\setlist{  
  listparindent=\parindent,
  parsep=0pt,
}
\tikzset{
   main/.style={circle, minimum size = 10mm, thick, 
        draw =black!80, node distance = 10mm},
   box/.style={rectangle, draw=black!100}
}
\newcommand{\I}{{\mathbf{I}}}
\newcommand{\E}{{\mathbf{E}}}
\newcommand{\U}{{\mathbf{U}}}
\newcommand{\Cov}{{\rm Cov}}
\newcommand{\var}{{\rm var}}
\newcommand{\polylog}{{\rm polylog}}
\newcommand{\SVD}{{\rm SVD}}
\newcommand{\argmin}{\mathop{\rm arg\min}}
\newcommand{\br}{\boldsymbol{r}}
\renewcommand{\citet}{\cite}
\renewcommand{\t}{^{\top}}
\newcommand{\utilde}{\mathbf{\tilde{U}}}
\newcommand{\uhat}{\mathbf{\hat{U}}}
\newtheorem{lemma}{Lemma}
\newtheorem{theorem}{Theorem}
\newtheorem{remark}{Remark}
\newtheorem{assumption}{Assumption}
\newcommand{\eps}{\varepsilon}
\newcommand{\p}{\mathbb{P}}
\newcommand{\inv}{^{-1}}
\newcommand{\ku}{^{(k)}}
\newcommand{\dl}{\delta_{\mathrm{L}}}
\renewcommand{\E}{\mathbb{E}}
\newcommand{\del}{\partial}
\renewcommand{\tilde}{\widetilde}
\renewcommand{\hat}{\widehat}
\newcommand{\pure}{^{(\mathrm{pure})}}
\newcommand\numberthis{\addtocounter{equation}{1}\tag{\theequation}}
\renewcommand{\hat}{\widehat}
 \newcommand*{\rom}[1]{\expandafter\@slowromancap\romannumeral #1@}
\begin{document}

\title{%The $\ell_2\to\ell_\infty$ Tensor Perturbation Bound with Applications to Statistics and Machine Learning\\
Statistical Inference for Low-Rank Tensors: Heteroskedasticity, Subgaussianity, and Applications}
\author{Joshua Agterberg\thanks{Department of Statistics, University of Illinois Urbana-Champaign, Email: jagt@illinois.edu} ~ and ~ Anru R. Zhang\thanks{Departments of Biostatistics \& Bioinformatics and Computer Science, Duke University. Email: anru.zhang@duke.edu.}}

\date{(\today)}

%\maketitle

\maketitle
% \begin{frontmatter}

% % "Title of the paper"

% %\runtitle{Statistical Inference for Tensors}

% % indicate corresponding author with \corref{}
% % \author{\fnms{John} \snm{Smith}\corref{}\ead[label=e1]{smith@foo.com}\thanksref{t1}}
% % \thankstext{t1}{Thanks to somebody} 
% % \address{line 1\\ line 2\\ printead{e1}}
% % \affiliation{Some University}
% \begin{aug}
% %%%%%%%%%%%%%%%%%%%%%%%%%%%%%%%%%%%%%%%%%%%%%%%
% %% Only one address is permitted per author. %%
% %% Only division, organization and e-mail is %%
% %% included in the address.                  %%
% %% Additional information can be included in %%
% %% the Acknowledgments section if necessary. %%
% %% ORCID can be inserted by command:         %%
% %% \orcid{0000-0000-0000-0000}               %%
% %%%%%%%%%%%%%%%%%%%%%%%%%%%%%%%%%%%%%%%%%%%%%%%

% \end{aug}
%\runauthor{Agterberg and Zhang}

\begin{abstract}
In this paper, we consider inference and uncertainty quantification for low Tucker rank tensors with additive noise in the high-dimensional regime. Focusing on the output of the \emph{higher-order orthogonal iteration} (\texttt{HOOI}) algorithm,  a commonly used algorithm for tensor singular value decomposition, we establish non-asymptotic distributional theory and study how to construct confidence regions and intervals for both the estimated singular vectors and the tensor entries in the presence of heteroskedastic subgaussian noise, which are further shown to be optimal for homoskedastic  Gaussian noise. Furthermore, as a byproduct of our theoretical results, we establish the entrywise convergence of \texttt{HOOI} when initialized via diagonal deletion. To further illustrate the utility of our theoretical results, we then consider several concrete statistical inference tasks.  First, in the tensor mixed-membership blockmodel, we consider a two-sample test for equality of membership profiles, and we propose a test statistic with consistency under local alternatives that exhibits a power improvement relative to the corresponding matrix test considered in several previous works. Next, we consider simultaneous inference for small collections of entries of the tensor, and we obtain consistent confidence regions.   Finally, focusing on the particular case of testing whether entries of the tensor are equal, we propose a consistent test statistic that shows how index overlap results in different asymptotic standard deviations. All of our  proposed procedures are fully data-driven, adaptive to noise distribution and signal strength, and do not rely on sample-splitting, and our main results highlight the effect of higher-order structures on estimation relative to the matrix setting. Our theoretical results are demonstrated through numerical simulations.  
\end{abstract}

%\begin{keyword}[class=MSC]
%\kwd[Primary ]{}
%\kwd{}
%\kwd[; secondary ]{}
%\end{keyword}

%\begin{keyword}
%\kwd{}
%\kwd{}
%\end{keyword}
% \begin{keyword}[class=MSC]
% \kwd[Primary ]{62G15}
% \kwd[; secondary ]{62E7}
% \end{keyword}

% \begin{keyword}
% \kwd{Tensor Data Analysis}
% \kwd{Spectral Methods}
% \kwd{Singular Value Decomposition}
% \end{keyword}

% \end{frontmatter}

\tableofcontents
\section{Introduction}\label{sec:intro}
%%%%%%%%%%%%%%%%%%%%%%%%%%%%%%%%%
Higher-order data, or tensor data, appears frequently in statistics, machine learning, and data science, and has applications in medical imaging \citep{li_parsimonious_2017,zhang_tensor_2019}, network analysis \citep{jing_community_2021,lyu_latent_2023,lei_consistent_2020}, electron microscopy \citep{zhang_denoising_2020}, and microbiome studies \citep{martino_context-aware_2021,han2023guaranteed}, to name a few. With the rise of the ubiquity of high-dimensional tensor data statistics researchers have begun focusing on models exhibiting low-dimensional structures such as low-rankness, and theoretical results have been derived under various observation models and low-rank structures.  However, despite the wide array of estimation guarantees in the literature, there are relatively few procedures that can adequately quantify the uncertainty inherent in the resulting estimates in a principled manner.  In addition, existing works have primarily focused on settings with homoskedastic or Gaussian noise, and hence cannot handle the general setting. 

%or focus on estimation. Much less work on statistical inference.  More detailed discussion of related work in \cref{sec:relatedwork}.
These observations motivate the main question considered in this work:
\begin{center}
    \textit{Can we reliably perform principled statistical inference for low-rank tensors in the presence of heteroskedastic, subgaussian noise?}
\end{center}
This work answers this question in the affirmative. Unlike matrices, there is no canonical notion of tensor rank, so we deliberately focus our attention on tensors with low-rank \emph{Tucker decomposition}, and we study the higher-order orthogonal iteration (\texttt{HOOI}) algorithm, an algorithm that performs Tucker decomposition of a tensor. We provide a suite of inferential tools for the output of the \texttt{HOOI} algorithm in the high-dimensional regime where the dimensions of the tensor are large and comparable, and we use our theoretical results to obtain solutions to several motivating statistical problems of theoretical and practical interest. All of our results hold under reasonable signal strength conditions, and our proposed confidence intervals and regions are data-driven and adaptive to heteroskedastic noise.

%The rest of this section is organized as follows.  
%In the following subsections we describe several motivating inferential tasks, discuss our contributions, and provide an overview of notation and tensor algebra. In \cref{sec:assumptions} we provide our main estimation methodology and technical assumptions.  \cref{sec:loadings} and \cref{sec:entries} contain our main inferential tools and associated theoretical guarantees
 
%%%%%%%%%%%%%%%%%%%%%%%%%%%%%%%%%
\subsection{Motivating Inference Tasks}
\label{sec:introapplications}
%%%%%%%%%%%%%%%%%%%%%%%%%%%%%%%%%
To further motivate the primary problem considered in this work, we consider several concrete inferential tasks of interest. 
\begin{itemize}
    \item \textbf{Testing membership profiles}.  In the \emph{tensor blockmodel} \citep{wu_general_2016}, or the \emph{tensor mixed-membership blockmodel} \citep{agterberg_estimating_2022}, nodes along different modes have community memberships associated with them, where the communities may be discrete or continuous (corresponding to the blockmodel and mixed-membership blockmodel setting respectively).   The estimation of community memberships in the blockmodel setting has been considered in a number of different works \citep{han_exact_2020,chi_provable_2020,wang_multiway_2019}, and in the mixed-membership blockmodel setting in \citet{agterberg_estimating_2022}. However, previous works have not considered the problem of \emph{testing memberships}.  Explicitly, given two nodes of interest along a fixed mode, can one test whether their memberships are the same given only their higher-order interactions?  This problem has been considered in the matrix (network) setting in \citet{fan_simple_2022} and \citet{du_hypothesis_2022}. %, and in this work we propose a test statistic with principled statistical guarantees for the tensor data analysis setting.  
\item  \textbf{Simultaneous confidence intervals}. In many situations one is not merely interested in individual entries of tensors, but rather collections of entries.  Unfortunately, depending on the collection there may be correlation between entries, particularly if the entries are all localized to a particular region of the tensor.  For example, in MRI data, collections of entries can correspond to tumor growth, with larger values indicating the possibility of a tumor.  Therefore, an important problem is constructing principled confidence intervals that are  \emph{simultaneously valid} for all entries in a collection.
\item  \textbf{Testing  entry equality}.  Beyond obtaining confidence intervals, in some settings, one may be interested in testing whether two entries are equal. For example, in time series of networks, one may be interested in testing whether the particular probability of an edge is equal between two distinct times.  Therefore, a practical but interesting theoretical problem is to design and analyze test statistics for testing this  hypothesis.  
%In this work we leverage our statistical theory to design and theoretically justify a test statistic 
\end{itemize}

%%%%%%%%%%%%%%%%%%%%%%%%%%%%%%%%%
\subsection{Our Contributions}
%%%%%%%%%%%%%%%%%%%%%%%%%%%%%%%%%
In light of our main question and the three applications in \cref{sec:introapplications}, the contributions of this paper are as follows.
\begin{itemize}
    \item \textbf{Singular vector distributional theory and inference}. We establish nonasymptotic distributional theory (\cref{thm:eigenvectornormality_v1} and \cref{thm:eigenvectornormality2_v1}) and confidence regions (\cref{thm:civalidity2_v1}) for the estimated tensor singular vectors obtained from the \texttt{HOOI} algorithm in the presence of heteroskedastic  subgaussian noise.  Our proposed confidence regions are data-driven, adaptive to heteroskedasticity and signal strength, and optimal for homoskedastic Gaussian noise (\cref{thm:efficiencyloadings,thm:efficiency_order}).  
    \item \textbf{Entrywise distributional theory, inference, and consistency}.  We establish nonasymptotic distributional theory (\cref{thm:asymptoticnormalityentries_v1}) and confidence intervals (\cref{thm:civalidity_v1}) for individual entries of the underlying tensor.  Again our proposed confidence intervals are   data-driven, adaptive to heteroskedasticity, and optimal for homoskedastic Gaussian noise (\cref{thm:efficiency,thm:efficiency_order_ijk}).   As a byproduct of our main results, we also establish the entrywise convergence of \texttt{HOOI} (\cref{cor:maxnormbound_v1}).  
    \item \textbf{Membership profile testing in tensor mixed-membership blockmodels}. We apply our results to  testing membership profiles in the tensor mixed-membership blockmodel.  We leverage our theory for the tensor singular vectors to study a test statistic for this hypothesis, and we show that our test statistic is consistent under the null as well as local alternatives (\cref{cor:testing}).  Our results show that this test statistic exhibits a power gain relative to the corresponding matrix test.  
    \item \textbf{Simultaneous confidence intervals}.  We study the problem of obtaining simultaneous confidence intervals for small collections of entries of the underlying tensor. Under reasonable assumptions on signal strengths and the size of the collection, we establish the consistency of our proposed confidence regions (\cref{thm:simultaneousinference_v1}).
    \item \textbf{Hypothesis tests for entries}.  We consider testing whether two tensor entries are equal, and we establish consistency for our proposed test procedure (\cref{thm:entrytesting_v1}).  Our results demonstrate  how the test depends on the overlap of the indices of the tensor; in particular, demonstrating that tensor entries that are ``further away'' are easier to test than those that are ``close.''
    \end{itemize}
All of our results hold under nearly-optimal signal to noise ratio conditions such that a polynomial-time estimator exists.  Throughout we compare our results to the matrix setting, highlighting fundamental differences between Tensor SVD and Matrix SVD.  For ease of presentation, we deliberately restrict our attention to order three tensors, though the ideas carry through straightforwardly to the higher-order setting.  

%%%%%%%%%%%%%%%%%%%%%%%%%%
\subsection{Paper Organization}
The rest of this paper is organized as follows.  In the following subsection, we set notation and provide background on tensor algebra that we will be using throughout this work.  In \cref{sec:assumptions}
we describe the \texttt{HOOI} algorithm and our model in detail.  In \cref{sec:loadings} and \cref{sec:entries} we study distributional theory and statistical inference for the estimated tensor singular vectors and entries respectively.  In \cref{sec:consequences} we study how to apply our theory to the motivating problems discussed in \cref{sec:introapplications}, and in \cref{sec:relatedwork} we discuss related work.  In \cref{sec:simulations} we present numerical simulations, and in \cref{sec:discussion} we include discussion.  We include several more general results as well as proof details in the appendices.

%Our analysis and more general results are in \cref{sec:analysis}, and our main proofs as well as more general theorem statements are deferred to the appendices. 

%%%%%%%%%%%%%%%%%%%%%%%%%%%%%%%%%
\subsection{Notation and Background on Tensor Algebra}
\label{sec:notation}
%%%%%%%%%%%%%%%%%%%%%%%%%%%%%%%%%
First, for two sequences of numbers $a_n$ and $b_n$, we say $a_n \lesssim b_n$ if there is some universal constant $C> 0$ such that $a_n \leq C b_n$, and we write $a_n \asymp b_n$ if $a_n \lesssim b_n$ and $b_n \lesssim a_n$. We also write $a_n = O(b_n)$ if $a_n \lesssim b_n$. In addition, we write $a_n \ll b_n$ if $a_n/b_n \to 0$ as $n \to \infty$, and we write $a_n = o(b_n)$ to mean $a_n \ll b_n$.  We write $a_n = \tilde O(b_n)$ if there is some $c > 0$ such that $a_n = O(b_n \log^c(n))$, and we write $a_n = \tilde o(b_n)$ if there is some $c>0$ such that $a_n = o(b_n \log^c(n))$.  For a random variable $Z$ we denote $\|Z\|_{\psi_2}$ as its subgaussian Orlicz norm defined via $\|Z\|_{\psi_2} := \inf\{t >0: \mathbb{E}(X^2/t^2) \leq 2 \}$ (see Chapter 2 of \citet{vershynin_high-dimensional_2018} for details). 

Next, we use bold letters to denote matrices, and for a matrix $\mathbf{M}$ we let $\mathbf{M}_{i\cdot}$ and $\mathbf{M}_{\cdot j}$ denote its $i$'th row and $j$'th column respectively, where we view both as column vectors.  We let $\mathbf{M}\t$ denote the transpose of a matrix, and we set $\| \mathbf{M}\|_F$ as the Frobenius norm on a matrix.  We use $\|\cdot\|$ to denote the Euclidean norm or matrix spectral norm of vectors and matrices respectively, and we let $\|\mathbf{M}\|_{2,\infty}$ be the $\ell_{2,\infty}$ norm of a matrix, defined as $\|\mathbf{M}\|_{2,\infty} = \max_{i} \|\mathbf{M}_{i\cdot} \|$.  We let $e_k$ denote the standard basis vector in the appropriate dimension, and we denote the identity as $\mathbf{I}$ or $\mathbf{I}_k$, the latter where the dimension is specified for clarity.  For a matrix $\mathbf{U}$ with orthonormal columns  satisfying $\mathbf{U}\t \mathbf{U} = \mathbf{I}_k$ we let $\mathbf{U}_{\perp}$ denote its orthogonal complement; i.e., the (non-unique) matrix with orthonormal columns satisfying $\mathbf{U}_{\perp}\t \mathbf{U} = 0$.  For two matrices $\U_1$ and $\U_2$ of same dimensions with orthonormal columns, we let $\|\sin\Theta(\U_1,\U_2)\|$ denote their (spectral) $\sin\Theta$ distance defined as $\|\sin\Theta(\U_1,\U_2)\| = \| (\mathbf{\U}_1)_{\perp}\t \U_2 \|$. For a matrix $\U$ with orthonormal columns, we write $\mathcal{P}_{\U}$ as the projection onto the subspace spanned by  $\U$. For a matrix $\mathbf{M}$, we let $\mathrm{SVD}_r(\mathbf{M})$ denote the leading $r$ left singular vectors of $\mathbf{M}$.  For a square matrix $\mathbf{M}$ with singular value decomposition $\mathbf{M} = \mathbf{U \Sigma V}\t$, we write $\mathrm{sgn}(\mathbf{M})$ to denote the \emph{matrix sign function} of $\mathbf{M}$, defined via
\begin{align}
    \mathrm{sgn}(\mathbf{M}) \coloneqq \mathbf{UV}\t. \label{sgnfunction}
\end{align}

A tensor $\mathcal{T}$ is a multidimensional array, and we use the calligraphic letters for tensors, except for $\mathcal{M}$ (defined momentarily), $\mathcal{P}$ (for projections), and $\mathcal{E}$ (for probabilistic events).  We write $\mathcal{M}_k(\mathcal{T})$ as the \emph{matricization} of $\mathcal{T}$ along its $k$'th mode, so that $\mathcal{M}_k(\mathcal{T})$ satisfies
\begin{align*}
    \big(\mathcal{M}_k(\mathcal{T}) \big)_{i_kj} = \mathcal{T}_{i_1i_2i_3}; \qquad j = 1 + \sum_{l=1,l\neq k}^{3} \bigg\{ (i_l-1) \prod_{m=1,m\neq k}^3 p_m \bigg\},
\end{align*}
where $p_k$ are the dimensions of the tensor.  We write $\mathrm{Vec}(\mathcal{T})$ to denote the vectorization of the tensor, organized according to the lexicographic ordering.  We write $p_{-k} = \prod_{m\neq k} p_k$.   We let $\br = (r_1,r_2,r_3)$ denote the multilinear rank of a tensor, where $r_k$ denotes the rank of the $k$'th matricization of $\mathcal{T}$.  We let $r_{-k}$ denote $\prod_{m\neq k} r_m$, and when referring to tensor modes we use the convention that each mode is understood mod three (i.e., $\mathcal{M}_{k+3}(\cdot) = \mathcal{M}_k(\cdot)$).  The mode-one  product of a tensor $\mathcal{T}\in\mathbb{R}^{p_1 \times p_2 \times p_3}$ with a matrix $\U\in \mathbb{R}^{p_1 \times r_1}$ is denoted $\mathcal{T} \times_1 \U\t \in \mathbb{R}^{r_1 \times p_2 \times p_3}$ as is defined by
\begin{align*}
    \big( \mathcal{T} \times_1 \U\t \big)_{ji_2i_3} &= \sum_{i_1=1}^{p_k} \mathcal{T}_{i_1i_2i_3} \U_{i_1j},
\end{align*}
with other mode-wise products defined similarly. See \citet{kolda_tensor_2009} for more details on tensor matricizations and tensor ranks.

We say a tensor $\mathcal{T} \in \mathbb{R}^{p_1 \times p_2 \times p_3}$ has Tucker decomposition of rank  $\br$ if
\begin{align*}
    \mathcal{T} = \mathcal{C} \times_1 \U_1 \times_2 \U_2 \times_3 \U_3,
\end{align*}
where $\mathcal{C}\in \mathbb{R}^{r_1 \times r_2 \times r_3}$ is the core tensor and $\U_k \in \mathbb{R}^{p_k\times r_k}$ satisfy  $\U_k =\mathrm{SVD}_{r_k}\big(\mathcal{M}_k(\mathcal{T})\big)$.   We let $\lambda_{\min}(\mathcal{T})$ denote the smallest nonzero singular value along each  matricization of $\mathcal{T}$, and we let $\kappa$ denote the condition number defined via
\begin{align*}
    \kappa := \max_k \frac{\| \mathcal{M}_k(\mathcal{T})\|}{\lambda_{r_k}(\mathcal{M}_k(\mathcal{T}))}.
\end{align*}
The \emph{incoherence parameter} of $\mathcal{T}$ with Tucker decomposition $\mathcal{T} = \mathcal{C}\times_1 \U_1\times_2 \U_2 \times_3 \U_3$ is given by the smallest number $\mu_0$ such that
\begin{align*}
    \max_k \sqrt{\frac{p_k}{r_k}} \| \U_k \|_{2,\infty} \leq \mu_0.
\end{align*}

%%%%%%%%%%%%%%%%%%%%%%%%%%%%%%%%%
\section{Methodology and Model}
\label{sec:assumptions}
%%%%%%%%%%%%%%%%%%%%%%%%%%%%%%%%%%%
In this section we describe our main methodology, the higher-order orthogonal iteration (\texttt{HOOI})  algorithm, and perhaps the most ubiquitous tensor SVD algorithm for Tucker low-rank tensors.  Roughly speaking, \texttt{HOOI} is the analog of power iteration for tensors, and a number of previous works have studied this algorithm due to its practical implementation and historical significance \citep{de_lathauwer_best_2000,zhang_tensor_2018,luo_sharp_2021,agterberg_estimating_2022}.

Given  a tensor $\mathcal{\tilde{T}}$ and initializations $\{\uhat_k^{(0)}\}_{k=1}^{2}$, the \texttt{HOOI} algorithm iteratively updates each subsequent iteration by first projecting the tensor onto the subspaces corresponding to the first two modes and then extracting the singular vectors of the reduced tensor from this new tensor. A convenient intuitive representation of \texttt{HOOI} was described in \citet{xia_inference_2022}: given previous iterates $\uhat_k^{(t-1)}$, \texttt{HOOI} attempts to solve the problem 
\begin{align*}
    \max_{\U_1} \| \mathcal{\tilde T} \times_1 \U_1 \times_2 \uhat_2^{(t-1)} \times_3 \uhat_3^{(t-1)} \|_F^2,
\end{align*}
with similar updates for the other modes.  The maximum above is achievable by the Ekhart-Young Theorem via the SVD; intuitively, given the previous   iterates, \texttt{HOOI} updates by keeping the other modes fixed and finding the subspace such that the projection onto that subspace is  maximized.  The formal procedure is described in \cref{al:tensor-power-iteration}.

\begin{algorithm}[t]
	\caption{Higher-Order Orthogonal Iteration (\texttt{HOOI})}
	\begin{algorithmic}[1]
		\State Input: $\mathcal{\tilde{T}}\in \mathbb{R}^{p_1 \times p_2 \times p_3}$, Tucker rank $\br = (r_1, r_2,r_3)$, initialization $\uhat_2^{(0)}, \uhat_3^{(0)}$.  
		\Repeat
		\State Let $t = t+1$
		\For {$k=1,2,3$}
		\begin{equation*}
		\uhat_k^{(t)} = 
		\SVD_{r_k}\left(\mathcal{M}_k\left(\mathcal{\widehat{T}}\times_{k' < k} (\uhat_{k'}^{(t)})^\top \times_{k'>k} (\uhat_{k'}^{(t-1)})^\top\right)\right).
		\end{equation*}
		\EndFor
		\Until Convergence or the maximum number of iterations is reached.
		\State Set 
		\begin{align*}
		    \mathcal{\hat T} &\coloneqq \mathcal{\tilde T}  \times_1 \uhat_1^{(t_{\max})} \times_2 \uhat_2^{(t_{\max})} \times_3 \uhat_3^{(t_{\max})}
		\end{align*}
		\State Output: $\uhat_k^{(t_{\max})}$, estimated  tensor $\mathcal{\hat T}$
	\end{algorithmic}\label{al:tensor-power-iteration}
\end{algorithm}

\begin{algorithm}[t]
	\caption{Diagonal-Deletion Initialization}
	\begin{algorithmic}[1]
		\State Input: $\mathcal{\widehat{T}}\in \mathbb{R}^{p_1 \times p_2 \times p_3}$, Tucker rank $\br = (r_1, r_2,r_3)$.
		\For {$k=2,3$}
		\State Set $\uhat_k^{(0)}$ as the leading $r_k$ eigenvectors of the matrix $\mathbf{\widehat{G}}_k$, with
		\begin{align*}
		\mathbf{\widehat{G}}_k \coloneqq 
	 \Gamma\big(\mathcal{M}_k ( \mathcal{\widehat{T}}) \mathcal{M}_k(\mathcal{\widehat{T}})\t \big), \qquad &\text{where $\Gamma(\cdot)$ is the \emph{hollowing operator} that sets} \\
  \qquad &\text{the diagonal of ``$\cdot$" to zero;}
		\end{align*}
		\EndFor
		\State Output: $\uhat_k^{(0)}$.
	\end{algorithmic}\label{al:dd}
\end{algorithm}

\subsection{Initialization via Diagonal Deletion}
The \texttt{HOOI} algorithm requires a suitably warm initialization.  One common procedure for initialization is via the Higher-Order SVD (HOSVD) procedure, which uses the leading $r_k$ singular vectors from each matricization of $\mathcal{\tilde{T}}$.  This procedure was analyzed in \citet{zhang_tensor_2018} and shown to yield a strong initialization for homoskedastic Gaussian noise, though, for heteroskedastic noise, it has been demonstrated in \citet{zhang_heteroskedastic_2022} to be biased, which may  not result in a sufficiently close initialization.  

 To understand  this bias, consider a matrix $\mathbf{M}$ corrupted by a noise matrix $\mathbf{Z}$.  Then the singular vectors of $\mathbf{M} + \mathbf{Z}$ are equivalent to the eigenvectors of the matrix $\mathbf{MM}\t + \mathbf{MZ}\t + \mathbf{ZM}\t + \mathbf{ZZ}\t$.  When the noise is homoskedastic, the matrix $\mathbb{E}\mathbf{ZZ}\t$ is a scalar multiple of the identity, and hence the singular vectors of $\mathbf{M + Z}$ may well approximate those of $\mathbf{M}$ as eigenvectors are invariant to adding scalar multiples of the identity.  However, when $\mathbf{Z}$ consists of heteroskedastic noise, the matrix $\mathbb{E} \mathbf{ZZ}\t$ is a diagonal matrix with unequal entries, and hence the singular vectors of $\mathbf{M + Z}$ may not approximate those of $\mathbf{M}$ unless the heteroskedasticity is small (i.e., $\mathbb{E}\mathbf{ZZ}\t$ is ``close'' to a scalar multiple of the identity). % diagonal matrix

To combat this bias we consider initialization via the diagonal deletion algorithm, which provides initialization singular vectors via the eigenvectors of the  \emph{hollowed} Gram matrix  (i.e., setting the diagonal of the Gram matrix $(\mathbf{M + Z})(\mathbf{M+Z)\t}$ to zero).  This procedure has previously been considered in the literature as both an initialization \citep{agterberg_estimating_2022,wang_implicit_2021}, and as an algorithm in its own right \citep{cai_subspace_2021}.  The full initialization procedure is described in \cref{al:dd}.  

\textcolor{black}{\begin{remark}[Estimation of $\br$] Throughout this paper, we assume that $\br = (r_1,r_2,r_3)$ is known, though in practice it needs to be estimated.  Even in the matrix setting, rank estimation is known to be difficult, and there are many different procedures to estimate the rank for dimensionality reduction tailored to different statistical models (e.g., \citet{jin_optimal_2023,han_universal_2023}).  As our theory demonstrates, the matrix $\mathbf{\widehat{G}}_k$ is approximately rank $r_k$, and, under our assumptions, will have $r_k$ ``large'' eigenvalues and $p_k - r_k$ ``small'' eigenvalues.  Therefore, one principled approach to obtain a rank estimate is to look for an elbow in the scree plot of the hollowed Gram matrix $\mathbf{\widehat{G}}_k$ from \cref{al:dd}; see, for example, \citet{zhu_automatic_2006}.  However, in principle, any rank estimation procedure for matrices can be applied to $\mathbf{\widehat{G}}_k$.  As this problem is worthwhile in its own right, we leave a more detailed explanation to future work. 
\end{remark}}

\subsection{Model and Technical Assumptions}
We now elucidate our main model, the tensor signal-plus-noise model (also referred to as the ``tensor denoising'' or ``tensor PCA'' model).  We assume that we observe
\begin{align*}
    \widetilde{\mathcal{T}} = \mathcal{T} + \mathcal{Z},
\end{align*}
where $\mathcal{T}, \mathcal{Z} \in \mathbb{R}^{p_1\times p_2\times p_3}$ are order three tensors, $\mathcal{T}$ has Tucker rank $\br = (r_1,r_2,r_3)$ of the form
\begin{align*}
    \mathcal{T} = \mathcal{C} \times_1 \U_1 \times_2 \U_2 \times_3 \U_3;
\end{align*}
(see \cref{sec:notation}), and $\mathcal{Z}$ consists of independent, heteroskedastic noise. Explicitly, we make the following assumption on the noise tensor $\mathcal{Z}$.

\begin{assumption}[Noise] \label{ass:noise}
The noise $\mathcal{Z}$ consists of independent mean-zero entries $\mathcal{Z}_{ijk}$ with $\|\mathcal{Z}_{ijk} \|_{\psi_2} \leq \sigma$, and $\mathrm{var}(\mathcal{Z}_{ijk}) = \sigma^2_{ijk}$ with $\sigma_{\min} \leq \sigma_{ijk} \leq \sigma$.  Finally, it holds that $\sigma \leq C \sigma_{\min} $.  \end{assumption}

In addition, throughout this work, we operate in the following ``quasi-asymptotic'' regime.

\begin{assumption}[Regime]
It holds that $p_{k} \asymp p$ and $r_k \asymp r$ for all $k$.%  (maybe assumption about regime goes here, and then repeated in more generality in the appendix)
\end{assumption}

While this second assumption is not strictly necessary for some of our results to hold, it renders a number of calculations much more straightforward.  Furthermore, we choose to focus on this regime as it highlights a number of fundamental differences from the matrix setting that we describe after stating our main results.

All of our results (both our main results and more general theorem statements in the appendix) will hold under these two assumptions.  Some of our results also do not require the assumption $\sigma \lesssim \sigma_{\min}$, but we make clear when this is the case.  In addition, it may be possible to extend our results to a broader regime, where $\sigma_{\min} \ll \sigma$, but this regime is beyond the scope of this paper.

Our results will be stated under general \emph{signal-to-noise ratio} (SNR) assumptions.   Define the signal-strength parameter:
\begin{align*}
    \lambda &\coloneqq \lambda_{\min}(\mathcal{T});
\end{align*}
that is, $\lambda$ is the smallest nonzero singular value of each matricization of the signal tensor $\mathcal{T}$.  It has been demonstrated in \citet{zhang_tensor_2018} that Tensor SVD suffers from a so-called statistical and computational \emph{gap}: the statistical lower bound requires $\lambda/\sigma \gtrsim \sqrt{p}$ for minimax optimal subspace estimation, whereas the condition $\lambda/\sigma \gtrsim p^{3/4}$ is required for a polynomial-time estimator to exist (under a complexity conjecture from computer science), and, moreover, \texttt{HOOI} achieves the minimax rate in this regime.  Therefore, in this work, we will focus on this latter regime $(\lambda/\sigma \gtrsim p^{3/4})$, as we emphasize data-driven and practical uncertainty quantification, which is not achievable computationally if the signal-to-noise ratio is below this level.  \textcolor{black}{In addition, we will assume throughout this work that $\lambda/\sigma \leq \exp(cp)$ for some small constant $c>0$.  Such a condition is only for technical purposes, as it guarantees that we have moderate noise.  Indeed, the regime of interest is $p^{3/4} \lesssim \lambda/\sigma \lesssim p$, as once $\lambda/\sigma \gtrsim p$, then no additional tensor power iterations are required to achieve the minimax rate (see, for example, \cref{remark:snr}).}

\section{Singular Vector Distributional Theory and Inference}
\label{sec:loadings}
%%%%%%%%%%%%%%%%%%%%%%%%
 In this section, we focus on studying the estimated singular vectors $\uhat_k^{(t)}$ from \cref{al:tensor-power-iteration} after sufficiently many iterations.   For convenience throughout all of our main results, we will assume that the condition number $\kappa$ and incoherence parameter $\mu_0$ are bounded, but more general results are available in \cref{sec:generaltheorems}.  

%In the subsequent theorems we will assume that $\kappa, \mu_0 = O(1)$l though more general theorems are available in appendix.

%We now turn our attention to distributional guarantees for the loading matrices $\U_k$.  

The theoretical results for the estimated singular vectors will be stated up to a rotational ambiguity $\mathbf{W}_k^{(t)}$, which is in general necessary as we do not make any assumptions on the multiplicity of the tensor singular values. Explicitly, we define $\mathbf{W}_k^{(t)}$ as the orthogonal matrix satisfying
\begin{align*}
    \mathbf{W}_k^{(t)} &\coloneqq \argmin_{\mathbf{WW}\t = \mathbf{I}_{r_k}} \| \uhat_k^{(t)} - \U_k \mathbf{W} \|_F.
\end{align*}
The matrix $\mathbf{W}_k^{(t)}$ satisfies $\mathbf{W}_k^{(t)} = \mathrm{sgn}( \U_k\t \uhat_k^{(t)} )$, where $\mathrm{sgn}(\cdot)$ is the matrix sign function defined in \eqref{sgnfunction}.

The following result establishes a first-order approximation of $\uhat_k^{(t)}$ to $\U_k$ up to the  orthogonal transformation $\mathbf{W}_k^{(t)}$.  The more general result with $\kappa, \mu_0$ permitted to grow can be found in \cref{thm:eigenvectornormality} in \cref{sec:generaltheorems}. 

\begin{theorem}[First-Order Expansion for Tensor Singular Vectors] \label{thm:eigenvectornormality_v1}
Suppose that $r \lesssim p^{1/2} $and $\lambda/\sigma \gtrsim p^{3/4} \sqrt{\log(p)}$, \textcolor{black}{and that $\lambda/\sigma \leq \exp(c p)$ for some small constant $c$}. Suppose that $\kappa,\mu_0 = O(1)$, and let $\uhat_k^{(t)}$ denote the estimated singular vectors from the output of \texttt{HOOI} (\cref{al:tensor-power-iteration}) with $t \asymp \textcolor{black}{\log( \frac{\lambda/\sigma}{C \sqrt{p\log(p)}})} $%\log(p/(\lambda/\sigma))$ 
iterations, initialized via \cref{al:dd}.  Suppose $\mathbf{T}_k = \mathcal{M}_k(\mathcal{T})$ has rank $r_k$ singular value decomposition $\U_k \mathbf{\Lambda}_k \mathbf{V}_k\t$.  Denote $\mathbf{Z}_k = \mathcal{M}_k(\mathcal{Z})$.  Then there exists an event  $\mathcal{E}_{\mathrm{\cref{thm:eigenvectornormality_v1}}}$ with $\p(\mathcal{E}_{\mathrm{\cref{thm:eigenvectornormality_v1}}}) \geq 1 - O(p^{-9})$ such that on this event for each $k$ it holds that
\begin{align*}
    \uhat_k^{(t)} (\mathbf{W}_k^{(t)})\t - \U_k &=\mathbf{Z}_k \mathbf{V}_k \mathbf{\Lambda}_k\inv + \mathbf{\Psi}^{(k)},
\end{align*}
where
\begin{align*}
    \| \mathbf{\Psi}^{(k)} \|_{2,\infty} \lesssim \frac{\sigma^2 \log(p) r \sqrt{p}}{\lambda^2} + \frac{\sigma r}{\lambda \sqrt{p}}.
\end{align*}
\end{theorem}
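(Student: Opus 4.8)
The argument has three stages: (i) control the diagonal-deletion initialization of \cref{al:dd}; (ii) an inductive ``one-step'' analysis showing each \texttt{HOOI} update of \cref{al:tensor-power-iteration} contracts the subspace error while preserving incoherence, so that after $t\asymp\log\!\big(\tfrac{\lambda/\sigma}{\sqrt{p\log p}}\big)$ sweeps all three modes are estimated at the ``oracle'' rate $\tfrac{\sigma\sqrt p}{\lambda}$ spectrally and $\tfrac{\mu_0\sqrt r}{\sqrt p}$ in $\ell_{2,\infty}$; and (iii) a row-wise linearization of the final mode-$k$ update isolating the leading term $\Z_k\V_k\bLambda_k\inv$ and bounding the remainder $\mathbf{\Psi}^{(k)}$ in $\ell_{2,\infty}$. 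For Stage (i): because $\E[\Z_k\Z_k\t]$ is diagonal, the hollowed Gram matrix $\widehat{\mathbf G}_k=\Gamma\big(\mathcal M_k(\widetilde{\mathcal T})\mathcal M_k(\widetilde{\mathcal T})\t\big)$ removes the leading-order heteroskedastic bias that corrupts the \texttt{HOSVD} initialization. Expanding $\widehat{\mathbf G}_k$ into $\T_k\T_k\t$ plus $\Z_k\T_k\t+\T_k\Z_k\t$, $\Gamma(\Z_k\Z_k\t)$, and a ``diagonal of the signal'' correction, and using $\|\Gamma(\Z_k\Z_k\t)\|\lesssim\sigma^2\sqrt{p\,p_{-k}}$ (a decoupling/$\varepsilon$-net bound for hollowed sample covariances), $\|\Z_k\T_k\t\|\lesssim\sigma\sqrt p\,\kappa\lambda$, and incoherence of $\U_k$, a Davis--Kahan argument gives $\|\sin\Theta(\uhat_k^{(0)},\U_k)\|\lesssim\kappa\big(\tfrac{\sigma\sqrt p}{\lambda}+\tfrac{\sigma^2 p}{\lambda^2}\big)$, which under the stated SNR is $o(1)$; a parallel row-wise perturbation bound (again using that the bias term is diagonal) gives $\|\uhat_k^{(0)}\|_{2,\infty}\lesssim\mu_0\sqrt{r/p}\cdot\polylog(p)$, which suffices as a warm start.

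\emph{Stage (ii).} Fix a mode $k$ and condition on the other two modes having spectral error $\le\delta$ and incoherence $\lesssim\mu_0\sqrt{r/p}$. The update matrix is $\widehat{\mathbf M}_k:=\mathcal M_k\big(\widetilde{\mathcal T}\times_{k'\ne k}(\uhat_{k'})\t\big)=\T_k\mathbf Q+\Z_k\mathbf Q$, where $\mathbf Q=\uhat_{k+2}\otimes\uhat_{k+1}$ has orthonormal columns. Since $\T_k=\U_k\bLambda_k\V_k\t$ and $\V_k$ lies in the column span of $\U_{k+2}\otimes\U_{k+1}$, one has $\sigma_{r_k}(\T_k\mathbf Q)\ge\lambda(1-\delta^2)^{1/2}$ and the leading $r_k$ left singular vectors of $\T_k\mathbf Q$ equal $\U_k$ up to rotation, so Davis--Kahan reduces the update error to $\|\Z_k\mathbf Q\|/\lambda$ plus lower-order terms. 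The crux is that $\mathbf Q$ depends on $\widetilde{\mathcal T}$, so $\|\Z_k\mathbf Q\|$ cannot be bounded by $\sigma(\sqrt p+\sqrt{r_{-k}})$ directly; decomposing $\mathbf Q$ along the deterministic space spanned by $\U_{k+2}\otimes\U_{k+1}$ and passing to leave-one-out surrogates $\uhat_{k'}^{(-i)}$ independent of the $i$-th slice of $\Z_k$ (with $\|\uhat_{k'}-\uhat_{k'}^{(-i)}\|$ controlled by the same induction) yields $\|\Z_k\mathbf Q\|\lesssim\sigma\sqrt p$ on a $1-p^{-10}$ event. This gives the recursion $\delta_t\lesssim\tfrac{\sigma\sqrt p}{\lambda}+\rho\,\delta_{t-1}$ with contraction factor $\rho\asymp\tfrac{\sigma\sqrt{p\log p}}{\lambda}\ll1$ under the SNR, so $\delta_t\lesssim\tfrac{\sigma\sqrt p}{\lambda}\sqrt{\log p}$ after $t\asymp\log\!\big(\tfrac{\lambda/\sigma}{\sqrt{p\log p}}\big)$ sweeps; a row-wise version of the same computation (tracking $\|(\I-\mathcal P_{\U_k})\uhat_k^{(t)}\mathbf W_k^{(t)}\|_{2,\infty}$ and using incoherence of $\U_{k'},\V_k$ together with the leave-one-out decoupling) preserves $\|\uhat_k^{(t)}\|_{2,\infty}\lesssim\mu_0\sqrt{r/p}$ at every iteration.

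\emph{Stage (iii).} With $\widehat{\mathbf M}_k=\T_k\mathbf Q+\Z_k\mathbf Q$ as above, whose leading $r_k$ left singular vectors are $\uhat_k^{(t)}$, the standard first-order singular-subspace expansion combined with the near-incoherence established in Stage (ii) gives $\uhat_k^{(t)}(\mathbf W_k^{(t)})\t-\U_k=(\I-\mathcal P_{\U_k})\Z_k\mathbf Q\mathbf Q\t\V_k\bLambda_k\inv+(\text{second order})$. One then (a) replaces $\mathbf Q\mathbf Q\t\V_k$ by $\V_k$: the discrepancy $\|(\I-\mathcal P_{\mathbf Q})\V_k\|\lesssim\delta\lesssim\tfrac{\sigma\sqrt p}{\lambda}$ interacts with the leave-one-out-decoupled rows of $\Z_k$ to contribute $\lesssim\tfrac{\sigma r}{\lambda\sqrt p}$ row-wise; (b) adds back $\mathcal P_{\U_k}\Z_k\V_k\bLambda_k\inv$, whose $\ell_{2,\infty}$ norm is $\lesssim\|\U_k\|_{2,\infty}\|\U_k\t\Z_k\V_k\|/\lambda\lesssim\tfrac{\sigma r}{\lambda\sqrt p}$ by incoherence; and (c) bounds the genuine second-order pieces — of the schematic form $(\I-\mathcal P_{\U_k})\Z_k\mathbf Q\mathbf Q\t\Z_k\t\U_k\bLambda_k^{-2}$ and related bilinear expressions — by $\tfrac{\sigma^2 r\sqrt p\log p}{\lambda^2}$ using Bernstein-type bounds for quadratic forms in the subgaussian entries of $\Z_k$, the $\log p$ being the price of a union bound over the $p$ rows and the $r,\sqrt p$ coming from the dimensions of the relevant projections. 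Collecting (a)--(c) yields $\|\mathbf{\Psi}^{(k)}\|_{2,\infty}\lesssim\tfrac{\sigma^2\log(p)r\sqrt p}{\lambda^2}+\tfrac{\sigma r}{\lambda\sqrt p}$, and a union bound over $k\in\{1,2,3\}$ and over all high-probability events in Stages (i)--(iii) produces $\mathcal E_{\mathrm{\cref{thm:eigenvectornormality_v1}}}$ with probability $\ge1-O(p^{-9})$.

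\emph{Main obstacle.} The essential difficulty is the data-dependence of the \texttt{HOOI} iterates: $\Z_k$ is correlated with every projection $\mathbf Q=\uhat_{k+2}\otimes\uhat_{k+1}$, and naive bounds such as $\|\Z_k\mathbf Q\|\le\|\Z_k\|\lesssim\sigma\sqrt{p_{-k}}$ are far too lossy when $\lambda/\sigma\asymp p^{3/4}$. Handling this requires a carefully maintained leave-one-out construction — a separate surrogate sequence for each row and each iteration — and an induction showing these surrogates track the true iterates in both spectral and $\ell_{2,\infty}$ norm; propagating this through all $t\asymp\log(\cdot)$ sweeps while keeping the total failure probability at $O(p^{-9})$, and doing so for the row-wise (rather than merely Frobenius-norm) quantities needed for the $\ell_{2,\infty}$ expansion, is the most delicate part of the argument.
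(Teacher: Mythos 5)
Your proposal takes essentially the same route as the paper: both proceed by (i) reusing the diagonal-deletion initialization and the leave-one-out iterate sequences from \citet{agterberg_estimating_2022} to control $\sin\Theta$ and $\ell_{2,\infty}$ errors through the iterations, and then (iii) linearizing the final \texttt{HOOI} sweep — peeling off $\mathbf{Z}_k\mathbf{V}_k\mathbf{\Lambda}_k^{-1}$ and bounding the residual by separating the quadratic noise term, the $\mathcal{P}_{\U_k}$ projection piece, the replacement $\mathcal{\hat P}_k^{(t)}\to\mathcal{P}_{\U_{k+1}\otimes\U_{k+2}}$, and the $\mathbf{\Lambda}_k^2\U_k\t\uhat_k^{(t)}\approx\U_k\t\uhat_k^{(t)}(\mathbf{\hat\Lambda}_k^{(t)})^2$ commutation. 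The paper packages these four steps into Lemmas~\ref{lem:linearapprox}, \ref{uperplemma}, \ref{lem:empiricallinearreplacement}, \ref{lem:approximatecommute}; your (a)/(b)/(c) are the same decomposition in slightly different order.

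One bookkeeping slip in your Stage (iii): you attribute the contribution of (a), namely $e_m\t\mathbf{Z}_k(\mathcal{P}_{\mathbf Q}-\mathcal{P}_{\U_{k+1}\otimes\U_{k+2}})\mathbf{V}_k\mathbf{\Lambda}_k^{-1}$, to the $\tfrac{\sigma r}{\lambda\sqrt p}$ piece of $\mathbf{\Psi}^{(k)}$. In fact this term is larger: with the leave-one-out decoupling the paper obtains $\|(\mathcal{\tilde P}_k^{t,k-m}-\mathcal{P}_k)\mathbf{V}_k\|_{2,\infty}\lesssim \mu_0^2\tfrac{r}{p}\cdot\tfrac{\dl}{\lambda}$ and then $e_m\t\mathbf{Z}_k$ applied to it contributes a factor $p\sqrt{\log p}$, yielding $\tfrac{\sigma^2 r\sqrt{p}\log p}{\lambda^2}$ — it belongs to the first error term, not the second. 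The $\tfrac{\sigma r}{\lambda\sqrt p}$ piece comes solely from your step (b), the $\mathcal{P}_{\U_k}\mathbf{Z}_k\mathbf{V}_k\mathbf{\Lambda}_k^{-1}$ projection, where incoherence of $\U_k$ gives $\|\U_k\|_{2,\infty}\|\U_k\t\mathbf{Z}_k\mathbf{V}_k\|/\lambda\lesssim\sqrt{r/p}\cdot\sigma\sqrt{r}/\lambda$. This mis-allocation does not affect the final claim since both terms are present in $\|\mathbf{\Psi}^{(k)}\|_{2,\infty}$, but the naive bound $\|(\I-\mathcal{P}_{\mathbf Q})\mathbf{V}_k\|\lesssim\delta$ you invoke in (a) is in terms of spectral norm and would give a bound off by a factor of $\sqrt{p}$; the genuine argument needs the row-wise $\ell_{2,\infty}$ estimate on $(\mathcal{P}_{\mathbf Q}-\mathcal{P}_k)\mathbf{V}_k$ obtained from the leave-one-out projection comparisons, not just the $\sin\Theta$ bound $\delta$.
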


  \begin{center}
 \begin{table*} 
\begin{tabular}{|c|c|c|c|}
\hline
Work & Setting & SNR & Leading-Order Term \\
\hline 
\citet{chen_spectral_2021} & \multirow{2}{6em}{$p \times p$ Symmetric Matrix} & $\lambda/\sigma \gtrsim \sqrt{p}$ & $\mathbf{Z} \mathbf{U} \mathbf{\Lambda}\inv$ \\ &&& \\ &&& \\
\hline
\citet{agterberg_entrywise_2022} & \multirow{2}{6em}{$p_1 \times p_2$ rectangular matrix} & \multirow{2}{6em}{$\lambda/\sigma \gtrsim \sqrt{p_{\max}}$} & $\mathbf{Z} \mathbf{V} \mathbf{\Lambda}\inv$ \\ &&& \\ &&& \\
\hline
\citet{yan_inference_2021} & \multirow{2}{6em}{$p_1 \times p_2$ rectangular matrix} & \multirow{2}{8em}{$\lambda/\sigma \gtrsim (p_1 p_2)^{1/4}$} & $\mathbf{Z} \mathbf{V} \mathbf{\Lambda}\inv + \mathcal{P}_{\mathrm{Off-Diag}}( \mathbf{Z Z}\t ) \U \mathbf{\Lambda}^{-2}$ \\ &&& \\ &&& \\
\hline
\emph{This work} & \multirow{2}{6em}{$p_1 \times p_2 \times p_3$ Tensor with $p_k \asymp p$} & \multirow{2}{9em}{$\lambda/\sigma \gtrsim (p_1 p_2 p_3)^{1/4}$ } & $\mathbf{Z} \mathbf{V} \mathbf{\Lambda}\inv$ \\ &&& \\ &&& \\\hline
\end{tabular}
\caption{Leading-order terms for estimated singular vectors and eigenvectors under various SNR regimes, ignoring logarithmic terms, factors of $\kappa,\mu_0$, and $r$.  \label{tbl:eigenvectorcomparison}}
\end{table*}
\end{center}
 
 \cref{thm:eigenvectornormality_v1} continues to hold without assuming $\sigma \lesssim \sigma_{\min}$. In essence, \cref{thm:eigenvectornormality_v1} showcases that one has a leading-order expansion for the rows of $\uhat_k$ (modulo an orthogonal transformation) under the nearly optimal SNR condition $\lambda/\sigma \gtrsim p^{3/4} \sqrt{\log(p)}$.  The key feature of \cref{thm:eigenvectornormality_v1} is that this leading-order expansion is \emph{linear} in the corresponding matricization of the noise tensor $\mathcal{Z}$. This result forms the foundation of our  analysis, including suggesting the form of the asymptotic distribution of the rows of $\uhat_k^{(t)}$ that we will see in subsequent results. %Note that \cref{thm:eigenvectornormality_v1} already allows the rank $r$ to grow with $p$.  
 
  \begin{remark}[Comparison to Previous Tensor Perturbation Bounds]
 It has been demonstrated in \citet{zhang_tensor_2018} that when $\lambda/\sigma \gtrsim p^{3/4}$, \texttt{HOOI} achieves the error rate
 \begin{align*}
     \| \uhat_k^{(t)}  - \U_k \mathbf{W}_k^{(t)} \|_F &\lesssim \frac{\sqrt{r_k p_k}}{\lambda/\sigma} =  O\bigg( \frac{\sqrt{p_k}}{\lambda/\sigma}\bigg)
 \end{align*}
 provided $r = O(1)$.   More recently, it was demonstrated in \citet{agterberg_estimating_2022} that when $\lambda/\sigma \gtrsim p^{3/4} \sqrt{\log(p)}$ that
 \begin{align*}
     \max_{1\leq m\leq p_k} \| \big(\uhat_k^{(t)}  - \U_k \mathbf{W}_k^{(t)} \big)_{m\cdot} \| &\lesssim \frac{\sqrt{r_k \log(p)}}{\lambda/\sigma} = \tilde O\bigg( \frac{1}{\lambda/\sigma} \bigg)
 \end{align*}
 assuming that $\kappa,\mu_0, r = O(1)$. In contrast, \cref{thm:eigenvectornormality_v1} demonstrates that
 \begin{align*}
\max_{1\leq m\leq p_k}\bigg\| \bigg(     \uhat_k^{(t)}( \mathbf{W}_k^{(t)})\t - \U_k - \mathbf{Z}_k \mathbf{V}_k \mathbf{\Lambda}_k\inv \bigg)_{m\cdot}\bigg\| &\lesssim \frac{\sigma^2 \log(p) r\sqrt{p}}{\lambda^2} + \frac{\sigma r }{\lambda \sqrt{p}}\\&= \tilde o \bigg( \frac{1}{\lambda/\sigma} \bigg).
 \end{align*}
 Consequently, \cref{thm:eigenvectornormality_v1} demonstrates precisely how well the additional first-order correction term $\mathbf{Z}_k \mathbf{V}_k \mathbf{\Lambda}_k\inv$ can be used to approximate the estimated singular vectors.  
 \end{remark}

 \begin{remark}[Comparison to Matrix Singular Vector Estimation] \label{remark:snr}
 Such a leading-order expansion has also been developed for symmetric matrix denoising in  \citet{chen_spectral_2021} when both the row and column dimensions are of comparable size.  One natural point of comparison is the corresponding error estimates for HOSVD (higher-order singular value decomposition), or similar procedures.  In the tensor setting with $p_k \asymp p$, each matricization is of order $p \times p^2$, and the column dimension can be much larger than the row dimension.  \textcolor{black}{Therefore, when comparing asymptotic results for a $p \times p \times p$ tensor to asymptotic results for a $p_1 \times p_2$ matrix, the most natural point of comparison is to take $p_1 \asymp p$ and $p_2 \asymp p^2$.  In the subsequent discussion we will focus on matrices of dimension $p_1 \times p_2$, but to translate the results to tensors, we will consider a generic $p \times p \times p$ tensor, with $p_1 \asymp p$ and $p_2 \asymp p^2$.  }
 
 In \citet{agterberg_entrywise_2022}, who study entrywise singular vector analyses of rectangular matrices \textcolor{black}{of dimension $p_1 \times p_2$, it was shown that when $\lambda/\sigma \gtrsim \sqrt{p_{\max} \log(p_{\max})}$ (which translates to the condition $\lambda/\sigma \gtrsim p \sqrt{\log(p)}$))}, that one has a leading-order expansion similar to the one presented in \cref{thm:eigenvectornormality_v1} (see their equation 3) for estimated singular vectors.  However, the SNR condition in \citet{agterberg_entrywise_2022} may be too stringent for tensors, particularly in the high-noise regime $\lambda/\sigma \asymp p^{3/4} \polylog(p)$. In \citet{yan_inference_2021}, it was shown that when $\lambda/\sigma \asymp  (p_1 p_2)^{3/4}  \polylog(p)$ \textcolor{black}{(which translates to the condition $\lambda/\sigma \gtrsim p^{3/4} \polylog(p)$)}, that one has the leading-order expansion
 \begin{align*}
     \uhat_k^S \mathbf{W}_k - \U_k &= \mathbf{Z}_k \mathbf{V}_k \mathbf{\Lambda}_k\inv + \Gamma( \mathbf{Z}_k \mathbf{Z}_k\t ) \U_k \mathbf{\Lambda}_k^{-2} + \mathbf{\tilde \Psi}^{(k)},
 \end{align*}
 where $\mathbf{\tilde \Psi}^{(k)}$ is a residual term, $\uhat_k^S$ are estimates of the left singular vectors of the underlying low-rank matrix obtained via the \texttt{HeteroPCA} algorithm, and $\mathbf{Z}_k$ and $\mathbf{\Lambda}_k$ are the noise matrix and singular value matrix respectively. Observe that the term containing  $\Gamma(\mathbf{Z}_k \mathbf{Z}_k\t)$ is quadratic in the noise $\mathbf{Z}_k$--this additional quadratic term is dominant in the high-noise regime.    
 
 Therefore, \cref{thm:eigenvectornormality_v1} demonstrates how \texttt{HOOI} uses the tensorial structure to effectively eliminate the additional (dominant) ``quadratic'' term in the regime $p \gtrsim \lambda/\sigma \gtrsim p^{3/4}$.
 These results \textcolor{black}{(modulo logarithmic terms)} are summarized in Table \ref{tbl:eigenvectorcomparison}.
 \end{remark}

 \begin{remark}[Adaptivity to Heteroskedasticity]
 Note that the primary condition in \cref{thm:eigenvectornormality_v1} is essentially a signal-strength condition;  in fact, this result continues to hold even if one does not have the condition $\sigma \lesssim \sigma_{\min}$ in Assumption \ref{ass:noise}.  It has been previously demonstrated in \citet{zhang_heteroskedastic_2022} that in the absence of additional structure, heteroskedasticity may require additional debiasing for matrix singular vector estimation.  In contrast, the leading-order expansion in \cref{thm:eigenvectornormality_v1} continues to hold \emph{even with heteroskedasticity} (provided one initializes with diagonal deletion), which further demonstrates that tensor SVD is \emph{adaptive} to unknown variance profiles.  This phenomenon has also been discussed in a perturbative sense in \citet{agterberg_estimating_2022}.  
 \end{remark}

 Next, while the expansion in \cref{thm:eigenvectornormality_v1} demonstrates the leading-order approximation of the estimated singular vectors, it falls just short of establishing the asymptotic normality of the rows.  The following result shows that the rows of $\uhat_k^{(t)}$ are Gaussian about $\U_k$ modulo an orthogonal transformation.  \cref{thm:eigenvectornormality2} gives a result where $\kappa,\mu_0$ are allowed to grow.

 \begin{theorem}[Distributional Theory for Tensor Singular Vectors] \label{thm:eigenvectornormality2_v1}
 Instate the conditions of \cref{thm:eigenvectornormality_v1}.  
 Let $\Sigma^{(m)}_k$ denote the diagonal matrix of dimension $p_{-k} \times p_{-k}$, where the diagonal entries consist of the variances of $\mathcal{Z}_{mbc}$ if $k = 1$, $\mathcal{Z}_{amc}$ if $k = 2$, and $\mathcal{Z}_{abm}$ if $k =3$.  Define
 \begin{align*}
     \mathbf{\Gamma}^{(m)}_k &\coloneqq \mathbf{\Lambda}_k\inv \mathbf{V}_k\t \Sigma^{(m)}_k \mathbf{V}_k \mathbf{\Lambda}_k\inv.
 \end{align*}
  Let $\mathcal{A}$ denote the collection of convex sets in $\mathbb{R}^{r_k}$, and let $Z$ be an $r_k$-dimensional Gaussian random variable with the identity covariance matrix.  
   Then it holds that
 \begin{align*}
     \sup_{A \in \mathcal{A}} \bigg| &\mathbb{P}\bigg\{ (\mathbf{\Gamma}^{(m)}_k)^{-1/2} \bigg( \uhat_k^{(t)}(\mathbf{W}_k^{(t)})\t - \U_k \bigg)_{m\cdot} \in A \bigg\} - \mathbb{P}\{ Z \in A \} \bigg| \\
     &\lesssim  \frac{\sigma \log(p) r^{3/2} \sqrt{p}}{\lambda} + \frac{r^{3/2}}{\sqrt{p}}.
 \end{align*}
 \end{theorem}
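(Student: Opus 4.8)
The plan is to obtain \cref{thm:eigenvectornormality2_v1} as a post-processing of the first-order expansion in \cref{thm:eigenvectornormality_v1}: feed its linear term into a multivariate Berry--Esseen bound over convex sets, and absorb the residual term via a standard set-enlargement (smoothing) argument. Fix $k$ and a row index $m$, write $\mathbf{Z}_k=\mathcal{M}_k(\mathcal{Z})$, and recall that by \cref{thm:eigenvectornormality_v1}, on an event $\mathcal{E}$ with $\p(\mathcal{E}^c)\lesssim p^{-9}$,
\[
\big(\uhat_k^{(t)}(\mathbf{W}_k^{(t)})\t-\U_k\big)_{m\cdot}=\mathbf{\Lambda}_k\inv\mathbf{V}_k\t(\mathbf{Z}_k)_{m\cdot}+(\mathbf{\Psi}^{(k)})_{m\cdot},
\]
with $\|(\mathbf{\Psi}^{(k)})_{m\cdot}\|\le\|\mathbf{\Psi}^{(k)}\|_{2,\infty}\lesssim \sigma^2\log(p)r\sqrt p/\lambda^2+\sigma r/(\lambda\sqrt p)$. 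Left-multiplying by $(\mathbf{\Gamma}^{(m)}_k)^{-1/2}$ makes the linear part $S:=\sum_{i=1}^{p_{-k}}X_i$, where $X_i:=(\mathbf{Z}_k)_{mi}\,(\mathbf{\Gamma}^{(m)}_k)^{-1/2}\mathbf{\Lambda}_k\inv\mathbf{V}_k\t e_i$ are independent, mean zero, and, since $\var\big((\mathbf{Z}_k)_{mi}\big)$ equals the $i$-th diagonal entry of $\Sigma^{(m)}_k$, satisfy $\sum_i\Cov(X_i)=(\mathbf{\Gamma}^{(m)}_k)^{-1/2}\mathbf{\Lambda}_k\inv\mathbf{V}_k\t\Sigma^{(m)}_k\mathbf{V}_k\mathbf{\Lambda}_k\inv(\mathbf{\Gamma}^{(m)}_k)^{-1/2}=\mathbf{I}_{r_k}$ by the definition of $\mathbf{\Gamma}^{(m)}_k$. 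The residual is $R:=(\mathbf{\Gamma}^{(m)}_k)^{-1/2}(\mathbf{\Psi}^{(k)})_{m\cdot}$, so that the normalized quantity in the statement equals $S+R$ on $\mathcal{E}$.

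I would next record a few deterministic bounds. From $\Sigma^{(m)}_k\succeq\sigma_{\min}^2\mathbf{I}$ we get $\mathbf{\Gamma}^{(m)}_k\succeq\sigma_{\min}^2\mathbf{\Lambda}_k^{-2}$, hence $\|(\mathbf{\Gamma}^{(m)}_k)^{-1/2}\|\le\|\mathbf{\Lambda}_k\|/\sigma_{\min}\asymp\lambda/\sigma$ (using $\|\mathbf{\Lambda}_k\|\asymp\lambda$, a consequence of $\kappa,r_k/r=O(1)$, together with $\sigma\asymp\sigma_{\min}$), and, via $(\mathbf{\Gamma}^{(m)}_k)^{-1}\preceq\sigma_{\min}^{-2}\mathbf{\Lambda}_k^{2}$, also $\tr\big((\mathbf{\Gamma}^{(m)}_k)^{-1}\mathbf{\Lambda}_k^{-2}\big)\le r_k/\sigma_{\min}^2$. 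Since $\mathcal{T}=\mathcal{C}\times_1\U_1\times_2\U_2\times_3\U_3$, the matrix $\mathbf{V}_k$ lies in the column space of $\U_{k+1}\otimes\U_{k+2}$, so $\|\mathbf{V}_k\|_{2,\infty}\le\|\U_{k+1}\|_{2,\infty}\|\U_{k+2}\|_{2,\infty}\lesssim\mu_0^2 r/p\lesssim r/p$. Therefore, on $\mathcal{E}$, $\|R\|\le\|(\mathbf{\Gamma}^{(m)}_k)^{-1/2}\|\,\|\mathbf{\Psi}^{(k)}\|_{2,\infty}\lesssim\delta$ with $\delta:=\sigma\log(p)r\sqrt p/\lambda+r/\sqrt p$; and, with $c_i:=\|(\mathbf{\Gamma}^{(m)}_k)^{-1/2}\mathbf{\Lambda}_k\inv\mathbf{V}_k\t e_i\|\le\kappa\|\mathbf{V}_k\|_{2,\infty}/\sigma_{\min}\lesssim r/(\sigma p)$, subgaussianity gives $\E|(\mathbf{Z}_k)_{mi}|^3\lesssim\sigma^3$ while $\sum_i c_i^2=\tr\big((\mathbf{\Gamma}^{(m)}_k)^{-1}\mathbf{\Lambda}_k^{-2}\big)\lesssim r/\sigma^2$, so
\[
\sum_{i=1}^{p_{-k}}\E\|X_i\|^3\lesssim\sigma^3\sum_i c_i^3\le\sigma^3\big(\max_i c_i\big)\sum_i c_i^2\lesssim\sigma^3\cdot\frac{r}{\sigma p}\cdot\frac{r}{\sigma^2}=\frac{r^2}{p}.
\]

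Then I would apply a multivariate Berry--Esseen theorem of Bentkus type over the class $\mathcal{A}$ of convex sets to $S$ (a sum of independent mean-zero vectors with total covariance $\mathbf{I}_{r_k}$): $\sup_{A\in\mathcal{A}}|\p(S\in A)-\p(Z\in A)|\lesssim r_k^{1/4}\sum_i\E\|X_i\|^3\lesssim r^{9/4}/p$. Finally I would glue in the residual. For convex $A$, on $\mathcal{E}$ one has the inclusions $\{S+R\in A\}\subseteq\{S\in A^{\delta}\}$ and $\{S+R\in A\}\supseteq\{S\in A^{-\delta}\}$, where $A^{\delta}$ is the (convex) $\delta$-neighborhood of $A$ and $A^{-\delta}$ the (convex) set of points whose closed $\delta$-ball lies in $A$; combining these with $\p(\mathcal{E}^c)$, the Berry--Esseen bound applied to the convex sets $A^{\pm\delta}$, and the Gaussian anti-concentration estimate $\p\{Z\in A^{\delta}\setminus A^{-\delta}\}\lesssim r^{1/4}\delta$ yields
\[
\sup_{A\in\mathcal{A}}\Big|\p\big\{(\mathbf{\Gamma}^{(m)}_k)^{-1/2}\big(\uhat_k^{(t)}(\mathbf{W}_k^{(t)})\t-\U_k\big)_{m\cdot}\in A\big\}-\p\{Z\in A\}\Big|\lesssim\frac{r^{9/4}}{p}+r^{1/4}\delta+p^{-9}.
\]
Substituting $\delta$ and using $r\lesssim p^{1/2}$ to discard the lower-order pieces leaves the claimed $\sigma\log(p)r^{3/2}\sqrt p/\lambda+r^{3/2}/\sqrt p$.

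The conceptually hard step --- the linearization with a negligible $\ell_{2,\infty}$ remainder --- is already provided by \cref{thm:eigenvectornormality_v1}, so what remains is bookkeeping, with two points needing care. First, one must verify that $\mathbf{\Gamma}^{(m)}_k$ is well-conditioned, i.e.\ $\lambda_{\min}(\mathbf{\Gamma}^{(m)}_k)\gtrsim\sigma^2/\lambda^2$; this is precisely where the lower heteroskedasticity bound $\sigma_{ijk}\ge\sigma_{\min}$ of \cref{ass:noise} enters, since otherwise $(\mathbf{\Gamma}^{(m)}_k)^{-1/2}$ could blow up and the normalization in the statement would be ill-posed. Second, obtaining the advertised powers of $r$ and $p$ in $\sum_i\E\|X_i\|^3$ forces one to use the incoherence-driven $\ell_{2,\infty}$ bound on $\mathbf{V}_k$ --- through its factorization $\mathbf{V}_k=(\U_{k+1}\otimes\U_{k+2})\mathbf{Q}$ --- together with the refinement $\sum_i c_i^3\le(\max_i c_i)\sum_i c_i^2$, since a crude uniform bound over all $p_{-k}\asymp p^2$ summands would be too lossy to reach the target rate.
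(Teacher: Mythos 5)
Your proof is correct and reaches the claimed rate, but it takes a slightly different technical route from the paper. The paper proves the general result (\cref{thm:eigenvectornormality2}) by applying Corollary~2.2 of \citet{shao_berryesseen_2022} directly to the decomposition $T = W + \Delta$, where $W = \mathbf{Z}_k\mathbf{V}_k\mathbf{\Lambda}_k\inv(\mathbf{\Gamma}^{(m)}_k)^{-1/2}$ is the linear part and $\Delta$ absorbs the $\ell_{2,\infty}$ remainder from \cref{thm:eigenvectornormality_v1}; that corollary is tailored to ``nonlinear statistics'' and gives in one step a bound of the form $r^{1/2}\sum_l\E\|\xi_l\|^3 + \Delta\,\E\|W\|$, so the paper never needs an explicit set-enlargement argument in this proof. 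You instead apply a classical Bentkus/Rai\v{c}-type Berry--Esseen bound over convex sets to the pure linear sum $S$, and then handle the residual $R$ by the deterministic inclusions $\{S\in A^{-\delta}\}\subseteq\{S+R\in A\}\subseteq\{S\in A^{\delta}\}$ combined with the Gaussian surface-area (anti-concentration) bound $\p\{Z\in A^{\delta}\setminus A^{-\delta}\}\lesssim r^{1/4}\delta$. Interestingly, the paper does use exactly this enlargement-plus-Rai\v{c} device later, in the proof of \cref{thm:civalidity2}, so your argument effectively fuses two of the paper's own techniques rather than invoking the Shao--Zhang result. What your route buys is a more elementary and modular proof (each ingredient is a textbook-level lemma), and incidentally a slightly sharper dimensional factor ($r^{1/4}$ in two places where the paper has $r^{1/2}$); what the paper's route buys is brevity, since the single Shao--Zhang corollary packages both ingredients.

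A few small bookkeeping remarks, none of which affect correctness. Your bound $\|\mathbf{V}_k\|_{2,\infty}\lesssim \mu_0^2 r/p$ via the factorization $\mathbf{V}_k=(\U_{k+1}\otimes\U_{k+2})\mathbf{Q}$ with orthonormal $\mathbf{Q}$ is clean and rigorous; the paper at this point quotes $\|\mathbf{V}_k\|_{2,\infty}\lesssim\mu_0\sqrt{r}/p$ (which would need an incoherence hypothesis on $\mathbf{V}_k$ itself rather than the $\mu_0^2 r/p$ bound that follows from the definition in \cref{sec:notation}), so if anything your derivation of the $\ell_{2,\infty}$ control is the more careful one; either estimate ends up dominated by $r^{3/2}/\sqrt{p}$ once the Berry--Esseen prefactor and the $r\lesssim\sqrt{p}$ hypothesis are taken into account. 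Your trace identity $\sum_i c_i^2=\tr((\mathbf{\Gamma}^{(m)}_k)^{-1}\mathbf{\Lambda}_k^{-2})\lesssim r/\sigma^2$ and the step $\sum_i c_i^3\le(\max_i c_i)\sum_i c_i^2$ are both correct and essential, as you note, to avoid the losses from crudely bounding all $p_{-k}\asymp p^2$ summands uniformly. Finally, your observation that the lower bound $\sigma_{ijk}\ge\sigma_{\min}$ of \cref{ass:noise} is what makes $(\mathbf{\Gamma}^{(m)}_k)^{-1/2}$ well-defined and well-conditioned is exactly the role that hypothesis plays in the paper's proof as well.
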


%  With a little more analysis, we can obtain the following result demonstrating the asymptotic normality of the rows of the estimated loadings.

 % Observe that if $\kappa$ and $\mu_0$ are bounded, a sufficient condition for normality is that $r = o(p^{1/6}/\log(p))$, which still allows $r$ to grow polynomially in $p$. 
  Observe that \cref{thm:eigenvectornormality2_v1} allows the rank $r$ to grow slowly.  It is sufficient to have
 \begin{align*}
     r^{3/2} \ll \min\bigg\{ \frac{\lambda/\sigma}{\sqrt{p} \log(p)}, \sqrt{p}\bigg\}
 \end{align*}
  for asymptotic normality. In particular,  $r = o(p^{1/6})$ suffices.  
  
  We show in the proof of \cref{thm:eigenvectornormality2_v1} that the covariance matrix $\mathbf{\Gamma}^{(m)}_k$ is invertible with minimum eigenvalue lower bounded by $\sigma_{\min}^2/\lambda^2$; moreover, in the particular case that $\sigma_{abc} \equiv \sigma$, we note that $\mathbf{\Gamma}^{(m)}_k$ simplifies to $\sigma^2 \mathbf{\Lambda}_k^{-2}$. In \cref{thm:efficiencyloadings,thm:efficiency_order} we show that this covariance matrix is both optimal over all unbiased estimators and yields the order-wise optimal expected length over all valid confidence intervals for Gaussian noise.
  
   \begin{remark}[Relationship to Matrix Singular Vector Estimation]
   The  matrix $\mathbf{\Gamma}^{(m)}_k$ is the same limiting covariance matrix as in Corollary 2 of \citet{agterberg_entrywise_2022} (in the particular case the noise matrix therein has independent entries).  However, a key feature is that \cref{thm:eigenvectornormality2_v1} holds when $\lambda/\sigma \asymp p^{3/4} \sqrt{\log(p)}$  (modulo factors of $\kappa, \mu_0$, and $r$), whereas the results of \citet{agterberg_entrywise_2022} (when translated to the tensor setting) only hold when $\lambda/\sigma \asymp p \sqrt{\log(p)}$, which shows how the additional tensor structure affects the limiting properties of the estimated singular vector components. 
  \end{remark}

 %%%%%%%%%%%%%%%%%%%%%%%%%%%%%%%%%%%%%%%%%%%
  \subsection{Confidence Regions and Statistical Inference}
  \label{sec:loadingsinference}
  %%%%%%%%%%%%%%%%%%%%%%%%%%%%%%%%%%%%%%%%%%%
  
  Next, we consider uncertainty quantification for the estimated singular vectors $\uhat_k = \uhat_k^{(t)}$. By \cref{thm:eigenvectornormality2_v1} we can identify the limiting covariance matrix $\mathbf{\Gamma}_k^{(m)}$, in \cref{al:ci_eigenvector} we describe a plug-in approach to estimating this matrix and producing confidence regions.  The following result demonstrates the theoretical  validity of this procedure.  A more general statement can be found in \cref{thm:civalidity2} in \cref{sec:generaltheorems}.  

  \begin{algorithm}[t]
	\caption{Confidence Regions for $(\U_k)_{m\cdot}$}
	\begin{algorithmic}[1]
		\State Input:  Singular vector estimate $\uhat_k$ and tensor estimate $\mathcal{\hat T}$ from \cref{al:tensor-power-iteration}, coverage level $1 - \alpha$  
		
			\State Let $\mathbf{\hat V}_k$ and $\mathbf{\hat \Lambda}_k$ denote the $r_k$ right singular vectors and singular values of the matrix
		\begin{align*}
		    \mathcal{M}_k\big(  \mathcal{\tilde T} \big) \bigg( \big(\uhat_{k+1} \uhat_{k+1}\t \big) \otimes \big(\uhat_{k+2} \uhat_{k+2}\t \big) \bigg).
		\end{align*}
	\State Define $\mathcal{\hat Z} = \mathcal{\tilde T} - \mathcal{\hat T}$, and set $\hat \Sigma^{(m)}_k$ as the diagonal matrix of entries defined via:
		\begin{align*}
		    \bigg(\hat \Sigma^{(m)}_1 \bigg)_{(a-1)p_3 + b} = 		    \mathcal{\hat Z}^2_{mab};\qquad
		     \bigg(\hat \Sigma^{(m)}_2 \bigg)_{(b-1)p_1 + a} &= 		    \mathcal{\hat Z}^2_{amb}; \qquad 
		        \bigg(\hat \Sigma^{(m)}_3 \bigg)_{(a-1)p_2 + b} = 		    \mathcal{\hat Z}^2_{amb}.
		\end{align*}
		\State Define
		\begin{align*}
		    \mathbf{\hat \Gamma}_k^{(m)} \coloneqq \mathbf{\hat \Lambda}_k\inv \mathbf{\hat V}_k\t \hat \Sigma^{(m)}_k \mathbf{\hat V}_k \mathbf{\hat \Lambda}_k\inv;
		\end{align*}
		\State Compute the $1 - \alpha$ quantile $\tau_{\alpha}$ of $\chi^2_{r_k}$ random variable, and construct the ball $\mathcal{B}_{1-\alpha} \coloneqq \{z: \|z\|^2 \leq \tau_{\alpha}\}$
		\State Output the confidence region
		\begin{align*}
		   \mathrm{C.R.}_{k,m}^{\alpha}(\uhat_k) &\coloneqq \uhat_k + \big(  \mathbf{\hat \Gamma}_k^{(m)}  \big)^{1/2} \mathcal{B}_{1-\alpha} = \{ \uhat_k + \big(  \mathbf{\hat \Gamma}_k^{(m)}  \big)^{1/2} z: z \in  \mathcal{B}_{1-\alpha} \}
		\end{align*}
		\end{algorithmic}\label{al:ci_eigenvector}
\end{algorithm}
  
 \begin{theorem}[Validity of Confidence Intervals for the Loadings]\label{thm:civalidity2_v1}  Instate the conditions of \cref{thm:eigenvectornormality_v1}.  Suppose also that 
 \begin{align}
     r^{3/2}\sqrt{\log(p)} \lesssim p^{1/4}, \label{technicalcondition}
 \end{align}
 In addition, assume that %$ \frac{r^2}{p} = o(1)$ and that
 \begin{align*}
     \lambda/\sigma \gg \log^2(p) r^2 \sqrt{p}.
 \end{align*}
Let $\mathrm{C.R.}_{k,m}^{\alpha}(\uhat_k)$ denote the output of \cref{al:ci_eigenvector}.  Then it holds that
 \begin{align*}
   \p\bigg\{ \bigg( \U_k \mathbf{W}_k^{(t)}\bigg)_{m\cdot}  \in  \mathrm{C.R.}_{k,m}^{\alpha}(\uhat_k) \bigg\} = 1- \alpha  - o(1).
 \end{align*}
   \end{theorem}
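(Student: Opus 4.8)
The plan is to combine the Gaussian approximation of \cref{thm:eigenvectornormality2_v1} with a consistency analysis of the plug-in covariance matrix $\mathbf{\hat\Gamma}_k^{(m)}$ from \cref{al:ci_eigenvector}, reducing the coverage statement to the concentration of a $\chi^2_{r_k}$ quadratic form. First I would set $w \coloneqq \big(\uhat_k^{(t)}(\mathbf{W}_k^{(t)})\t - \U_k\big)_{m\cdot}$, which by \cref{thm:eigenvectornormality_v1} equals $\big(\mathbf{Z}_k\mathbf{V}_k\mathbf{\Lambda}_k\inv + \mathbf{\Psi}^{(k)}\big)_{m\cdot}$. Since $\mathbf{W}_k^{(t)}$ is orthogonal, $(\uhat_k)_{m\cdot} - (\U_k\mathbf{W}_k^{(t)})_{m\cdot} = (\mathbf{W}_k^{(t)})\t w$, so $(\U_k\mathbf{W}_k^{(t)})_{m\cdot} \in \mathrm{C.R.}_{k,m}^{\alpha}(\uhat_k)$ exactly when $w\t\big(\mathbf{W}_k^{(t)}\mathbf{\hat\Gamma}_k^{(m)}(\mathbf{W}_k^{(t)})\t\big)^{-1} w \le \tau_\alpha$. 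By \cref{thm:eigenvectornormality2_v1}, $(\mathbf{\Gamma}_k^{(m)})^{-1/2}w$ is $o(1)$-close, uniformly over convex sets (hence over Euclidean balls, which are the relevant sets here), to a standard $r_k$-Gaussian; thus the remaining task is to show that on an event $\mathcal{E}$ with $\p(\mathcal{E}) = 1-o(1)$ one has $\mathbf{W}_k^{(t)}\mathbf{\hat\Gamma}_k^{(m)}(\mathbf{W}_k^{(t)})\t = \mathbf{\Gamma}_k^{(m)} + \mathbf{R}$ with $\|\mathbf{R}\| = o(\sigma^2/\lambda^2)$, i.e.\ a relative-error $o(1)$ estimate. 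Here I would use the bounds $\sigma_{\min}^2/\lambda^2 \lesssim \lambda_{\min}(\mathbf{\Gamma}_k^{(m)}) \le \lambda_{\max}(\mathbf{\Gamma}_k^{(m)}) \lesssim \sigma^2/\lambda^2$ recorded in the proof of \cref{thm:eigenvectornormality2_v1}, which make relative and absolute perturbations interchangeable under $\sigma \lesssim \sigma_{\min}$.

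The heart of the argument is establishing consistency of $\mathbf{\hat\Gamma}_k^{(m)} = \mathbf{\hat\Lambda}_k\inv\mathbf{\hat V}_k\t\hat\Sigma_k^{(m)}\mathbf{\hat V}_k\mathbf{\hat\Lambda}_k\inv$, which I would do factor by factor. For $\mathbf{\hat V}_k,\mathbf{\hat\Lambda}_k$: the matrix in Line 2 of \cref{al:ci_eigenvector} is a perturbation of $\U_k\mathbf{\Lambda}_k\mathbf{V}_k\t\big((\U_{k+1}\U_{k+1}\t)\otimes(\U_{k+2}\U_{k+2}\t)\big)$, whose nonzero singular values are those of $\mathbf{\Lambda}_k$ and whose leading right singular space is that of $\mathbf{V}_k$, with perturbation of operator norm $\lesssim\sigma\sqrt{rp}$ (bounding $\|\mathcal{M}_k(\mathcal{Z})(\cdot)\|\lesssim\sigma\sqrt p$ and using the $\sin\Theta$ estimates $\|\sin\Theta(\uhat_{k\pm1},\U_{k\pm1})\|\lesssim\sqrt{rp}/(\lambda/\sigma)$ valid after enough \texttt{HOOI} iterations); Weyl and a Davis--Kahan--Wedin bound then give $\mathbf{\hat V}_k = \mathbf{V}_k\mathbf{O}_k + \mathbf{E}_k$ with $\|\mathbf{E}_k\|\lesssim\sigma\sqrt{rp}/\lambda$ and $\|\mathbf{\hat\Lambda}_k - \mathbf{O}_k\t\mathbf{\Lambda}_k\mathbf{O}_k\|\lesssim\sigma\sqrt{rp}$, both $o(1)$ relative to $1$, resp.\ $\lambda$, under $\lambda/\sigma\gg\log^2(p)r^2\sqrt p$, and $\mathbf{O}_k$ can be identified with $\mathbf{W}_k^{(t)}$ because the left singular vectors $\uhat_k$ and the right singular vectors $\mathbf{\hat V}_k$ of that same matrix are rotated toward $\U_k$ and $\mathbf{V}_k$ by a common orthogonal factor. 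For $\hat\Sigma_k^{(m)}$: writing $\mathbf{\Delta}\coloneqq\mathcal{\hat T}-\mathcal{T}$, one has $\mathcal{\hat Z}_{mab}^2 = \mathcal{Z}_{mab}^2 - 2\mathcal{Z}_{mab}\mathbf{\Delta}_{mab} + \mathbf{\Delta}_{mab}^2$, so $\mathbf{V}_k\t\hat\Sigma_k^{(m)}\mathbf{V}_k = \mathbf{V}_k\t\Sigma_k^{(m)}\mathbf{V}_k + \sum_{ab}(\mathbf{V}_k)_{(ab)\cdot}(\mathbf{V}_k)_{(ab)\cdot}\t(\mathcal{Z}_{mab}^2-\sigma_{mab}^2) + (\text{cross and quadratic remainders})$; matrix Bernstein for independent subexponential matrices, together with the incoherence bound $\|\mathbf{V}_k\|_{2,\infty}\lesssim\mu_0^2 r/p$ inherited from $\|\U_{k\pm1}\|_{2,\infty}$, shows the middle sum has operator norm $o(\sigma^2)$, while the remainders are $\lesssim r\,\mathrm{polylog}(p)\,\sigma\|\mathbf{\Delta}\|_{\max} + r\|\mathbf{\Delta}\|_{\max}^2 = o(\sigma^2)$ by the entrywise \texttt{HOOI} bound of \cref{cor:maxnormbound_v1} and $\lambda/\sigma\gg\log^2(p)r^2\sqrt p$. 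Finally I would assemble these: on $\mathcal{E}$, $\|\hat\Sigma_k^{(m)}\| = \max_{ab}\mathcal{\hat Z}_{mab}^2\lesssim\sigma^2\log p$ lets me absorb the cross-terms carrying $\mathbf{E}_k$, and substituting everything yields $\mathbf{W}_k^{(t)}\mathbf{\hat\Gamma}_k^{(m)}(\mathbf{W}_k^{(t)})\t = \mathbf{\Gamma}_k^{(m)} + \mathbf{R}$ with $\|\mathbf{R}\| = o(\sigma^2/\lambda^2)$.

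To conclude, on $\mathcal{E}$ all eigenvalues of $(\mathbf{\Gamma}_k^{(m)})^{-1/2}\mathbf{W}_k^{(t)}\mathbf{\hat\Gamma}_k^{(m)}(\mathbf{W}_k^{(t)})\t(\mathbf{\Gamma}_k^{(m)})^{-1/2}$ lie in $[1-\eps,1+\eps]$ for a deterministic $\eps = o(1)$, so $w\t\big(\mathbf{W}_k^{(t)}\mathbf{\hat\Gamma}_k^{(m)}(\mathbf{W}_k^{(t)})\t\big)^{-1}w$ is sandwiched between $(1-2\eps)\|(\mathbf{\Gamma}_k^{(m)})^{-1/2}w\|^2$ and $(1+2\eps)\|(\mathbf{\Gamma}_k^{(m)})^{-1/2}w\|^2$, and hence the coverage event is sandwiched between $\{\|(\mathbf{\Gamma}_k^{(m)})^{-1/2}w\|^2\le\tau_\alpha(1-2\eps)\}$ and $\{\|(\mathbf{\Gamma}_k^{(m)})^{-1/2}w\|^2\le\tau_\alpha(1+2\eps)\}$; applying \cref{thm:eigenvectornormality2_v1} to the balls of radii $\sqrt{\tau_\alpha(1\pm2\eps)}$, each probability is $F_{\chi^2_{r_k}}(\tau_\alpha(1\pm2\eps)) + o(1)$, and since $F_{\chi^2_{r_k}}$ has bounded density near $\tau_\alpha > 0$ and $\eps = o(1)$, both equal $F_{\chi^2_{r_k}}(\tau_\alpha) + o(1) = 1-\alpha+o(1)$, so that adding $\p(\mathcal{E}^c) = o(1)$ gives $\p\{(\U_k\mathbf{W}_k^{(t)})_{m\cdot}\in\mathrm{C.R.}_{k,m}^{\alpha}(\uhat_k)\} = 1-\alpha-o(1)$. \textbf{The main obstacle} will be the consistency of $\hat\Sigma_k^{(m)}$: since $\mathcal{\hat Z}$ is the residual, not the true noise, and is correlated with it, reconstructing the quadratic form $\mathbf{V}_k\t\Sigma_k^{(m)}\mathbf{V}_k$ from single squared residuals requires simultaneously the sharp entrywise \texttt{HOOI} bound of \cref{cor:maxnormbound_v1} (to kill the $\mathbf{\Delta}$-terms), a matrix-Bernstein/incoherence estimate exploiting the averaging over the $p_{-k}\asymp p^2$ indices, and control of the $\sin\Theta$ error of $\mathbf{\hat V}_k$ against $\|\hat\Sigma_k^{(m)}\|\asymp\sigma^2\log p$; the rotation bookkeeping identifying $\mathbf{O}_k$ with $\mathbf{W}_k^{(t)}$ is a secondary complication.
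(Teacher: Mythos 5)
Your proof takes essentially the same two-part structure as the paper's — establish consistency of the plug-in covariance $\mathbf{\hat\Gamma}_k^{(m)}$ (modulo the orthogonal rotation), then transfer the Gaussian approximation of \cref{thm:eigenvectornormality2_v1} to the coverage event — but the concluding step is genuinely different. The paper works with the \emph{square root} $(\mathbf{\hat\Gamma}_k^{(m)})^{1/2}$ and uses an $\varepsilon$-enlargement argument for convex sets (Theorem 1.2 of Raič), obtaining a penalty of order $r^{1/4}\varepsilon$ from anticoncentration of the isotropic Gaussian near the boundary of a convex set; you instead work with $\mathbf{\hat\Gamma}_k^{(m)}$ itself, sandwich the quadratic form between $(1\pm 2\eps)\|(\mathbf{\Gamma}_k^{(m)})^{-1/2}w\|^2$, and then compare $\chi^2_{r_k}$ CDF values at perturbed thresholds. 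Both routes are valid in principle, but the paper's enlargement argument is cleaner because it never requires spelling out the density of $\chi^2_{r_k}$.

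Two points need tightening. First, the statement ``$F_{\chi^2_{r_k}}$ has bounded density near $\tau_\alpha>0$ and $\eps=o(1)$ implies $F_{\chi^2_{r_k}}(\tau_\alpha(1\pm 2\eps))=F_{\chi^2_{r_k}}(\tau_\alpha)+o(1)$'' is not correct as written once $r_k$ is allowed to grow, which the theorem permits. The change in the argument is of order $\tau_\alpha\eps\asymp r_k\eps$, and the density of $\chi^2_{r_k}$ near its $(1-\alpha)$-quantile is of order $1/\sqrt{r_k}$, so what is actually needed is $\sqrt{r_k}\,\eps=o(1)$, not merely $\eps=o(1)$. This is a quantitative gap: you must carry the $\sqrt{r_k}$ factor through and verify it is absorbed by the stated SNR condition, analogously to how the paper incurs a $r^{1/4}\varepsilon$ (later $r^{1/2}\varepsilon$) penalty from the enlargement bound. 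Second, the identification of the right-singular-vector rotation $\mathbf{O}_k$ with $\mathbf{W}_k^{(t)}=\mathrm{sgn}(\U_k\t\uhat_k)$ ``because left and right singular vectors are rotated by a common orthogonal factor'' is not automatic and is exactly the hard bookkeeping. In the paper, the rotation for $\mathbf{\hat V}_k$ is $\mathbf{W}_{\mathbf{V}_k}=\mathrm{sgn}(\mathbf{V}_k\t\mathbf{\hat V}_k)$, which is \emph{not} equal to $\mathbf{W}_k^{(t)}$; what is shown (in \cref{lem:Vmatrixcloseness}) is an approximate commutation relation $\|\mathbf{W}_{\mathbf{V}_k}\t\mathbf{\Lambda}_k\inv-\mathbf{\hat\Lambda}_k\inv\mathbf{\hat W}_k\t\|\lesssim\frac{1}{\lambda}\cdot\frac{\sigma\sqrt{pr}+\kappa^2\sigma\sqrt{p\log p}}{\lambda}$, and \cref{lem:gammacloseness} then threads this through the three factors of $\mathbf{\hat\Gamma}_k^{(m)}$ to land on the conjugated estimate $\mathbf{\hat W}_k\mathbf{\hat\Gamma}_k^{(m)}\mathbf{\hat W}_k\t$. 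Your consistency argument needs to reproduce that bookkeeping rather than assert a common rotation. The rest of your outline — including the decomposition $\mathcal{\hat Z}_{mab}^2=\mathcal{Z}_{mab}^2-2\mathcal{Z}_{mab}\mathbf{\Delta}_{mab}+\mathbf{\Delta}_{mab}^2$, the use of \cref{cor:maxnormbound_v1} to control $\|\mathbf{\Delta}\|_{\max}$, and the matrix-Bernstein plus incoherence estimate for $\sum_{ab}(\mathbf{V}_k)_{(ab)\cdot}(\mathbf{V}_k)_{(ab)\cdot}\t(\mathcal{Z}_{mab}^2-\sigma_{mab}^2)$ — matches the paper's \cref{lem:gammacloseness} and is sound.
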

 
 We note that the SNR condition $\lambda/\sigma \gg  r^2 \log(p) \sqrt{p}$ is automatically satisfied when  $r = o(p^{1/8}/\log^{3/2}(p))$.  In particular, the condition holds whenever  $r$ is fixed.  
 
 To the best of our knowledge, \cref{thm:civalidity2_v1} is the first to establish entrywise confidence region validity for estimated tensor singular vectors in the presence of heteroskedastic subgaussian noise.  Moreover, these results are entirely data-driven and do not require any sample splitting.

  %%%%%%%%%%%%%%%%%%%%%%%%%%%%%%%%%%
\subsection{Lower Bounds}
\label{sec:loadingslowerbounds}
%%%%%%%%%%%%%%%%%%%%%%%%%%%%%%%%%
   \cref{thm:civalidity2_v1} shows that the confidence regions from \cref{al:ci_eigenvector}, are, up to nonidentifiable orthogonal transformation, asymptotically valid.    In order to investigate the optimality of this result, we establish the following  lower bound showing that these regions are essentially statistically efficient.
 
 \begin{theorem}[Efficiency of Loadings] \label{thm:efficiencyloadings}
 Suppose that $\mathcal{Z}$ consists of independent Gaussians with variance lower bounded by $\sigma_{\min}^2$, and suppose that $\kappa^2 \mu_0 \sqrt{\frac{r}{p}} \ll 1$.  Then if $( \mathbf{\tilde \U}_k)_{m\cdot}$ is any unbiased estimator for $(\U_k)_{m\cdot}$, it holds that
 \begin{align*}
     \mathrm{Var}\big( (\mathbf{\tilde \U}_k)_{m\cdot} \big) \succcurlyeq \sigma_{\min}^2 (1- o(1)) \mathbf{\Lambda}_k^{-2},
 \end{align*}
 where $A \succcurlyeq B$ refers to the positive semidefinite ordering.
 \end{theorem}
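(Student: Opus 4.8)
I would prove this as a multivariate Cramér--Rao (information) lower bound, taken along a carefully chosen $r_k$-dimensional parametric submodel of the rank-$\br$ Tucker model that passes through the true tensor and along which the quantity being estimated is \emph{literally} the free parameter. Fix $k$ (the three modes are symmetric, so take $k=1$), abbreviate $\u^* := (\U_k)_{m\cdot}$, and set $\mathbf{B}_k := \mathcal{M}_k(\mathcal{C}\times_{k+1}\U_{k+1}\times_{k+2}\U_{k+2})$, so $\mathcal{M}_k(\mathcal{T})=\U_k\mathbf{B}_k$ and, since $\U_k$ has orthonormal columns, $\mathbf{B}_k=\U_k^\top\mathcal{M}_k(\mathcal{T})=\mathbf{\Lambda}_k\mathbf{V}_k^\top$. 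The key structural point I will exploit is that perturbing only the $m$-th row of $\U_k$ does not touch $\mathbf{B}_k$, and that the orthonormality constraint ties that row to the others only weakly because $\|\u^*\|$ is small.

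\textbf{Step 1: the submodel.} For $\u$ in a small ball around $\u^*$ in $\mathbb{R}^{r_k}$, let $\U_k(\u)\in\mathbb{R}^{p_k\times r_k}$ be obtained from $\U_k$ by replacing its $m$-th row with $\u^\top$ and right-multiplying the other $p_k-1$ rows by $(\I_{r_k}-\u^*(\u^*)^\top)^{-1/2}(\I_{r_k}-\u\u^\top)^{1/2}$; a direct check gives $\U_k(\u)^\top\U_k(\u)=\I_{r_k}$, $(\U_k(\u))_{m\cdot}=\u$, $\U_k(\u^*)=\U_k$, and real-analyticity near $\u^*$, which is where $\|\u^*\|\le\mu_0\sqrt{r_k/p_k}\ll1$ (incoherence) is used. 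Set $\mathcal{T}(\u):=\mathcal{C}\times_k\U_k(\u)\times_{k+1}\U_{k+1}\times_{k+2}\U_{k+2}$. Since $\mathbf{B}_k$ is unchanged, $\mathcal{M}_k(\mathcal{T}(\u))=\U_k(\u)\mathbf{\Lambda}_k\mathbf{V}_k^\top$ is already in singular value decomposition form, so $\SVD_{r_k}(\mathcal{M}_k(\mathcal{T}(\u)))=\U_k(\u)$ and the ``true loading row'' equals $\u$ along the entire submodel; hence any $\tilde{\U}_k$ unbiased for $(\U_k)_{m\cdot}$ is unbiased for $\u$ here. The Cramér--Rao inequality for the Gaussian location family $\{\mathcal{N}(\mathrm{Vec}(\mathcal{T}(\u)),\mathrm{diag}(\sigma_{abc}^2))\}$ then yields $\mathrm{Var}((\tilde{\U}_k)_{m\cdot})\succcurlyeq I(\u^*)^{-1}$, with $I(\u^*)=\sum_{abc}\sigma_{abc}^{-2}\,\nabla_\u\mathcal{T}(\u)_{abc}\,\nabla_\u\mathcal{T}(\u)_{abc}^\top\big|_{\u^*}$; the usual regularity conditions are automatic because the mean map is analytic and the covariance is fixed and nonsingular.

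\textbf{Step 2: bounding the information, and conclusion.} I split the sum over the slice $a=m$ and the rest. For $a=m$ one has $\nabla_\u\mathcal{T}(\u)_{mbc}=(\mathbf{B}_k)_{\cdot,(bc)}$ exactly, so this part equals $\mathbf{B}_k(\Sigma_k^{(m)})^{-1}\mathbf{B}_k^\top=\mathbf{\Lambda}_k\mathbf{V}_k^\top(\Sigma_k^{(m)})^{-1}\mathbf{V}_k\mathbf{\Lambda}_k\preccurlyeq\sigma_{\min}^{-2}\mathbf{\Lambda}_k^2$, using $\Sigma_k^{(m)}\succcurlyeq\sigma_{\min}^2\I$ and $\mathbf{V}_k^\top\mathbf{V}_k=\I$. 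For $a\neq m$, $\nabla_\u\mathcal{T}(\u)_{abc}=J_a^\top(\mathbf{B}_k)_{\cdot,(bc)}$ with $J_a$ the Jacobian of $\u\mapsto(\U_k(\u))_{a\cdot}$; differentiating the matrix square root gives $\|J_a\|\lesssim\|\u^*\|\|(\U_k)_{a\cdot}\|$, and the cancellation $\sum_{a\neq m}\langle\u^*,(\U_k)_{a\cdot}\rangle(\U_k)_{a\cdot}=(1-\|\u^*\|^2)\u^*$ (from $\sum_a(\U_k)_{a\cdot}(\U_k)_{a\cdot}^\top=\I$) together with $\|\mathbf{\Lambda}_k\|^2\I\preccurlyeq\kappa^2\mathbf{\Lambda}_k^2$ shows that all slices $a\neq m$ contribute $\preccurlyeq C\kappa^2\mu_0^2(r/p)\,\sigma_{\min}^{-2}\mathbf{\Lambda}_k^2$. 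Hence $I(\u^*)\preccurlyeq(1+C\kappa^2\mu_0^2 r/p)\sigma_{\min}^{-2}\mathbf{\Lambda}_k^2=(1+o(1))\sigma_{\min}^{-2}\mathbf{\Lambda}_k^2$ under $\kappa^2\mu_0\sqrt{r/p}\ll1$; inverting gives $\mathrm{Var}((\tilde{\U}_k)_{m\cdot})\succcurlyeq I(\u^*)^{-1}\succcurlyeq(1-o(1))\sigma_{\min}^2\mathbf{\Lambda}_k^{-2}$, as claimed.

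\textbf{Main obstacle.} The only genuinely delicate part is the off-slice bookkeeping in Step 2. In the full Tucker model one cannot vary $(\U_k)_{m\cdot}$ while holding everything else fixed (this leaves the Stiefel manifold, so the SVD convention no longer returns $\u$); the orthonormality constraint therefore forces the remaining rows of $\U_k(\u)$ to move with $\u$, injecting a spurious term $E$ into the Fisher information. Showing that $E$ is lower order relative to the diagonal-$\mathbf{\Lambda}_k^2$ main term is exactly what the hypothesis $\kappa^2\mu_0\sqrt{r/p}\ll1$ buys, via the smallness of $\|\u^*\|$ and the outer-product identity above. Everything else---the explicit form of the slice-$m$ information, the positive-semidefinite comparison with $\mathbf{\Lambda}_k^2$, and verifying Cramér--Rao regularity---is routine.
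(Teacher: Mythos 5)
Your proof is correct, and it takes a genuinely different route from the paper's. The paper computes the \emph{full} Fisher information in the unconstrained Euclidean parametrization $(\U_1,\U_2,\U_3)\in\mathbb{R}^{p_1 r_1}\times\mathbb{R}^{p_2 r_2}\times\mathbb{R}^{p_3 r_3}$ (with $\mathcal{C}$ known), writes $\sigma^2\mathcal{I}=\mathcal{\tilde I}+\mathcal{H}$ with $\mathcal{\tilde I}$ block-diagonal with blocks $\mathbf{\Lambda}_k^2$ and $\mathcal{H}$ the cross-mode coupling, shows $\|\mathcal{\tilde I}^{-1}\mathcal{H}\|\lesssim\kappa^2\mu_0\sqrt{r/p}\ll1$ via a Neumann-series argument, and then reads off the $r_k\times r_k$ block of $\mathcal{I}^{-1}$. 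You instead fix $\mathcal{C},\U_{k+1},\U_{k+2}$ and a single $r_k$-dimensional curve $\u\mapsto\U_k(\u)$ on the Stiefel manifold passing through the truth with $(\U_k(\u))_{m\cdot}=\u$, so the cross-mode terms $\mathcal{H}$ never appear; the price is the orthonormality correction to the other rows of $\U_k$, which your outer-product identity $\sum_a(\U_k)_{a\cdot}(\U_k)_{a\cdot}^\top=\mathbf{I}$ controls at order $\kappa^2\mu_0^2 r/p$. Two points worth flagging. First, your approach is in a sense more self-contained: the paper never discusses why the \emph{unconstrained} Cramér--Rao bound is legitimate when the estimator is only assumed unbiased over the constrained (Stiefel) model; your submodel sits inside the constrained parameter space by construction, so unbiasedness transfers without comment (modulo the continuous sign selection in the SVD near $\u^*$, which you should mention but is automatic). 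Second, note that your submodel bound and the paper's full-information bound are \emph{a priori} different — fixing nuisance parameters gives the diagonal block $I_{\psi\psi}$ rather than the Schur complement $(I_{\psi\psi}-I_{\psi\eta}I_{\eta\eta}^{-1}I_{\eta\psi})$, i.e.\ a potentially weaker lower bound — but both collapse to $\sigma_{\min}^{-2}\mathbf{\Lambda}_k^2(1+o(1))$ under the stated hypothesis, so the conclusions agree. Finally, your off-slice error $\kappa^2\mu_0^2 r/p$ is already $o(1)$ under the \emph{weaker} condition $\kappa\mu_0\sqrt{r/p}\ll1$, whereas the paper's cross-mode bookkeeping genuinely uses $\kappa^2\mu_0\sqrt{r/p}\ll1$; since the latter implies the former, your argument still closes under the theorem's hypothesis.
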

 
 Recall that when $\sigma_{ijk} \equiv \sigma$ it holds that $\mathbf{\Gamma}^{(m)}_k = \sigma^2 \mathbf{\Lambda}_k^{-2}$.   Consequently, \cref{thm:efficiencyloadings} shows that when $\sigma_{abc} \equiv \sigma$ (i.e., the noise is homoskedastic), the rows of the estimated loadings are asymptotically efficient, and when $\sigma/\sigma_{\min} = O(1)$, the confidence regions for the rows of $\uhat$ are essentially optimal.  

A subtle nuance in \cref{thm:efficiencyloadings} is that it only holds for unbiased estimators of $(\mathbf{U}_k)_{m\cdot}$, and while $(\uhat_k)_{m\cdot}$ is \emph{asymptotically} unbiased, it may not be unbiased for finite samples. Therefore, \textcolor{black}{following ideas from \citet{cai_confidence_2017}}, we will also consider a lower bound for the expected length of any $1-\alpha$ confidence interval for (any linear functional of) $\big(\mathbf{U}_k\big)_{m\cdot}$.  In order to do so, we must first define some notation.

Define the parameter space
 \begin{align*}
     \Theta( \lambda,&\kappa,\mu_0,\sigma,\sigma_{\min}) \\
     := \bigg\{ &\mathcal{T} \in \mathbb{R}^{p_1 \times p_2 \times p_3}: \mathcal{T} = \mathcal{S} \times_1 \mathbf{U}_1 \times_2 \U_2 \times_3 \U_3;\  \mathcal{S} \in \mathbb{R}^{r_1 \times r_2 \times r_3}; \\
  &\mathbf{U}_k \in \mathbb{R}^{p_k \times r_k}, \U_k\t \U_k = \mathbf{I}_{r_k}; \| \U_k \|_{2,\infty} \leq \mu_0 \sqrt{\frac{r_k}{p_k}}; \\
  &\lambda \leq \lambda_{\min}(\mathcal{S}) \leq \lambda_{\max}(\mathcal{S}) \leq \kappa \lambda; \ \lambda/\sigma \geq C_0 \kappa r_{\max} \sqrt{p_{\max}}; \\
  &\sigma_{\min} \leq  \sigma_{ijk} \leq \sigma; \ \kappa \leq p_{\min}^{1/4}; \ r_{\max} \leq p_{\min}^{1/2} \bigg\}. \numberthis \label{parameterspace}
 \end{align*}
 Under the assumptions $p_k \asymp p$ and $\kappa r_{\max} \lesssim p^{1/4}$, the assumption that $\lambda/\sigma \geq C_0 \kappa r_{\max} \sqrt{p_{\max}}$ is not particularly stringent, and only precludes settings where the rank is prohibitively large or the tensor is extremely ill-conditioned.

Next, for a given deterministic vector $\xi \in \mathbb{R}^{r_k}$, define the set $\mathcal{I}_{\alpha}(\Theta,\xi)$ of all $1-\alpha$ confidence intervals for $\xi\t (\mathbf{U}_k)_{m\cdot}$ over the parameter space $\Theta$; that is,
 \begin{align*}
     \mathcal{I}_{\alpha}(\Theta,\xi) := \bigg\{ \mathrm{C.I.}_{k,m}^{\alpha}(\xi,\mathcal{Z},\mathcal{T}) = [l, u]: \inf_{\mathcal{T} \in \Theta} \p_{\mathcal{T}}\big( l \leq \pm \xi\t\big( \mathbf{U}_k\big)_{m\cdot} \leq u \big)\geq 1 - \alpha \bigg\},
 \end{align*}
 where the fact that we consider $\pm \xi\t (\U_k)_{m\cdot}$ is due to rotational ambiguity.  For a given confidence interval $\mathrm{C.I.}_{k,m}^{\alpha}(\xi,\mathcal{Z},\mathcal{T}) = [l,u]$, define its length $L\big( \mathrm{C.I.}_{k,m}^{\alpha}(\xi,\mathcal{Z},\mathcal{T}) \big) =|u - l|$.  \textcolor{black}{In words, the set $\mathcal{I}_{\alpha}(\Theta,\xi)$ is the set of all confidence intervals for $\xi\t\big( \mathbf{U}_k\big)_{m\cdot}$ that have coverage at least $1-\alpha$ uniformly over the parameter space $\Theta$.}
 
The following result yields a lower bound for the expected length of any linear functional over the class $\Theta(\lambda,\kappa,\mu_0,\sigma,\sigma_{\min})$.  
% shows that the smallest expected length of any linear functional of the $m$'th row of $(\mathbf{U}_k)_{m\cdot}$ is of order $\frac{\sigma^2_{\min}}{\lambda}$, thereby demonstrating the \emph{order-wise} optimality of $\uhat_k$. 

\begin{theorem} \label{thm:efficiency_order}
     Let $\xi \in \mathbb{R}^{r_k}$ be any deterministic vector satisfying $\max_{j\neq k} \frac{|\xi_j|}{|\xi_k|} \leq c_0$ for some fixed constant $c_0$, and suppose that  $\alpha$ satisfies $ 0 < \alpha < 1/2$. Then there is some constant $c > 0$ such that
     \begin{align*}
         \inf_{\mathrm{C.I.}_{k,m}^{\alpha}(\xi,\mathcal{Z},\mathcal{T}) \in \mathcal{I}_{\alpha}(\Theta,\xi)} \sup_{\mathcal{T} \in \Theta} \mathbb{E}_{\mathcal{T}} L\big( \mathrm{C.I.}_{k,m}^{\alpha}(\xi,\mathcal{Z},\mathcal{T}) \big) &\geq c \|\xi\|_{\infty} \frac{\sigma_{\min}}{\lambda_{r_k}}.
     \end{align*}
 \end{theorem}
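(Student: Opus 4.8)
The plan is a two-point (Le~Cam) argument in the spirit of \citet{cai_confidence_2017}. I will exhibit two signal tensors $\mathcal{T}^{(0)},\mathcal{T}^{(1)}\in\Theta(\lambda,\kappa,\mu_0,\sigma,\sigma_{\min})$ whose induced Gaussian observation laws $\mathbb{P}^{(0)},\mathbb{P}^{(1)}$ (means $\mathcal{T}^{(a)}$, fixed diagonal covariance $\mathrm{diag}(\sigma^2_{ijk})$) satisfy $\mathrm{KL}(\mathbb{P}^{(0)}\,\|\,\mathbb{P}^{(1)})=O(1)$, while the target functionals differ by $|\xi^\top(\U_k^{(1)})_{m\cdot}-\xi^\top(\U_k^{(0)})_{m\cdot}|\gtrsim\|\xi\|_\infty\,\sigma_{\min}/\lambda_{r_k}$. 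Then the standard confidence-interval length lemma finishes: for any $\mathrm{C.I.}=[l,u]\in\mathcal{I}_\alpha(\Theta,\xi)$, validity at $\mathcal{T}^{(1)}$ (and the ``$\pm$'' in the definition of $\mathcal{I}_\alpha$) forces $u-l\geq 2|\xi^\top(\U_k^{(1)})_{m\cdot}|$ with $\mathbb{P}^{(1)}$-probability at least $1-\alpha$, and—since $\|\mathbb{P}^{(0)}-\mathbb{P}^{(1)}\|_{\mathrm{TV}}\le 1-\alpha-c'$ after tuning a scaling parameter (Pinsker applied to the $O(1)$ KL bound)—the same length bound holds with probability at least $c'$ under $\mathbb{P}^{(0)}$, so $\sup_{\mathcal{T}\in\Theta}\mathbb{E}_\mathcal{T}(u-l)\geq\mathbb{E}_{\mathcal{T}^{(0)}}(u-l)\gtrsim|\xi^\top(\U_k^{(1)})_{m\cdot}|$. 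By symmetry of the construction across modes it suffices to take $k=1$.

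For the construction, fix incoherent orthonormal factors $\U_2,\U_3$ and choose a core $\mathcal{S}\in\mathbb{R}^{r_1\times r_2\times r_3}$ with two properties: (i) the mode-$1$ slices $\mathcal{S}_{1,\cdot,\cdot},\dots,\mathcal{S}_{r_1,\cdot,\cdot}$ are mutually Frobenius-orthogonal with distinct norms $d_1>\dots>d_{r_1}$, all in $[\lambda,2\lambda]$; (ii) the mode-$2$ and mode-$3$ matricizations of $\mathcal{S}$ are $O(1)$-well-conditioned with all singular values $\asymp\lambda$. Such a core exists (e.g.\ $\mathcal{S}_{ijk}=d_i(\mathbf{B}_i)_{jk}$ for a generic Frobenius-orthonormal family $\{\mathbf{B}_i\}$, which makes $\mathcal{M}_1(\mathcal{S})\mathcal{M}_1(\mathcal{S})^\top=\mathrm{diag}(d_i^2)$ while the Gram sums governing modes $2,3$ stay well-conditioned; $r_1\le r_2r_3$ holds automatically since $\Theta$ is nonempty). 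Property (i) is the key device: for any $\mathcal{T}=\mathcal{S}\times_1\U_1\times_2\U_2\times_3\U_3$ it gives $\mathcal{M}_1(\mathcal{T})\mathcal{M}_1(\mathcal{T})^\top=\U_1\,\mathrm{diag}(d_i^2)\,\U_1^\top$, hence $\mathrm{SVD}_{r_1}(\mathcal{M}_1(\mathcal{T}))=\U_1$ exactly, with no spurious rotation, so the Tucker factor of $\mathcal{T}$ is the matrix $\U_1$ we plug in. Now let $\U_1^{(0)}$ be a flat incoherent $p_1\times r_1$ orthonormal matrix whose $m$-th row is identically zero (possible since $r_1<p_1$, spreading $\|\U_1^{(0)}\|_F^2=r_1$ evenly over the other $p_1-1$ rows), and set $\mathcal{T}^{(0)}=\mathcal{S}\times_1\U_1^{(0)}\times_2\U_2\times_3\U_3$. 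With $j^\ast=\argmax_j|\xi_j|$ (so $|\xi_{j^\ast}|=\|\xi\|_\infty$; the hypothesis $\max_{j\neq k}|\xi_j|/|\xi_k|\le c_0$ is precisely what places us in this dominant-coordinate situation) and a small parameter $\theta=\eps\,\sigma_{\min}/\lambda$, let $\U_1^{(1)}$ agree with $\U_1^{(0)}$ except in column $j^\ast$, replaced by $\cos\theta\,(\U_1^{(0)})_{\cdot j^\ast}+\sin\theta\,e_m$; since $(\U_1^{(0)})_{\cdot j^\ast}\perp e_m$ all columns stay orthonormal. Set $\mathcal{T}^{(1)}=\mathcal{S}\times_1\U_1^{(1)}\times_2\U_2\times_3\U_3$.

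Three verifications complete the proof. Membership: $\U_1^{(1)}$ is orthonormal of rank $r_1$, and only its $m$-th row changed, to $\sin\theta\,e_{j^\ast}$ of norm $\le\theta\lesssim\sigma/\lambda\lesssim p^{-1/2}\ll\sqrt{r_1/p_1}$, so incoherence is preserved; the core, $\kappa$, $\lambda_{\min}(\mathcal{S})$, $\lambda_{\max}(\mathcal{S})$ and the SNR condition are untouched, so $\mathcal{T}^{(0)},\mathcal{T}^{(1)}\in\Theta$ with $\lambda_{r_1}(\mathcal{M}_1(\mathcal{T}^{(a)}))=d_{r_1}\asymp\lambda$. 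Functional gap: by property (i), $(\U_1^{(a)})_{m\cdot}$ is the Tucker factor's $m$-th row, so $\xi^\top(\U_1^{(0)})_{m\cdot}=0$ while $\xi^\top(\U_1^{(1)})_{m\cdot}=\xi_{j^\ast}\sin\theta$ has magnitude $\|\xi\|_\infty\sin\theta\asymp\eps\|\xi\|_\infty\sigma_{\min}/\lambda$ (the $\pm$ is harmless since the model-$0$ value is exactly $0$). KL bound: both laws are Gaussian with diagonal covariance bounded below by $\sigma_{\min}^2$, so $\mathrm{KL}(\mathbb{P}^{(0)}\,\|\,\mathbb{P}^{(1)})\le\|\mathcal{T}^{(0)}-\mathcal{T}^{(1)}\|_F^2/(2\sigma_{\min}^2)$, and $\|\mathcal{T}^{(0)}-\mathcal{T}^{(1)}\|_F=\|(\U_1^{(0)}-\U_1^{(1)})\mathcal{M}_1(\mathcal{S})\|_F\le\|\U_1^{(0)}-\U_1^{(1)}\|_F\,\|\mathcal{M}_1(\mathcal{S})\|=(2|\sin(\theta/2)|)(\max_i d_i)\le 2\lambda\theta$, giving $\mathrm{KL}\le 2\eps^2$. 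Picking $\eps$ small depending only on $\alpha$ makes the total variation small enough for the length lemma, and plugging $|\xi^\top(\U_1^{(1)})_{m\cdot}|\asymp\|\xi\|_\infty\sigma_{\min}/\lambda\asymp\|\xi\|_\infty\sigma_{\min}/\lambda_{r_k}$ into it yields the claim with $c=c(\alpha,c_0)>0$.

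The main obstacle is the middle step: I must perturb $\U_1$ so as to move $\xi^\top(\U_1)_{m\cdot}$ by the full $\|\xi\|_\infty\sigma_{\min}/\lambda_{r_k}$—with no $\sqrt{r}$ loss—\emph{while} $\U_1$ remains the genuine $\mathrm{SVD}_{r_1}$ of the perturbed tensor's mode-$1$ matricization, since in general the Tucker factor of $\mathcal{S}\times_1\U_1\times_2\U_2\times_3\U_3$ is $\U_1$ composed with the eigenvector rotation of $\mathcal{M}_1(\mathcal{S})\mathcal{M}_1(\mathcal{S})^\top$, whose ``flatness'' would otherwise cost a $\sqrt{r}$ factor in the gap. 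Choosing a core with orthogonal mode-$1$ slices trivializes that rotation, and arranging this \emph{simultaneously} with well-conditioned mode-$2$ and mode-$3$ matricizations (so the whole family lies in $\Theta$) is the one genuinely constructive point; the KL computation, the incoherence check, and the invocation of the two-point length lemma are all routine.
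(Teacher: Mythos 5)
Your proof is correct, and it takes a route that differs from the paper's in several meaningful ways, while sharing the same high-level Le Cam two-point skeleton. The paper also runs a two-point argument (via Lemma~1 of Cai and Guo) but (i) measures distinguishability with $\chi^2$ rather than $\mathrm{KL}$/Pinsker, (ii) starts from an arbitrary $\mathcal{T}\in\Theta(\lambda,\kappa,\mu_0/2,\cdot,\cdot)$ and perturbs the $r_1$-th column of $\U_1$ by adding $\delta(\mathbf{I}-\U_1\U_1^\top)e_1$ and renormalizing, which requires the nontrivial lower bound $\|(\mathbf{I}-\U_1\U_1^\top)e_1\|\geq 1/2$ via incoherence, and (iii) glosses over whether the perturbed factor $\mathbf{\bar U}_1$ is the \emph{canonical} $\mathrm{SVD}_{r_1}$ factor of $\mathcal{M}_1(\mathcal{\bar T})$. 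Your construction buys two genuine simplifications that the paper's presentation leaves implicit or skips: by engineering a core with Frobenius-orthogonal, distinctly-normed mode-1 slices you ensure $\mathcal{M}_1(\mathcal{C})\mathcal{M}_1(\mathcal{C})^\top$ is diagonal with simple spectrum, so that the $\U_1$ you plug in really is the canonical Tucker factor with no rotational ambiguity; and by choosing $\U_1^{(0)}$ with $(\U_1^{(0)})_{m\cdot}=0$ you pin the null value of the functional to $0$, which neutralizes the $\pm$ sign ambiguity in $\mathcal{I}_\alpha$ (the generic two-point length bound in the $\pm$ setting is only $\min\{|v_0-v_1|,|v_0+v_1|\}$, and your $v_0=0$ collapses that to $|v_1|$ cleanly, whereas the paper's $|v_0-v_1|$ step is not obviously the minimum).

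Two minor points worth tightening. First, the sentence ``validity at $\mathcal{T}^{(1)}$ \ldots forces $u-l\geq 2|\xi^\top(\U_k^{(1)})_{m\cdot}|$ with $\mathbb{P}^{(1)}$-probability at least $1-\alpha$'' is not quite right under the ``or'' reading of the $\pm$: validity at $\mathcal{T}^{(1)}$ only forces $[l,u]$ to contain \emph{one} of $\pm v_1$; the length bound $u-l\geq|v_1|$ emerges only after intersecting with the event $0\in[l,u]$ coming from validity at $\mathcal{T}^{(0)}$, controlled via the TV gap. Since your final chain indeed invokes both points and discards the factor of $2$, this is a wording slip rather than a gap, but as stated that intermediate implication is false. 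Second, the assertion that a generic Frobenius-orthonormal family $\{\mathbf{B}_i\}$ simultaneously delivers (i) orthogonal mode-1 slices with $d_i\in[\lambda,\kappa\lambda]$ and (ii) mode-2, mode-3 matricizations with all $r_2$ (resp.\ $r_3$) nonzero singular values in $[\lambda,\kappa\lambda]$ requires the compatibility condition $\kappa^2\gtrsim r_{\max}/r_{\min}$ (since $\sum_i d_i^2=\|\mathcal{S}\|_F^2$ must match the trace of every matricization Gram); this is exactly the constraint that makes $\Theta$ nonempty, so your remark ``$r_1\leq r_2r_3$ holds automatically since $\Theta$ is nonempty'' is the right instinct, but it should be stated in terms of the rank-ratio/$\kappa$ tradeoff rather than just $r_1\leq r_2r_3$. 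Finally, like the paper (which reduces incoherence to $\mu_0/2$), your choice of $\U_1^{(0)}$ with a vanishing $m$-th row implicitly uses $\mu_0$ bounded away from $1$; this is a standing hypothesis in the paper's companion lower bound and is harmless, but worth flagging.
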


%\cref{thm:efficiency_order} shows that the smallest expected length of any linear functional of the $m$'th row of $(\mathbf{U}_k)_{m\cdot}$ \textcolor{black}{(with coverage at least $1 - \alpha$)} must be of order $\frac{\sigma_{\min}}{\lambda}$. 
\textcolor{black}{Observe that the left-hand side above represents the minimax expected length of all confidence intervals for $\xi^\top (\mathbf{U}_k)_{m\cdot}$ with coverage probability of at least $1 - \alpha$, uniformly over all $\mathcal{T} \in \Theta$. In simple terms, \cref{thm:efficiency_order} shows that the minimax expected length of such intervals is on the order of $\|\xi\|_{\infty} \frac{\sigma_{\min}}{\lambda_{r_k}}$. Moreover, the confidence interval for $\xi^\top (\mathbf{U}_k)_{m\cdot}$ constructed via \cref{al:ci_eigenvector} has the same order of length, as demonstrated in the proof of \cref{thm:civalidity2_v1}, where the estimator $(\mathbf{\hat \Gamma}_k^{(m)})^{1/2}$ from \cref{al:ci_eigenvector} is shown to have a smallest eigenvalue of at least $\sigma_{\min}/\lambda$ (see \eqref{lowerbdongamma}). Since the length of this confidence interval aligns with the lower bound in \cref{thm:efficiency_order}, these results together demonstrate that \cref{al:ci_eigenvector} produces asymptotically valid and, in some sense, optimal confidence intervals.
%Observe that  the left hand side above can be viewed as the minimax expected length of all confidence intervals for $\xi\t (\mathbf{U}_k)_{m\cdot})$ with coverage at least $1 - \alpha$ \emph{uniformly for all $\mathcal{T}\in\Theta$}. Put simply, \cref{thm:efficiency_order} shows that the minimax expected length of any such confidence interval must have length of order $\|\xi\|_{\infty} \frac{\sigma_{\min}}{\lambda_{r_k}}$, up to some universal constant.  Furthermore, the confidence interval for $\xi\t (\mathbf{U}_k)_{m\cdot}$ constructed using \cref{al:ci_eigenvector} will have length of the same order, since it is shown in the proof of \cref{thm:civalidity2_v1} that the estimator $(\mathbf{\hat \Gamma}_k^{(m)})^{1/2}$ from \cref{al:ci_eigenvector} has the smallest eigenvalue of order at least $\sigma_{\min}/\lambda$ (see \eqref{lowerbdongamma}). Since the length of the resulting confidence interval matches the lower bound in \cref{thm:efficiency_order}, these results in tandem show that \cref{al:ci_eigenvector} yields asymptotically valid confidence intervals, that are, in some sense, optimal.
} 
 
 %We return to the problem of obtaining confidence intervals for the true loadings in \cref{sec:entrywiseinference}.
 
 %%%%%%%%%%%%%%%%%%%%%%%%%%%%%%
\section{Entrywise Distributional Theory, Inference, and Consistency}
\label{sec:entries}
%%%%%%%%%%%%%%%%%%%%%%%%%%%%%%%

We now turn our attention to estimating the entries of the underlying low-rank tensor, obtained via the estimate
\begin{align*}
    \mathcal{\hat T} \coloneqq \mathcal{\tilde T} \times_1 \uhat_1 \times_2 \uhat_2 \times_3 \uhat_3
\end{align*}
as described in \cref{al:tensor-power-iteration}, where $\uhat_k \coloneqq \uhat_k^{(t)}$ for $t$ iterations.  The following result characterizes the distribution of the entries of this estimator, with the more general statement permitting $\mu_0, \kappa$ to grow available in \cref{thm:asymptoticnormalityentries} in \cref{sec:generaltheorems}.

\begin{theorem}[Asymptotic Normality of the Estimated Entries] \label{thm:asymptoticnormalityentries_v1}  Instate the conditions of \cref{thm:eigenvectornormality_v1}, and suppose further that
\begin{align*}
  r^{3/2} \sqrt{\log(p)} \lesssim p^{1/4}.
\end{align*}
 Let $\Sigma^{(m)}_1 \in \mathbb{R}^{p_2 p_3 \times p_2 p_3}$ be the diagonal matrix whose $(a-1)p_3 + b$'th entry is the variance of the random variable $\mathcal{Z}_{mab}$, and let $\Sigma^{(m)}_2$ and $\Sigma^{(m)}_3$ be defined similarly.  Assume that
\begin{align*}
   \| e_{(j-1)p_3 + k}\t \mathbf{V}_1 \|^2& + \|e_{(k-1)p_1 + i} \mathbf{V}_2  \|^2 + \| e_{(i-1)p_2 + j} \mathbf{V}_3\|^2 \\
   &\gg \max\bigg\{\frac{ r^4 \log(p)}{p^{3}} ,\frac{\sigma^2 r^{4} \log^2(p)}{\lambda^2 p}\bigg\}.
  \end{align*}
Define
\begin{align*}
    s^2_{ijk} &\coloneqq  \| e_{(j-1)p_3 + k}\t \mathbf{V}_1 \mathbf{V}_1\t \big(\Sigma^{(i)}_1\big)^{1/2} \|^2 + \|e_{(k-1)p_1 + i}\t \mathbf{V}_2 \mathbf{V}_2\t\big(\Sigma^{(j)}_2\big)^{1/2} \|^2\\
    &\quad + \| e_{(i-1)p_2 + j}\t \mathbf{V}_3\mathbf{V}_3\t \big( \Sigma^{(k)}_3 \big)^{1/2}\|^2.
\end{align*}
Let $Z$ denote a standard Gaussian random variable and let $\Phi$ denote its cumulative distribution function. Then it holds that
\begin{align*}
    \sup_{t\in\mathbb{R}} \bigg| \p\bigg\{ \frac{ \mathcal{\hat T}_{ijk} - \mathcal{T}_{ijk}}{s_{ijk}} \leq t \bigg\} - \Phi(t) \bigg| &= o(1).  
\end{align*}
\end{theorem}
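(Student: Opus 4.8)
The plan is to expand $\mathcal{\hat T}_{ijk}-\mathcal{T}_{ijk}$ as a term that is \emph{linear} in the noise $\mathcal{Z}$, with variance $s_{ijk}^{2}(1+o(1))$, plus a remainder that is $o(s_{ijk})$ on a high-probability event, and then apply a Berry--Esseen bound to the linear term. For the expansion, recall $\mathcal{\hat T}=\mathcal{\tilde T}\times_{1}\mathcal{P}_{\uhat_{1}}\times_{2}\mathcal{P}_{\uhat_{2}}\times_{3}\mathcal{P}_{\uhat_{3}}$ with $\uhat_{k}=\uhat_{k}^{(t)}$, and set $\mathbf{H}_{k}\coloneqq\uhat_{k}(\mathbf{W}_{k}^{(t)})\t-\U_{k}=\mathbf{Z}_{k}\mathbf{V}_{k}\mathbf{\Lambda}_{k}\inv+\mathbf{\Psi}^{(k)}$ via \cref{thm:eigenvectornormality_v1}. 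Splitting $\mathcal{\tilde T}=\mathcal{T}+\mathcal{Z}$ gives $\mathcal{\hat T}_{ijk}-\mathcal{T}_{ijk}=(\langle\mathcal{T},\mathcal{P}_{\uhat_{1}}e_{i}\otimes\mathcal{P}_{\uhat_{2}}e_{j}\otimes\mathcal{P}_{\uhat_{3}}e_{k}\rangle-\mathcal{T}_{ijk})+\langle\mathcal{Z},\mathcal{P}_{\uhat_{1}}e_{i}\otimes\mathcal{P}_{\uhat_{2}}e_{j}\otimes\mathcal{P}_{\uhat_{3}}e_{k}\rangle$. Since $\mathcal{T}$ lies in the range of $\mathcal{P}_{\U_{1}}\otimes\mathcal{P}_{\U_{2}}\otimes\mathcal{P}_{\U_{3}}$, in the first term I may replace $\mathcal{P}_{\uhat_{k}}e_{k}$ by $\mathcal{P}_{\U_{k}}\mathcal{P}_{\uhat_{k}}e_{k}$; writing $\uhat_{k}=(\U_{k}+\mathbf{H}_{k})\mathbf{W}_{k}^{(t)}$ and using that $\mathbf{W}_{k}^{(t)}=\mathrm{sgn}(\U_{k}\t\uhat_{k})$ forces $\U_{k}\t\mathbf{H}_{k}=-\tfrac12\mathbf{H}_{k}\t\mathbf{H}_{k}$, i.e.\ order $\|\mathbf{H}_{k}\|^{2}$, one gets $\mathcal{P}_{\U_{k}}\mathcal{P}_{\uhat_{k}}e_{k}=\mathcal{P}_{\U_{k}}e_{k}+\U_{k}\mathbf{H}_{k}\t e_{k}+O(\|\mathbf{H}_{k}\|^{2})$. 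Expanding the triple tensor product and using $\mathcal{M}_{k}(\mathcal{T})=\U_{k}\mathbf{\Lambda}_{k}\mathbf{V}_{k}\t$, the zeroth-order term is $\mathcal{T}_{ijk}$, the mode-$1$ first-order term is $\langle\mathcal{T},(\U_{1}\mathbf{H}_{1}\t e_{i})\otimes\mathcal{P}_{\U_{2}}e_{j}\otimes\mathcal{P}_{\U_{3}}e_{k}\rangle=e_{i}\t\mathbf{H}_{1}\mathbf{\Lambda}_{1}\mathbf{V}_{1}\t e_{(j-1)p_{3}+k}=(\mathbf{Z}_{1})_{i\cdot}\t\mathbf{V}_{1}\mathbf{V}_{1}\t e_{(j-1)p_{3}+k}+e_{i}\t\mathbf{\Psi}^{(1)}\mathbf{\Lambda}_{1}\mathbf{V}_{1}\t e_{(j-1)p_{3}+k}$, with the obvious analogues for modes $2,3$, and all remaining terms carry at least two factors of $\mathbf{H}_{k}$ or an $O(\|\mathbf{H}_{k}\|^{2})$ correction.

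The main term is $L\coloneqq(\mathbf{Z}_{1})_{i\cdot}\t\mathbf{V}_{1}\mathbf{V}_{1}\t e_{(j-1)p_{3}+k}+(\mathbf{Z}_{2})_{j\cdot}\t\mathbf{V}_{2}\mathbf{V}_{2}\t e_{(k-1)p_{1}+i}+(\mathbf{Z}_{3})_{k\cdot}\t\mathbf{V}_{3}\mathbf{V}_{3}\t e_{(i-1)p_{2}+j}$, a sum of independent mean-zero subgaussian contributions whose $m$-th summand has variance equal, by the definition of $\Sigma_{m}^{(\cdot)}$, to the $m$-th term of $s_{ijk}^{2}$. Hence $\mathrm{Var}(L)=s_{ijk}^{2}+(\text{three cross-mode covariances})$, and since $\mathbf{V}_{1}$ lies in the range of the Kronecker product of $\U_{2}$ and $\U_{3}$ (and similarly for $\mathbf{V}_{2},\mathbf{V}_{3}$), the incoherence bound $\|(\U_{k})_{m\cdot}\|\lesssim\sqrt{r/p}$ together with Cauchy--Schwarz yields each cross covariance $O(\sigma^{2}r^{3}/p^{3})$, which is $o(s_{ijk}^{2})$ by the assumed lower bound $\|\mathbf{V}_{1}\t e_{(j-1)p_{3}+k}\|^{2}+\|\mathbf{V}_{2}\t e_{(k-1)p_{1}+i}\|^{2}+\|\mathbf{V}_{3}\t e_{(i-1)p_{2}+j}\|^{2}\gg r^{4}\log(p)/p^{3}$. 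Thus $\mathrm{Var}(L)=s_{ijk}^{2}(1+o(1))$.

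The crux is bounding the remainder by $o(s_{ijk})$ on an event of probability $1-o(1)$. It consists of (i) the bilinear terms $e_{i}\t\mathbf{\Psi}^{(1)}\mathbf{\Lambda}_{1}\mathbf{V}_{1}\t e_{(j-1)p_{3}+k}$ and the mode-$2,3$ analogues; (ii) the doubly perturbed signal terms $\langle\mathcal{C},(\mathbf{H}_{k_{1}}\t e_{\cdot})\otimes(\mathbf{H}_{k_{2}}\t e_{\cdot})\otimes((\U_{k_{3}})_{m\cdot})\rangle$ and the $O(\|\mathbf{H}_{k}\|^{2})$ corrections; and (iii) the noise term $\langle\mathcal{Z},\mathcal{P}_{\uhat_{1}}e_{i}\otimes\mathcal{P}_{\uhat_{2}}e_{j}\otimes\mathcal{P}_{\uhat_{3}}e_{k}\rangle$. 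For (i), $|e_{i}\t\mathbf{\Psi}^{(1)}\mathbf{\Lambda}_{1}\mathbf{V}_{1}\t e_{(j-1)p_{3}+k}|\le\|\mathbf{\Psi}^{(1)}\|_{2,\infty}\,\|\mathbf{\Lambda}_{1}(\mathbf{V}_{1})_{(j-1)p_{3}+k,\cdot}\|\lesssim\kappa\lambda\,\|\mathbf{\Psi}^{(1)}\|_{2,\infty}\|(\mathbf{V}_{1})_{(j-1)p_{3}+k,\cdot}\|$, and since the same factor $\|(\mathbf{V}_{1})_{(j-1)p_{3}+k,\cdot}\|$ appears (with constant $\sigma_{\min}$) as a lower bound for $s_{ijk}$, the ratio to $s_{ijk}$ is $\lesssim(\lambda/\sigma)\|\mathbf{\Psi}^{(1)}\|_{2,\infty}\lesssim r\sqrt{p}\log(p)/(\lambda/\sigma)+r/\sqrt{p}$ by \cref{thm:eigenvectornormality_v1}, which is $o(1)$ because $\lambda/\sigma\gtrsim p^{3/4}\sqrt{\log p}$ together with $r^{3/2}\sqrt{\log p}\lesssim p^{1/4}$ forces $r\sqrt{p}\log p\ll\lambda/\sigma$. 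For (ii) and (iii), I contract against signal fibers to produce a factor matching a summand of $s_{ijk}$ (a row norm of some $\mathbf{V}_{k}$ or of some $\U_{k}$), and use $\|\mathbf{H}_{k}\|_{2,\infty}\lesssim\sqrt{r\log p}/(\lambda/\sigma)$ and $\|\mathbf{H}_{k}\|\lesssim\sqrt{rp}/(\lambda/\sigma)$ from \citet{zhang_tensor_2018,agterberg_estimating_2022}; dividing each term by $s_{ijk}$ gives $o(1)$, where now the second branch $\sigma^{2}r^{4}\log^{2}(p)/(\lambda^{2}p)$ of the hypothesized lower bound on $s_{ijk}$ is what is used. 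This step is the main obstacle: all residual bounds are only $\lesssim$ the lower-bound quantities themselves, so they leave essentially no slack, and it is precisely the combination of the near-optimal SNR condition, the rank restriction, and the (two-branched) lower bound on $s_{ijk}$ — stated with ``$\gg$'' rather than ``$\gtrsim$'' — that makes each residual negligible. Thus $\mathcal{\hat T}_{ijk}-\mathcal{T}_{ijk}=L+R$ with $|R|\le\eps_{p}s_{ijk}$ for a deterministic $\eps_{p}\to0$.

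Finally, writing $L=\langle\mathcal{Z},\mathcal{W}_{ijk}\rangle$, where $\mathcal{W}_{ijk}$ places $\mathbf{V}_{1}\mathbf{V}_{1}\t e_{(j-1)p_{3}+k}$, $\mathbf{V}_{2}\mathbf{V}_{2}\t e_{(k-1)p_{1}+i}$, $\mathbf{V}_{3}\mathbf{V}_{3}\t e_{(i-1)p_{2}+j}$ on the slices $\{i\}\times[p_{2}]\times[p_{3}]$, $[p_{1}]\times\{j\}\times[p_{3}]$, $[p_{1}]\times[p_{2}]\times\{k\}$, the Berry--Esseen theorem for sums of independent, non-identically distributed summands gives $\sup_{t}|\p\{L/\sqrt{\mathrm{Var}(L)}\le t\}-\Phi(t)|\lesssim\max_{abc}|(\mathcal{W}_{ijk})_{abc}|/\|\mathcal{W}_{ijk}\|$ (using $\sigma\asymp\sigma_{\min}$), and the incoherence bound $\max_{abc}|(\mathcal{W}_{ijk})_{abc}|\lesssim r^{2}/p^{2}$ together with $\|\mathcal{W}_{ijk}\|^{2}=(1+o(1))(\|\mathbf{V}_{1}\t e_{(j-1)p_{3}+k}\|^{2}+\|\mathbf{V}_{2}\t e_{(k-1)p_{1}+i}\|^{2}+\|\mathbf{V}_{3}\t e_{(i-1)p_{2}+j}\|^{2})\gg r^{4}\log(p)/p^{3}$ makes this $o(1)$. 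Combining with $\mathrm{Var}(L)=s_{ijk}^{2}(1+o(1))$, the inequalities $\p\{L/s_{ijk}\le t-\eps_{p}\}-\p(|R|>\eps_{p}s_{ijk})\le\p\{(L+R)/s_{ijk}\le t\}\le\p\{L/s_{ijk}\le t+\eps_{p}\}+\p(|R|>\eps_{p}s_{ijk})$, valid uniformly in $t$, and the uniform continuity of $\Phi$, yields $\sup_{t\in\mathbb{R}}|\p\{(\mathcal{\hat T}_{ijk}-\mathcal{T}_{ijk})/s_{ijk}\le t\}-\Phi(t)|=o(1)$, as claimed.
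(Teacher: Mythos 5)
Your proposal is correct and follows essentially the same strategy as the paper's proof: (1) split $\mathcal{\hat T}_{ijk}-\mathcal{T}_{ijk}$ into a signal-projection part and a noise part, (2) linearize the projection operators to extract the linear statistic $\xi_{ijk}=e_i^\top\mathbf{Z}_1\mathbf{V}_1\mathbf{V}_1^\top e_{(j-1)p_3+k}+\cdots$, (3) show the residual is $o(s_{ijk})$ on a high-probability event, (4) establish $\mathrm{Var}(\xi_{ijk})=(1+o(1))s_{ijk}^2$ by bounding cross-mode covariances via incoherence, and (5) apply Berry--Esseen to the independent-summand representation of $\xi_{ijk}$ together with a Lipschitz-of-$\Phi$ argument to absorb the residual. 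This is the same decomposition and the same Berry--Esseen endgame as in the paper (Lemmas~\ref{lem:zentrywiselemma}, \ref{lem:tentrywiselemma}, \ref{lem:xiijkgaussian}).

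The one place you diverge technically is the linearization of the projections. The paper proves and invokes \cref{cor:asymptoticnormality_projection}, which gives a two-term expansion $\uhat_k\uhat_k^\top-\U_k\U_k^\top\approx\U_k\mathbf{\Lambda}_k^{-1}\mathbf{V}_k^\top\mathbf{Z}_k^\top+\mathbf{Z}_k\mathbf{V}_k\mathbf{\Lambda}_k^{-1}\U_k^\top$ and then shows, after contracting with $\mathbf{T}_k$, that only the second term survives as leading (the first produces a small $\U_k^\top\mathbf{Z}_k\mathbf{V}_k$ factor). You instead exploit the fact that $\mathcal{T}$ already lives in the range of the $\U_k$'s to work directly with $\mathcal{P}_{\U_k}\mathcal{P}_{\uhat_k}e_m$, and then the algebraic identity $\U_k^\top\mathbf{H}_k=-\tfrac12\mathbf{H}_k^\top\mathbf{H}_k$ kills the unwanted first-order term at the level of the expansion itself, leaving a single first-order contribution $\U_k\mathbf{H}_k^\top e_m$. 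This is a slicker route to the same leading term and avoids a side-calculation the paper has to do; the rest of the argument (residual bookkeeping, Berry--Esseen) is identical in substance. Your cross-covariance bound $O(\sigma^2 r^3/p^3)$ is cruder than the paper's relative bound $O(\tfrac{\sigma^2\mu_0^2 r}{p}(\|\mathbf{V}_1^\top e\|^2+\cdots))$ (the paper's version shows the cross terms are negligible without invoking the hypothesized lower bound on $s_{ijk}$), but your cruder bound is still dominated under the assumed $\gg$, so the final statement goes through. The residual bounds in your steps (ii) and (iii) are stated somewhat informally, in particular for the pure-noise term $\langle\mathcal{Z},\mathcal{P}_{\uhat_1}e_i\otimes\cdots\rangle$, which requires a Hoeffding-type concentration rather than a deterministic contraction against $s_{ijk}$-matching factors; the paper handles this carefully in \cref{lem:zentrywiselemma}. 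Nonetheless, the absolute bounds you invoke match those in the paper's lemmas, so this is a gap of exposition rather than of substance.
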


In \citet{xia_inference_2022} the authors obtain entrywise distributional theory under the assumption that $\mathcal{T}$ is rank one and that the noise is homoskedastic Gaussian.  In contrast, we allow subgaussian noise, arbitrary (possibly growing) rank, and unknown variances. In fact, for rank-one tensors, our results generalize those of \cite{xia_inference_2022}: in the rank-one setting, it holds that $\| e_{(j-1)p_3 + k}\t \mathbf{V}_1 \|^2 = (\U_2)_{j}^2 (\U_3)_{k}^2$ (and similarly for the other terms), so that the limiting variance in \cref{thm:asymptoticnormalityentries_v1} reduces to that of \cite{xia_inference_2022}.  On the other hand, our incoherence requirement is much stronger than that in \cite{xia_inference_2022}, but this is likely due to the fact that we allow arbitrary subgaussian noise.  In addition, our analysis is significantly different from \citet{xia_inference_2022}, who rely heavily on the rotational invariance of the Gaussian distribution, whereas our analysis is based on a leave-one-out argument via the constructions from \citet{agterberg_estimating_2022}. 
%The proof in \cite{xia_inference_2022} relies heavily on the rotational invariance of the Gaussian distribution, something that does not hold for more general subgaussian ensembles. In addition, our proof technique is significantly different from \citet{xia_inference_2022} to account for the possibility of non-Gaussian, heteroskedastic noise. 

Note that $s^2_{ijk}$ in \cref{thm:civalidity_v1} satisfies the lower bound
\begin{align*}
    s^2_{ijk} &\geq \sigma^2_{\min}  \left(\left\|e_{(j-1) p_{3}+k}^{\top} \mathbf{V}_{1}\right\|^{2}+\left\|e_{(k-1) p_{1}+i}\t \mathbf{V}_{2}\right\|^{2}+\left\|e_{(i-1) p_{2}+j}\t \mathbf{V}_{3}\right\|^{2}\right),
\end{align*}
with equality when $\sigma \equiv \sigma_{\min}$ (i.e., the noise is homoskedastic).  In \cref{thm:efficiency} we demonstrate that this lower bound is optimal when $\mathcal{Z}$ consists of homoskedastic Gaussian noise.  

\begin{remark}[Comparison to Entrywise Distributional  Theory for Matrices]
\cref{thm:asymptoticnormalityentries_v1} can be compared to several results on entrywise distributional guarantees for low-rank matrices, such as Theorem 4.10 of \citet{chen_spectral_2021} or Theorem 6 of \citet{yan_inference_2021}. %A key difference from our work and these works is that here we consider an order-three tensor, which means that the leading-order term stems from three separate quantities, each of which is linear in the noise tensor.  
Informally, the proof of \cref{thm:asymptoticnormalityentries_v1} shows that we have the approximate asymptotic expansion
\begin{align*}
    \mathcal{T}_{ijk} - \mathcal{\hat T}_{ijk} &\approx e_{i}^{\top} \mathbf{Z}_{1} \mathbf{V}_{1} \mathbf{V}_{1}^{\top} e_{(j-1) p_{3}+k}+e_{j}^{\top} \mathbf{Z}_{2} \mathbf{V}_{2} \mathbf{V}_{2}^{\top} e_{(k-1) p_{1}+i}+e_{k}^{\top} \mathbf{Z}_{3} \mathbf{V}_{3} \mathbf{V}_{3}^{\top} e_{(i-1) p_{2}+j}.
\end{align*}
Moreover, the asymptotic variance $s^2_{ijk}$ is simply the variance of each of the three leading-order terms, ignoring cross-terms.  While each of the three terms is not entirely uncorrelated (since they contain repetitions of elements of $\mathcal{Z}$), we show that this correlation is negligible due to the incoherence of singular vectors.  

On the other hand, by slightly modifying the results in, for example, \citet{chen_spectral_2021} or \citet{yan_inference_2021}, one can show that for a generic low-rank matrix $\mathbf{M = U \Lambda V\t}$ of dimension $p_1 \times p_2$, with $p_1 \asymp p_2$,  one has the approximate first-order decomposition 
\begin{align*}
    \mathbf{M}_{ij} - \mathbf{\hat M}_{ij} &\approx e_i\t \mathbf{Z} \mathbf{V} \mathbf{V}\t e_j + e_j\t \mathbf{Z}\t \U \U\t e_i,
\end{align*}
where $\mathbf{\hat M}$ is the truncated rank $r$ SVD of the observed matrix $\mathbf{M + Z}$, for $\mathbf{Z}$ consisting of independent noise.    Hence, in contrast to the matrix case, the tensor case results in \emph{three} (as opposed to two) leading-order perturbations, each of which is linear in the noise tensor $\mathcal{Z}$, which shows how the tensorial structure manifests in the asymptotics. 
\end{remark}

\begin{remark}[Signal-Strength Requirement] In addition to the signal-strength conditions from \cref{thm:eigenvectornormality_v1}, \cref{thm:asymptoticnormalityentries_v1} also requires a lower bound on the magnitude of the appropriate rows of the matrices $\mathbf{V}_k$. This condition implies a lower bound condition on the variance $s^2_{ijk}$ of the form
\begin{align*}
    s_{ijk} \gg \max\bigg\{ \frac{\sigma r^{3/2} \sqrt{\log(p)}}{p^{3/2}}, \frac{\sigma^2 r^2 \log(p)}{\lambda \sqrt{p}} \bigg\}.
\end{align*}
Ignoring factors of $r$ amounts to requiring that 
\begin{align*}
     s_{ijk} \gg \max\bigg\{ \frac{\sigma\sqrt{\log(p)}}{p^{3/2}}, \frac{\sigma^2 \log(p)}{\lambda \sqrt{p}} \bigg\}.
\end{align*}
A similar condition is required in the matrix setting in Theorem 4.10 of \cite{chen_spectral_2021} and Theorem 7 of \cite{yan_inference_2021}. Informally, this condition is required so that there is enough signal within that corresponding entry of the underlying tensor. In the more challenging regime $\lambda/\sigma \asymp p^{3/4}\polylog(p)$, this yields the condition
\begin{align*}
    s_{ijk} \gg \frac{\sigma}{p^{5/4} \polylog(p)}.
\end{align*}
Note that $s_{ijk}$ is of order $\sigma$ times the size of the maximum of the corresponding rows of $\mathbf{V}_k$.  Since $\| \mathbf{V}_k \|_{2,\infty} \lesssim  \frac{\sqrt{r}}{p}$ when $\mu_0 = O(1)$, we see that the corresponding rows are allowed to be as much as a factor of $p^{1/4} \polylog(p)$ smaller than the maximum row norm, which covers a wide range of possible values.  This regime is much broader than what is permitted in the matrix setting (e.g., Theorem 4.10 of \citet{chen_spectral_2021}). %), but this effect is due largely to the fact that the permitted lower bound $s^2_{ijk}$ depends on the value of $\lambda/\sigma$, and additional signal is required in order for the spectral initialization to succeed. 
\end{remark}

\subsection{Confidence Intervals} \label{sec:entrywiseinference}
%%%%%%%%%%%%%%%%%%%%%%%%%%%%%

We now turn our attention to uncertainty quantification for the  entries of the underlying low-rank tensor. 
In  \cref{al:ci_entries} we provide a data-driven plug-in estimator of the variance $s^2_{ijk}$, and the following result demonstrates the theoretical validity of this procedure. The general statement is available in \cref{thm:civalidity}.  

  \begin{algorithm}[t]
	\caption{Confidence Intervals for $\mathcal{T}_{ijk}$}
	\begin{algorithmic}[1]
		\State Input:  Singular vector estimate $\uhat_k$ and tensor estimate $\mathcal{\hat T}$ from \cref{al:tensor-power-iteration}, coverage level $1 - \alpha$.
		
			\State Let $\mathbf{\hat V}_k$ denote the $r_k$ right singular vectors of the matrix
		\begin{align*}
		    \mathcal{M}_k\big(  \mathcal{\tilde T} \big) \bigg( \big(\uhat_{k+1} \uhat_{k+1}\t \big) \otimes \big(\uhat_{k+2} \uhat_{k+2}\t \big) \bigg).
		\end{align*}
	\State Define $\mathcal{\hat Z} = \mathcal{\tilde T} - \mathcal{\hat T}$. Set
		\begin{align*}
		   \mathbf{\hat Z}_k &\coloneqq \mathcal{M}_k( \mathcal{\hat Z}).
		\end{align*}
		\State Set %{\red (Do you think it would be better to specify the upper and lower range of $a,b,c$ in your summation?)}
\begin{align*}
    \hat s^2_{ijk} &\coloneqq \sum_{a=1}^{p_{-1}} \big( \mathbf{\hat Z}_1 \big)_{ia}^2 \big( \mathbf{\hat V}_1 \mathbf{\hat V}_1\t \big)_{a,(j-1)p_3 + k}^2 + \sum_{b=1}^{p_{-2}} \big( \mathbf{\hat Z}_2 \big)_{jb}^2 \big( \mathbf{\hat V}_2 \mathbf{\hat V}_2\t \big)_{b,(k-1)p_1 + i}^2 \\
    &\quad + \sum_{c=1}^{p_{-3}} \big( \mathbf{\hat Z}_3 \big)_{kc}^2 \big( \mathbf{\hat V}_3 \mathbf{\hat V}_3\t \big)_{c,(i-1)p_2 + j}^2
\end{align*}
\State Let $z_{\alpha/2}$ denote the $1 - \alpha/2$ quantile of a standard Gaussian random variable.
\State Output confidence interval
\begin{align*}
    \mathrm{C.I.}^{\alpha}_{ijk}(\mathcal{\hat T}_{ijk}) &\coloneqq \big( \mathcal{\hat T}_{ijk} - z_{\alpha/2} \hat s_{ijk}, \mathcal{\hat T}_{ijk} + z_{\alpha/2} \hat s_{ijk} \big).
\end{align*}
		\end{algorithmic}\label{al:ci_entries}
\end{algorithm}

\begin{theorem}[Validity of Confidence Intervals of the Entries] \label{thm:civalidity_v1}
Instate the conditions of \cref{thm:asymptoticnormalityentries_v1}.  
Suppose further that
\begin{align*}
\left\|e_{(j-1) p_{3}+k}^{\top} \mathbf{V}_{1}\right\|^{2}+\left\|e_{(k-1) p_{1}+i}\t \mathbf{V}_{2}\right\|^{2}+\left\|e_{(i-1) p_{2}+j}\t \mathbf{V}_{3}\right\|^{2} &\gg  \frac{\sigma r^3  \log^{3/2}(p)}{\lambda p^{3/2}}.
\end{align*}
%Suppose that
%\begin{align*}
%    \kappa^2 \mu_0^2 r^{3/2} \sqrt{\log(p)} \lesssim p^{1/4},
%\end{align*}
%in addition to the conditions of \cref{thm:twoinfty}.  
Let $\mathrm{C.I.}^{\alpha}_{ijk}(\mathcal{\hat T}_{ijk})$ denote the output of \cref{al:ci_entries}.  Then it holds that
\begin{align*}
    \p\bigg( \mathcal{T}_{ijk} \in  \mathrm{C.I.}^{\alpha}_{ijk}(\mathcal{\hat T}_{ijk}) \bigg) = 1- \alpha  - o(1).
\end{align*}
\end{theorem}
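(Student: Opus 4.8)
The plan is to reduce the coverage statement to a ratio-consistency statement for the plug-in standard error and then invoke Slutsky's theorem. Write $b_1 = (j-1)p_3+k$, $b_2 = (k-1)p_1+i$, $b_3 = (i-1)p_2+j$. The coverage event of \cref{al:ci_entries} is $\{|\hat{\mathcal T}_{ijk}-\mathcal T_{ijk}| \le z_{\alpha/2}\,\hat s_{ijk}\}$, which I rewrite as $\{|\hat{\mathcal T}_{ijk}-\mathcal T_{ijk}|/s_{ijk} \le z_{\alpha/2}\,(\hat s_{ijk}/s_{ijk})\}$. By \cref{thm:asymptoticnormalityentries_v1} the quantity $(\hat{\mathcal T}_{ijk}-\mathcal T_{ijk})/s_{ijk}$ is uniformly close in distribution to a standard Gaussian, so once I show $\hat s_{ijk}/s_{ijk} = 1 + o_{\mathbb P}(1)$, splitting on a high-probability event where $\hat s_{ijk}/s_{ijk}$ is within $\eps$ of $1$ and letting $\eps\to 0$ slowly (using the uniform Berry--Esseen bound of \cref{thm:asymptoticnormalityentries_v1} and continuity of $\Phi$) gives coverage $\mathbb P(|Z|\le z_{\alpha/2}) - o(1) = 1-\alpha - o(1)$. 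Hence the whole task is to prove $\hat s^2_{ijk} = s^2_{ijk}(1+o_{\mathbb P}(1))$.

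To establish this I would handle the first summand of $\hat s^2_{ijk} - s^2_{ijk}$ (the other two modes being symmetric). The first summand of $s^2_{ijk}$ equals $\sum_a \sigma^2_{ia}(\mathbf V_1\mathbf V_1\t)^2_{a,b_1}$ (indexing $\mathcal Z$ along the mode-$1$ matricization), while that of $\hat s^2_{ijk}$ is $\sum_a (\hat{\mathbf Z}_1)^2_{ia}(\hat{\mathbf V}_1\hat{\mathbf V}_1\t)^2_{a,b_1}$, and I would telescope in three stages. (i) Replace $(\hat{\mathbf Z}_1)_{ia}$ by $(\mathbf Z_1)_{ia}$: since $\hat{\mathbf Z}_1 = \mathbf Z_1 - \mathcal M_1(\hat{\mathcal T}-\mathcal T)$, this costs $\lesssim \big(\max_{abc}|\hat{\mathcal T}_{abc}-\mathcal T_{abc}|\big)\cdot\big(\max_a |(\mathbf Z_1)_{ia}|\big)\sum_a (\hat{\mathbf V}_1\hat{\mathbf V}_1\t)^2_{a,b_1}$, controlled using \cref{cor:maxnormbound_v1}, the subgaussian tail $\max_a|(\mathbf Z_1)_{ia}|\lesssim\sigma\sqrt{\log p}$, and the projection identity $\sum_a (\hat{\mathbf V}_1\hat{\mathbf V}_1\t)^2_{a,b_1}=\|e_{b_1}\t\hat{\mathbf V}_1\|^2\le\|\hat{\mathbf V}_1\|_{2,\infty}^2$. (ii) Replace $\hat{\mathbf V}_1\hat{\mathbf V}_1\t$ by $\mathbf V_1\mathbf V_1\t$: this requires a row-wise perturbation bound for $\hat{\mathbf V}_1$ (the right singular vectors of $\mathcal M_1(\tilde{\mathcal T})((\hat{\mathbf U}_2\hat{\mathbf U}_2\t)\otimes(\hat{\mathbf U}_3\hat{\mathbf U}_3\t))$) against $\mathbf V_1$, combined with the crude high-probability bound $\sum_a (\mathbf Z_1)^2_{ia}\lesssim\sigma^2 p_{-1}$. (iii) Replace $(\mathbf Z_1)^2_{ia}$ by $\sigma^2_{ia}$: for fixed $i$ the variables $\{(\mathbf Z_1)_{ia}\}_a$ are independent, so $\sum_a((\mathbf Z_1)^2_{ia}-\sigma^2_{ia})(\mathbf V_1\mathbf V_1\t)^2_{a,b_1}$ is a weighted sum of centered subexponentials with deterministic weights $w_a=(\mathbf V_1\mathbf V_1\t)^2_{a,b_1}$ satisfying $\sum_a w_a=\|e_{b_1}\t\mathbf V_1\|^2$ and $\max_a w_a\le\|\mathbf V_1\|_{2,\infty}^2\|e_{b_1}\t\mathbf V_1\|^2$, so Bernstein's inequality bounds it by $\lesssim\sigma^2\|\mathbf V_1\|_{2,\infty}\sqrt{\log p}\,\|e_{b_1}\t\mathbf V_1\|^2$ with high probability.

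I would then assemble: by \cref{ass:noise}, $s^2_{ijk}\asymp\sigma^2 N$ with $N := \|e_{b_1}\t\mathbf V_1\|^2+\|e_{b_2}\t\mathbf V_2\|^2+\|e_{b_3}\t\mathbf V_3\|^2$, so it remains to check that the sum of the three stage errors over the three modes is $o_{\mathbb P}(\sigma^2 N)$. Stage (iii) contributes relative error $\lesssim\|\mathbf V_k\|_{2,\infty}\sqrt{\log p}\lesssim\sqrt{r\log p}/p=o(1)$ under the standing assumptions, with no extra hypothesis needed. Stages (i) and (ii) contribute relative errors that are products of the entrywise estimation error, $\ell_{2,\infty}$/$\sin\Theta$-type perturbation quantities for $\hat{\mathbf V}_k$, and logarithmic factors, divided by $N$; these are exactly the terms that vanish under the sharpened lower bound $N\gg\sigma r^3\log^{3/2}(p)/(\lambda p^{3/2})$ imposed in the statement, which is calibrated so that even when $N$ sits at its smallest allowed value the replacement errors remain below the variance floor $\sigma^2 N$. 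This yields $\hat s^2_{ijk}=s^2_{ijk}(1+o_{\mathbb P}(1))$, hence $\hat s_{ijk}/s_{ijk}=1+o_{\mathbb P}(1)$, which closes the argument.

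\textbf{Main obstacle.} The crux is stage (ii): a bare spectral $\sin\Theta$ bound on $\hat{\mathbf V}_1\hat{\mathbf V}_1\t-\mathbf V_1\mathbf V_1\t$ is too lossy to control $\sum_a (\mathbf Z_1)^2_{ia}\big((\hat{\mathbf V}_1\hat{\mathbf V}_1\t)^2_{a,b_1}-(\mathbf V_1\mathbf V_1\t)^2_{a,b_1}\big)$, because the noise weights are concentrated on a single row $b_1$. What is needed is an $\ell_{2,\infty}$-type expansion of $\hat{\mathbf V}_1$ around $\mathbf V_1$ analogous to \cref{thm:eigenvectornormality_v1} but for the right singular vectors of the partially projected matricization, and tracking its interplay with the entrywise error of \cref{cor:maxnormbound_v1}; this is where the bulk of the technical work lies and is the reason the theorem needs the extra incoherence/SNR hypothesis beyond those of \cref{thm:asymptoticnormalityentries_v1}.
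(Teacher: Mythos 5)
Your proposal follows essentially the same route as the paper's proof: Slutsky reduction to $\hat s_{ijk}/s_{ijk}=1+o_{\mathbb P}(1)$, then a telescope replacing $\hat{\mathbf Z}_k\to\mathbf Z_k$ (via \cref{cor:maxnormbound_v1}), $\hat{\mathbf V}_k\to\mathbf V_k$ (via an $\ell_{2,\infty}$ perturbation bound, the paper's \cref{lem:Vmatrixcloseness}), and $(\mathbf Z_k)^2\to\sigma^2_k$ (via Bernstein), with the paper merely merging your stages (i)--(ii) into a single step through the auxiliary $\tilde s^2_{ijk}$. You also correctly identified that the additional SNR hypothesis is calibrated to the $\hat{\mathbf V}_k\to\mathbf V_k$ replacement error, which is exactly where the paper's proof spends its extra condition.
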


Consequently, \cref{thm:civalidity_v1} shows that the confidence interval obtained by a plug-in estimate of the variance is asymptotically valid.   In addition, this result allows $r$ to grow --  a sufficient condition for  \cref{thm:civalidity_v1} to hold is that $r = o(p^{1/6}/\log(p))$.   To the best of our knowledge, this is the first result demonstrating the theoretical validity of a plug-in approach in the presence of heteroskedastic noise.

\begin{remark}[Signal-Strength Requirement] %\textcolor{green}{I think it is better to leave it stated in its current form, since it resembles existing works for the matrix case (e.g., the assumptions in Theorem 3 of https://arxiv.org/abs/2006.08580, Assumption 4 in https://arxiv.org/pdf/1909.00116.pdf, Equation 3.5 in https://arxiv.org/abs/2107.12365}
We note that \cref{thm:civalidity_v1} requires an additional signal-strength condition to \cref{thm:asymptoticnormalityentries_v1}. This additional requirement ensures that
\begin{align*}
    s^2_{ijk} \gg \frac{\sigma^3  \log^{3/2}(p)}{\lambda p^{3/2}},
\end{align*}
which implies that the variance dominates the bias in order to yield asymptotically valid confidence intervals. This is slightly more stringent than the condition in \cref{thm:asymptoticnormalityentries_v1}.  When $\lambda/\sigma \asymp p^{3/4} \polylog(p)$ and $r = O(1)$, then this requirement essentially states that
\begin{align*}
    s^2_{ijk} \gg \frac{\sigma^2}{p^{9/4}\polylog(p)}.
\end{align*}
On the other hand, under the same conditions, \cref{thm:asymptoticnormalityentries_v1} requires $s^2_{ijk}$ to satisfy
\begin{align*}
    s^2_{ijk} \gg \frac{\sigma^2}{p^{5/2} \polylog(p)},
\end{align*}
which is a factor of $p^{1/4}$ smaller than the condition in \cref{thm:civalidity_v1}.  However, this requirement still allows the appropriate rows of $\mathbf{V}_k$ to be a factor of $p^{1/8}\polylog(p)$ smaller than the maximum row norm.  %We believe that this requirement is due to our proof technique, where we bound relevant quantities with their maximum entries, which may be somewhat coarse. 
It may be possible to improve this result slightly by using a more refined analysis, %bounding scheme, 
but this is beyond the scope of this paper.
\end{remark}

    %%%%%%%%%%%%%%%%%%%%%%%%%%%%
    \subsection{Lower Bounds}
    \label{sec:entrylowerbounds}
    %%%%%%%%%%%%%%%%%%%%%%%%%%
    
We now turn our attention to the optimality of the plug-in estimate $\hat s^2_{ijk}$.  The following result shows that the variance $s^2_{ijk}$ is optimal over all unbiased estimators for $\mathcal{T}_{ijk}$ when $\mathcal{Z}$ consists of homoskedastic Gaussian noise.

%{\red (Your lower bound statement (Cramer-Rao type argument) states that: ``Then for any unbiased estimate"... Actually in the high-dimensional regimes, almost all estimators are biased. In particular, HOOI estimator is definitely biased (although it is hopeful to be nearly unbiased). So rigorously speaking, your lower bound statement does not match your upper bound statement and does not really apply here. Any thoughts on how to revise it?)}
\begin{theorem}[Efficiency Of Entrywise Confidence Intervals] \label{thm:efficiency}
Suppose that $\mathcal{Z}$ consists of independent Gaussian entries of variance lower bounded by $\sigma_{\min}^2$, and suppose that $\kappa^2 \mu_0 \sqrt{\frac{r}{p}} \ll 1$. Then for any unbiased estimate $\mathcal{\tilde T}_{ijk}$ of $\mathcal{T}_{ijk}$ it holds that
\begin{align*}
    \mathrm{Var}&(\mathcal{\tilde T}_{ijk}) \\
    &\geq (1 - o(1)) 
\sigma^2_{\min}  \left(\left\|e_{(j-1) p_{3}+k}^{\top} \mathbf{V}_{1}\right\|^{2}+\left\|e_{(k-1) p_{1}+i}\t \mathbf{V}_{2}\right\|^{2}+\left\|e_{(i-1) p_{2}+j}\t \mathbf{V}_{3}\right\|^{2}\right).
\end{align*}
\end{theorem}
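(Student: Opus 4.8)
The plan is to establish a Cramér--Rao-type lower bound. Since the noise $\mathcal{Z}$ is Gaussian, the observation $\widetilde{\mathcal{T}} = \mathcal{T} + \mathcal{Z}$ is Gaussian with mean $\mathcal{T} = \mathcal{C}\times_1\mathbf{U}_1\times_2\mathbf{U}_2\times_3\mathbf{U}_3$ and known diagonal covariance (variances $\sigma_{ijk}^2$, bounded below by $\sigma_{\min}^2$). The natural parameter is the triple $(\mathcal{C},\mathbf{U}_1,\mathbf{U}_2,\mathbf{U}_3)$, but because of the orthogonality constraints $\mathbf{U}_k^\top\mathbf{U}_k = \mathbf{I}_{r_k}$ this is a constrained estimation problem. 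I would handle this by working on the smooth manifold $\mathcal{M}$ of rank-$\br$ Tucker tensors: parametrize a neighborhood of $\mathcal{T}$ in $\mathcal{M}$ by a chart $\theta\mapsto\mathcal{T}(\theta)$, $\theta\in\mathbb{R}^d$ where $d = \dim\mathcal{M} = \prod r_k + \sum_k(p_k r_k - r_k^2)$, compute the Fisher information $\mathbf{I}(\theta)$, and invoke the constrained/manifold Cramér--Rao bound: for any unbiased estimator of $g(\theta) = \mathcal{T}_{ijk}(\theta) = e_{ijk}^\top\mathrm{Vec}(\mathcal{T}(\theta))$,
\begin{align*}
    \mathrm{Var}(\widetilde{\mathcal{T}}_{ijk}) \geq (\nabla_\theta g)^\top \mathbf{I}(\theta)^{-1} (\nabla_\theta g).
\end{align*}

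The key computation is to evaluate this quadratic form and show it is at least $(1-o(1))\sigma_{\min}^2(\|e_{(j-1)p_3+k}^\top\mathbf{V}_1\|^2 + \|e_{(k-1)p_1+i}^\top\mathbf{V}_2\|^2 + \|e_{(i-1)p_2+j}^\top\mathbf{V}_3\|^2)$. Here is where I would simplify: rather than a full chart, note that the Fisher information for a Gaussian family with mean $\mathcal{T}(\theta)$ and covariance $\bSigma = \mathrm{diag}(\sigma_{ijk}^2)$ is $\mathbf{I}(\theta) = \mathbf{J}^\top\bSigma^{-1}\mathbf{J}$ where $\mathbf{J} = \partial\,\mathrm{Vec}(\mathcal{T}(\theta))/\partial\theta$ is the Jacobian, whose column space is exactly the tangent space $T_{\mathcal{T}}\mathcal{M}$. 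The constrained CRB quadratic form $(\nabla g)^\top\mathbf{I}^{-1}(\nabla g)$ then equals $v^\top\bSigma\, v$ minimized over all $v\in\mathbb{R}^{p_1p_2p_3}$ with $P_{T_{\mathcal{T}}\mathcal{M}}\,v = $ (the component of $e_{ijk}$ in the tangent space), i.e. it is the minimum of $v^\top\bSigma v$ over $v$ such that $\langle v, w\rangle = \langle e_{ijk}, w\rangle$ for all tangent $w$. Since $\bSigma \succeq \sigma_{\min}^2 \mathbf{I}$, we get $v^\top\bSigma v \geq \sigma_{\min}^2\|v\|^2 \geq \sigma_{\min}^2\|P_{T_{\mathcal{T}}\mathcal{M}} e_{ijk}\|^2$. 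So it remains to lower-bound $\|P_{T_{\mathcal{T}}\mathcal{M}}\, e_{ijk}\|^2$.

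The tangent space to the Tucker manifold at $\mathcal{T}$ decomposes (see e.g.\ the standard description of $T_{\mathcal{T}}\mathcal{M}$) into the "core direction" $\mathrm{span}(\mathbf{U}_1\otimes\mathbf{U}_2\otimes\mathbf{U}_3)$ plus three "factor directions" of the form $(\mathbf{U}_{1\perp}\otimes\cdot\otimes\cdot)$ contractions; projecting $e_{ijk}$ onto the factor-$1$ piece and discarding the (nonnegative) contributions of the other pieces gives a term whose squared norm is exactly $\|e_i^\top\mathbf{U}_1\|^2\cdot\|(e_j\otimes e_k)^\top\mathbf{V}_1\|^2$-type quantity; more carefully, projecting onto all of $\mathrm{span}(\mathbf{U}_1)\otimes\mathbb{R}^{p_2p_3}$ within mode-1 matricization dominates $\|e_{(j-1)p_3+k}^\top\mathbf{V}_1\|^2$ up to the incoherence factor, and similarly for modes $2,3$. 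Summing the three mode-wise lower bounds (justifying that the cross-terms and overlaps only cost a $1-o(1)$ factor, using $\kappa^2\mu_0\sqrt{r/p}\ll 1$ to control incoherence-type corrections exactly as in the proof of \cref{thm:efficiencyloadings}) yields the claimed bound.

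\textbf{Main obstacle.} The delicate point is the last step: the three mode-wise tangent directions are not orthogonal to each other (nor to the core direction), so I cannot literally add the three squared-norm lower bounds for free — I must show the overlap is negligible. I expect this is where the hypothesis $\kappa^2\mu_0\sqrt{r/p}\ll 1$ enters, exactly as in \cref{thm:efficiencyloadings}: the inner products between the mode-$k$ and mode-$\ell$ tangent components of $e_{ijk}$ scale like $\|\mathbf{U}_k\|_{2,\infty}\|\mathbf{U}_\ell\|_{2,\infty}$-type quantities, which the incoherence condition makes lower order than each individual term. The bookkeeping of the Tucker tangent-space projection (and verifying the constraint qualification / that the "nuisance" part of the manifold does not help the estimator) is the technical heart; everything else is the standard Gaussian CRB applied on a manifold.
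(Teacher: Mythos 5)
Your route — apply the manifold (constrained) Cramér--Rao bound on the Tucker manifold and lower-bound by the squared norm of the tangent-space projection of $e_{ijk}$ — is genuinely different from the paper's. The paper fixes the core $\mathcal{C}$ as known, treats $(\mathbf{U}_1,\mathbf{U}_2,\mathbf{U}_3)$ as unconstrained parameters, computes the resulting Fisher information explicitly as a block-diagonal term $\tilde{\mathcal{I}}$ (with blocks $\mathbf{\Lambda}_k^2$) plus an off-diagonal perturbation $\mathcal{H}$, and inverts via a Neumann series; the hypothesis $\kappa^2\mu_0\sqrt{r/p}\ll 1$ enters precisely to control $\|\tilde{\mathcal{I}}^{-1}\mathcal{H}\|$. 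Your approach avoids that operator-norm bookkeeping. However, two points in your final step need correcting.

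First, the ``main obstacle'' you flag — non-orthogonality of the mode-wise tangent directions — does not actually arise, and this is the cleanest feature of the route you chose. Write $\dot{\mathbf{U}}_k = \mathbf{U}_k\mathbf{A}_k + \mathbf{U}_{k\perp}\mathbf{B}_k$ with $\mathbf{A}_k$ skew-symmetric; the $\mathbf{U}_k\mathbf{A}_k$ part of the mode-$k$ factor direction produces $(\mathcal{C}\times_k\mathbf{A}_k)\times_1\mathbf{U}_1\times_2\mathbf{U}_2\times_3\mathbf{U}_3$ and is therefore absorbed into the core direction. After this gauge reduction the tangent space decomposes as $W_0\oplus W_1\oplus W_2\oplus W_3$ with $W_0 = \{\mathcal{G}\times_1\mathbf{U}_1\times_2\mathbf{U}_2\times_3\mathbf{U}_3\}$ and $W_k = \{\mathcal{C}\times_k(\mathbf{U}_{k\perp}\mathbf{B}_k)\times_{j\neq k}\mathbf{U}_j\}$, and these four subspaces are \emph{exactly} mutually orthogonal — e.g.\ $W_1\perp W_2$ reduces, after mode-1 matricization, to $\mathbf{U}_{1\perp}^\top\mathbf{U}_1 = 0$. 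So the four squared-norm contributions add exactly, with no cross-term to estimate. In particular you do not need the $\kappa^2$ in the hypothesis: $\mu_0\sqrt{r/p}\ll 1$ alone suffices for your argument, which is in fact a small improvement on the paper's assumption.

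Second, your formula for the projection onto the factor-$1$ piece is wrong. In mode-1 matricization $W_1 = \{(\mathbf{U}_{1\perp}\mathbf{B}_1)\mathbf{\Lambda}_1\mathbf{V}_1^\top\}$, whose orthogonal projector sends $\mathbf{M}\mapsto(\mathbf{I}-\mathbf{U}_1\mathbf{U}_1^\top)\mathbf{M}\,\mathbf{V}_1\mathbf{V}_1^\top$. Applied to $e_ie_{(j-1)p_3+k}^\top$ this has squared Frobenius norm $(1-\|e_i^\top\mathbf{U}_1\|^2)\,\|e_{(j-1)p_3+k}^\top\mathbf{V}_1\|^2$, not $\|e_i^\top\mathbf{U}_1\|^2\,\|e_{(j-1)p_3+k}^\top\mathbf{V}_1\|^2$ as you wrote — the $\mathbf{U}_1\mathbf{U}_1^\top$ version corresponds to the gauge part, which lives in $W_0$, and would be far too small. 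With the correct row factor, $(1-\|e_i^\top\mathbf{U}_1\|^2) = 1 - O(\mu_0^2 r/p) = 1-o(1)$ immediately, and summing the three mode-wise contributions (and discarding the nonnegative $W_0$ contribution) gives exactly the claimed bound.
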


Recall that $s^2_{ijk}$ in \cref{thm:civalidity_v1} satisfies the lower bound
\begin{align*}
    s^2_{ijk} &\geq \sigma^2_{\min}  \left(\left\|e_{(j-1) p_{3}+k}^{\top} \mathbf{V}_{1}\right\|^{2}+\left\|e_{(k-1) p_{1}+i}\t \mathbf{V}_{2}\right\|^{2}+\left\|e_{(i-1) p_{2}+j}\t \mathbf{V}_{3}\right\|^{2}\right),
\end{align*}
with equality when $\sigma \equiv \sigma_{\min}$ (i.e., the noise is homoskedastic). Consequently, taken together, Theorems \ref{thm:civalidity_v1} and \ref{thm:efficiency} show that Tensor SVD together plug-in variance estimation yields asymptotically efficient uncertainty quantification for the entries of the underlying low-rank tensor $\mathcal{T}$, with strict optimality for homoskedastic noise. However, as in the case for the estimated tensor singular vectors, \cref{thm:efficiency} only holds for unbiased estimators of $\mathcal{T}_{ijk}$, and while our results demonstrate that $\mathcal{\hat T}_{ijk}$ is \emph{asymptotically} unbiased, it may not be unbiased for finite samples.  Therefore, similar to the previous analysis, we provide a lower bound for the expected length for any confidence interval $\mathrm{C.I.}^{\alpha}_{ijk}(\mathcal{T},\mathcal{Z})$.

In what follows, recall we let $\Theta$ denote the parameter space given in \eqref{parameterspace}.  Define the set $\mathcal{I}_{\alpha}(\Theta,\{i,j,k\})$ via
\begin{align*}
     \mathcal{I}_{\alpha}(\Theta,\{i,j,k\}) := \bigg\{ \mathrm{C.I.}_{ijk}^{\alpha}(\mathcal{Z},\mathcal{T}) = [l, u]: \inf_{\mathcal{T} \in \Theta} \p_{\mathcal{T}}\big( l \leq  \mathcal{T}_{ijk} \leq u \big)\geq 1 - \alpha \bigg\};
 \end{align*}
 i.e., the set of valid confidence intervals such that $\mathcal{T}_{ijk}$ is contained in $\mathrm{C.I.}_{ijk}^{\alpha}(\mathcal{Z},\mathcal{T})$.  The following result quantifies the minimax length of any such confidence interval.

\begin{theorem}\label{thm:efficiency_order_ijk}
 Suppose that $\alpha$ satisfies $0 < \alpha < 1/2$, suppose $\mu_0 > 2$, and suppose that $r_{\max} \leq C r_{\min}$ and $p_k \leq C p_{\min}$. Then there is some constant $c > 0$ such that
 \begin{align*}
     &\inf_{\mathrm{C.I.}^{\alpha}_{ijk}(\mathcal{Z},\mathcal{T}) \in \mathcal{I}_{\alpha}(\Theta,\{i,j,k\})} \sup_{\mathcal{T} \in \Theta} \mathbb{E}_{\mathcal{T}} L\big(\mathrm{C.I.}^{\alpha}_{ijk}(\mathcal{Z},\mathcal{T}) \big) \\
     &\geq c \sigma_{\min} \sqrt{ \big\| e_{(j-1)p_3 + k}\t \mathbf{V}_1 \big\|^2 + \big\| e_{(k-1)p_1 + i}\t \mathbf{V}_2 \big\|^2 + \big\| e_{(i-1)p_2 + j}\t \mathbf{V}_3 \big\|^2 }.
 \end{align*}
 \end{theorem}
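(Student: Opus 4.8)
The plan is to prove this by a two-point (Le Cam) argument: reduce the lower bound on the expected confidence-interval length to the construction of two tensors in $\Theta$ that are statistically nearly indistinguishable yet differ appreciably in the $(i,j,k)$-entry. First I would record the standard reduction (in the spirit of \citet{cai_confidence_2017}): if $\mathcal{T}^{(0)},\mathcal{T}^{(1)}\in\Theta$ satisfy $\mathrm{TV}\big(\p_{\mathcal{T}^{(0)}},\p_{\mathcal{T}^{(1)}}\big)\le 1-2\alpha-\eta$ for some $\eta>0$, then every $\mathrm{C.I.}\in\mathcal{I}_{\alpha}(\Theta,\{i,j,k\})$ has $\sup_{\mathcal{T}\in\Theta}\E_{\mathcal{T}}L(\mathrm{C.I.})\gtrsim\big|\mathcal{T}^{(0)}_{ijk}-\mathcal{T}^{(1)}_{ijk}\big|$ with implied constant depending only on $\alpha,\eta$; indeed a data-driven interval covering $\mathcal{T}^{(0)}_{ijk}$ with probability $\ge1-\alpha$ under $\p_{\mathcal{T}^{(0)}}$ and much shorter than the gap must, with probability bounded away from zero, miss $\mathcal{T}^{(1)}_{ijk}$, contradicting the coverage requirement at $\mathcal{T}^{(1)}$ unless the two laws are close — here the hypothesis $0<\alpha<1/2$ is exactly what makes $1-2\alpha>0$, so the argument goes through. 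It therefore suffices to produce, for each mode $\ell\in\{1,2,3\}$, a pair of tensors in $\Theta$ at $O(1)$ total-variation distance whose $(i,j,k)$-entries differ by order $\sigma_{\min}\,\|e_{(j-1)p_3+k}^\top\V_1\|$, resp.\ $\sigma_{\min}\,\|e_{(k-1)p_1+i}^\top\V_2\|$, resp.\ $\sigma_{\min}\,\|e_{(i-1)p_2+j}^\top\V_3\|$, and then keep the best of the three.

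For the perturbation (take mode $1$; modes $2,3$ are symmetric) I would fix a base tensor $\mathcal{T}^{(0)}=\mathcal{C}\times_1\U_1\times_2\U_2\times_3\U_3\in\Theta$ sitting well inside $\Theta$ — every matricization of $\mathcal{C}$ has condition number $O(1)$ and smallest singular value $\asymp\lambda$, and each $\U_k$ is, say, $2$-incoherent — and let $\V_k$, $\mathbf{\Lambda}_k$ be the mode-$k$ right singular vectors and singular values of $\mathcal{M}_k(\mathcal{T}^{(0)})$, so these are the objects appearing in the statement. If $\V_1^\top e_{(j-1)p_3+k}=0$ there is nothing to prove for mode $1$; otherwise set $w=\V_1^\top e_{(j-1)p_3+k}$, $v=w/\|w\|\in\mathbb{R}^{r_1}$, and $a=\mathcal{P}_{(\U_1)_\perp}e_i/\|\mathcal{P}_{(\U_1)_\perp}e_i\|\in\mathbb{R}^{p_1}$; incoherence and $r_1\le\sqrt{p_1}$ give $\|\mathcal{P}_{(\U_1)_\perp}e_i\|^2\ge 1-4r_1/p_1\ge1/2$, so $a_i\ge1/\sqrt2$. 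For a small angle $\theta>0$ let $R_\theta$ be the rotation of $\mathbb{R}^{p_1}$ by $\theta$ in the plane $\mathrm{span}(\U_1v,\,a)$, the identity on the orthogonal complement, and set $\mathcal{T}^{(1)}=\mathcal{C}\times_1(R_\theta\U_1)\times_2\U_2\times_3\U_3$. The key structural facts are: (i) since $R_\theta$ is orthogonal, $\mathcal{M}_1(\mathcal{T}^{(1)})=R_\theta\mathcal{M}_1(\mathcal{T}^{(0)})$ and $\mathcal{M}_2(\mathcal{T}^{(1)}),\mathcal{M}_3(\mathcal{T}^{(1)})$ differ from $\mathcal{M}_2(\mathcal{T}^{(0)}),\mathcal{M}_3(\mathcal{T}^{(0)})$ only by right multiplication by orthogonal matrices, so $\mathcal{T}^{(1)}$ has the same multilinear rank and the same singular-value profile along every matricization as $\mathcal{T}^{(0)}$ (hence it inherits the condition-number, $\lambda_{\min}$, and SNR constraints of $\Theta$ for free), and moreover $\V_1$ is unchanged; (ii) $R_\theta$ disturbs only the $i$-th row of $\U_1$, by $O(\theta)$, and leaves $\U_2,\U_3$ fixed, so $\mathcal{T}^{(1)}$ is $(2+O(\theta))$-incoherent, which is $\le\mu_0$ once $\theta$ is at most a $\mu_0$-dependent constant times $\sqrt{r/p}$ (possible since $\mu_0>2$); and (iii) $\mathcal{M}_1(\mathcal{T}^{(1)}-\mathcal{T}^{(0)})=g\,h^\top$ with $g=(\cos\theta-1)\U_1v+\sin\theta\,a$ and $h^\top=v^\top\mathbf{\Lambda}_1\V_1^\top$, so $\|g\|=2|\sin(\theta/2)|\le\theta$, $\|h\|=\|\mathbf{\Lambda}_1v\|\asymp\lambda$, whence $\|\mathcal{T}^{(1)}-\mathcal{T}^{(0)}\|_F\lesssim\theta\lambda$, while
\begin{align*}
\big|\mathcal{T}^{(1)}_{ijk}-\mathcal{T}^{(0)}_{ijk}\big|
&= \big|e_i^\top g\big|\cdot\big|h^\top e_{(j-1)p_3+k}\big| \\
&\ge \big(\tfrac{\theta}{2}a_i-O(\theta^2)\big)\cdot\frac{w^\top\mathbf{\Lambda}_1 w}{\|w\|}
\ \gtrsim\ \theta\,\lambda\,\big\|e_{(j-1)p_3+k}^\top\V_1\big\|,
\end{align*}
using $w^\top\mathbf{\Lambda}_1 w\ge\lambda\|w\|^2$ and $a_i\ge1/\sqrt2$.

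To calibrate, I would take the noise homoskedastic with $\sigma_{abc}\equiv\sigma_{\min}$ (admissible in $\Theta$), so $\p_{\mathcal{T}^{(0)}},\p_{\mathcal{T}^{(1)}}$ are Gaussian location families and $\mathrm{KL}(\p_{\mathcal{T}^{(1)}}\|\p_{\mathcal{T}^{(0)}})=\|\mathcal{T}^{(1)}-\mathcal{T}^{(0)}\|_F^2/(2\sigma_{\min}^2)\lesssim\theta^2\lambda^2/\sigma_{\min}^2$. Choosing $\theta\asymp\sigma_{\min}/\lambda$ with a small enough constant makes this KL, hence by Pinsker also $\mathrm{TV}$, as small as we like, in particular below $(1-2\alpha)/4$; this is a legitimate choice because the SNR constraint $\lambda/\sigma\gtrsim r\sqrt p$ of $\Theta$ forces $\sigma_{\min}/\lambda\ll\sqrt{r/p}$, so the incoherence requirement from (ii) holds. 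With this $\theta$ the entry gap from (iii) is $\gtrsim\sigma_{\min}\|e_{(j-1)p_3+k}^\top\V_1\|$, so the reduction gives the lower bound $\gtrsim\sigma_{\min}\|e_{(j-1)p_3+k}^\top\V_1\|$. The symmetric constructions for modes $2,3$ (rotating $\U_2$, resp.\ $\U_3$) yield $\gtrsim\sigma_{\min}\|e_{(k-1)p_1+i}^\top\V_2\|$ and $\gtrsim\sigma_{\min}\|e_{(i-1)p_2+j}^\top\V_3\|$; taking the largest of the three and using $\max_\ell x_\ell\ge\tfrac{1}{\sqrt3}(\sum_\ell x_\ell^2)^{1/2}$ gives the claimed bound, with the hypotheses $r_{\max}\le Cr_{\min}$, $p_k\le Cp_{\min}$, $\mu_0>2$ used only to make the base tensor and the constants uniform across modes.

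The main obstacle is arranging that \emph{both} endpoints lie in $\Theta$ while the entry gap is as large as $\sigma_{\min}\|e_{(j-1)p_3+k}^\top\V_1\|$: the perturbation direction must load on coordinate $i$ of mode $1$, which is precisely what endangers the incoherence constraint. The resolution is the observation in (i)–(ii) — rotating a \emph{single} factor matrix preserves the entire singular-value profile of every matricization (so the spectral and SNR constraints of $\Theta$ need no checking) and touches only one row of one factor — together with the fact that the slack $\mu_0>2$, combined with $r\le\sqrt p$ and $\lambda/\sigma\gtrsim r\sqrt p$, comfortably absorbs the $O(\theta)$ increase in incoherence at the calibrated scale $\theta\asymp\sigma_{\min}/\lambda$.
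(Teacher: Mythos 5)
Your two-point reduction is the same as the paper's (both invoke Lemma~1 of \citet{cai_confidence_2017}), but the construction of the perturbed tensor is genuinely different, and in one respect cleaner. The paper perturbs all three factors simultaneously, setting $\mathbf{\bar U}_k = (\U_k + (\I - \U_k\U_k\t)\Delta_k)\,\mathbf{C}_{\Delta_k}^{-1/2}$ with $\Delta_k$ a rank-one matrix whose only nonzero row carries entries $\pm\eps/\lambda_\ell$, renormalizes, and then argues the entry gap is dominated by the three linear-in-$\Delta_k$ contributions. Because all three modes move at once, the gap picks up cross-terms (products of two or more $\Delta_k$'s, or the renormalization remainders $\Gamma_k$) that have to be subtracted off; this is precisely what forces the paper to impose the auxiliary hypothesis $\sqrt{\sum_\ell\|e\t\mathbf{V}_\ell\|^2}\ge C\mu_0 r_{\max}^{3/2}\kappa/(\lambda\sqrt{p_{\min}})$ on the base tensor, and it also makes the incoherence check for $\mathbf{\bar U}_k$ a multi-step estimate through $\mathbf{C}_{\Delta_k}^{-1/2}$. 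You instead rotate a single factor, one mode at a time, in the plane spanned by $\U_1 v$ and a normalized out-of-column-space direction aimed at coordinate $i$. This buys you three things the paper's route does not: membership of the perturbed tensor in $\Theta$ is essentially automatic, because an orthogonal rotation of a single factor preserves the Tucker rank, all three matricization singular-value profiles, and $\mathbf{V}_1$ exactly (only incoherence needs verifying, and only on one row of one factor, by $O(\theta)$); the entry gap is an exact bilinear form $(e_i\t g)(h\t e_{(j-1)p_3+k})$ with no cross-term remainder, so no extra hypothesis on the base tensor is required; and the $\chi^2$/KL calibration $\theta\asymp\sigma_{\min}/\lambda$ follows immediately from $\|\mathcal{T}^{(1)}-\mathcal{T}^{(0)}\|_F=\|g\|\,\|h\|\lesssim\theta\lambda$. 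The price is that each single-mode rotation only delivers $\gtrsim\sigma_{\min}\|e\t\mathbf{V}_\ell\|$ for that one $\ell$, so you take the best of three and invoke $\max_\ell x_\ell\ge\tfrac{1}{\sqrt3}(\sum_\ell x_\ell^2)^{1/2}$ — a negligible concession. Both arguments arrive at the stated bound; yours is shorter, eliminates the cross-term bookkeeping, and the hypotheses $\mu_0>2$, $\lambda/\sigma\gtrsim \kappa r_{\max}\sqrt{p_{\max}}$, and $r_{\max}\le p_{\min}^{1/2}$ are used for exactly the same purpose in both proofs (to absorb the incoherence blow-up of the perturbed factor at the calibrated $\theta$).
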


Analogous to the case of the singular vectors, we see that \cref{thm:efficiency,thm:efficiency_order_ijk} in tandem demonstrate that the confidence intervals obtained by \cref{al:ci_entries} is essentially optimal.  

\subsection{Entrywise Convergence of \texttt{HOOI}}
In the previous results, we require that there is sufficient signal strength in the entry of the underlying tensor in order to obtain valid confidence intervals. The following result shows that can still attain a strong rate of convergence in entrywise max-norm even when there is not sufficient signal strength.  As throughout the rest of this main paper, we focus on the regime $\kappa,\mu_0 = O(1)$, but a more general result is available in \cref{cor:maxnormbound}. 

\begin{theorem}\label{cor:maxnormbound_v1}
Instate the conditions of \cref{thm:eigenvectornormality_v1}, and suppose that
\begin{align*}
     r^{3/2} \sqrt{\log(p)} \lesssim p^{1/4}.
\end{align*}
Then the following bound holds with probability at least $1 - O(p^{-6})$:
\begin{align*}
    \| \mathcal{\hat T} - \mathcal{T} \|_{\max} &\lesssim \frac{\sigma \sqrt{r\log(p)}  }{p}  %\frac{\sigma \kappa \mu_0^3 r^{2} \sqrt{\log(p)}}{p^{3/2}} 
    + \frac{\sigma^2  r^3 \log(p)}{\lambda \sqrt{p}}. %\frac{\mu_0\sigma \sqrt{r\log(p)}  }{p} + 
    %\frac{\sigma \kappa \mu_0 \sqrt{r\log(p)}}{p} + \frac{\sigma^2 \mu_0^4 \kappa^3 r^3 \log(p)}{\lambda \sqrt{p}}
\end{align*}
Consequently, when following condition holds:
\begin{align*}
   % \kappa \mu_0^2 r \lesssim \sqrt{p}; \qquad 
   \lambda/\sigma \gtrsim   r^{5/2} \sqrt{p\log(p)},
\end{align*}
the bound above reduces to
\begin{align*}
     \| \mathcal{\hat T} - \mathcal{T} \|_{\max} &\lesssim \frac{ \sigma \sqrt{r\log(p)}}{p}.
\end{align*}
%In particular, this bound holds if $\mu_0 = O(1)$ and $\kappa^3 r^{5/2} = o(p^{1/4})$.
\end{theorem}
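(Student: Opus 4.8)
The plan is to build directly on the first-order expansion for the singular vectors from \cref{thm:eigenvectornormality_v1} and its consequences, tracking the entrywise error of $\mathcal{\hat T} = \mathcal{\tilde T} \times_1 \uhat_1 \times_2 \uhat_2 \times_3 \uhat_3$ through a telescoping decomposition. Writing $\mathcal{P}_k = \uhat_k \uhat_k\t$ and recalling $\mathcal{\hat T} = \mathcal{\tilde T} \times_1 \mathcal{P}_1 \times_2 \mathcal{P}_2 \times_3 \mathcal{P}_3$ (since $\mathcal{\hat T}$ as defined in \cref{al:tensor-power-iteration} is the projection of $\mathcal{\tilde T}$ onto the estimated subspaces, viewed back in the original coordinates), I would decompose $\mathcal{\hat T} - \mathcal{T}$ into (i) a ``noise projected onto the true subspaces'' term $\mathcal{Z} \times_1 \mathcal{P}_{\U_1} \times_2 \mathcal{P}_{\U_2} \times_3 \mathcal{P}_{\U_3}$, (ii) terms capturing the error $\mathcal{P}_k - \mathcal{P}_{\U_k}$ acting on the signal $\mathcal{T}$, and (iii) cross terms where $\mathcal{P}_k - \mathcal{P}_{\U_k}$ acts on the noise $\mathcal{Z}$. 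The standard telescoping identity $\mathcal{P}_1\otimes\mathcal{P}_2\otimes\mathcal{P}_3 - \mathcal{P}_{\U_1}\otimes\mathcal{P}_{\U_2}\otimes\mathcal{P}_{\U_3}$ expands into a sum of terms each containing at least one factor $(\mathcal{P}_k - \mathcal{P}_{\U_k})$, and since $\mathcal{P}_{\U_k}$ annihilates nothing on $\mathcal{T}$ while $(\I - \mathcal{P}_{\U_k})\mathcal{T} = 0$ along mode $k$, the genuinely signal-dependent error terms all carry a $\sin\Theta$-type factor.

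First I would bound term (i): entrywise, $\big(\mathcal{Z} \times_1 \mathcal{P}_{\U_1} \times_2 \mathcal{P}_{\U_2} \times_3 \mathcal{P}_{\U_3}\big)_{ijk} = e_i\t \U_1 \U_1\t \mathcal{M}_1(\mathcal{Z}) (\U_2\otimes\U_3)(\U_2\otimes\U_3)\t e_{(j-1)p_3+k}$, which is a linear functional of the independent subgaussian entries of $\mathcal{Z}$ with variance proxy controlled by $\sigma^2 \|\U_1\|_{2,\infty}^2 \|\U_2\otimes\U_3\|_{2,\infty}^2 \lesssim \sigma^2 (r/p)(r^2/p^2)$ when $\mu_0 = O(1)$; a Bernstein/Hoeffding bound plus a union bound over the $p^3$ entries gives a contribution of order $\sigma \sqrt{r^3 \log(p)}/p^{3/2} \cdot \sqrt{p^{?}}$ — here I must be careful: the correct normalization gives $\|\mathcal{Z} \times_1 \mathcal{P}_{\U_1}\times_2 \mathcal{P}_{\U_2}\times_3\mathcal{P}_{\U_3}\|_{\max} \lesssim \sigma\sqrt{r\log(p)}/p$, matching the claimed leading term (the three subspace projections each contribute $\sqrt{r/p}$ in $\ell_{2,\infty}$ but the variance of the bilinear form scales like $\sigma^2 \cdot (r/p)$ from mode one times $O(1)$ from the product of the other two because $\|(\U_2\otimes\U_3)\U_2\t\otimes\U_3\t e\|^2 = \|(\U_2\otimes\U_3)\t e\|^2 \le \|\U_2\|_{2,\infty}^2\|\U_3\|_{2,\infty}^2 \cdot$ — I'd need to recompute this precisely, but the target is $\sigma\sqrt{r\log p}/p$). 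Next, for the signal-dependent terms (ii), each carries a factor $\|\uhat_k\uhat_k\t - \U_k\U_k\t\| \cdot \|\mathcal{T}\|_{\max}$-type bound; but more sharply, using the entrywise expansion $\uhat_k(\W_k)\t - \U_k = \mathbf{Z}_k\mathbf{V}_k\bLambda_k\inv + \mathbf{\Psi}^{(k)}$ with the $\ell_{2,\infty}$ bound on $\mathbf{\Psi}^{(k)}$ from \cref{thm:eigenvectornormality_v1}, and incoherence of $\mathcal{T}$, these reduce to the second claimed term $\sigma^2 r^3\log(p)/(\lambda\sqrt{p})$ after multiplying the $O(\sigma r/(\lambda))$-scale subspace errors against the $O(\lambda/p^{3/2})$-scale signal entries and the $O(\sigma\sqrt{p})$-scale spectral norm of $\mathbf{Z}_k$, with extra factors of $r$ from the rank. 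For the cross terms (iii) where a subspace error hits the noise, I would use $\|\mathbf{Z}_k\| \lesssim \sigma\sqrt{p}$ together with the $\ell_{2,\infty}$ bound on the subspace error and incoherence to show these are dominated by the same $\sigma^2 r^3\log(p)/(\lambda\sqrt{p})$ order.

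The main obstacle I anticipate is the careful bookkeeping of the cross terms in (iii): these involve products like $e_i\t(\uhat_1\uhat_1\t - \U_1\U_1\t)\mathcal{M}_1(\mathcal{Z})(\cdots)$ where the subspace perturbation $\uhat_1\uhat_1\t - \U_1\U_1\t$ is itself a function of $\mathcal{Z}$, so the bilinear-form-in-independent-noise structure is destroyed and one cannot naively apply concentration. The resolution is the leave-one-out machinery inherited from \citet{agterberg_estimating_2022} (which underlies \cref{thm:eigenvectornormality_v1}): one replaces $\uhat_k$ by a leave-one-out version $\uhat_k^{(-i)}$ that is independent of the $i$-th slice of $\mathcal{Z}$, controls the difference $\|\uhat_k - \uhat_k^{(-i)}\|$, and then the residual bilinear form becomes a legitimate sum of independent terms amenable to Bernstein. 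Since \cref{thm:eigenvectornormality_v1} is already proven (and I may assume it), I would import the requisite leave-one-out bounds as black boxes and only need to assemble the entrywise estimate; the condition $r^{3/2}\sqrt{\log p}\lesssim p^{1/4}$ is exactly what is needed to ensure the $\mathbf{\Psi}^{(k)}$ residuals and the leave-one-out perturbations are negligible relative to the two displayed terms. Finally, the simplification under $\lambda/\sigma \gtrsim r^{5/2}\sqrt{p\log p}$ is immediate: this is precisely the condition making $\sigma^2 r^3\log(p)/(\lambda\sqrt{p}) \lesssim \sigma\sqrt{r\log p}/p$, so the second term is absorbed into the first.
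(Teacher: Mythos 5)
There is a genuine gap: you have mis-identified the source of the leading term $\sigma\sqrt{r\log p}/p$.

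Your term (i), the projected noise $\mathcal{Z} \times_1 \mathcal{P}_{\U_1}\times_2\mathcal{P}_{\U_2}\times_3\mathcal{P}_{\U_3}$, is a bilinear form $a^\top \mathbf{Z}_1 b$ with $a = \U_1\U_1^\top e_i$ and $b = (\U_2\U_2^\top \otimes \U_3\U_3^\top)e_{(j-1)p_3+k}$, so Hoeffding gives $\lesssim \sigma\sqrt{\log p}\,\|a\|\|b\| \lesssim \sigma\sqrt{\log p}\,\mu_0^3 r^{3/2}/p^{3/2}$ --- this is the correct calculation you start but abandon, and it is \emph{smaller} than $\sigma\sqrt{r\log p}/p$ by a factor of order $r/\sqrt{p}$ (this is precisely the first bound in the paper's Lemma \ref{lem:zentrywiselemma}). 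The claim that ``the correct normalization gives $\sigma\sqrt{r\log p}/p$, matching the claimed leading term'' is wrong; that term is a residual, not the leading order.

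The true leading term comes from your term (ii), the subspace-estimation error hitting the signal, and your proposed bound for it misses it entirely. Using the projection expansion (\cref{cor:asymptoticnormality_projection} in the paper), $\uhat_1\uhat_1^\top - \U_1\U_1^\top = \mathbf{Z}_1\mathbf{V}_1\mathbf{\Lambda}_1^{-1}\U_1^\top + \U_1\mathbf{\Lambda}_1^{-1}\mathbf{V}_1^\top\mathbf{Z}_1^\top + \mathbf{\Phi}^{(1)}$; contracting the first piece against $\mathbf{T}_1 = \U_1\mathbf{\Lambda}_1\mathbf{V}_1^\top$ in $\big(\mathcal{T}\times_1(\uhat_1\uhat_1^\top - \U_1\U_1^\top)\times_2\U_2\U_2^\top\times_3\U_3\U_3^\top\big)_{ijk}$ makes the $\mathbf{\Lambda}_1$ factors cancel \emph{exactly}, yielding $e_i^\top \mathbf{Z}_1\mathbf{V}_1\mathbf{V}_1^\top e_{(j-1)p_3+k}$ --- a term that is linear in $\sigma$ and carries no $\lambda$. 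Hoeffding on this gives the claimed $\sigma\sqrt{r\log p}/p$. Your proposed calculation for (ii) multiplies an $O(\sigma\sqrt{r\log p}/\lambda)$ subspace error against an $O(\sigma\sqrt p)$ noise norm, producing a $\sigma^2/\lambda$-scaled quantity; this can only capture the \emph{residual} $\sigma^2 r^3\log p/(\lambda\sqrt p)$, not the dominant $\lambda$-free linear-in-$\sigma$ contribution. Without recognizing the $\mathbf{\Lambda}$ cancellation, your decomposition cannot reproduce the first term in the bound, so the argument as stated would not close.
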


\cref{cor:maxnormbound_v1} continues to hold without the assumption $\sigma \lesssim \sigma_{\min}$.  To the best of our knowledge, this is the first entrywise $\|\cdot \|_{\max}$ convergence guarantee for tensor denoising in the low Tucker rank setting.  In \citet{wang_implicit_2021} the authors study the entrywise convergence of a Riemannian algorithm for tensor completion, but they focus on the noiseless setting, and, as such, they do not need to consider additional complications arising from additive noise, so our results are not directly comparable.  Similarly, \citet{cai_nonconvex_2022} consider the entrywise convergence of their gradient descent algorithm. %but they assume a low CP-rank, which is more stringent than the low Tucker rank setting.  Moreover, 
Their results only hold with probability $1 -\delta$ for an arbitrary (but fixed) small constant $\delta$, which is weaker than our results.  

%.  Their argument hinges on a leave-one-out analysis idea as well.  However, an important note is that their leave-one-out construction considers only missingness (i.e. Bernoulli noise), and, as such, . Our analysis considers additive, heteroskedastic, subgaussian noise, and therefore requires a different type of analysis that takes into account the noise.  

%%%%%%%%%%%%%%%%%%%%%%%%%%%%%%%%%%%%

%%%%%%%%%%%%%%%%%%%%%%%%%%%%%%%%%

%{\red (Change the following title to ``Applications to Inference Tasks"?)}
%%%%%%%%%%%%%%%%%%%%%%%%%%%%%%%%%
\section{Applications to Inference Tasks}
%%%%%%%%%%%%%%%%%%%%%%%%%%%%%%%%%
\label{sec:consequences}
In this section, we discuss the consequences of our main theoretical results in the context of the inference problems discussed in \cref{sec:introapplications}.   In \cref{sec:testing}, we apply our results to testing in the tensor mixed-membership blockmodel. In \cref{sec:simultaneous}, we consider simultaneous confidence intervals. In \cref{sec:testing2}, we consider the problem of testing the equality of tensor entries.

\subsection{Testing  Membership Profiles in the Tensor Mixed-Membership Blockmodel} \label{sec:testing}
%%%%%%%%%%%%%%%%%%%%%%%%%%%%%%%%%
We now consider an application of our theoretical results to the tensor mixed-membership blockmodel. %whose analysis was initiated in \citet{agterberg_estimating_2022}.  
We say the signal tensor $\mathcal{T}$ is a \emph{tensor mixed-membership blockmodel} \citep{agterberg_estimating_2022} if
\begin{align*}
    \mathcal{T} = \mathcal{S} \times_1 \mathbf{\Pi}_1 \times_2 \mathbf{\Pi}_2 \times_3 \mathbf{\Pi}_3,
\end{align*}
where $\mathcal{S} \in \mathbb{R}^{r_1 \times r_2 \times r_3}$ is a \emph{mean tensor} and $\mathbf{\Pi}_k$ are \emph{mixed-membership matrices} satisfying
\begin{align*}
    \mathbf{\Pi}_k \in [0,1]^{p_k \times r_k}; \qquad \sum_{l=1}^{r_k} \big( \mathbf{\Pi}_k \big)_{i_k l} = 1  \text{ for all } 1 \leq i_k \leq p_k.
\end{align*}
Informally, along each mode there are $r_k$ communities, and the rows of $\mathbf{\Pi}_k$ describe the memberships of each node in each community, where the total membership for each node in each community sums to one.  This model generalizes the tensor blockmodel studied in a number of previous works, as if every row of $\mathbf{\Pi}_k$ is $\{0,1\}$ valued, one recovers the tensor blockmodel.  The identifiability of this model was studied in \citet{agterberg_estimating_2022}, who demonstrated that the existence of \emph{pure nodes} is necessary and sufficient when $\mathcal{S}$ is assumed full-rank; here a \emph{pure node} is a node $i_k$ such that $\big(\mathbf{\Pi}_k\big)_{i_k} \in \{0,1\}^{r_k}$.  

Now consider the setting that one has two particular nodes of interest $i_k$ and $i_k'$, and consider the null hypothesis
\begin{align*}
    H_0&: (\mathbf{\Pi}_k)_{i_k\cdot} = (\mathbf{\Pi}_k)_{i_k'\cdot}.
\end{align*}
In essence, $H_0$ determines whether two nodes have the same community memberships.  To test this hypothesis, we consider a test statistic partially motivated by \citet{fan_simple_2022} for the matrix setting.  Define
\begin{align*}
    \hat T_{i_k i_k'} \coloneqq \big((\uhat_k)_{i_k\cdot} - (\uhat_k)_{i_k'\cdot} \big)\t \bigg( \mathbf{\hat \Gamma}_k^{(i_k)} + \mathbf{\hat \Gamma}_k^{(i_k')} \bigg)\inv \big((\uhat_k)_{i_k\cdot} - (\uhat_k)_{i_k' \cdot} \big),
\end{align*}
where $\uhat_k$ is as in the previous section and $\mathbf{\hat \Gamma}_k^{(i_k)}$ and $\mathbf{\hat \Gamma}_k^{(i_k')}$ are the plug-in estimators from \cref{al:ci_eigenvector} (for the $i_k$'th and $i_k'$'th row of the singular vector estimates for mode $k$ respectively).

The following result establishes the asymptotic distribution of our test statistic  $\hat T_{i_k i_k'}$ when $r_k$ is fixed in $p$. 
\begin{theorem}\label{cor:testing}
Consider the tensor mixed-membership model, where $r_k$ is fixed along each mode.  Suppose that
\begin{itemize}
\item (Regularity) Each matricization of $\mathcal{S}$ has a bounded condition number.
\item (Identifiability) There is at least one pure node for each community along each mode.  
\item (Signal strength) The smallest singular value of $\mathcal{S}$ satisfies $\lambda_{\min}(\mathcal{S})^2/\sigma^2 \gg \frac{\log(p)}{p^{3/2}}.$
\item (Approximately equal community sizes) The community membership matrices $\mathbf{\Pi}_k$ satisfy $\lambda_{\min} \bigg( \mathbf{\Pi}_k\t \mathbf{\Pi}_k \bigg) \gtrsim p.$
\end{itemize}
Then:
\begin{enumerate}
    \item (Consistency under the null) Under the null hypothesis $(\mathbf{\Pi}_k)_{i_k\cdot} = (\mathbf{\Pi}_k)_{i_k'\cdot}$, it holds that
    %\begin{align*}
    $\hat T_{i_k i_k'} \to \chi^2_{r_k}$
%\end{align*}
in distribution as $p \to \infty$.
\item (Consistency against local alternatives) If it holds that $%\begin{align*}
    \frac{\lambda_{\min}(\mathcal{S}) p }{\sigma} \| \mathbf{\Pi}_{i_k\cdot} - \mathbf{\Pi}_{i_k'\cdot} \| \to \infty,$
%\end{align*}
then for any constant $C > 0$, 
%\begin{align*}
$    \p( \hat T_{i_k i_k'} > C ) \to 1.$
%\end{align*}
If instead, it holds that
\begin{align*}
    \big( (\U_k)_{i_k\cdot} - (\U_k)_{i_k'\cdot} \big)\t \bigg( \mathbf{\Gamma}^{(i_k)}_k + \mathbf{\Gamma}^{(i_k')}_{k} \bigg)^{-1}\big((\U_k)_{i_k\cdot} - (\U_k)_{i_k'\cdot} \big) \to \gamma < \infty
\end{align*}
then %
%\begin{align*}
$    \hat T_{i_k i_k'} \to \chi^2(\gamma),$
%\end{align*}
as $p \to \infty$, where $\chi^2(\gamma)$ denotes a noncentral $\chi^2$ distribution with noncentrality parameter $\gamma$. 
\end{enumerate}
\end{theorem}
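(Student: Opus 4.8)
<br>

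The plan is to read the statistic $\hat T_{i_k i_k'}$ through the distributional theory for the estimated singular vectors — \cref{thm:eigenvectornormality_v1,thm:eigenvectornormality2_v1}, applied to the \emph{difference} of two rows — together with the consistency of the plug-in covariance $\mathbf{\hat \Gamma}_k^{(m)}$ established in the proof of \cref{thm:civalidity2_v1}. First I would translate the blockmodel hypotheses into this language. Identifiability forces each $\mathbf{\Pi}_k$ to have full column rank $r_k$, and since $\mathcal{S}$ has full multilinear rank, the column space of $\mathcal{M}_k(\mathcal{T}) = \mathbf{\Pi}_k\,\mathcal{M}_k(\mathcal{S})(\mathbf{\Pi}_{k+2}\otimes\mathbf{\Pi}_{k+1})\t$ coincides with that of $\mathbf{\Pi}_k$; hence $\U_k = \mathbf{\Pi}_k\mathbf{B}_k$ for an invertible $\mathbf{B}_k\in\mathbb{R}^{r_k\times r_k}$, so that
\begin{align*}
(\U_k)_{i_k\cdot} - (\U_k)_{i_k'\cdot} = \mathbf{B}_k\t\bigl((\mathbf{\Pi}_k)_{i_k\cdot} - (\mathbf{\Pi}_k)_{i_k'\cdot}\bigr),
\end{align*}
which vanishes under $H_0$. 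Since the rows of $\mathbf{\Pi}_k$ lie on the simplex, $\lambda_{\max}(\mathbf{\Pi}_k\t\mathbf{\Pi}_k)\le p_k$, which together with the approximately-equal-community-size assumption gives $\mathbf{\Pi}_k\t\mathbf{\Pi}_k\asymp p\,\mathbf{I}_{r_k}$, hence $\sigma_{\min}(\mathbf{B}_k)\asymp\|\mathbf{B}_k\|\asymp p^{-1/2}$ and $\mu_0 = O(1)$; using the regularity assumption on $\mathcal{S}$ one further gets $\kappa = O(1)$ and $\lambda_{\min}(\mathcal{T})\asymp\lambda_{\min}(\mathcal{S})\,p^{3/2}$. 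In particular the signal-strength hypothesis is exactly $\lambda_{\min}(\mathcal{T})/\sigma\gg p^{3/4}\sqrt{\log p}$, which for $r$ fixed implies every SNR requirement of \cref{thm:eigenvectornormality_v1,thm:eigenvectornormality2_v1,thm:civalidity2_v1}.

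Next I would linearize. Write $\mathbf{W} = \mathbf{W}_k^{(t)}$, $\hat\Delta = (\uhat_k)_{i_k\cdot} - (\uhat_k)_{i_k'\cdot}$, $\Delta = (\U_k)_{i_k\cdot} - (\U_k)_{i_k'\cdot}$, and $\mathbf{M} = (\mathbf{\Gamma}_k^{(i_k)} + \mathbf{\Gamma}_k^{(i_k')})\inv$. Since $\mathbf{W}$ is orthogonal, $\hat T_{i_k i_k'} = (\mathbf{W}\hat\Delta)\t\bigl[\mathbf{W}(\mathbf{\hat \Gamma}_k^{(i_k)} + \mathbf{\hat \Gamma}_k^{(i_k')})\mathbf{W}\t\bigr]\inv(\mathbf{W}\hat\Delta)$. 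By \cref{thm:eigenvectornormality_v1} the leading terms of the $i_k$-th and $i_k'$-th rows of $\uhat_k(\mathbf{W})\t - \U_k$ are $\mathbf{\Lambda}_k\inv\mathbf{V}_k\t\mathbf{Z}_k\t e_{i_k}$ and $\mathbf{\Lambda}_k\inv\mathbf{V}_k\t\mathbf{Z}_k\t e_{i_k'}$, which depend on \emph{disjoint} entries of $\mathcal{Z}$ (because $i_k\ne i_k'$), hence are independent with covariances $\mathbf{\Gamma}_k^{(i_k)}$ and $\mathbf{\Gamma}_k^{(i_k')}$; running the Berry--Esseen argument behind \cref{thm:eigenvectornormality2_v1} on their difference gives
\begin{align*}
\sup_{A\in\mathcal{A}}\bigl|\mathbb{P}\{\mathbf{M}^{1/2}(\mathbf{W}\hat\Delta - \Delta)\in A\} - \mathbb{P}\{Z\in A\}\bigr| = o(1),\qquad Z\sim\mathcal{N}(0,\mathbf{I}_{r_k}),
\end{align*}
where $\mathbf{M}^{-1}\asymp(\sigma/\lambda_{\min}(\mathcal{T}))^2\,\mathbf{I}_{r_k}$. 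Finally, as in the proof of \cref{thm:civalidity2_v1}, $\mathbf{W}\mathbf{\hat \Gamma}_k^{(m)}\mathbf{W}\t$ is relative-error consistent for $\mathbf{\Gamma}_k^{(m)}$, so $\bigl[\mathbf{W}(\mathbf{\hat \Gamma}_k^{(i_k)} + \mathbf{\hat \Gamma}_k^{(i_k')})\mathbf{W}\t\bigr]\inv = (1+o_{\mathbb{P}}(1))\mathbf{M}$ in the positive-semidefinite ordering. Combining, $\hat T_{i_k i_k'} = (1+o_{\mathbb{P}}(1))\,\|\mathbf{M}^{1/2}\Delta + Z_p\|^2$, where $Z_p := \mathbf{M}^{1/2}(\mathbf{W}\hat\Delta - \Delta)$ is within $o(1)$ in convex-set distance of $\mathcal{N}(0,\mathbf{I}_{r_k})$.

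The three claims then follow from this representation. \textbf{Null:} $\Delta = \mathbf{0}$, so $\hat T_{i_k i_k'} = (1+o_{\mathbb{P}}(1))\|Z_p\|^2$; applying the bound to the convex ball $\{z:\|z\|^2\le t\}$ gives $\|Z_p\|^2\to\chi^2_{r_k}$, hence $\hat T_{i_k i_k'}\to\chi^2_{r_k}$. \textbf{Diverging alternative:} by Step 1, $\|\mathbf{M}^{1/2}\Delta\|\asymp\frac{\lambda_{\min}(\mathcal{S})\,p}{\sigma}\|(\mathbf{\Pi}_k)_{i_k\cdot} - (\mathbf{\Pi}_k)_{i_k'\cdot}\|$ (using $\mathbf{M}\asymp(\lambda_{\min}(\mathcal{T})/\sigma)^2\mathbf{I}$, $\|\Delta\|\asymp p^{-1/2}\|(\mathbf{\Pi}_k)_{i_k\cdot}-(\mathbf{\Pi}_k)_{i_k'\cdot}\|$, and $\lambda_{\min}(\mathcal{T})\asymp\lambda_{\min}(\mathcal{S})p^{3/2}$); if this $\to\infty$ then $\|\mathbf{M}^{1/2}\Delta + Z_p\|\ge\|\mathbf{M}^{1/2}\Delta\| - \|Z_p\| = \|\mathbf{M}^{1/2}\Delta\| - O_{\mathbb{P}}(1)\to\infty$, so $\mathbb{P}(\hat T_{i_k i_k'} > C)\to1$ for every fixed $C>0$. \textbf{Finite alternative:} if $\Delta\t\mathbf{M}\Delta\to\gamma<\infty$, set $v_p := \mathbf{M}^{1/2}\Delta$ (deterministic, $\|v_p\|^2\to\gamma$); for each $t$ the set $\{z:\|z + v_p\|^2\le t\}$ is convex, so the bound gives $\mathbb{P}(\|v_p + Z_p\|^2\le t) = \mathbb{P}(\|v_p + Z\|^2\le t) + o(1) = \mathbb{P}(\chi^2_{r_k}(\|v_p\|^2)\le t) + o(1)$ by rotational invariance of $Z$, and continuity of the noncentral $\chi^2_{r_k}$ distribution function in its noncentrality parameter yields $\hat T_{i_k i_k'}\to\chi^2_{r_k}(\gamma)$.

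The hard part will be the distributional step: upgrading \cref{thm:eigenvectornormality2_v1} from a single row to the difference of two rows with pooled covariance $\mathbf{\Gamma}_k^{(i_k)} + \mathbf{\Gamma}_k^{(i_k')}$, and verifying that the plug-in $\mathbf{\hat \Gamma}_k^{(m)}$ from \cref{al:ci_eigenvector} remains consistent after conjugation by the nuisance rotation $\mathbf{W}$. Both reduce to rerunning arguments already in the proofs of \cref{thm:eigenvectornormality2_v1,thm:civalidity2_v1} — the independence of the noise slices indexed by $i_k\ne i_k'$ is precisely what makes the covariances add — so the genuinely new ingredient is only the rotational-invariance argument producing the noncentral $\chi^2$ limit, which circumvents the fact that $v_p = \mathbf{M}^{1/2}\Delta$ need not converge as a vector even though its squared norm does.
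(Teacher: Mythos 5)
Your proposal is correct and follows essentially the same route as the paper's proof: translate the blockmodel parameters into the tensor-SVD language (incoherence, condition number, effective SNR), apply the row-wise distributional theory to the difference of two rows — exploiting that the leading terms for rows $i_k \neq i_k'$ involve disjoint slices of $\mathcal{Z}$, so the covariances add — invoke the plug-in consistency of $\mathbf{\hat\Gamma}_k^{(m)}$, and then read off the null, diverging-alternative, and finite-noncentrality conclusions from the resulting approximate (noncentral) $\chi^2$ representation. The only notable differences are cosmetic: the paper invokes Lemma~1 and Proposition~2 of \citet{agterberg_estimating_2022} for the facts that $\lambda \asymp \lambda_{\min}(\mathcal{S})p^{3/2}$, $\mu_0,\kappa = O(1)$, and $\U_k = \mathbf{\Pi}_k \U\pure$ with $\U\pure$ entries of order $p^{-1/2}$, whereas you re-derive the $\U_k = \mathbf{\Pi}_k\mathbf{B}_k$ factorization and the scaling of $\mathbf{B}_k$ from first principles; and in the finite-$\gamma$ case your argument via the convex-set Berry--Esseen bound applied to $\{z:\|z+v_p\|^2\le t\}$ plus rotational invariance of the Gaussian is slightly more explicit than the paper's appeal to ``the Delta method,'' and it cleanly handles the possibility that $v_p = \mathbf{M}^{1/2}\Delta$ converges only in squared norm rather than as a vector. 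Both gains are genuine improvements in rigor but do not change the underlying strategy.
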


Suppose that both $i_k$ and $i_k$' are pure nodes in the sense that $(\mathbf{\Pi}_k)_{i_k\cdot} = e_l$ for some basis vector $e_l \in \{0,1\}^{r_k}$.  It is straightforward to check that
\begin{align*}
    \| (\mathbf{\Pi}_k)_{i_k\cdot} - (\mathbf{\Pi}_k)_{i_k'\cdot} \| = \sqrt{2},
\end{align*}
and, hence it holds that
\begin{align*}
    \frac{\lambda_{\min}(\mathcal{S}) p}{\sigma}  \| (\mathbf{\Pi}_k)_{i_k\cdot} - (\mathbf{\Pi}_k)_{i_k'\cdot} \| &\geq  p^{1/4}\sqrt{\log(p)},
\end{align*}
which diverges.  Consequently, the test statistic consistently rejects whenever each node belongs solely to separate communities.

\begin{remark}[Signal Strength Condition]
In \citet{agterberg_estimating_2022}, the condition $\lambda_{\min}(\mathcal{S})^2/\sigma^2 \gtrsim \frac{\log(p)}{p^{3/2}}$ was shown to be sufficient for $\ell_{2,\infty}$ membership recovery when $r$ is fixed; our signal-strength condition is only slightly stronger than theirs.  %In addition, \citet{han_exact_2020} consider community detection in the tensor blockmodel, and they define the signal-strength parameter
%\begin{align*}
%    \tilde \lambda_{\min}(\mathcal{S})^2&\coloneqq \min_k\min_{l\neq l'} \| \mathcal{M}_k\big( \mathcal{S} \big)_{l\cdot} - \mathcal{M}_k\big( \mathcal{S} \big)_{l'\cdot} \|^2.
%\end{align*}
%It was discussed in \citet{agterberg_estimating_2022} that the signal-strength parameter $\tilde\lambda_{\min}(\mathcal{S})$ coincides with $\lambda_{\min}(\mathcal{S})$ up to a factor of the condition number.  In \citet{han_exact_2020} iw was demonstrated that the condition $\frac{\lambda_{\min}(\mathcal{S})^2}{\sigma^2} \gtrsim  \frac{1}{p^{3/2}}$ is required to achieve exact community detection in polynomial time.  Therefore, our assumption that $\frac{\lambda_{\min}(\mathcal{S})^2}{\sigma^2} \gg \frac{\log(p)}{p^{3/2}}$ nearly optimal up to the logarithmic term and implicit condition number dependence for .
However, in contrast to \citet{agterberg_estimating_2022}, % we allow mixed membership, meaning that each vertex can belong to a convex combination of communities, and 
our results apply to testing whether two vertices have the same community memberships,  whereas \citet{agterberg_estimating_2022} focus only on estimation.

In addition, we assume that the membership matrices satisfy $\lambda_{\min}\big( \mathbf{\Pi}_k\t \mathbf{\Pi}_k \big) \gtrsim p$. A similar condition was imposed in \citet{agterberg_estimating_2022}; implicitly this condition requires that the communities are approximately balanced. 
\end{remark}

\begin{remark}[Comparison to  Matrix Two-Sample Testing]
Our result can also be compared to Theorem 1 of \citet{fan_simple_2022}, where they prove (under a Bernoulli noise model), that their test statistic exhibits similar convergence in distribution (a similar result was obtained in \citet{du_hypothesis_2022}). Informally, their results demonstrate that 
\begin{align*}
    \sqrt{p}  \times  \mathrm{SNR} \times \| \mathbf{\Pi}_{i\cdot} - \mathbf{\Pi}_{j\cdot} \| \to \infty
\end{align*}
in order to achieve power converging to one, where their result holds for testing the $i$'th and $j$'th rows for a $p\times p$ symmetric matrix of the form $\mathbf{\Pi} \mathbf{B} \mathbf{\Pi}\t$, and $\mathrm{SNR}$ denotes a measurement of the signal-to-noise ratio.  In contrast, our results demonstrate that
\begin{align*}
    p \times \mathrm{SNR} \times \| \big(\mathbf{\Pi}_k\big)_{i_k} - \big(\mathbf{\Pi}_k\big)_{i_k'} \| \to \infty
\end{align*}
is required 
in order to achieve power converging to one.  Therefore our result yields an improvement of order $\sqrt{p}$ in the local power of our test statistic.  
%Therefore, since $\lambda_{\min}(\mathcal{S})^2/\sigma^2$ can be viewed as the analogue of $\theta$ for the subgaussian noise setting, our result yields an improvement of $\sqrt{p}$ in the power of our test statistic, which showcases the effect of the third-order structure on this problem.
\end{remark}

%As a special case of our result, we note that if there are exac
%
%We note that our result continues to hold if we instead assume that each $\mathbf{B}^{(i)}$ is a convex combination of $r$ distinct $\mathbf{B}^{(k)}$'s. Define the $L \times r$ matrix $\mathbf{\tilde \Pi} \in [0,1]^{L \times r}$ whose $l,k$ entry corresponds to the coefficient for the $l$'th network for the $k$'th distinct $\mathbf{B}$ matrix.  If we also assume that $\lambda_{\min}\big(\mathbf{\tilde \Pi}\t \mathbf{\tilde \Pi}\big) \gtrsim L$, then \cref{cor:testing} continues to hold without any additional modification.  Furthermore, our results also apply to the setting of testing if two distinct networks have the same $\mathbf{B}$ matrices, where now we need only use the third mode instead of the first two modes.  The result continues to hold in this setting as well.  

%%%%%%%%%%
%\subsubsection{Relationship to Prior Work}

%%%%%%%%%%%%%%%%%%%%%%%%%%%%%%%%%
\subsection{Simultaneous Confidence Intervals} \label{sec:simultaneous}
%%%%%%%%%%%%%%%%%%%%%%%%%%%%%%%%%

In many applications, one is often interested in more than just one single entry of $\mathcal{T}$, and instead one may wish to design simultaneous confidence intervals for multiple entries in a small localized region of the tensor; for example, in image denoising, one may be interested in particular collections of entries that may correspond to regions of interest in the underlying image.  Na\"ively applying \cref{thm:civalidity_v1} to all these entries will not result in confidence intervals that are simultaneously valid, since there may be a correlation between entries. In fact, the proof of \cref{thm:asymptoticnormalityentries_v1} shows that if two entries contain the same indices (e.g., the index $\{i,j,k\}$ and the index $\{i',j,k\}$), then they will be highly correlated as their leading terms will depend on the same rows of the matricizations of $\mathcal{Z}$.  Therefore, in order to obtain simultaneous confidence intervals, one may need to correct for the covariance arising due to the close proximity of the entries of interest.

To formalize this problem, let $J \subset [p_1] \times [p_2] \times [p_3]$ denote an index set.  When $|J|$ is sufficiently small relative to $p$, we can still obtain valid simultaneous confidence intervals for the vector $\mathrm{Vec}(\mathcal{\hat T}_J)$, where $\mathcal{\hat T}_J$ denotes the entries of $\mathcal{\hat T}$ corresponding to indices in $J$.   \cref{al:simultaneousci} describes an approach to obtain simultaneous confidence intervals for $\mathrm{Vec}(\mathcal{\hat T}_J)$. 
The following result establishes the validity of this procedure.  The more general result with $\mu_0$ and $\kappa$ permitted to grow can be found in \cref{sec:applicationproofs}.  % For convenience we will assume that $\mu_0, \kappa = O(1)$.  

  \begin{algorithm}[t]
	\caption{Simultaneous Confidence Intervals for $\mathcal{T}_{J}$}
	\begin{algorithmic}[1]
		\State Input:  Singular vector estimate $\uhat_k$ and tensor estimate $\mathcal{\hat T}$ from \cref{al:tensor-power-iteration}, coverage level $1 - \alpha$, and index set $J$.
		
			\State Let $\mathbf{\hat V}_k$ denote the $r_k$ right singular vectors of the matrix
		\begin{align*}
		    \mathcal{M}_k\big(  \mathcal{\tilde T} \big) \bigg( \big(\uhat_{k+1} \uhat_{k+1}\t \big) \otimes \big(\uhat_{k+2} \uhat_{k+2}\t \big) \bigg).
		\end{align*}
	\State Define $\mathcal{\hat Z} = \mathcal{\tilde T} - \mathcal{\hat T}$. Set
		\begin{align*}
		   \mathbf{\hat Z}_k &\coloneqq \mathcal{M}_k( \mathcal{\hat Z}).
		\end{align*}
		\State Let $\hat \Sigma^{(i)}_1$ denote the $p_{-1} \times p_{-1}$ diagonal matrix with entries consisting of the squared values of $e_i\t \mathbf{\hat Z}_1$, and define $\hat \Sigma_2^{(j)}$ and $\hat \Sigma_3^{(k)}$ similarly.  
		\State 
		Define the matrix $\hat S_J$ via
\begin{align*}
    \big(\hat S_{J} \big)_{\{i,j,k\},\{i',j',k'\}} &\coloneqq \mathbb{I}_{\{i= i'\}} e_{(j-1)p_3 + k}\t  \mathbf{\hat V}_1 \mathbf{\hat V}_1\t \hat \Sigma_1^{(i)} \mathbf{\hat V}_1 \mathbf{\hat V}_1\t e_{(j'-1)p_3 + k'} \\
    &\quad + \mathbb{I}_{\{j= j'\}} e_{(k-1)p_1 + i}\t  \mathbf{\hat V}_2 \mathbf{\hat V}_2\t\hat \Sigma_2^{(j)} \mathbf{\hat V}_2 \mathbf{\hat V}_2\t e_{(k'-1)p_3 + i'}  \\
    &\quad + \mathbb{I}_{\{k= k'\}} e_{(i-1)p_2 + j}\t  \mathbf{\hat V}_3 \mathbf{\hat V}_3\t \hat \Sigma_3^{(k)} \mathbf{\hat V}_3 \mathbf{\hat V}_3\t e_{(i'-1)p_2 +j'}.
\end{align*}
\State Compute the $1 - \alpha$ quantile $\tau_{\alpha}$ of a $\chi^2_{|J|}$ random variable, and construct the ball $\mathcal{B}_{1-\alpha} \coloneqq \{ z: \|z\|^2 \leq \tau_{\alpha} \}$.
\State Output confidence interval
\begin{align*}
    \mathrm{C.I.}^{\alpha}_{J}(\mathcal{\hat T}) &\coloneqq \mathrm{Vec}\big( \mathcal{\hat T}_J\big) + \big( \hat S_J\big)^{1/2} \mathcal{B}_{1-\alpha} = \{\mathrm{Vec}\big( \mathcal{\hat T}_J\big) + \big( \hat S_J\big)^{1/2} z: z \in \mathcal{B}_{1-\alpha}\}.
\end{align*}
		\end{algorithmic}\label{al:simultaneousci}
\end{algorithm}

\begin{theorem}[Simultaneous Inference for Sparse Collections of Entries] \label{thm:simultaneousinference_v1}
Instate the conditions in \cref{thm:eigenvectornormality_v1}, and suppose that 
\begin{align*}
    r^{3/2} \sqrt{\log(p)} \lesssim p^{1/4}.
\end{align*}
Let $J$ be a given index set with $|J| = o(p^{1/6})$.  Define the $|J|\times |J|$ matrix $S_J$ via
\begin{align*}
    (S_J)_{\{i,j,k\},\{i',j',k'\}} &\coloneqq  \mathbb{I}_{\{i= i'\}} e_{(j-1)p_3 + k}\t  \mathbf{V}_1 \mathbf{V}_1\t \Sigma_1^{(i)} \mathbf{ V}_1 \mathbf{ V}_1\t e_{(j'-1)p_3 + k'} \\
    &\quad + \mathbb{I}_{\{j= j'\}} e_{(k-1)p_1 + i}\t  \mathbf{ V}_2 \mathbf{ V}_2\t \Sigma_2^{(j)} \mathbf{ V}_2 \mathbf{ V}_2\t e_{(k'-1)p_3 + i'}  \\
    &\quad + \mathbb{I}_{\{k= k'\}} e_{(i-1)p_2 + j}\t  \mathbf{ V}_3 \mathbf{ V}_3\t \Sigma_3^{(k)} \mathbf{ V}_3 \mathbf{ V}_3\t e_{(i'-1)p_2 + j'},
\end{align*}
where $\Sigma_k^{(m)}$ is as in \cref{thm:asymptoticnormalityentries_v1}.  Suppose $S_J$ is invertible, and let $s^2_{\min}$ denote its smallest eigenvalue.  Suppose that 
\begin{align*}
    s_{\min}/\sigma &\gg \max\bigg\{ |J|^{3/2}  \frac{r^{3/2} \sqrt{\log(p)}}{p^{3/2}}, |J|^{3/2} \frac{ r^2 \log(p)}{(\lambda/\sigma) \sqrt{p}}, |J| \frac{  r^{3/2}  \log^{3/4}(p)}{(\lambda/\sigma)^{1/2} p^{3/4}}, \\
      &\qquad \qquad \qquad  |J|^{1/6} \frac{ r^{3/2} \sqrt{\log(p)}}{p^{4/3}}, |J|^{1/6} \frac{r^{7/6} \log^{5/6}(p)}{(\lambda/\sigma)^{1/3} p^{5/6}}\bigg\}.
\end{align*}
Let $\mathrm{C.I.}_J^{\alpha}(\mathcal{\hat T})$ denote the output of \cref{al:simultaneousci}.  Then it holds that
\begin{align*}
 \p\bigg\{ \mathrm{Vec}(\mathcal{ T}_J) \in  \mathrm{C.I.}^{\alpha}_{J}(\mathcal{\hat T}) \bigg\} = 1- \alpha - o(1).
\end{align*}
\end{theorem}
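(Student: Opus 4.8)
## Proof Proposal for Theorem \ref{thm:simultaneousinference_v1}

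The plan is to reduce the simultaneous coverage statement to a multivariate Berry--Esseen-type bound for the vector $\mathrm{Vec}(\mathcal{\hat T}_J - \mathcal{T}_J)$, rescaled by the (true) covariance $S_J$, followed by a consistency argument showing $\hat S_J$ approximates $S_J$ well enough that the plug-in quantile $\tau_\alpha$ remains valid. The starting point is the approximate linear expansion from the proof of \cref{thm:asymptoticnormalityentries_v1}: for each $\{i,j,k\} \in J$,
\begin{align*}
    \mathcal{\hat T}_{ijk} - \mathcal{T}_{ijk} &= e_i^\top \mathbf{Z}_1 \mathbf{V}_1 \mathbf{V}_1^\top e_{(j-1)p_3+k} + e_j^\top \mathbf{Z}_2 \mathbf{V}_2 \mathbf{V}_2^\top e_{(k-1)p_1+i} \\
    &\quad + e_k^\top \mathbf{Z}_3 \mathbf{V}_3 \mathbf{V}_3^\top e_{(i-1)p_2+j} + (\text{remainder}),
\end{align*}
where the remainder is controlled, uniformly over $J$, using \cref{cor:maxnormbound_v1} and the residual bounds underlying \cref{thm:eigenvectornormality_v1}. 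Stacking these over $J$ yields $\mathrm{Vec}(\mathcal{\hat T}_J - \mathcal{T}_J) = L(\mathcal{Z}) + R$, where $L$ is a fixed linear functional of the noise entries with covariance exactly $S_J$ (the indicator structure in $S_J$ encodes precisely which of the three leading terms share noise entries across two indices — the cross-terms that don't match an index vanish in expectation up to incoherence-negligible corrections, as in the proof of \cref{thm:asymptoticnormalityentries_v1}).

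Next I would apply a multivariate Berry--Esseen bound over convex sets (e.g. the version for sums of independent vectors with the dimension-dependent rate of Bentkus, as used for \cref{thm:eigenvectornormality2_v1}) to $S_J^{-1/2} L(\mathcal{Z})$. Because $L(\mathcal{Z})$ is a linear combination of independent mean-zero subgaussian scalars, writing it as a sum of $O(p^2)$ independent $|J|$-dimensional increments, the Berry--Esseen error scales like $|J|^{1/4}$ times a ratio of third to (3/2)-power of second moments, which after normalizing by $s_{\min}$ produces terms of the shape $|J|^{1/6} \cdot (\text{per-coordinate third moment bound})/s_{\min}$ — this is the source of the $|J|^{1/6}$-weighted terms in the stated lower bound on $s_{\min}/\sigma$. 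The remainder $R$ must be shown to satisfy $\|S_J^{-1/2} R\| = o_{\mathbb P}(1)$; since $\|S_J^{-1/2}\| \le s_{\min}^{-1}$ and $\|R\| \le \sqrt{|J|}\,\|\mathcal{\hat T} - \mathcal{T}\|_{\max}'$ where the prime denotes the max over $J$ of the per-entry residual from \cref{thm:asymptoticnormalityentries_v1}'s proof (of order $\sigma r^{3/2}\sqrt{\log p}/p^{3/2} + \sigma^2 r^2 \log p/(\lambda \sqrt p)$ plus the bias term of order $\sigma^2 r^3 \log^{3/2}(p)/(\lambda p^{3/2})$), requiring $\sqrt{|J|}$ times these to be $\ll s_{\min}$ produces the first three $|J|^{3/2}$- and $|J|$-weighted terms in the hypothesis (after squaring appropriately). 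Anti-concentration of the limiting $\chi^2_{|J|}$ ball (its density is bounded appropriately away from degeneracy) converts the $\sin\Theta$/additive errors into $o(1)$ coverage error.

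Finally I would handle the plug-in covariance: one shows $\|S_J^{-1/2} \hat S_J S_J^{-1/2} - \mathbf{I}_{|J|}\|_F = o_{\mathbb P}(1)$. This uses (i) $\mathbf{\hat V}_k \mathbf{\hat V}_k^\top \approx \mathbf{V}_k \mathbf{V}_k^\top$ in spectral norm at rate $\lesssim \sqrt{p}/(\lambda/\sigma)$ from the $\sin\Theta$ theory implicit in \cref{thm:eigenvectornormality_v1}, (ii) $\hat\Sigma_k^{(m)} = \mathrm{diag}((e_m^\top \mathbf{\hat Z}_k)^2)$ concentrates around $\Sigma_k^{(m)}$ after projection onto the (incoherent) low-dimensional subspace spanned by the relevant $\mathbf{\hat V}_k$ rows — this is exactly the computation already carried out in the proof of \cref{thm:civalidity_v1} for a single entry, now summed over the $|J|^2$ entries of the matrix $\hat S_J$, which is why an extra power of $|J|$ appears relative to \cref{thm:civalidity_v1}. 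Since $|J| \hat S_J$-type errors are dominated once $s_{\min}/\sigma$ exceeds the stated threshold, Slutsky/continuous-mapping on the quantile $\tau_\alpha$ (continuity of $\chi^2_{|J|}$ quantiles in the covariance perturbation) gives the claim.

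\textbf{Main obstacle.} The hardest part is the uniform-over-$J$ control of the correlation structure: unlike the single-entry case, one must verify that the \emph{entire} $|J| \times |J|$ matrix $S_J$ is well-approximated — both that the off-diagonal "matching index" entries are captured correctly by the indicator formula (requiring the non-matching cross-correlations between the three linear terms to be negligible \emph{simultaneously} for all $O(|J|^2)$ pairs, via incoherence) and that $\hat S_J$ concentrates in a strong enough norm ($S_J^{-1/2}$-relative Frobenius) that the $\chi^2_{|J|}$ quantile perturbation is $o(1)$ even as $|J| \to \infty$. Tracking how the invertibility assumption on $S_J$ (through $s_{\min}$) interacts with all five error terms in the hypothesis is the delicate bookkeeping that the theorem's somewhat intricate condition on $s_{\min}/\sigma$ reflects.
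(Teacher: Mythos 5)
Your overall strategy matches the paper's: expand $\mathrm{Vec}(\mathcal{\hat T}_J - \mathcal{T}_J)$ linearly in $\mathcal{Z}$ with a uniformly controlled remainder, apply a multivariate Berry--Esseen bound to the normalized sum, and then show the plug-in covariance $\hat S_J$ is close enough to $S_J$ that the $\chi^2_{|J|}$ quantile is asymptotically valid. The paper's proof (the generalization, Theorem \ref{thm:simultaneousinference}) proceeds in exactly these four conceptual steps.

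There are, however, two substantive differences worth flagging. First, you propose Bentkus's multivariate Berry--Esseen bound; the paper instead uses Corollary 2.2 of Shao--Zhang \citep{shao_berryesseen_2022} (the same tool as in \cref{thm:eigenvectornormality2}), which is a Berry--Esseen theorem for \emph{nonlinear} statistics $T = W + \Delta$. This matters: with Bentkus you handle only the linear part $S_J^{-1/2}\xi_J$ and must then stitch in the remainder $R$ via a separate Slutsky-type argument, whereas Shao--Zhang absorbs $\Delta$ directly into the Berry--Esseen error as $\frac{\Delta}{s_{\min}}\,\mathbb{E}\|\mathrm{Cov}^{-1/2}\xi_J\|$, producing the $|J|^{3/2}$-weighted residual terms in one shot (here $\Delta$ already carries a factor $|J|$ from the union bound over the $|J|$ entries, and $\mathbb{E}\|\mathrm{Cov}^{-1/2}\xi_J\|\lesssim |J|^{1/2}$). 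Second, your attribution of the $|J|^{1/6}$-weighted hypothesis terms to Berry--Esseen dimension dependence is not where the paper gets them. In the paper, the $|J|^{3/2}/\sqrt{p\log p}$ third-moment term from Berry--Esseen is absorbed by the $|J|=o(p^{1/6})$ assumption, and the $|J|^{1/6}$-weighted terms arise instead from the \emph{final} step: the error $\|\hat S_J^{-1/2} - S_J^{-1/2}\|$ enters through an $\eps$-enlargement of the confidence region, which is converted to a coverage error via Raič's multivariate anti-concentration bound (\citet{raicv_multivariate_2019}, Theorem 1.2) applied to convex sets in $\mathbb{R}^{|J|}$; solving the resulting cubic inequality $s_{\min}^3 \gg |J|^{7/2}M(\cdots)$ with $M\lesssim |J|^2(\cdots)$ (the covariance-plug-in $\|\cdot\|_{\max}$ error summed over $|J|^2$ entries) is what produces those terms after absorbing factors via $|J|=o(p^{1/6})$. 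You describe the right ingredients — $|J|^2$-entrywise covariance control, $\chi^2$-quantile continuity — but the bookkeeping of which hypothesis term comes from which step is off, and you omit the Gaussian total-variation comparison (Devroye et al., Theorem 1.3) that the paper uses to handle the discrepancy between $\mathrm{Cov}(\xi_J)$ and the ``model'' covariance $S_J$ caused by the incoherence-negligible cross-correlations. These are corrections to the execution rather than gaps in the approach.
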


Suppose $\mathcal{T}$ is the constant tensor, $\mathcal{Z}$ consists of homoskedastic noise, and $J$ consists of two index sets of the form $\{i,j,k\}$ and $\{i',j,k\}$ so that $j$ and $k$ are shared. Then $S_J$ is simply the matrix
\begin{align*}
    \sigma^2 \begin{pmatrix} \frac{3}{p^2} & \frac{2}{p^2} \\ \frac{2}{p^2} & \frac{3}{p^2} \end{pmatrix},
\end{align*}
which has smallest eigenvalue $\sigma^2\frac{1}{p^2}$.  Consequently,  $s_{\min}/\sigma = p\inv$, and the condition on $s_{\min}/\sigma$ holds very straightforwardly.  More general bounds on $s_{\min}$ may not be possible unless $\mathcal{T}$ has additional structure.

\begin{remark}[Signal Strength Condition]
The condition on $s_{\min}$ can be viewed as both a signal-strength requirement on the magnitudes of the corresponding entries of the rows of the $\mathbf{V}_k$'s as well as a condition governing how much the indices overlap, with more overlap requiring more minimum signal.  Note that if $J$ consists of $|J|$ disjoint index sets, then the condition on $s_{\min}$ is comparable to the condition in \cref{thm:civalidity_v1} accounting for the size of the set $J$.   If some terms share an index, then the condition on $s_{\min}$ governs how many terms can be shared.  
\end{remark}

 \begin{remark}[Allowable Size of $|J|$]
 While the condition on $s_{\min}$ is hard to parse, consider the setting that $r = O(1)$, and that $\lambda/\sigma \asymp p^{3/4} \polylog(p)$.  Then our condition translates to the requirement
\begin{align*}
   s_{\min}/\sigma \gg \max\bigg\{  |J|^{3/2} \frac{1}{ p^{5/4} \polylog(p) }, |J| \frac{ 1}{p^{9/8} \polylog(p)},   |J|^{1/6} \frac{1}{p^{13/12}\polylog(p)}\bigg\},
\end{align*}
which demonstrates a tradeoff in the size of the index set $|J|$.  Consider the setting that $|J| = O(p^{1/6 - \eps})$.  Then, ignoring logarithmic terms, this condition translates to the requirement %{\red (Change to 
$$s_{\min}/\sigma
     \gg \max\bigg\{ p^{-\eps-23/24} , p^{-\eps/6-19/18}\bigg\}$$%}
%\begin{align*}
%    s_{\min}/\sigma
%     &\gg \max\bigg\{  \frac{p^{-\eps}}{p^{23/24}} , \frac{p^{-\eps/6}}{p^{19/18}}\bigg\},
%end{align*}
From this, we see that we need at least that $\eps \geq \frac{1}{24}$ since the largest eigenvalue of $S_J/\sigma^2$ is of order at most $\frac{1}{p^2}$ by incoherence.  
However, as $\eps$ increases (i.e., $|J|$ gets smaller), we see that we require less signal strength in each row of $\mathbf{V}_1$, $\mathbf{V}_2$, and $\mathbf{V}_3$.  Therefore, smaller index sets require less signal strength.
 \end{remark}

%In the proof of \cref{thm:simultaneousinference}, we show that the corresponding population counterpart of $\hat S_J$ is invertible with smallest eigenvalue bounded below by the smallest magnitude of the entries, which is what yields our condition of the signal strength.  

%%%%%%%%%%%%%%%%%%%%%%%%

%%%%%%%%%%%%%%%%%%%%%%%%%%%%%%%%%
\subsection{Testing Equality of Entries} 
\label{sec:testing2}
%%%%%%%%%%%%%%%%%%%%%%%%%%%%%%%%%
In \cref{thm:simultaneousinference_v1} the results depend on the minimum eigenvalue $s_{\min}$ of $S_J$, which may be hard to interpret in general.  However, in many settings, one may only be interested in two entries of the underlying tensor.  Therefore, in this section, we consider the null hypothesis
\begin{align*}
    H_0: \mathcal{T}_{ijk} = \mathcal{T}_{i'j'k'}
\end{align*}
for some prespecified indices $\{ijk\}$ and $\{i'j'k'\}$.  By modifying the proof of \cref{thm:simultaneousinference_v1}, we can establish the  consistency of a procedure using a plug-in estimate for the variance, with the general result available in \cref{sec:applicationproofs}.   The procedure is summarized in \cref{al:entrytesting}.

\begin{algorithm}[t]
	\caption{Confidence Intervals for $\mathcal{T}_{ijk} - \mathcal{T}_{i'j'k'}$}
	\begin{algorithmic}[1]
		\State Input:  Singular vector estimate $\uhat_k$ and tensor estimate $\mathcal{\hat T}$ from \cref{al:tensor-power-iteration}, coverage level $1 - \alpha$.
		
			\State Let $\mathbf{\hat V}_k$ denote the $r_k$ right singular vectors of the matrix
		\begin{align*}
		    \mathcal{M}_k\big(  \mathcal{\tilde T} \big) \bigg( \big(\uhat_{k+1} \uhat_{k+1}\t \big) \otimes \big(\uhat_{k+2} \uhat_{k+2}\t \big) \bigg).
		\end{align*}
	\State Define $\mathcal{\hat Z} = \mathcal{\tilde T} - \mathcal{\hat T}$. Set
		\begin{align*}
		   \mathbf{\hat Z}_k &\coloneqq \mathcal{M}_k( \mathcal{\hat Z}).
		\end{align*}
	\State Let $\hat \Sigma^{(i)}$ denote the $p_2 \times p_3$ diagonal matrix with entries consisting of the squared values of $e_i\t \mathbf{\hat Z}_1$, and define $\hat \Sigma^{(j)}$ and $\hat \Sigma^{(k)}$ similarly.  \State Set
\begin{align*}
    \hat s^2_{ijk} &\coloneqq \sum_{a} \big( \mathbf{\hat Z}_1 \big)_{ia}^2 \big( \mathbf{\hat V}_1 \mathbf{\hat V}_1\t \big)_{a,(j-1)p_3 + k}^2 + \sum_{b} \big( \mathbf{\hat Z}_2 \big)_{jb}^2 \big( \mathbf{\hat V}_2 \mathbf{\hat V}_2\t \big)_{b,(k-1)p_1 + i}^2 \\
    &\quad + \sum_{c} \big( \mathbf{\hat Z}_3 \big)_{kc}^2 \big( \mathbf{\hat V}_3 \mathbf{\hat V}_3\t \big)_{c,(i-1)p_2 + j}^2.
\end{align*}
\State Set
\begin{align*}
    \hat s_{\{ijk\}\{i'j'k'\}}^2 &= \hat s^2_{ijk} + \hat s^2_{i'j'k'}  \\
    &\quad - \mathbb{I}_{\{i= i'\}} e_{(j-1)p_3 + k}\t  \mathbf{\hat V}_1 \mathbf{\hat V}_1\t \hat \Sigma^{(i)} \mathbf{\hat V}_1 \mathbf{\hat V}_1\t e_{(j'-1)p_3 + k'} \\
    &\quad - \mathbb{I}_{\{j= j'\}} e_{(k-1)p_1 + i}\t  \mathbf{\hat V}_2 \mathbf{\hat V}_2\t\hat \Sigma^{(j)} \mathbf{\hat V}_2 \mathbf{\hat V}_2\t e_{(k'-1)p_3 + i'}  \\
    &\quad - \mathbb{I}_{\{k= k'\}} e_{(i-1)p_2 + j}\t  \mathbf{\hat V}_3 \mathbf{\hat V}_3\t \hat \Sigma^{(k)} \mathbf{\hat V}_3 \mathbf{\hat V}_3\t e_{(i'-1)p_2 +j'}.
\end{align*}
\State Let $z_{\alpha/2}$ denote the $1 - \alpha/2$ quantile of a standard Gaussian random variable.
\State Output confidence interval
\begin{align*}
    \mathrm{C.I.}^{\alpha}_{\{ijk\},\{i'j'k'\}}( \mathcal{\hat T}) &\coloneqq \big( \mathcal{\hat T}_{ijk} - \mathcal{\hat T}_{i'j'k'} - z_{\alpha/2} \hat s_{\{ijk\}\{i'j'k'\}}, \mathcal{\hat T}_{ijk} - \mathcal{\hat T}_{i'j'k'} + z_{\alpha/2} \hat s_{\{ijk\}\{i'j'k'\}}\big).
\end{align*}
		\end{algorithmic}
		\label{al:entrytesting}
\end{algorithm}

\begin{theorem}\label{thm:entrytesting_v1}
Instate the conditions of \cref{thm:eigenvectornormality_v1}, and suppose that $r^{3/2} \sqrt{\log(p)} \lesssim p^{1/4}$.  
Suppose that  \begin{align*}
 \min\bigg\{ \| e_{(j-1)p_3 + k }\t \mathbf{V}_1\|^2, \|& e_{(j'-1)p_3 + k'}\t \mathbf{V}_1\|^2, \| e_{(k-1)p_1 + i}\t \mathbf{V}_2 \|^2\\
 &, \| e_{(k'-1)p_1 + i}\t \mathbf{V}_2 \|^2, \| e_{(i-1)p_2 + j}\t \mathbf{V}_3 \|^2, \| e_{(i'-1)p_2 + j'}\t \mathbf{V}_3 \|^2 \bigg\} \\
    &\gg \max\bigg\{ \frac{r^{3} \log(p)}{p^3} , \frac{r^{3} \log^{3/2}(p)}{(\lambda/\sigma)p^{3/2}}\bigg\} .
    \end{align*} 
    Let $\mathrm{C.I.}^{\alpha}_{\{ijk\},\{i'j'k'\}}( \mathcal{\hat T}) $ denote the output of \cref{al:entrytesting}.  
Then it holds that
\begin{align*}
   \p\bigg\{ \mathcal{T}_{ijk} - \mathcal{T}_{i'j'k'} \in   \mathrm{C.I.}^{\alpha}_{\{ijk\},\{i'j'k'\}}( \mathcal{\hat T}) \bigg\} =  1- \alpha - o(1).
\end{align*}
%\begin{align*}
%    \frac{\mathcal{\hat T}_{ijk} - \mathcal{\hat T}_{i'j'k'} - \big( \mathcal{T}_{ijk} - \mathcal{T}_{i'j'k'}\big)}{\hat s_{\{ijk\}\{i'j'k'\}}} \to N(0,1).
%\end{align*}
\end{theorem}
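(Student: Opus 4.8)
The plan is to reduce the claim to a one–dimensional Gaussian approximation for $\mathcal{\hat T}_{ijk} - \mathcal{\hat T}_{i'j'k'}$ together with a consistency statement for the plug-in variance $\hat s^2_{\{ijk\}\{i'j'k'\}}$, reusing the machinery behind \cref{thm:asymptoticnormalityentries_v1} and \cref{thm:civalidity_v1}. First, on the high-probability event of \cref{thm:eigenvectornormality_v1}, the first-order expansion driving the proof of \cref{thm:asymptoticnormalityentries_v1} gives, for each of the two entries,
\[
\mathcal{\hat T}_{ijk} - \mathcal{T}_{ijk} = e_i^\top \mathbf{Z}_1 \mathbf{V}_1\mathbf{V}_1^\top e_{(j-1)p_3+k} + e_j^\top \mathbf{Z}_2 \mathbf{V}_2\mathbf{V}_2^\top e_{(k-1)p_1+i} + e_k^\top \mathbf{Z}_3 \mathbf{V}_3\mathbf{V}_3^\top e_{(i-1)p_2+j} + R_{ijk},
\]
with $\|R_{ijk}\|$ controlled exactly as there. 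Subtracting the two expansions writes $\big(\mathcal{\hat T}_{ijk} - \mathcal{\hat T}_{i'j'k'}\big) - \big(\mathcal{T}_{ijk} - \mathcal{T}_{i'j'k'}\big)$ as a sum of six linear-in-$\mathcal{Z}$ terms plus $R_{ijk} - R_{i'j'k'}$.

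Second, I would identify the asymptotic variance. For each mode, the pair of linear terms coming from that mode is (conditionally) independent when the corresponding indices of $\{ijk\}$ and $\{i'j'k'\}$ differ, and is correlated through a shared row of the corresponding matricization of $\mathcal{Z}$ when that index is shared; the cross-mode covariances are negligible by the same incoherence bookkeeping used for $s^2_{ijk}$ in \cref{thm:asymptoticnormalityentries_v1}, using $\|\mathbf{V}_k\|_{2,\infty}\lesssim\sqrt{r}/p$. Summing, the variance of the linear part equals $s^2_{ijk} + s^2_{i'j'k'}$ minus the overlap corrections appearing in \cref{al:entrytesting}; call this $s^2_{\{ijk\}\{i'j'k'\}}$. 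Because $\{ijk\}\neq\{i'j'k'\}$, at least one mode has differing indices and hence contributes an uncancelled sum of two nonnegative variances, while every matching mode contributes a nonnegative amount; combined with the hypothesis on the six row norms $\|e_{(j-1)p_3+k}^\top \mathbf{V}_1\|^2,\dots,\|e_{(i'-1)p_2+j'}^\top \mathbf{V}_3\|^2$, this yields $s_{\{ijk\}\{i'j'k'\}} \gtrsim \sigma_{\min}\,\big(\min\text{ of the six row norms}\big)^{1/2}$, which dominates $\|R_{ijk} - R_{i'j'k'}\|$ with room to spare.

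Third, I would establish the Gaussian approximation: normalizing the six-term linear part by $s_{\{ijk\}\{i'j'k'\}}$, a Berry--Esseen-type bound (the Lindeberg/Stein argument of \cref{thm:asymptoticnormalityentries_v1}, using subgaussianity of the entries of $\mathcal{Z}$ and incoherence) shows that this ratio lies within $o(1)$ in Kolmogorov distance of a standard Gaussian. Combined with the residual bound from the previous step, $\big(\mathcal{\hat T}_{ijk} - \mathcal{\hat T}_{i'j'k'} - (\mathcal{T}_{ijk} - \mathcal{T}_{i'j'k'})\big)/s_{\{ijk\}\{i'j'k'\}}$ converges in distribution to $N(0,1)$.

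Finally, I would show $\hat s^2_{\{ijk\}\{i'j'k'\}}/s^2_{\{ijk\}\{i'j'k'\}} \to 1$ in probability and conclude by Slutsky; this is the step I expect to be the main obstacle. It splits into replacing $\hat{\mathbf V}_k$ by $\mathbf{V}_k$, which follows from the $\sin\Theta$ and perturbation bounds already in hand, and replacing the plug-in noise $\mathcal{\hat Z} = \mathcal{\tilde T} - \mathcal{\hat T}$ by $\mathcal{Z}$ inside both the diagonal sums defining $\hat s^2_{ijk}$ and the bilinear overlap corrections. The single-entry diagonal part is exactly the content of the proof of \cref{thm:civalidity_v1}; the new work is controlling the bilinear forms $e_{(j-1)p_3+k}^\top \hat{\mathbf V}_1\hat{\mathbf V}_1^\top \hat\Sigma^{(i)} \hat{\mathbf V}_1\hat{\mathbf V}_1^\top e_{(j'-1)p_3+k'}$ on the overlapping modes, where one shows that the entrywise error $\mathcal{\hat Z} - \mathcal{Z}$ (dominated by $\|\mathcal{\hat T} - \mathcal{T}\|_{\max}$ from \cref{cor:maxnormbound_v1}) and the fluctuations of the empirical second moments of the rows of $\mathcal{Z}$ about their means are negligible relative to $s^2_{\{ijk\}\{i'j'k'\}}$ — which is precisely where the second branch $r^3 \log^{3/2}(p)/\big((\lambda/\sigma)p^{3/2}\big)$ of the signal-strength hypothesis enters. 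Assembling the four pieces gives coverage $1 - \alpha - o(1)$.
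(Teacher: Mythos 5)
Your proposal follows essentially the same route as the paper's proof of \cref{thm:entrytesting}: take the first-order expansion $\mathcal{\hat T}_{ijk} - \mathcal{T}_{ijk} = \xi_{ijk} + O(\text{residual})$ from the proof of \cref{thm:asymptoticnormalityentries_v1}, difference the two expansions, compute $\mathrm{Var}(\xi_{ijk} - \xi_{i'j'k'}) = s^2_{\{ijk\}\{i'j'k'\}}(1+o(1))$ by tracking which modes share an index, lower bound $s_{\{ijk\}\{i'j'k'\}}$ via the unmatched mode, apply a Berry--Esseen bound, and establish $\hat s_{\{ijk\}\{i'j'k'\}} = s_{\{ijk\}\{i'j'k'\}}(1+o(1))$ by modifying the plug-in consistency argument of \cref{thm:civalidity_v1} to cover the bilinear overlap corrections. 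Your identification of where the signal-strength hypothesis and \cref{cor:maxnormbound_v1} enter, and your flag that the bilinear overlap terms are the genuinely new piece of the plug-in step, both agree with the paper.
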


\begin{remark}[Index Overlap and Correlation]
\cref{thm:entrytesting_v1}  demonstrates how the closeness of indices induces correlation via the additional correction terms required in $\hat s^2_{\{ijk\}\{i'j'k'\}}$ in \cref{al:entrytesting}.  For example, if one wishes to consider uncertainty quantification for the entries $\{i,j,k\}$ and $\{i',j,k\}$ simultaneously, the asymptotic variance in \cref{thm:entrytesting_v1} will have additional correlation since these two entries both share the indices $j$ and $k$.  Consequently, the closeness of indices corresponds to higher correlation, with the strength of correlation depending on how corresponding entries of right singular vectors interact.  
\end{remark}

%%%%%%%%%%%%%%%%%%%%%%%%%%%%%%%%%%%%%%%%%%%%%%%%%%%%
\section{Related Work}
\label{sec:relatedwork}
%%%%%%%%%%%%%%%%%%%%%%%%%%%%%%%%%
A number of authors have obtained theoretical results for tensor data under various structured models. Under  the Tucker low-rank model, \citet{zhang_tensor_2018} study the statistical and computational limits of estimation,  \citet{luo_sharp_2021} provide sharp  perturbation bounds for the \texttt{HOOI} algorithm,  \citet{zhang_optimal_2019} consider a version of Tucker decomposition where some modes have additional sparsity structure, and \citet{han_optimal_2022} consider a general framework for estimating Tucker low-rank tensors.  The works \citet{richard2014statistical,auddy_estimating_2022}, and \citet{huang_power_2022} consider the special case where the underlying tensor is rank one, with the latter focusing on the convergence of the power iteration algorithm, and \citet{auddy_estimating_2022} considering heavy-tailed errors.  Under the CP low-rank model, general perturbation bounds have been developed in \citet{auddy_perturbation_2023}, and \citet{han_tensor_2023} consider probabilistic bounds for their proposed algorithm.  Both \citet{zhou_optimal_2022} and \citet{cai_provable_2022} consider the low-rank tensor train model, and \citet{hao_sparse_2020} considers a setting where there are sparse corruptions.  Finally, a series of works  have considered clustering in the tensor blockmodel \citep{han_exact_2020,luo_tensor_2022,wu_general_2016,chi_provable_2020,wang_multiway_2019} or generalizations thereof \citep{agterberg_estimating_2022,hu_multiway_2022,lyu_optimal_2022,lyu_optimal_2022-1,jing_community_2021,hu_multiway_2022}.

These previous works have primarily focused on estimation guarantees in, for example, the $\sin\Theta$ distance and theoretical results on uncertainty quantification or distributional theory are comparatively lacking.
Perhaps the most related work is in \citet{cai_uncertainty_2022}, in which the authors consider uncertainty quantification for noisy tensor completion for tensors with low CP-rank.  However, in \citet{cai_uncertainty_2022}, they assume that the underlying tensor and noise are supersymmetric.  On the other hand, our results require independent noise (i.e., absence of symmetries), but our results allow for a general Tucker low-rank structure.   Therefore,  \cref{thm:asymptoticnormalityentries_v1} is not directly comparable to the results of \citet{cai_uncertainty_2022}, but our results complement theirs by generalizing to a broader model class and filling out the picture to the asymmetric setting.  

In addition, as discussed in \cref{sec:entrywiseinference}, the related work \citet{xia_inference_2022} considers statistical inference for tensors under homoskedastic Gaussian noise.  Besides the entrywise distribution of rank-one tensors, they also establish confidence regions for the error metric $\|\sin\Theta(\uhat_k,\U_k)\|^2_F$, which corresponds to a ``coarse'' confidence region for $\U_k$.  In contrast to this work, we establish fine-grained confidence regions for $(\uhat_k)_{m\cdot}$, and our results hold under heteroskedastic subgaussian noise.  Moreover, our proof techniques are significantly different from \citet{xia_inference_2022}, which rely heavily on the rotational invariance of the Gaussian distribution.  Finally, \citet{huang_power_2022} establish an asymptotic theory for the low CP-rank tensor signal-plus-noise model, and they use these results to obtain confidence intervals for linear functionals of the signals.  Similar to \citet{xia_inference_2022}, their analysis relies on the assumption of homoskedastic Gaussian noise.

In this work, we also study the entrywise convergence of the \texttt{HOOI} algorithm.  The previous work \citet{wang_implicit_2021} considers the $\|\cdot\|_{\max}$ convergence of \cref{al:tensor-power-iteration} in the noiseless tensor completion setting.  Our work is not directly comparable as we focus on the fully observed noisy setting.  Similarly,  \citet{cai_nonconvex_2022} provide bounds for their procedure to estimate general CP-rank tensors and our results are not directly comparable as they assume symmetry and allow for missingness.  In addition, we study the \texttt{HOOI} procedure, a ubiquitous algorithm for computing the tensor SVD, whereas  \citet{cai_nonconvex_2022} study a more specific gradient descent procedure for their problem.

%Since any low CP-rank tensor is also a low Tucker rank tensor, our results also hold for low CP-rank tensors, albeit without being able to estimate the individual components directly, but rather their underlying subspaces.  

Our work is closely related to that of \citet{agterberg_estimating_2022}, who study estimation in the tensor mixed-membership model as well as provide general $\ell_{2,\infty}$ perturbation bounds for tensor denoising.  While our proofs are closely related to their proofs insofar as we use their leave-one-out constructions,  \citet{agterberg_estimating_2022} focus on providing perturbation bounds, whereas we focus on distributional theory and uncertainty quantification.  In addition, we use several of their intermediate results to establish the validity of our test procedure in \cref{sec:testing}.  Beyond \citet{agterberg_estimating_2022}, our work is also related to leave-one-out analyses for matrix and tensor data, such as \citet{abbe_entrywise_2020,cai_nonconvex_2022,cai_subspace_2021,cai_uncertainty_2022,yan_inference_2021}.

\section{Numerical Simulations}

\label{sec:simulations}

In this section, we conduct numerical simulations for our proposed procedures.  In every simulation we run $2000$ independent Monte Carlo iterations.  \textcolor{black}{For all simulations, we fix the significance level (Type I error rate) at $\alpha = 0.05$.}

%\subsection{Distributional Theory}

\ \\ 
\noindent    
\textbf{Setup:} 
We design our simulation as follows.  First, we generate our tensor by drawing a mean tensor $\mathcal{S} \in \mathbb{R}^{r \times r \times r}$ with independent Gaussian entries, and then drawing $\mathbf{\Pi}_1,\mathbf{\Pi}_2$ and $\mathbf{\Pi}_3 \in [0,1]^{p \times r}$ independently from a Dirichlet distribution with all parameters set to one. We then form the signal tensor $\mathcal{T} = \mathcal{S} \times_1 \mathbf{\Pi}_1 \times_2 \mathbf{\Pi}_2 \times_3 \mathbf{\Pi}_3$.  Note that this procedure guarantees that $\mu_0 = O(1)$ with high probability.  Finally, we manually set the smallest singular value of $\mathcal{T}$ to be $\lambda = 1$.  This procedure is done once for each $p$.

To generate the noise, for a given value $\lambda/\sigma$ (where due to our parameterization, $\lambda/\sigma = \sigma\inv$), we first draw the standard deviations according to $U(0,\sigma)$, and then we generate the noise tensor $\mathcal{Z}_{ijk} \sim  N(0,\sigma^2_{ijk})$.  The standard deviations are drawn once for each value of $\lambda/\sigma$, but the noise tensor is redrawn at each Monte Carlo iteration. \textcolor{black}{In this section we only present coverage rates, but more general simulation results can be found in the appendix.}   %{\red ($N(0,\sigma_{ijk})$ seems a bit strange. Can you double-check?)} \textcolor{ForestGreen}{Agreed -- it is $\sigma^2_{ijk}$.}
\\ \ \\ \noindent
\textbf{Empirical Coverage Rates}: We now consider the approximate validity of Algorithms \ref{al:ci_eigenvector} and \ref{al:ci_entries} as demonstrated by \cref{thm:civalidity_v1} and \cref{thm:civalidity2_v1}, respectively.  We use the same setup as the previous setting, only in both cases do we take $r = 4$.  In \cref{fig:coveragerates} we display the empirical coverage rates and standard deviations for both  $(\U_{1})_{1\cdot}$ and $\mathcal{T}_{111}$, where we use the plug-in estimate $\mathbf{\hat \Gamma}_{1}^{(1)}$ and $\hat s_{111}$.

 \begin{table}[htb]
    \centering
    \begin{tabular}{|c|c | c | c |}
    \hline 
\multicolumn{4}{|c|}{Coverage Rates for $(\U_1)_{1\cdot}$}   \\ [1ex]
\hline 
  $p$ & $ \lambda/\sigma = p^{\gamma}$ & Mean  & Std  \\
  \hline 
  100 & $\gamma = 3/4$ & 0.984 &   0.0028 
\\ 
\hline 
  
 150 & $\gamma =  3/4$ &  1.000  & 0.0000 
 \\
\hline 
  
 100 & $\gamma = 7/8$ & 0.939   & 0.0054 
\\
\hline 
  
 150 & $\gamma = 7/8$ & 0.991
  &   0.0022
 \\
 \hline 
  
 100  & $\gamma = 1$ &  0.883 
&  0.0072 
\\\hline 
  
 150  & $\gamma =  1$ & 0.946  & 0.0051
\\ \hline 
  
\end{tabular} 
\hspace{20pt}
\begin{tabular}{|c|c | c | c |}
      \hline 
\multicolumn{4}{|c|}{Coverage Rates for $\mathcal{T}_{111}$}   \\ [1ex]
\hline 
  $p$ & $ \lambda/\sigma = p^{\gamma}$ & Mean  & Std  \\
  \hline 
  100 & $\gamma = 3/4$ & 0.949 &  0.0049 
\\ 
\hline 
  
 150 & $\gamma =  3/4$ &  0.943 &   0.0052 
\\
\hline 
  
 100 & $\gamma = 7/8$ & 0.949   & 0.0049
\\
\hline 
  
 150 & $\gamma = 7/8$ &0.938 &  0.0054 
 \\
 \hline 
  
 100  & $\gamma = 1$ & 0.938 & 0.0054
\\\hline 
  
 150  & $\gamma =  1$ &0.955 &   0.0047
\\ \hline 
  
\end{tabular}
    \caption{Empirical coverage rates for confidence intervals for both  $(\U_1)_{1\cdot}$ (left) and $\mathcal{T}_{111}$ (right) using \cref{al:ci_eigenvector} and \cref{al:ci_entries} respectively for varying $p$ and $\lambda/\sigma$.   The column ``Mean'' represents the empirical probability of coverage averaged over $200$ Monte Carlo iterations, and the column ``Std'' denotes the standard deviation of this coverage rate.}
    \label{fig:coveragerates}
\end{table}
% \begin{table}[]
%     \centering
 
%     \caption{Empirical coverage rates (Mean) and standard deviation (Std) for confidence intervals $\mathcal{T}_{111}$ using \cref{al:ci_entries} with $200$ Monte Carlo iterations.}
%     \label{tab:my_label}
% \end{table}

\ \\
\textbf{Tensor Mixed-Membership  Blockmodel}: We now consider applying \cref{cor:testing} to testing if the first two rows of $\mathbf{\Pi}_1$ are equal.  To generate our tensor mixed-membership blockmodel, we use the same procedure as in the previous simulations with  $p = 150$ and $r = 3$, only we also manually guarantee that there are pure nodes for each community (as required in \cref{cor:testing}), and we manually set the first two nodes via $\big(\mathbf{\Pi}_1\big)_{1\cdot} = \{.2,.6,.2\}$, and $\big(\mathbf{\Pi}_1\big)_{2\cdot} = \{.2,.6-\eps/2,.2+\eps/2\}$, where $\eps = \|(\mathbf{\Pi}_1)_{1\cdot} - (\mathbf{\Pi}_1)_{2\cdot} \|_1$ represents a local departure from the null hypothesis.  In \cref{tab:power} we display the empirical size and power of our test at $\alpha = .05$, with each column representing the empirical power (size for the first column) for varying values of $\eps$ \textcolor{black}{under both Gaussian (left) and Bernoulli (right) noise. The Bernoulli noise is generated by drawing the $\mathbf{\Pi}_k$ matrices the same as in the Gaussian case, but by setting the underlying mean tensor $\mathcal{S}$ to have entries within $\{.2,.3,.4,.5,.6,.8,.9\}$ (recycled), and then having entries rescaled by $\rho$, with smaller $\rho$ corresponding to sparser tensors (and hence weaker signal strength). Observe that for the Gaussian setting the power increases to one as $\eps$ increases, and it increases at a slower rate for smaller values of $\lambda/\sigma$.  Similarly, while the Bernoulli model exhibits weaker power, it still improves as the tensor becomes denser and $\eps$ increases.}
\begin{table}[htb]
    \centering
    \begin{tabular}{|c||c | c | c |c|c|} 
    \hline 
    \multicolumn{6}{|c|}{Size and Power of Test Statistic $\hat T_{i_k i_k'}$ (Gaussian)} \\ [1ex] \hline
 &   \multicolumn{5}{|c|}{$\|(\mathbf{\Pi}_1)_{1\cdot} - (\mathbf{\Pi}_1)_{2\cdot} \|_1 = \eps$}\\[0.5ex] \hline 
$ \gamma $  &$\eps = $ 0 &0.05  &  0.1 & 0.15 &  0.2  \\ \hline \hline
     $ 3/4$ & 0.052 & 0.120& 0.725 &0.977 & 0.811\\ \hline 
$7/8$ & 0.056& 0.304& 1.000& 1.000 &0.999 \\ \hline 
 $1$ &   0.046& 0.814& 1.000& 1.000 &1.000 \\ \hline 
  \end{tabular} \hspace{10pt}
   \begin{tabular}{|c||c | c | c |c|c|} 
    \hline 
    \multicolumn{6}{|c|}{Size and Power of Test Statistic $\hat T_{i_k i_k'}$ (Bernoulli)} \\ [1ex] \hline
 &   \multicolumn{5}{|c|}{$\|(\mathbf{\Pi}_1)_{1\cdot} - (\mathbf{\Pi}_1)_{2\cdot} \|_1 = \eps$}\\[0.5ex] \hline 
$\rho $  &$\eps = $ 0 &0.05  &  0.1 & 0.15 &  0.2  \\ \hline \hline
     $.8 $ & 0.056 &0.072& 0.123 &0.216& 0.352\\ \hline 
$ .9$ &0.056 &0.076& 0.146 &0.248& 0.442 \\ \hline 
 $ 1$ &  0.059 &0.086 &0.167& 0.308 &0.500 \\ \hline 
  \end{tabular}
      \caption{Empirical power (first column $=$ size) of testing the null hypothesis $H_0: (\mathbf{\Pi}_1)_{1\cdot} = (\mathbf{\Pi}_2)_{2\cdot}$ under varying signal to noise ratios and local alternatives  (as quantified via $\|(\mathbf{\Pi}_1)_{1\cdot} - (\mathbf{\Pi}_1)_{2\cdot} \|_1 = \eps$).  The left hand table denotes Gaussian noise with  $\lambda/\sigma = p^{\gamma}$ with the leftmost column denoting different values of $\gamma$.  The right hand table denotes Bernoulli noise with the leftmost column conidering varying levels of sparsity $\rho$.       
      }
    \label{tab:power}
\end{table}
\\ \ \\ \noindent
\textbf{Entrywise Testing}:
In \cref{fig:entrywiseci} we examine empirical coverage rates for $\mathcal{T}_{111} - \mathcal{T}_{112}$ and $\mathcal{T}_{111} - \mathcal{T}_{122}$ using \cref{al:entrytesting} with varying $p$ and noise levels.  As before we focus on the setting of $r = 4$.  By \cref{thm:entrytesting_v1}, the confidence intervals are asymptotically valid, so we display both the empirical coverage rate (``Mean'') and empirical standard deviation (``Std'').  \textcolor{black}{In the appendix we also plot the associated joint distribution of $\hat S_J^{-1/2} \big ( \mathcal{T}_{J} - \mathcal{T}_J \big)$ with $J = \{111,112\}$ and $J = \{111,122\}$.}
\begin{table}[htb]
    \centering
    \begin{tabular}{|c|c | c | c |}
    \hline 
\multicolumn{4}{|c|}{Coverage Rates for $\mathcal{T}_{111}- \mathcal{T}_{112}$}   \\ [1ex]
\hline 
  $p$ & $ \lambda/\sigma = p^{\gamma}$ & Mean  & Std  \\
  \hline

  100 & $\gamma = 3/4$ & 0.984&  0.0028 
\\ 
\hline 
  
 150 & $\gamma =  3/4$ &   0.991  
   &  0.0021
\\
\hline 
  
 100 & $\gamma = 7/8$ &   0.990 
 & 0.0022
\\
\hline 
  
 150 & $\gamma = 7/8$ & 0.993 
  &   0.0019
 \\
 \hline 
  
 100  & $\gamma = 1$ &  0.986   
&  0.0027
\\\hline 
  
 150  & $\gamma =  1$ &0.995  & 0.0017 
\\ \hline 
  
\end{tabular} 
\hspace{20pt}
\begin{tabular}{|c|c | c | c |}
      \hline 
\multicolumn{4}{|c|}{Coverage Rates for $\mathcal{T}_{111} - \mathcal{T}_{122}$}   \\ [1ex]
\hline 
  $p$ & $ \lambda/\sigma = p^{\gamma}$ & Mean  & Std  \\
  \hline 
  100 & $\gamma = 3/4$ & 0.973   &  0.0036 
\\ 
\hline 
  
 150 & $\gamma =  3/4$ &0.946    &   0.0051 
\\
\hline 
  
 100 & $\gamma = 7/8$ &  0.981   &  0.0031
\\
\hline 
  
 150 & $\gamma = 7/8$ & 0.943   &   0.0052
 \\
 \hline 
  
 100  & $\gamma = 1$ & 0.973    &   0.0037 
\\\hline 
  
 150  & $\gamma =  1$ & 0.937  &  0.0054
\\ \hline 
  
\end{tabular}
    \caption{Empirical coverage rates for confidence intervals for both  $\mathcal{T}_{111} - \mathcal{T}_{112}$ (left) and $\mathcal{T}_{111} - \mathcal{T}_{122}$ (right) using \cref{al:entrytesting} for varying $p$ and $\lambda/\sigma$.   The column ``Mean'' represents the empirical probability of coverage averaged over $200$ Monte Carlo iterations, and the column ``Std'' denotes the standard deviation of this coverage rate.}
    \label{fig:entrywiseci}
\end{table}

\section{Discussion}
\label{sec:discussion}
In this paper, we have studied a suite of inferential procedures for tensor data in the presence of heteroskedastic, subgaussian noise.  Our main results depend only on the structural properties of the underlying tensor, and our confidence intervals and regions are shown to be optimal for independent homoskedastic Gaussian noise.  We have also seen how our results can be used in three different concrete applications, resulting in several interesting insights for these problems.

In future work, it would be interesting to study other types of structures beyond the Tucker decomposition.  For example, can similar distributional theory and inference be obtained for tensors with low tensor train rank \citep{zhou_optimal_2022,cai_provable_2022} or low with additional sparsity structure \citep{zhang_optimal_2019}?  In our results a leading-order term of the form $\mathbf{Z}_k \mathbf{V}_k \mathbf{\Lambda}_k\inv$ manifested; it would be of interest to see if similar leading-order  terms arise in these other settings. In addition, throughout all of this paper, we have assumed knowledge of the underlying ranks  $r_k$; however, in practice, this is typically not known \emph{a priori}.  Therefore, a practical and interesting theoretical problem is to develop inferential tools when the rank is either over or under-specified.   

Throughout this work we have assumed that the noise is subgaussian, meaning that it exhibits certain tail behavior.  In many settings, such as network data, the noise satisfies other distributional assumptions (e.g., Bernoulli noise), so it would be useful to establish the statistical theory for other noise settings.  Moreover, the tools in this paper require that there are no outliers; it would be of interest to study statistical inference for models permitting outlier (e.g., heavy-tailed) noise. \textcolor{black}{In \citet{auddy_estimating_2022}, the authors showed that Tensor SVD is suboptimal when the noise has a finite $\alpha$-th moment for some $2 < \alpha < 4$. They propose an alternative procedure based on sample splitting to address this issue. Extending their analysis to provide valid inferential guarantees in this regime would be of interest.}
%In \citet{auddy_estimating_2022} the authors show that Tensor SVD is suboptimal when the noise has finite $\alpha$'th moment for some $2 < \alpha < 4$.  They propose an alternative procedure to correct for this discrepancy based on sample splitting.  It would be of interest to extend their analysis to obtain valid inferential guarantees in such a regime.}

In addition, our results in the main paper require the condition number $\kappa$ to be bounded, though we permit $\kappa$ to grow slowly in our general results stated in \cref{sec:generaltheorems}. Nonetheless, the recent work \citet{zhou_deflated_2023} proposes an intriguing subspace estimation procedure \texttt{Deflated-HeteroPCA} that is shown to be optimal in both $\ell_2$ and $\ell_{2,\infty}$ norm for unbalanced matrices, and they propose applying their algorithm as an initialization procedure for \texttt{HOOI}, showing optimal $\ell_2$ error rates that are condition number free.  It would be interesting to combine their procedure with our statistical theory to obtain theoretical guarantees that are independent of the condition number.  

Finally, our theory requires that $p_k \asymp p$ for all $k$.  The work \citet{luo_sharp_2021} establishes sharp perturbation bounds for tensors of varying order $p_k$'s, resulting in different phenomena for different regimes depending on the order of $p_k$ and the signal strength.  In many settings, one does not have $p_k \asymp p$, so it would be of theoretical and practical interest to develop statistical theory under varying  signal strengths and orders of $p_k$.

\section*{Acknowledgements}
JA’s research was partially supported by a fellowship from the Johns Hopkins Mathematical Institute of Data Science (MINDS) via its NSF TRIPODS award CCF-1934979, through the Charles and Catherine Counselman Fellowship, and through support from the Acheson J. Duncan Fund for the Advancement of Research in Statistics. ARZ's research was partially supported by the NSF Grant CAREER-2203741.

\appendix
%\addtocontents{toc}{\protect\setcounter{tocdepth}{0}}

\section{Appendix Structure and More General Theorems}
\label{sec:generaltheorems}

In this section, we  describe the structure of the rest of the appendix and the analysis.   We then state the more general results where we allow $\kappa, \mu_0$ to grow with $p$.  Our results in the main paper are readily seen to be implied by these more general results.  

The rest of the appendix is structured as follows.  First, in the subsequent subsections, we present generalizations of our distributional theory results, namely, Theorems \ref{thm:eigenvectornormality}, \ref{thm:eigenvectornormality2}, \ref{thm:asymptoticnormalityentries}, and \cref{cor:maxnormbound}, which are generalizations of Theorems  \ref{thm:eigenvectornormality_v1}, \ref{thm:eigenvectornormality2_v1}, \ref{thm:asymptoticnormalityentries_v1}, and \ref{cor:maxnormbound_v1} respectively.  Next, we present generalizations of our confidence interval and region validity, namely,  \cref{thm:civalidity2} and \cref{thm:civalidity}, generalizations of \cref{thm:civalidity2_v1} and \cref{thm:civalidity_v1} respectively. \cref{sec:analysispreliminaries} sets the stage for our analysis, including stating results and introducing notation from  \citet{agterberg_estimating_2022}.  In \cref{sec:distributionalproofs} we prove our main distributional theory results for the estimated singular vectors, and \cref{sec:entrywisedistributionaltheoryproofs} is concerned with proving the distributional theory for the entries.  In \cref{sec:ciproofs} we prove the validity of our confidence intervals and regions.  \cref{sec:applicationproofs} contains the proofs from \cref{sec:consequences}, as well as more general statements of Theorems \ref{thm:simultaneousinference_v1} and \cref{thm:entrytesting_v1}.  Finally, \cref{sec:efficiencyproof} contains a self-contained proof of both \cref{thm:efficiencyloadings} and \cref{thm:efficiency}, as well as proofs of \cref{thm:efficiency_order,thm:efficiency_order_ijk}.  Additional simulation results are presented in \cref{sec:additionalsimulations}.

\subsection{Distributional Theory and Entrywise Consistency Generalizations} \label{sec:generaldistributionaltheory}

The following result generalizes \cref{thm:eigenvectornormality_v1} to the setting where $\kappa$ and $\mu_0$ are permitted to grow. 

%^Suppose that $r \lesssim p^{1/2} $and $\lambda/\sigma \gtrsim p^{3/4} \sqrt{\log(p)}$. Suppose that $\kappa,\mu_0 = O(1)$, and let $\uhat_k^{(t)}$ denote the estimated singular vectors from the output of \texttt{HOOI} (\cref{al:tensor-power-iteration}) with $t \asymp \log(p/(\lambda/\sigma))$ iterations, initialized via \cref{al:dd}.  Suppose $\mathbf{T}_k = \mathcal{M}_k(\mathcal{T})$ has rank $r_k$ singular value decomposition $\U_k \mathbf{\Lambda}_k \mathbf{V}_k\t$.  Denote $\mathbf{Z}_k = \mathcal{M}_k(\mathcal{Z})$.  Then there exists an event  $\mathcal{E}_{\mathrm{\cref{thm:eigenvectornormality_v1}}}$ with $\p(\mathcal{E}_{\mathrm{\cref{thm:eigenvectornormality_v1}}}) \geq 1 - O(p^{-9})$ such that on this event for each $k$ it holds that
%\begin{align*}
%    \uhat_k^{(t)} (\mathbf{W}_k^{(t)})\t - \U_k &=\mathbf{Z}_k \mathbf{V}_k \mathbf{\Lambda}_k\inv + \mathbf{\Psi}^{(k)},
%\end{align*}
%where
%\begin{align*}
%    \| \mathbf{\Psi}^{(k)} \|_{2,\infty} \lesssim \frac{\sigma^2 \log(p) r \sqrt{p}}{\lambda^2} + \frac{\sigma r}{\lambda \sqrt{p}}.
%\end{align*}

\begin{theorem}[Generalization of \cref{thm:eigenvectornormality_v1}] \label{thm:eigenvectornormality}
Suppose that $\mu_0^2 r \lesssim p^{1/2}$, that $\kappa^2 \lesssim p^{1/4}$,  that $\lambda/\sigma \gtrsim \kappa p^{3/4} \sqrt{\log(p)}$, \textcolor{black}{and that $\lambda/\sigma \leq \exp(c p)$ for some small constant $c$}. Let $\uhat_k^{(t)}$ denote the estimated singular vectors from the output of \texttt{HOOI} (\cref{al:tensor-power-iteration}) with $t \asymp \textcolor{black}{\log( \frac{\lambda/\sigma}{C \kappa \sqrt{p\log(p)}}})$%\log(\kappa p/(\lambda/\sigma))$
iterations, initialized via \cref{al:dd}.  Suppose $\mathbf{T}_k = \mathcal{M}_k(\mathcal{T})$ has rank $r_k$ singular value decomposition $\U_k \mathbf{\Lambda}_k \mathbf{V}_k\t$. %Suppose further that $\lambda/\sigma \lesssim \mu_0 \kappa p \log(p)$.  
Denote $\mathbf{Z}_k = \mathcal{M}_k(\mathcal{Z})$.
Then there exists an event $\mathcal{E}_{\mathrm{Theorem} \ \ref{thm:eigenvectornormality}}$ with $\p(\mathcal{E}_{\mathrm{Theorem} \ \ref{thm:eigenvectornormality}}) \geq 1 - O(p^{-9})$ such that on this event it holds that
 \begin{align*}
     \uhat_k^{(t)} ( \mathbf{W}_k^{(t)})\t - \U_k &= \mathbf{Z}_k \mathbf{V}_k \mathbf{\Lambda}_k\inv + \mathbf{\Psi}^{(k)},
 \end{align*}
 where 
 \begin{align*}
     \big\| \mathbf{\Psi}^{(k)} \big\|_{2,\infty} &\lesssim \frac{\sigma^2 \kappa^2 \mu_0^2 \log(p) r \sqrt{p}}{\lambda^2} + \frac{\mu_0 r \kappa}{\lambda \sqrt{p}}.
 \end{align*}
 \end{theorem}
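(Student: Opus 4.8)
The plan is to follow the leave-one-out analysis of \citet{agterberg_estimating_2022}, tracking the dependence on $\kappa$ and $\mu_0$ throughout. The first ingredient is the $\ell_{2,\infty}$ perturbation theory already developed there: under the stated SNR condition $\lambda/\sigma \gtrsim \kappa p^{3/4}\sqrt{\log p}$ and with $t \asymp \log\!\big(\tfrac{\lambda/\sigma}{C\kappa\sqrt{p\log p}}\big)$ iterations initialized by diagonal deletion (\cref{al:dd}), one obtains, on an event of probability $1 - O(p^{-9})$, both a spectral bound of the form $\|\sin\Theta(\uhat_k^{(t)},\U_k)\| \lesssim \kappa\mu_0\sqrt{p}\,\sigma/\lambda$ for each mode and a row-wise bound $\|\uhat_k^{(t)}(\mathbf{W}_k^{(t)})\t - \U_k\|_{2,\infty} \lesssim \mu_0\kappa\sqrt{r\log p}\,\sigma/(\lambda\sqrt p)$ (up to logarithmic factors). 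One also has to recall that, even under heteroskedastic noise, diagonal deletion supplies a sufficiently warm start; this is exactly the initialization analysis of \citet{agterberg_estimating_2022}, and it does not require $\sigma \lesssim \sigma_{\min}$.

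Given these bounds, I would carry out the chain of approximations sketched in the analysis overview. Writing $\hat{\mathcal{P}}_k^{(t)}$ for the Kronecker product of the two ``other-mode'' estimated projections feeding the $k$-th update, the fixed-point characterization of the SVD step yields $e_m\t\big(\uhat_k^{(t)}(\mathbf{W}_k^{(t)})\t - \U_k\big) \approx e_m\t\mathbf{Z}_k\hat{\mathcal{P}}_k^{(t)}\mathbf{V}_k\mathbf{\Lambda}_k\U_k\t\uhat_k^{(t)}\hat{\mathbf{\Lambda}}_k^{-2}$, with a residual controlled by combining the $\ell_{2,\infty}$ and spectral bounds along the leave-one-out sequence; the key point is that independence between the $m$-th row of $\mathbf{Z}_k$ and the leave-one-out iterates turns $\|e_m\t\mathbf{Z}_k\hat{\mathcal{P}}_k^{(t)}\|$ into an $O(\sigma\sqrt{r\log p})$ quantity rather than $O(\sigma\sqrt{p})$. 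Next I would replace $\hat{\mathcal{P}}_k^{(t)}\mathbf{V}_k$ by $\mathbf{V}_k$ at the cost of $\|(\I - \hat{\mathcal{P}}_k^{(t)})\mathbf{V}_k\|$, which is bounded by the $\sin\Theta$ errors of the other two modes; replace $\mathbf{\Lambda}_k\U_k\t\uhat_k^{(t)}\hat{\mathbf{\Lambda}}_k^{-2}$ by $\mathbf{\Lambda}_k^{-1}\U_k\t\uhat_k^{(t)}$ using a commutator/Weyl estimate on $\|\hat{\mathbf{\Lambda}}_k^2 - \mathbf{\Lambda}_k^2\|$; and finally replace $\U_k\t\uhat_k^{(t)}$ by $\mathbf{W}_k^{(t)}$ using $\|\U_k\t\uhat_k^{(t)} - \mathbf{W}_k^{(t)}\| \lesssim \|\sin\Theta(\uhat_k^{(t)},\U_k)\|^2$, the standard bound for the matrix sign function. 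Multiplying through by the orthogonal matrix $(\mathbf{W}_k^{(t)})\t$ on the right (which preserves $\ell_{2,\infty}$ norms) then gives the decomposition $\uhat_k^{(t)}(\mathbf{W}_k^{(t)})\t - \U_k = \mathbf{Z}_k\mathbf{V}_k\mathbf{\Lambda}_k^{-1} + \mathbf{\Psi}^{(k)}$.

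The claimed bound on $\|\mathbf{\Psi}^{(k)}\|_{2,\infty}$ is then the sum of the errors of the four replacement steps. The dominant contribution is the ``quadratic-in-noise'' residual $e_m\t\mathbf{Z}_k(\I - \hat{\mathcal{P}}_k^{(t)})(\cdots)$, which, after inserting the spectral $\sin\Theta$ bound $\kappa\mu_0\sqrt p\,\sigma/\lambda$ for the other modes together with the $\ell_{2,\infty}$-type control on $e_m\t\mathbf{Z}_k$, produces the term $\sigma^2\kappa^2\mu_0^2\log(p)\,r\sqrt p/\lambda^2$; the sign-function and $\hat{\mathbf{\Lambda}}_k$-replacement errors contribute the lower-order term $\mu_0 r\kappa/(\lambda\sqrt p)$. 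The main obstacle is bookkeeping: every intermediate leave-one-out lemma from \citet{agterberg_estimating_2022} must be restated with explicit $\kappa,\mu_0,r$ dependence, and one must check that the iteration count $t$ is large enough for the geometric contraction of \texttt{HOOI} to drive the initialization bias below $\tilde o(\sigma/\lambda)$ yet small enough that the union bound over iterates remains at the $O(p^{-9})$ level --- the assumption $\lambda/\sigma \le \exp(cp)$ is exactly what keeps $t = O(p)$ so that this union bound is harmless. The remaining estimates are routine given the machinery of \citet{agterberg_estimating_2022}.
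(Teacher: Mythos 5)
Your proposal is correct and follows essentially the same route as the paper: a linear-term expansion of the SVD fixed point, followed by the chain of replacements (drop the $\U_k\U_k^\top$ projection, replace $\hat{\mathcal{P}}_k^{(t)}$ by the population projection, approximately commute $\mathbf{\Lambda}_k^2$ with $\hat{\mathbf{\Lambda}}_k^{(t)2}$, swap $\U_k^\top\uhat_k^{(t)}$ for $\mathbf{W}_k^{(t)}$), each controlled by leave-one-out independence, exactly as in the paper's Lemmas \ref{lem:linearapprox}--\ref{lem:approximatecommute} and the argument in \cref{sec:eigenvectornormalityproof}. One minor imprecision: the spectral bound on $\mathcal{E}_{\mathrm{Good}}$ is $\|\sin\Theta(\uhat_k^{(t)},\U_k)\| \lesssim \kappa\sqrt{p\log p}\,\sigma/\lambda$ (no $\mu_0$ factor), but since you flag the log factors as approximate and this does not change which term dominates, the argument goes through.
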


 \noindent
 The following result is needed for our entrywise distributional theory results.
  \begin{theorem} \label{cor:asymptoticnormality_projection}
  Instate the conditions of \cref{thm:eigenvectornormality}. Under the event $\mathcal{E}_{\mathrm{\cref{thm:eigenvectornormality}}}$ it holds that
\begin{align*}
    \uhat_k^{(t)} (\uhat_k^{(t)})\t - \U_k \U_k\t &= \U_k \mathbf{\Lambda}_k\inv \mathbf{V}_k \mathbf{Z}_k\t + \mathbf{Z}_k \mathbf{V}_k \mathbf{\Lambda}_k\inv \U_k\t + \mathbf{\Phi}^{(k)},
\end{align*}
where
\begin{align*}
    \big\| \mathbf{\Phi}^{(k)} \big\|_{2,\infty} &\lesssim %\frac{\sigma^2\kappa^2 \mu_0^3 \log(p) r^{3/2} \sqrt{p}}{\lambda^2}.  
    \frac{\sigma^2\kappa^2 \mu_0^3 \log(p) r^{3/2} \sqrt{p}}{\lambda^2}   + \frac{\sigma \mu_0^2 r^{3/2} \kappa}{\lambda \sqrt{p}}
\end{align*}
\end{theorem}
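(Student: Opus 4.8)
The plan is to derive this expansion directly from \cref{thm:eigenvectornormality} by expanding an outer product. Write $\uhat_k = \uhat_k^{(t)}$ and $\mathbf{W} = \mathbf{W}_k^{(t)}$, and condition throughout on the event $\mathcal{E}_{\mathrm{\cref{thm:eigenvectornormality}}}$. Since $\mathbf{W}$ is orthogonal, $\uhat_k \uhat_k\t = (\uhat_k\mathbf{W}\t)(\uhat_k\mathbf{W}\t)\t$, so setting $\mathbf{E}^{(k)} := \uhat_k\mathbf{W}\t - \U_k$ we have the exact identity
\begin{align*}
\uhat_k\uhat_k\t - \U_k\U_k\t = \U_k(\mathbf{E}^{(k)})\t + \mathbf{E}^{(k)}\U_k\t + \mathbf{E}^{(k)}(\mathbf{E}^{(k)})\t.
\end{align*}
By \cref{thm:eigenvectornormality}, $\mathbf{E}^{(k)} = \mathbf{Z}_k\mathbf{V}_k\mathbf{\Lambda}_k\inv + \mathbf{\Psi}^{(k)}$ with $\|\mathbf{\Psi}^{(k)}\|_{2,\infty}$ under control. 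Substituting this, the linear terms produce the two claimed leading matrices together with $\U_k(\mathbf{\Psi}^{(k)})\t + \mathbf{\Psi}^{(k)}\U_k\t$, while the quadratic term produces $\mathbf{Z}_k\mathbf{V}_k\mathbf{\Lambda}_k^{-2}\mathbf{V}_k\t\mathbf{Z}_k\t$, the pair of mixed terms $\mathbf{Z}_k\mathbf{V}_k\mathbf{\Lambda}_k\inv(\mathbf{\Psi}^{(k)})\t$ and its transpose, and $\mathbf{\Psi}^{(k)}(\mathbf{\Psi}^{(k)})\t$. I would then \emph{define} $\mathbf{\Phi}^{(k)}$ to be the sum of these six residual matrices; everything that remains is the $\ell_{2,\infty}$ estimate.

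To bound the residuals I would use: incoherence $\|\U_k\|_{2,\infty}\leq\mu_0\sqrt{r_k/p_k}$; the crude conversion $\|\mathbf{\Psi}^{(k)}\|_{\mathrm{op}}\leq\sqrt{p_k}\,\|\mathbf{\Psi}^{(k)}\|_{2,\infty}$; the spectral bound $\|\mathbf{Z}_k\mathbf{V}_k\|_{\mathrm{op}}\lesssim\sigma\sqrt{p}$ (valid because $\mathbf{Z}_k\mathbf{V}_k$ is $p_k\times r_k$ with $O(\sigma)$-subgaussian rows and $r\lesssim\sqrt p$); and the row bound $\max_m\|e_m\t\mathbf{Z}_k\mathbf{V}_k\|\lesssim\sigma\sqrt{r\log p}$ — the last two being standard and already supplied by the concentration lemmas of \citet{agterberg_estimating_2022}. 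Combining the first two, $\|\U_k(\mathbf{\Psi}^{(k)})\t\|_{2,\infty}\leq\|\U_k\|_{2,\infty}\|\mathbf{\Psi}^{(k)}\|_{\mathrm{op}}\lesssim\mu_0\sqrt{r}\,\|\mathbf{\Psi}^{(k)}\|_{2,\infty}$, which after inserting the bound of \cref{thm:eigenvectornormality} is exactly the two stated terms; the transposed piece $\|\mathbf{\Psi}^{(k)}\U_k\t\|_{2,\infty}\leq\|\mathbf{\Psi}^{(k)}\|_{2,\infty}$ is strictly smaller. For the purely noisy quadratic term I would write its $m$-th row as $(e_m\t\mathbf{Z}_k\mathbf{V}_k\mathbf{\Lambda}_k\inv)(\mathbf{\Lambda}_k\inv\mathbf{V}_k\t\mathbf{Z}_k\t)$ and bound its norm by $\lambda^{-2}\|e_m\t\mathbf{Z}_k\mathbf{V}_k\|\,\|\mathbf{Z}_k\mathbf{V}_k\|_{\mathrm{op}}\lesssim\sigma^2\sqrt{pr\log p}/\lambda^2$, which is dominated by the first stated term; the mixed terms and $\mathbf{\Psi}^{(k)}(\mathbf{\Psi}^{(k)})\t$ are treated identically, and the genuinely higher-order pieces they generate are discarded using the SNR hypothesis $\lambda/\sigma\gtrsim\kappa p^{3/4}\sqrt{\log p}$.

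I expect the only real subtlety to be the term $\U_k(\mathbf{\Psi}^{(k)})\t$: because \cref{thm:eigenvectornormality} controls $\mathbf{\Psi}^{(k)}$ only in $\ell_{2,\infty}$ and not in operator norm, passing through $\|\mathbf{\Psi}^{(k)}\|_{\mathrm{op}}\leq\sqrt{p_k}\|\mathbf{\Psi}^{(k)}\|_{2,\infty}$ costs a factor $\sqrt{p_k}$ that combines with incoherence into the $\mu_0\sqrt{r}$ inflation — precisely why the conclusion carries $r^{3/2}$ and $\mu_0^3$ where \cref{thm:eigenvectornormality} carries $r$ and $\mu_0^2$. One could in principle try to extract a sharper operator-norm bound on $\mathbf{\Psi}^{(k)}$ from the leave-one-out construction, but this is unnecessary since the stated conclusion already absorbs the loss; all remaining residual matrices are, by the manipulations above together with the SNR lower bound, of strictly smaller $\ell_{2,\infty}$ order.
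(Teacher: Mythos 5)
Your proposal is correct and coincides with the paper's own argument: the same exact outer-product identity $\uhat_k\uhat_k\t-\U_k\U_k\t=\mathbf{E}^{(k)}(\mathbf{E}^{(k)})\t+\U_k(\mathbf{E}^{(k)})\t+\mathbf{E}^{(k)}\U_k\t$ with $\mathbf{E}^{(k)}=\uhat_k\mathbf{W}\t-\U_k$, the same substitution of the first-order expansion of $\mathbf{E}^{(k)}$, and the same identification of $\U_k(\mathbf{\Psi}^{(k)})\t$ as the dominant residual via $\|\U_k(\mathbf{\Psi}^{(k)})\t\|_{2,\infty}\lesssim\sqrt{p}\,\|\U_k\|_{2,\infty}\|\mathbf{\Psi}^{(k)}\|_{2,\infty}\lesssim\mu_0\sqrt{r}\,\|\mathbf{\Psi}^{(k)}\|_{2,\infty}$, which produces the extra $\mu_0\sqrt{r}$ factor in the stated bound. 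The remaining five residual matrices are bounded by exactly the row/operator-norm estimates you list and are absorbed under the SNR condition, just as in the paper.
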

 
\noindent 
The following result generalizes \cref{thm:eigenvectornormality2_v1}.  

 \begin{theorem}[Generalization of \cref{thm:eigenvectornormality2_v1}]\label{thm:eigenvectornormality2}
 Instate the conditions of \cref{thm:eigenvectornormality}.    Let $\Sigma_k^{(m)}$ denote the diagonal matrix of dimension  $p_{-k} \times p_{-k}$, where the diagonal entries consist of the variances of $\mathcal{Z}_{mbc}$ if $k = 1$, $\mathcal{Z}_{amc}$ if $k = 2$, and $\mathcal{Z}_{abm}$ if $k = 3$.  Define
  \begin{align*}
     \mathbf{\Gamma}^{(m)}_k &\coloneqq \mathbf{\Lambda}_k\inv \mathbf{V}_k\t \Sigma^{(m)} \mathbf{V}_k \mathbf{\Lambda}_k\inv.
 \end{align*}
 Let $\mathcal{A}$ denote the collection of all convex sets in  $\mathbb{R}^{r_k}$, and let $Z$  be an $r_k$-dimensional Gaussian random variable with the identity covariance matrix. Then it holds that
 \begin{align*}
      \sup_{A \in \mathcal{A}} | &\p\bigg\{   \big( \mathbf{\Gamma}^{(m)}_k \big)^{-1/2}  \bigg( \uhat_k^{(t)} (\mathbf{W}_k^{(t)})\t - \U_k \bigg)_{m\cdot} \in A \bigg\} - \p \{ Z \in A \} | \\
      &\lesssim \mu_0 \frac{r^2}{p} + \frac{\sigma\kappa^3 \mu_0^2 \log(p) r^{3/2} \sqrt{p}}{\lambda}  + \frac{\mu_0 r^{3/2} \kappa }{ \sqrt{p}}.
 \end{align*}
 Therefore, asymptotic normality holds as long as $\kappa^2 \mu_0^2 r^{3/2} = o\big(p^{1/4}/\sqrt{\log(p)}\big)$. Furthermore, when $\kappa$ and $\mu_0$ are bounded, a sufficient condition for asymptotic normality is that $r = o(p^{1/6}/\log(p))$. 
 \end{theorem}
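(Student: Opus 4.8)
The plan is to isolate the linear term from \cref{thm:eigenvectornormality}, treat it as a standardized sum of independent random vectors, apply a multivariate Berry--Esseen bound over convex sets, and then absorb the remainder by a Gaussian anti-concentration (tube) argument. First I would condition on the event $\mathcal{E}_{\mathrm{Theorem} \ \ref{thm:eigenvectornormality}}$, whose complement has probability $O(p^{-9})$ and hence contributes only an additive $O(p^{-9})$ everywhere. On that event \cref{thm:eigenvectornormality} gives $\big(\uhat_k^{(t)}(\mathbf{W}_k^{(t)})\t - \U_k\big)_{m\cdot} = G_m + R_m$ with $G_m := \big(\mathbf{Z}_k\mathbf{V}_k\mathbf{\Lambda}_k\inv\big)_{m\cdot}$ and $\|R_m\| \le \|\mathbf{\Psi}^{(k)}\|_{2,\infty}$. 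Because $\mathbf{Z}_k$ has independent entries, $G_m = \sum_{a=1}^{p_{-k}} (\mathbf{Z}_k)_{ma}\,\mathbf{v}_a$ with $\mathbf{v}_a := \mathbf{\Lambda}_k\inv\mathbf{V}_k\t e_a$, a sum of independent mean-zero $\mathbb{R}^{r_k}$-valued summands whose total covariance is exactly $\mathbf{\Lambda}_k\inv\mathbf{V}_k\t\Sigma_k^{(m)}\mathbf{V}_k\mathbf{\Lambda}_k\inv = \mathbf{\Gamma}_k^{(m)}$. I would also note $\mathbf{\Gamma}_k^{(m)} \succcurlyeq \sigma_{\min}^2\mathbf{\Lambda}_k^{-2}$, so $\mathbf{\Gamma}_k^{(m)}$ is invertible with $\|(\mathbf{\Gamma}_k^{(m)})^{-1/2}\| \lesssim \kappa\lambda/\sigma_{\min}$, and $\|(\mathbf{\Gamma}_k^{(m)})^{-1/2}\mathbf{\Lambda}_k\inv\| \le \sigma_{\min}\inv$ directly from the definition.

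Next I would apply the multivariate Berry--Esseen inequality over convex sets (Bentkus' bound) to the standardized linear term $\widetilde G_m := (\mathbf{\Gamma}_k^{(m)})^{-1/2}G_m = \sum_a \xi_a$, $\xi_a := (\mathbf{Z}_k)_{ma}(\mathbf{\Gamma}_k^{(m)})^{-1/2}\mathbf{v}_a$, which has total covariance $\mathbf{I}_{r_k}$, obtaining $\sup_{A\in\mathcal{A}}|\p(\widetilde G_m\in A) - \p(Z\in A)| \lesssim r_k^{1/4}\sum_a\mathbb{E}\|\xi_a\|^3$. To bound the third moments I would use $\mathbb{E}|(\mathbf{Z}_k)_{ma}|^3 \lesssim \sigma^3$ (subgaussianity), the identity $\sum_a \sigma_{m,a}^2\|(\mathbf{\Gamma}_k^{(m)})^{-1/2}\mathbf{v}_a\|^2 = \tr(\mathbf{I}_{r_k}) = r_k$ (where $\sigma_{m,a}^2 = (\Sigma_k^{(m)})_{aa}$), and $\|(\mathbf{\Gamma}_k^{(m)})^{-1/2}\mathbf{v}_a\| \le \sigma_{\min}\inv\|(\mathbf{V}_k)_{a\cdot}\| \lesssim \mu_0^2 r/(\sigma_{\min}p)$, the last bound because the column space of $\mathbf{V}_k$ sits inside the range of $\U_{k+1}\otimes\U_{k+2}$ and so inherits their incoherence. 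Together with $\sigma\lesssim\sigma_{\min}$ this gives $\sum_a\mathbb{E}\|\xi_a\|^3 \lesssim \mu_0^2 r^2/p$, so the Berry--Esseen error is, up to the precise powers of $r$ and $\mu_0$, at most the first term $\mu_0 r^2/p$ of the claimed bound.

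To pass from $\widetilde G_m$ to the full statistic I would use anti-concentration. On $\mathcal{E}_{\mathrm{Theorem} \ \ref{thm:eigenvectornormality}}$ we have $\|(\mathbf{\Gamma}_k^{(m)})^{-1/2}R_m\| \le \eps := \|(\mathbf{\Gamma}_k^{(m)})^{-1/2}\|\,\|\mathbf{\Psi}^{(k)}\|_{2,\infty}$, and inserting the $\|\mathbf{\Psi}^{(k)}\|_{2,\infty}$ bound from \cref{thm:eigenvectornormality}, the eigenvalue bound above, and $\sigma\lesssim\sigma_{\min}$ shows that $r_k^{1/4}\eps$ is, up to constants and exponents of $\kappa$ and $r$, at most the remaining two terms of the asserted bound. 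For convex $A$, writing $A^{\eps}$ and $A^{-\eps}$ for the $\eps$-enlargement and $\eps$-erosion (both convex), one has $\{\widetilde G_m\in A^{-\eps}\}\cap\mathcal{E}_{\mathrm{Theorem} \ \ref{thm:eigenvectornormality}} \subseteq \{(\mathbf{\Gamma}_k^{(m)})^{-1/2}(G_m+R_m)\in A\} \subseteq \{\widetilde G_m\in A^{\eps}\}\cup\mathcal{E}_{\mathrm{Theorem} \ \ref{thm:eigenvectornormality}}^c$. Applying the Berry--Esseen bound to $A^{\pm\eps}$ and the Gaussian tube estimate $\p\big(\mathrm{dist}(Z,\partial A)\le\eps\big)\lesssim r_k^{1/4}\eps$ for convex $A$, these inclusions give $|\p((\mathbf{\Gamma}_k^{(m)})^{-1/2}(G_m+R_m)\in A) - \p(Z\in A)| \lesssim \mu_0 r^2/p + r_k^{1/4}\eps + p^{-9}$, uniformly in $A$, which after substituting $\eps$ is the claimed bound; the concluding remarks on asymptotic normality follow by inspecting when each term is $o(1)$.

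The hard part will be this final step taken over the \emph{entire} class of convex sets (not merely balls or half-spaces): it requires the dimension-dependent control on the Gaussian surface area of convex bodies in $\mathbb{R}^{r_k}$, both to justify the smoothing and to obtain the stated $r_k$-dependence, and the argument must be routed through the set inclusions above rather than by conditioning, since $\mathcal{E}_{\mathrm{Theorem} \ \ref{thm:eigenvectornormality}}$ is not independent of $\widetilde G_m$. A secondary technical point is the incoherence transfer $\|\mathbf{V}_k\|_{2,\infty}\lesssim\mu_0^2 r/p$ from $\U_{k+1}$ and $\U_{k+2}$, which keeps the Berry--Esseen term polynomially small in $p$.
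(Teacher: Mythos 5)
Your proposal is correct and would yield the result, but it follows a genuinely different route from the paper. Where you decompose the problem into (i) a multivariate Berry--Esseen bound (Bentkus) applied to the standardized linear term $\widetilde G_m$, and then (ii) a smoothing step over convex sets via set inclusions $A^{-\eps}\subseteq A\subseteq A^{\eps}$ plus the Gaussian-surface-area bound $\p(\mathrm{dist}(Z,\partial A)\le\eps)\lesssim r_k^{1/4}\eps$ to absorb the residual $R_m$, the paper instead invokes Corollary~2.2 of \citet{shao_berryesseen_2022}, a Berry--Esseen theorem for \emph{nonlinear} statistics of the form $W+D$ that packages both pieces into one inequality: the resulting bound is $r^{1/2}\sum_l\E\|\xi_l\|^3 + \Delta\cdot\E\|W\|$ with $\Delta$ an a.s.\ bound on the remainder and $\E\|W\|\lesssim\sqrt{r}$. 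Thus your ``residual times anti-concentration'' factor is $r^{1/4}\eps$ whereas the paper's is $\sqrt{r}\,\Delta$, which is why the paper's second and third terms carry $r^{3/2}$ rather than your $r^{5/4}$; your route is slightly sharper in $r$. Your handling of the fact that the event $\mathcal{E}_{\mathrm{Theorem}\ \ref{thm:eigenvectornormality}}$ is not independent of $\widetilde G_m$ (routing through deterministic set inclusions on that event, not conditioning) is exactly the issue that makes the smoothing step legitimate and is the genuine technical content you must supply; the paper sidesteps it because the Shao--Zhang corollary is formulated precisely to absorb such a data-dependent remainder. Two small reconciliations with the paper's write-up: (a) the paper's bound $\|\mathbf{V}_k\|_{2,\infty}\lesssim\mu_0\sqrt{r}/p$ in the third-moment computation should, by the incoherence transfer through $\U_{k+1}\otimes\U_{k+2}$ that you correctly carry out, read $\mu_0^2 r/p$, so your $\mu_0^2 r^2/p$ versus the paper's $\mu_0 r^2/p$ is the more defensible exponent; (b) your $\kappa$-tracking through $\|(\mathbf{\Gamma}^{(m)}_k)^{-1/2}\|\lesssim\kappa\lambda/\sigma_{\min}$ (using $\lambda_{\max}(\mathbf{\Lambda}_k)$ rather than $\lambda_{\min}$) is slightly more careful than the paper's stated eigenvalue bound. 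Neither discrepancy affects the concluding sufficient conditions for asymptotic normality.
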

 \noindent
 Next, the following result generalizes \cref{thm:asymptoticnormalityentries_v1}.  
 
 \begin{theorem}[Generalization of \cref{thm:asymptoticnormalityentries_v1}] \label{thm:asymptoticnormalityentries}
 Instate the conditions of \cref{thm:eigenvectornormality}, and suppose further that \begin{align*}
 \kappa^2 \mu_0^2 r^{3/2} \sqrt{\log(p)} \lesssim p^{1/4}.
\end{align*}
Let $\Sigma_1^{(m)} \in \mathbb{R}^{p_2p_3 \times p_2p_3}$ be the diagonal matrix whose $(a-1)p_3 + b$'th entry is the variance of the random variable $\mathcal{Z}_{mab}$, and let $\Sigma_2^{(m)}$ and $\Sigma_{3}^{(m)}$ be defined similarly.  Assume that
\begin{align*}
   \| e_{(j-1)p_3 + k}\t &\mathbf{V}_1 \|^2 + \|e_{(k-1)p_1 + i} \mathbf{V}_2  \|^2 + \| e_{(i-1)p_2 + j} \mathbf{V}_3\|^2 \\
   &\gg \max\bigg\{\frac{ \kappa^4 \mu_0^6 r^4 \log(p)}{p^{3}} ,\frac{\sigma^2\mu_0^8 \kappa^6 r^{4} \log^2(p)}{\lambda^2 p}\bigg\}.
  \end{align*}
Define 
 \begin{align*}
    s^2_{ijk} &\coloneqq  \| e_{(j-1)p_3 + k}\t \mathbf{V}_1 \mathbf{V}_1\t \big(\Sigma^{(i)}\big)^{1/2} \|^2 + \|e_{(k-1)p_1 + i} \mathbf{V}_2 \mathbf{V}_2\t\big(\Sigma^{(j)}\big)^{1/2} \|^2 \\
    &\quad + \| e_{(i-1)p_2 + j} \mathbf{V}_3\mathbf{V}_3\t \big( \Sigma^{(k)} \big)^{1/2}\|^2.
\end{align*}
Let $Z$ denote a standard Gaussian random variable and let $\Phi$ denote its cumulative distribution function. Then it holds that
\begin{align*}
    \sup_{t\in\mathbb{R}} \bigg| \p\bigg\{ \frac{ \mathcal{\hat T}_{ijk} - \mathcal{T}_{ijk}}{s_{ijk}} \leq t \bigg\} - \Phi(t) \bigg| &= o(1).  
\end{align*}
\end{theorem}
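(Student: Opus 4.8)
The plan is a first-order-expansion argument: I would show that $\hat{\mathcal{T}}_{ijk}-\mathcal{T}_{ijk}$ equals, up to a residual that is $\tilde o(s_{ijk})$, a leading term $\xi_{ijk}$ which is linear in the independent entries of $\mathcal{Z}$, and then apply the classical Berry--Esseen theorem for sums of independent random variables. This parallels the route to \cref{thm:eigenvectornormality2}, but now pushed through to a single entry of the reconstructed tensor, and it leans heavily on the projection-level expansion in \cref{cor:asymptoticnormality_projection} and the $\ell_{2,\infty}$ control from \cref{thm:eigenvectornormality}.

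First I would write $\hat{\mathcal{T}}=\tilde{\mathcal{T}}\times_1\uhat_1\uhat_1\t\times_2\uhat_2\uhat_2\t\times_3\uhat_3\uhat_3\t$ and split
\begin{align*}
\hat{\mathcal{T}}_{ijk}=\big(\mathcal{T}\times_1\uhat_1\uhat_1\t\times_2\uhat_2\uhat_2\t\times_3\uhat_3\uhat_3\t\big)_{ijk}+\big(\mathcal{Z}\times_1\uhat_1\uhat_1\t\times_2\uhat_2\uhat_2\t\times_3\uhat_3\uhat_3\t\big)_{ijk}.
\end{align*}
For the noise term, I would replace each $\uhat_k\uhat_k\t$ by $\U_k\U_k\t$ (the incurred error is controlled by \cref{cor:asymptoticnormality_projection} together with the $\sin\Theta$ bound inside \cref{thm:eigenvectornormality}), leaving $(\mathcal{Z}\times_1\U_1\U_1\t\times_2\U_2\U_2\t\times_3\U_3\U_3\t)_{ijk}$, a weighted sum of independent entries of $\mathcal{Z}$ whose weights are products of incoherent rows of the $\U_k$'s and hence of size $\tilde O(\sigma\mu_0^3 r^2 p^{-3/2})$. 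Since $\mathcal{T}=\mathcal{T}\times_1\U_1\U_1\t\times_2\U_2\U_2\t\times_3\U_3\U_3\t$ exactly, for the signal term I would substitute $\uhat_k\uhat_k\t-\U_k\U_k\t=\U_k\mathbf{\Lambda}_k\inv\mathbf{V}_k\t\mathbf{Z}_k\t+\mathbf{Z}_k\mathbf{V}_k\mathbf{\Lambda}_k\inv\U_k\t+\mathbf{\Phi}^{(k)}$ from \cref{cor:asymptoticnormality_projection} into all three modes and multiply out. Terms containing two or three first-order corrections, or any $\mathbf{\Phi}^{(k)}$, go into the residual; in the single-first-order-correction terms, after contracting $\U_k\mathbf{\Lambda}_k\inv\mathbf{V}_k\t\mathbf{Z}_k\t$ and $\mathbf{Z}_k\mathbf{V}_k\mathbf{\Lambda}_k\inv\U_k\t$ against $\mathcal{M}_k(\mathcal{T})=\U_k\mathbf{\Lambda}_k\mathbf{V}_k\t$, the first piece is negligible by incoherence of $\U_k$ and $\mathbf{V}_k$, while the second collapses to $\mathbf{Z}_k\mathbf{V}_k\mathbf{V}_k\t$, leaving exactly
\begin{align*}
\xi_{ijk}=e_i\t\mathbf{Z}_1\mathbf{V}_1\mathbf{V}_1\t e_{(j-1)p_3+k}+e_j\t\mathbf{Z}_2\mathbf{V}_2\mathbf{V}_2\t e_{(k-1)p_1+i}+e_k\t\mathbf{Z}_3\mathbf{V}_3\mathbf{V}_3\t e_{(i-1)p_2+j}.
\end{align*}

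With $\hat{\mathcal{T}}_{ijk}-\mathcal{T}_{ijk}=\xi_{ijk}+(\text{residual})$ established, I would finish in two steps. Since $\xi_{ijk}$ is a linear functional of the independent, mean-zero, subgaussian entries of $\mathcal{Z}$, the classical Berry--Esseen theorem applies to $\xi_{ijk}/\sqrt{\mathrm{Var}(\xi_{ijk})}$, and the Lyapunov ratio is $o(1)$ because $\|\mathbf{V}_k\|_{2,\infty}\lesssim\mu_0\sqrt r/p$ makes each coefficient negligible relative to $\sqrt{\mathrm{Var}(\xi_{ijk})}$ under the row-norm hypothesis. Then I would check $\mathrm{Var}(\xi_{ijk})=(1+o(1))s_{ijk}^2$: the three summands reuse entries of $\mathcal{Z}$ (notably $\mathcal{Z}_{ijk}$ and the axis-slices through it), so $\mathrm{Var}(\xi_{ijk})$ is $s_{ijk}^2$ plus cross-covariances, but incoherence bounds each cross-covariance by $o(s_{ijk}^2)$, so only the sum of the three marginal variances, $s_{ijk}^2$, survives. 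Combining with the residual bound and Slutsky's lemma gives the stated uniform-in-$t$ conclusion.

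The hard part will be the residual analysis. Because $s_{ijk}$ can be as small as order $\sigma p^{-5/4}\polylog(p)$, the noise-projection term, the doubled first-order corrections, and the $\mathbf{\Phi}^{(k)}$ contributions from \cref{cor:asymptoticnormality_projection} must all be pinned down with the exact dependence on $p,r,\kappa,\mu_0,\lambda/\sigma$ and then shown to be $o(s_{ijk})$. This is precisely what dictates the two-term $\max$ in the row-norm hypothesis — one term, $\kappa^4\mu_0^6 r^4\log(p)/p^3$, calibrated against the noise-projection and projection-expansion errors (which are $o(\sigma\times\text{row norm of }\mathbf{V}_k)$), the other, $\sigma^2\mu_0^8\kappa^6 r^4\log^2(p)/(\lambda^2 p)$, against the dominant $\mathbf{\Phi}^{(k)}$ term of size $\asymp(\sigma^2\kappa^3\mu_0^3\log(p)r^{3/2}\sqrt p/\lambda)\cdot\|e\t\mathbf{V}_k\|$ — together with the condition $\kappa^2\mu_0^2 r^{3/2}\sqrt{\log p}\lesssim p^{1/4}$ inherited from \cref{thm:eigenvectornormality}.
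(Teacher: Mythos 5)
Your proposal matches the paper's own proof essentially step for step: the same split of $\hat{\mathcal{T}}_{ijk}$ into $\mathcal{T}$- and $\mathcal{Z}$-parts, the same full expansion in $\uhat_k\uhat_k^\top-\U_k\U_k^\top$ with multi-difference terms sent to the residual, the same use of \cref{cor:asymptoticnormality_projection} to extract the leading linear statistic $\xi_{ijk}$ (discarding $\U_k\mathbf{\Lambda}_k^{-1}\mathbf{V}_k^\top\mathbf{Z}_k^\top$ and $\mathbf{\Phi}^{(k)}$ by incoherence), and the same Berry--Esseen-plus-cross-covariance argument to identify $\mathrm{Var}(\xi_{ijk})=(1+o(1))s_{ijk}^2$; this is precisely what \cref{lem:zentrywiselemma}, \cref{lem:tentrywiselemma}, and \cref{lem:xiijkgaussian} deliver in the paper. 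Your exponents on $r$ and the exact polylog factors are a shade off in a couple of places (e.g.\ the noise-projection term is $\tilde O(\sigma\mu_0^3 r^{3/2}p^{-3/2})$, not $r^2$), but these are bookkeeping slips that do not change the structure or the calibration of the two-term row-norm hypothesis, which you identify correctly.
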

\noindent 
Finally, the following result generalizes  \cref{cor:maxnormbound_v1}.  
\begin{theorem}[Generalization of \cref{cor:maxnormbound_v1}]\label{cor:maxnormbound}
Instate the conditions of \cref{thm:eigenvectornormality}, and suppose that
\begin{align*}
    \kappa^2 \mu_0^2 r^{3/2} \sqrt{\log(p)} \lesssim p^{1/4}.
\end{align*}
Then the following bound holds with probability at least $1 - O(p^{-6})$:
\begin{align*}
    \| \mathcal{\hat T} - \mathcal{T} \|_{\max} &\lesssim \frac{\mu_0\sigma \sqrt{r\log(p)}  }{p}  %\frac{\sigma \kappa \mu_0^3 r^{2} \sqrt{\log(p)}}{p^{3/2}} 
    + \frac{\sigma^2 \mu_0^4 \kappa^3 r^3 \log(p)}{\lambda \sqrt{p}} %\frac{\mu_0\sigma \sqrt{r\log(p)}  }{p} + 
    %\frac{\sigma \kappa \mu_0 \sqrt{r\log(p)}}{p} + \frac{\sigma^2 \mu_0^4 \kappa^3 r^3 \log(p)}{\lambda \sqrt{p}}
\end{align*}
Consequently, when the following condition holds:
\begin{align*}
   % \kappa \mu_0^2 r \lesssim \sqrt{p}; \qquad 
   \lambda/\sigma \gtrsim  \mu_0^3 \kappa^3 r^{5/2} \sqrt{p\log(p)},
\end{align*}
the bound above reduces to
\begin{align*}
     \| \mathcal{\hat T} - \mathcal{T} \|_{\max} &\lesssim \frac{ \kappa \sigma \mu_0 \sqrt{r\log(p)}}{p}.
\end{align*}
In particular, this bound holds if $\mu_0 = O(1)$ and $\kappa^3 r^{5/2} = o(p^{1/4})$.
\end{theorem}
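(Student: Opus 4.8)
The plan is to work with the projection form of the estimator, $\mathcal{\hat T} = \mathcal{\tilde T}\times_1\hat P_1\times_2\hat P_2\times_3\hat P_3$ where $\hat P_k := \uhat_k^{(t)}(\uhat_k^{(t)})\t$, and to set $P_k := \U_k\U_k\t$ and $\mathbf{\Delta}_k := \hat P_k - P_k$. Since $\mathcal{T} = \mathcal{T}\times_1 P_1\times_2 P_2\times_3 P_3$, substituting $\mathcal{\tilde T} = \mathcal{T} + \mathcal{Z}$ and $\hat P_k = P_k + \mathbf{\Delta}_k$ and expanding the trilinear product yields
\begin{align*}
\mathcal{\hat T} - \mathcal{T} = \sum_{k=1}^{3}\mathcal{T}\times_k\mathbf{\Delta}_k \;+\; \big(\text{terms of second and third order in the }\mathbf{\Delta}_k\big) \;+\; \mathcal{Z}\times_1\hat P_1\times_2\hat P_2\times_3\hat P_3.
\end{align*}
I would extract from the first sum the term linear in $\mathcal{Z}$, bound its $\|\cdot\|_{\max}$ by a subgaussian maximal inequality over the $p_1p_2p_3\asymp p^3$ indices, and then show that every remaining contribution is of strictly smaller order on the high-probability event of \cref{thm:eigenvectornormality} (intersected with a few standard concentration events, which keeps the failure probability at $O(p^{-6})$).

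To isolate the leading term, I would substitute the first-order projection expansion of \cref{cor:asymptoticnormality_projection}, $\mathbf{\Delta}_k = \U_k\mathbf{\Lambda}_k\inv\mathbf{V}_k\t\mathbf{Z}_k\t + \mathbf{Z}_k\mathbf{V}_k\mathbf{\Lambda}_k\inv\U_k\t + \mathbf{\Phi}^{(k)}$, into $\mathcal{M}_k(\mathcal{T}\times_k\mathbf{\Delta}_k) = \mathbf{\Delta}_k\U_k\mathbf{\Lambda}_k\mathbf{V}_k\t$. Using $\U_k\t\U_k = \mathbf{I}$ the term $\mathbf{Z}_k\mathbf{V}_k\mathbf{\Lambda}_k\inv\U_k\t$ collapses to $\mathbf{Z}_k\mathbf{V}_k\mathbf{V}_k\t$, whose $(i,(j-1)p_3+k)$-entry, together with the analogues for modes $2$ and $3$, sums to
\begin{align*}
\xi_{ijk} := e_i\t\mathbf{Z}_1\mathbf{V}_1\mathbf{V}_1\t e_{(j-1)p_3+k} + e_j\t\mathbf{Z}_2\mathbf{V}_2\mathbf{V}_2\t e_{(k-1)p_1+i} + e_k\t\mathbf{Z}_3\mathbf{V}_3\mathbf{V}_3\t e_{(i-1)p_2+j}.
\end{align*}
Because the $\mathbf{V}_k$ are deterministic, each $\xi_{ijk}$ is a fixed linear combination of the independent, mean-zero, $\sigma$-subgaussian entries of $\mathcal{Z}$, hence subgaussian with variance proxy $\lesssim\sigma^2\big(\|\mathbf{V}_1\t e_{(j-1)p_3+k}\|^2 + \|\mathbf{V}_2\t e_{(k-1)p_1+i}\|^2 + \|\mathbf{V}_3\t e_{(i-1)p_2+j}\|^2\big)$; bounding the relevant row norms of the $\mathbf{V}_k$ by incoherence (each $\mathbf{V}_k$ lies in the column span of a Kronecker product of two of the $\U$'s), a union bound over the $\asymp p^3$ indices gives $\max_{ijk}|\xi_{ijk}|$ of the order of the first term in the claimed bound.

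For the remainder, I would control each surviving piece on the aforementioned event: (i) the $\mathbf{\Phi}^{(k)}$-contribution, bounded by $\|\mathbf{\Phi}^{(k)}\|_{2,\infty}\,\kappa\lambda\,\|\mathbf{V}_k\|_{2,\infty}$ using the $\|\mathbf{\Phi}^{(k)}\|_{2,\infty}$ estimate of \cref{cor:asymptoticnormality_projection}; (ii) the benign cross-term arising from $\U_k\mathbf{\Lambda}_k\inv(\mathbf{V}_k\t\mathbf{Z}_k\t\U_k)\mathbf{\Lambda}_k\mathbf{V}_k\t$, bounded by $\|\U_k\|_{2,\infty}\kappa\|\mathbf{V}_k\t\mathbf{Z}_k\t\U_k\|\,\|\mathbf{V}_k\|_{2,\infty}$ with $\|\mathbf{V}_k\t\mathbf{Z}_k\t\U_k\|\lesssim\sigma\sqrt{r+\log p}$; (iii) the second- and third-order $\mathbf{\Delta}$-products; and (iv) the noise term $\mathcal{Z}\times_1\hat P_1\times_2\hat P_2\times_3\hat P_3$, expanded once more as $\hat P_k = P_k + \mathbf{\Delta}_k$, whose all-$P_k$ part is $\lesssim\sigma\mu_0^3 r^{5/2}/p^{3/2}$ (using $\|\U_k\t\mathbf{Z}_k(\U_{k+1}\otimes\U_{k+2})\|\lesssim\sigma r$) and whose $\mathbf{\Delta}_k$-corrections are handled like those in (iii). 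A short computation under $\kappa^2\mu_0^2 r^{3/2}\sqrt{\log p}\lesssim p^{1/4}$ shows each piece is bounded by a constant times $\sigma\mu_0\sqrt{r\log p}/p + \sigma^2\mu_0^4\kappa^3 r^3\log p/(\lambda\sqrt p)$; the simplified bound then follows by noting that the second term is absorbed by the first once $\lambda/\sigma\gtrsim\mu_0^3\kappa^3 r^{5/2}\sqrt{p\log p}$.

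\emph{The main obstacle is step (iii).} The crude estimate $\|\mathcal{T}\times_a\mathbf{\Delta}_a\times_b\mathbf{\Delta}_b\|_{\max}\le\|\mathbf{\Delta}_a\|_{2,\infty}\|\mathbf{\Delta}_b\|_{2,\infty}\|\mathcal{M}_a(\mathcal{T})\|$ is too weak — it loses a factor of order $\sqrt p$ relative to the target. To recover the missing $p^{-1/2}$, I would substitute the first-order expansion into one of the two $\mathbf{\Delta}$-factors, writing $\mathbf{\Delta}_a\U_a = \mathbf{Z}_a\mathbf{V}_a\mathbf{\Lambda}_a\inv + (\text{smaller})$, so that the other factor gets left-multiplied by $\mathbf{V}_a\t$; splitting $\mathbf{V}_a\t$ through its Kronecker structure then produces a second row-norm factor $\lesssim\mu_0\sqrt r/\sqrt p$ before the rest is bounded in operator norm. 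Carrying this accounting through all ordered mode pairs, and through the $\mathbf{\Delta}_k$-corrections inside the noise term, is the bulk of the work; the triple product $\mathcal{T}\times_1\mathbf{\Delta}_1\times_2\mathbf{\Delta}_2\times_3\mathbf{\Delta}_3$ is smaller still, and everything else is routine.
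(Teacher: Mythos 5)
Your overall strategy mirrors the paper's: expand $\mathcal{\hat T}-\mathcal{T}$ through $\hat P_k = P_k + \mathbf{\Delta}_k$, extract the linear-in-$\mathcal{Z}$ term $\xi_{ijk}$, control $\max_{ijk}|\xi_{ijk}|$ via a subgaussian tail bound plus a union bound over the $\asymp p^3$ indices, and absorb the rest into a residual bounded through \cref{cor:asymptoticnormality_projection} and incoherence of the $\U_k$ and $\mathbf{V}_k$. In fact the paper simply re-invokes \eqref{mainentrywiseexpansion}, established inside the proof of \cref{thm:asymptoticnormalityentries}, which already packages your steps (i)--(iv) into a single ready-made entrywise expansion, leaving only the Hoeffding-plus-union-bound step for the leading term.

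Where you deviate is the fix for your step (iii), and there you have made things harder than they need to be. You are right that $\|\mathbf{\Delta}_a\|_{2,\infty}\|\mathbf{\Delta}_b\|_{2,\infty}\|\mathcal{M}_a(\mathcal{T})\|$ is short by roughly $\|\U_c\|_{2,\infty}\asymp\mu_0\sqrt{r/p}$, but you do not need to re-substitute the first-order expansion of $\mathbf{\Delta}_a$ to recover it. Since $\mathcal{T} = \mathcal{T}\times_c\U_c\U_c\t$, the quantity $\mathcal{T}\times_a\mathbf{\Delta}_a\times_b\mathbf{\Delta}_b$ equals $\mathcal{T}\times_a\mathbf{\Delta}_a\times_b\mathbf{\Delta}_b\times_c\U_c\U_c\t$; matricizing along the clean mode $c$ and writing $\mathcal{M}_c(\mathcal{T}) = \U_c\mathbf{\Lambda}_c\mathbf{V}_c\t$ gives
\begin{align*}
\big|(\mathcal{T}\times_a\mathbf{\Delta}_a\times_b\mathbf{\Delta}_b)_{ijk}\big| \;\le\; \|\U_c\|_{2,\infty}\,\lambda_1\,\|\mathbf{\Delta}_a\|_{2,\infty}\,\|\mathbf{\Delta}_b\|_{2,\infty},
\end{align*}
because extracting a single column of $\mathbf{\Delta}_a\otimes\mathbf{\Delta}_b$ produces a Kronecker product of a column of $\mathbf{\Delta}_a$ and a column of $\mathbf{\Delta}_b$, whose norm factors. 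This is exactly the bound in \cref{lem:tentrywiselemma}, and it immediately supplies the $p^{-1/2}$ you were missing; the triple-$\mathbf{\Delta}$ product and the $\mathbf{\Delta}$-corrections inside the noise term $\mathcal{Z}\times_1\hat P_1\times_2\hat P_2\times_3\hat P_3$ are handled by the same device. Your more elaborate route---rewriting $\mathbf{\Delta}_a\U_a = \mathbf{Z}_a\mathbf{V}_a\mathbf{\Lambda}_a\inv + \cdots$ and then splitting $\mathbf{V}_a$ through the Kronecker structure---would plausibly give the right order as well, but it is bookkeeping that the column-space structure of $\mathcal{T}$ along the untouched mode makes unnecessary. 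Apart from that over-complication, which concerns efficiency rather than correctness, your plan is sound and lands on the claimed bound.
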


 \subsection{Confidence Interval Validity}
The following results generalize \cref{thm:civalidity2_v1} and \cref{thm:civalidity_v1} respectively.  
 \begin{theorem}[Generalization of \cref{thm:civalidity2_v1}]\label{thm:civalidity2}
 Instate the conditions of \cref{thm:eigenvectornormality}.   Suppose also that 
 \begin{align}
     \kappa^2 \mu_0^2 r^{3/2}\sqrt{\log(p)} \lesssim p^{1/4}. \label{technicalcondition}
 \end{align}
 In addition, assume that $\mu_0 \frac{r^2}{p} = o(1)$ and that
 \begin{align*}
     \lambda/\sigma \gg  \kappa^3 \mu_0^2 \log^2(p) r^2 \sqrt{p}.
 \end{align*}
 Let $\mathrm{C.R.}^{\alpha}_{k,m}(\uhat_k)$ denote  the output of \cref{al:ci_eigenvector}.  Then it holds that
 \begin{align*}
     \p\bigg\{ \bigg( \U_k \mathbf{W}_k^{(t)} \bigg)_{m\cdot} \in \mathrm{C.R.}^{\alpha}_{k,m}(\uhat_k) \bigg\} = 1- \alpha  - o(1).
 \end{align*}
   \end{theorem}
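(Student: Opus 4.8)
The plan is to reduce the coverage statement to the Gaussian approximation of \cref{thm:eigenvectornormality2} and then to bound the perturbation incurred by replacing the population covariance $\mathbf{\Gamma}_k^{(m)}$ by the plug-in $\mathbf{\hat \Gamma}_k^{(m)}$ of \cref{al:ci_eigenvector}. Set $\Bdelta_m := \big(\uhat_k^{(t)}(\mathbf{W}_k^{(t)})\t - \U_k\big)_{m\cdot}$. Since $\big(\U_k\mathbf{W}_k^{(t)}\big)_{m\cdot} = (\mathbf{W}_k^{(t)})\t(\U_k)_{m\cdot}$ and $(\uhat_k)_{m\cdot} - (\mathbf{W}_k^{(t)})\t(\U_k)_{m\cdot} = (\mathbf{W}_k^{(t)})\t\Bdelta_m$, the coverage event is exactly
\begin{align*}
   \big\| \big(\mathbf{\hat \Gamma}_k^{(m)}\big)^{-1/2}(\mathbf{W}_k^{(t)})\t\Bdelta_m \big\|^2 \leq \tau_\alpha .
\end{align*}
By \cref{thm:eigenvectornormality2}, $\big(\mathbf{\Gamma}_k^{(m)}\big)^{-1/2}\Bdelta_m$ is within $o(1)$, in the distance over convex sets, of a standard Gaussian $Z$ on $\mathbb{R}^{r_k}$, and $\|Z\|^2 \sim \chi^2_{r_k}$, so the ball $\mathcal{B}_{1-\alpha}$ of squared radius $\tau_\alpha$ has Gaussian mass exactly $1-\alpha$. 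Thus it suffices to show that $\mathbf{\hat \Gamma}_k^{(m)}$ is close enough to $(\mathbf{W}_k^{(t)})\t\mathbf{\Gamma}_k^{(m)}\mathbf{W}_k^{(t)}$.

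\textbf{Core estimate.} On an event of probability $1-o(1)$,
\begin{align*}
   (1-\eta)\,(\mathbf{W}_k^{(t)})\t\mathbf{\Gamma}_k^{(m)}\mathbf{W}_k^{(t)} \preccurlyeq \mathbf{\hat \Gamma}_k^{(m)} \preccurlyeq (1+\eta)\,(\mathbf{W}_k^{(t)})\t\mathbf{\Gamma}_k^{(m)}\mathbf{W}_k^{(t)}
\end{align*}
with $\eta\sqrt{r} = o(1)$; in particular $\lambda_{\min}(\mathbf{\hat \Gamma}_k^{(m)}) \gtrsim \sigma_{\min}^2/\lambda^2$, which also gives the length comparison with \cref{thm:efficiency_order}. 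I would prove this in two pieces. First, from the SVD identity $\mathcal{M}_k(\mathcal{\tilde T})\big((\uhat_{k+1}\uhat_{k+1}\t)\otimes(\uhat_{k+2}\uhat_{k+2}\t)\big) = \uhat_k^{(t)}\mathbf{\hat \Lambda}_k\mathbf{\hat V}_k\t$, the closeness of the left-hand matrix to $\mathbf{T}_k = \U_k\mathbf{\Lambda}_k\mathbf{V}_k\t$ on the relevant subspace, and the expansion $\uhat_k^{(t)} \approx \U_k\mathbf{W}_k^{(t)}$ from \cref{thm:eigenvectornormality}, one obtains $\mathbf{\hat \Lambda}_k\mathbf{\hat V}_k\t = (\mathbf{W}_k^{(t)})\t\mathbf{\Lambda}_k\mathbf{V}_k\t + (\text{small})$, hence $\mathbf{\hat V}_k\mathbf{\hat \Lambda}_k^{-1}$ and $\mathbf{\Lambda}_k^{-1}\mathbf{V}_k\t\mathbf{W}_k^{(t)}$ agree up to relative operator-norm error $o(1)$, with $\|\mathbf{\hat \Lambda}_k^{-1}\| \lesssim \lambda^{-1}$. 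Second, for the variance plug-in $\mathbf{\hat \Sigma}_k^{(m)} = \diag(\mathcal{\hat Z}^2_{m\cdot})$: \cref{cor:maxnormbound} gives $\|\mathcal{\hat T} - \mathcal{T}\|_{\max} = o(\sigma/\sqrt{\log p})$ under the SNR hypothesis, so, using $\max|\mathcal{Z}_{mab}| \lesssim \sigma\sqrt{\log p}$ with high probability, $\max|\mathcal{\hat Z}^2_{mab} - \mathcal{Z}^2_{mab}| \ll \sigma^2$; moreover, writing the residual error as a sum $\sum_c(\mathcal{Z}^2_{mc} - \sigma^2_{mc})(\mathbf{V}_k)_{c\cdot}(\mathbf{V}_k)_{c\cdot}\t$ of centered sub-exponential matrices whose norms are controlled by the incoherence $\|\mathbf{V}_k\|_{2,\infty}^2 \lesssim \mu_0^2 r/p$, the matrix Bernstein inequality yields $\|\mathbf{V}_k\t\diag(\mathcal{Z}^2_{m\cdot} - \sigma^2_{m\cdot})\mathbf{V}_k\| \lesssim \sigma^2\sqrt{\mu_0^2 r\log(p)/p} + \sigma^2\mu_0^2 r\log^2(p)/p \ll \sigma^2$. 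Conjugating the resulting diagonal sandwich by $\mathbf{\hat V}_k\mathbf{\hat \Lambda}_k^{-1}$ and adding the contribution of the residual $\mathbf{\Psi}^{(k)}$ (which after normalization by $\big(\mathbf{\Gamma}_k^{(m)}\big)^{-1/2}$ has size $\lesssim \sigma\kappa^3\mu_0^2\log(p)r\sqrt p/\lambda + \mu_0 r\kappa^2/\sqrt p$ by \cref{thm:eigenvectornormality} and the lower bound on $\mathbf{\Gamma}_k^{(m)}$), one gets the displayed bound with $\eta$ of order $\sqrt{\mu_0^2 r\log(p)/p}$ plus the SNR terms; the two hypotheses $\kappa^2\mu_0^2 r^{3/2}\sqrt{\log p} \lesssim p^{1/4}$ and $\lambda/\sigma \gg \kappa^3\mu_0^2\log^2(p)r^2\sqrt p$ (together with $\mu_0 r^2/p = o(1)$) are precisely what force $\eta\sqrt r = o(1)$ and also make the bound in \cref{thm:eigenvectornormality2} genuinely vanish.

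\textbf{Conclusion.} Factor $\big(\mathbf{\hat \Gamma}_k^{(m)}\big)^{-1/2}(\mathbf{W}_k^{(t)})\t\Bdelta_m = \mathbf{A}\,\big(\mathbf{\Gamma}_k^{(m)}\big)^{-1/2}\Bdelta_m$ with $\mathbf{A} = \big(\mathbf{\hat \Gamma}_k^{(m)}\big)^{-1/2}(\mathbf{W}_k^{(t)})\t\big(\mathbf{\Gamma}_k^{(m)}\big)^{1/2}$; the core estimate says every singular value of $\mathbf{A}$ lies in $[(1+\eta)^{-1/2},(1-\eta)^{-1/2}]$, so the coverage event is trapped between $\big\{\|(\mathbf{\Gamma}_k^{(m)})^{-1/2}\Bdelta_m\|^2 \leq (1-\eta)\tau_\alpha\big\}$ and $\big\{\|(\mathbf{\Gamma}_k^{(m)})^{-1/2}\Bdelta_m\|^2 \leq (1+\eta)\tau_\alpha\big\}$, both ellipsoids and hence convex. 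Applying \cref{thm:eigenvectornormality2} to each, their probabilities equal $\p(\chi^2_{r_k} \leq (1\mp\eta)\tau_\alpha) + o(1)$; since $\eta\sqrt{r_k} = o(1)$ and the density of $\chi^2_{r_k}$ near its $(1-\alpha)$-quantile $\tau_\alpha \asymp r_k$ is $O(r_k^{-1/2})$, both equal $1-\alpha+o(1)$, and the sandwich gives $\p\{(\U_k\mathbf{W}_k^{(t)})_{m\cdot} \in \mathrm{C.R.}_{k,m}^{\alpha}(\uhat_k)\} = 1-\alpha + o(1)$ after a union bound over the $O(p^{-9})$-probability failure of the event of \cref{thm:eigenvectornormality} and the $o(1)$ failures of the Bernstein and sub-Gaussian-maximum events. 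The main obstacle is the first piece of the core estimate: tracking the non-identifiable rotation so that the SVD $(\mathbf{\hat V}_k,\mathbf{\hat \Lambda}_k)$ of the compressed matricized tensor is related to $(\U_k,\mathbf{\Lambda}_k,\mathbf{V}_k)$ with exactly the orthogonal factor $\mathbf{W}_k^{(t)}$, together with pushing the matrix-Bernstein concentration of the squared-residual variance estimate down to relative error $o(1/\sqrt r)$, which is where incoherence of $\mathbf{V}_k$ and the two standing conditions get consumed.
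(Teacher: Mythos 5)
Your proposal is correct and follows essentially the same line as the paper's proof: reduce to the Gaussian approximation of \cref{thm:eigenvectornormality2}, establish a high-probability closeness of the plug-in covariance $\mathbf{\hat\Gamma}_k^{(m)}$ to $(\mathbf{W}_k^{(t)})\t\mathbf{\Gamma}_k^{(m)}\mathbf{W}_k^{(t)}$ (your two ``core estimate'' pieces match the $\alpha_1,\alpha_2$ computations via \cref{lem:Vmatrixcloseness} and the $\alpha_3$ sub-exponential concentration via Bernstein inside \cref{lem:gammacloseness}), and then convert that covariance error into a coverage error by anticoncentration.

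The one place your route diverges from the paper is the last step. The paper bounds the specific vector quantity $\|e_m\t(\uhat_k\mathbf{W}_k\t-\U_k)\bigl((\mathbf{\Gamma}_k^{(m)})^{-1/2}-(\mathbf{W}_k\mathbf{\hat\Gamma}_k^{(m)}\mathbf{W}_k\t)^{-1/2}\bigr)\|\leq\eps$, views the coverage event as the oracle event plus an $\eps$-enlargement, and invokes \citet{raicv_multivariate_2019} (Thm.\ 1.2) for $\p(Z\in A^\eps\setminus A)\lesssim r^{1/4}\eps$. You instead push a uniform PSD sandwich $(1-\eta)(\mathbf{W}_k^{(t)})\t\mathbf{\Gamma}_k^{(m)}\mathbf{W}_k^{(t)}\preccurlyeq\mathbf{\hat\Gamma}_k^{(m)}\preccurlyeq(1+\eta)(\cdot)$, trap the coverage event between two balls of radii $(1\mp\eta)\tau_\alpha$, and finish with the observation that the $\chi^2_{r_k}$ density near $\tau_\alpha\asymp r_k$ is $O(r_k^{-1/2})$, so the difference is $O(\eta\sqrt{r_k})$. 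This is a legitimate and perhaps cleaner alternative: it avoids Raic and it does not need the high-probability size of $\Bdelta_m$ (the sandwich is a uniform operator-norm statement), at the cost of requiring the somewhat stronger $\eta\sqrt{r}=o(1)$ for the full covariance matrix rather than just a bound on the action against the single random vector $\Bdelta_m$. You have correctly identified where the standing hypotheses are consumed, and you have correctly flagged the careful bookkeeping of the identifiability rotation $\mathbf{W}_k^{(t)}$ (handled in the paper through $\mathbf{\hat W}_k=\mathrm{sgn}(\U_k\t\uhat_k)$ and $\mathbf{W}_{\mathbf{V}_k}=\mathrm{sgn}(\mathbf{V}_k\t\mathbf{\hat V}_k)$) as the real labor. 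One small inaccuracy: $\|\mathbf{V}_k\|_{2,\infty}^2\lesssim\mu_0^2 r/p^2$ (not $\mu_0^2 r/p$) since $\mathbf{V}_k\in\mathbb{R}^{p_{-k}\times r_k}$ with $p_{-k}\asymp p^2$; this only makes your Bernstein concentration stronger, so nothing breaks.
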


\begin{theorem}[Generalization of \cref{thm:civalidity_v1}] \label{thm:civalidity}
Instate the conditions of \cref{thm:asymptoticnormalityentries}.  Suppose further that 
\begin{align*}
\left\|e_{(j-1) p_{3}+k}^{\top} \mathbf{V}_{1}\right\|^{2}+\left\|e_{(k-1) p_{1}+i}\t \mathbf{V}_{2}\right\|^{2}+\left\|e_{(i-1) p_{2}+j}\t \mathbf{V}_{3}\right\|^{2} &\gg  \frac{\sigma \mu_0^5 r^3 \kappa^2 \log^{3/2}(p)}{\lambda p^{3/2}}.
\end{align*}
 Let $\mathrm{C.I.}^{\alpha}_{ijk}(\mathcal{\hat T}_{ijk})$ denote the output of \cref{al:ci_entries}.  Then it holds that
\begin{align*}
    \p\bigg( \mathcal{T}_{ijk} \in  \mathrm{C.I.}^{\alpha}_{ijk}(\mathcal{\hat T}_{ijk}) \bigg) = 1- \alpha  - o(1).
\end{align*}
\end{theorem}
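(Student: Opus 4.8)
The plan is to reduce the statement to a consistency claim for the plug-in variance $\hat s_{ijk}^2$ produced by \cref{al:ci_entries}, followed by one application of Slutsky's lemma. Since the hypotheses of \cref{thm:asymptoticnormalityentries} are instated, we already have $\sup_{t}\big|\p\{(\mathcal{\hat T}_{ijk}-\mathcal{T}_{ijk})/s_{ijk}\le t\}-\Phi(t)\big|=o(1)$, so it suffices to prove $\hat s_{ijk}^2/s_{ijk}^2 = 1 + o_{\mathbb{P}}(1)$ with an explicit rate. Granting this, $(\mathcal{\hat T}_{ijk}-\mathcal{T}_{ijk})/\hat s_{ijk}\Rightarrow N(0,1)$, and since the interval from \cref{al:ci_entries} is $\mathcal{\hat T}_{ijk}\pm z_{\alpha/2}\hat s_{ijk}$, we obtain $\p(\mathcal{T}_{ijk}\in\mathrm{C.I.}^{\alpha}_{ijk}) = \p\big(|(\mathcal{\hat T}_{ijk}-\mathcal{T}_{ijk})/\hat s_{ijk}|\le z_{\alpha/2}\big)\to 1-\alpha$, the rate being inherited from the rate in \cref{thm:asymptoticnormalityentries} and from the consistency bound. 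Thus the entire proof is about the ratio $\hat s_{ijk}^2/s_{ijk}^2$.

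\textbf{Decomposition of the estimator.} Write $\hat s_{ijk}^2 = \sum_{\ell=1}^{3}\hat s_{ijk,\ell}^2$ and $s_{ijk}^2 = \sum_{\ell=1}^{3}s_{ijk,\ell}^2$ according to the three modes, where, abbreviating $m := (j-1)p_3+k$ and $\sigma^2_{ia} := \mathrm{Var}(\mathcal{Z}_{i,(a)})$ for the $a$-th lexicographic column index, one has $\hat s_{ijk,1}^2 = \sum_a (\mathbf{\hat Z}_1)_{ia}^2 (\mathbf{\hat V}_1\mathbf{\hat V}_1\t)_{a,m}^2$ and $s_{ijk,1}^2 = \sum_a \sigma^2_{ia} (\mathbf{V}_1\mathbf{V}_1\t)_{a,m}^2$, with modes $2,3$ symmetric. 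It is enough to show $|\hat s_{ijk,\ell}^2 - s_{ijk,\ell}^2| = o(1)\,s_{ijk}^2$ for each $\ell$, since $s_{ijk}^2 \gtrsim \sigma_{\min}^2\nu^2$ with $\nu^2 := \|e_m\t\mathbf{V}_1\|^2 + \|e_{(k-1)p_1+i}\t\mathbf{V}_2\|^2 + \|e_{(i-1)p_2+j}\t\mathbf{V}_3\|^2$. For mode $1$, using $(\mathbf{\hat Z}_1)_{ia} = (\mathbf{Z}_1)_{ia} + \big(\mathcal{M}_1(\mathcal{T}-\mathcal{\hat T})\big)_{ia}$, split
\begin{align*}
\hat s_{ijk,1}^2 - s_{ijk,1}^2 &= \sum_a (\mathbf{\hat Z}_1)_{ia}^2\big[(\mathbf{\hat V}_1\mathbf{\hat V}_1\t)_{a,m}^2 - (\mathbf{V}_1\mathbf{V}_1\t)_{a,m}^2\big] + \sum_a \big[(\mathbf{\hat Z}_1)_{ia}^2 - (\mathbf{Z}_1)_{ia}^2\big](\mathbf{V}_1\mathbf{V}_1\t)_{a,m}^2 \\
&\quad + \sum_a \big[(\mathbf{Z}_1)_{ia}^2 - \sigma^2_{ia}\big](\mathbf{V}_1\mathbf{V}_1\t)_{a,m}^2 \;=:\; \mathrm{(I)}+\mathrm{(II)}+\mathrm{(III)}.
\end{align*}

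\textbf{Bounding the three pieces.} For $\mathrm{(III)}$, the summands are independent, mean-zero, sub-exponential with $\psi_1$-norm $\lesssim\sigma^2$, with nonnegative weights $w_a := (\mathbf{V}_1\mathbf{V}_1\t)_{a,m}^2$ obeying $\sum_a w_a = \|e_m\t\mathbf{V}_1\|^2\le\nu^2$ and $\max_a w_a \le \|\mathbf{V}_1\|_{2,\infty}^2\|e_m\t\mathbf{V}_1\|^2 \lesssim \mu_0^4 r^2 p^{-2}\nu^2$ by incoherence of $\mathbf{V}_1$ (which inherits incoherence from $\mathbf{U}_2,\mathbf{U}_3$ via the Kronecker structure of its column space); Bernstein's inequality then gives, with probability $1-O(p^{-10})$, $|\mathrm{(III)}| \lesssim \sigma^2\nu^2\big(\mu_0^2 r\sqrt{\log p}/p + \mu_0^4 r^2\log p/p^2\big) = o(1)\,\sigma_{\min}^2\nu^2$ under the regime assumptions $\mu_0^2 r\lesssim p^{1/2}$ and $\sigma\asymp\sigma_{\min}$. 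For $\mathrm{(II)}$, expand $(\mathbf{\hat Z}_1)_{ia}^2 - (\mathbf{Z}_1)_{ia}^2$, bound $|\mathcal{M}_1(\mathcal{T}-\mathcal{\hat T})_{ia}|\le\|\mathcal{\hat T}-\mathcal{T}\|_{\max}$, and control the cross term by Cauchy--Schwarz using $\sum_a\|(\mathbf{V}_1)_{a\cdot}\|^2 = r_1$ and $\sum_a(\mathbf{Z}_1)_{ia}^2 \lesssim \sigma^2 p^2$ (w.h.p.); invoking the max-norm bound of \cref{cor:maxnormbound} then gives $|\mathrm{(II)}| = o(1)\,\sigma_{\min}^2\nu^2$. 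This is precisely where the \emph{additional} hypothesis of the theorem, $\|e_m\t\mathbf{V}_1\|^2+\cdots \gg \sigma\mu_0^5 r^3\kappa^2\log^{3/2}(p)/(\lambda p^{3/2})$, enters: it is what renders $\|\mathcal{\hat T}-\mathcal{T}\|_{\max}$ negligible relative to $s_{ijk}^2/(\sigma r)$. For $\mathrm{(I)}$, factor the difference of squares $(\mathbf{\hat V}_1\mathbf{\hat V}_1\t)_{a,m}^2 - (\mathbf{V}_1\mathbf{V}_1\t)_{a,m}^2 = \big[(\mathbf{\hat V}_1\mathbf{\hat V}_1\t)_{a,m}-(\mathbf{V}_1\mathbf{V}_1\t)_{a,m}\big]\big[(\mathbf{\hat V}_1\mathbf{\hat V}_1\t)_{a,m}+(\mathbf{V}_1\mathbf{V}_1\t)_{a,m}\big]$, and apply the $\sin\Theta$ and $\ell_{2,\infty}$ perturbation bounds for $\mathbf{\hat V}_1$ relative to $\mathbf{V}_1$ (available from the analysis of \cref{al:tensor-power-iteration} and the constructions of \citet{agterberg_estimating_2022}, since $\mathbf{V}_1$ is exactly the right singular subspace of $\mathcal{M}_1(\mathcal{T})$ projected onto the true $\mathbf{U}_2,\mathbf{U}_3$, while $\mathbf{\hat V}_1$ uses $\uhat_2,\uhat_3$ and the noisy tensor) together with $\sum_a(\mathbf{\hat Z}_1)_{ia}^2\lesssim\sigma^2 p^2$, yielding $|\mathrm{(I)}| = o(1)\,\sigma_{\min}^2\nu^2$. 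Summing over the three modes finishes the argument.

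\textbf{Main obstacle.} The delicate point is term $\mathrm{(I)}$: both $\mathbf{\hat V}_1$ and the matricized residual $\mathbf{\hat Z}_1$ depend on the $i$-th mode-$1$ slice of $\mathcal{Z}$, so one cannot simply condition on $\mathbf{\hat V}_1$ and average over $(\mathbf{Z}_1)_{i\cdot}$. The clean resolution is to replace $\mathbf{\hat V}_1$ by its leave-one-out analogue built from the tensor with that slice zeroed out, which is independent of $(\mathbf{Z}_1)_{i\cdot}$, the replacement error being controlled by the leave-one-out perturbation bounds imported from \citet{agterberg_estimating_2022}. Beyond this decoupling, the remaining work is careful bookkeeping: each error term must be shown to be $o(s_{ijk}^2)$ rather than merely $o(\sigma^2)$, which is exactly what forces the strengthened lower bound on $\|e_m\t\mathbf{V}_1\|^2 + \|e_{(k-1)p_1+i}\t\mathbf{V}_2\|^2 + \|e_{(i-1)p_2+j}\t\mathbf{V}_3\|^2$; the concentration inputs (Bernstein for diagonal quadratic forms, Cauchy--Schwarz, and the incoherence estimates) are otherwise routine.
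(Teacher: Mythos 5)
Your overall strategy is correct and coincides with the paper's: reduce the claim to $\hat s_{ijk}^2/s_{ijk}^2 = 1 + o_{\mathbb{P}}(1)$, then conclude via Slutsky and the Lipschitz continuity of $\Phi$, using the already-proved \cref{thm:asymptoticnormalityentries}. Your three-way decomposition $\mathrm{(I)}+\mathrm{(II)}+\mathrm{(III)}$ is essentially the paper's decomposition: the paper introduces the hybrid $\tilde s^2_{ijk} := \sum_a (\mathbf{Z}_1)_{ia}^2 (\mathbf{V}_1\mathbf{V}_1\t)_{a,m}^2 + \cdots$ and proves $|\hat s^2-\tilde s^2| = o(s^2)$ (your $\mathrm{(I)}+\mathrm{(II)}$, handled jointly) and $|\tilde s^2-s^2| = o(s^2)$ (your $\mathrm{(III)}$, via Bernstein with the identical weight computation). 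So the skeleton is right.

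However, the obstacle you flag at the end is not an obstacle in the paper's argument, and your proposed leave-one-out fix is not needed. You worry that because $\hat{\mathbf{V}}_1$ and $(\mathbf{\hat Z}_1)_{i\cdot}$ both depend on the $i$-th mode-$1$ slice of $\mathcal{Z}$, term $\mathrm{(I)}$ cannot be handled by conditioning. But the paper never conditions or averages: it bounds every factor pointwise on a high-probability event. Concretely, it uses $\sum_a (\mathbf{Z}_1)_{ia}^2|(\mathbf{V}_1\mathbf{V}_1\t)^2_{a,m}-(\hat{\mathbf{V}}_1\hat{\mathbf{V}}_1\t)^2_{a,m}| \le \|\mathbf{Z}_1\|_{\max}^2\,p^2\,\big(\|\mathbf{V}_1\mathbf{V}_1\t\|_{\max}+\|\hat{\mathbf{V}}_1\hat{\mathbf{V}}_1\t\|_{\max}\big)\|\mathbf{V}_1\mathbf{V}_1\t - \hat{\mathbf{V}}_1\hat{\mathbf{V}}_1\t\|_{\max}$, and then plugs in $\|\mathbf{Z}_1\|_{\max}\lesssim\sigma\sqrt{\log p}$, $\|\mathbf{V}_1\mathbf{V}_1\t\|_{\max}\lesssim\mu_0^2 r/p^2$, and the $\ell_{2,\infty}$ bound on $\hat{\mathbf{V}}_1-\mathbf{V}_1\mathbf{W}_{\mathbf{V}_1}$ from \cref{lem:Vmatrixcloseness}. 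Dependence is irrelevant: the intersection of the three high-probability events gives a deterministic bound, and the resulting error is exactly $\sigma^3\mu_0^5 r^3\kappa^2\log^{3/2}(p)/(\lambda p^{3/2})$, which is precisely what the additional hypothesis of the theorem renders negligible relative to $s_{ijk}^2$. Introducing a leave-one-out analogue of $\hat{\mathbf{V}}_1$ would add substantial bookkeeping for no gain here — the decoupling machinery is needed in the distributional proofs (\cref{thm:eigenvectornormality}, \cref{thm:asymptoticnormalityentries}), not in the variance-consistency step.

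Two smaller inaccuracies worth flagging. In your bound for $\mathrm{(III)}$ you assert $\max_a w_a \lesssim \mu_0^4 r^2 p^{-2}\nu^2$, but in fact $w_a = \langle(\mathbf{V}_1)_{a\cdot},(\mathbf{V}_1)_{m\cdot}\rangle^2 \le \|\mathbf{V}_1\|_{2,\infty}^2\|e_m\t\mathbf{V}_1\|^2 \lesssim (\mu_0^2 r/p^2)\nu^2$ — you have an extra factor of $\mu_0^2 r$; the paper's Bernstein step gives the sharper rate $\sigma^2\mu_0\sqrt{r\log p}/p\cdot\nu^2$. In your bound for $\mathrm{(II)}$, invoking $\sum_a(\mathbf{Z}_1)_{ia}^2\lesssim\sigma^2 p^2$ via Cauchy–Schwarz is wasteful and destroys the concentration of the weights $(\mathbf{V}_1\mathbf{V}_1\t)_{a,m}^2$; the paper instead pulls out $\max_a|(\hat{\mathbf{Z}}_1)_{ia}^2-(\mathbf{Z}_1)_{ia}^2| \lesssim \sigma\sqrt{\log p}\cdot\|\hat{\mathcal{T}}-\mathcal{T}\|_{\max}$ (via \cref{cor:maxnormbound}) and multiplies by $\sum_a(\hat{\mathbf{V}}_1\hat{\mathbf{V}}_1\t)_{a,m}^2\le\|\hat{\mathbf{V}}_1\|_{2,\infty}^2$, keeping the bound proportional to $s_{ijk}^2$.
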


\section{Analysis Preliminaries} 
\label{sec:analysispreliminaries}
In this section we introduce notation and present several previous results concerning the output of \texttt{HOOI} from \citet{agterberg_estimating_2022}.    We also describe the dependencies of all of our main results.

\subsection{Initial Bounds and the Leave-One-Out Sequence} \label{sec:leaveoneout}

Our analysis is based on the theory developed in \citet{agterberg_entrywise_2022}.  First we state several results concerning the output of Tensor SVD.   Throughout our proofs we assume that $t$ is taken to be $t_0 + 1$, with $t_0$ as in \cref{thm:twoinfty} below.    
\begin{theorem}[Restatement of Theorem 2 \citet{agterberg_entrywise_2022}] \label{thm:twoinfty}
Suppose $\mathcal{T}$ is a Tucker low-rank tensor with incoherence parameter $\mu_0$ and condition number $\kappa$.  Suppose that $\lambda/\sigma \gtrsim \kappa p^{3/4} \sqrt{\log(p)}$ and that $r_k \asymp r$.  Suppose further that $\kappa^2 \lesssim p^{1/4}$ and that $\mu_0^2 r \lesssim p^{1/2}$.  Then  for $t\asymp \textcolor{black}{\log( \frac{\lambda/\sigma}{C \kappa \sqrt{p\log(p)}}})$ %\log(\kappa p/(\lambda/\sigma)$
it holds \textcolor{black}{with probability at least $1 - p^{-10}$} that
\begin{align*}
    \| \uhat_k^{(t)} \mathbf{W}_k^{(t)} - \U_k \|_{2,\infty} &\lesssim \frac{ \kappa\mu_0 \sqrt{r_k \log(p)}}{\lambda/\sigma}.
\end{align*}
\end{theorem}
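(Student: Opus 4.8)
The plan is to prove the bound by a leave-one-out induction on the \texttt{HOOI} iterations $t$, bootstrapping from the spectral-norm contraction already available for \texttt{HOOI}. First I would invoke the spectral theory: under $\lambda/\sigma \gtrsim \kappa p^{3/4}\sqrt{\log p}$, $\kappa^2 \lesssim p^{1/4}$, and a warm start (provided, e.g., by the diagonal-deletion initialization of \cref{al:dd}, whose $\sin\Theta$ error is itself controlled by a separate spectral argument), the results of \citet{zhang_tensor_2018} and \citet{luo_sharp_2021} give, with probability $1 - O(p^{-10})$, that $\|\sin\Theta(\uhat_k^{(t)},\U_k)\| \lesssim \sigma\sqrt{p}/\lambda$ for every $k$ and every $t \gtrsim t_0 := \log\!\big(\tfrac{\lambda/\sigma}{C\kappa\sqrt{p\log p}}\big)$; the $p^{3/4}$ rate is exactly what makes the per-iteration error contract to this noise floor within $O(t_0)$ sweeps. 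This reduces the problem to promoting the $\ell_2$ (spectral) control to an $\ell_{2,\infty}$ (row-wise) bound.

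Next I would build the leave-one-out machinery. For each mode $k$ and each slice index $m \in [p_k]$, let $\widetilde{\mathcal{T}}^{(k,m)}$ be the tensor obtained from $\widetilde{\mathcal{T}}$ by zeroing out the $m$-th mode-$k$ slice of the noise $\mathcal{Z}$, and run \cref{al:tensor-power-iteration} on it to produce auxiliary iterates $\uhat_\ell^{(t),(k,m)}$; these are independent of the entries of $e_m\t \mathbf{Z}_k$. The heart of the argument is a joint induction over $t$ maintaining, for all $k$ and $m$: (i) the spectral error $\|\uhat_k^{(t)}\mathbf{W}_k^{(t)} - \U_k\| \lesssim \sigma\sqrt{p}/\lambda$; (ii) the leave-one-out discrepancy $\|\uhat_k^{(t)}\mathbf{W}_k^{(t)} - \uhat_k^{(t),(k,m)}\mathbf{W}_k^{(t),(k,m)}\| \lesssim \kappa\mu_0\sqrt{r_k\log p}/(\lambda/\sigma)$; and (iii) the target bound $\|\uhat_k^{(t)}\mathbf{W}_k^{(t)} - \U_k\|_{2,\infty} \lesssim \kappa\mu_0\sqrt{r_k\log p}/(\lambda/\sigma)$.

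For the induction step I would use the explicit \texttt{HOOI} update: $\uhat_k^{(t)}$ spans the top-$r_k$ left singular subspace of $(\mathbf{T}_k + \mathbf{Z}_k)\big(\uhat_{k+1}\uhat_{k+1}\t \otimes \uhat_{k+2}\uhat_{k+2}\t\big)$ with the appropriate mixed current/previous time indices. Writing a first-order eigenspace perturbation expansion for this matrix (Davis--Kahan together with a Neumann-series / resolvent refinement in the style of \citet{abbe_entrywise_2020}) and then swapping each perturbed factor for its leave-one-out analogue — at a cost of (ii) times $\|e_m\t\mathbf{Z}_k\| \lesssim \sigma\sqrt{p\log p}$ — decouples $e_m\t\mathbf{Z}_k$ from the remaining factors, so that the dominant contribution to $e_m\t(\uhat_k^{(t)}\mathbf{W}_k^{(t)} - \U_k)$ is a linear form in $e_m\t\mathbf{Z}_k$ against a matrix that is fixed conditional on the leave-one-out data, has $r_k$ columns spanning essentially the range of $\mathbf{V}_k$, and has operator norm $\lesssim 1/\lambda$. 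A Bernstein bound on this $r_k$-dimensional projected linear form, plus Hanson--Wright for the quadratic-in-$\mathbf{Z}$ remainder (lower order thanks to the $\sin\Theta$ bound (i), the SNR condition, and the incoherence of $\mathbf{V}_k$ inherited from $\mu_0$), and a union bound over the $O(p)$ values of $m$, the finitely many modes $k$, and the $O(t_0)$ iterations, gives the order $\kappa\mu_0\sqrt{r_k\log p}/(\lambda/\sigma)$; the contraction factor from the power-iteration structure absorbs any leftover initialization error geometrically, closing the induction by step $t \asymp t_0$.

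The main obstacle is the cross-mode coupling: the mode-$k$ update depends on the mode-$(k+1)$ and mode-$(k+2)$ subspace estimates, which are themselves slightly contaminated by the $m$-th mode-$k$ slice of $\mathcal{Z}$, so the leave-one-out subspaces across the three modes are entangled, and the discrepancy bounds (ii) must be propagated through all three updates within a single inner for-loop sweep while carefully respecting the ``iteration $t$ versus iteration $t-1$'' bookkeeping of \cref{al:tensor-power-iteration}. Keeping every remainder strictly below the target order $\sigma\kappa\mu_0\sqrt{r_k\log p}/\lambda$, with the correct polynomial dependence on $\kappa$, $\mu_0$, and $r$, is the delicate part, and is precisely where the assumptions $\lambda/\sigma \gtrsim \kappa p^{3/4}\sqrt{\log p}$, $\kappa^2 \lesssim p^{1/4}$, and $\mu_0^2 r \lesssim p^{1/2}$ are used jointly.
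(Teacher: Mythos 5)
The paper does not supply its own proof of \cref{thm:twoinfty}: it states the result verbatim as ``Restatement of Theorem~2'' of the cited prior work and then imports the proof infrastructure (the leave-one-out iterates $\utilde_k^{(t,j-m)}$, the events $\mathcal{E}_{\mathrm{Good}}$, $\mathcal{E}_{\mathrm{main}}^{t_0-1,k}$, $\mathcal{E}_{2,\infty}^{t,k}$, $\mathcal{E}_{j-m}^{t,k}$, and the control quantities $\tau_k$, $\xi_k^{(t,j-m)}$, $\eta_k^{(t,j-m)}$) in \cref{sec:notation2} as black-box preliminaries; see also \cref{sinthetaloo14}, whose proof explicitly refers back to the proof of that Theorem~2. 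Your sketch — bootstrap a $\sin\Theta$ contraction for \texttt{HOOI} from the spectral theory, couple it with a two-index leave-one-out family $\uhat_\ell^{(t),(k,m)}$ obtained by zeroing the $m$-th mode-$k$ slice of the noise, and run a joint induction over iterations maintaining spectral error, leave-one-out discrepancy, and the row-wise bound, with the row-wise error split into a linear-in-$e_m^\top\mathbf{Z}_k$ piece (decoupled by the swap) and a lower-order quadratic remainder — is exactly the architecture of the cited proof, including the cross-mode entanglement you flag and the time-indexing bookkeeping inside each inner sweep. Two minor technical inaccuracies worth noting: the spectral bound maintained on $\mathcal{E}_{\mathrm{Good}}$ is $\|\sin\Theta(\uhat_k^{(t)},\U_k)\|\lesssim \kappa\sigma\sqrt{p\log p}/\lambda$, not $\sigma\sqrt{p}/\lambda$; and the quantity that actually appears in the swap cost is the \emph{projected} row $\|e_m^\top\mathbf{Z}_k\,\mathcal{\hat P}_k^{(t)}\|\le\tau_k\lesssim\sigma\sqrt{pr}$ (or the leave-one-out perturbation $\xi_k^{(t,j-m)}$), not the unprojected $\|e_m^\top\mathbf{Z}_k\|$, whose Euclidean norm over $p_{-k}\asymp p^2$ coordinates is of order $\sigma p$, far larger than the $\sigma\sqrt{p\log p}$ you wrote; relatedly, the quadratic remainder $e_m^\top\mathbf{Q}_k^{(t)}$ is handled in the cited proof by a deterministic inequality (their Lemma~5) in terms of $\tau_k$, $\eta_k^{(t)}$, and $\xi_k^{(t,j-m)}$ rather than by an explicit Hanson--Wright bound, though the effect is the same. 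These are presentation-level slips in a sketch; the proof strategy is correct and matches the one the paper relies on.
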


%\subsection{Leave-One-Out Sequences and Related Results}
We now recall the definition of the leave-one-out sequences defined in \citet{agterberg_estimating_2022}.  We define $\utilde_k^{S,j-m}$ as follows.  First, let $\mathbf{Z}_k^{k-m}$ be the $k$'th matricization of $\mathcal{Z}$ with its $m$'th row set to zero, and let $\mathcal{Z}^{k-m}$ be the corresponding tensor.  We then define $\mathcal{Z}_k^{j-m}$ as the matrix $\mathcal{M}_k(\mathcal{Z}^{j-m})$, which is the $k$'th matricization of the tensor $\mathcal{Z}$ with entries assocciated to the $m$'th row of $\mathbf{Z}_j$ set to zero.  We then define $\utilde_k^{(S,j-m)}$ as the leading $r_k$ eigenvectors of the matrix
\begin{align*}
    \Gamma\big( \mathbf{T}_k \mathbf{T}_k\t + \mathbf{Z}_k^{j-m} \mathbf{T}_k\t + \mathbf{T}_k (\mathbf{Z}_k^{j-m})\t + \mathbf{Z}_k^{j-m} (\mathbf{Z}_k^{j-m})\t \big).
\end{align*}
We then define $\utilde_k^{(t,j-m)}$ inductively as follows. For a given iteration $t$, we set
\begin{align*}
    \utilde_k^{(t,j-m)} = \begin{cases}
    \mathrm{SVD}_{r_1}\big( \mathbf{T}_1 + \mathbf{Z}_1^{j-m} \mathcal{P}_{\utilde_{2}^{(t-1,j-m)} \otimes \utilde_{3}^{(t-1,j-m)}} \big)& k = 1; \\
    \mathrm{SVD}_{r_2}\big( \mathbf{T}_2 + \mathbf{Z}_2^{j-m} \mathcal{P}_{\utilde_{1}^{(t,j-m)} \otimes \utilde_{3}^{(t-1,j-m)}} \big)& k =2; \\\mathrm{SVD}_{r_3}\big( \mathbf{T}_3 + \mathbf{Z}_3^{j-m} \mathcal{P}_{\utilde_{1}^{(t,j-m)} \otimes \utilde_{2}^{(t,j-m)}} \big)& k = 3. \end{cases}
\end{align*}

 \subsection{Additional Notation} \label{sec:notation2}
 Finally, we define the following additional notation defined in \citet{agterberg_estimating_2022}.  We set $\mathcal{\hat P}_{k}^{t}$ via
 \begin{align*}
     \mathcal{\hat P}_k^{t} &\coloneqq \begin{cases}
     \mathcal{P}_{\uhat_{2}^{(t-1)} \otimes \uhat_3^{(t-1)}} &k = 1; \\
        \mathcal{P}_{\uhat_{1}^{(t)} \otimes \uhat_3^{(t-1)}} &k =2;\\
           \mathcal{P}_{\uhat_{1}^{(t)} \otimes \uhat_2^{(t)}} &k =3. \end{cases}
 \end{align*}
 We define $\mathcal{\tilde P}_k^{t,j-m}$ similarly.  We also define the terms
 \begin{align*}
     \mathbf{L}_k^{(t)} &\coloneqq \U_{k\perp} \U_{k\perp}\t \mathbf{Z}_k \mathcal{\hat P}_k^{t} \mathbf{T}_k\t \uhat_k^{(t)} \big( \mathbf{\hat \Lambda}_k^{(t)}\big)^{-2}; \\
     \mathbf{Q}_k^{(t)} &\coloneqq \U_{k\perp} \U_{k\perp}\t \mathbf{Z}_k \mathcal{\hat P}_k^{t} \mathbf{Z}_k\t \uhat_k^{(t)} \big( \mathbf{\hat \Lambda}_k^{(t)}\big)^{-2},
 \end{align*}
 representing the \emph{linear error} and \emph{quadratic error} respectively.  We also define
 \begin{align*}
    \tau_k &\coloneqq\sup_{\substack{ \| \mathbf{U}_1 \| = 1, \mathrm{rank}(\U_1) \leq 2 r_{k+1}\\ \|\mathbf{U}_2\| =1, \mathrm{rank}(\U_2) \leq 2 r_{k+2}} } \|  \mathbf{Z}_k \bigg( \mathcal{P}_{\mathbf{U}_1} \otimes \mathcal{P}_{\mathbf{U}_2} \bigg)\|; \\  
  %  \tilde \tau_k &\coloneqq \sup_{\substack{ \| \mathbf{U}_1 \| = 1, \mathrm{rank}(\U_1) \leq 2 r_{k+1}\\ \|\mathbf{U}_2\| =1, \mathrm{rank}(\U_2) \leq 2 r_{k+2}} } \|  \mathbf{Z}_k \bigg( \mathcal{P}_{\mathbf{U}_1} \otimes \mathcal{P}_{\mathbf{U}_2} \bigg) \mathbf{V}_k\|; \\  
    \xi_{k}^{(t,j-m)} &\coloneqq 
    \bigg\| \bigg(\mathbf{Z}_k^{j-m} - \mathbf{Z}_k \bigg) \mathcal{\tilde P}_{k}^{t,j-m}  \bigg\| \\
    \tilde \xi_{k}^{(t,j-m)} &\coloneqq   \bigg\| \bigg(\mathbf{Z}_k^{j-m} - \mathbf{Z}_k \bigg) \mathcal{\tilde P}_{k}^{t,j-m}  \mathbf{V}_k  \bigg\| \\
     \eta_{k}^{(t,j-m)} &\coloneqq 
     \begin{cases}
         \| \sin\Theta( \utilde_{k+1}^{(t-1,j-m)}, \uhat_{k+1}^{(t-1)}) \| + \| \sin\Theta( \utilde_{k+2}^{(t-1,j-m)}, \uhat_{k+2}^{(t-1)}) \| & k = 1  \\
          \| \sin\Theta( \utilde_{k+1}^{(t-1,j-m)}, \uhat_{k+1}^{(t-1)}) \| + \| \sin\Theta( \utilde_{k+2}^{(t,j-m)}, \uhat_{k+2}^{(t)}) \| & k = 2  \\
           \| \sin\Theta( \utilde_{k+1}^{(t,j-m)}, \uhat_{k+1}^{(t)}) \| + \| \sin\Theta( \utilde_{k+2}^{(t,j-m)}, \uhat_{k+2}^{(t)}) \| & k = 3  \end{cases} \\
    \eta_k^{(t)} &\coloneqq \begin{cases}\| \sin\Theta( \U_{k+1}, \uhat_{k+1}^{(t-1)}) \| + \| \sin\Theta( \U_{k+2}, \uhat_{k+2}^{(t-1)}) \| & k = 1 \\
        \| \sin\Theta( \U_{k+1}, \uhat_{k+1}^{(t-1)}) \| + \| \sin\Theta( \U_{k+2}, \uhat_{k+2}^{(t)}) \| & k = 2 \\
        \| \sin\Theta( \U_{k+1}, \uhat_{k+1}^{(t)}) \| + \| \sin\Theta( \U_{k+2}, \uhat_{k+2}^{(t)}) \| & k = 3.
    \end{cases}
\end{align*}
Denote $\delta_L^{(k)} \coloneqq C_0\kappa \sqrt{p_k \log(p)}$, where $C_0$ is some appropriately large constant, and let $\delta_L = C_0 \kappa \sqrt{p_{\max} \log(p)}$.  We will use the following events from \citet{agterberg_estimating_2022} (where $\sigma = 1$ without loss of generality):
\begin{align*}
    \mathcal{E}_{\mathrm{Good}} &\coloneqq \bigg\{ \max_k \tau_k \leq C \sqrt{pr} \bigg\} \\
    &\qquad \bigcap \bigg\{ \| \sin\Theta(\uhat_k^{(t)}, \U_k ) \| \leq \frac{\dl\ku}{\lambda} + \frac{1}{2^{t}} \text{ for all $t \leq t_{\max}$ and $1\leq k \leq 3$ } \bigg\} \\
    &\qquad \bigcap \bigg\{ \max_k \bigg\| \U_k\t \mathbf{Z}_k \mathbf{V}_k \bigg\| \leq C \left( \sqrt{r} + \sqrt{\log(p)} \right) \bigg\}; \\
    &\qquad \bigcap \bigg\{ \max_k  \bigg\| \U_k\t \mathbf{Z}_k \mathcal{P}_{\U_{k+1}}  \otimes   \mathcal{P}_{\U_{k+2}}  \bigg\| \leq C \left( r + \sqrt{\log(p)} \right) \bigg\}; \\
    &\qquad \bigcap \bigg\{ \max_k \bigg\| \mathbf{Z}_k \mathbf{V}_k \bigg\| \leq C \sqrt{p_k} \bigg\}. \\
    \mathcal{E}_{2,\infty}^{t,k} &\coloneqq \bigg\{  \| \uhat_k^{(t)} - \U_k \mathbf{W}_k^{(t)} \|_{2,\infty} \leq \bigg( \frac{\dl\ku}{\lambda} + \frac{1}{2^{t}} \bigg) \mu_0 \sqrt{\frac{r_k}{p_k}} \bigg\}; \\
    %\bigg\{ \max_k \| \uhat_k^{(t)} - \U_k \mathbf{W}_k^{(t)} \|_{2,\infty} \leq \bigg( \frac{\dl}{\lambda} + \frac{1}{2^{t}} \bigg) \mu_0 \sqrt{\frac{r}{p}} \text{ for all $t \leq t_0 - 1$}\bigg\}; \\
    \mathcal{E}_{j-m}^{t,k} &\coloneqq \bigg\{ \| \sin\Theta (\utilde_k^{t,j-m}, \uhat_k^{(t)}) \| \leq  \bigg( \frac{\dl\ku}{\lambda} + \frac{1}{2^{t}} \bigg) \mu_0 \sqrt{\frac{r_k}{p_j}} \bigg\}; \\
    %\bigg\{ \max_j \| \sin\Theta(\utilde_j^{t,k-m}, \uhat_j^{(t)}) \| \leq \bigg( \frac{\dl}{\lambda} + \frac{1}{2^{t}} \bigg) \mu_0 \sqrt{\frac{r}{p}} \text{ for all $t \leq t_0 - 1$} \bigg\}; \\
  \mathcal{E}_{\mathrm{main}}^{t_0-1,1} &\coloneqq \bigcap_{t=1}^{t_0-1} \Bigg\{ \bigcap_{k=1}^{3}  \mathcal{E}^{t,k}_{2,\infty} \cap  \bigcap_{j=1}^{3} \bigcap_{m=1}^{p_j} \mathcal{E}_{k-m}^{t,j} \Bigg\}; \\
       \mathcal{E}_{\mathrm{main}}^{t_0-1,2} &\coloneqq\mathcal{E}_{\mathrm{main}}^{t_0-1,1} \cap \bigg\{ \bigcap_{k=1}^{3} \bigcap_{m = 1}^{p_k} \mathcal{E}_{k-m}^{t_0,1} \bigg\} \cap \mathcal{E}_{2,\infty}^{t_0,1} \\
        \mathcal{E}_{\mathrm{main}}^{t_0-1,3} &\coloneqq\mathcal{E}_{\mathrm{main}}^{t_0-1,2} \cap \bigg\{ \bigcap_{k=1}^{3} \bigcap_{m = 1}^{p_k} \mathcal{E}_{k-m}^{t_0,2} \bigg\} \cap \mathcal{E}_{2,\infty}^{t_0,2}.\\
        \mathcal{\tilde E}_{j-m}^{t,k} &\coloneqq \Bigg\{ \|\mathcal{\tilde P}_k^{t_0,j-m}  \mathbf{V}_k \|_{2,\infty} \leq c \mu_0^2 \frac{\sqrt{r_{-k}}}{p_j} \bigg( \frac{\dl^{(k+1)}}{\lambda} + \frac{1}{2^{t_0-1}}\bigg)\bigg( \frac{\dl^{(k+2)}}{\lambda} + \frac{1}{2^{t_0-1}}\bigg) \\
     &\quad + c \mu_0^2 \frac{\sqrt{r_{-k}}}{\sqrt{p_jp_{k+2}}} \bigg( \frac{\dl^{(k+1)}}{\lambda} + \frac{1}{2^{t_0-1}} \bigg) + c \mu_0^2 \frac{\sqrt{r_{-k}}}{\sqrt{p_j p_{k+1}}} \bigg( \frac{\dl^{(k+2)}}{\lambda} + \frac{1}{2^{t_0-1}} \bigg)\\
     &\quad + c \mu_0^2 \frac{\sqrt{r_{-k}}}{\sqrt{p_{-k}}} \bigg( \frac{\dl^{(k+1)}}{\lambda} + \frac{1}{2^{t_0-1}} \bigg) + c\mu_0^2 \frac{\sqrt{r_{-k}}}{\sqrt{p_{-k}}} \bigg( \frac{\dl^{(k+2)}}{\lambda} + \frac{1}{2^{t_0-1}} \bigg) + c \mu_0 \sqrt{\frac{r_k}{p_{-k}}}.\Bigg\} \\
     &\bigcap \Bigg\{ \|\mathcal{\tilde P}_k^{t_0,j-m}   \|_{2,\infty} \leq c \mu_0^2 \frac{\sqrt{r_{-k}}}{p_j}  \bigg( \frac{\dl^{(k+1)}}{\lambda} + \frac{1}{2^{t_0-1}} \bigg)  \bigg( \frac{\dl^{(k+2)}}{\lambda} + \frac{1}{2^{t_0-1}} \bigg) \\
    &\quad +  c\mu_0^2 \frac{\sqrt{r_{-k}}}{\sqrt{p_jp_{k+2}}} \bigg( \frac{\dl^{(k+1)}}{\lambda} + \frac{1}{2^{t_0-1}} \bigg) + c\mu_0^2 \frac{\sqrt{r_{-k}}}{\sqrt{p_jp_{k+1}}} \bigg( \frac{\dl^{(k+2)}}{\lambda} + \frac{1}{2^{t_0-1}} \bigg) + c\mu_0^2 \frac{\sqrt{r_{-k}}}{\sqrt{p_{-k}}}.\Bigg\},
  \end{align*}
  where $c$ is some deterministic constant.  These events are analyzed explicitly in \citet{agterberg_estimating_2022}. 
  \subsection{Initial Lemmas}
  \textcolor{black}{Without loss of generality, throughout this section we assume that $\sigma = 1$.  }  \textcolor{black}{First, we record the following lemma concerning the event $\mathcal{E}_{\mathrm{Good}}$ from \citet{agterberg_estimating_2022}.
  \begin{lemma}
      Let $\mathcal{E}_{\mathrm{Good}}$ be defined as above.  Under the conditions of \cref{thm:twoinfty}, it holds that $\p\{ \mathcal{E}_{\mathrm{Good}}\} \geq 1- O(p^{-30})$.  
  \end{lemma}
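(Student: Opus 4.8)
The event $\mathcal{E}_{\mathrm{Good}}$ is an intersection of five sub-events, which I will treat separately: (i) the operator-norm bound $\max_k \tau_k \leq C\sqrt{pr}$; (ii) the $\sin\Theta$ contraction bound $\|\sin\Theta(\uhat_k^{(t)},\U_k)\| \leq \dl\ku/\lambda + 2^{-t}$ for all $t\leq t_{\max}$ and all $k$; (iii) $\max_k \|\U_k\t\mathbf{Z}_k\mathbf{V}_k\| \leq C(\sqrt r + \sqrt{\log p})$; (iv) $\max_k \|\U_k\t\mathbf{Z}_k(\mathcal{P}_{\U_{k+1}}\otimes\mathcal{P}_{\U_{k+2}})\| \leq C(r + \sqrt{\log p})$; and (v) $\max_k\|\mathbf{Z}_k\mathbf{V}_k\| \leq C\sqrt{p_k}$. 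The plan is to show that each of these holds with probability at least $1-O(p^{-30})$ once the absolute constant $C$ in the definition is taken sufficiently large (enlarging $C$ only shrinks the event), and then to union-bound over the $O(1)$ modes. All of these facts are essentially established in \citet{zhang_tensor_2018} and \citet{agterberg_estimating_2022}; the write-up amounts to invoking the relevant lemmas and checking that the tails can be pushed down to $p^{-30}$.

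Events (iii), (iv), (v) are bounds on the spectral norm of a \emph{fixed} linear image of the independent, mean-zero, $\sigma$-subgaussian noise tensor $\mathcal{Z}$. For instance, for unit vectors $x,y$ the scalar $x\t\U_k\t\mathbf{Z}_k\mathbf{V}_k y$ is a weighted sum of independent subgaussians with variance proxy $\lesssim\sigma^2$, so $\p(|x\t\U_k\t\mathbf{Z}_k\mathbf{V}_k y|>t)\leq 2\exp(-ct^2/\sigma^2)$, and similarly for the bilinear forms in (iv) and (v). Covering the relevant spheres of dimension $r_k$, $r_{-k}$, $p_k$ by $\tfrac14$-nets of cardinalities $e^{C'r_k}$, $e^{C'r_{-k}}$, $e^{C'p_k}$ and taking a union bound gives failure probabilities $\lesssim e^{C'r_k - ct^2/\sigma^2}$, etc.; since $r_{-k}\asymp r^2$ so that $\sqrt{r_{-k}}\asymp r$, and since $p_k\asymp p\gtrsim\log p$, choosing the multiplicative constant in front of $\sqrt r+\sqrt{\log p}$, $r+\sqrt{\log p}$, $\sqrt{p_k}$ large enough makes each failure probability $O(p^{-30})$. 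For (i), $\tau_k$ is the supremum of such a bilinear form over all pairs of rank-$\leq 2r_{k+1}$ and rank-$\leq 2r_{k+2}$ projections; the set of such projections admits a net of size $e^{C'pr}$, and combining with a net over the $\lesssim r^2$-dimensional image yields failure probability $\lesssim e^{C'pr - ct^2/\sigma^2}$, which at $t\asymp\sigma\sqrt{pr}$ is $O(p^{-30})$.

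Event (ii) is the geometric-convergence guarantee for \texttt{HOOI}, and it is a \emph{deterministic} consequence of (i), (iii), (iv), (v) together with a good diagonal-deletion initialization, namely $\|\sin\Theta(\uhat_k^{(0)},\U_k)\|$ being below the contraction threshold (a constant depending on $\kappa$). The quality of the diagonal-deletion initialization is itself a high-probability statement, established in \citet{cai_subspace_2021} and \citet{agterberg_estimating_2022} under the SNR condition $\lambda/\sigma\gtrsim\kappa p^{3/4}\sqrt{\log p}$, and it again holds with probability $1-O(p^{-30})$ after adjusting constants. Given this initialization and the noise bounds, the standard one-step contraction analysis of \texttt{HOOI} (see \citet{zhang_tensor_2018,luo_sharp_2021,agterberg_estimating_2022}) shows by induction on $t$ that $\|\sin\Theta(\uhat_k^{(t)},\U_k)\|$ at least halves each iteration until it reaches the noise floor $\dl\ku/\lambda$, which is exactly (ii). Crucially, this is a single deterministic implication, so no union bound over the (possibly $\asymp p$ many, since $\lambda/\sigma\leq e^{cp}$) iterations is incurred.

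The only genuine bookkeeping point — and the one I would flag as the main ``obstacle'' — is lining up the exponents: each constituent estimate as stated in prior work holds with some probability $1-O(p^{-c})$ for a fixed but possibly small $c$, and one must recheck that enlarging the absolute constants in the definition of $\mathcal{E}_{\mathrm{Good}}$ boosts every exponent to at least $30$; this is immediate from the exponential tails above, as the net cardinalities are $e^{O(p_k)}$ or $e^{O(pr)}$ and are absorbed by the $ct^2/\sigma^2$ term at the stated scales. Assembling the four concentration bounds, the initialization bound, and the deterministic implication, and union-bounding over $k\in\{1,2,3\}$, yields $\p(\mathcal{E}_{\mathrm{Good}})\geq 1-O(p^{-30})$.
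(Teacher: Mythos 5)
Your proposal is correct and takes the same route as the paper's own proof, which is simply the one-line citation ``See the proof of Lemma 19 of Agterberg and Zhang (2022)'' — that cited lemma establishes exactly $\p\{\mathcal{E}_{\mathrm{Good}}\}\geq 1-O(p^{-30})$ by the same ingredients you sketch (subgaussian concentration with $\varepsilon$-nets for $\tau_k$, $\|\U_k^\top\mathbf{Z}_k\mathbf{V}_k\|$, $\|\U_k^\top\mathbf{Z}_k(\mathcal{P}_{\U_{k+1}}\otimes\mathcal{P}_{\U_{k+2}})\|$, $\|\mathbf{Z}_k\mathbf{V}_k\|$, a high-probability diagonal-deletion initialization bound, and a deterministic per-iteration $\sin\Theta$ contraction that avoids any union bound over $t$). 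Your write-up is a reasonable reconstruction of what that cited proof contains, and the bookkeeping you flag — enlarging the absolute constant $C$ to push every exponential tail below $p^{-30}$ — is indeed the only nontrivial check.
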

  \begin{proof}
  See the proof of Lemma 19 of \citet{agterberg_estimating_2022}.
  \end{proof}}
  \textcolor{black}{
 We note with the choice of $t_0 = C \log( \frac{\lambda}{C_0 \kappa \sqrt{p\log(p)}})$, we have that
  \begin{align*}
 \log(   2^{t_0} ) &=\log\bigg( 2^{C \log( \frac{\lambda}{C_0 \kappa \sqrt{p_{\min}\log(p)}})} \bigg) = C \log(2) \log( \frac{\lambda}{C_0 \kappa \sqrt{p_{\min}\log(p)}}) \geq \log\bigg( \frac{\lambda}{C_0 \kappa \sqrt{p_{\min}\log(p)}}\bigg),
  \end{align*}
  provided the constant $C$ is sufficiently large.  Hence it holds that $\frac{1}{2^{t_0}} \leq \frac{\dl}{\lambda}$.  Therefore, on the event $\mathcal{E}_{\mathrm{Good}}$ for this choice of $t_0$, it holds that
  \begin{align*}
      \| \sin\Theta(\uhat_k^{(t)}, \U_k ) \| \lesssim \frac{\dl^{(k)}}{\lambda}. \numberthis \label{sinthetaegood}
  \end{align*}
  In addition, for this choice of $t_0$, on the event $\mathcal{E}_{{\mathrm{main}}}^{t_0-1,k}$, it holds that
  \begin{align*}
      \| \uhat_k^{(t)} - \U_k \mathbf{W}_k^{(t)} \|_{2,\infty} &\lesssim \frac{\dl^{(k)}}{\lambda} \mu_0 \sqrt{\frac{r_k}{p_k}}.\numberthis \label{twoinftygood}
  \end{align*}
}
\textcolor{black}{
 In addition, the following result characterizes the properties of the leave-one-out sequences.
\begin{lemma} \label{sinthetaloo14}
In the setting of \cref{thm:twoinfty}, on the event $\mathcal{E}_{\mathrm{main}}^{t_0-1,k}$ it holds that for each $1 \leq j \leq 3$ and $1 \leq k \leq 3$ that
\begin{align*}
    \| \sin\Theta(\uhat_j^{(t)},\utilde_j^{(t,k-m)}) \| \lesssim \frac{\kappa \sqrt{p_k \log(p)}}{\lambda} \mu_0 \sqrt{\frac{r_k}{p_j}}.
\end{align*}
\end{lemma}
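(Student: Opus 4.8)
The plan is to read the bound off directly from the leave-one-out events imported from \citet{agterberg_estimating_2022}, and then perform a short bookkeeping step to absorb the exponentially small term and pass to the regime $p_k \asymp p$, $r_k \asymp r$. By construction, for any modes $k',j'$, iteration $t$, and row index $m$, the event $\mathcal{E}_{j'-m}^{t,k'}$ is exactly the event on which
\[
\| \sin\Theta(\utilde_{k'}^{(t,j'-m)}, \uhat_{k'}^{(t)}) \| \leq \Big( \frac{\dl^{(k')}}{\lambda} + \frac{1}{2^{t}} \Big)\mu_0 \sqrt{\frac{r_{k'}}{p_{j'}}}, \qquad \dl^{(k')} = C_0 \kappa \sqrt{p_{k'}\log(p)}.
\]
Applying this with $k' = j$ (the mode being estimated) and $j' = k$ (the mode whose $m$-th row is deleted), and using that the spectral $\sin\Theta$ distance is symmetric in its two arguments, we obtain control of precisely the quantity $\| \sin\Theta(\uhat_j^{(t)}, \utilde_j^{(t,k-m)}) \|$ that appears in the lemma. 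Since $\mathcal{E}_{\mathrm{main}}^{t_0-1,k}$ is, by definition, an intersection that contains $\bigcap_{t}\bigcap_{j,m}\mathcal{E}_{k-m}^{t,j}$ over all iterations up through $t_0$ (and the nested chain $\mathcal{E}_{\mathrm{main}}^{t_0-1,1}\supseteq\mathcal{E}_{\mathrm{main}}^{t_0-1,2}\supseteq\mathcal{E}_{\mathrm{main}}^{t_0-1,3}$ ensures this for every mode $j$), on $\mathcal{E}_{\mathrm{main}}^{t_0-1,k}$ the displayed inequality holds for the iteration count $t$ used in the proofs.

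Next I would dispose of the $2^{-t}$ term. With the choice $t_0 = C\log\!\big(\tfrac{\lambda}{C_0\kappa\sqrt{p\log(p)}}\big)$ and $t \geq t_0$ throughout, the computation recorded just above the lemma shows $\tfrac{1}{2^{t}} \leq \tfrac{1}{2^{t_0}} \leq \tfrac{\dl}{\lambda}$, so the bracketed factor is $\lesssim \dl^{(j)}/\lambda = C_0\kappa\sqrt{p_j\log(p)}/\lambda$. Hence
\[
\| \sin\Theta(\uhat_j^{(t)}, \utilde_j^{(t,k-m)}) \| \lesssim \frac{\kappa\sqrt{p_j\log(p)}}{\lambda}\,\mu_0\sqrt{\frac{r_j}{p_k}}.
\]
Finally, under the hypotheses of \cref{thm:twoinfty} we have $p_j \asymp p_k \asymp p$ and $r_j \asymp r_k \asymp r$, so $\sqrt{p_j\log(p)}\,\sqrt{r_j/p_k} \asymp \sqrt{p_k\log(p)}\,\sqrt{r_k/p_j}$, and the right-hand side coincides with $\tfrac{\kappa\sqrt{p_k\log(p)}}{\lambda}\mu_0\sqrt{r_k/p_j}$ up to a universal constant, which is the claimed bound.

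The one mildly delicate point is the bookkeeping in the first paragraph: verifying that the particular instance of $\mathcal{E}_{\mathrm{main}}^{t_0-1,k}$ invoked here — or rather its largest member, $\mathcal{E}_{\mathrm{main}}^{t_0-1,3}$ — really does control $\|\sin\Theta(\utilde_j^{(t,k-m)},\uhat_j^{(t)})\|$ for the value of $t$ used downstream ($t = t_0+1$ in the running convention), simultaneously for all three modes $j$, all deleted modes $k$, and all rows $m$. This requires matching the nested definitions of $\mathcal{E}_{\mathrm{main}}^{t_0-1,\cdot}$ against the update order within a single \texttt{HOOI} sweep (mode $1$, then $2$, then $3$), together with the fact that the right-hand bound is non-increasing in $t$, so that establishing it at iteration $t_0$ — after absorbing $2^{-t}$ into $\dl/\lambda$ — is enough. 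No genuinely new estimate is needed; everything is already contained in the events constructed in \citet{agterberg_estimating_2022}.
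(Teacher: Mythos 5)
Your overall strategy coincides with the paper's: read the bound off the leave‑one‑out events imported from \citet{agterberg_estimating_2022}, absorb the geometrically decaying term using $2^{-t_0}\le \dl/\lambda$, and then use $p_j\asymp p_k$, $r_j\asymp r_k$ to reconcile index placement. The index‑swap observation is a useful piece of explicit bookkeeping that the paper's proof elides, and the absorption step is exactly the paper's.

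Where you have a genuine gap is in the claim that the bound can be read ``directly'' off the event $\mathcal{E}_{\mathrm{main}}^{t_0-1,k}$ for the iteration count used downstream. Unwinding the definitions, $\mathcal{E}_{\mathrm{main}}^{t_0-1,1}$ only asserts the displayed $\sin\Theta$ bound for iterations $t\le t_0-1$; $\mathcal{E}_{\mathrm{main}}^{t_0-1,2}$ and $\mathcal{E}_{\mathrm{main}}^{t_0-1,3}$ add only $\mathcal{E}_{k-m}^{t_0,1}$ and $\mathcal{E}_{k-m}^{t_0,2}$ respectively, so even the tightest version $\mathcal{E}_{\mathrm{main}}^{t_0-1,3}$ does not cover mode $3$ at $t_0$, nor any mode at $t_0+1$ — yet those instances are precisely the ones invoked downstream (the running convention is $t=t_0+1$). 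Your proposed fix, that ``the right-hand bound is non-increasing in $t$, so establishing it at $t_0$ is enough,'' controls the wrong quantity: monotonicity of the \emph{target} bound says nothing about whether $\|\sin\Theta(\uhat_j^{(t)},\utilde_j^{(t,k-m)})\|$ itself stays small at later iterations. Closing the gap requires the inductive contraction argument carried out in the proof of Theorem~2 of \citet{agterberg_estimating_2022}, which shows that once $\mathcal{E}_{\mathrm{main}}^{t_0-1,k}$ holds the bound propagates to all $t\le t_{\max}$. The paper's proof is short precisely because it defers to that result rather than trying to extract the conclusion from the raw event definitions. Your proposal is otherwise sound, but as written it leaves this propagation step as an unjustified assertion rather than a cited fact or a carried-out induction.
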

\begin{proof}
The proof of Theorem 2 of \citet{agterberg_estimating_2022} shows that on the event $\mathcal{E}_{\mathrm{main}}^{t_0-1,k}$, for all $t \leq t_{\max} \asymp  C \log \bigg( \frac{\lambda}{C \kappa \sqrt{p\log(p)}} \bigg)$, one has the bound
\begin{align*}
    \| \sin\Theta(\uhat_j^{(t)}, \utilde_j^{(t,k-m)}) \| \leq \frac{C_0 \kappa \sqrt{p_k\log(p)}}{\lambda} \mu_0 \sqrt{\frac{r_k}{p_j}} + \frac{1}{2^t} \mu_0 \sqrt{\frac{r_k}{p_j}}.
\end{align*}
For the choice of $t = C \log \bigg( \frac{\lambda}{C \kappa \sqrt{p\log(p)}} \bigg)$, it holds that
\begin{align*}
    \frac{1}{2^t} \leq \frac{C_0 \kappa \sqrt{p_k\log(p)}}{\lambda},
\end{align*}
which completes the proof, with the implicit constant $2 C_0$.  
\end{proof}  
Finally, we record the following result concerning the empirical singular values $\mathbf{\hat \Lambda}_k^{(t)}$.
\begin{lemma}\label{lem:eigengaps}
Let $\mathbf{\Lambda}_k$ denote the diagonal matrix of leading $r_k$ nonzero singular values of $\mathbf{T}_k$, and let $\mathbf{\hat \Lambda}_k^{(t)}$ denote the leading $r$ singular values of $(\mathbf{T}_k + \mathbf{Z}_k\big) \mathcal{\hat P}_k^{(t)}.$ Under the conditions of \cref{thm:twoinfty}, tor all $t \geq 2$, on the event $\mathcal{E}_{{\mathrm{Good}}}$ it holds that
\begin{align*}
    \| \big(\mathbf{\hat \Lambda}_k^{(t)} \big)\inv \| &\leq  \frac{2}{\lambda}.
\end{align*}
\end{lemma}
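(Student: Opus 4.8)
The plan is to reduce the statement to a lower bound on the smallest nonzero singular value of $\mathbf{M}:=(\mathbf{T}_k+\mathbf{Z}_k)\mathcal{\hat P}_k^{(t)}$. Since $\mathbf{\hat\Lambda}_k^{(t)}$ is, by definition, the diagonal matrix of the top $r_k$ singular values of $\mathbf{M}$, we have $\|(\mathbf{\hat\Lambda}_k^{(t)})^{-1}\|=1/\sigma_{r_k}(\mathbf{M})$, so it suffices to show $\sigma_{r_k}(\mathbf{M})\geq\lambda/2$ on $\mathcal{E}_{\mathrm{Good}}$ for every $t\geq 2$ (recall $\sigma=1$ throughout this section, so $\lambda$ denotes $\lambda/\sigma$). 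First I would split $\mathbf{M}=\mathbf{T}_k\mathcal{\hat P}_k^{(t)}+\mathbf{Z}_k\mathcal{\hat P}_k^{(t)}$ and apply Weyl's inequality for singular values to get $\sigma_{r_k}(\mathbf{M})\geq\sigma_{r_k}(\mathbf{T}_k\mathcal{\hat P}_k^{(t)})-\|\mathbf{Z}_k\mathcal{\hat P}_k^{(t)}\|$. The noise term is immediate: $\mathcal{\hat P}_k^{(t)}$ is a Kronecker product of two orthogonal projections of ranks $r_{k+1},r_{k+2}\lesssim r$, so $\|\mathbf{Z}_k\mathcal{\hat P}_k^{(t)}\|\leq\tau_k$, and on $\mathcal{E}_{\mathrm{Good}}$ one has $\tau_k\leq C\sqrt{pr}$; since $r\lesssim\mu_0^2 r\lesssim p^{1/2}$ and $\lambda\gtrsim\kappa p^{3/4}\sqrt{\log(p)}$, this gives $\|\mathbf{Z}_k\mathcal{\hat P}_k^{(t)}\|=O(p^{3/4})=o(\lambda)$.

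For the signal term, I would write $\mathbf{T}_k=\U_k\mathbf{\Lambda}_k\mathbf{V}_k\t$ and use that $\U_k$ has orthonormal columns and $\sigma_{\min}(\mathbf{\Lambda}_k)=\lambda_{r_k}(\mathbf{T}_k)\geq\lambda$ to obtain $\sigma_{r_k}(\mathbf{T}_k\mathcal{\hat P}_k^{(t)})\geq\lambda\,\sigma_{r_k}(\mathbf{V}_k\t\mathcal{\hat P}_k^{(t)})=\lambda\sqrt{\lambda_{\min}(\mathbf{V}_k\t\mathcal{\hat P}_k^{(t)}\mathbf{V}_k)}=\lambda\sqrt{1-\|(\mathbf{I}-\mathcal{\hat P}_k^{(t)})\mathbf{V}_k\|^2}$, where the last two equalities use that $\mathcal{\hat P}_k^{(t)}$ is a symmetric idempotent. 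Because the row space of $\mathbf{T}_k$ lies in the range of $\mathcal{P}_{\U_{k+1}}\otimes\mathcal{P}_{\U_{k+2}}$, one has $(\mathbf{I}-\mathcal{\hat P}_k^{(t)})\mathbf{V}_k=\big(\mathcal{P}_{\U_{k+1}}\otimes\mathcal{P}_{\U_{k+2}}-\mathcal{P}_{\uhat_{k+1}}\otimes\mathcal{P}_{\uhat_{k+2}}\big)\mathbf{V}_k$, and the identity $\mathcal{P}_{\U_{a}}\otimes\mathcal{P}_{\U_{b}}-\mathcal{P}_{\uhat_{a}}\otimes\mathcal{P}_{\uhat_{b}}=(\mathcal{P}_{\U_{a}}-\mathcal{P}_{\uhat_{a}})\otimes\mathcal{P}_{\U_{b}}+\mathcal{P}_{\uhat_{a}}\otimes(\mathcal{P}_{\U_{b}}-\mathcal{P}_{\uhat_{b}})$ bounds its spectral norm by $\|\sin\Theta(\uhat_{k+1},\U_{k+1})\|+\|\sin\Theta(\uhat_{k+2},\U_{k+2})\|$, where the iterates are taken at times $t-1$ or $t$, both at least $1$ when $t\geq 2$. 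On $\mathcal{E}_{\mathrm{Good}}$ these $\sin\Theta$ quantities are $o(1)$ for $t\geq 2$, using the $\sin\Theta$ bounds encoded in $\mathcal{E}_{\mathrm{Good}}$ (cf.\ \eqref{sinthetaegood}) together with the SNR assumption. Combining the two pieces gives $\sigma_{r_k}(\mathbf{M})\geq\lambda\sqrt{1-o(1)}-o(\lambda)\geq\lambda/2$ for $p$ large, which is the desired bound.

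The main obstacle is the signal term: arguing that restricting $\mathbf{T}_k$ to the range of the Kronecker-structured projection $\mathcal{\hat P}_k^{(t)}$ does not shrink its $r_k$-th singular value below $\lambda/2$. This reduces to controlling the principal angles between the fixed row space of $\mathbf{T}_k$ and the subspace spanned by the current \texttt{HOOI} iterates along the other two modes, which is precisely where the $\ell_2$ $\sin\Theta$ guarantees from \cref{thm:twoinfty} (as encoded in $\mathcal{E}_{\mathrm{Good}}$) enter. Some care is needed at small iteration counts, where the iterates are a priori only within a constant of the truth; here one uses that a single \texttt{HOOI} update past the diagonal-deletion initialization already drives the $\sin\Theta$ error to $o(1)$ under the stated signal-to-noise ratio, so that the argument goes through for all $t\geq 2$.
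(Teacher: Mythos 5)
Your proposal is correct in substance, and while its overall skeleton (Weyl's inequality to peel off the noise contribution $\|\mathbf{Z}_k\mathcal{\hat P}_k^{(t)}\|\leq\tau_k\leq C\sqrt{pr}=o(\lambda)$, then a lower bound on $\sigma_{r_k}(\mathbf{T}_k\mathcal{\hat P}_k^{(t)})$) matches the paper, the treatment of the signal term takes a genuinely different route. The paper factorizes \emph{multiplicatively}: it writes $\lambda_{r_k}(\mathbf{T}_k\mathcal{\hat P}_k^{(t)})\geq\lambda_{r_k}(\mathbf{T}_k)\,\lambda_{\min}(\U_{k+1}\t\uhat_{k+1})\,\lambda_{\min}(\U_{k+2}\t\uhat_{k+2})$ and invokes the Cai--Zhang identity $\lambda_{\min}(\U_j\t\uhat_j)^2=1-\|\sin\Theta(\U_j,\uhat_j)\|^2$ to convert the factors to $\sin\Theta$ bounds, yielding a product $\frac{7}{8}\cdot\frac{7}{8}=\frac{49}{64}$. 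You instead go through the row space: you write $\sigma_{r_k}(\mathbf{T}_k\mathcal{\hat P}_k^{(t)})\geq\lambda\,\sigma_{r_k}(\mathbf{V}_k\t\mathcal{\hat P}_k^{(t)})=\lambda\sqrt{1-\|(\mathbf{I}-\mathcal{\hat P}_k^{(t)})\mathbf{V}_k\|^2}$, use that $\mathbf{V}_k$ lies in the range of $\mathcal{P}_{\U_{k+1}}\otimes\mathcal{P}_{\U_{k+2}}$ to re-express the loss as a difference of Kronecker-structured projectors, and then telescope to bound it \emph{additively} by $\|\sin\Theta(\uhat_{k+1},\U_{k+1})\|+\|\sin\Theta(\uhat_{k+2},\U_{k+2})\|$. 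Both routes ultimately reduce to the same $\sin\Theta$ control encoded in $\mathcal{E}_{\mathrm{Good}}$; the paper's multiplicative bound is numerically a bit tighter ($\frac{49}{64}$ versus $\sqrt{1-(\epsilon_1+\epsilon_2)^2}$), but both leave enough slack to absorb the $\lambda/8$ noise term and reach $\lambda/2$.

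One imprecision worth flagging: you claim the relevant $\sin\Theta$ quantities are ``$o(1)$'' on $\mathcal{E}_{\mathrm{Good}}$ for all $t\geq 2$. That is not quite what the event gives you — for small $t$, $\mathcal{E}_{\mathrm{Good}}$ only asserts $\|\sin\Theta(\uhat_j^{(t)},\U_j)\|\leq\dl/\lambda+2^{-t}$, which for $t$ near $2$ is of constant order rather than $o(1)$ (the $o(1)$ behavior in \eqref{sinthetaegood} is tied to the specific choice $t\asymp t_0$). So your argument cannot literally end with $\lambda\sqrt{1-o(1)}-o(\lambda)$; instead you must track absolute constants, e.g.\ $\|\sin\Theta\|\leq 3/8$ at the relevant iterate indices, which is what the paper does. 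With that accounting the inequality $\sqrt{1-(3/4)^2}\lambda-\lambda/8\geq\lambda/2$ still holds, so the gap is only cosmetic, not fatal — but the ``one update drives $\sin\Theta$ to $o(1)$'' claim at the end of your write-up overstates what the event delivers at small $t$.
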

\begin{proof}
Without loss of generality, we prove the result for $k = 1$.  First, observe that on the event $\mathcal{E}_{\mathrm{Good}}$, it holds that under the assumptions $\lambda \geq C_0 \kappa p^{3/4} \sqrt{\log(p)}$ and $r \lesssim p^{1/2}$ that
    \begin{align*}
        \| \mathbf{Z}_1 \mathcal{\hat P}_k^{(t)} \| \lesssim \sqrt{pr} \leq \lambda/8.
    \end{align*}
    As a result, letting $\mathbf{\tilde \Lambda}_1^{(t)}$ denote the singular values of the matrix $\mathbf{T}_1 \mathcal{\hat P}_1^{(t)}$, Weyl's inequality implies
    \begin{align*}
        \| \mathbf{\tilde \Lambda}_1^{(t)} - \mathbf{\hat \Lambda}_1^{(t)} \| \leq \frac{\lambda}{8}.
    \end{align*}
Furthermore, since by definition $\mathbf{T}_1 = \mathbf{T}_1 \mathcal{P}_{\mathbf{U}_{2} \otimes \U_{3}}$, we have that
\begin{align*}
  \lambda_{r_1} \bigg( \mathbf{T}_1 \mathcal{\hat {P}}_1^{(t)} \bigg)  &=  \lambda_{r_1} \bigg(  \mathbf{T}_1\mathcal{P}_{\mathbf{U}_{2} \otimes \U_{3}} \mathcal{\hat P}_1^{(t)} \bigg) \\
  &\geq \lambda_{r_1}\bigg(  \mathbf{T}_1 \U_{2} \otimes \U_{2} \bigg) \lambda_{\min} \bigg( \big( \U_{2} \otimes \U_{3} \big)\t  \big( \uhat_{3}^{(t-1)} \otimes \uhat_{3}^{(t-1)} \big)  \bigg) \\
  &= \lambda_{r_1} \bigg( \mathbf{T}_1 \bigg) \lambda_{\min} \big( \U_2\t \uhat_2^{(t)} \big) \lambda_{\min} \big( \U_3\t \uhat_3^{(t)} \big).
\end{align*}
Next, by Lemma 1 of \citet{cai_rate-optimal_2018} it holds that
\begin{align*}
    \| \sin\Theta(\U_2, \uhat_2^{(t)})\|^2 &= 1 - \lambda_{\min}\big( \U_2\t \uhat_2^{(t)} \big)^2
\end{align*}
which implies that
\begin{align*}
    \lambda_{\min}\big( \U_2\t \uhat_2^{(t)} \big) &= \sqrt{1 - \|\sin\Theta(\U_2,\uhat_2^{(t)})\|^2} \geq \sqrt{1 - \frac{15}{64}} \geq \frac{7}{8},
\end{align*}
where we have used the fact that by \eqref{sinthetaegood}, on the event $\mathcal{E}_{\mathrm{Good}}$ one has
\begin{align*}
    \|\sin\Theta(\U_2,\uhat_2^{(t)})\| \lesssim \frac{\dl}{\lambda} \leq \frac{3}{8} \leq \sqrt{\frac{15}{64}}
\end{align*}
since $\lambda \gg \dl = C_0 \kappa \sqrt{p\log(p)}$ by assumption.  By a similar argument, it holds that $\lambda_{\min}\big(\U_3\t \uhat_3^{(t)} \big) \geq \frac{7}{8}$.  
Therefore, this demonstrates that
\begin{align*}
    \lambda_{r_1} \bigg( \mathbf{T}_1 \mathcal{\hat P}_1^{(t)} \bigg) \geq \lambda_{r_1} \bigg( \mathbf{T}_1 \bigg) \frac{49}{64} \geq \lambda_{r_1} \bigg( \mathbf{T}_1 \bigg) \frac{3}{4}.
\end{align*}
Consequently, combining these bounds, we see that
\begin{align*}
 \lambda_{r_1}\big( \mathbf{\hat \Lambda}_1^{(t)} \big) &\geq \lambda_{r_1}\big(\mathbf{T}_1 \mathcal{\hat P}_1^{(t)} \big) - \|  \mathbf{\tilde \Lambda}_1^{(t)} - \mathbf{\hat \Lambda}_1^{(t)} \| \geq \frac{5}{8} \lambda.
\end{align*}
As a result, one has that $\|(\mathbf{\hat \Lambda}_1^{(t)})\inv\| \leq \frac{2}{\lambda}$ as required.  
\end{proof}
 }

 \subsection{Proof Dependencies}
As our main technical results have a rather complicated dependency structure, for convenience we have included the following diagram describing the dependencies of the results.  We note that \cref{thm:efficiency} and \cref{thm:efficiencyloadings} are self-contained and do not rely on any previous results.  

\begin{center}
{\footnotesize 
\begin{tikzpicture}
   % put nodes
   \node[main] (t1)   {\shortstack{First-order \\
   expansion\\ (\cref{thm:eigenvectornormality})}};
   \node[main] (t2) [below left= 2.5cm of t1] {\shortstack{Singular vector \\ distributional \\ theory \\ (\cref{thm:eigenvectornormality2})}};
   \node[main] (t3) [below right= 2.5cm of t1] {\shortstack{Entrywise \\ Distributional \\ Theory \\ (\cref{thm:asymptoticnormalityentries})}};
   \node[main] (t6) [below = 1.3cm of t1] {\shortstack{Projection \\ expansion \\ (\cref{cor:asymptoticnormality_projection})}};
        \node[main] (t7) [below = 1cm of t6]{\shortstack{Entrywise \\ convergence \\ (\cref{cor:maxnormbound})}};
        \node[main] (t4) [left = 1.5cm of t7]{\shortstack{Confidence \\region \\ validity\\ (\cref{thm:civalidity2})}};
     \node[main] (t5) [right = 1.5cm of t7]{\shortstack{Confidence \\interval \\ validity \\ (\cref{thm:civalidity})}};
  \node[main](t8) [below = .6cm of t4]{\shortstack{Testing \\ memberships \\ (\cref{cor:testing})}};
  \node[main](t9) [right = 1.2cm of t8]{\shortstack{Simultaneous \\ Inference \\ (\cref{thm:simultaneousinference})}}; 
  \node[main](t10) [right = 1.2cm of t9]{\shortstack{Testing entries\\ \cref{thm:entrytesting}}};

   % make path ...
   %\path (t1) -- node[auto=false]{\ldots} (t2);
   %\path (tt) -- node[auto=false]{\ldots} (t0);

\draw [->] (t1) to (t2);
\draw [->] (t1) to (t6);
\draw [->] (t6) to (t3);
\draw[->](t7) to (t5);
\draw[->] (t7) to (t4);
\draw[->](t4) to node[midway,left]{{\tiny  $r,\kappa,\mu_0$ bounded }}(t8);
\draw[->](t7) to node[midway,left]{{\tiny \shortstack{Condition \\ on $s_{\min}$}}} (t9);
\draw [->] (t7) to node[midway,right]{{\tiny \shortstack{Additional \\condition \\ on $s_{ijk}$}}} (t10);
\draw [->] (t1) to node[midway,right]{{\tiny \shortstack{$\kappa^2 \mu_0^2 r^{3/2}\sqrt{\log(p)}\lesssim p^{1/4}$\\Condition on $s_{ijk}$}}} (t3);
\draw [->] (t2) to node[midway,left]{{\tiny \shortstack{$\kappa^2 \mu_0^2 r^{3/2}\sqrt{\log(p)}\lesssim p^{1/4}$\\$\lambda/\sigma \gg \kappa^3 \mu_0^2 \log^2(p) r^2 \sqrt{p}$}}} (t4);
\draw[->] (t3) to node[midway,right]{{\tiny \shortstack{Additional condition\\ on $s_{ijk}$}}} (t5);
\draw[->] (t6) to [out = 210,in=135] node[midway,right]{{\tiny $\kappa^2 \mu_0^2 r^{3/2} \sqrt{\log(p)} \lesssim p^{1/4}$}}(t7);

%\draw[->](t2) to node[midway,left]{{\tiny \substack{$\kappa^2 \mu_0^2 r^{3/2} \sqrt{\log(p)}\lesssim p^{1/4}$\\ }}} (t4);
   % draw arrows
  % \draw [->] (t1) to [out=45,in=135] node [midway, above]{$P(k=n|\lambda=i)$} (tn); 
   %\draw [->] (t1) to [out=-45,in=-135] node [midway, below]{$P(k=i+2|\lambda=i)$}(t3);
   %\draw [->] (t1) to  node [midway, above] {$P(k=i+1|\lambda=i)$}(t2);
   %\draw [->] (t1) to  node [midway, above] {$P(k=i-1|\lambda=i)$} (t);
   %\draw [->] (t1) to [out=-135,in=-45] node [midway, below]{$P(k=i-2|\lambda=i)$}(tt);
   %\draw [->] (t1) to [out=135,in=35] node [midway, above]{$P(k=0|\lambda=i)$}(t0);
\end{tikzpicture} 
}
\end{center}

 \section{Proof of Distributional Guarantees for the Loadings  (Theorem \ref{thm:eigenvectornormality}, Theorem \ref{thm:eigenvectornormality2}, and Theorem  \ref{cor:asymptoticnormality_projection})}
 \label{sec:distributionalproofs}

 This section contains the proof of \cref{thm:eigenvectornormality}, \cref{thm:eigenvectornormality2}, and \cref{cor:asymptoticnormality_projection}.  The following subsection introduces the auxiliary lemmas needed for the proofs,  \cref{sec:eigenvectornormalityproof} contains the proof of \cref{thm:eigenvectornormality}, \cref{sec:eigenvectornormality2proof} contains the proof of \cref{thm:eigenvectornormality2}, and \cref{sec:corollaryprojectionproof} contains the proof of \cref{cor:asymptoticnormality_projection}. 
Throughout we assume that $t = t_0 + 1$, where $t_0$ is such that \cref{thm:twoinfty} holds.   Throughout this section we assume without loss of generality that $\sigma = 1$.  
 %In addition, we note that the proof of \cref{thm:twoinfty} shows that if $t = t_0 + 1$ where $t_0$ is the first iteration such that \cref{thm:twoinfty} holds, then \cref{thm:twoinfty} still holds. 
 
 \subsection{Preliminary Lemmas: First Order Approximations}
 \label{sec:prelimlemmas}
 In this section we present several lemmas that are useful for the proofs of the main results in this section.  The proofs are deferred to \cref{sec:prelimproofs}.  We assume throughout this section without loss of generality that $\sigma = 1$.
 
 The following result shows that the linear-term approximation is sufficiently strong.

\begin{lemma}[Linear term approximation] \label{lem:linearapprox}
Under the conditions of \cref{thm:eigenvectornormality}, with probability at least $1 - O(p^{-9})$ it holds that
\begin{align*}
    \bigg\|&\bigg(\uhat_k^{(t)} - \U_k \mathbf{W}_k^{(t)}  - (\mathbf{I} - \U_k \U_k\t) \mathbf{Z}_k \mathcal{\hat P}^{(t)}_k \mathbf{V}_k \mathbf{\Lambda}_k \U_k\t \uhat_k^{(t)}(\mathbf{\hat \Lambda}_k^{(t)})^{-2} \bigg)\bigg\|_{2,\infty} \\
    &\lesssim  \frac{\kappa \mu_0^2 r^{3/2} p \log(p)}{\lambda^3} + \frac{\mu_0^2 \big(r^2 \sqrt{\log(p)} + r \log(p) \big)}{\lambda^2}\\
    &\qquad +  \frac{\mu_0 \kappa^2 \sqrt{pr} \log(p)}{\lambda^2}.
\end{align*}
\end{lemma}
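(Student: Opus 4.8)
The plan is to start from the defining identity for the singular vectors, reduce the claim to bounding two pieces, and then handle the harder one with the leave-one-out constructions of \citet{agterberg_estimating_2022} recalled in \cref{sec:notation2}. First I would record the basic identity: since $\uhat_k^{(t)}$ collects the leading $r_k$ left singular vectors of $(\mathbf{T}_k+\mathbf{Z}_k)\mathcal{\hat P}_k^{(t)}$ with singular values $\mathbf{\hat\Lambda}_k^{(t)}$, and $\mathcal{\hat P}_k^{(t)}$ is an orthogonal projection,
\begin{align*}
\uhat_k^{(t)}\big(\mathbf{\hat\Lambda}_k^{(t)}\big)^{2}=(\mathbf{T}_k+\mathbf{Z}_k)\mathcal{\hat P}_k^{(t)}(\mathbf{T}_k+\mathbf{Z}_k)\t\uhat_k^{(t)}.
\end{align*}
Left-multiplying by $\U_{k\perp}\U_{k\perp}\t=\mathbf{I}-\U_k\U_k\t$ and using $\mathbf{T}_k=\U_k\mathbf{\Lambda}_k\mathbf{V}_k\t$ annihilates the two terms beginning with $\mathbf{T}_k$, so, dividing by $(\mathbf{\hat\Lambda}_k^{(t)})^{2}$ (invertible by \cref{lem:eigengaps}), $\U_{k\perp}\U_{k\perp}\t\uhat_k^{(t)}=\mathbf{L}_k^{(t)}+\mathbf{Q}_k^{(t)}$ for the linear and quadratic error terms $\mathbf{L}_k^{(t)},\mathbf{Q}_k^{(t)}$ of \cref{sec:notation2}; moreover $\mathbf{L}_k^{(t)}$ equals the term subtracted in the statement, since $\mathbf{T}_k\t=\mathbf{V}_k\mathbf{\Lambda}_k\U_k\t$. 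Hence
\begin{align*}
\uhat_k^{(t)}-\U_k\mathbf{W}_k^{(t)}-\mathbf{L}_k^{(t)}=\U_k\big(\U_k\t\uhat_k^{(t)}-\mathbf{W}_k^{(t)}\big)+\mathbf{Q}_k^{(t)},
\end{align*}
and it remains to bound the $\ell_{2,\infty}$ norm of each summand on the good events $\mathcal{E}_{\mathrm{Good}}$, $\mathcal{E}_{\mathrm{main}}^{t_0-1,k}$ and the auxiliary events of \cref{sec:notation2}, plus a few conditional concentration events.

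The first summand is elementary: since $\mathbf{W}_k^{(t)}=\mathrm{sgn}(\U_k\t\uhat_k^{(t)})$, the deterministic inequality $\|\U_k\t\uhat_k^{(t)}-\mathrm{sgn}(\U_k\t\uhat_k^{(t)})\|\le\|\sin\Theta(\uhat_k^{(t)},\U_k)\|^{2}$, the incoherence bound $\|\U_k\|_{2,\infty}\le\mu_0\sqrt{r_k/p_k}$, and the $\sin\Theta$ estimate $\|\sin\Theta(\uhat_k^{(t)},\U_k)\|\lesssim\dl^{(k)}/\lambda\asymp\kappa\sqrt{p\log p}/\lambda$ from \eqref{sinthetaegood} give $\|\U_k(\U_k\t\uhat_k^{(t)}-\mathbf{W}_k^{(t)})\|_{2,\infty}\lesssim\mu_0\kappa^{2}\sqrt{pr}\log(p)/\lambda^{2}$, which is the third term of the claimed bound.

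The crux is the $\ell_{2,\infty}$ bound on $\mathbf{Q}_k^{(t)}=\U_{k\perp}\U_{k\perp}\t\mathbf{Z}_k\mathcal{\hat P}_k^{(t)}\mathbf{Z}_k\t\uhat_k^{(t)}(\mathbf{\hat\Lambda}_k^{(t)})^{-2}$, i.e.\ bounding $\|e_m\t\mathbf{Q}_k^{(t)}\|$ uniformly in $m$. A crude product-of-operator-norms estimate fails here (it is only of order $1$ at the critical SNR), because the $m$-th mode-$k$ slice $(\mathbf{Z}_k)_{m\cdot}$ that enters $e_m\t\mathbf{Z}_k$ also enters $\mathbf{Z}_k\t\uhat_k^{(t)}$ and, more weakly, $\mathcal{\hat P}_k^{(t)}$, $\uhat_k^{(t)}$ and $\mathbf{\hat\Lambda}_k^{(t)}$. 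I would therefore decouple using the leave-one-out objects $\mathcal{\tilde P}_k^{(t,k-m)}$, $\mathbf{Z}_k^{k-m}$ and $\utilde_k^{(t,k-m)}$, none of which depends on $(\mathbf{Z}_k)_{m\cdot}$: swap $\mathcal{\hat P}_k^{(t)}$ for $\mathcal{\tilde P}_k^{(t,k-m)}$; replace the $\mathbf{Z}_k$ in $\mathbf{Z}_k\t\uhat_k^{(t)}$ by $\mathbf{Z}_k^{k-m}$, the error being the rank-one block $(\mathbf{Z}_k)_{m\cdot}(\uhat_k^{(t)})_{m\cdot}\t$; and replace $\uhat_k^{(t)}$ by $\utilde_k^{(t,k-m)}\mathbf{O}$ for a suitable orthogonal $\mathbf{O}$. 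After these swaps the leading term $e_m\t\mathbf{Z}_k\mathcal{\tilde P}_k^{(t,k-m)}(\mathbf{Z}_k^{k-m})\t\utilde_k^{(t,k-m)}$ has $e_m\t\mathbf{Z}_k$ independent of the remaining factor, so conditioning on the $\sigma$-algebra generated by all entries of $\mathcal{Z}$ outside the $m$-th mode-$k$ slice and applying a sub-Gaussian tail bound controls it by $\sqrt{\log p}$ times the operator and Frobenius norms of $\mathcal{\tilde P}_k^{(t,k-m)}(\mathbf{Z}_k^{k-m})\t\utilde_k^{(t,k-m)}$; those norms are in turn bounded via the near-equalities of $\mathcal{\tilde P}_k^{(t,k-m)}$ with $\mathcal{P}_{\U_{k+1}}\otimes\mathcal{P}_{\U_{k+2}}$ and of $\utilde_k^{(t,k-m)}$ with $\U_k$, the bounds on $\|\U_k\t\mathbf{Z}_k(\mathcal{P}_{\U_{k+1}}\otimes\mathcal{P}_{\U_{k+2}})\|$ and on $\tau_k$ in $\mathcal{E}_{\mathrm{Good}}$, and $\|(\mathbf{\hat\Lambda}_k^{(t)})^{-1}\|\le 2/\lambda$ from \cref{lem:eigengaps}. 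The scalar arising from the $(\mathbf{Z}_k)_{m\cdot}(\uhat_k^{(t)})_{m\cdot}\t$ error, in which $(\mathbf{Z}_k)_{m\cdot}$ appears bilinearly, is handled by a Hanson--Wright-type estimate (again after exchanging $\mathcal{\hat P}_k^{(t)}$ for the independent $\mathcal{\tilde P}_k^{(t,k-m)}$), while the three replacement errors are bounded by the leave-one-out $\sin\Theta$ transfer $\|\sin\Theta(\uhat_j^{(t)},\utilde_j^{(t,k-m)})\|\lesssim\kappa\mu_0\sqrt{r\log p}/\lambda$ of \cref{sinthetaloo14}, the two-to-infinity estimate $\|(\uhat_k^{(t)})_{m\cdot}\|\lesssim\mu_0\sqrt{r/p}$ from \eqref{twoinftygood}, the incoherence events $\mathcal{\tilde E}_{k-m}^{t,k}$, and $\|(\mathbf{\hat\Lambda}_k^{(t)})^{-1}\|\le 2/\lambda$. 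Collecting these contributions and taking a union bound over $m\in[p_k]$ and $k\in\{1,2,3\}$ produces the remaining two terms $\kappa\mu_0^{2}r^{3/2}p\log(p)/\lambda^{3}$ and $\mu_0^{2}(r^{2}\sqrt{\log p}+r\log p)/\lambda^{2}$; the union bound over the $O(p)$ conditional events accounts for the $1-O(p^{-9})$ probability.

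The main obstacle is precisely this quadratic term: one must arrange the leave-one-out couplings so that every occurrence of $(\mathbf{Z}_k)_{m\cdot}$ --- direct, or through $\uhat_k^{(t)}$, $\mathbf{\hat\Lambda}_k^{(t)}$, or $\mathcal{\hat P}_k^{(t)}$ --- is either rendered independent of the rest or traded for a factor of size $\dl/\lambda$ or $\mu_0\sqrt{r/p}$, and then track the $\mu_0,\kappa,r,\log p$ dependencies through the ensuing sub-Gaussian and Hanson--Wright bounds so that they telescope precisely into the three stated terms; the rest follows from the algebraic identity and the lemmas already in place.
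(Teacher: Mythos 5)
Your proposal is correct and follows essentially the same route as the paper: both start from the eigenvalue–eigenvector identity for $\uhat_k^{(t)}(\mathbf{\hat\Lambda}_k^{(t)})^2$, project off $\U_k$ to identify $\U_{k\perp}\U_{k\perp}\t\uhat_k^{(t)}=\mathbf{L}_k^{(t)}+\mathbf{Q}_k^{(t)}$, reduce the claim to the sign term $\U_k(\U_k\t\uhat_k^{(t)}-\mathbf{W}_k^{(t)})$ plus the quadratic term $\mathbf{Q}_k^{(t)}$, bound the former by $\|\U_k\|_{2,\infty}\|\sin\Theta\|^2$, and control $e_m\t\mathbf{Q}_k^{(t)}$ via the leave-one-out constructions, the events $\mathcal{E}_{\mathrm{Good}}$, $\mathcal{E}_{\mathrm{main}}^{t_0,k}$, and a union bound over rows. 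The only organizational difference is that the paper directly invokes Lemma~5 of \citet{agterberg_estimating_2022} (which packages the $\ell_{2,\infty}$ bound on $\mathbf{Q}_k^{(t)}$ in terms of $\tau_k,\eta_k,\xi_k,\|\U_k\t\mathbf{Z}_k\mathcal{P}_{\U_{k+1}}\otimes\mathcal{P}_{\U_{k+2}}\|$) and Lemma~11 there for $\xi_k^{t,k-m}$, then plugs in the event bounds, whereas you sketch how that deterministic-plus-concentration bound on $e_m\t\mathbf{Q}_k^{(t)}$ would be re-derived from scratch via leave-one-out swaps, conditional sub-Gaussian tails, and Hanson--Wright; this is the same argument at a finer granularity rather than a genuinely different route.
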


\noindent The next lemma shows that the contribution of the projection onto $\U_k$ is sufficiently small.
\begin{lemma}[Small Projection] \label{uperplemma}
Under the conditions of \cref{thm:eigenvectornormality} it holds that
\begin{align*}
    \| \U_k \U_k\t \mathbf{Z}_k \mathcal{\hat P}_k^{(t)} \mathbf{V}_k \mathbf{\Lambda}_k \U_k\t \uhat_k^{(t)} ( \mathbf{\hat \Lambda}_k^{(t)})^{-2} \|_{2,\infty} &\lesssim \frac{\mu_0 r \kappa^2 \sqrt{p\log(p)}}{\lambda^2} + \frac{\mu_0 r \kappa}{\lambda \sqrt{p}}.
\end{align*}

\end{lemma}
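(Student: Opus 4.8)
The plan is to exploit the fact that $\U_k\t \mathbf{Z}_k$ appears in the target quantity, which is a ``good'' term: multiplying the noise matrix on the left by the true singular subspace produces something of order $\sqrt{p}$ (or $\sqrt{r}+\sqrt{\log p}$ after further projection) rather than $\sqrt{p_{-k}} \asymp p$. First I would bound the $\ell_{2,\infty}$ norm of the product by writing
\[
\| \U_k \U_k\t \mathbf{Z}_k \mathcal{\hat P}_k^{(t)} \mathbf{V}_k \mathbf{\Lambda}_k \U_k\t \uhat_k^{(t)} ( \mathbf{\hat \Lambda}_k^{(t)})^{-2} \|_{2,\infty} \leq \| \U_k \|_{2,\infty} \, \big\| \U_k\t \mathbf{Z}_k \mathcal{\hat P}_k^{(t)} \mathbf{V}_k \big\| \, \|\mathbf{\Lambda}_k\| \, \| \U_k\t \uhat_k^{(t)}\| \, \|( \mathbf{\hat \Lambda}_k^{(t)})^{-2}\|,
\]
using that $\|\U_k \U_k\t M\|_{2,\infty} \leq \|\U_k\|_{2,\infty}\|\U_k\t M\|$ for any $M$. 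By incoherence $\|\U_k\|_{2,\infty} \leq \mu_0 \sqrt{r/p}$; by \cref{lem:eigengaps}, on $\mathcal{E}_{\mathrm{Good}}$ we have $\|(\mathbf{\hat \Lambda}_k^{(t)})^{-2}\| \leq 4/\lambda^2$; trivially $\|\mathbf{\Lambda}_k\| \leq \kappa\lambda$ and $\|\U_k\t \uhat_k^{(t)}\| \leq 1$. So the whole bound reduces to controlling $\big\| \U_k\t \mathbf{Z}_k \mathcal{\hat P}_k^{(t)} \mathbf{V}_k \big\|$.

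The central step is to split $\mathcal{\hat P}_k^{(t)} = \mathcal{P}_{\U_{k+1}} \otimes \mathcal{P}_{\U_{k+2}} + (\mathcal{\hat P}_k^{(t)} - \mathcal{P}_{\U_{k+1}} \otimes \mathcal{P}_{\U_{k+2}})$ (with the appropriate one-step-behind convention in the tensor indices). For the first piece, $\big\| \U_k\t \mathbf{Z}_k (\mathcal{P}_{\U_{k+1}} \otimes \mathcal{P}_{\U_{k+2}}) \mathbf{V}_k \big\| \leq \big\| \U_k\t \mathbf{Z}_k (\mathcal{P}_{\U_{k+1}} \otimes \mathcal{P}_{\U_{k+2}})\big\| \lesssim r + \sqrt{\log p}$ on $\mathcal{E}_{\mathrm{Good}}$ directly from the event definition. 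For the second (perturbation) piece, I would bound it by $\tau_k \cdot \|\mathcal{\hat P}_k^{(t)} - \mathcal{P}_{\U_{k+1}} \otimes \mathcal{P}_{\U_{k+2}}\|$ — more carefully, by splitting the difference of projections as in the standard identity $\mathcal{\hat P} - \mathcal{P} = (\mathcal{\hat P} - \mathcal{P})\mathcal{P} + \mathcal{\hat P}(\mathcal{\hat P}-\mathcal{P})$ so that one factor has rank $\leq 2 r_{k+1}, 2 r_{k+2}$ and we can use $\tau_k \lesssim \sqrt{pr}$ — and using $\|\mathcal{\hat P}_k^{(t)} - \mathcal{P}_{\U_{k+1}} \otimes \mathcal{P}_{\U_{k+2}}\| \lesssim \eta_k^{(t)} \lesssim \kappa\sqrt{p\log p}/\lambda$ from \eqref{sinthetaegood}. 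This gives $\lesssim \sqrt{pr} \cdot \kappa \sqrt{p \log p}/\lambda = \kappa r^{1/2} p \sqrt{\log p}/\lambda$ for that piece. Collecting: $\big\| \U_k\t \mathbf{Z}_k \mathcal{\hat P}_k^{(t)} \mathbf{V}_k \big\| \lesssim r + \sqrt{\log p} + \kappa \sqrt{r}\, p \sqrt{\log p}/\lambda$.

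Putting everything together, the $\ell_{2,\infty}$ bound becomes $\lesssim \mu_0 \sqrt{r/p} \cdot \kappa \lambda \cdot (4/\lambda^2) \cdot \big( r + \sqrt{\log p} + \kappa \sqrt{r}\, p\sqrt{\log p}/\lambda\big)$, which simplifies to $\lesssim \mu_0 r \kappa/(\lambda\sqrt{p}) + \mu_0 \kappa^2 r \sqrt{p\log p}/\lambda^2$ (the first two summands inside the parenthesis combine to give the first claimed term after noting $r + \sqrt{\log p} \lesssim r\sqrt{\log p}$ and absorbing constants; the third gives the second claimed term). This matches the statement. The main obstacle I anticipate is the bookkeeping in the perturbation-of-projections term: one must be careful that the $\otimes$ structure of $\mathcal{\hat P}_k^{(t)}$ (a Kronecker product of two rank-$r$ projections, one possibly at iteration $t$ and one at $t-1$) is handled so that the rank-restricted operator norm $\tau_k$ applies, and that the $\sin\Theta$ bounds for \emph{both} constituent modes are invoked — this is exactly the kind of estimate already carried out in \citet{agterberg_estimating_2022}, so I would cite their intermediate bounds on $\eta_k^{(t)}$ and $\tau_k$ rather than rederive them.
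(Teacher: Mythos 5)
Your plan follows the paper's route closely: first peel off $\|\U_k\|_{2,\infty}$, $\|\mathbf{\Lambda}_k\|$, $\|\U_k\t\uhat_k^{(t)}\|$, $\|(\mathbf{\hat\Lambda}_k^{(t)})^{-2}\|$ via incoherence and \cref{lem:eigengaps}, then split the remaining $\|\U_k\t\mathbf{Z}_k\mathcal{\hat P}_k^{(t)}\mathbf{V}_k\|$ into a population piece plus a projection-perturbation piece, and bound the perturbation via $\tau_k\cdot\|\sin\Theta\|$. Your treatment of the perturbation piece and all the outer factors is correct and is what the paper does.

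However, there is a genuine gap in how you bound the population piece. You wrote
\[
\big\| \U_k\t \mathbf{Z}_k (\mathcal{P}_{\U_{k+1}} \otimes \mathcal{P}_{\U_{k+2}}) \mathbf{V}_k \big\| \leq \big\| \U_k\t \mathbf{Z}_k (\mathcal{P}_{\U_{k+1}} \otimes \mathcal{P}_{\U_{k+2}})\big\| \lesssim r + \sqrt{\log p},
\]
discarding the $\mathbf{V}_k$ contraction. But the decisive observation in the paper is that $\mathbf{V}_k$ lies in the column span of $\U_{k+1}\otimes\U_{k+2}$ (since $\mathbf{T}_k = \U_k\mathcal{M}_k(\mathcal{C})(\U_{k+1}\otimes\U_{k+2})\t$), so $(\mathcal{P}_{\U_{k+1}}\otimes\mathcal{P}_{\U_{k+2}})\mathbf{V}_k=\mathbf{V}_k$ and the relevant quantity is $\|\U_k\t\mathbf{Z}_k\mathbf{V}_k\|$, which on $\mathcal{E}_{\mathrm{Good}}$ is $\lesssim \sqrt{r}+\sqrt{\log p}$. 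Your bound of $r+\sqrt{\log p}$ costs you a factor of roughly $\sqrt{r}$. Feeding your value through the outer factors gives a first contribution of order $\mu_0\kappa r^{3/2}/(\lambda\sqrt{p})$, which is \emph{not} $\lesssim \mu_0 r\kappa/(\lambda\sqrt{p})$ as the lemma claims once $r$ is allowed to grow (the conditions of \cref{thm:eigenvectornormality} permit $r\lesssim p^{1/2}/\mu_0^2$). Your remark that ``$r+\sqrt{\log p}\lesssim r\sqrt{\log p}$'' does not rescue this: it only makes the bound worse. So the fix is precisely to use the identity $\mathcal{P}_k\mathbf{V}_k=\mathbf{V}_k$ rather than dropping the $\mathbf{V}_k$ factor, which is exactly what the paper's argument does; with that change your proof matches the paper's and yields the stated rate.
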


\noindent 
The next lemma replaces the empirical linear term with the population linear term.

\begin{lemma}[Replacing the empirical linear term with the population linear term] \label{lem:empiricallinearreplacement}
Under the conditions of \cref{thm:eigenvectornormality}, with probability at least $1 - O(p^{-9})$ it holds that
\begin{align*}
    \bigg\| \mathbf{Z}_k \bigg( \mathcal{\hat P}_{k}^{(t)} - \mathcal{P}_k  \bigg) \mathbf{V}_k \mathbf{\Lambda}_k \U_k\t \uhat_k^{(t)}  (\mathbf{\hat \Lambda}_k^{(t)})^{-2} \bigg\|_{2,\infty} &\lesssim \frac{\kappa^2 \mu_0^2 \log(p) r \sqrt{p}}{\lambda^2}.
\end{align*}
\end{lemma}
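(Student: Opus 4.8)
The plan is as follows. Throughout, work on the intersection of $\mathcal{E}_{\mathrm{Good}}$ and $\mathcal{E}_{\mathrm{main}}^{t_0-1,k}$, which has probability at least $1 - O(p^{-9})$ under the stated assumptions. First I would peel off the trailing factor: since $\|\mathbf{\Lambda}_k\| \le \kappa\lambda$, $\|\U_k\t\uhat_k^{(t)}\| \le 1$, and $\|(\mathbf{\hat \Lambda}_k^{(t)})^{-2}\| \le 4/\lambda^2$ by \cref{lem:eigengaps}, the matrix $\mathbf{\Lambda}_k\U_k\t\uhat_k^{(t)}(\mathbf{\hat \Lambda}_k^{(t)})^{-2}$ has spectral norm $\lesssim \kappa/\lambda$, so it suffices to prove that for each row index $m$,
\[
\big\| e_m\t\mathbf{Z}_k\big(\mathcal{\hat P}_k^{(t)} - \mathcal{P}_k\big)\mathbf{V}_k\big\| \lesssim \frac{\kappa\mu_0^2 r\sqrt{p}\log(p)}{\lambda},
\]
and then take a union bound over $m$ and $k$.

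The core idea is to break the dependence of $\mathcal{\hat P}_k^{(t)}$ on the $m$-th slice of $\mathcal{Z}$ by inserting the leave-one-out projection $\mathcal{\tilde P}_k^{(t,k-m)}$, which is independent of $e_m\t\mathbf{Z}_k$ by construction, writing
\[
e_m\t\mathbf{Z}_k\big(\mathcal{\hat P}_k^{(t)} - \mathcal{P}_k\big)\mathbf{V}_k = e_m\t\mathbf{Z}_k\big(\mathcal{\hat P}_k^{(t)} - \mathcal{\tilde P}_k^{(t,k-m)}\big)\mathbf{V}_k + e_m\t\mathbf{Z}_k\big(\mathcal{\tilde P}_k^{(t,k-m)} - \mathcal{P}_k\big)\mathbf{V}_k .
\]
For the second term, conditionally on $\mathcal{\tilde P}_k^{(t,k-m)}$ the vector $e_m\t\mathbf{Z}_k$ is a mean-zero subgaussian vector (parameter $\sigma = 1$) independent of the fixed matrix $(\mathcal{\tilde P}_k^{(t,k-m)} - \mathcal{P}_k)\mathbf{V}_k$, so a standard subgaussian/Hanson--Wright tail bound gives, with probability $1 - O(p^{-10})$,
\[
\big\| e_m\t\mathbf{Z}_k\big(\mathcal{\tilde P}_k^{(t,k-m)} - \mathcal{P}_k\big)\mathbf{V}_k\big\| \lesssim \big\| \big(\mathcal{\tilde P}_k^{(t,k-m)} - \mathcal{P}_k\big)\mathbf{V}_k\big\|_F + \sqrt{\log(p)}\,\big\| \big(\mathcal{\tilde P}_k^{(t,k-m)} - \mathcal{P}_k\big)\mathbf{V}_k\big\| .
\]
I would then bound $\|(\mathcal{\tilde P}_k^{(t,k-m)} - \mathcal{P}_k)\mathbf{V}_k\|_F \le \sqrt{r}\,\|\mathcal{\tilde P}_k^{(t,k-m)} - \mathcal{P}_k\|$ together with $\|\mathcal{\tilde P}_k^{(t,k-m)} - \mathcal{P}_k\| \le \|\mathcal{\tilde P}_k^{(t,k-m)} - \mathcal{\hat P}_k^{(t)}\| + \|\mathcal{\hat P}_k^{(t)} - \mathcal{P}_k\| \lesssim \kappa\sqrt{p\log(p)}/\lambda$, using \cref{sinthetaloo14} and \eqref{sinthetaegood}, which makes this term $\lesssim \kappa\sqrt{rp}\log(p)/\lambda$, comfortably within the target.

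The first term is where care is needed, and I expect it to be the main obstacle: the naive split $\|e_m\t\mathbf{Z}_k\|\cdot\|\mathcal{\hat P}_k^{(t)} - \mathcal{\tilde P}_k^{(t,k-m)}\|$ is far too lossy, since $\|e_m\t\mathbf{Z}_k\| \asymp \sqrt{p_{-k}} \asymp p$. Instead I would exploit that $\mathcal{\hat P}_k^{(t)}$ and $\mathcal{\tilde P}_k^{(t,k-m)}$ are both tensor-product projections of rank $r_{-k}$, so their difference has range inside $\mathrm{span}(\U_1)\otimes\mathrm{span}(\U_2)$ with $\U_1 = [\uhat_{k+1}\mid\utilde_{k+1}^{(\cdot,k-m)}]$ and $\U_2 = [\uhat_{k+2}\mid\utilde_{k+2}^{(\cdot,k-m)}]$, of ranks at most $2r_{k+1}$ and $2r_{k+2}$ respectively; hence
\[
\big\| e_m\t\mathbf{Z}_k\big(\mathcal{\hat P}_k^{(t)} - \mathcal{\tilde P}_k^{(t,k-m)}\big)\mathbf{V}_k\big\| \le \big\| \mathbf{Z}_k\big(\mathcal{P}_{\U_1}\otimes\mathcal{P}_{\U_2}\big)\big\| \cdot \big\| \mathcal{\hat P}_k^{(t)} - \mathcal{\tilde P}_k^{(t,k-m)}\big\| \le \tau_k \cdot \big\| \mathcal{\hat P}_k^{(t)} - \mathcal{\tilde P}_k^{(t,k-m)}\big\| .
\]
On $\mathcal{E}_{\mathrm{Good}}$ one has $\tau_k \lesssim \sqrt{pr}$, and by \cref{sinthetaloo14} the difference of projections is at most the sum of the relevant $\sin\Theta$ distances, $\lesssim \kappa\mu_0\sqrt{r\log(p)}/\lambda$; multiplying gives $\lesssim \kappa\mu_0 r\sqrt{p\log(p)}/\lambda$, within the target. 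Collecting both terms, restoring the $\kappa/\lambda$ factor, and union bounding over all $m$ and $k$ (the extra logarithmic factors from the tail bounds being absorbed) completes the argument. The single nontrivial point is recognizing that the leave-one-out perturbation $\mathcal{\hat P}_k^{(t)} - \mathcal{\tilde P}_k^{(t,k-m)}$ is a low-rank tensor-product object, so that the uniform bound $\tau_k$ — rather than a full row norm of $\mathbf{Z}_k$ — is the right quantity to control it.
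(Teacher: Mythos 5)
Your proposal is correct and follows the same skeleton as the paper's proof: peel off the factor $\kappa/\lambda$, insert the leave-one-out projection $\mathcal{\tilde P}_k^{(t,k-m)}$, control the inner term $\mathcal{\hat P}_k^{(t)} - \mathcal{\tilde P}_k^{(t,k-m)}$ via the low-rank tensor-product observation and the uniform quantity $\tau_k$ together with \cref{sinthetaloo14}, and control the outer term $\mathcal{\tilde P}_k^{(t,k-m)} - \mathcal{P}_k$ by exploiting the independence of $e_m\t\mathbf{Z}_k$ from the leave-one-out projection.

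The one place you genuinely deviate from the paper is in bounding the outer term. The paper invokes Lemma 16 of \citet{agterberg_estimating_2022}, which gives $\|e_m\t\mathbf{Z}_k\mathbf{A}\| \lesssim p\sqrt{\log p}\,\|\mathbf{A}\|_{2,\infty}$, and then plugs in a fairly delicate $\ell_{2,\infty}$ estimate for $(\mathcal{\tilde P}_k^{(t,k-m)} - \mathcal{P}_k)\mathbf{V}_k$ that is built up from the entrywise structure of the leave-one-out projections. You instead use the standard subgaussian concentration $\|e_m\t\mathbf{Z}_k\mathbf{A}\| \lesssim \|\mathbf{A}\|_F + \sqrt{\log p}\,\|\mathbf{A}\|$ together with $\|\mathbf{A}\mathbf{V}_k\|_F \le \sqrt{r_k}\,\|\mathbf{A}\|$ and the spectral bound on $\mathcal{\tilde P}_k^{(t,k-m)} - \mathcal{P}_k$. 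Your route is a bit more elementary — it sidesteps the $\ell_{2,\infty}$ bound on the leave-one-out projection difference entirely — and in fact yields a slightly sharper estimate for term $(II)$ ($\kappa^2\sqrt{rp}\log p/\lambda^2$ instead of $\kappa^2\mu_0^2 r\sqrt{p}\log p/\lambda^2$). The paper likely reuses the $\ell_{2,\infty}$ machinery because that same estimate is already in hand and used elsewhere in the argument, so both treatments are reasonable; yours trades one extra auxiliary lemma for a cleaner direct concentration. One small thing to make explicit in a polished write-up: when applying the conditional subgaussian concentration, the bound on $\|\mathcal{\tilde P}_k^{(t,k-m)} - \mathcal{P}_k\|$ is itself a high-probability event measurable with respect to the leave-one-out $\sigma$-algebra, so you should condition on that $\sigma$-algebra (and intersect with the corresponding good event) rather than merely on the matrix $\mathcal{\tilde P}_k^{(t,k-m)}$; this is exactly what makes the independence argument airtight.
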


\noindent 
Finally, the following result shows that $\mathbf{\Lambda}_k^2$ and $(\mathbf{\hat \Lambda}_k^{(t)})^2$ approximately commute.  
\begin{lemma}[Approximate commutation of $\mathbf{\Lambda}_k^2$ and $(\mathbf{\hat{\Lambda}}_k^{(t)})^2$] \label{lem:approximatecommute}
Under the conditions of \cref{thm:eigenvectornormality}, with probability at least $1 - O(p^{-9})$ it holds that
\begin{align*}
    \bigg\| \mathbf{\Lambda}_k^2 \U_k\t \uhat_k - \U_k\t \uhat_k (\mathbf{\hat{\Lambda}}_k^{(t)})^{2} \bigg\| &\lesssim \lambda_1 \sqrt{pr}.
\end{align*}
\end{lemma}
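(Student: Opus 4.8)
\textbf{Proof proposal for Lemma~\ref{lem:approximatecommute}.}

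The plan is to bound the commutator-type quantity $\mathbf{\Lambda}_k^2 \U_k\t \uhat_k^{(t)} - \U_k\t \uhat_k^{(t)} (\mathbf{\hat\Lambda}_k^{(t)})^2$ by reinterpreting both $\mathbf{\Lambda}_k^2$ and $(\mathbf{\hat\Lambda}_k^{(t)})^2$ via the Gram matrices whose eigenvectors are $\U_k$ and $\uhat_k^{(t)}$ respectively. First I would write $\mathbf{\Lambda}_k^2 = \U_k\t \mathbf{T}_k \mathbf{T}_k\t \U_k$ and $(\mathbf{\hat\Lambda}_k^{(t)})^2 = (\uhat_k^{(t)})\t (\mathbf{T}_k + \mathbf{Z}_k)\mathcal{\hat P}_k^{(t)} (\mathbf{T}_k + \mathbf{Z}_k)\t \uhat_k^{(t)}$, using that $\uhat_k^{(t)}$ are the leading left singular vectors of $(\mathbf{T}_k+\mathbf{Z}_k)\mathcal{\hat P}_k^{(t)}$ (by the definition of the \texttt{HOOI} update) and that these matrices reproduce the squared singular values on their invariant subspaces. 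Substituting into the target expression, the two terms become
\begin{align*}
    \mathbf{\Lambda}_k^2 \U_k\t \uhat_k^{(t)} - \U_k\t \uhat_k^{(t)} (\mathbf{\hat\Lambda}_k^{(t)})^2 &= \U_k\t \mathbf{T}_k \mathbf{T}_k\t \U_k \U_k\t \uhat_k^{(t)} - \U_k\t \uhat_k^{(t)} (\uhat_k^{(t)})\t (\mathbf{T}_k + \mathbf{Z}_k)\mathcal{\hat P}_k^{(t)} (\mathbf{T}_k + \mathbf{Z}_k)\t \uhat_k^{(t)}.
\end{align*}
Since $\mathbf{T}_k = \mathbf{T}_k \mathcal{P}_k$ where $\mathcal{P}_k = \mathcal{P}_{\U_{k+1}} \otimes \mathcal{P}_{\U_{k+2}}$, and $\mathbf{T}_k\t \U_k \U_k\t = \mathbf{T}_k\t$, the first term equals $\U_k\t \mathbf{T}_k \mathbf{T}_k\t \uhat_k^{(t)}$; similarly I can insert $\U_k\t \uhat_k^{(t)}(\uhat_k^{(t)})\t$ freely up to a $\sin\Theta$ error. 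The difference then decomposes into (a) a term comparing $\mathbf{T}_k\mathbf{T}_k\t$ with $\mathbf{T}_k \mathcal{\hat P}_k^{(t)} \mathbf{T}_k\t$, which is controlled by $\|\mathcal{P}_k - \mathcal{\hat P}_k^{(t)}\| \lesssim \eta_k^{(t)} \lesssim \dl/\lambda$ on $\mathcal{E}_{\mathrm{Good}}$, contributing at most $\lambda_1^2 \cdot (\dl/\lambda) = \lambda_1 \cdot O(\kappa \sqrt{p\log(p)})$ — essentially $\lambda_1\sqrt{pr}$ up to the stated constants; (b) cross terms $\U_k\t \mathbf{T}_k \mathcal{\hat P}_k^{(t)} \mathbf{Z}_k\t \uhat_k^{(t)}$ and its transpose, bounded by $\lambda_1 \cdot \|\mathbf{Z}_k \mathcal{\hat P}_k^{(t)}\| \lesssim \lambda_1 \sqrt{pr}$ using the rank-restricted operator norm bound $\tau_k \lesssim \sqrt{pr}$ from $\mathcal{E}_{\mathrm{Good}}$; and (c) the purely quadratic term $(\uhat_k^{(t)})\t \mathbf{Z}_k \mathcal{\hat P}_k^{(t)} \mathbf{Z}_k\t \uhat_k^{(t)}$, again bounded by $\|\mathbf{Z}_k \mathcal{\hat P}_k^{(t)}\|^2 \lesssim pr \lesssim \lambda_1 \sqrt{pr}$ since $\lambda_1 \gtrsim \lambda \gtrsim p^{3/4}\sqrt{\log p} \gtrsim \sqrt{pr}$ under the SNR and rank assumptions.

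After collecting these contributions I would check that each is $O(\lambda_1 \sqrt{pr})$ given $\lambda/\sigma \gtrsim \kappa p^{3/4}\sqrt{\log(p)}$ and $r \lesssim p^{1/2}$, absorb the residual $\sin\Theta$ errors (of order $\lambda_1^2 \eta_k^{(t)} \cdot \eta_k^{(t)}$, which is lower order), and conclude the stated bound holds on the event $\mathcal{E}_{\mathrm{Good}} \cap \mathcal{E}_{\mathrm{main}}^{t_0-1,k}$, which has probability $1 - O(p^{-9})$. The main obstacle I anticipate is being careful with the projection bookkeeping: $\mathbf{T}_k$ is annihilated by $\mathcal{P}_k^\perp$ but $\mathcal{\hat P}_k^{(t)}$ is only \emph{close} to $\mathcal{P}_k$, so every place where I want to replace $\mathbf{T}_k \mathcal{\hat P}_k^{(t)}$ by $\mathbf{T}_k$ incurs a $\|\mathcal{P}_k - \mathcal{\hat P}_k^{(t)}\|$-sized error that must be tracked and shown subdominant — and simultaneously ensuring that the quadratic-in-$\mathbf{Z}$ term genuinely stays at $O(pr)$ rather than the naive $O(p^2)$, which is exactly where the rank-restricted structure of $\mathcal{\hat P}_k^{(t)}$ (rank $\lesssim r_{-k} \asymp r^2 \lesssim p$) and the definition of $\tau_k$ on $\mathcal{E}_{\mathrm{Good}}$ do the real work.
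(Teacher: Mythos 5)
Your proposal is correct and follows essentially the same route as the paper: after the Gram-matrix rewriting, it reduces to the paper's identity $\mathbf{\Lambda}_k^2 \U_k\t\uhat_k^{(t)} - \U_k\t\uhat_k^{(t)}(\mathbf{\hat\Lambda}_k^{(t)})^2 = \U_k\t\big(\mathbf{T}_k\mathbf{T}_k\t - (\mathbf{T}_k+\mathbf{Z}_k)\mathcal{\hat P}_k^{(t)}(\mathbf{T}_k+\mathbf{Z}_k)\t\big)\uhat_k^{(t)}$, and then to the same three-way split on $\mathcal{E}_{\mathrm{Good}}$ (projection-difference term via $\|\mathcal{P}_k-\mathcal{\hat P}_k^{(t)}\|\lesssim \dl/\lambda$, cross terms via $\lambda_1\tau_k$, quadratic term via $\tau_k^2$). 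The only extraneous step is the $\sin\Theta$ budget you reserve when inserting $\uhat_k^{(t)}(\uhat_k^{(t)})\t$ --- this error is actually zero, since $(\mathbf{T}_k+\mathbf{Z}_k)\mathcal{\hat P}_k^{(t)}(\mathbf{T}_k+\mathbf{Z}_k)\t\uhat_k^{(t)} = \uhat_k^{(t)}(\mathbf{\hat\Lambda}_k^{(t)})^2$ exactly, which is precisely why the paper works from the eigenvector--eigenvalue equations rather than the scalar Gram identities.
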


\subsection{Proof of \cref{thm:eigenvectornormality}}
\label{sec:eigenvectornormalityproof}

With the lemmas from the previous section in place, we are now prepared to prove \cref{thm:eigenvectornormality}.  Again without loss of generality we assume $\sigma = 1$.  
\begin{proof}[Proof of \cref{thm:eigenvectornormality}]
% First we show that with probability at least $1 - O(p^{-9})$ that
% \begin{align*}
%      e_m\t \bigg( \uhat_k^{(t)} (\mathbf{W}_k^{(t)})\t - \U_k \bigg) &= e_m\t \mathbf{Z}_k \mathbf{V}_k \mathbf{\Lambda}_k\inv + O\bigg( \frac{\kappa^2 \mu_0^2 \log(p) r \sqrt{p}}{\lambda^2} \bigg),
% \end{align*}
% which justifies the first part of \cref{thm:eigenvectornormality}.
First, we note that with probability at least $1 - O(p^{-9})$ it holds that
\begin{align*}
    e_m\t &\bigg( \uhat_k^{(t)} - \U_k \mathbf{W}_k^{(t)} \bigg) \\
    &\overset{\scalebox{.6}{\text{\cref{lem:linearapprox}}}}{=} e_m\t (\mathbf{I} - \U_k \U_k\t ) \mathbf{Z}_k \mathcal{\hat P}_k^{(t)} \mathbf{V}_k \mathbf{\Lambda}_k \U_k\t \uhat_k^{(t)} (\mathbf{\hat{\Lambda}}_k^{(t)})^{-2}  \\
    &\qquad + O\bigg( \frac{\kappa \mu_0^2 r^{3/2}p \log(p)}{\lambda^3} + \frac{\mu_0^2 \big( r^2 \sqrt{\log(p)} + r \log(p)\big) + \mu_0 \kappa^2 \sqrt{pr} \log(p)}{\lambda^2} \bigg) \\
    &\overset{\scalebox{.6}{\text{\cref{uperplemma}}}}{=} e_m\t  \mathbf{Z}_k \mathcal{\hat P}_k^{(t)} \mathbf{V}_k \mathbf{\Lambda}_k \U_k\t \uhat_k^{(t)} (\mathbf{\hat{\Lambda}}_k^{(t)})^{-2}  \\
    &\qquad + O\bigg( \frac{\mu_0 r \kappa^2 \sqrt{p\log(p)}}{\lambda^2} + \frac{\mu_0 r \kappa}{\lambda \sqrt{p}} \bigg) \\
    &\quad + O\bigg( \frac{\kappa \mu_0^2 r^{3/2}p \log(p)}{\lambda^3} + \frac{\mu_0^2 \big( r^2 \sqrt{\log(p)} + r \log(p)\big) + \mu_0 \kappa^2 \sqrt{pr} \log(p)}{\lambda^2} \bigg) \\
    &\overset{\scalebox{.6}{\text{\cref{lem:empiricallinearreplacement}}}}{=} e_m\t \mathbf{Z}_k \mathcal{ P}_k \mathbf{V}_k \mathbf{\Lambda}_k \U_k\t \uhat_k^{(t)}  (\mathbf{\hat{\Lambda}}_k^{(t)})^{-2} \\
    &\qquad + O\bigg( \frac{\kappa^2 \mu_0^2 \log(p) r \sqrt{p}}{\lambda^2} \bigg) \\
    &\qquad + O\bigg( \frac{\kappa \mu_0^2 r^{3/2}p \log(p)}{\lambda^3} + \frac{\mu_0^2 \big( r^2 \sqrt{\log(p)} + r \log(p)\big) + \mu_0 \kappa^2 \sqrt{pr} \log(p)}{\lambda^2} \bigg) \\
    &\qquad + O\bigg( \frac{\mu_0 r \kappa^2 \sqrt{p\log(p)}}{\lambda^2} + \frac{\mu_0 r \kappa}{\lambda \sqrt{p}} \bigg) \\
    &= e_m\t \mathbf{Z}_k \mathbf{V}_k \mathbf{\Lambda}_k\inv \U_k\t \uhat_k^{(t)} \\
    &\qquad + e_m\t \mathbf{Z}_k \mathbf{V}_k \mathbf{\Lambda}_k\inv \bigg( \mathbf{\Lambda}_k^2 \U_k\t \uhat_k^{(t)} - \U_k \uhat_k^{(t)} (\mathbf{\hat{\Lambda}}_k^{(t)})^2 \bigg) (\mathbf{\hat{\Lambda}}_k^{(t)})^{-2} \\
    &\qquad + O\bigg( \frac{\kappa^2 \mu_0^2 \log(p) r \sqrt{p}}{\lambda^2} \bigg) \\
    &\qquad + O\bigg( \frac{\kappa \mu_0^2 r^{3/2}p \log(p)}{\lambda^3} + \frac{\mu_0^2 \big( r^2 \sqrt{\log(p)} + r \log(p)\big) + \mu_0 \kappa^2 \sqrt{pr} \log(p)}{\lambda^2} \bigg) \\
    &\qquad + O\bigg( \frac{\mu_0 r \kappa^2 \sqrt{p\log(p)}}{\lambda^2} + \frac{\mu_0 r \kappa}{\lambda \sqrt{p}} \bigg) \\
    &= e_m\t \mathbf{Z}_k \mathbf{V}_k \mathbf{\Lambda}_k\inv \mathbf{W}_k^{(t)} + e_m\t \mathbf{Z}_k \mathbf{V}_k \mathbf{\Lambda}_k\inv \bigg( \U_k\t \uhat_k^{(t)} - \mathbf{W}_k^{(t)} \bigg) \\
    &\qquad + e_m\t \mathbf{Z}_k \mathbf{V}_k \mathbf{\Lambda}_k\inv \bigg( \mathbf{\Lambda}_k^2 \U_k\t \uhat_k^{(t)} - \U_k \uhat_k^{(t)} (\mathbf{\hat{\Lambda}}_k^{(t)})^2 \bigg) (\mathbf{\hat{\Lambda}}_k^{(t)})^{-2} \\
    &\qquad + O\bigg( \frac{\kappa^2 \mu_0^2 \log(p) r \sqrt{p}}{\lambda^2} \bigg) \\
    &\qquad + O\bigg( \frac{\kappa \mu_0^2 r^{3/2}p \log(p)}{\lambda^3} + \frac{\mu_0^2 \big( r^2 \sqrt{\log(p)} + r \log(p)\big) + \mu_0 \kappa^2 \sqrt{pr} \log(p)}{\lambda^2} \bigg) \\
     &\qquad + O\bigg( \frac{\mu_0 r \kappa^2 \sqrt{p\log(p)}}{\lambda^2} + \frac{\mu_0 r \kappa}{\lambda \sqrt{p}} \bigg).
\end{align*}
Note that by Lemma 16 of \citet{agterberg_estimating_2022} it holds that with probability at least $1 - O(p^{-30})$ that
\begin{align*}
    \| e_m\t \mathbf{Z}_k \mathbf{V}_k \mathbf{\Lambda}_k\inv \| &\lesssim p \sqrt{\log(p)} \| \mathbf{V}_k \mathbf{\Lambda}_k\inv \|_{2,\infty} \\
    &\lesssim \frac{\mu_0 \sqrt{r\log(p)} }{\lambda}.
\end{align*}
\textcolor{black}{Furthermore, suppose $\U_k\t \uhat_k^{(t)}$ has singular value decomposition  $\mathbf{W}_1 \mathbf{\Sigma} \mathbf{W}_2\t$. 
 Then since $\mathbf{W}_k^{(t)} = \mathrm{sgn}(\U_k, \uhat_k^{(t)})$, we have $\mathbf{W}_k^{(t)} = \mathbf{W}_1 \mathbf{W}_2\t$, and $\mathbf{\Sigma}$ contains the diagonal entries equal to $\cos\theta_i$, with $\theta_i$ the canonical angles between the subspace spanned by $\U_k$ and $\uhat_k^{(t)}$ (see, e.g, \citet{kato2013perturbation}) and hence it holds that
\begin{align*}
    \| \U_k\t \uhat_k^{(t)} - \mathbf{W}_k^{(t)} \| &= \| \mathbf{W}_1 \mathbf{\Sigma} \mathbf{W}_2\t - \mathbf{W}_1 \mathbf{W}_2\t \| = \| \mathbf{\Sigma} - \mathbf{I} \| \\
    &= \max_{1\leq i \leq r} 1 - \cos\theta_i \leq \max_{1 \leq i\leq r} 1 - \cos^2 \theta_i = \max_{1\leq i \leq r} \sin^2\theta_i = \| \sin\Theta(\uhat_k^{(t)}, \U_k) \|^2.\numberthis \label{sintheta14}
\end{align*}
}
Therefore, it holds on the event $\mathcal{E}_{\mathrm{Good}}$ that
\begin{align*}
    \| e_m\t \mathbf{Z}_k \mathbf{V}_k \mathbf{\Lambda}_k\inv \bigg( \U_k\t \uhat_k^{(t)} - \mathbf{W}_k^{(t)} \bigg) \| &\lesssim \frac{\mu_0 \sqrt{r\log(p)} }{\lambda} \| \U_k\t \uhat_k^{(t)} - \mathbf{W}_k^{(t)} \| \\
    &\lesssim\frac{\mu_0 \sqrt{r\log(p)} }{\lambda} \| \sin\Theta(\uhat_k^{(t)}, \U_k ) \|^2 \\
    &\overset{\scalebox{.6}{\text{\cref{sinthetaegood}}}}\lesssim \frac{\mu_0 \sqrt{r\log(p)} }{\lambda} \frac{\dl^2}{\lambda^2} \\
    &\lesssim \frac{\mu_0 \kappa^2 p \sqrt{r} \log^{3/2}(p)  }{\lambda^3},
\end{align*}
and that
\begin{align*}
   \bigg\| e_m\t &\mathbf{Z}_k \mathbf{V}_k \mathbf{\Lambda}_k\inv \bigg( \mathbf{\Lambda}_k^2 \U_k\t \uhat_k^{(t)} - \U_k \uhat_k^{(t)} (\mathbf{\hat{\Lambda}}_k^{(t)})^2 \bigg) (\mathbf{\hat{\Lambda}}_k^{(t)})^{-2} \bigg\| \\&\lesssim \frac{\mu_0 \sqrt{r\log(p)} }{\lambda} \| \mathbf{\Lambda}_k^2 \U_k\t \uhat_k^{(t)} - \U_k \uhat_k^{(t)} (\mathbf{\hat{\Lambda}}_k^{(t)})^2 \| \| (\mathbf{\hat{\Lambda}}_k^{(t)})^{-2} \| \\
   &\overset{\scalebox{.6}{\text{\cref{lem:eigengaps}}}}{\lesssim} \frac{\mu_0 \sqrt{r\log(p)} }{\lambda^3} \| \mathbf{\Lambda}_k^2 \U_k\t \uhat_k^{(t)} - \U_k \uhat_k^{(t)} (\mathbf{\hat{\Lambda}}_k^{(t)})^2 \| \\
   &\overset{\scalebox{.6}{\text{\cref{lem:approximatecommute}}}}{\lesssim} \frac{\mu_0 \sqrt{r\log(p)} }{\lambda^3} \lambda_1 \sqrt{pr} \\
   &\lesssim \frac{\mu_0 \kappa r \sqrt{p\log(p)} }{\lambda^2}.
\end{align*}
Therefore, we have shown that there is an event $\mathcal{E}_{\mathrm{Very Good}}$  with  $\p\big( \mathcal{E}_{\mathrm{Very Good}}\big) \geq 1 - O(p^{-9})$ such that on this event
\begin{align*}
    e_m\t &\bigg( \uhat_k^{(t)} - \U_k \mathbf{W}_k^{(t)} \bigg) \\
    &= e_m\t \mathbf{Z}_k \mathbf{V}_k \mathbf{\Lambda}_k\inv \mathbf{W}_k^{(t)} \\
    &\qquad + O\bigg( \frac{\mu_0 \kappa r \sqrt{p\log(p)} }{\lambda^2} + \frac{\mu_0 \kappa^2 p \sqrt{r} \log^{3/2}(p)  }{\lambda^3} \bigg) \\
    &\qquad + O\bigg( \frac{\kappa^2 \mu_0^2 \log(p) r \sqrt{p}}{\lambda^2} \bigg) \\
    &\qquad + O\bigg( \frac{\kappa \mu_0^2 r^{3/2}p \log(p)}{\lambda^3} + \frac{\mu_0^2 \big( r^2 \sqrt{\log(p)} + r \log(p)\big) + \mu_0 \kappa^2 \sqrt{pr} \log(p)}{\lambda^2} \bigg) \\ 
    &\qquad + O\bigg( \frac{\mu_0 r \kappa^2 \sqrt{p\log(p)}}{\lambda^2} + \frac{\mu_0 r \kappa}{\lambda \sqrt{p}} \bigg) \\
    &= e_m\t \mathbf{Z}_k \mathbf{V}_k \mathbf{\Lambda}_k\inv \mathbf{W}_k^{(t)} + O\bigg( \frac{\kappa^2 \mu_0^2 \log(p) r \sqrt{p}}{\lambda^2} + \frac{\mu_0 r \kappa}{\lambda \sqrt{p}} \bigg) 
\end{align*}
where we have used the fact that $\lambda \gtrsim \kappa \mu_0 \sqrt{pr \log(p)}$, which holds under the conditions of \cref{thm:eigenvectornormality} since $\mu_0^2 r \lesssim \sqrt{p}$. 
Therefore, by rotational invariance of vector norms, it holds that on this event
\begin{align*}
    e_m\t \bigg( \uhat_k^{(t)} (\mathbf{W}_k^{(t)})\t - \U_k \bigg) &= e_m\t \mathbf{Z}_k \mathbf{V}_k \mathbf{\Lambda}_k\inv + O\bigg( \frac{\kappa^2 \mu_0^2 \log(p) r \sqrt{p}}{\lambda^2} + \frac{\mu_0 r \kappa}{\lambda \sqrt{p}}  \bigg).
\end{align*}
This completes the proof.
\end{proof}
\subsection{Proof of \cref{cor:asymptoticnormality_projection}} \label{sec:corollaryprojectionproof}

\begin{proof}[Proof of \cref{cor:asymptoticnormality_projection}]
Again we assume that $\sigma = 1$ without loss of generality. We suppress the dependence of $\uhat_k^{(t)}$ and $\mathbf{\hat{\Lambda}}_k^{(t)}$ on $t$ for convenience.  Denoting $\mathbf{W}_k$ as the orthogonal matrix in \cref{thm:eigenvectornormality}, by \cref{thm:eigenvectornormality} on $\mathcal{E}_{\mathrm{Theorem} \ \ref{thm:eigenvectornormality}}$ it holds that
\begin{align*}
    \uhat_k &\uhat_k\t - \U_k \U_k\t \\
    &= \uhat_k (\uhat_k - \U_k \mathbf{W}_k )\t + (\uhat_k \mathbf{W}_k\t - \U_1)\U_1\t \\
    &= \uhat_k \mathbf{W}_k\t (\uhat_k \mathbf{W}_k\t - \U_k  )\t + (\uhat_k \mathbf{W}_k\t - \U_1)\U_1\t \\
    &= (\uhat_k \mathbf{W}_k\t -\U_k) (\uhat_k \mathbf{W}_k\t - \U_k  )\t +\U_k (\uhat_k \mathbf{W}_k\t - \U_k  )\t+ (\uhat_k \mathbf{W}_k\t - \U_1)\U_1\t \\
    &= (\mathbf{Z}_k \mathbf{V}_k \mathbf{\Lambda}_k\inv + \mathbf{\Psi}^{(k)}) (\mathbf{Z}_k \mathbf{V}_k \mathbf{\Lambda}_k\inv + \mathbf{\Psi}^{(k)})\t \\
    &\quad +\U_k (\mathbf{Z}_k \mathbf{V}_k \mathbf{\Lambda}_k\inv + \mathbf{\Psi}^{(k)}  )\t+ (\mathbf{Z}_k \mathbf{V}_k \mathbf{\Lambda}_k\inv + \mathbf{\Psi}^{(k)})\U_1\t \\
    &= \mathbf{Z}_k \mathbf{V}_k \mathbf{\Lambda}_k^{-2} \mathbf{V}_k\t \mathbf{Z}_k\t + \mathbf{\Psi}^{(k)} \mathbf{\Lambda}_k^{-1} \mathbf{V}_k\t \mathbf{Z}_k\t + \mathbf{Z}_k \mathbf{V}_k \mathbf{\Lambda}_k^{-1} (\mathbf{\Psi}^{(k)})\t + \mathbf{\Psi}^{(k)}(\mathbf{\Psi}^{(k)})\t\\
    &\quad + \U_k \mathbf{\Lambda}_k\inv \mathbf{V}_k\t \mathbf{Z}_k\t + \U_k (\mathbf{\Psi}^{(k)})\t  + \mathbf{Z}_k \mathbf{V}_k \mathbf{\Lambda}_k\inv \U_k\t + \mathbf{\Psi}^{(k)} \U_k\t \\
    &\coloneqq \U_k \mathbf{\Lambda}_k\inv \mathbf{V}_k\t \mathbf{Z}_k\t + \mathbf{Z}_k \mathbf{V}_k \mathbf{\Lambda}_k\inv \U_k\t + \mathbf{\Phi}^{(k)}.
\end{align*}
We now show the bound on $\mathbf{\Phi}^{(k)}$ holds.  We need to bound the terms
\begin{align*}
    \mathbf{\Phi}^{(k)}_1 &= \mathbf{Z}_k \mathbf{V}_k \mathbf{\Lambda}_k^{-2} \mathbf{V}_k\t \mathbf{Z}_k\t; \\
    \mathbf{\Phi}^{(k)}_2 &= \mathbf{\Psi}^{(k)}\mathbf{\Lambda}_k^{-1} \mathbf{V}_k\t \mathbf{Z}_k\t; \\
     \mathbf{\Phi}^{(k)}_3 &= \mathbf{Z}_k \mathbf{V}_k \mathbf{\Lambda}_k^{-1}(\mathbf{\Psi}^{(k)})\t; \\
      \mathbf{\Phi}^{(k)}_4 &= \mathbf{\Psi}^{(k)} (\mathbf{\Psi}^{(k)})\t; \\
       \mathbf{\Phi}^{(k)}_5 &= \U_k (\mathbf{\Psi}^{(k)})\t; \\
       \mathbf{\Phi}^{(k)}_6 &= \mathbf{\Psi}^{(k)} \U_k\t.
\end{align*}
Note that by the proof of \cref{thm:eigenvectornormality} it holds on $\mathcal{E}_{\mathrm{Theorem} \ \ref{thm:eigenvectornormality}}$ that
\begin{align*}
    \| \mathbf{Z}_k \mathbf{V}_k \mathbf{\Lambda}_k^{\inv} \|_{2,\infty} &\lesssim \frac{\mu_0 \sqrt{r\log(p)}}{\lambda}; \\
    \| \mathbf{Z}_k \mathbf{V}_k \| &\lesssim \sqrt{p},
\end{align*}
where the second inequality holds on the event $\mathcal{E}_{\mathrm{Good}}$  (which is included implicitly in the event $\mathcal{E}_{\mathrm{Theorem} \ \ref{thm:eigenvectornormality}}$).  Therefore,
\begin{align*}
    \| \mathbf{\Phi}_1^{(k)} \|_{2,\infty} &\lesssim \| \mathbf{Z}_k \mathbf{V}_k \mathbf{\Lambda}_k\inv \|_{2,\infty} \frac{ \| \mathbf{Z}_k \mathbf{V}_k \|}{\lambda} \\
    &\lesssim \frac{\mu_0 \sqrt{r\log(p)}}{\lambda} \frac{\sqrt{p}}{\lambda} \\
    &\lesssim \frac{\kappa^2 \mu_0^2 \log(p) r \sqrt{p}}{\lambda^2}.
\end{align*}
The second term can be bounded directly by noting that
\begin{align*}
    \| \mathbf{\Phi}_2^{(k)} \|_{2,\infty} &\leq \| \mathbf{\Psi}^{(k)} \|_{2,\infty} \| \mathbf{Z}_k \mathbf{V}_k \mathbf{\Lambda}_k\inv \| \\
    &\lesssim \| \mathbf{\Psi}^{(k)} \|_{2,\infty} \frac{\sqrt{p}}{\lambda} \\
    &\ll \| \mathbf{\Psi}^{(k)} \|_{2,\infty} \\
    &\lesssim \frac{\kappa^2 \mu_0^2 \log(p) r \sqrt{p}}{\lambda^2}.
\end{align*}
Next,
\begin{align*}
    \| \mathbf{Z}_k \mathbf{V}_k \mathbf{\Lambda}_k\inv  (\mathbf{\Psi}^{(k)})\t \|_{2,\infty} &\leq   \| \mathbf{Z}_k \mathbf{V}_k \mathbf{\Lambda}_k\inv \|_{2,\infty} \| \mathbf{\Psi}^{(k)} \| \\
    &\lesssim \frac{\mu_0 \sqrt{r\log(p)}}{\lambda} \sqrt{p} \| \mathbf{\Psi}^{(k)} \|_{2,\infty} \\
    &\lesssim \frac{\mu_0 \sqrt{rp\log(p)}}{\lambda} \bigg( \frac{ \kappa^2 \mu_0^2 \log(p) r \sqrt{p}}{\lambda^2} + \frac{\mu_0 r \kappa}{\lambda \sqrt{p}} \bigg) \\%\frac{\kappa^2 \mu_0^2 \log(p) r p}{\lambda^2} \\
   % &= \bigg( \frac{\mu_0 \sqrt{rp \log(p)}}{\lambda} \bigg) \frac{ \kappa^2 \mu_0^2 \log(p) r\sqrt{p}}{\lambda^2} \\
    &\lesssim \frac{ \kappa^2 \mu_0^2 \log(p) r\sqrt{p}}{\lambda^2},
\end{align*}
since $\mu_0^2 r \leq \sqrt{p}$ and $\lambda \gtrsim \kappa p^{3/4} \sqrt{\log(p)}$.  Next,
\begin{align*}
    \| \mathbf{\Phi}^{(k)}_4 \|_{2,\infty} &= \| \mathbf{\Psi}^{(k)} (\mathbf{\Psi}^{(k)})\t \|_{2,\infty} \\
    &\leq \| \mathbf{\Psi}^{(k)} \|_{2,\infty} \sqrt{p} \| \mathbf{\Psi}^{(k)}\|_{2,\infty} \\
    &\lesssim  \bigg( \frac{ \kappa^2 \mu_0^2 \log(p) r \sqrt{p}}{\lambda^2} + \frac{\mu_0 r \kappa}{\lambda \sqrt{p}} \bigg)  \bigg( \frac{ \kappa^2 \mu_0^2 \log(p) r \sqrt{p}}{\lambda^2} + \frac{\mu_0 r \kappa}{\lambda \sqrt{p}} \bigg) \sqrt{p} \\%\frac{ \kappa^2 \mu_0^2 \log(p) r\sqrt{p}}{\lambda^2} \bigg( \frac{ \kappa^2 \mu_0^2 \log(p) rp}{\lambda^2} \bigg) \\
    &\lesssim \frac{ \kappa^2 \mu_0^2 \log(p) r\sqrt{p}}{\lambda^2}.
\end{align*}
Next,
\begin{align*}
    \| \mathbf{\Phi}_5^{(k)} \|_{2,\infty} &= \| \U_k (\mathbf{\Psi}^{(k)})\t \|_{2,\infty} \\
    &\leq \| \U_k \|_{2,\infty} \| \mathbf{\Psi}^{(k)} \| \\
    &\lesssim \sqrt{p} \|\U_k\|_{2,\infty} \| \mathbf{\Psi}^{(k)} \|_{2,\infty} \\
     &\lesssim \mu_0 \sqrt{r} \| \mathbf{\Psi}^{(k)} \|_{2,\infty} \\
     &\lesssim  \mu_0 \sqrt{r}  \bigg( \frac{ \kappa^2 \mu_0^2 \log(p) r \sqrt{p}}{\lambda^2} + \frac{\mu_0 r \kappa}{\lambda \sqrt{p}} \bigg) \\
     &\lesssim \frac{\kappa^2 \mu_0^3 \log(p) r^{3/2} \sqrt{p}}{\lambda^2}   + \frac{\mu_0^2 r^{3/2} \kappa}{\lambda \sqrt{p}}.
     %\frac{ \kappa^2 \mu_0^3 \log(p) r^{3/2}\sqrt{p}}{\lambda^2}.
\end{align*}
The bound on $\|\mathbf{\Phi}_6^{(k)}\|_{2,\infty}$ holds from the same bound as $\|\mathbf{\Psi}^{(k)}\|_{2,\infty}$, which completes the proof.
\end{proof}

\subsection{Proof of \cref{thm:eigenvectornormality2}} \label{sec:eigenvectornormality2proof}

\begin{proof}
 By \cref{thm:eigenvectornormality}, it holds that
\begin{align*}
    e_m\t \bigg(\uhat_k^{(t)} (\mathbf{W}_k^{(t)} )\t - \U_k\bigg) &= e_m\t \mathbf{Z}_k \mathbf{V}_k \mathbf{\Lambda}_k\inv + O\bigg( \frac{\sigma^2 \kappa^2 \mu_0^2 \log(p) r \sqrt{p}}{\lambda^2} + \frac{\mu_0 r \kappa \sigma}{\lambda \sqrt{p}}\bigg).
\end{align*}
It is straightforward to verify that the covariance of the vector $e_m\t \mathbf{Z}_k \mathbf{V}_k \mathbf{\Lambda}_k\inv$ is given by 
\begin{align*}
    \mathbf{\Lambda}_k\inv \mathbf{V}_k\t \Sigma^{(m)}_k \mathbf{V}_k \mathbf{\Lambda}_k\inv = \mathbf{\Gamma}^{(m)}_k.
\end{align*}
It is also straightforward to note that 
\begin{align*}
    \lambda_{\min} \bigg( \mathbf{\Gamma}^{(m)}_k \bigg) &= \min_{x \in \mathbb{R}^{r_k}: \|x \| = 1} \| (\Sigma^{(m)}_k)^{1/2} \mathbf{V}_k \mathbf{\Lambda}_k\inv x \|^2 \\
    &\geq \sigma_{\min}^2 \| \mathbf{V}_k \mathbf{\Lambda}_k\inv x \|^2 \\
    &\geq \frac{\sigma_{\min}^2 }{\lambda^2}.
\end{align*}
Hence $\mathbf{\Gamma}^{(m)}_k$ is invertible, and its inverse has maximum eigenvalue at most $\lambda^2/\sigma_{\min}^2 \lesssim \lambda^2 \sigma^2$ by Assumption \ref{ass:noise}.  Therefore, we have that
\begin{align*}
    e_m\t \bigg(& \uhat_k^{(t)} (\mathbf{W}_k^{(t)}) - \U_k \bigg) \big( \mathbf{\Gamma}^{(m)}_k \big)^{-1/2} \\
    &= e_m\t \mathbf{Z}_k \mathbf{V}_k \mathbf{\Lambda}_k\inv \big( \mathbf{\Gamma}^{(m)}_k \big)^{-1/2} + O\bigg( \frac{\sigma^2 \kappa^2 \mu_0^2 \log(p) r\sqrt{p}}{\lambda^2} \| (\mathbf{\Gamma}^{(m)}_k)^{-1/2} \| \bigg) \\ 
    &\quad + O\bigg(  \frac{\sigma\mu_0 r \kappa}{\lambda \sqrt{p}}  \| (\mathbf{\Gamma}^{(m)}_k)^{-1/2} \| \bigg) \\
    &= e_m\t \mathbf{Z}_k \mathbf{V}_k \mathbf{\Lambda}_k\inv \big( \mathbf{\Gamma}^{(m)}_k \big)^{-1/2} + O\bigg( \frac{\sigma \kappa^3 \mu_0^2 \log(p) r\sqrt{p}}{\lambda} + \frac{\mu_0 r \kappa}{\sqrt{p}} \bigg).
\end{align*}
We will apply Corollary 2.2 of \citet{shao_berryesseen_2022}, with (in their notation)
\begin{align*}
    \xi_l &= (\mathbf{Z}_k)_{ml} \bigg( \mathbf{V}_k \mathbf{\Lambda}_k\inv \big( \mathbf{\Gamma}^{(m)}_k \big)^{-1/2} \bigg)_{l\cdot}; \\
    \Delta^{(l)} = \Delta &= C \bigg(\frac{\sigma \kappa^3 \mu_0^2 \log(p) r\sqrt{p}}{\lambda} + \frac{\mu_0 r \kappa}{\sqrt{p}}\bigg).
\end{align*}
By Corollary 2.2 of \citet{shao_berryesseen_2022}, it holds that
\begin{align*}
    \sup_{A \in \mathcal{A}} | &\p\bigg\{ e_m\t \bigg( \uhat_k^{(t)} (\mathbf{W}_k^{(t)})\t - \U_k \bigg) (\mathbf{\Gamma}^{(m)}_k)^{-1/2} \in A \bigg\} - \p \{ Z \in A \} | \\
    &\lesssim r^{1/2} \sum_{l=1}^{p_{-k}} \E \| (\mathbf{Z}_k)_{ml} (\mathbf{V}_k \mathbf{\Lambda}_k\inv \mathbf{\Gamma}^{(m)}_k)^{-1/2}  )_{l\cdot}\|^3 \\
    &\quad + \bigg( \frac{\sigma\kappa^3 \mu_0^2 \log(p) r \sqrt{p}}{\lambda} + \frac{\mu_0 r \kappa}{\sqrt{p}} \bigg) \E \| e_m\t \mathbf{Z}_k \mathbf{V}_k \mathbf{\Lambda}_k\inv(\mathbf{\Gamma}^{(m)}_k)^{-1/2} \| + O(p^{-9}),
\end{align*}
where $\mathcal{A}$ is the collection of convex sets in $\mathbb{R}^r$ and $Z$ is an $r$-dimensional Gaussian random vector with identity covariance. To bound the first term, observe that by subgaussianity (Proposition 2.5.2 of  \citet{vershynin_high-dimensional_2018}),
\begin{align*}
    \sum_{l=1}^{p_{-k}} &\E \| (\mathbf{Z}_k)_{ml} (\mathbf{V}_k \mathbf{\Lambda}_k\inv (\mathbf{\Gamma}^{(m)}_k)^{-1/2} )_{l\cdot}\|^3 \\
    &\lesssim \sigma^3 \sum_{l=1}^{p_{-k}} \| (\mathbf{V}_k \mathbf{\Lambda}_k\inv (\mathbf{\Gamma}^{(m)}_k)^{-1/2} )_{l\cdot}\|^3  \\
    &\lesssim \sigma^3 \max_{l} \| (\mathbf{V}_k \mathbf{\Lambda}_k\inv (\mathbf{\Gamma}^{(m)}_k)^{-1/2} )_{l\cdot}) \| \sum_{l=1}^{p_{-k}}  \| (\mathbf{V}_k \mathbf{\Lambda}_k\inv (\mathbf{\Gamma}^{(m)}_k)^{-1/2} )_{l\cdot}\|^2 \\
    &\lesssim \sigma^3 \| \mathbf{V}_k \|_{2,\infty} \frac{\| (\mathbf{\Gamma}^{(m)}_k)^{-1/2} \|}{\lambda} \| \mathbf{V}_k \mathbf{\Lambda}_k\inv (\mathbf{\Gamma}^{(m)}_k)^{-1/2} \|^2_F \\
    &\lesssim \sigma^3 \mu_0 \frac{\sqrt{r}}{p} \frac{\lambda/\sigma_{\min}}{\lambda} \| \mathbf{V}_k \|_F^2 \| \mathbf{\Lambda}_k\inv \|^2 \| \mathbf{\Gamma}^{(m)}_k)^{-1/2} \|^2 \\
    &\lesssim \sigma^3 \mu_0 \frac{\sqrt{r}}{p} \frac{1}{\sigma_{\min}} r \frac{\lambda^2/\sigma_{\min}^2}{\lambda^2} \\
    &\lesssim \frac{\sigma^3}{\sigma_{\min}^3} \frac{\mu_0 r^{3/2}}{p} \\
    &\lesssim \mu_0 \frac{r^{3/2}}{p},
\end{align*}
where we have used the fact that $\|\mathbf{V}_k\|^2_F = r$ and that $\sigma/\sigma_{\min} = O(1)$.  In addition, we note that
\begin{align*}
    \E \| e_m\t \mathbf{Z}_k \mathbf{V}_k \mathbf{\Lambda}_k\inv (\mathbf{\Gamma}^{(m)}_k)^{-1/2} \| &\lesssim \sqrt{r},
\end{align*}
since the vector $e_m\t\mathbf{Z}_k\mathbf{V}_k \mathbf{\Lambda}_k\inv (\mathbf{\Gamma}^{(m)}_k)^{-1/2} $ is isotropic and subgaussian.  Therefore,
\begin{align*}
     \sup_{A \in \mathcal{A}} | &\p\bigg\{ e_m\t \bigg( \uhat_k^{(t)} (\mathbf{W}_k^{(t)})\t - \U_k \bigg) (\mathbf{\Gamma}^{(m)}_k)^{-1/2} \in A \bigg\} - \p \{ Z \in A \} | \\
    &\lesssim r^{1/2} \sum_{l=1}^{p_{-k}} \E \| (\mathbf{Z}_k)_{ml} (\mathbf{V}_k \mathbf{\Lambda}_k\inv \mathbf{\Gamma}^{(m)}_k)^{-1/2}  )_{l\cdot}\|^3 \\
    &\quad + \bigg( \frac{\sigma\kappa^3 \mu_0^2 \log(p) r \sqrt{p}}{\lambda} + \frac{\mu_0 r \kappa }{ \sqrt{p}}\bigg)\E \| e_m\t \mathbf{Z}_k \mathbf{V}_k \mathbf{\Lambda}_k\inv(\mathbf{\Gamma}^{(m)}_k)^{-1/2} \| + O(p^{-9}) \\
    &\lesssim \mu_0 \frac{r^2}{p} + \frac{\sigma\kappa^3 \mu_0^2 \log(p) r^{3/2} \sqrt{p}}{\lambda}  + \frac{\mu_0 r^{3/2} \kappa }{ \sqrt{p}} + O(p^{-9}) \\
    &\lesssim \mu_0 \frac{r^2}{p} + \frac{\sigma\kappa^3 \mu_0^2 \log(p) r^{3/2} \sqrt{p}}{\lambda}  + \frac{\mu_0 r^{3/2} \kappa }{ \sqrt{p}}.
\end{align*}
This completes the proof.
%
%
%
%Note that by subgaussianity,
%\begin{align*}
%    \sum_{l=1}^{p_{-k}} \E \| (\mathbf{Z}_k)_{ml} (\mathbf{V}_k)_{l\cdot} \|^3 %&\lesssim \sum_{l=1}^{p_{-k}} (\mathbf{V}_k)_{l\cdot} \|^3 \E | %(\mathbf{Z}_k)_{ml} |^3 \\
%    &\lesssim \mu_0 \frac{\sqrt{r}}{p} \sum_{l=1}^{p_{-k}} %(\mathbf{V}_k)_{l\cdot} \|^2  \\
%    &\leq  \mu_0 \frac{r^{3/2}}{p}.
%\end{align*}
%In addition, also by subgaussianity, it holds that $\E \| e_m\t \mathbf{Z}_k %\mathbf{V}_k \| \lesssim \sqrt{r}$. Combinining these together yields
%\begin{align*}
%     \sup_{A \in \mathcal{A}} | &\p\bigg\{ e_m\t \bigg( \uhat_k^{(t)} %(\mathbf{W}_k^{(t)})\t - \U_k \bigg) \mathbf{\Lambda}_k \in A \bigg\} - \p \{ Z \in A \} | %&\lesssim \mu_0 \frac{r^2}{p} + \frac{\kappa^3 \mu_0^2 \log(p) r^{3/2} %\sqrt{p}}{\lambda},
%\end{align*}
%which completes the proof.
\end{proof}

% We rely heavily on the following corollary to characterize the limiting distribution of the entries of the tensor.
% \begin{corollary} \label{cor:asymptoticnormality_projection}
% Under the conditions of \cref{thm:eigenvectornormality} it holds that under the event $\mathcal{E}_{\mathrm{VeryGood}}$
% \begin{align*}
%     \uhat_k^{(t)} (\uhat_k^{(t)})\t - \U_k \U_k\t &= \U_k \mathbf{\Lambda}_k\inv \mathbf{V}_k \mathbf{Z}_k\t + \mathbf{Z}_k \mathbf{V}_k \mathbf{\Lambda}_k\inv \U_k\t + \mathbf{\Phi}^{(k)},
% \end{align*}
% where
% \begin{align*}
%     \big\| \mathbf{\Phi}^{(k)} \big\|_{2,\infty} &\lesssim \frac{\sigma^2\kappa^2 \mu_0^3 \log(p) r^{3/2} \sqrt{p}}{\lambda^2}.  
% \end{align*}
% \end{corollary}

\subsection{Proofs of Preliminary Lemmas from \cref{sec:prelimlemmas}} \label{sec:prelimproofs}
This section contains all of the proofs from \cref{sec:prelimlemmas}.  

\subsubsection{Proof of \cref{lem:linearapprox}}
\begin{proof}[Proof of \cref{lem:linearapprox}]
First, the following expansion holds:
\begin{align*}
    e_m\t \bigg(\uhat_k^{(t)} - &\U_k \mathbf{W}_k^{(t)}  - (\mathbf{I} - \U_k \U_k\t) \mathbf{Z}_k \mathcal{\hat P}^{(t)}_k \mathbf{V}_k \mathbf{\Lambda}_k \U_k\t \uhat_k^{(t)} \mathbf{\hat{\Lambda}}_k^{-2} \bigg) \\
    &= e_m\t \mathbf{Q}_k^{(t)} + e_m\t \U_k (\U_k\t \uhat_k^{(t)} - \mathbf{W}_k^{(t)}),
\end{align*}
where $\mathbf{Q}_k^{(t)}$ is the quadratic term defined in \cref{sec:notation2} \textcolor{black}{(see the proof of Theorem 2 of \citet{agterberg_estimating_2022} for details on this expansion)}.  
Without loss of generality consider the case $k = 1$.  
Recall we assume $t_0$ is such that \cref{thm:twoinfty} holds.  For $t = t_0 + 1$, where $t_0$ is such that \cref{thm:twoinfty} holds, on the event $\mathcal{E}_{\mathrm{main}}^{t,1} \cap \mathcal{E}_{\mathrm{Good}}$, %\textcolor{black}{by \eqref{sinthetaegood}}, 
the following bounds hold:
\begin{align*}
    \tau_1 &\lesssim \sqrt{pr}; \\
    \eta_1^{(t_0,1-m)}  &\overset{\scalebox{.6}{\text{\cref{sinthetaloo14}}}}{\lesssim} \mu_0 \sqrt{\frac{r}{p}} \frac{\dl}{\lambda} ;\\
    \eta_1^{(t_0)} &\overset{\scalebox{.6}{\text{\cref{sinthetaegood}}}}{\lesssim} \frac{\dl}{\lambda} ;\\
    \| \sin\Theta(\uhat_1^{(t_0-1)}, \U_1^{(t_0-1)}) \| &\overset{\scalebox{.6}{\text{\cref{sinthetaegood}}}}{\lesssim} \frac{\dl}{\lambda}; \\
    \| \U_1\t \mathbf{Z}_1 \mathcal{P}_{\U_2}\otimes\mathcal{P}_{\U_3} \| &\lesssim r + \sqrt{\log(p)}.
\end{align*}
Next by Lemma 5 of \citet{agterberg_estimating_2022}, it holds that
\begin{align*}
    \| e_m\t \mathbf{Q}_1^{(t)} \| &\leq \frac{4}{\lambda^2} \| \U_1 \|_{2,\infty} \bigg( \tau_1 \eta_1^{(t)} + \| \U_k\t \mathbf{Z}_k\big[ \mathcal{P}_{\U_{k+1} \otimes \U_{k+2}} \big] \| \bigg) + \frac{16}{\lambda^2} \tau_1^2 \eta_k^{(t,k-m)} \\
    &\quad + \frac{4}{\lambda^2} \xi_1^{t,1-m} \bigg( \tau_k \|\sin\Theta(\uhat_k^{(t)}, \U_k) \| + \tau_k \eta_k^{(t-1)} + \| \U_k\t \mathbf{Z}_k \big[ \mathcal{P}_{\U_{k+1} \otimes \U_{k+2}} \big] \| \bigg),
\end{align*}
where the notation is defined in \cref{sec:notation2}, and the bound above holds whenever $\lambda/2 \leq \lambda_{r_1}\big( \mathbf{\hat \Lambda}_k^{(t)}\big)$, which holds by \cref{lem:eigengaps} on the event $\mathcal{E}_{\mathrm{Good}}$.  Combining this bound with the bounds on the event $\mathcal{E}_{\mathrm{main}}^{t,1} \cap \mathcal{E}_{\mathrm{Good}}$ we obtain
\begin{align*}
    \| e_m\t \mathbf{Q}_1^{t_0} \| &\lesssim  \frac{\|\U_1\|_{2,\infty}}{\lambda^2} \bigg( \sqrt{pr} \frac{\dl}{\lambda} + r + \sqrt{\log(p)} \bigg) + \frac{pr}{\lambda^2} \frac{\dl}{\lambda} \mu_0 \sqrt{\frac{r}{p}};\\
    &\qquad + \frac{\xi_1^{t,1-m}}{\lambda^2} \bigg( \sqrt{pr} \frac{\dl}{\lambda} + r + \sqrt{\log(p)} \bigg),
\end{align*}
where we recall
\begin{align*}
    \xi_1^{t,1-m} &= \bigg\| \bigg( \mathbf{Z}_1^{1-m} - \mathbf{Z}_1 \bigg) \mathcal{\tilde P}_1^{t_0,1-m} \bigg\|.
\end{align*}
By Lemma 11 of \citet{agterberg_estimating_2022} \textcolor{black}{and the fact that $\frac{1}{2^{t_0}} \leq \frac{\dl}{\lambda}$}, on the event $\mathcal{E}_{\mathrm{Good}} \cap \mathcal{E}_{\mathrm{main}}^{t_0,1}$  it holds  with probability at least $1 - O(p^{-30})$ that
\begin{align*}
    \xi_1^{t,1-m} &\lesssim r \mu_0^2 \sqrt{\log(p)},
\end{align*}
where their proof of that lemma uses the fact that the events $\mathcal{\tilde E}$ defined in \cref{sec:notation2} have empty intersection with the event $\mathcal{E}_{\mathrm{Good}} \cap \mathcal{E}_{\mathrm{main}}^{t_0,1}$.  
Recalling $\dl = C_0 \kappa \sqrt{p\log(p)}$ and simplifying these bounds yields
\begin{align*}
    \| e_m\t \mathbf{Q}_1^{t_0} \| &\lesssim  \frac{\mu_0}{\lambda^2} \sqrt{\frac{r}{p}} \bigg( \sqrt{pr} \frac{C_0 \kappa\sqrt{p\log(p)}}{\lambda} + r + \sqrt{\log(p)} \bigg) + \frac{pr}{\lambda^2} \frac{C_0 \kappa\sqrt{p\log(p)}}{\lambda} \mu_0 \sqrt{\frac{r}{p}};\\
    &\qquad + \frac{r \mu_0^2 \sqrt{\log(p)}}{\lambda^2} \bigg( \sqrt{pr} \frac{C_0 \kappa\sqrt{p\log(p)}}{\lambda} + r + \sqrt{\log(p)} \bigg) \\
    &\lesssim \frac{\mu_0 r \kappa \sqrt{p\log(p)}}{\lambda^3} + \mu_0\frac{r^{3/2} + \sqrt{r\log(p)}}{\sqrt{p}\lambda^2} + \frac{pr^{3/2} \kappa \mu_0 \sqrt{\log(p)}}{\lambda^3}; \\
    &\qquad +  \frac{\kappa \mu_0^2 r^{3/2} p \log(p)}{\lambda^3} + r \mu_0^2 \sqrt{\log(p)} \frac{r + \sqrt{\log(p)}}{\lambda^2}  \\
    &\lesssim \frac{\kappa \mu_0^2 r^{3/2} p \log(p)}{\lambda^3} + \frac{\mu_0^2 \big(r^2 \sqrt{\log(p)} + r \log(p) \big)}{\lambda^2}.
\end{align*} 
Similarly, it holds on the event $\mathcal{E}_{\mathrm{Good}}$ that
\begin{align*}
    \|e_m\t \U_1 \big( \U_1\t \uhat_1^{(t)} - \mathbf{W}_1^{(t)} \big) \| &\leq \| \U_1\|_{2,\infty} \|\sin\Theta(\uhat_1^{(t)},\U_1) \|^2 \\
    &\overset{\text{\cref{sintheta14} and \eqref{sinthetaegood}}}{\lesssim} \mu_0 \sqrt{\frac{r}{p}} \frac{\dl^2}{\lambda^2} \\
    &\lesssim \mu_0 \frac{ \kappa^2 \sqrt{pr} \log(p)}{\lambda^2}.
\end{align*}
Combining these bounds and taking a union bound over all $p_1$ rows yields that with probability at least $1 - O(p^{-9})$ that
\begin{align*}
    \|   e_m\t &\bigg(\uhat_k^{(t)} - \U_k \mathbf{W}_k^{(t)}  - (\mathbf{I} - \U_k \U_k\t) \mathbf{Z}_k \mathcal{\hat P}^{(t)}_k \mathbf{V}_k \mathbf{\Lambda}_k \U_k\t \uhat_k^{(t)} \mathbf{\hat{\Lambda}}_k^{-2} \bigg)\|  \\
    &\leq  \| e_m\t \mathbf{Q}_k^{(t)} \| + \|e_m\t \U_k (\U_k\t \uhat_k^{(t)} - \mathbf{W}_k^{(t)}) \| \\
    &\lesssim \frac{\kappa \mu_0^2 r^{3/2} p \log(p)}{\lambda^3} + \frac{\mu_0^2 \big(r^2 \sqrt{\log(p)} + r \log(p) \big)}{\lambda^2} + \frac{\mu_0 \kappa^2 \sqrt{pr}\log(p)}{\lambda^2}
\end{align*}
which completes the proof.
\end{proof}

\subsubsection{Proof of \cref{uperplemma}}
\begin{proof}[Proof of \cref{uperplemma}]
Observe that
\begin{align*}
    \| \U_k \U_k\t &\mathbf{Z}_k\mathcal{\hat P}_k^{(t)} \mathbf{V}_k \mathbf{\Lambda}_k \U_k\t \uhat_k^{(t)} ( \mathbf{\hat \Lambda}_k^{(t)})^{-2} \|_{2,\infty} \\
    &\leq \| \U_k \|_{2\infty} \| \mathbf{\Lambda}_k \| \|(\mathbf{\hat \Lambda}_k^{(t)})^{-2} \| \bigg( \| \U_k\t \mathbf{Z}_k \big( \mathcal{P}_k -\mathcal{\hat P}_k^{(t)} \big) \mathbf{V}_k \| + \| \U_k\t \mathbf{Z}_k \mathbf{V}_k \|  \bigg)\\
    &\lesssim \mu_0 \sqrt{\frac{r}{p}} \frac{\kappa}{\lambda} \bigg( \| \U_k\t \mathbf{Z}_k \big( \mathcal{P}_k -\mathcal{\hat P}_k^{(t)} \big) \mathbf{V}_k \| + \| \U_k\t \mathbf{Z}_k \mathbf{V}_k \|  \bigg) \\
    &\lesssim \mu_0 \sqrt{\frac{r}{p}} \frac{\kappa}{\lambda} \bigg( \| \mathbf{Z}_k \big( \mathcal{P}_{\U_{k+1}} \otimes \mathcal{P}_{\U_{k+2}}  -\mathcal{P}_{\uhat_{k+1}^{(t)}} \otimes \mathcal{P}_{\uhat_{k+2}^{(t)}}  \big)  \| + \sqrt{r} \bigg) \\
    &\lesssim \mu_0 \sqrt{\frac{r}{p}} \frac{\kappa}{\lambda} \bigg( \sqrt{rp} \frac{\kappa \sqrt{p\log(p)}}{\lambda} + \sqrt{r} \bigg)  \\
    &\asymp \frac{\mu_0 r \kappa^2 \sqrt{p\log(p)}}{\lambda^2} + \frac{\mu_0 r \kappa}{\lambda \sqrt{p}},
\end{align*}
where we have implicitly used the fact that
\begin{align*}
    \| \mathbf{Z}_k \big(& \mathcal{P}_{\U_{k+1}} \otimes \mathcal{P}_{\U_{k+2}}  -\mathcal{P}_{\uhat_{k+1}^{(t)}} \otimes \mathcal{P}_{\uhat_{k+2}^{(t)}}  \big)  \|\\
    &\leq  \| \mathbf{Z}_k \big( \mathcal{P}_{\U_{k+1}} \otimes \bigg( \mathcal{P}_{\uhat^{(t)}_{k+2}}  - \mathcal{P}_{\uhat_{k+2}^{(t)}}  \bigg)   \| + \| \mathbf{Z}_k \bigg( \mathcal{P}_{\U_{k+1}} - \mathcal{P}_{\uhat^{(t)}_{k+1}} \bigg) \otimes  \mathcal{P}_{\uhat^{(t)}_{k+2}} \| \\
    &\leq \tau_k \bigg( \|\sin\Theta(\uhat_{k+1}, \U_{k+1}) \| + \| \sin\Theta(\uhat_{k+2}, \U_{k+2}) \| \bigg)\\
    &\lesssim \sqrt{pr} \frac{\kappa \sqrt{p\log(p)}}{\lambda},
\end{align*}
where the final bound holds on the event $\mathcal{E}_{\mathrm{Good}}$ \textcolor{black}{by \eqref{sinthetaegood}}.  
\end{proof}

\subsubsection{Proof of \cref{lem:empiricallinearreplacement}}
\begin{proof}[Proof of \cref{lem:empiricallinearreplacement}]
Without loss of generality, we consider $k = 1$.  On the event $\mathcal{E}_{\mathrm{Good}} \cap \mathcal{E}_{\mathrm{main}}^{t,1}$, it holds that
\begin{align*}
    \bigg\| e_m\t \mathbf{Z}_1 \bigg( \mathcal{\hat P}_1^{(t)} -  \mathcal{P}_1 \bigg) \mathbf{V}_1 \mathbf{\Lambda}_1 \U_1\t \uhat_1^{(t)}  (\mathbf{\hat\Lambda}_1^{(t)})^{-2} \bigg\| &\lesssim \frac{\kappa}{\lambda} \bigg\| e_m\t \mathbf{Z}_1 \bigg( \mathcal{\tilde P}_{1}^{t,1-m} - \mathcal{\hat P}_1^{(t)} \bigg) \mathbf{V}_k \bigg\| \\
    &\qquad + \frac{\kappa}{\lambda} \bigg\| e_m\t \mathbf{Z}_1 \bigg( \mathcal{\tilde P}_{1}^{t,1-m} - \mathcal{ P}_1 \bigg) \mathbf{V}_k \bigg\| \\
    &\coloneqq (I) + (II).
\end{align*}
For term $(I)$, we note that
\begin{align*}
    (I) &= \frac{\kappa}{\lambda} \bigg\| e_m\t \mathbf{Z}_1 \bigg( \mathcal{\tilde P}_{1}^{t,1-m} - \mathcal{\hat P}_1^{(t)} \bigg) \mathbf{V}_k \bigg\| \\
    &= \frac{\kappa}{\lambda} \bigg\| e_m\t \mathbf{Z}_1 \bigg( \mathcal{P}_{\utilde_2^{(t-1,1-m)}} \otimes \mathcal{P}_{\utilde_3^{(t-1,1-m)}} - \mathcal{P}_{\uhat_2^{(t-1)}} \otimes \mathcal{P}_{\uhat_3^{(t-1)}} \bigg) \mathbf{V}_k \bigg\| \\
    &\leq \frac{\kappa}{\lambda} \bigg\| e_m\t \mathbf{Z}_1 \bigg( \bigg[\mathcal{P}_{\utilde_2}^{(t-1,1-m)} - \mathcal{P}_{\uhat_2^{(t-1)}} \bigg] \otimes \mathcal{P}_{\uhat_3^{(t-1)}} \bigg) \mathbf{V}_k \bigg\| \\
    &\qquad + \frac{\kappa}{\lambda} \bigg\| e_m\t \mathbf{Z}_1 \bigg( \mathcal{P}_{\utilde_2}^{(t-1,1-m)} \otimes \bigg[  \mathcal{P}_{\utilde_3^{(t-1,1-m)}} -  \mathcal{P}_{\uhat_3^{(t-1)}} \bigg]\bigg) \mathbf{V}_k \bigg\| \\
    &\lesssim \frac{\kappa \tau_k}{\lambda}  \bigg(\| \sin\Theta(\utilde_2^{(t-1,1-m)},\uhat_2^{(t-1)}) \|  + \| \sin\Theta(\utilde_3^{(t-1,1-m)},\uhat_3^{(t-1)}) \| \bigg) \\
    &\overset{\scalebox{.6}{\text{\cref{sinthetaloo14}}}}{\lesssim} \frac{\kappa \sqrt{pr}}{\lambda} \frac{\dl}{\lambda} \mu_0 \sqrt{\frac{r}{p}}\\
    &\lesssim \frac{ \mu_0 \kappa^2 r \sqrt{p\log(p)}}{\lambda^2},
\end{align*}
where the penultimate bound holds on the event $\mathcal{E}_{\mathrm{main}}^{t_0,1}$.  
Similarly, for term $(II)$, by Lemma 16 of \citet{agterberg_estimating_2022}, it holds that with probability at least $1 - O(p^{-30})$ that
\begin{align*}
    (II) &=  \frac{\kappa}{\lambda} \bigg\| e_m\t \mathbf{Z}_1 \bigg( \mathcal{\tilde P}_{1}^{t,1-m} - \mathcal{ P}_1 \bigg) \mathbf{V}_k \bigg\| \\
    &\lesssim \frac{\kappa}{\lambda} p \sqrt{\log(p)} \bigg\| \bigg( \mathcal{\tilde P}_{1}^{t,1-m} - \mathcal{ P}_1 \bigg) \mathbf{V}_k \bigg\|_{2,\infty}.
\end{align*}
We now observe that
\begin{align*}
    \| \bigg( \mathcal{\tilde P}_1^{t,1-m} - \mathcal{P}_1 \bigg) \mathbf{V}_k \bigg\|_{2,\infty} &=  \bigg\| \bigg( \mathcal{P}_{\utilde_2^{(t-1,1-m)}} \otimes \mathcal{P}_{\utilde_3^{(t-1,1-m)}} - \mathcal{P}_{\U_2} \otimes \mathcal{P}_{\U_3} \bigg) \mathbf{V}_k \bigg\|_{2,\infty}  \\
    &\leq \bigg\| \bigg( \mathcal{P}_{\utilde_2^{(t-1,1-m)}} \otimes \bigg[ \mathcal{P}_{\utilde_3^{(t-1,1-m)}} - \mathcal{P}_{\U_3} \bigg]\bigg) \mathbf{V}_k \bigg\|_{2,\infty}  \\
    &\qquad + \bigg\| \bigg(\bigg[ \mathcal{P}_{\utilde_2^{(t-1,1-m)}} - \mathcal{P}_{\U_2} \bigg] \otimes  \mathcal{P}_{\U_3} \bigg] \bigg) \mathbf{V}_k \bigg\|_{2,\infty} \\
    &\leq \| \mathcal{P}_{\utilde_2^{(t-1,1-m)}} \|_{2,\infty} \| \mathcal{P}_{\utilde_3^{(t-1,1-m)}} - \mathcal{P}_{\U_3} \|_{2,\infty} \\
    &\qquad + \| \mathcal{P}_{\utilde_2^{(t-1,1-m)}} - \mathcal{P}_{\U_2} \|_{2,\infty}  \mu_0 \sqrt{\frac{r}{p}} \\
    &\leq \| \mathcal{P}_{\utilde_2^{(t-1,1-m)}} - \mathcal{P}_{\U_2} \|_{2,\infty} \| \mathcal{P}_{\utilde_3^{(t-1,1-m)}} - \mathcal{P}_{\U_3} \|_{2,\infty} \\
    &\qquad + \| \mathcal{P}_{\utilde_3^{(t-1,1-m)}} - \mathcal{P}_{\U_3} \|_{2,\infty} \mu_0 \sqrt{\frac{r}{p}} \\
    &\qquad + \| \mathcal{P}_{\utilde_2^{(t-1,1-m)}} - \mathcal{P}_{\U_2} \|_{2,\infty}  \mu_0 \sqrt{\frac{r}{p}}.
\end{align*}
It is straightforward to check that
\begin{align*}
    \| \mathcal{P}_{\utilde_2^{(t-1,1-m)}} - \mathcal{P}_{\U_2} \|_{2,\infty} &\leq \|  \mathcal{P}_{\utilde_2^{(t-1,1-m)}} - \mathcal{P}_{\uhat_2^{(t-1)}} \|_{2,\infty} + \| \mathcal{P}_{\uhat_2^{(t-1)}}  - \mathcal{P}_{\U_2}\|_{2,\infty} \\
    &\lesssim \|\sin\Theta(\utilde_2^{(t-1,1-m)},\uhat_2^{(t-1)} ) \| \\
    &\quad + \| \U_2 \|_{2,\infty} \|\sin\Theta(\uhat_2^{(t-1)},\U_2) \| + \| \uhat_2^{(t-1)} - \U_2 \mathbf{W}_2^{(t-1)} \|_{2,\infty} \\
    &\lesssim \mu_0 \sqrt{\frac{r}{p}} \frac{\dl}{\lambda},
\end{align*}
where the final bound holds on the event \textcolor{black}{$\mathcal{E}_{\mathrm{main}}^{t_0,1} \cap \mathcal{E}_{\mathrm{Good}}$ by \cref{sinthetaloo14,sinthetaegood}}.  
The same bounds holds with $\mathcal{P}_{\utilde_3^{(t-1,1-m)}}$ in place of $\mathcal{P}_{\utilde_2^{(t-1,1-m)}}$.  Plugging this bound in yields that with probability at least $1 - O(p^{-11})$,
\begin{align*}
    (II)  &\lesssim \frac{\kappa}{\lambda} p \sqrt{\log(p)} \bigg\| \bigg( \mathcal{\tilde P}_{1}^{t,1-m} - \mathcal{ P}_1 \bigg) \mathbf{V}_k \bigg\|_{2,\infty} \\
    &\lesssim \frac{\kappa}{\lambda} p \sqrt{\log(p)} \bigg( \| \mathcal{P}_{\utilde_2^{(t-1,1-m)}} - \mathcal{P}_{\U_2} \|_{2,\infty} \| \mathcal{P}_{\utilde_3^{(t-1,1-m)}} - \mathcal{P}_{\U_3} \|_{2,\infty} \\
    &\qquad + \| \mathcal{P}_{\utilde_3^{(t-1,1-m)}} - \mathcal{P}_{\U_3} \|_{2,\infty} \mu_0 \sqrt{\frac{r}{p}} \\
    &\qquad + \| \mathcal{P}_{\utilde_2^{(t-1,1-m)}} - \mathcal{P}_{\U_2} \|_{2,\infty}  \mu_0 \sqrt{\frac{r}{p}} \bigg) \\
    &\lesssim \frac{\kappa}{\lambda} p \sqrt{\log(p)} \bigg( \mu_0^2 \frac{r}{p} \frac{\dl}{\lambda} \bigg) \\
    &\lesssim \frac{\kappa^2 \mu_0^2 \log(p) r \sqrt{p}}{\lambda^2}.
\end{align*}
Combining our bounds for both $(I)$ and $(II)$, we obtain that
\begin{align*}
    (I) + (II) &\lesssim \frac{\mu_0 \kappa^2 r \sqrt{p\log(p)}}{\lambda^2} + \frac{\kappa^2 \mu_0^2 \log(p) r \sqrt{p}}{\lambda^2} \\
    &\asymp \frac{\kappa^2 \mu_0^2 \log(p) r \sqrt{p}}{\lambda^2}.
\end{align*}
These bounds hold cumulatively with probability at most $1 - O(p^{-10})$, and the proof is completed by taking a union bound over all $p_1$ rows.
\end{proof}

\subsubsection{Proof of \cref{lem:approximatecommute}}

\begin{proof}[Proof of \cref{lem:approximatecommute}]
First, by the eigenvector-eigenvalue equation, it holds that
\begin{align*}
    \U_k \mathbf{\Lambda}_k^2 &= \mathbf{T}_k \mathbf{T}_k\t \U_k; \\
    \uhat_k (\mathbf{\hat \Lambda}_k^{(t)} )^2 &= \bigg( \mathbf{T}_k \mathcal{\hat P}_k^{(t)} \mathbf{T}_k\t + \mathbf{Z}_k \mathcal{\hat P}_k^{(t)} \mathbf{T}_k\t +\mathbf{T}_k\mathcal{\hat P}_k^{(t)} \mathbf{Z}_k\t +\mathbf{Z}_k\mathcal{\hat P}_k^{(t)} \mathbf{Z}_k\t \bigg) \uhat_k.
\end{align*}
Therefore,
\begin{align*}
    \bigg\| \mathbf{\Lambda}_k^2& \U_k\t \uhat_k - \U_k\t \uhat_k (\mathbf{\hat\Lambda}_k^{(t)})^2 \bigg\|\\
    &=   \bigg\| \U_k\t\bigg( \mathbf{T}_k \mathbf{T}_k\t - \mathbf{T}_k \mathcal{\hat P}_k^{(t)} \mathbf{T}_k\t - \mathbf{Z}_k \mathcal{\hat P}_k^{(t)} \mathbf{T}_k\t -\mathbf{T}_k\mathcal{\hat P}_k^{(t)} \mathbf{Z}_k\t -\mathbf{Z}_k\mathcal{\hat P}_k^{(t)} \mathbf{Z}_k\t \bigg) \uhat_k \bigg\|
    \\
    &\leq +     \bigg\| \U_k\t \bigg( \mathbf{T}_k \mathbf{T}_k\t - \mathbf{T}_k \mathcal{\hat P}_k^{(t)} \mathbf{T}_k\t \bigg) \uhat_k \bigg\|
    \\
    &\quad +  \bigg\| \U_k\t \bigg( \mathbf{Z}_k \mathcal{\hat P}_k^{(t)} \mathbf{T}_k\t + \mathbf{T}_k \mathcal{\hat P}_k^{(t)} \mathbf{Z}_k\t + \mathbf{Z}_k \mathcal{\hat P}_k^{(t)} \mathbf{Z}_k\t \bigg) \uhat_k^{(t)} \bigg\| \\
    &\leq \bigg\| \U_k\t \mathbf{T}_k \bigg( \mathcal{P}_{\U_{k+1} \otimes \U_{k+2}} - \mathcal{\hat P}_k^{(t)} \bigg) \mathbf{T}_k\t \uhat_k \bigg\| \\
    &\quad + \bigg\| \U_k\t \mathbf{Z}_k \mathcal{\hat P}_k^{(t)} \mathbf{T}_k\t \uhat_k^{(t)} \bigg\| + \bigg\| \U_k\t \mathbf{T}_k \mathcal{\hat P}_k^{(t)} \mathbf{Z}_k\t \uhat_k^{(t)} \bigg\| + \bigg\| \U_k\t \mathbf{Z}_k \mathcal{\hat P}_k^{(t)} \mathbf{Z}_k\t \uhat_k^{(t)} \bigg\| \\
    &\lesssim \lambda_1^2 \frac{\kappa \sqrt{p\log(p)}}{\lambda} + \lambda_1 \tau_k + \tau_k^2 \\
    &\lesssim \kappa^2 \sqrt{p\log(p)} + \lambda_1 \sqrt{pr} \\
    &\lesssim \lambda_1 \sqrt{pr},
\end{align*}
where the final inequality holds when $\lambda_1 \gtrsim \sqrt{p\log(p)}$ together with the assumption that $\kappa \leq p^{1/4}$.  \textcolor{black}{In addition, we have implicitly used the bound
\begin{align*}
    \| \mathcal{P}_{{\mathbf{U}_{k+1}\otimes \U_{k+2}}} - \mathcal{\hat P}_k^{(t)} \| \leq \| \sin\Theta(\uhat_{k+1}^{(t)},\U_{k+1}) \| + \| \sin\Theta(\uhat_{k+2}^{(t)},\U_{k+2})\| 
    &\lesssim \frac{\dl}{\lambda},
\end{align*}
which holds on the event $\mathcal{E}_{\mathrm{Good}}$ by \eqref{sinthetaegood}.
}
\end{proof}

 \section{Proof of Entrywise Distributional Theory (Theorem \ref{thm:asymptoticnormalityentries}) and $\|\cdot\|_{\max}$ Convergence (Corollary \ref{cor:maxnormbound})}
% Entrywise Guarantees (\cref{thm:asymptoticnormalityentries} and \cref{cor:maxnormbound})}
\label{sec:entrywisedistributionaltheoryproofs}
This section contains the proofs of the entrywise distributional theory and entrywise $\|\cdot\|_{\max}$ convergence.  Throughout this section we suppress the dependence of $\uhat_k^{(t)}$,  and $\mathbf{\hat{\Lambda}}_k^{(t)}$ on $t$.  \cref{sec:entrywiseaux} gives preliminary lemmas, \cref{sec:entrywiseprooc} gives the proof of \cref{thm:asymptoticnormalityentries},
 and \cref{sec:maxnormproof} gives the proof of \cref{cor:maxnormbound}. % and \cref{sec:civalidityproofs} gives the proof of \cref{thm:civalidity}.  

 \subsection{Preliminary Lemmas: Entrywise Residual Bounds and Leading-Order Approximations}
 \label{sec:entrywiseaux}
 This section presents several lemmas needed en route to the proof of \cref{thm:asymptoticnormalityentries}, whose proofs are deferred to \cref{sec:entrywiseauxproofs}.  
% The following two results (\cref{lem:zentrywiselemma} and \cref{lem:tentrywiselemma}) obtain strong concentration bounds for residual terms.  
The following lemma shows that the effect of the projection matrices on the random noise tensor $\mathcal{Z}$ is sufficiently small.

 \begin{lemma} \label{lem:zentrywiselemma}
 Under the conditions of \cref{thm:asymptoticnormalityentries},
 the following bounds hold with probability at least $1 - O(p^{-9})$:
 \begin{align*}
    \bigg| \bigg( &\mathcal{Z} \times_1 \U_1 \U_1\t \times_2 \U_2 \U_2\t \times_3 \U_3 \U_3\t \bigg)_{ijk} \bigg| \\
    &\lesssim \sigma\sqrt{\log(p)} \mu_0^3 \frac{r^{3/2}}{p^{3/2}}; \\
    \bigg| \bigg( &\mathcal{Z} \times_1 (\uhat_1 \uhat_1\t - \U_1 \U_1\t) \times_2 \U_2 \U_2\t \times_3 \U_3 \U_3\t \bigg)_{ijk} \bigg|\\ &\lesssim \frac{\sigma^2 \mu_0^3 r^{3/2} \log(p)}{\lambda \sqrt{p}}; \\
     \bigg| \bigg(& \mathcal{Z} \times_1 (\uhat_1 \uhat_1\t - \U_1 \U_1\t) \times_2 (\uhat_2 \uhat_2\t - \U_1 \U_1\t) \times_3 \U_3 \U_3\t \bigg)_{ijk} \bigg|\\
     &\lesssim  \frac{\sigma^2 \mu_0^3 r^{3/2} \log(p)}{\lambda \sqrt{p}}; \\
     \bigg| \bigg(& \mathcal{Z} \times_1 (\uhat_1 \uhat_1\t - \U_1 \U_1\t) \times_2 (\uhat_2 \uhat_2\t - \U_1 \U_1\t) \times_3 (\uhat_3 \uhat_3\t - \U_3 \U_3\t) \bigg)_{ijk} \bigg| \\&\lesssim  \frac{\sigma^2 \mu_0^3 r^{3/2} \log(p)}{\lambda \sqrt{p}}.
\end{align*}
\end{lemma}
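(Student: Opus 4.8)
## Proof Plan for Lemma \ref{lem:zentrywiselemma}

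The plan is to treat each of the four bounds separately, but all four follow the same template: write the entry $(\cdot)_{ijk}$ as an inner product of a noise vector with a fixed (or nearly fixed) incoherent vector, and then apply a subgaussian concentration bound (Hoeffding-type, as in Proposition 2.6.1 of \citet{vershynin_high-dimensional_2018}). First I would record the key observation that
\[
\bigg( \mathcal{Z} \times_1 \mathbf{A}_1 \times_2 \mathbf{A}_2 \times_3 \mathbf{A}_3 \bigg)_{ijk} = \big\langle \mathrm{Vec}(\mathcal{Z}), (\mathbf{A}_1)_{i\cdot} \otimes (\mathbf{A}_2)_{j\cdot} \otimes (\mathbf{A}_3)_{k\cdot} \big\rangle,
\]
so that conditional on the $\mathbf{A}_\ell$'s (or using a leave-one-out trick when the $\mathbf{A}_\ell$ depend on $\mathcal{Z}$) this is a weighted sum of independent mean-zero subgaussian random variables with Orlicz norm at most $\sigma$, hence it concentrates at scale $\sigma$ times the $\ell_2$-norm of the weight vector, which is $\|(\mathbf{A}_1)_{i\cdot}\|\,\|(\mathbf{A}_2)_{j\cdot}\|\,\|(\mathbf{A}_3)_{k\cdot}\|$.

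For the first bound, take $\mathbf{A}_\ell = \U_\ell \U_\ell\t$; these are deterministic, $\|(\U_\ell\U_\ell\t)_{i\cdot}\| = \|\U_\ell\U_\ell\t e_i\| \le \|\U_\ell\|_{2,\infty} \lesssim \mu_0\sqrt{r/p}$ by incoherence, and a union bound over the $O(p^3)$ entries costs only a $\sqrt{\log p}$ factor, giving $\sigma\sqrt{\log p}\,\mu_0^3 (r/p)^{3/2}$ as claimed. For the remaining three bounds the matrices $\uhat_\ell\uhat_\ell\t$ depend on $\mathcal{Z}$, so the plan is to invoke the projection expansion of \cref{cor:asymptoticnormality_projection}: write $\uhat_\ell\uhat_\ell\t - \U_\ell\U_\ell\t = \U_\ell\mathbf{\Lambda}_\ell\inv\mathbf{V}_\ell\t\mathbf{Z}_\ell\t + \mathbf{Z}_\ell\mathbf{V}_\ell\mathbf{\Lambda}_\ell\inv\U_\ell\t + \mathbf{\Phi}^{(\ell)}$ with $\|\mathbf{\Phi}^{(\ell)}\|_{2,\infty}$ controlled on the good event. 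The residual $\mathbf{\Phi}^{(\ell)}$ contributes a negligible deterministic term (its $2,\infty$-norm times the $2,\infty$-norms of the other two factors times $\sqrt{p}$-type bounds), and for the leading linear-in-$\mathbf{Z}_\ell$ pieces one either (a) uses the $\ell_{2,\infty}$ control $\|\uhat_\ell\uhat_\ell\t - \U_\ell\U_\ell\t\|_{2,\infty} \lesssim \frac{\sigma\mu_0\sqrt{r\log p}}{\lambda} \cdot \sqrt{p}$-type bounds combined with the deterministic size of the other projections, or (b) conditions on $\uhat_{\ell'}$ for $\ell'\ne \ell$ via the leave-one-out sequences from \cref{sec:leaveoneout} to decouple the randomness and again apply subgaussian concentration. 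Each successive bound (two, three, then all three factors perturbed) is smaller, so it suffices to show the worst case — one perturbed factor — already meets the stated bound $\frac{\sigma^2\mu_0^3 r^{3/2}\log p}{\lambda\sqrt p}$; the extra perturbed factors only multiply by further powers of $\frac{\sigma\mu_0\sqrt{r\log p}}{\lambda} = \tilde o(1)$.

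The main obstacle is handling the dependence between $\mathcal{Z}$ and $\uhat_\ell$ cleanly in bounds two through four: one cannot simply condition, because $\uhat_\ell$ depends on \emph{all} of $\mathcal{Z}$. The standard fix, which I would follow, is the leave-one-out device of \citet{agterberg_estimating_2022} already set up in \cref{sec:leaveoneout} — replace $\uhat_\ell$ by $\utilde_\ell^{(t,j-m)}$ which is independent of the $m$-th slice of $\mathcal{Z}_j$, bound the difference using the $\sin\Theta$ estimates in $\mathcal{E}_{\mathrm{main}}^{t_0-1,k}$ and \cref{sinthetaloo14}, and apply Hoeffding to the decoupled piece. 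Keeping track of which mode's slice is being left out, and verifying that the substitution error is dominated by $\frac{\sigma^2\mu_0^3 r^{3/2}\log p}{\lambda\sqrt p}$ under the incoherence and SNR hypotheses $\mu_0^2 r \lesssim \sqrt p$, $\lambda/\sigma \gtrsim \kappa p^{3/4}\sqrt{\log p}$, is the bookkeeping-heavy core of the argument; everything else is routine subgaussian tail bounds plus union bounds over the $O(p^3)$ entries.
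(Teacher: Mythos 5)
Your proposal has the right ingredients but diverges from the paper at the crucial point. For the first bound you match the paper exactly (sum of subgaussians with deterministic incoherent coefficients plus a union bound). For bounds two through four you worry, correctly, that one cannot condition on $\uhat_\ell$, and then commit to the leave-one-out device from \cref{sec:leaveoneout} as the fix. The paper instead avoids decoupling entirely. For the second bound it writes the entry as $\sum_a w_a Y_a$ with $w_a = (\uhat_1\uhat_1^\top - \U_1\U_1^\top)_{ia}$ and $Y_a = \sum_{b,c}\mathcal{Z}_{abc}(\U_2\U_2^\top)_{jb}(\U_3\U_3^\top)_{kc}$, applies the deterministic bound $|\sum_a w_a Y_a| \le \|w\|_1 \max_a|Y_a| \le \sqrt{p_1}\,\|\uhat_1\uhat_1^\top - \U_1\U_1^\top\|_{2,\infty}\max_a|Y_a|$, and then controls the two factors on separate high-probability events: the $\ell_{2,\infty}$ bound comes from \cref{cor:asymptoticnormality_projection}, and $\max_a|Y_a|$ involves only the \emph{deterministic} coefficients $(\U_2\U_2^\top)_{jb}(\U_3\U_3^\top)_{kc}$, so Hoeffding plus a union bound over $a$ applies directly. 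No conditioning or decoupling is needed because the inequality is deterministic and the two factors are bounded simultaneously on a union of events. For the third and fourth bounds the paper isolates the perturbed modes into low-rank matrices $\mathbf{A}^{(2)}(j),\mathbf{A}^{(3)}(k)$ (a single nonzero row each, rank $\le 2r$) and uses the \emph{uniform} quantity $\tau_k$ already bounded on $\mathcal{E}_{\mathrm{Good}}$: $\|\mathbf{Z}_1(\mathbf{A}^{(2)}(j)\otimes\mathbf{A}^{(3)}(k))\| \le \tau_1\|\mathbf{A}^{(2)}(j)\|\|\mathbf{A}^{(3)}(k)\|$, after which the $\ell_{2,\infty}$ bounds on the projection differences finish. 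Your route (b) would almost certainly close, since the leave-one-out construction gives comparable $\ell_{2,\infty}$ control and the substitution errors can be absorbed under the stated SNR, but it is considerably heavier; the paper's deterministic-factorization approach is shorter, avoids tracking which slice is left out, and is the intended argument. Your sketch of route (a) actually contains the germ of the right idea (the $\ell_1$-to-$\ell_2$ Hölder step that produces the $\sqrt{p}$ factor), but you abandon it; pursuing it and noticing that the residual factors have deterministic coefficients (bound 2) or can be absorbed into the uniform $\tau_k$ (bounds 3, 4) is what makes leave-one-out unnecessary here.
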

\noindent 
The next lemma shows that the terms involving $\mathcal{T}$ and at least two differences of projection matrices is sufficiently small.

\begin{lemma}\label{lem:tentrywiselemma}
Under the conditions of \cref{thm:asymptoticnormalityentries}, the following bounds hold with probability at least $1 - O(p^{-9})$:
\begin{align*}
  \bigg| \bigg( \mathcal{T}& \times_1 (\uhat_1 \uhat_1\t - \U_1\U_1\t) \times_2 ( \uhat_2 \uhat_2\t - \U_2 \U_2\t) \times_3 (\U_3 \U_3\t) \bigg)_{ijk}  \bigg|\\&\lesssim \frac{\sigma^2 \mu_0^3 r^{3/2} \kappa \log(p)}{\lambda\sqrt{p}}  \\
  \bigg| \bigg( \mathcal{T}& \times_1 (\uhat_1 \uhat_1\t - \U_1\U_1\t) \times_2 ( \uhat_2 \uhat_2\t - \U_2 \U_2\t) \times_3 (\uhat_3 \uhat_3\t - \U_3 \U_3\t) \bigg)_{ijk} \bigg| \\&\lesssim \frac{\sigma^2 \mu_0^3 r^{3/2} \kappa \log(p)}{\lambda\sqrt{p}}.
\end{align*}
\end{lemma}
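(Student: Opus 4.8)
The goal is to bound two entrywise quantities, each of which involves the signal tensor $\mathcal{T}$ hit by two or three projection-difference operators $\uhat_j \uhat_j\t - \U_j \U_j\t$, with the remaining mode (if any) hit by the exact projection $\U_3 \U_3\t$. The plan is to expand each difference using the first-order expansion for the projections from \cref{cor:asymptoticnormality_projection}, namely $\uhat_j \uhat_j\t - \U_j \U_j\t = \U_j \mathbf{\Lambda}_j\inv \mathbf{V}_j \mathbf{Z}_j\t + \mathbf{Z}_j \mathbf{V}_j \mathbf{\Lambda}_j\inv \U_j\t + \mathbf{\Phi}^{(j)}$, together with the $\ell_{2,\infty}$ bound on $\mathbf{\Phi}^{(j)}$ and the $\ell_{2,\infty}$ bound on $\uhat_j^{(t)} - \U_j\mathbf{W}_j^{(t)}$ from \cref{thm:eigenvectornormality}/\cref{thm:twoinfty}. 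First I would observe that each projection difference has operator norm $\lesssim \|\sin\Theta(\uhat_j,\U_j)\| \lesssim \dl/\lambda \asymp \kappa\sqrt{p\log(p)}/\lambda$ on $\mathcal{E}_{\mathrm{Good}}$, and — crucially — each projection difference applied to a vector is bounded in $\ell_{2,\infty}$ norm by a quantity of order $\mu_0\sqrt{r/p}\cdot \dl/\lambda$ (this follows by writing the difference in the telescoping form $\uhat_j(\uhat_j - \U_j\mathbf{W}_j)\t + (\uhat_j\mathbf{W}_j\t - \U_j)\U_j\t$ and invoking the $\ell_{2,\infty}$ and incoherence bounds).

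The key structural point is that $\mathcal{T}$ itself is incoherent: $\mathcal{M}_j(\mathcal{T}) = \U_j\mathbf{\Lambda}_j\mathbf{V}_j\t$ with $\|\U_j\|_{2,\infty} \lesssim \mu_0\sqrt{r/p}$ and $\|\mathbf{\Lambda}_j\| \leq \kappa\lambda$, so each entry-slice of $\mathcal{T}$ is ``spread out.'' The proof then proceeds by unrolling the multilinear product mode by mode. For the first quantity (two differences, one exact projection), I would write
\begin{align*}
  \bigg( \mathcal{T} \times_1 (\uhat_1 \uhat_1\t - \U_1\U_1\t) \times_2 ( \uhat_2 \uhat_2\t - \U_2 \U_2\t) \times_3 \U_3 \U_3\t \bigg)_{ijk}
\end{align*}
as $e_i\t (\uhat_1\uhat_1\t - \U_1\U_1\t) \, \mathcal{M}_1(\mathcal{T}\times_2(\uhat_2\uhat_2\t - \U_2\U_2\t)\times_3\U_3\U_3\t) \, e_{(j-1)p_3+k}$, then bound it by $\|e_i\t(\uhat_1\uhat_1\t - \U_1\U_1\t)\|$ times the $\ell_{2,\infty}$-type norm of the inner matricization, or alternatively peel off one factor at a time using Cauchy–Schwarz. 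Each projection difference contributes a factor of order $\mu_0\sqrt{r/p}\cdot\kappa\sqrt{p\log(p)}/\lambda = \mu_0\kappa\sqrt{r\log(p)}/\lambda$ (the $\ell_{2,\infty}$ gain) or, on a ``bulk'' mode, operator norm $\kappa\sqrt{p\log(p)}/\lambda$; the exact projection $\U_3\U_3\t$ contributes $\|\U_3\|_{2,\infty} \lesssim \mu_0\sqrt{r/p}$; and the signal contributes $\lambda_1 \leq \kappa\lambda$. Multiplying the leading-order (linear-in-$\mathbf{Z}$) contributions gives roughly $\lambda_1 \cdot (\mu_0\sqrt{r/p})^{3} \cdot (\text{noise terms})$; carefully tracking which factors become $\ell_{2,\infty}$ and which become operator norm, and using $\sigma = 1$, should yield the claimed bound $\sigma^2\mu_0^3 r^{3/2}\kappa\log(p)/(\lambda\sqrt p)$. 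The residual contributions involving $\mathbf{\Phi}^{(j)}$ are lower order because $\|\mathbf{\Phi}^{(j)}\|_{2,\infty}$ already carries a $1/\lambda^2$ (with matching $\sqrt p$ and $\log p$ factors) that dominates. The second quantity (three differences) is handled identically — it gains an extra factor of a projection difference, which under the SNR assumption $\lambda/\sigma \gtrsim \kappa p^{3/4}\sqrt{\log p}$ is $O(1)$ or smaller, so the bound is the same up to constants; indeed the three-difference term is strictly smaller.

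The main obstacle — and the place where care is genuinely required — is the bookkeeping of \emph{which} norm (operator versus $\ell_{2,\infty}$) to assign to each of the three projection factors so as not to lose powers of $p$. Putting $\ell_{2,\infty}$ on all three would overcount (one must convert between $\ell_{2,\infty}$ and operator norms via $\|\mathbf{A}\mathbf{B}\|_{2,\infty} \leq \|\mathbf{A}\|_{2,\infty}\|\mathbf{B}\|$, paying $\sqrt p$ each time one widens), and putting operator norm on all three loses the incoherence savings entirely. The right bookkeeping is: the outermost mode (the one carrying the free index being evaluated) uses $\ell_{2,\infty}$, the exact projection $\U_3\U_3\t$ uses $\ell_{2,\infty}$, and the middle/bulk difference uses operator norm; then one checks this allocation reproduces the stated rate. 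A secondary technical point is that the three leading-order terms in \cref{cor:asymptoticnormality_projection} are themselves sums of a ``left'' and a ``right'' piece ($\U_j\mathbf{\Lambda}_j\inv\mathbf{V}_j\mathbf{Z}_j\t$ versus $\mathbf{Z}_j\mathbf{V}_j\mathbf{\Lambda}_j\inv\U_j\t$), so each product of two or three differences expands into $2^2$ or $2^3$ cross-terms; one must verify that each cross-term obeys the bound, using in each case the appropriate combination of $\|\mathbf{Z}_j\mathbf{V}_j\| \lesssim \sqrt p$, $\|\mathbf{Z}_j\mathbf{V}_j\mathbf{\Lambda}_j\inv\|_{2,\infty} \lesssim \mu_0\sqrt{r\log(p)}/\lambda$ (both on $\mathcal{E}_{\mathrm{Good}}$), and incoherence of $\U_j$ and $\mathbf{V}_j$. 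All of this is routine once the norm allocation is fixed, so the write-up should be organized around first stating the norm bounds for a single projection difference (as a sub-lemma or inline), then composing them.
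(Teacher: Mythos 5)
Your general strategy—matricize, then bound each factor using a combination of $\ell_{2,\infty}$ and operator norms—is on the right track, but there are two concrete errors in the bookkeeping, and one way in which the proposal is unnecessarily complicated.

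\textbf{The claimed optimal norm allocation is wrong.} You state that ``the right bookkeeping is: the outermost mode uses $\ell_{2,\infty}$, the exact projection $\U_3\U_3\t$ uses $\ell_{2,\infty}$, and the middle/bulk difference uses operator norm.'' Plug this in: the signal contributes $\kappa\lambda$, the exact projection contributes $\mu_0\sqrt{r/p}$, one difference (in $\ell_{2,\infty}$) contributes $\mu_0\sqrt{r\log p}/\lambda$, and one difference (in operator norm) contributes $\kappa\sqrt{p\log p}/\lambda$. The product is of order $\kappa^2\mu_0^2 r\log(p)/\lambda$, which is larger than the target $\kappa\mu_0^3 r^{3/2}\log(p)/(\lambda\sqrt{p})$ by a factor of $\kappa\sqrt{p}/(\mu_0\sqrt{r})$ — at least $p^{1/4}$ under the paper's standing assumptions $\kappa \geq 1$ and $\mu_0^2 r \lesssim \sqrt{p}$. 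In fact, \emph{both} projection differences must be bounded in $\ell_{2,\infty}$ (and for the three-difference term, all three). The paper matricizes along the mode with the exact projection so that the two differences sit in the Kronecker factor, uses $\|A \otimes B\|_{2,\infty} = \|A\|_{2,\infty}\|B\|_{2,\infty}$, and pays $\ell_{2,\infty}$ on each. (Your own earlier heuristic ``$\lambda_1\cdot(\mu_0\sqrt{r/p})^3\cdot(\text{noise})$'' is consistent with the correct answer, which conflicts with your stated allocation.)

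\textbf{Your $\ell_{2,\infty}$ bound for the projection difference carries an extra $\kappa$.} The telescoping argument $\uhat_j\uhat_j\t - \U_j\U_j\t = \uhat_j(\uhat_j-\U_j\mathbf{W}_j)\t + (\uhat_j\mathbf{W}_j\t - \U_j)\U_j\t$ together with \cref{thm:twoinfty} yields $\|\uhat_j\uhat_j\t - \U_j\U_j\t\|_{2,\infty} \lesssim \kappa\mu_0\sqrt{r\log p}/\lambda$. Squaring this (for the two-difference term) introduces a spurious $\kappa^2$, giving $\kappa^3\mu_0^3 r^{3/2}\log(p)/(\lambda\sqrt p)$ instead of the stated $\kappa^1$. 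The paper instead derives $\|\uhat_j\uhat_j\t - \U_j\U_j\t\|_{2,\infty} \lesssim \sigma\mu_0\sqrt{r\log p}/\lambda$ directly from \cref{cor:asymptoticnormality_projection}, using the SNR condition $\kappa^2\mu_0^2 r^{3/2}\sqrt{\log p} \lesssim p^{1/4}$ to absorb the residual $\mathbf{\Phi}^{(j)}$ into the leading term. This is the bound labelled \eqref{eq:projbound} in the paper, and it has no $\kappa$ in the numerator — so the $\kappa$ in the final bound comes only from $\|\U_3\t\mathbf{T}_3\| \leq \lambda_1 \leq \kappa\lambda$.

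\textbf{Expanding into $2^2$ or $2^3$ cross-terms is not needed.} Once you have the aggregate $\ell_{2,\infty}$ bound on $\uhat_j\uhat_j\t - \U_j\U_j\t$ (residual already absorbed), the proof is a three-line product of norms. Splitting each factor into ``left piece,'' ``right piece,'' and ``residual'' and tracking $2^k$ cross-terms is exactly the more delicate calculation the paper \emph{avoids} for this lemma; that finer decomposition is reserved for identifying the leading-order term of the full entrywise expansion (in the proof of \cref{thm:asymptoticnormalityentries}), not for bounding these remainder pieces. If you fix the norm allocation to use $\ell_{2,\infty}$ on every projection difference and use the sharper $\ell_{2,\infty}$ bound from \cref{cor:asymptoticnormality_projection}, the direct product-of-norms argument closes, and the cross-term machinery can be discarded.
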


\noindent 
Finally, the following lemma shows that the leading-order term $\xi_{ijk}/s_{ijk}$ (defined below) is approximately Gaussian.
\begin{lemma}\label{lem:xiijkgaussian}
Assume the conditions of \cref{thm:asymptoticnormalityentries} hold. Define 
\begin{align*}
    \xi_{ijk} &=  e_{i}^{\top} \mathbf{Z}_{1} \mathbf{V}_{1} \mathbf{V}_{1}^{\top} e_{(j-1) p_{3}+k}+e_{j}^{\top} \mathbf{Z}_{2} \mathbf{V}_{2} \mathbf{V}_{2}^{\top} e_{(k-1) p_{1}+i}+e_{k}^{\top} \mathbf{Z}_{3} \mathbf{V}_{3} \mathbf{V}_{3}^{\top} e_{(i-1) p_{2}+j}.
\end{align*}
Then it holds that
\begin{align*}
      \sup_{t\in \mathbb{R}}\bigg| \p\bigg\{ \frac{\xi_{ijk}}{s_{ijk}} \leq t \bigg\} - \Phi(t) \bigg| &\leq \frac{C_1}{\sqrt{p\log(p)}} + \frac{C_2 \mu_0^2 r}{p}.
\end{align*}
\end{lemma}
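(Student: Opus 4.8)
\textbf{Proof proposal for Lemma \ref{lem:xiijkgaussian}.}

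The plan is to recognize $\xi_{ijk}$ as a sum of independent mean-zero random variables and apply a Berry--Esseen bound, after carefully tracking the fact that the three summands share entries of the noise tensor. First I would write $\xi_{ijk}$ explicitly as a linear combination of the entries of $\mathcal{Z}$. Noting that $(\mathbf{Z}_1)_{ia}$, $(\mathbf{Z}_2)_{jb}$, $(\mathbf{Z}_3)_{kc}$ are all re-indexings of the same tensor $\mathcal{Z}$, the only entries appearing in more than one of the three terms are those of the form $\mathcal{Z}_{i j' k'}$, $\mathcal{Z}_{i' j k'}$, $\mathcal{Z}_{i' j' k}$ with two coordinates matching $(i,j,k)$ — these are the ``overlap'' terms. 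The key structural observation is that $\xi_{ijk}$ can still be written as $\sum_{\alpha} w_\alpha \mathcal{Z}_\alpha$ over all index triples $\alpha$, where $w_\alpha$ collects the (at most three, but generically one) contributions from the three matricized terms; since the $\mathcal{Z}_\alpha$ are independent, this is a genuine sum of independent summands and the classical (Berry--Esseen) bound applies: for independent mean-zero $\zeta_\alpha = w_\alpha \mathcal{Z}_\alpha$,
\begin{align*}
    \sup_{t\in\mathbb{R}} \bigg| \p\bigg\{ \frac{\xi_{ijk}}{s_{ijk}} \leq t\bigg\} - \Phi(t)\bigg| \lesssim \frac{\sum_\alpha \E|\zeta_\alpha|^3}{\big(\sum_\alpha \E \zeta_\alpha^2\big)^{3/2}}.
\end{align*}

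Next I would separately estimate the numerator and denominator. For the denominator, $\sum_\alpha \E\zeta_\alpha^2 = \Var(\xi_{ijk})$; this is $s^2_{ijk}$ plus cross-covariance terms coming from the overlap entries. I would argue $\Var(\xi_{ijk}) = (1+o(1)) s^2_{ijk}$ — the covariance between, say, $e_i^\top \mathbf{Z}_1 \mathbf{V}_1 \mathbf{V}_1^\top e_{(j-1)p_3+k}$ and $e_j^\top \mathbf{Z}_2 \mathbf{V}_2 \mathbf{V}_2^\top e_{(k-1)p_1+i}$ involves only the single shared entry $\mathcal{Z}_{ijk}$, so it is bounded by $\sigma^2 \|e_{(j-1)p_3+k}^\top \mathbf{V}_1\|_\infty \|\cdots\|$ which, by incoherence $\|\mathbf{V}_k\|_{2,\infty} \lesssim \mu_0\sqrt{r}/p$, is $O(\sigma^2 \mu_0^2 r^2/p^4)$ or similar — much smaller than $s^2_{ijk}$ under the assumed lower bound on the row norms. (Strictly, the cleanest route: bound the off-diagonal covariances in absolute value and divide by the lower bound $s^2_{ijk} \geq \sigma^2_{\min}(\|e_{(j-1)p_3+k}^\top\mathbf{V}_1\|^2 + \cdots)$.) For the numerator, I would use subgaussianity: $\E|\zeta_\alpha|^3 \lesssim \sigma^3 |w_\alpha|^3$, and then $\sum_\alpha |w_\alpha|^3 \leq \max_\alpha |w_\alpha| \cdot \sum_\alpha w_\alpha^2$. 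Here $\sum_\alpha w_\alpha^2 \asymp \Var(\xi_{ijk})/\sigma^2 \asymp s^2_{ijk}/\sigma^2$ and $\max_\alpha |w_\alpha| \lesssim \|\mathbf{V}_1\mathbf{V}_1^\top e_{(j-1)p_3+k}\|_\infty + \cdots$, which by incoherence is at most a bounded multiple of $\|e_{(j-1)p_3+k}^\top\mathbf{V}_1\| \cdot \|\mathbf{V}_1\|_{2,\infty} + \cdots \lesssim s_{ijk}/\sigma \cdot \mu_0\sqrt{r}/p$ (after relating the max coordinate to the row norms). Assembling, the ratio is $\lesssim \mu_0 \sqrt{r}/p \cdot (s_{ijk}/\sigma)^3 / (s_{ijk}/\sigma)^3 = \mu_0\sqrt{r}/p$ up to constants, which needs to be massaged into the stated $\frac{C_1}{\sqrt{p\log(p)}} + \frac{C_2\mu_0^2 r}{p}$.

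The main obstacle I anticipate is getting the two-term form of the bound rather than a single clean term, and in particular producing the $\frac{1}{\sqrt{p\log(p)}}$ piece. I expect this comes from splitting the sum $\sum_\alpha |w_\alpha|^3$ according to whether a given coordinate (e.g. the $\mathbf{Z}_1$ contribution $(\mathbf{V}_1\mathbf{V}_1^\top e_{(j-1)p_3+k})_a$) is ``large'' or ``small'': for the bulk of small coordinates one gets a factor like $\|\mathbf{V}_1\|_{2,\infty}^2/s_{ijk}^2 \cdot (\cdots)$ leading to the $\mu_0^2 r/p$ term, while the handful of larger coordinates — or rather a cruder bound using only $\|\mathbf{V}_1\mathbf{V}_1^\top\| \leq 1$ together with the SNR-driven lower bound on $s_{ijk}$ from the hypothesis $\|e_{(j-1)p_3+k}^\top\mathbf{V}_1\|^2 + \cdots \gg r^4\log(p)/p^3$ — produces the $\frac{1}{\sqrt{p\log(p)}}$ term. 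I would need to be careful to invoke the precise lower-bound hypothesis on the $\mathbf{V}_k$ row norms of \cref{thm:asymptoticnormalityentries} to convert the raw third-moment ratio into the advertised bound; this bookkeeping, rather than any conceptual difficulty, is where the real work lies. A union bound is not needed here since $i,j,k$ are fixed.
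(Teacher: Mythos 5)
Your overall strategy---recognize $\xi_{ijk}$ as a sum of independent mean-zero random variables and apply Berry--Esseen---matches the paper's, and your handling of the third-moment-to-variance-ratio (using subgaussianity and incoherence) is essentially the same bookkeeping the paper carries out. However there are two concrete issues.

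First, your count of the shared noise entries is wrong. You claim that the only entry shared by $e_i^\top \mathbf{Z}_1 \mathbf{V}_1\mathbf{V}_1^\top e_{(j-1)p_3+k}$ and $e_j^\top \mathbf{Z}_2\mathbf{V}_2\mathbf{V}_2^\top e_{(k-1)p_1+i}$ is the single entry $\mathcal{Z}_{ijk}$. In fact the $i$-th row of $\mathbf{Z}_1$ consists of all $\mathcal{Z}_{iab}$ with $a\in[p_2]$, $b\in[p_3]$, while the $j$-th row of $\mathbf{Z}_2$ consists of all $\mathcal{Z}_{ajb}$ with $a\in[p_1]$, $b\in[p_3]$; their intersection is $\{\mathcal{Z}_{ijb}: b\in[p_3]\}$, which has $p_3 \asymp p$ elements, not one. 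Consequently the cross-covariance between two of the three leading terms is not $O(\sigma^2\mu_0^2 r^2/p^4)$ as you estimate; the correct bound (obtained by summing $p$ overlap terms, each bounded by $\|\mathbf{V}_1\|_{2,\infty}\|e^\top\mathbf{V}_1\|\cdot\|\mathbf{V}_2\|_{2,\infty}\|e^\top\mathbf{V}_2\|$ and using $2ab\le a^2+b^2$) is of order $\sigma^2\,\frac{\mu_0^2 r}{p}\bigl(\|e_{(j-1)p_3+k}^\top\mathbf{V}_1\|^2+\|e_{(k-1)p_1+i}^\top\mathbf{V}_2\|^2\bigr)$, i.e.\ a fraction $\mu_0^2 r/p$ of $s^2_{ijk}$, not something polynomially smaller. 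This is exactly the size of the second error term in the lemma, and your estimate would lose it.

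Second, and more substantively, you apply Berry--Esseen directly to $\xi_{ijk}/s_{ijk}$, but Berry--Esseen normalizes by the \emph{true} standard deviation $\sqrt{\mathrm{Var}(\xi_{ijk})}$, which is $(1+O(\mu_0^2 r/p))\,s_{ijk}$, not $s_{ijk}$. You acknowledge the approximation but then attribute the resulting $\mu_0^2 r/p$ term to a splitting of the third-moment sum into ``large'' and ``small'' coordinates---this is not where it comes from, and attempting to produce it that way would not succeed. The correct route is to split off the scaling mismatch explicitly:
\begin{align*}
\bigl|\p\{\xi_{ijk}/s_{ijk}\le t\}-\Phi(t)\bigr|
&\le \Bigl|\p\Bigl\{\tfrac{\xi_{ijk}}{\sqrt{\Var(\xi_{ijk})}}\le t\tfrac{s_{ijk}}{\sqrt{\Var(\xi_{ijk})}}\Bigr\}-\Phi\Bigl(t\tfrac{s_{ijk}}{\sqrt{\Var(\xi_{ijk})}}\Bigr)\Bigr|\\
&\quad+\Bigl|\Phi(t)-\Phi\Bigl(t\tfrac{s_{ijk}}{\sqrt{\Var(\xi_{ijk})}}\Bigr)\Bigr|.
\end{align*}
The first term is bounded uniformly by Berry--Esseen and, after invoking the SNR lower bound on the $\mathbf{V}_k$ row norms, yields the $C_1/\sqrt{p\log(p)}$ contribution (so your intuition about this piece is basically right). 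The second term requires a separate argument---controlling the total variation distance between $N(0,1)$ and $N(0,\Var(\xi_{ijk})/s^2_{ijk})$, for which the paper invokes a quantitative bound of order $|1-\Var(\xi_{ijk})/s^2_{ijk}|=O(\mu_0^2 r/p)$. Without this second step your argument is incomplete: the Berry--Esseen ratio alone gives only the $1/\sqrt{p\log(p)}$ piece, and the $\mu_0^2 r/p$ piece is not recoverable by any dichotomy on the coefficients $w_\alpha$.
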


\subsection{Proof of \cref{thm:asymptoticnormalityentries}} \label{sec:entrywiseprooc}

With these lemmas in hand, we are now prepared to prove \cref{thm:asymptoticnormalityentries}.
\begin{proof}[Proof of \cref{thm:asymptoticnormalityentries}]
First, note that
\begin{align*}
    \mathcal{\hat T}_{ijk} &= \bigg( \big( \mathcal{T + Z} \big) \times_1 (\uhat_1 \uhat_1\t) \times_2 (\uhat_2 \uhat_2\t) \times_3 (\uhat_3 \uhat_3)\t \bigg)_{ijk} \\
    &= \bigg( \mathcal{T} \times_1 (\uhat_1 \uhat_1\t) \times_2 (\uhat_2 \uhat_2\t) \times_3 (\uhat_3 \uhat_3)\t \bigg)_{ijk} \\
    &\quad + \bigg(\mathcal{Z} \times_1 (\uhat_1 \uhat_1\t) \times_2 (\uhat_2 \uhat_2\t) \times_3 (\uhat_3 \uhat_3)\t \bigg)_{ijk}.
\end{align*}
We consider each term separately. First, we will show that the second term is a residual term.  Observe that % For the second term, we note that
\begin{align*}
    \bigg(\mathcal{Z} &\times_1 (\uhat_1 \uhat_1\t) \times_2 (\uhat_2 \uhat_2\t) \times_3 (\uhat_3 \uhat_3)\t \bigg)_{ijk} \\
    &=  \bigg(\mathcal{Z} \times_1 (\uhat_1 \uhat_1\t - \U_1 \U_1\t ) \times_2 (\uhat_2 \uhat_2\t) \times_3 (\uhat_3 \uhat_3)\t \bigg)_{ijk} \\
    &\quad + \bigg(\mathcal{Z} \times_1 ( \U_1 \U_1\t ) \times_2 (\uhat_2 \uhat_2\t) \times_3 (\uhat_3 \uhat_3)\t \bigg)_{ijk} \\
    &= \bigg(\mathcal{Z} \times_1 (\uhat_1 \uhat_1\t - \U_1 \U_1\t ) \times_2 (\uhat_2 \uhat_2\t - \U_2 \U_2\t) \times_3 (\uhat_3 \uhat_3 - \U_3 \U_3)\t \bigg)_{ijk} \\
    &\quad + \bigg(\mathcal{Z} \times_1 (\uhat_1 \uhat_1\t - \U_1 \U_1\t ) \times_2 (\uhat_2 \uhat_2\t - \U_2 \U_2\t) \times_3 ( \U_3 \U_3)\t \bigg)_{ijk} \\
    &\quad + \bigg(\mathcal{Z} \times_1 (\uhat_1 \uhat_1\t - \U_1 \U_1\t ) \times_2 ( \U_2 \U_2\t) \times_3 (\uhat_3 \uhat_3 - \U_3 \U_3)\t \bigg)_{ijk} \\
        &\quad + \bigg(\mathcal{Z} \times_1 (\uhat_1 \uhat_1\t - \U_1 \U_1\t ) \times_2 ( \U_2 \U_2\t) \times_3 (\U_3 \U_3)\t \bigg)_{ijk} \\
    &\quad + \bigg(\mathcal{Z} \times_1 ( \U_1 \U_1\t ) \times_2 (\uhat_2 \uhat_2\t - \U_2 \U_2\t ) \times_3 (\uhat_3 \uhat_3 - \U_3 \U_3)\t \bigg)_{ijk} \\
     &\quad + \bigg(\mathcal{Z} \times_1 ( \U_1 \U_1\t ) \times_2 (\uhat_2 \uhat_2\t - \U_2 \U_2\t ) \times_3 (\U_3 \U_3)\t \bigg)_{ijk} \\
    &\quad + \bigg(\mathcal{Z} \times_1 ( \U_1 \U_1\t ) \times_2 ( \U_2 \U_2\t ) \times_3 (\uhat_3 \uhat_3-\U_3 \U_3)\t \bigg)_{ijk} \\
     &\quad + \bigg(\mathcal{Z} \times_1 ( \U_1 \U_1\t ) \times_2 ( \U_2 \U_2\t ) \times_3 ( \U_3 \U_3)\t \bigg)_{ijk}.
\end{align*}
Each term consists of terms containing either $\U_k \U_k\t$ or the difference $\uhat_k \uhat_k\t - \U_k \U_k\t$.  Therefore, without loss of generality, since $r_k \asymp r$ and $p_k \asymp p$, it suffices to analyze the following four terms:
\begin{align*}
    (I) &\coloneqq \bigg(\mathcal{Z} \times_1 ( \U_1 \U_1\t ) \times_2 ( \U_2 \U_2\t ) \times_3 ( \U_3 \U_3)\t \bigg)_{ijk}; \\
    (II) &\coloneqq \bigg(\mathcal{Z} \times_1 (\uhat_1 \uhat_1\t - \U_1 \U_1\t ) \times_2 ( \U_2 \U_2\t) \times_3 (\U_3 \U_3)\t \bigg)_{ijk} \\
    (III) &\coloneqq \bigg(\mathcal{Z} \times_1 (\uhat_1 \uhat_1\t - \U_1 \U_1\t ) \times_2 (\uhat_2 \uhat_2\t - \U_2 \U_2\t) \times_3 ( \U_3 \U_3)\t \bigg)_{ijk}; \\
    (IV) &\coloneqq \bigg(\mathcal{Z} \times_1 (\uhat_1 \uhat_1\t - \U_1 \U_1\t ) \times_2 (\uhat_2 \uhat_2\t - \U_2 \U_2\t) \times_3 (\uhat_3 \uhat_3 - \U_3 \U_3)\t \bigg)_{ijk}.
\end{align*}
Each of these terms are analyzed in  \cref{lem:zentrywiselemma}.  Therefore, with probability at least $1 - O(p^{-9})$, it holds that
\begin{align*}
   \bigg|  \bigg(\mathcal{Z} \times_1 (\uhat_1 \uhat_1\t) \times_2 (\uhat_2 \uhat_2\t) \times_3 (\uhat_3 \uhat_3)\t \bigg)_{ijk} \bigg| &\lesssim \sigma \mu_0^3 \sqrt{\log(p)} \frac{r^{3/2}}{p^{3/2}} + \frac{\sigma^2 \mu_0^3 r^{3/2} \log(p)}{\lambda \sqrt{p}}.
\end{align*}
We now focus on the term containing $\mathcal{T}$.  The strategy will be similar, only now appealing to the distributional characterization for the projections in  \cref{cor:asymptoticnormality_projection}.  We note that
\begin{align*}
   \bigg(\mathcal{T} \times_1 &\uhat_1 \uhat_1\t \times_2 \uhat_2 \uhat_2\t \times_3 \uhat_3 \uhat_3\t \bigg)_{ijk} \\
   &= \bigg( \mathcal{T} \times_1 (\uhat_1 \uhat_1\t - \U_1\U_1\t) \times_2 \uhat_2 \uhat_2\t \times_3 \uhat_3\uhat_3\t \bigg)_{ijk} \\
   &\quad + \bigg( \mathcal{T} \times_1  \U_1\U_1\t \times_2 \uhat_2 \uhat_2\t \times_3 \uhat_3\uhat_3\t \bigg)_{ijk} \\
   &= \bigg( \mathcal{T} \times_1 (\uhat_1 \uhat_1\t - \U_1\U_1\t) \times_2 ( \uhat_2 \uhat_2\t - \U_2 \U_2\t) \times_3 \uhat_3\uhat_3\t \bigg)_{ijk} \\
   &\quad + \bigg( \mathcal{T} \times_1 (\uhat_1 \uhat_1\t - \U_1\U_1\t) \times_2 \U_2 \U_2\t \times_3 \uhat_3\uhat_3\t \bigg)_{ijk} \\
   &\quad + \bigg( \mathcal{T} \times_1  \U_1\U_1\t \times_2 (\uhat_2 \uhat_2\t - \U_2 \U_2\t) \times_3 \uhat_3\uhat_3\t \bigg)_{ijk} \\
    &\quad + \bigg( \mathcal{T} \times_1  \U_1\U_1\t \times_2 \U_2 \U_2\t \times_3 \uhat_3\uhat_3\t \bigg)_{ijk} \\
    %here
    &= \bigg( \mathcal{T} \times_1 (\uhat_1 \uhat_1\t - \U_1\U_1\t) \times_2 ( \uhat_2 \uhat_2\t - \U_2 \U_2\t) \times_3 (\uhat_3\uhat_3\t - \U_3 \U_3\t) \bigg)_{ijk} \\
    &\quad + \bigg( \mathcal{T} \times_1 (\uhat_1 \uhat_1\t - \U_1\U_1\t) \times_2 ( \uhat_2 \uhat_2\t - \U_2 \U_2\t) \times_3 (\U_3 \U_3\t) \bigg)_{ijk} \\
   &\quad + \bigg( \mathcal{T} \times_1 (\uhat_1 \uhat_1\t - \U_1\U_1\t) \times_2 \U_2 \U_2\t \times_3 (\uhat_3\uhat_3\t - \U_3 \U_3\t)\bigg)_{ijk} \\
   &\quad + \bigg( \mathcal{T} \times_1 (\uhat_1 \uhat_1\t - \U_1\U_1\t) \times_2 \U_2 \U_2\t \times_3 ( \U_3 \U_3\t)\bigg)_{ijk} \\
   &\quad + \bigg( \mathcal{T} \times_1  \U_1\U_1\t \times_2 (\uhat_2 \uhat_2\t - \U_2 \U_2\t) \times_3 (\uhat_3\uhat_3\t - \U_3 \U_3\t) \bigg)_{ijk} \\
   &\quad + \bigg( \mathcal{T} \times_1  \U_1\U_1\t \times_2 (\uhat_2 \uhat_2\t - \U_2 \U_2\t) \times_3 ( \U_3 \U_3\t) \bigg)_{ijk} \\
    &\quad + \bigg( \mathcal{T} \times_1  \U_1\U_1\t \times_2 \U_2 \U_2\t \times_3 (\uhat_3\uhat_3\t  - \U_3 \U_3\t)\bigg)_{ijk} \\
    &\quad + \bigg( \mathcal{T} \times_1  \U_1\U_1\t \times_2 \U_2 \U_2\t \times_3  \U_3 \U_3\t \bigg)_{ijk}.
\end{align*}
The final term is simply $\mathcal{T}_{ijk}$. Similar to the previous case, the terms appearing all appear with either $\U_k \U_k\t$ or the difference $\uhat_k \uhat_k\t - \U_k \U_k\t$.  We will show that terms with at least two projection-norm differences are small-order terms. Again, since $r_k \asymp r$ and $p_k \asymp p$, it suffices to analyze the two terms
\begin{align*}
    (I) &\coloneqq  \bigg( \mathcal{T} \times_1 (\uhat_1 \uhat_1\t - \U_1\U_1\t) \times_2 ( \uhat_2 \uhat_2\t - \U_2 \U_2\t) \times_3 (\U_3 \U_3\t) \bigg)_{ijk}; \\
    (II) &\coloneqq \bigg( \mathcal{T} \times_1 (\uhat_1 \uhat_1\t - \U_1\U_1\t) \times_2 ( \uhat_2 \uhat_2\t - \U_2 \U_2\t) \times_3 (\uhat_3 \uhat_3\t - \U_3 \U_3\t) \bigg)_{ijk}.
\end{align*}
By \cref{lem:tentrywiselemma}, it holds with probability at least $1 - O(p^{-9})$ that
\begin{align*}
    (I) + (II) &\lesssim \frac{\sigma^2 \mu_0^3 r^{3/2} \kappa \log(p)}{\lambda \sqrt{p}}.
\end{align*}
By symmetry, we have shown so far that with probability at least $1 - O(p^{-9})$,
\begin{align*}
    \mathcal{\hat T}_{ijk} - \mathcal{T}_{ijk} &=  \bigg( \mathcal{T} \times_1 (\uhat_1 \uhat_1\t - \U_1\U_1\t) \times_2 \U_2 \U_2\t \times_3 ( \U_3 \U_3\t)\bigg)_{ijk} \\
    &\quad +  \bigg( \mathcal{T} \times_1  \U_1\U_1\t \times_2 (\uhat_2 \uhat_2\t - \U_2 \U_2\t) \times_3 ( \U_3 \U_3\t) \bigg)_{ijk} \\
    &\quad + \bigg( \mathcal{T} \times_1  \U_1\U_1\t \times_2 \U_2 \U_2\t \times_3 (\uhat_3\uhat_3\t  - \U_3 \U_3\t)\bigg)_{ijk} \\
    &\quad + O\bigg( \sigma \mu_0^3 \sqrt{\log(p)} \frac{r^{3/2}}{p^{3/2}} \bigg) \\
    &\quad + O\bigg( \frac{\sigma^2 \mu_0^3 r^{3/2} \kappa \log(p)}{\lambda \sqrt{p}} \bigg).
\end{align*}
We will now argue that the difference terms consist of another leading-order term.  More specifically, considering $k=1$, we will show that
\begin{align*}
    \bigg( \mathcal{T} \times_1 (\uhat_1 \uhat_1\t - \U_1\U_1\t) \times_2 \U_2 \U_2\t \times_3 ( \U_3 \U_3\t)\bigg)_{ijk} &= e_i\t \mathbf{Z}_1 \mathbf{V}_1 \mathbf{V}_1\t e_{(j-1)p_3 + k} + o( s_{ijk}).
\end{align*}
The other indices will follow by symmetry.  

By \cref{cor:asymptoticnormality_projection} on the event $\mathcal{E}_{\mathrm{\cref{thm:eigenvectornormality}}}$ it holds that
\begin{align*}
     \bigg( \mathcal{T} \times_1 &(\uhat_1 \uhat_1\t - \U_1\U_1\t) \times_2 \U_2 \U_2\t \times_3 ( \U_3 \U_3\t)\bigg)_{ijk} \\
     &=  \bigg( \mathcal{T} \times_1 (\mathbf{U}_1 \mathbf{\Lambda}_1\inv \mathbf{V}_1 \mathbf{Z}_1\t + \mathbf{Z}_1 \mathbf{V}_1 \mathbf{\Lambda}_1\inv \U_1\t + \mathbf{\Phi}^{(1)}) \times_2 \U_2 \U_2\t \times_3 ( \U_3 \U_3\t)\bigg)_{ijk} \\
     &= \bigg( \mathcal{T} \times_1 (\mathbf{U}_1 \mathbf{\Lambda}_1\inv \mathbf{V}_1 \mathbf{Z}_1\t ) \times_2 \U_2 \U_2\t \times_3 ( \U_3 \U_3\t)\bigg)_{ijk}  \\
     &\quad +\bigg( \mathcal{T} \times_1 ( \mathbf{Z}_1 \mathbf{V}_1 \mathbf{\Lambda}_1\inv \U_1\t ) \times_2 \U_2 \U_2\t \times_3 ( \U_3 \U_3\t)\bigg)_{ijk} \\
     &\quad +\bigg( \mathcal{T} \times_1 ( \mathbf{\Phi}^{(1)}) \times_2 \U_2 \U_2\t \times_3 ( \U_3 \U_3\t)\bigg)_{ijk}.
\end{align*}
The first term satisfies
\begin{align*}
    \bigg( \mathcal{T} &\times_1 ( \U_1 \mathbf{\Lambda}_1\inv \mathbf{V}_1 \mathbf{Z}_1\t) \times_2 \U_2 \U_2\t \times_3 ( \U_3 \U_3\t)\bigg)_{ijk} \\
    &= \mathcal{M}_1 \bigg( \mathcal{T} \times_1 ( \U_1 \mathbf{\Lambda}_1\inv \mathbf{V}_1 \mathbf{Z}_1\t) \times_2 \U_2 \U_2\t \times_3 ( \U_3 \U_3\t ) \bigg)_{i, (j-1)p_3 + k} \\
    &= e_i\t \mathcal{M}_1 \bigg( \mathcal{T} \times_1 ( \U_1 \mathbf{\Lambda}_1\inv \mathbf{V}_1 \mathbf{Z}_1\t) \bigg) ( \U_2 \U_2\t \otimes \U_3 \U_3\t ) e_{(j-1)p_3 + k} \\
    &= e_i\t \U_1 \mathbf{\Lambda}_1\inv \mathbf{V}_1 \mathbf{Z}_1\t \mathbf{T}_1 ( \U_2 \U_2\t \otimes \U_3 \U_3\t ) e_{(j-1)p_3 + k} \\
    &= e_i\t \U_1 \mathbf{\Lambda}_1\inv \mathbf{V}_1 \mathbf{Z}_1\t \U_1 \mathbf{\Lambda}_1 \mathbf{V}_1\t e_{(j-1)p_3 + k}.
\end{align*}
On the event $\mathcal{E}_{\mathrm{VeryGood}}$ it holds that
\begin{align*}
    \|\U_1 \t \mathbf{Z}_1 \mathbf{V}_1 \| &\lesssim \sigma \sqrt{r}.
\end{align*}
Therefore, on this event,
\begin{align*}
    | e_i\t \U_1 \mathbf{\Lambda}_1\inv \mathbf{V}_1 \mathbf{Z}_1\t \U_1 \mathbf{\Lambda}_1 \mathbf{V}_1\t e_{(j-1)p_3 + k} | &\leq \kappa \| e_i\t \U_1 \| \| e_{(j-1)p_3 + k}\t \mathbf{V}_1 \| \|\U_1 \t \mathbf{Z}_1 \mathbf{V}_1 \| \\
    &\lesssim \kappa \sigma \mu_0^2 \frac{r^{3/2}}{p^{3/2}}.
\end{align*}
In addition,
\begin{align*}
   | \bigg( \mathcal{T} &\times_1 ( \mathbf{\Phi}^{(1)}) \times_2 \U_2 \U_2\t \times_3 ( \U_3 \U_3\t)\bigg)_{ijk} |\\
   &= | e_i\t \mathbf{\Phi}^{(1)} \mathbf{T}_1 (\U_2 \U_2\t \otimes \U_3 \U_3\t) e_{(j-1)p_3 + k} | \\
   &\leq \| e_i\t \mathbf{\Phi}^{(1)} \| \| \mathbf{T}_1 e_{(j-1)p_3 + k} \| \\
   &\leq \| \mathbf{\Phi}^{(1)} \|_{2,\infty} \| \mathbf{T}_1\t \|_{2,\infty}  \\
   &\lesssim \bigg(  \frac{\sigma^2 \kappa^2 \mu_0^3 \log(p) r^{3/2} \sqrt{p}}{\lambda^2} + \frac{\sigma \mu_0^2 r^{3/2} \kappa}{\lambda \sqrt{p}}  \bigg)\lambda_1 \mu_0 \frac{\sqrt{r}}{p} \\
   &\lesssim 
   \frac{\kappa^3 \sigma^2 \mu_0^4 \log(p) r^2}{\lambda \sqrt{p}} + \frac{\kappa^2 \mu_0^3 r^2 \sigma}{ p^{3/2}}
\end{align*}
The remaining term satisfies
\begin{align*}
    \bigg( \mathcal{T} &\times_1 ( \mathbf{Z}_1 \mathbf{V}_1 \mathbf{\Lambda}_1\inv \U_1\t ) \times_2 \U_2 \U_2\t \times_3 ( \U_3 \U_3\t)\bigg)_{ijk} \\
    &= e_i\t \mathbf{Z}_1 \mathbf{V}_1 \mathbf{\Lambda}_1\inv \U_1\t \mathbf{T}_1 ( \U_2 \U_2\t \otimes \U_3 \U_3\t ) e_{(j-1)p_3 + k} \\
    &= e_i\t \mathbf{Z}_1 \mathbf{V}_1 \mathbf{\Lambda}_1\inv \U_1 \t \U_1 \mathbf{\Lambda}_1 \mathbf{V}_1\t  e_{(j-1)p_3 + k} \\
    &= e_i\t \mathbf{Z}_1 \mathbf{V}_1 \mathbf{V}_1\t  e_{(j-1)p_3 + k}.
\end{align*}
Therefore, we have shown that with probability at least $1 - O(p^{-9})$,
\begin{align*}
    \bigg( \mathcal{T}& \times_1 (\uhat_1 \uhat_1\t - \U_1\U_1\t) \times_2 \U_2 \U_2\t \times_3 ( \U_3 \U_3\t)\bigg)_{ijk} \\
    &= e_i\t \mathbf{Z}_1 \mathbf{V}_1 \mathbf{V}_1\t  e_{(j-1)p_3 + k} \\
    &\quad + O\bigg( \frac{\kappa^2 \sigma \mu_0^3 r^{2}}{p^{3/2}} + \frac{\kappa^3 \sigma^2 \mu_0^4 \log(p) r^2}{\lambda\sqrt{p}}\bigg).
\end{align*}
By symmetry among indices, it holds that with probability at least $1 - O(p^{-9})$ that
\begin{align}
    \mathcal{\hat T}_{ijk} - \mathcal{T}_{ijk} &= e_i\t \mathbf{Z}_1 \mathbf{V}_1 \mathbf{V}_1\t e_{(j-1)p_3 + k} + e_j\t \mathbf{Z}_2 \mathbf{V}_2 \mathbf{V}_2\t e_{(k-1)p_1 + i} + e_k\t \mathbf{Z}_3 \mathbf{V}_3 \mathbf{V}_3\t e_{(i-1)p_2 + j} \nonumber\\
    &\quad + O\bigg( \frac{\kappa^2 \sigma^2 \mu_0^4 \log(p) r^2}{\lambda \sqrt{p}} + \frac{\kappa^2 \mu_0^3 r^2 \sigma}{p^{3/2}} \bigg) \nonumber \\
    &\quad + O\bigg( \frac{\sigma \mu_0^3 \sqrt{\log(p)} r^{3/2}}{p^{3/2}} + \frac{\sigma^2 \mu_0^3 r^{3/2} \kappa \log(p)}{\lambda \sqrt{p}} \bigg) \nonumber \\
   % &\quad + O \bigg( \frac{\kappa \sigma^2 \mu_0^2 r^{3/2}}{p^{3/2}} + \frac{\kappa^3 \mu_0^4 \log(p) r^2}{\lambda\sqrt{p}}\bigg) \nonumber\\
   % &\quad +O( \sigma \mu_0^3 \sqrt{\log(p)} \frac{r^{3/2}}{p^{3/2}} ) \nonumber\\
 %   &\quad + O( \frac{\sigma^2 \mu_0^3 r^{3/2} \kappa \log(p)}{\lambda \sqrt{p}}) \nonumber\\
    &= e_i\t \mathbf{Z}_1 \mathbf{V}_1 \mathbf{V}_1\t e_{(j-1)p_3 + k} + e_j\t \mathbf{Z}_2 \mathbf{V}_2 \mathbf{V}_2\t e_{(k-1)p_1 + i} + e_k\t \mathbf{Z}_3 \mathbf{V}_3 \mathbf{V}_3\t e_{(i-1)p_2 + j} \nonumber\\
    &\quad + O\bigg(\frac{\sigma \kappa^2 \mu_0^3 r^2 \sqrt{ \log(p)}}{p^{3/2}} \bigg)+  O\bigg( \frac{\sigma^2 \mu_0^4 \kappa^3 r^{2} \log(p)}{\lambda \sqrt{p}}\bigg). \label{mainentrywiseexpansion}
\end{align}
This establishes the leading-order expansion.  

Therefore, defining $\xi_{ijk}$ as in \cref{lem:xiijkgaussian}, with probability at least $1 - O(p^{-9})$ it holds that
\begin{align*}
\frac{\mathcal{T}_{ijk} - \mathcal{\hat T}_{ijk}}{s_{ijk}} &= \frac{\xi_{ijk}}{s_{ijk}} + \frac{C_1}{s_{ijk}} \frac{\sigma \kappa^2 \mu_0^3 r^2 \sqrt{ \log(p)}}{p^{3/2}} + \frac{C_2}{s_{ijk}}   \frac{\sigma^2 \mu_0^4 \kappa^3 r^{2} \log(p)}{\lambda \sqrt{p}} 
\\
&\coloneqq \frac{\xi_{ijk}}{s_{ijk}} + \frac{\mathrm{err}}{s_{ijk}}
,\end{align*}
where $C_1$ and $C_2$ are some universal constants.  Therefore,  for any $t \in \mathbb{R}$, by \cref{lem:xiijkgaussian}, it holds that
\begin{align*}
    \p\Bigg\{ \frac{\mathcal{T}_{ijk} - \mathcal{\hat T}_{ijk}}{s_{ijk}} \leq t \Bigg\} &\leq \p\Bigg\{\frac{\xi_{ijk}}{s_{ijk}} \leq t + \frac{\mathrm{err}}{s_{ijk}} \Bigg\}  + C_3 p^{-9} \\
    &\leq \Phi\Bigg\{ t + \frac{\mathrm{err}}{s_{ijk}} \Bigg\}+ C_3 p^{-9} + \frac{C_4}{\sqrt{p\log(p)}} + \frac{C_5 \mu_0^2 r}{p} \\
    &\leq \Phi(t) + \frac{\mathrm{err}}{s_{ijk}} + C_3 p^{-9} + \frac{C_4}{\sqrt{p\log(p)}} + \frac{C_5 \mu_0^2 r}{p} \\
    &= \Phi(t) + o(1),
\end{align*}
where the final result holds since
\begin{align*}
    s_{ijk} &\geq \sigma_{\min} \left(\left\|e_{(j-1) p_{3}+k}^{\top} \mathbf{V}_{1}\right\|^{2}+\left\|e_{(k-1) p_{1}+i} \mathbf{V}_{2}\right\|^{2}+\left\|e_{(i-1) p_{2}+j} \mathbf{V}_{3}\right\|^{2}\right)^{1 / 2} \\
    &\gg \sigma_{\min} \max\bigg\{ \frac{\kappa^2 \mu_0^3 r^2 \sqrt{\log(p)}}{p^{3/2}}, \frac{\sigma \mu_0^4 \kappa^3 r^2 \log(p)}{\lambda \sqrt{p}} \bigg\}, %\\
   % \max\bigg\{ \frac{\kappa \mu_0^3 r^{3/2} \sqrt{\log(p)}}{p^{3/2}}, \frac{\sigma \mu_0^4 \kappa^3 r^{2} \log(p)}{\lambda \sqrt{p}} \bigg\},
\end{align*}
and the fact that $\sigma/\sigma_{\min} = O(1)$.  By applying the same argument to the other direction, the proof is complete.
\end{proof}

% With these lemmas in hand, we are now prepared to prove \cref{thm:asymptoticnormalityentries}.
% \begin{proof}[Proof of \cref{thm:asymptoticnormalityentries}]
\subsection{Proof of \cref{cor:maxnormbound}} \label{sec:maxnormproof}
%proof of max norm perturbation bound password
\begin{proof}[Proof of \cref{cor:maxnormbound}]
We start with the leading-order expansion in \eqref{mainentrywiseexpansion} partway through the proof of \cref{thm:asymptoticnormalityentries}, which demonstrates that with probability at least $1 - O(p^{-9})$
\begin{align}
     \mathcal{\hat T}_{ijk} - \mathcal{T}_{ijk} %&= e_i\t \mathbf{Z}_1 \mathbf{V}_1 \mathbf{V}_1\t e_{(j-1)p_3 + k} + e_j\t \mathbf{Z}_2 \mathbf{V}_2 \mathbf{V}_2\t e_{(k-1)p_1 + i} + e_k\t \mathbf{Z}_3 \mathbf{V}_3 \mathbf{V}_3\t e_{(i-1)p_2 + j} \nonumber\\
    % &\quad + O \bigg( \frac{\kappa \sigma^2 \mu_0^2 r^{3/2}}{p^{3/2}} + \frac{\kappa^3 \mu_0^4 \log(p) r^2}{\lambda\sqrt{p}}\bigg) \nonumber\\
    % &\quad +O( \sigma \mu_0^3 \sqrt{\log(p)} \frac{r^{3/2}}{p^{3/2}} ) \nonumber\\
    % &\quad + O( \frac{\sigma^2 \mu_0^3 r^{3/2} \kappa \log(p)}{\lambda \sqrt{p}}) \nonumber\\
    &= e_i\t \mathbf{Z}_1 \mathbf{V}_1 \mathbf{V}_1\t e_{(j-1)p_3 + k} + e_j\t \mathbf{Z}_2 \mathbf{V}_2 \mathbf{V}_2\t e_{(k-1)p_1 + i} + e_k\t \mathbf{Z}_3 \mathbf{V}_3 \mathbf{V}_3\t e_{(i-1)p_2 + j} \nonumber\\
     &\quad + O\bigg(\frac{\sigma \kappa^2 \mu_0^3 r^2 \sqrt{ \log(p)}}{p^{3/2}} \bigg)+  O\bigg( \frac{\sigma^2 \mu_0^4 \kappa^3 r^{2} \log(p)}{\lambda \sqrt{p}}\bigg). \nonumber  %O\bigg(\frac{\sigma \kappa \mu_0^3 \sqrt{r^3 \log(p)}}{p^{3/2}} \bigg)+  O\bigg( \frac{\sigma^2 \mu_0^4 \kappa^3 r^{2} \log(p)}{\lambda \sqrt{p}}\bigg). \nonumber %\label{mainentrywiseexpansion}
\end{align}
A straightforward Hoeffding inequality argument shows that with high probability,
\begin{align*}
    |e_i\t \mathbf{Z}_1 \mathbf{V}_1 \mathbf{V}_1\t e_{(j-1)p_3 + k} | &\lesssim \sigma \sqrt{\log(p)} \| e_{(j-1)p_3 + k}\t \mathbf{V}_1 \| \\
    &\lesssim \frac{\mu_0\sigma \sqrt{r\log(p)}  }{p}.
\end{align*}
The same bound holds for the other two terms; moreover, this bound is uniform in $i,j$, and $k$.  Consequently, by taking a union bound over all $O(p^{3})$ entries, we obtain
\begin{align*}
    \| \mathcal{\hat T} - \mathcal{T} \|_{\max} &\lesssim \frac{\mu_0\sigma \sqrt{r\log(p)}  }{p} + \frac{\sigma \kappa^2 \mu_0^3 r^{2} \sqrt{\log(p)}}{p^{3/2}} + \frac{\sigma^2 \mu_0^4 \kappa^3 r^3 \log(p)}{\lambda \sqrt{p}} \\
    &\lesssim \frac{\mu_0\sigma \kappa \sqrt{r\log(p)}  }{p} +  \frac{\sigma^2 \mu_0^4 \kappa^3 r^3 \log(p)}{\lambda \sqrt{p}},
\end{align*}
where the final bound is due to the condition  $\kappa^2 \mu_0^2 r^{3/2} \sqrt{\log(p)} \lesssim p^{1/4}$.  This holds with probability at least $1 - O(p^{-6})$.  The ``consequently'' part is immediate.
\end{proof}

\subsection{Proofs of Preliminary Lemmas from \cref{sec:entrywiseaux}} \label{sec:entrywiseauxproofs}
In this section we prove the preliminary lemmas from \cref{sec:entrywiseaux}. 
\subsubsection{Proof of \cref{lem:zentrywiselemma}}
\begin{proof}[Proof of \cref{lem:zentrywiselemma}]
The first bound follows by noting that
\begin{align*}
        \bigg( \mathcal{Z} \times_1 \U_1 \U_1\t \times_2 \U_2 \U_2\t \times_3 \U_3 \U_3\t \bigg)_{ijk}  &= \sum_{abc} \mathcal{Z}_{abc} (\U_1 \U_1\t)_{ia}(\U_2 \U_2\t)_{jb}(\U_3 \U_3\t)_{kc},
\end{align*}
which is a linear combination of Subgaussian random variables with Orlicz norm of coefficients bounded by
\begin{align*}
    \sigma^2 \sum_{a,b,c}  (\U_1 \U_1\t)_{ia}^2(\U_2 \U_2\t)_{jb}^2(\U_3 \U_3\t)_{kc}^2 &\leq \sigma^2 \| e_i\t \U_1 \|^2 \| e_j\t \U_2 \|^2 \| e_k\t \U_3 \|^2 \\
    &\leq \sigma^2 \mu_0^6 \frac{r^3}{p^3}.
\end{align*}
Consequently, Hoeffding's inequality shows that this term is bounded by $C \sigma \sqrt{\log(p)} \mu_0^3 \frac{r^{3/2}}{p^{3/2}}$ with probability at least $1 - O(p^{-10})$.

Next, observe that \cref{cor:asymptoticnormality_projection} implies that with probability at least $1- O(p^{-9})$ that
\begin{align*}
    \| \uhat_1 \uhat_1\t - \U_1 \U_1\t \|_{2,\infty} &\lesssim \frac{\sigma\mu_0 \sqrt{r \log(p)}}{\lambda} + \frac{\sigma^2 \kappa^2 \mu_0^3 \log(p) r^{3/2} \sqrt{p}}{\lambda^2}   + \frac{\sigma \mu_0^2 r^{3/2} \kappa}{\lambda \sqrt{p}}.  
\end{align*}
In addition, under the conditions of \cref{thm:asymptoticnormalityentries}, it holds that
\begin{align*}
  \kappa^2 \mu_0^2 r^{3/2} \sqrt{\log(p)} \lesssim p^{1/4}
\end{align*}
which implies that
\begin{align*}
\frac{\sigma^2 \kappa^2 \mu_0^3 \log(p) r^{3/2} \sqrt{p}}{\lambda^2} &=  \frac{\sigma \mu_0 \sqrt{r\log(p)}}{\lambda} \bigg( \frac{\sigma \kappa^2 \mu_0^2 r^{3/2} \sqrt{p\log(p)} }{\lambda} \bigg) \\
&\lesssim \frac{\sigma \mu_0 \sqrt{r\log(p)}}{\lambda} \bigg( \frac{p^{3/4} \sqrt{\log(p)}}{\lambda/\sigma} \bigg)\\
&\lesssim \frac{\sigma \mu_0 \sqrt{r\log(p)}}{\lambda}.
\end{align*}
Similarly,
\begin{align*}
    \frac{\sigma \mu_0^2 r^{3/2} \kappa}{\lambda \sqrt{p}} &\lesssim \frac{\sigma \mu_0 \sqrt{r\log(p)}}{\lambda}.
\end{align*}
Hence, with probability $1 - O(p^{-9})$,
\begin{align}
    \| \uhat_1 \uhat_1\t - \U_1 \U_1\t \|_{2,\infty} &\lesssim \frac{\sigma \mu_0 \sqrt{r\log(p)}}{\lambda}. \label{eq:projbound}
\end{align}
Similar bounds hold for the other modes as well. Therefore,
\begin{align*}
     \bigg| &\bigg( \mathcal{Z} \times_1 (\uhat_1 \uhat_1\t - \U_1 \U_1\t) \times_2  \U_2 \U_2\t\times_3 \U_3 \U_3\t \bigg)_{ijk} \bigg| \\
     &= \bigg| \sum_{abc}\mathcal{Z}_{abc}(\uhat_1 \uhat_1\t - \U_1 \U_1\t)_{ia} (\U_2 \U_2\t)_{jb} (\U_3 \U_3\t)_{kc} \bigg| \\
     &\leq \sqrt{p_1} \| \uhat_1 \uhat_1- \U_1\U_1\t \|_{2,\infty} \max_{a} \bigg| \sum_{bc} \mathcal{Z}_{abc}(\U_2 \U_2\t)_{jb} (\U_3 \U_3\t)_{kc} \bigg| \\
     &\lesssim \bigg(\frac{\sigma \mu_0 \sqrt{pr\log(p)}}{\lambda} \bigg) \max_{a} \bigg| \sum_{bc} \mathcal{Z}_{abc}(\U_2 \U_2\t)_{jb} (\U_3 \U_3\t)_{kc} \bigg|
\end{align*}
Hoeffding's inequality and a union bound reveals that
\begin{align*}
    \max_a \bigg| \sum_{bc} \mathcal{Z}_{abc}(\U_2 \U_2\t)_{jb} (\U_3 \U_3\t)_{kc} \bigg| &\lesssim \sigma \mu_0^2 \frac{r}{p} \sqrt{\log(p)}
\end{align*}
with probability at least $1 - O(p^{-9})$.  Therefore,
\begin{align*}
      \bigg| &\bigg( \mathcal{Z} \times_1 (\uhat_1 \uhat_1\t - \U_1 \U_1\t) \times_2  \U_2 \U_2\t\times_3 \U_3 \U_3\t \bigg)_{ijk} \bigg| \\
      &\lesssim \sigma \mu_0^2 \frac{r}{p} \sqrt{\log(p)} \bigg(\frac{\sigma \mu_0 \sqrt{pr\log(p)}}{\lambda} \bigg) \\
      &\lesssim \frac{\sigma^2 \mu_0^3 r^{3/2} \log(p)}{\lambda \sqrt{p}}.
\end{align*}
For the next term, we note that
\begin{align*}
    \bigg| &\bigg( \mathcal{Z} \times_1 (\uhat_1 \uhat_1\t - \U_1 \U_1\t) \times_2 (\uhat_2 \uhat_2\t - \U_2 \U_2\t) \times_3 \U_3 \U_3\t \bigg)_{ijk} \bigg| \\
    &=  \bigg| (\uhat_1 \uhat_1\t - \U_1\U_1\t) \bigg(\mathbf{Z}_1 ( \uhat_2 \uhat_2\t - \U_2\U_2\t) \otimes \U_3 \U_3\t \bigg) _{i, (j-1)p_3 + k}\bigg| \\
    &\leq \| \big(\uhat_1 \uhat_1\t - \U_1\U_1\t \big)_{i\cdot}\| \big\| \bigg(\mathbf{Z}_1 ( \uhat_2 \uhat_2\t - \U_2\U_2\t) \otimes \U_3 \U_3\t\bigg)_{\cdot, (j-1)p_3 + k} \big\| \\
    &\leq \| \uhat_1 \uhat_1\t - \U_1\U_1\t \|_{2,\infty} \big\| \bigg(\mathbf{Z}_1 ( \uhat_2 \uhat_2\t - \U_2\U_2\t) \otimes \U_3 \U_3\t\bigg)_{\cdot, (j-1)p_3 + k} \big\|.
\end{align*}
Define the matrix $\mathbf{A}^{(2)}(j)$ as the $p_2 \times p_2$ matrix whose rows are all zero except for the $j$'th row, which is equal to the $j$'th row of $\uhat_2 \uhat_2\t - \U_2 \U_2\t$, and define $\mathbf{A}^{(3)}(k)$ as the $p_3 \times p_3$ matrix whose rows are all zero except for the $k$'th row, which is equal to the $k$'th row of $\U_3 \U_3\t$.  Observe that since both $\mathbf{A}^{(2)}(j)$ and $\mathbf{A}^{(3)}(k)$ are rank at most $2r$, it holds on the event $\mathcal{E}_{\mathrm{Good}}$ that
\begin{align*}
    \| \mathbf{Z}_1 (\mathbf{A}^{(2)}(j) \otimes \mathbf{A}^{(3)}(k)) \| &\leq \tau_k \|\mathbf{A}^{(2)}(j) \| \|\mathbf{A}^{(3)}(k)\| \\
    &\lesssim \sigma \sqrt{pr}\| \| \big(\uhat_2 \uhat_2\t - \U_2 \U_2\t \big)_{j\cdot}\| (\U_3 \U_3\t )_{k\cdot}\| \\
    &\lesssim \sigma \sqrt{pr} \| \uhat_2 \uhat_2\t - \U_2 \U_2\t \|_{2,\infty} \|\U_3 \U_3\t\|_{2,\infty} \\
    &\lesssim \sigma \sqrt{pr} \frac{\sigma \mu_0 \sqrt{r\log(p)}}{\lambda} \mu_0 \sqrt{\frac{r}{p}} \\
    &\lesssim \frac{\sigma^2 \mu_0^2 r^{3/2} \sqrt{\log(p)}}{\lambda},
\end{align*}
where we have implicitly used the bound in Equation \eqref{eq:projbound}.  Putting it together, with probability at least $1-O(p^{-9})$ it holds that
\begin{align*}
     \bigg| &\bigg( \mathcal{Z} \times_1 (\uhat_1 \uhat_1\t - \U_1 \U_1\t) \times_2 (\uhat_2 \uhat_2\t - \U_2 \U_2\t) \times_3 \U_3 \U_3\t \bigg)_{ijk} \bigg|  \\
     &\lesssim \| \U_1 \U_1\t - \U_1 \U_1 \|_{2,\infty}\frac{\sigma^2 \mu_0^2 r^{3/2} \sqrt{\log(p)}}{\lambda} \\
     &\lesssim \frac{\sigma^3 \mu_0^3 r^2 \log(p)}{\lambda^2} \\
     &\lesssim \frac{\sigma^2 \mu_0^2 r^{3/2} \log(p)}{\lambda \sqrt{p}},
\end{align*}
since $\lambda/\sigma \gtrsim \sqrt{pr}$.  

By a similar argument, it is straightforward to show that with this same probability,
\begin{align*}
    \bigg| &\bigg( \mathcal{Z} \times_1 (\uhat_1 \uhat_1\t - \U_1 \U_1\t) \times_2 (\uhat_2 \uhat_2\t - \U_2 \U_2\t) \times_3 (\uhat_3 \uhat_3\t - \U_3 \U_3\t) \bigg)_{ijk} \bigg| \\
    &\lesssim \sigma \sqrt{pr} \|   \uhat_1 \uhat_1\t - \U_1 \U_1\t \|_{2,\infty} \| \uhat_2 \uhat_2\t - \U_2 \U_2\t \|_{2,\infty} \| \uhat_3 \uhat_3\t - \U_3 \U_3\t \|_{2,\infty} \\
    &\lesssim \sigma \sqrt{pr} \frac{ \sigma^3 \mu_0^3 r^{3/2}\log^{3/2}(p) }{\lambda^3} \\
    &\lesssim  \frac{ \sigma^3 \mu_0^3 r^{3/2}\log^{3/2}(p) }{\lambda^2} \\
    &\lesssim \frac{\sigma^2 \mu_0^3 r^{3/2} \log(p)}{\lambda \sqrt{p}}.
\end{align*}
Aggregating these bounds completes the proof.
\end{proof}

\subsubsection{Proof of \cref{lem:tentrywiselemma}}

\begin{proof}[Proof of \cref{lem:tentrywiselemma}]
The proof is similar to \cref{lem:zentrywiselemma}.  Again, observe that \cref{cor:asymptoticnormality_projection} and the conditions of \cref{thm:asymptoticnormalityentries} implies that with probability at least $1 - O(p^{-9})$,
\begin{align*}
    \max_k \| \uhat_k \uhat_k\t - \U_k \U_k\t \|_{2,\infty} &\lesssim \frac{\sigma \mu_0 \sqrt{r\log(p)}}{\lambda}.
\end{align*}
Therefore, we note that on this event,
\begin{align*}
    \bigg| \bigg( \mathcal{T}& \times_1 (\uhat_1 \uhat_1\t - \U_1\U_1\t) \times_2 ( \uhat_2 \uhat_2\t - \U_2 \U_2\t) \times_3 (\U_3 \U_3\t) \bigg)_{ijk} \bigg| \\
    &= \bigg| e_k\t \U_3 \U_3\t \mathbf{T}_3 \bigg( (\uhat_1 \uhat_1\t - \U_1\U_1\t) \otimes ( \uhat_2 \uhat_2\t - \U_2 \U_2\t) \bigg) e_{(i-1)p_2 + j} \bigg| \\
    &\leq \| \U_3 \|_{2,\infty} \| \U_3\t \mathbf{T}_3 \| \bigg\| \bigg( (\uhat_1 \uhat_1\t - \U_1\U_1\t) \otimes ( \uhat_2 \uhat_2\t - \U_2 \U_2\t) \bigg) \bigg\|_{2,\infty} \\
    &\leq \lambda_1 \| \U_3 \|_{2,\infty} \| \uhat_1 \uhat_1\t - \U_1\U_1\t \|_{2,\infty} \| \uhat_2 \uhat_2\t - \U_2 \U_2\t \|_{2,\infty} \\
    &\lesssim \lambda_1 \mu_0 \sqrt{\frac{r}{p}} \frac{ \sigma^2 \mu_0^2 r\log(p)}{\lambda^2} \\
    &\lesssim \frac{\sigma^2 \mu_0^3 r^{3/2} \kappa \log(p)}{\lambda\sqrt{p}}.
\end{align*}
Similarly, also on this event,
\begin{align*}
    \bigg|\bigg( &\mathcal{T} \times_1 (\uhat_1 \uhat_1\t - \U_1\U_1\t) \times_2 ( \uhat_2 \uhat_2\t - \U_2 \U_2\t) \times_3 (\uhat_3 \uhat_3\t - \U_3 \U_3\t) \bigg)_{ijk} \bigg| \\
    &= \bigg| e_i\t (\uhat_1 \uhat_1\t - \U_1\U_1\t) \mathbf{T}_1\bigg[ \bigg( \uhat_2 \uhat_2\t - \U_2 \U_2\t \bigg) \otimes \bigg( \uhat_3 \uhat_3\t - \U_3 \U_3\t \bigg) \bigg] e_{(j-1)p_3 + k} \bigg| \\
    &\leq \| \uhat_1 \uhat_1\t - \U_1\U_1\t \|_{2,\infty} \| \mathbf{T}_1 \| \| \uhat_2 \uhat_2\t - \U_2 \U_2\t \|_{2,\infty} \| \uhat_3 \uhat_3\t - \U_3 \U_3\t \|_{2,\infty} \\
    &\leq \frac{\sigma^3 \mu_0^3 r^{3/2} \kappa\log^{3/2}(p)}{\lambda^2} \\
    &\lesssim \frac{\sigma^2 \mu_0^3 r^{3/2} \kappa \log(p)}{\lambda\sqrt{p}},
\end{align*} 
since $\lambda/\sigma \gtrsim \sqrt{p\log(p)}$.  This completes the proof.
\end{proof}

\subsubsection{Proof of \cref{lem:xiijkgaussian}}
\begin{proof}[Proof of \cref{lem:xiijkgaussian}]
 We first observe that the random variable
\begin{align*}
    \xi_{ijk} &\coloneqq  e_{i}^{\top} \mathbf{Z}_{1} \mathbf{V}_{1} \mathbf{V}_{1}^{\top} e_{(j-1) p_{3}+k}+e_{j}^{\top} \mathbf{Z}_{2} \mathbf{V}_{2} \mathbf{V}_{2}^{\top} e_{(k-1) p_{1}+i}+e_{k}^{\top} \mathbf{Z}_{3} \mathbf{V}_{3} \mathbf{V}_{3}^{\top} e_{(i-1) p_{2}+j}
\end{align*}
is a linear combination of random variables belonging to $\mathcal{Z}$. Note that $\mathrm{Var}(\xi_{ijk})$ may not equal $s^2_{ijk}$.  First, we will show that  
\begin{align*}
    \mathrm{Var}(\xi_{ijk}) &= s^2_{ijk} + o( s^2_{ijk}).
\end{align*}
Next, we will calculate the moment bounds needed to apply the Berry-Esseen Theorem, and finally we will put it all together.
\begin{itemize}
    \item 
\textbf{Step 1: Variance Calculation:} Since $\xi_{ijk}$ is a sum of three separate terms, we will first calculate the contribution of the cross terms to the variance of $\xi_{ijk}$.  Observe that 
\begin{align*}
\bigg| \E \bigg[ e_i\t& \mathbf{Z}_1 \mathbf{V}_1 \mathbf{V}_1\t e_{(j-1)p_3 + k}  \bigg] \bigg[ e_j\t \mathbf{Z}_2 \mathbf{V}_2 \mathbf{V}_2\t e_{(k-1)p_1 + i} \bigg] \bigg| \\
&\leq \sigma^2  \sum_{(l_1,l_2) \in \Omega}  | e_{l_1}\t \mathbf{V}_1 \mathbf{V}_1\t e_{(j-1)p_3 + k} | \big| e_{l_2}\t \mathbf{V}_2 \mathbf{V}_2\t e_{(k-1)p_1 + i} \big|,
\end{align*}
where the sum is over the set $\Omega$ containing indices $(l_1,l_2)$ such that $(\mathbf{Z}_1)_{il_1} = (\mathbf{Z}_2)_{kl_2}$ (i.e., the indices corresponding to the same elements of the underlying tensor $\mathcal{Z}$).  We note that the general formula is given by
\begin{align*}
    (\mathbf{Z}_1)_{i,(j-1)p_3 + b} = (\mathbf{Z}_2)_{j,(b-1)p_1 + i},
\end{align*}
which shows that the two terms have $p_3 \lesssim p$ terms in common (since $1\leq b \leq p_3$).  Therefore,
\begin{align*}
     \sum_{(l_1,l_2) \in \Omega}  | e_{l_1}\t \mathbf{V}_1 &\mathbf{V}_1\t e_{(j-1)p_3 + k} | \big| e_{l_2}\t \mathbf{V}_2 \mathbf{V}_2\t e_{(k-1)p_1 + i} \big| \\
     &\lesssim p \max_{l_1,l_2}| e_{l_1}\t \mathbf{V}_1 \mathbf{V}_1\t e_{(j-1)p_3 + k} | \big| e_{l_2}\t \mathbf{V}_2 \mathbf{V}_2\t e_{(k-1)p_1 + i} \big| \\
     &\leq p \| \mathbf{V}_1 \|_{2,\infty} \|e_{(j-1)p_3 + k} \t \mathbf{V}_1 \| \| \mathbf{V}_2 \|_{2,\infty} \|e_{(k-1)p_1 + i}\t \mathbf{V}_2 \| \\
     &\leq p \mu_0^2 \frac{r}{p^2} \|e_{(j-1)p_3 + k} \t \mathbf{V}_1 \|\|e_{(k-1)p_1 + i}\t \mathbf{V}_2 \| \\
     &\leq \mu_0^2 \frac{r}{p}  \|e_{(j-1)p_3 + k} \t \mathbf{V}_1 \|\|e_{(k-1)p_1 + i}\t \mathbf{V}_2 \| \\
     &\leq \frac{\mu_0^2 r}{2p} \bigg(  \|e_{(j-1)p_3 + k} \t \mathbf{V}_1 \|^2 + \|e_{(k-1)p_1 + i}\t \mathbf{V}_2 \|^2\bigg),
\end{align*}
where we have used the inequality $2ab \leq a^2 + b^2$.  Therefore, by symmetry,
\begin{align*}
    \mathrm{Var}&\bigg( e_i\t \mathbf{Z}_1 \mathbf{V}_1 \mathbf{V}_1\t e_{(j-1)p_3 + k} + e_j\t \mathbf{Z}_2 \mathbf{V}_2 \mathbf{V}_2\t e_{(k-1)p_1 + i} + e_k\t \mathbf{Z}_3 \mathbf{V}_3 \mathbf{V}_3\t e_{(i-1)p_2 + j} \bigg) \\
    &= \mathrm{Var}\bigg( e_i\t \mathbf{Z}_1 \mathbf{V}_1 \mathbf{V}_1\t e_{(j-1)p_3 + k}\bigg) + \mathrm{Var}\bigg( e_j\t \mathbf{Z}_2 \mathbf{V}_2 \mathbf{V}_2\t e_{(k-1)p_1 + i} \bigg) \\
    &\quad + \mathrm{Var}\bigg( e_k\t \mathbf{Z}_3 \mathbf{V}_3 \mathbf{V}_3\t e_{(i-1)p_2 + j} \bigg) \\
    &\quad + O\bigg( \frac{\sigma^2 \mu_0^2 r}{p} \bigg(  \|e_{(j-1)p_3 + k} \t \mathbf{V}_1 \|^2 + \|e_{(k-1)p_1 + i}\t \mathbf{V}_2 \|^2 + \| e_{(i-1)p_2 + j} \mathbf{V}_3 \|^2 \bigg) \bigg).
\end{align*}
To calculate the remaining terms, we simply note that
\begin{align*}
    \E \bigg(e_i\t \mathbf{Z}_1 \mathbf{V}_1 \mathbf{V}_1\t e_{(j-1)p_3 + k}\bigg)^2 &= \sum_{l=1}^{p_2p_3} \E (\mathbf{Z}_1)_{il}^2 ( e_l\t \mathbf{V}_1 \mathbf{V}_1\t e_{(j-1)p_3 + k} )^2 \\
    &= \sum_{l=1}^{p_2p_3} \sigma^2_{il} ( e_l\t \mathbf{V}_1 \mathbf{V}_1\t e_{(j-1)p_3 + k} )^2 \\
    &= \| e_{(j-1)p_3 + k}\t \mathbf{V}_1 \mathbf{V}_1\t\big( \Sigma^{(i)}_1\big)^{1/2} \|^2,% \\
%    &= \| e_{(j-1)p_3 + k}\t \mathbf{V}_1\|^2
\end{align*}
where we recall that $\Sigma^{(i)}_1$ is the diagonal matrix whose entries are the variances $\sigma^2_{il}$.   Consequently,
\begin{align*}
    \mathrm{Var}\bigg( &e_i\t \mathbf{Z}_1 \mathbf{V}_1 \mathbf{V}_1\t e_{(j-1)p_3 + k}\bigg) + \mathrm{Var}\bigg( e_j\t \mathbf{Z}_2 \mathbf{V}_2 \mathbf{V}_2\t e_{(k-1)p_1 + i} \bigg)  \\
    &\quad + \mathrm{Var}\bigg( e_k\t \mathbf{Z}_3 \mathbf{V}_3 \mathbf{V}_3\t e_{(i-1)p_2 + j} \bigg) \\
    &= \| e_{(j-1)p_3 + k}\t \mathbf{V}_1 \mathbf{V}_1\t \big(\Sigma^{(i)}_1\big)^{1/2} \|^2 + \|e_{(k-1)p_1 + i} \mathbf{V}_2 \mathbf{V}_2\t \big(\Sigma^{(j)}_2\big)^{1/2} \|^2 \\
    &\quad + \| e_{(i-1)p_2 + j} \mathbf{V}_3 \mathbf{V}_3\t \big( \Sigma^{(k)}_3\big)^{1/2} \|^2 \\
    &= s_{ijk}^2.
\end{align*}
Therefore,
\begin{align*}
    \mathrm{Var}(\xi_{ijk}) &= s_{ijk}^2 + O\bigg( \frac{\sigma^2\mu_0^2 r}{p} \bigg(  \|e_{(j-1)p_3 + k} \t \mathbf{V}_1 \|^2 + \|e_{(k-1)p_1 + i}\t \mathbf{V}_2 \|^2 + \| e_{(i-1)p_2 + j} \mathbf{V}_3 \|^2 \bigg) \bigg), \\
    &= s^2_{ijk} + o( s^2_{ijk}),
\end{align*}
where the final inequality holds since 
\begin{align*}
    s^2_{ijk} &= \| e_{(j-1)p_3 + k}\t \mathbf{V}_1 \mathbf{V}_1\t \big(\Sigma^{(i)}_1\big)^{1/2} \|^2 + \|e_{(k-1)p_1 + i} \mathbf{V}_2 \mathbf{V}_2\t \big(\Sigma^{(j)}_2\big)^{1/2} \|^2 \\
    &\quad + \| e_{(i-1)p_2 + j} \mathbf{V}_3 \mathbf{V}_3\t \big(\Sigma^{(k)}_3\big)^{1/2} \|^2  \\
    &\geq \sigma_{\min}^2 \bigg( \| e_{(j-1)p_3 + k}\t \mathbf{V}_1  \|^2 + \|e_{(k-1)p_1 + i} \mathbf{V}_2 \|^2 + \| e_{(i-1)p_2 + j} \mathbf{V}_3 \|^2 \bigg) \\
    &\gg \frac{\sigma^2\mu_0^2 r}{p} \bigg(  \|e_{(j-1)p_3 + k} \t \mathbf{V}_1 \|^2 + \|e_{(k-1)p_1 + i}\t \mathbf{V}_2 \|^2 + \| e_{(i-1)p_2 + j} \mathbf{V}_3 \|^2 \bigg) \bigg),
\end{align*}
since $\sigma/\sigma_{\min} = O(1)$ and $\mu_0^2 r \lesssim \sqrt{p}$.  Consequently, 
\begin{align}
    \frac{\mathrm{Var}(\xi_{ijk})}{s^2_{ijk}} &= 1 + O\bigg( \frac{\mu_0^2 r}{p} \bigg) \label{eq:taylor1}
\end{align}
which will be useful later on.
% The following calculation will be useful when applying the Berry-Eseen Theorem later on.  By Taylor's Theorem,
% \begin{align}
%     \frac{s_{ijk}}{\sqrt{\var(\xi_{ijk})}} &=  \frac{1}{\sqrt{1 + \tfrac{\mathrm{Var}(\xi_{ijk}) - s^2_{ijk}}{s^2_{ijk}}}} \nonumber \\
%     &= 1 - \frac{1}{2} \frac{\mathrm{Var}(\xi_{ijk}) - s^2_{ijk}}{s^2_{ijk}} + o\bigg( \frac{\mathrm{Var}(\xi_{ijk}) - s^2_{ijk}}{s^2_{ijk}} \bigg) \nonumber \\
%     &= 1 + C\frac{\mu_0^2 r}{p}. \label{eq:taylor1}
% \end{align}
\item 
\textbf{Step 2: Third Moment Calculation}: 
In order to apply the Berry-Esseen Theorem, we will a bound on the third absolute moment of the sum of the independent random variables in question.  To avoid complicated notation, let $(b,c)$ be the index of the first matricization corresponding to its $(i,b,c)$ entry, and similarly for $(a,c)$ and $(a,b)$ (with second and third matricization and $j$ and $k$ replaced respectively).  We can then write
\begin{align*}
    \xi_{ijk} &= \sum_{b=1}^{p_2} \sum_{c=1}^{p_3} \mathcal{Z}_{ibc}\bigg[ (\mathbf{V}_1 \mathbf{V}_1\t)_{(b,c),(j-1)p_3 + k}  + \mathbb{I}_{\{b=j\}} \big( \mathbf{V}_2 \mathbf{V}_2\t \big)_{(a,c),(k-1)p_1 + i} \\
    &\quad + \mathbb{I}_{\{c = k \}} \big( \mathbf{V}_3 \mathbf{V}_3\t \big)_{(a,b),(i-1)p_2 + j} \bigg] \\
    &\quad + \sum_{a\neq i, a=1}^{p_1} \sum_{c=1}^{p_3} \mathcal{Z}_{ajc} \bigg[ \big(\mathbf{V}_2 \mathbf{V}_2\t \big)_{(a,c),(k-1)p_1 + i} + \mathbb{I}_{\{c = k \}} \big( \mathbf{V}_3 \mathbf{V}_3\t \big)_{(a,b),(i-1)p_2 + j} \bigg] \\
    &\quad + \sum_{a\neq i, a=1}^{p_1} \sum_{b\neq j, b=1}^{p_2} \mathcal{Z}_{abk} \big( \mathbf{V}_3 \mathbf{V}_3\t \big)_{(a,b),(i-1)p_2+j},
\end{align*}
which, when written in this form, is precisely a sum of independent random variables. There are $O(p^2)$ many terms in this sum.  We will need to bound
\begin{align}
    \sum_{b=1}^{p_2}& \sum_{c=1}^{p_3}\E \bigg| \mathcal{Z}_{ibc}\bigg[ (\mathbf{V}_1 \mathbf{V}_1\t)_{(b,c),(j-1)p_3 + k}  + \mathbb{I}_{\{b=j\}} \big( \mathbf{V}_2 \mathbf{V}_2\t \big)_{(a,c),(k-1)p_1 + i} \\
    &\quad + \mathbb{I}_{\{c = k \}} \big( \mathbf{V}_3 \mathbf{V}_3\t \big)_{(a,b),(i-1)p_2 + j} \bigg]\bigg|^3 \nonumber \\
    &\quad + \sum_{a\neq i, a=1}^{p_1} \sum_{c=1}^{p_3} \E \bigg| \mathcal{Z}_{ajc} \bigg[ \big(\mathbf{V}_2 \mathbf{V}_2\t \big)_{(a,c),(k-1)p_1 + i} + \mathbb{I}_{\{c = k \}} \big( \mathbf{V}_3 \mathbf{V}_3\t \big)_{(a,b),(i-1)p_2 + j} \bigg]\bigg|^3 \nonumber \\
    &\quad + \sum_{a\neq i, a=1}^{p_1} \sum_{b\neq j, b=1}^{p_2} \E \bigg| \mathcal{Z}_{abk} \big( \mathbf{V}_3 \mathbf{V}_3\t \big)_{(a,b),(i-1)p_2+j} \bigg|^3. \label{eq:toboundthirdmoment}
\end{align}
By subgaussianity (e.g., \citet{vershynin_high-dimensional_2018}, Proposition 2.5.2), it holds that
\begin{align*}
    \E \bigg|& \mathcal{Z}_{ibc}\bigg[ (\mathbf{V}_1 \mathbf{V}_1\t)_{(b,c),(j-1)p_3 + k}  + \mathbb{I}_{\{b=j\}} \big( \mathbf{V}_2 \mathbf{V}_2\t \big)_{(a,c),(k-1)p_1 + i} \\
    &\quad + \mathbb{I}_{\{c = k \}} \big( \mathbf{V}_3 \mathbf{V}_3\t \big)_{(a,b),(i-1)p_2 + j} \bigg]\bigg|^3 \\
    &\leq \bigg| (\mathbf{V}_1 \mathbf{V}_1\t)_{(b,c),(j-1)p_3 + k}  + \mathbb{I}_{\{b=j\}} \big( \mathbf{V}_2 \mathbf{V}_2\t \big)_{(a,c),(k-1)p_1 + i}  \\
    &\quad + \mathbb{I}_{\{c = k \}} \big( \mathbf{V}_3 \mathbf{V}_3\t \big)_{(a,b),(i-1)p_2 + j} \bigg|^3 \E | \mathcal{Z}_{ibc}|^3 \\
    &\leq C \sigma^3 \bigg| (\mathbf{V}_1 \mathbf{V}_1\t)_{(b,c),(j-1)p_3 + k}  + \mathbb{I}_{\{b=j\}} \big( \mathbf{V}_2 \mathbf{V}_2\t \big)_{(a,c),(k-1)p_1 + i}  \\
    &\quad + \mathbb{I}_{\{c = k \}} \big( \mathbf{V}_3 \mathbf{V}_3\t \big)_{(a,b),(i-1)p_2 + j} \bigg|^3 \\
    &\leq C' \sigma^3 \bigg( \bigg| (\mathbf{V}_1 \mathbf{V}_1\t)_{(b,c),(j-1)p_3 + k} \bigg|^3 +  \bigg| \mathbb{I}_{\{b=j\}} \big( \mathbf{V}_2 \mathbf{V}_2\t \big)_{(a,c),(k-1)p_1 + i}  \bigg|^3  \\
    &\quad +  \bigg| \mathbb{I}_{\{c = k \}} \big( \mathbf{V}_3 \mathbf{V}_3\t \big)_{(a,b),(i-1)p_2 + j} \bigg|^3 \bigg) % \\
    %&\leq C' \sigma^3 \max\bigg\{  \bigg| (\mathbf{V}_1 \mathbf{V}_1\t)_{(b,c),(j-1)p_3 + k} \bigg|^3, \bigg| \mathbb{I}_{\{b=j\}} \big( \mathbf{V}_2 \mathbf{V}_2\t \big)_{(a,c),(k-1)p_1 + i}  \bigg|^3,\bigg| \mathbb{I}_{\{c = k \}} \big( \mathbf{V}_3 \mathbf{V}_3\t \big)_{(a,b),(i-1)p_2 + j} \bigg|^3 \bigg\} 
\end{align*}
Substituting this bound into \cref{eq:toboundthirdmoment} and rearranging yields the upper bound
\begin{align}
    C' &\sigma^3 \Bigg[ \sum_{b,c} \big| (\mathbf{V}_1 \mathbf{V}_1\t)_{(b,c),(j-1)p_3 + k} \big|^3 + \sum_{a,c} \big|\big(\mathbf{V}_2 \mathbf{V}_2\t \big)_{(a,c),(k-1)p_1 + i}\big|^3 \nonumber \\
    &\quad + \sum_{a,b} \big|\big( \mathbf{V}_3 \mathbf{V}_3\t \big)_{(a,b),(i-1)p_2+j} \big|^3 \Bigg] \nonumber \\
    &\leq C' \sigma^3 \bigg[ \max_{b,c} \big| (\mathbf{V}_1 \mathbf{V}_1\t)_{(b,c),(j-1)p_3 + k} \big| \sum_{b,c} \big| (\mathbf{V}_1 \mathbf{V}_1\t)_{(b,c),(j-1)p_3 + k} \big|^2 \nonumber \\
    &\qquad + \max_{a,c}  \big|\big(\mathbf{V}_2 \mathbf{V}_2\t \big)_{(a,c),(k-1)p_1 + i}\big| \sum_{a,c} \big|\big(\mathbf{V}_2 \mathbf{V}_2\t \big)_{(a,c),(k-1)p_1 + i}\big|^2 \nonumber \\
    &\qquad + \max_{a,b} \big|\big( \mathbf{V}_3 \mathbf{V}_3\t \big)_{(a,b),(i-1)p_2+j} \big| \sum_{a,b} \big|\big( \mathbf{V}_3 \mathbf{V}_3\t \big)_{(a,b),(i-1)p_2+j} \big|^2 \Bigg]  \nonumber \\
    &\lesssim \sigma^3 \frac{\mu_0^2 r}{p^2} \bigg( \| \mathbf{V}_1 \mathbf{V}_1\t e_{(j-1)p_3 + k} \|^2 + \| \mathbf{V}_2 \mathbf{V}_2\t e_{(k-1)p_1 + i} \|^2 + \| \mathbf{V}_3 \mathbf{V}_3\t e_{(i-1)p_2 + j} \|^2 \bigg) \label{eq:todivide}
\end{align}
In addition, we note that from the previous step it holds that
\begin{align*}
    \mathrm{Var}(\xi_{ijk}) &= s_{ijk}^2 + C \frac{\sigma^2 \mu_0^2 r}{p} \bigg( \left\|e_{(j-1) p_{3}+k}^{\top} \mathbf{V}_{1}\right\|^{2}+\left\|e_{(k-1) p_{1}+i}^{\top} \mathbf{V}_{2}\right\|^{2}+\left\|e_{(i-1) p_{2}+j} \mathbf{V}_{3}\right\|^{2} \bigg).
\end{align*}
This implies that there is some constant $c > 0$ such that
\begin{align*}
    \mathrm{\var}(\xi_{ijk})^{3/2} &\geq c s_{ijk}^{3}.
\end{align*}
Moreover,
\begin{align*}
    s_{ijk}^3 \geq \sigma_{\min}^3 \bigg( \left\|e_{(j-1) p_{3}+k}^{\top} \mathbf{V}_{1}\right\|^{2}+\left\|e_{(k-1) p_{1}+i}^{\top} \mathbf{V}_{2}\right\|^{2}+\left\|e_{(i-1) p_{2}+j} \mathbf{V}_{3}\right\|^{2} \bigg)^{3/2}.
\end{align*}
Therefore, dividing \eqref{eq:todivide} by the $\mathrm{Var}(\xi_{ijk})^{3/2}$ yields that
\begin{align*}
    &\frac{1}{\mathrm{Var}(\xi_{ijk})^{3/2}} \sigma^3 \frac{\mu_0^2 r}{p^2} \bigg( \| \mathbf{V}_1 \mathbf{V}_1\t e_{(j-1)p_3 + k} \|^2 + \| \mathbf{V}_2 \mathbf{V}_2\t e_{(k-1)p_1 + i} \|^2 + \| \mathbf{V}_3 \mathbf{V}_3\t e_{(i-1)p_2 + j} \|^2 \bigg) \\
    &\lesssim \frac{1}{cs_{ijk}^3} \sigma^3 \frac{\mu_0^2 r}{p^2} \bigg( \| \mathbf{V}_1 \mathbf{V}_1\t e_{(j-1)p_3 + k} \|^2 + \| \mathbf{V}_2 \mathbf{V}_2\t e_{(k-1)p_1 + i} \|^2 + \| \mathbf{V}_3 \mathbf{V}_3\t e_{(i-1)p_2 + j} \|^2 \bigg)  \\
    &\lesssim \frac{\mu_0^2 r}{p^2} \frac{\sigma^3}{\sigma_{\min}^3} \ \bigg( \left\|e_{(j-1) p_{3}+k}^{\top} \mathbf{V}_{1}\right\|^{2}+\left\|e_{(k-1) p_{1}+i}^{\top} \mathbf{V}_{2}\right\|^{2}+\left\|e_{(i-1) p_{2}+j} \mathbf{V}_{3}\right\|^{2} \bigg)^{-1/2} \\
    &\lesssim \frac{\mu_0^2 r}{p^2} \bigg( \left\|e_{(j-1) p_{3}+k}^{\top} \mathbf{V}_{1}\right\|^{2}+\left\|e_{(k-1) p_{1}+i}^{\top} \mathbf{V}_{2}\right\|^{2}+\left\|e_{(i-1) p_{2}+j} \mathbf{V}_{3}\right\|^{2} \bigg)^{-1/2} \\
    &\lesssim \frac{\mu_0^2 r }{p^2} \frac{p^{3/2}}{\kappa \mu_0^3 r^{3/2} \sqrt{\log(p)}} \\
    &\lesssim \frac{1}{\sqrt{p\log(p)}},
\end{align*}
where we have used the condition 
\begin{align*}
     \bigg( \left\|e_{(j-1) p_{3}+k}^{\top} \mathbf{V}_{1}\right\|^{2}+\left\|e_{(k-1) p_{1}+i}^{\top} \mathbf{V}_{2}\right\|^{2}+\left\|e_{(i-1) p_{2}+j} \mathbf{V}_{3}\right\|^{2} \bigg)^{1/2} \gg \frac{\kappa \mu_0^3 r^{3/2} \sqrt{\log(p)}}{p^{3/2}}.
\end{align*}
\item 
\textbf{Step 3: Putting It All Together}: 
By the Berry-Esseen Theorem, for any $t \in \mathbb{R}$, it holds that
\begin{align*}
   \Bigg| \p\bigg\{ \frac{\xi_{ijk}}{s_{ijk}} \leq t \bigg\} - \Phi(t) \Bigg| &=  \Bigg| \p\bigg\{ \frac{\xi_{ijk}}{\sqrt{\mathrm{Var}(\xi_{ijk})}} \leq t \frac{s_{ijk}}{\sqrt{\mathrm{Var}(\xi_{ijk})}} \bigg\} - \Phi(t) \bigg| \\
    &\leq  \bigg| \p\bigg\{ \frac{\xi_{ijk}}{\sqrt{\mathrm{Var}(\xi_{ijk})}} \leq t \frac{s_{ijk}}{\sqrt{\mathrm{Var}(\xi_{ijk})}} \bigg\} - \Phi\bigg( t \frac{s_{ijk}}{\sqrt{\mathrm{Var}(\xi_{ijk})}} \bigg) \bigg| \\
    &\quad + \bigg| \Phi(t) - \Phi\bigg( t \frac{s_{ijk}}{\sqrt{\mathrm{Var}(\xi_{ijk})}} \bigg) \bigg| \\
    &\leq \frac{C}{\sqrt{p\log(p)}}  + \bigg| \Phi(t) - \Phi\bigg( t \frac{s_{ijk}}{\sqrt{\mathrm{Var}(\xi_{ijk})}} \bigg) \bigg|.
    \end{align*}
    Note that 
    \begin{align*}
        \sup_{t\in \mathbb{R}} \bigg| \Phi(t) - \ \Phi\bigg( t \frac{s_{ijk}}{\sqrt{\mathrm{Var}(\xi_{ijk})}} \bigg) \bigg| &\leq \sup_{ \mathcal{A} \ \mathrm{ measurable}} \bigg| \p\bigg( Z \in \mathcal{A} \bigg) - \p\bigg( \frac{\sqrt{\mathrm{Var}(\xi_{ijk})}}{s_{ijk}} Z' \in \mathcal{A} \bigg) \bigg| \\
        &= D_{TV} \bigg( Z, \frac{\sqrt{\mathrm{Var}(\xi_{ijk})}}{s_{ijk}} Z' \bigg)
    \end{align*}
    where $Z$ and $Z'$ are independent standard Gaussians and $D_{TV}$ is the total variation distance.  By Theorem 1.3 of \citet{devroye_total_2022}, it holds that
    \begin{align*}
        D_{TV} \bigg( Z, \frac{\sqrt{\mathrm{Var}(\xi_{ijk})}}{s_{ijk}} Z' \bigg) &\leq \frac{2}{3} \bigg(1 - \frac{\mathrm{Var}(\xi_{ijk})}{s^2_{ijk}} \bigg) \\
        &= O( \mu_0^2 \frac{r}{p}),
    \end{align*}
    where we have used \eqref{eq:taylor1}.  Therefore,
    \begin{align*}
        \sup_{t\in \mathbb{R}}\bigg| \p\bigg\{ \frac{\xi_{ijk}}{s_{ijk}} \leq t \bigg\} - \Phi(t) \bigg| &\leq \frac{C_1}{\sqrt{p\log(p)}} + \frac{C_2 \mu_0^2 r}{p}.
    \end{align*}
This completes the proof.
\end{itemize}
\end{proof}

\section{Proof of Validity of Confidence Intervals (Theorem \ref{thm:civalidity2} and Theorem \ref{thm:civalidity})} \label{sec:ciproofs}
In this section we prove the validity of our confidence regions (\cref{thm:civalidity2}) and intervals (\cref{thm:civalidity}). % and \cref{thm:civalidity2}), 
 In \cref{sec:civaliditylemmas} we state several preliminary lemmas needed to guarantee good approximation of our plug-in estimates, and their proofs are in \cref{sec:prelimproofsvalidity}.  In \cref{sec:civalidityproofs} we prove \cref{thm:civalidity}, and in
\cref{sec:civalidity2proofs} we prove \cref{thm:civalidity2}. % Finally, in \cref{sec:testingproof} we prove \cref{cor:testing}.  

\subsection{Preliminary Lemmas: Plug-In Estimate Proximity} \label{sec:civaliditylemmas}

First  we show that our estimates $\mathbf{\hat V}_k$ and $\mathbf{\hat \Lambda}_k$ from \cref{al:ci_eigenvector} and \cref{al:ci_entries} are sufficiently close to $\mathbf{V}_k$ and $\mathbf{\Lambda}_k$ with respect to both $\|\cdot\|$ and $\|\cdot\|_{2,\infty}$. In what follows, recall that $\mathbf{\hat V}_k$ and $\mathbf{\hat \Lambda}_k$ are defined as the leading $r_k$ right singular vectors and singular values of the matrix $\mathcal{M}_k(\mathcal{\tilde T}) \bigg( \big( \uhat_{k+1} \uhat_{k+1}\t \big) \otimes \big( \uhat_{k+2} \uhat_{k+2}\t \big) \bigg)$.

\begin{lemma} \label{lem:Vmatrixcloseness} Instate the conditions of \cref{thm:eigenvectornormality}, and suppose that $$\kappa^2 \mu_0^2 r^{3/2} \sqrt{\log(p)} \lesssim p^{1/4}.$$
Let $\mathbf{\hat V}_k$ and $\mathbf{\hat \Lambda}_k$ be as in \cref{al:ci_eigenvector} and \cref{al:ci_entries}.  Let $\mathbf{W}_{\mathbf{V}_k} \coloneqq \mathrm{sgn}(\mathbf{ V}_k\t \mathbf{\hat V}_k)$.  Then with probability at least $1 - O(p^{-9})$,
\begin{align*}
\max_k \| \sin\Theta(\mathbf{\hat V}_k,\mathbf{V}_k) \| &\lesssim \frac{\sigma \sqrt{pr} + \kappa^2 \sigma \sqrt{p\log(p)}}{\lambda}; \\
\max_k   \| \mathbf{\hat V}_k - \mathbf{V}_k \mathbf{W}_{\mathbf{V}_k} \|_{2,\infty} &\lesssim \frac{\kappa^2 \sigma \mu_0^2 r^{3/2} \sqrt{\log(p)}}{\lambda \sqrt{p}}; \\
\max_k  \| \mathbf{W}_{\mathbf{V}_k}\t \mathbf{\Lambda}_k\inv - \mathbf{\hat{\Lambda}}_k\inv \mathbf{\hat W}_k\t \| &\lesssim \frac{1}{\lambda} \bigg( \frac{\sigma \sqrt{pr} + \kappa^2 \sigma \sqrt{p\log(p)}}{\lambda} \bigg).
\end{align*}
\end{lemma}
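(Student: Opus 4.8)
\textbf{Proof proposal for Lemma \ref{lem:Vmatrixcloseness}.}

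The plan is to treat $\mathbf{\hat V}_k$ as the leading $r_k$ right singular vectors of the perturbed matrix $\mathcal{M}_k(\mathcal{\tilde T})\bigl((\uhat_{k+1}\uhat_{k+1}\t)\otimes(\uhat_{k+2}\uhat_{k+2}\t)\bigr)$ and compare it, via a Davis--Kahan / Wedin-type argument, to the right singular vectors of the ``population'' target $\mathbf{T}_k\bigl((\U_{k+1}\U_{k+1}\t)\otimes(\U_{k+2}\U_{k+2}\t)\bigr) = \mathbf{T}_k$, whose right singular subspace is exactly $\mathbf{V}_k$. First I would write
\[
    \mathcal{M}_k(\mathcal{\tilde T})\bigl((\uhat_{k+1}\uhat_{k+1}\t)\otimes(\uhat_{k+2}\uhat_{k+2}\t)\bigr) = \mathbf{T}_k + \mathbf{E}_k,
\]
and bound $\|\mathbf{E}_k\|$ by splitting into (i) the ``clean signal difference'' $\mathbf{T}_k\bigl(\mathcal{\hat P}_k - \mathcal{P}_k\bigr)$, which is $O\bigl(\lambda\kappa\sqrt{p\log(p)}/\lambda\bigr) = O(\kappa^2\sqrt{p\log(p)})$ using $\|\mathcal{\hat P}_k - \mathcal{P}_k\|\lesssim \dl/\lambda$ on $\mathcal{E}_{\mathrm{Good}}$ (cf. the bound used in the proof of \cref{lem:approximatecommute}), and (ii) the ``noise'' term $\mathbf{Z}_k\bigl((\uhat_{k+1}\uhat_{k+1}\t)\otimes(\uhat_{k+2}\uhat_{k+2}\t)\bigr)$, which is a rank-$\le 2r$-times-$2r$ restriction of $\mathbf{Z}_k$ and hence $\lesssim \tau_k \lesssim \sqrt{pr}$ on $\mathcal{E}_{\mathrm{Good}}$. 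Combining, $\|\mathbf{E}_k\| \lesssim \sigma\sqrt{pr} + \kappa^2\sigma\sqrt{p\log(p)} \ll \lambda$ under the SNR hypothesis, so Wedin's theorem gives the first display. For the singular-value bound, Weyl's inequality plus \cref{lem:eigengaps}-style reasoning gives $\|\mathbf{\hat\Lambda}_k\inv\|\lesssim 1/\lambda$, and then $\|\mathbf{W}_{\mathbf{V}_k}\t\mathbf{\Lambda}_k\inv - \mathbf{\hat\Lambda}_k\inv\mathbf{\hat W}_k\t\|$ is controlled by writing it as $\mathbf{\hat\Lambda}_k\inv(\mathbf{\hat\Lambda}_k\mathbf{W}_{\mathbf{V}_k}\t - \mathbf{W}_{\mathbf{V}_k}\t\mathbf{\Lambda}_k)\mathbf{\Lambda}_k\inv$ (up to handling the second rotation, which is near-identity by the $\sin\Theta$ bound exactly as in \eqref{sintheta14}) and bounding the middle factor by $\|\mathbf{E}_k\|$.

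The genuinely delicate part is the $\ell_{2,\infty}$ bound on $\mathbf{\hat V}_k - \mathbf{V}_k\mathbf{W}_{\mathbf{V}_k}$, which requires an entrywise (leave-one-out-type) analysis rather than a black-box Wedin bound. I would derive it from the same first-order machinery already in place: note that $\mathbf{\hat V}_k$ is, up to rotation, close to $\mathbf{T}_k\t\uhat_k\mathbf{\hat\Lambda}_k\inv$ (the right singular vectors expressed via the left ones), so
\[
    \mathbf{\hat V}_k \mathbf{\hat W}_k \approx \bigl(\mathbf{T}_k + \mathbf{E}_k\bigr)\t \uhat_k \mathbf{\hat\Lambda}_k^{-1} = \mathbf{V}_k\mathbf{\Lambda}_k\U_k\t\uhat_k\mathbf{\hat\Lambda}_k\inv + \mathbf{E}_k\t\uhat_k\mathbf{\hat\Lambda}_k\inv,
\]
and then bound $\|\mathbf{E}_k\t\uhat_k\mathbf{\hat\Lambda}_k\inv\|_{2,\infty}$ row by row. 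The signal part of $\mathbf{E}_k\t$ has rows controlled by incoherence ($\|\mathbf{T}_k\|_{2,\infty}\lesssim \lambda_1\mu_0\sqrt{r}/p$) and the noise part $\mathbf{Z}_k\t(\cdots)$ has rows controlled by a Hoeffding bound against the (low-rank, incoherent) projected directions, exactly mirroring the estimates in \cref{lem:zentrywiselemma} and \cref{uperplemma}; the rotations $\U_k\t\uhat_k\to\mathbf{W}_k$ and $\mathbf{W}_k\mathbf{\hat W}_k\to\mathbf{W}_{\mathbf{V}_k}$ are absorbed using $\|\U_k\t\uhat_k - \mathbf{W}_k\|\lesssim \|\sin\Theta(\uhat_k,\U_k)\|^2$ as in \eqref{sinthetaegood}--\eqref{sintheta14}. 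Tracking the dominant contribution yields the stated rate $\kappa^2\sigma\mu_0^2 r^{3/2}\sqrt{\log(p)}/(\lambda\sqrt{p})$, after invoking $\kappa^2\mu_0^2 r^{3/2}\sqrt{\log(p)}\lesssim p^{1/4}$ to discard the lower-order quadratic-in-noise terms (as is done repeatedly, e.g. after \eqref{eq:projbound}).

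Throughout, all bounds hold on $\mathcal{E}_{\mathrm{Good}}\cap\mathcal{E}_{\mathrm{main}}^{t_0-1,k}\cap\mathcal{E}_{\mathrm{Theorem}\,\ref{thm:eigenvectornormality}}$, which has probability $\ge 1 - O(p^{-9})$, and a final union bound over the $p_k$ rows and the three modes $k$ costs only polynomial factors already absorbed into the $O(p^{-9})$. I expect the main obstacle to be the bookkeeping in the $\ell_{2,\infty}$ step: one must carefully separate the three sources of error (signal-projection mismatch $\mathcal{\hat P}_k - \mathcal{P}_k$, genuine noise $\mathbf{Z}_k$ restricted to estimated subspaces, and the near-identity rotation corrections), show each is $o$ of the claimed rate or matches it, and verify that replacing $\uhat_k$ by $\U_k\mathbf{W}_k$ inside $\mathbf{E}_k\t\uhat_k$ does not degrade the rate — but this is routine given the lemmas of \cref{sec:prelimlemmas} and \cref{sec:entrywiseaux} and the event definitions in \cref{sec:notation2}.
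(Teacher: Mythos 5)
Your first and third bounds follow essentially the paper's route: the paper likewise writes the perturbation as $\mathbf{E}_k = \mathbf{T}_k(\mathcal{P}_k - \hat{\mathcal{P}}_k) - \mathbf{Z}_k\hat{\mathcal{P}}_k$, gets $\|\mathbf{E}_k\|\lesssim \sigma\sqrt{pr}+\kappa^2\sigma\sqrt{p\log p}$ on $\mathcal{E}_{\mathrm{Good}}$, applies Davis--Kahan for the $\sin\Theta$ bound, and for the $\mathbf{\Lambda}\inv$ comparison uses the eigenvector--eigenvalue relation $\uhat_k\mathbf{\hat\Lambda}_k = (\mathbf{T}_k+\mathbf{E}_k)\mathbf{\hat V}_k$ to get $\|\U_k\t\uhat_k\mathbf{\hat\Lambda}_k - \mathbf{\Lambda}_k\mathbf{V}_k\t\mathbf{\hat V}_k\| = \|\U_k\t\mathbf{E}_k\mathbf{\hat V}_k\|\le\|\mathbf{E}_k\|$, handling the \emph{two} rotations $\mathbf{\hat W}_k$ and $\mathbf{W}_{\mathbf{V}_k}$ separately via $\|\U_k\t\uhat_k - \mathbf{\hat W}_k\|\lesssim\sin^2\Theta$ and $\|\mathbf{V}_k\t\mathbf{\hat V}_k - \mathbf{W}_{\mathbf{V}_k}\|\lesssim\sin^2\Theta$. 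Be careful with your formulation for the third bound: the single-rotation commutator $\mathbf{\hat\Lambda}_k\inv(\mathbf{\hat\Lambda}_k\mathbf{W}_{\mathbf{V}_k}\t - \mathbf{W}_{\mathbf{V}_k}\t\mathbf{\Lambda}_k)\mathbf{\Lambda}_k\inv$ equals $\mathbf{W}_{\mathbf{V}_k}\t\mathbf{\Lambda}_k\inv - \mathbf{\hat\Lambda}_k\inv\mathbf{W}_{\mathbf{V}_k}\t$, which is \emph{not} the target: the target has $\mathbf{\hat W}_k\t$ on the right, and $\mathbf{W}_{\mathbf{V}_k}$, $\mathbf{\hat W}_k$ are generically distinct rotations (related approximately via conjugation by $\mathbf{\Lambda}_k$, not equal). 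Your parenthetical ``handle the second rotation, near-identity by the $\sin\Theta$ bound'' is slightly off -- the rotations themselves are not near identity; it is the gap between $\U_k\t\uhat_k$ (resp.\ $\mathbf{V}_k\t\mathbf{\hat V}_k$) and its sign function that is $O(\sin^2\Theta)$. The paper's three-term triangle-inequality decomposition, which tracks both rotations explicitly, is the cleaner way to absorb this.

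For the $\ell_{2,\infty}$ bound you take a genuinely different route from the paper. The paper invokes Theorem 3.7 of Cape, Tang, Priebe (2019) as a deterministic black box, which expresses $\|\mathbf{\hat V}_k - \mathbf{V}_k\mathbf{W}_{\mathbf{V}_k}\|_{2,\infty}$ in terms of $\|(\mathbf{I}-\mathbf{V}_1\mathbf{V}_1\t)\mathbf{E}\,\U_1\U_1\t\|_{2,\infty}$, $\|(\mathbf{I}-\mathbf{V}_1\mathbf{V}_1\t)\mathbf{E}(\mathbf{I}-\U_1\U_1\t)\|_{2,\infty}\cdot\|\sin\Theta\|$, and $\|\sin\Theta\|^2\|\mathbf{V}_1\|_{2,\infty}$ (the term $(\mathbf{I}-\mathbf{V}_1\mathbf{V}_1\t)\mathbf{T}_1\t$ vanishes), and then bounds these by purely operator-norm and row-norm estimates of $\mathbf{E}$ using the projection $\ell_{2,\infty}$ bound \eqref{eq:projbound} -- crucially, \emph{no fresh leave-one-out construction or Hoeffding concentration is needed} since the Cape et al.\ inequality is deterministic and the row norms of the noise piece of $\mathbf{E}$ are bounded crudely by $\|\hat{\mathcal P}_k\|_{2,\infty}\cdot\|\mathbf{Z}_k\hat{\mathcal P}_k\|$. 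Your alternative route, expanding $\mathbf{\hat V}_k\approx(\mathbf{T}_k+\mathbf{E}_k)\t\uhat_k\mathbf{\hat\Lambda}_k\inv$ and Hoeffding-bounding rows of $\mathbf{E}_k\t\uhat_k$, faces a dependence obstruction you gloss over: the rows of $\mathbf{Z}_k$ are statistically coupled with $\uhat_k$ and $\hat{\mathcal P}_k$, so a clean Hoeffding bound requires either the leave-one-out surrogates $\utilde_k^{(t,j-m)}$ as in \cref{lem:zentrywiselemma} or a similar decoupling device. This is doable (and you acknowledge it), but it is considerably heavier machinery than what the paper actually uses, and you would also need to justify the ``$\approx$'' in $\mathbf{\hat V}_k\mathbf{\hat W}_k\approx M\t\uhat_k\mathbf{\hat\Lambda}_k\inv$ (the exact relation only holds when $\uhat_k$ is chosen as the paired left singular basis for $\mathbf{\hat V}_k$, which the algorithm does not enforce -- another rotation correction). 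In short: your plan is plausible but over-engineered for the second display and too casual about the rotation bookkeeping for the third; the paper's Cape et al.\ shortcut for the $\ell_{2,\infty}$ bound is the idea you are missing and is worth noting.
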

\noindent 
Next, in order to prove \cref{thm:civalidity2}, we will need the following concentration inequality for the estimated matrix $\mathbf{\hat \Gamma}^{(m)}_k$ versus the true matrix $\mathbf{\Gamma}^{(m)}_k$.  
\begin{lemma}\label{lem:gammacloseness}
Instate the conditions of \cref{thm:civalidity2}, and define $\mathbf{\hat \Gamma}^{(m)}_k$ as in \cref{al:ci_eigenvector}.  Then with probability at least $1 - O(p^{-6})$ it holds that
\begin{align*}
%    \| \mathbf{\hat W}_k \mathbf{\hat \Gamma}^{(m)}_k \mathbf{\hat W}_k\t - \mathbf{\Gamma}^{(m)}_k \| &\lesssim \frac{\sigma^2 \kappa \mu_0 \sqrt{r} \log(p)}{\lambda^2 p}+ \frac{\sigma^3 \sqrt{rp}}{\lambda^3} \bigg( \sqrt{r} + \kappa^2 \sqrt{\log(p)} \bigg). \\
 \| \mathbf{\hat W}_k&\big(\mathbf{\hat \Gamma}^{(m)}_k \big)^{1/2}\mathbf{\hat W}_k\t -\big(\mathbf{\Gamma}^{(m)}_k \big)^{1/2} \| \\
 &\lesssim \frac{\sigma}{\lambda} \bigg( \frac{\sigma r \sqrt{p}\log(p) + \sigma \kappa^2 \sqrt{rp} \log^{3/2}(p)}{\lambda} + \frac{\kappa \mu_0 \sqrt{r} \log(p)}{p} + \frac{\mu_0 r^{3/2} \sqrt{\log(p)}}{p} \bigg),
 \end{align*}
 where $\mathbf{\hat W}_k = \mathrm{sgn}(\U_k\t \uhat_k)$.
\end{lemma}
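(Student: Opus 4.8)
\textbf{Proof proposal for Lemma~\ref{lem:gammacloseness}.} The plan is to expand the difference $\mathbf{\hat W}_k(\mathbf{\hat \Gamma}^{(m)}_k)^{1/2}\mathbf{\hat W}_k\t - (\mathbf{\Gamma}^{(m)}_k)^{1/2}$ by first controlling the difference of the unsquared matrices $\mathbf{\hat W}_k \mathbf{\hat \Gamma}^{(m)}_k \mathbf{\hat W}_k\t - \mathbf{\Gamma}^{(m)}_k$ in operator norm, and then invoking a matrix square-root perturbation bound together with the lower bound $\lambda_{\min}(\mathbf{\Gamma}^{(m)}_k)\gtrsim \sigma_{\min}^2/\lambda^2$ established in the proof of \cref{thm:eigenvectornormality2}. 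Recall $\mathbf{\hat \Gamma}^{(m)}_k = \mathbf{\hat \Lambda}_k\inv \mathbf{\hat V}_k\t \hat\Sigma^{(m)}_k \mathbf{\hat V}_k \mathbf{\hat \Lambda}_k\inv$ and $\mathbf{\Gamma}^{(m)}_k = \mathbf{\Lambda}_k\inv \mathbf{V}_k\t \Sigma^{(m)}_k \mathbf{V}_k \mathbf{\Lambda}_k\inv$. I would write the difference (after conjugating by $\mathbf{\hat W}_k$) as a telescoping sum over three sources of error: (i) replacing $\mathbf{\hat \Lambda}_k\inv$ by $\mathbf{W}_{\mathbf{V}_k}\t\mathbf{\Lambda}_k\inv$ (up to the sign matrix), controlled by the third bound in \cref{lem:Vmatrixcloseness}; (ii) replacing $\mathbf{\hat V}_k$ by $\mathbf{V}_k\mathbf{W}_{\mathbf{V}_k}$, controlled by the $\sin\Theta$ and $\ell_{2,\infty}$ bounds in \cref{lem:Vmatrixcloseness}; and (iii) replacing the empirical diagonal variance estimate $\hat\Sigma^{(m)}_k$ (whose entries are $\mathcal{\hat Z}^2_{mab}$) by the true $\Sigma^{(m)}_k$.

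The key estimates are as follows. For (i) and (ii), since $\|\Sigma^{(m)}_k\|\leq \sigma^2$ and $\|\mathbf{\Lambda}_k\inv\|\leq 1/\lambda$, each replacement costs a factor $\sigma^2/\lambda$ times the relevant perturbation, giving terms of order $\frac{\sigma}{\lambda}\cdot\frac{\sigma r\sqrt{p}\log(p) + \sigma\kappa^2\sqrt{rp}\log^{3/2}(p)}{\lambda}$ after plugging in \cref{lem:Vmatrixcloseness} and using $\kappa^2\mu_0^2 r^{3/2}\sqrt{\log(p)}\lesssim p^{1/4}$. Here, because $\hat\Sigma^{(m)}_k$ is diagonal, the cross term $\mathbf{V}_k\t\hat\Sigma^{(m)}_k\mathbf{V}_k$ can be bounded either by $\|\mathbf{V}_k\|_{2,\infty}$-type quantities (picking up factors of $r/p$) or by the crude $\|\hat\Sigma^{(m)}_k\|\lesssim\sigma^2$; one must choose carefully to produce the $\frac{\kappa\mu_0\sqrt{r}\log(p)}{p}$ and $\frac{\mu_0 r^{3/2}\sqrt{\log(p)}}{p}$ terms. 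For (iii), I would write $\mathcal{\hat Z}_{mab} = \mathcal{Z}_{mab} + (\mathcal{\hat Z}_{mab} - \mathcal{Z}_{mab})$ where $\mathcal{\hat Z} = \mathcal{\tilde T} - \mathcal{\hat T}$, so the latter difference is $\mathcal{T}_{mab} - \mathcal{\hat T}_{mab}$, controlled in $\|\cdot\|_{\max}$ by \cref{cor:maxnormbound}. Thus $\mathcal{\hat Z}^2_{mab} - \sigma^2_{mab} = (\mathcal{Z}^2_{mab} - \sigma^2_{mab}) + 2\mathcal{Z}_{mab}(\mathcal{\hat T}_{mab}-\mathcal{T}_{mab}) + (\mathcal{\hat T}_{mab}-\mathcal{T}_{mab})^2$; the first piece is a sum of centered subexponential random variables, handled after conjugating by $\mathbf{V}_k$ via a Bernstein/Hanson–Wright argument together with the incoherence $\|\mathbf{V}_k\|_{2,\infty}\lesssim\sqrt{r}/p$, yielding a bound of order $\sigma^2\cdot\frac{\mu_0\sqrt{r\log(p)}}{p}$-ish after the $\mathbf{\Lambda}_k^{-2}$ scaling; the remaining pieces are lower order given the $\|\cdot\|_{\max}$ bound.

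The main obstacle I anticipate is bookkeeping the diagonal variance-estimation error (iii) sharply: the naive operator-norm bound $\|\mathbf{V}_k\t(\hat\Sigma^{(m)}_k - \Sigma^{(m)}_k)\mathbf{V}_k\|\leq \max_{ab}|\mathcal{\hat Z}^2_{mab} - \sigma^2_{mab}|\cdot\|\mathbf{V}_k\|^2$ loses the incoherence savings and would not give the stated $1/p$-type rate. The fix is to exploit that $\mathbf{V}_k\t D\mathbf{V}_k = \sum_{ab} D_{ab}(\mathbf{V}_k)_{(ab)\cdot}(\mathbf{V}_k)_{(ab)\cdot}\t$ for diagonal $D$, so one controls $\|\sum_{ab}(\mathcal{Z}^2_{mab}-\sigma^2_{mab})(\mathbf{V}_k)_{(ab)\cdot}(\mathbf{V}_k)_{(ab)\cdot}\t\|$ as a sum of independent centered rank-one matrices; a matrix Bernstein inequality then gives operator-norm concentration at scale $\sigma^2\sqrt{\|\mathbf{V}_k\|_{2,\infty}^2\log(p)} + \sigma^2\|\mathbf{V}_k\|_{2,\infty}^2\log(p)$, which after multiplying by $\|\mathbf{\Lambda}_k\inv\|^2\leq 1/\lambda^2$ and comparing to $\lambda_{\min}(\mathbf{\Gamma}^{(m)}_k)^{1/2}\gtrsim\sigma_{\min}/\lambda$ in the square-root perturbation step produces the advertised $\frac{\mu_0 r^{3/2}\sqrt{\log(p)}}{p}$ and $\frac{\kappa\mu_0\sqrt{r}\log(p)}{p}$ contributions. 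The cross term $2\mathcal{Z}_{mab}(\mathcal{\hat T}_{mab}-\mathcal{T}_{mab})$ is subtle because $\mathcal{\hat T}$ depends on $\mathcal{Z}$; I would handle it by using the $\|\cdot\|_{\max}$ bound of \cref{cor:maxnormbound} deterministically on the high-probability event and absorbing it into the already-present $\lambda$-denominator terms. Finally, combining all three contributions and invoking the standard inequality $\|A^{1/2}-B^{1/2}\|\leq \|A-B\|/(\lambda_{\min}(A)^{1/2}+\lambda_{\min}(B)^{1/2})$ (valid for positive semidefinite $A,B$) with $\lambda_{\min}(\mathbf{\Gamma}^{(m)}_k)\gtrsim\sigma_{\min}^2/\lambda^2\gtrsim\sigma^2/\lambda^2$ converts the operator-norm bound on the unsquared difference into the stated bound on the square-root difference, completing the proof.
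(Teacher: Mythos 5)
Your proposal is essentially the same route as the paper's: decompose $\mathbf{\hat W}_k\mathbf{\hat \Gamma}^{(m)}_k\mathbf{\hat W}_k\t-\mathbf{\Gamma}^{(m)}_k$ into (i) the $\mathbf{\hat \Lambda}_k^{-1}$/$\mathbf{\Lambda}_k^{-1}$ replacement, (ii) the $\mathbf{\hat V}_k$/$\mathbf{V}_k$ replacement, and (iii) the variance-estimate error $\hat\Sigma^{(m)}_k-\Sigma^{(m)}_k$; handle (i)--(ii) via \cref{lem:Vmatrixcloseness}; split (iii) via $\mathcal{\hat Z}=\mathcal{Z}+(\mathcal{T}-\mathcal{\hat T})$ and control the $\mathcal{\hat T}-\mathcal{T}$ contributions with the $\|\cdot\|_{\max}$ bound from \cref{cor:maxnormbound} and the purely stochastic centered quadratic via a Bernstein argument against the incoherent rows of $\mathbf{V}_k$; and finally convert to the square-root scale using the standard inequality $\|A^{1/2}-B^{1/2}\|\leq\|A-B\|/(\lambda_{\min}^{1/2}(A)+\lambda_{\min}^{1/2}(B))$ together with $\lambda_{\min}(\mathbf{\Gamma}^{(m)}_k)\gtrsim\sigma_{\min}^2/\lambda^2$. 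That is exactly the paper's $\alpha_1+\alpha_2+\alpha_3$ decomposition followed by Theorem~6.2 of Higham. The one genuine deviation is in (iii): for the centered term $\sum_l(\mathcal{Z}_{ml}^2-\sigma^2_{ml})(\mathbf{V}_k)_{l\cdot}(\mathbf{V}_k)_{l\cdot}\t$ you propose a matrix Bernstein argument, whereas the paper applies scalar Bernstein to each of the $r^2$ entries of $\mathbf{\Lambda}_k^{-1}\mathbf{V}_k\t(\tilde\Sigma-\Sigma)\mathbf{V}_k\mathbf{\Lambda}_k^{-1}$ and multiplies the max-entry bound by $r$. Your matrix-Bernstein variant (variance proxy $\sigma^4\|\mathbf{V}_k\|_{2,\infty}^2\mathbf{I}_{r_k}$) would actually save a factor of $r$ on that one summand, yielding $\mu_0\sqrt{r\log p}/p$ rather than $\mu_0 r^{3/2}\sqrt{\log p}/p$; the paper's cruder route is simpler and avoids needing a matrix Bernstein inequality for subexponential summands, while yours is marginally sharper. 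Either way the bound stated in the lemma follows. One small bookkeeping point to be careful about if you formalize this: the estimate $\|\hat\Sigma^{(m)}_k\|\lesssim\sigma^2\log(p)$ (not just $\sigma^2$) is what you actually get from the high-probability $\|\mathcal{Z}\|_{\max}$ bound, and it is this $\log(p)$ that ultimately produces the $\log(p)$ and $\log^{3/2}(p)$ factors in the first parenthetical term once combined with the $\mathbf{\hat V}_k\mathbf{\hat\Lambda}_k^{-1}$ closeness rate.
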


\subsection{Proof of \cref{thm:civalidity2}} \label{sec:civalidity2proofs}
We now prove \cref{thm:civalidity2}.

\begin{proof}[Proof of \cref{thm:civalidity2}]
Here we suppress the dependence on $t$. \cref{lem:gammacloseness} reveals that
\begin{align*}
   \| \mathbf{\hat W}_k&\big(\mathbf{\hat \Gamma}^{(m)}_k \big)^{1/2}\mathbf{\hat W}_k\t -\big(\mathbf{\Gamma}^{(m)}_k \big)^{1/2} \|  \\
   &\lesssim \frac{\sigma}{\lambda} \bigg( \frac{\sigma r \sqrt{p}\log(p) + \sigma \kappa^2 \sqrt{rp} \log^{3/2}(p)}{\lambda} + \frac{\kappa \mu_0 \sqrt{r} \log(p)}{p} + \frac{\mu_0 r^{3/2} \sqrt{\log(p)}}{p} \bigg) \\
   &= \frac{\sigma}{\lambda} \times o(1)
\end{align*}
with probability at least $1 - O(p^{-6})$.  Snce $\mathbf{\Gamma}_k^{(m)}$ has smallest eigenvalue at most $c\frac{\sigma^2}{\lambda}$ (see the proof of \cref{lem:gammacloseness}), by Weyl's inequality it therefore holds that
\begin{align}
    \lambda_{\min} \bigg( \big(\mathbf{\hat \Gamma}^{(m)}_k \big)^{1/2}\bigg) \gtrsim \frac{\sigma_{\min}}{\lambda}. \label{lowerbdongamma}
\end{align}
Hence, with probability at least $1 - O(p^{-6})$ it holds that
\begin{align*}
    \| e_m\t \bigg( &\uhat_k \mathbf{W}_k\t- \U_k   \bigg) \big( (\mathbf{\Gamma}^{(m)}_k)^{-1/2} - (\mathbf{W}_k \mathbf{\hat \Gamma}^{(m)}_k \mathbf{W}_k\t )^{-1/2} \big) \| \\
    &\leq \| e_m\t \big(  \uhat_k \mathbf{W}_k\t- \U_k   \big) \| \| (\mathbf{\Gamma}^{(m)}_k)^{-1/2} - (\mathbf{W}_k \mathbf{\hat \Gamma}^{(m)}_k \mathbf{W}_k\t )^{-1/2} \| \\
    &\leq \| e_m\t \big( \uhat_k- \U_k  \mathbf{W}_k \big) \| \| (\mathbf{\Gamma}^{(m)}_k)^{-1/2} - (\mathbf{W}_k \mathbf{\hat \Gamma}^{(m)}_k \mathbf{W}_k\t )^{-1/2} \| \\
    &\leq \| \uhat_k - \U_k \mathbf{W}_k \|_{2,\infty} \| (\mathbf{\Gamma}^{(m)}_k)^{-1/2} \bigg( (\mathbf{\Gamma}^{(m)}_k)^{1/2} - \mathbf{W}_k(\mathbf{\hat \Gamma}^{(m)}_k)^{1/2}\mathbf{W}_k\t  \bigg) ( \mathbf{W}_k \mathbf{\hat \Gamma}^{(m)}_k\mathbf{W}_k\t )^{-1/2} \| \\
    &\lesssim \frac{\kappa \sigma\mu_0 \sqrt{r\log(p)}}{\lambda} \frac{1}{\lambda_{\min}^{1/2}(\mathbf{\Gamma}^{(m)}_k)} \frac{1}{\lambda_{\min}^{1/2}(\mathbf{\hat \Gamma}^{(m)}_k)} \| (\mathbf{\Gamma}^{(m)}_k)^{1/2} - \mathbf{W}_k(\mathbf{\hat \Gamma}^{(m)}_k)^{1/2}\mathbf{W}_k\t \| \\
    &\lesssim \frac{\kappa \sigma \mu_0 \sqrt{r\log(p)}}{\lambda} \frac{\lambda^2}{\sigma_{\min}^2} \| (\mathbf{\Gamma}^{(m)}_k)^{1/2} - \mathbf{W}_k(\mathbf{\hat \Gamma}^{(m)}_k)^{1/2}\mathbf{W}_k\t \| \\
    &\lesssim \kappa \mu_0 \sqrt{r\log(p)} \bigg( \frac{\sigma r \sqrt{p}\log(p) + \sigma \kappa^2 \sqrt{rp} \log^{3/2}(p)}{\lambda} + \frac{\kappa \mu_0 \sqrt{r} \log(p)}{p} + \frac{\mu_0 r^{3/2} \sqrt{\log(p)}}{p} \bigg) \\
    &\asymp \frac{ \sigma \kappa \mu_0 r^{3/2} \sqrt{p} \log^{3/2}(p) + \sigma \kappa^3 r \mu_0 \sqrt{p} \log^2(p)}{\lambda} + \frac{\kappa^2 \mu_0^2 r \log^{3/2}(p)}{p} + \frac{\kappa \mu_0^2 r^2 \log(p)}{p}\\
    &\coloneqq \varepsilon,
\end{align*}
where in the fourth line we have implicitly used \cref{thm:twoinfty}.  For a convex set $A$, we denote $A^{\varepsilon}$ as the $\varepsilon$-enlargement via
\begin{align*}
    A^{\varepsilon} &\coloneqq \{ x: d(x,A) \leq \varepsilon \}.
\end{align*}
By Theorem 1.2 of \citet{raicv_multivariate_2019}, if $Z$ is an isotropic $\mathbb{R}^{r_k}$ dimensional random vector, it holds that
\begin{align*}
    \p\bigg( Z \in A^{\varepsilon} \setminus A \bigg) &\lesssim r^{1/4} \varepsilon.
\end{align*}
The proof is now straightforward.  Define $A_{\alpha}$ as the confidence region such that
\begin{align*}
    \p( Z \in A_{\alpha} ) = 1 - \alpha,
\end{align*}
where $Z \sim N(0, I_{r_k})$.  Then by \cref{thm:eigenvectornormality2}, 
\begin{align*}
    \bigg| \p\bigg\{ e_m\t& \U_k \mathbf{\hat W}_k \in \mathrm{C.I.}_{\alpha} (\uhat_k) \bigg\}  - (1-\alpha) \bigg| \\
    &= \bigg| \p\bigg\{ e_m\t \bigg(  \U_k \mathbf{\hat W}_k - \uhat_k \bigg) \in \big(\mathbf{\hat \Gamma}^{(m)}_k\big)^{1/2} A_{\alpha} \bigg\} - (1 - \alpha) \bigg| \\
    &=\bigg| \p\bigg\{ e_m\t \bigg( \U_k - \uhat_k \mathbf{\hat W}_k\t \bigg) \in  \mathbf{\hat W}_k \big(\mathbf{\hat \Gamma}^{(m)}_k\big)^{1/2} \mathbf{\hat W}_k\t A_{\alpha} \bigg\} - (1-\alpha) \bigg| \\
    &\leq \bigg| \p\Bigg\{ \bigg\{ e_m\t \bigg( \U_k - \uhat_k \mathbf{\hat W}_k\t \bigg) \in \big( \mathbf{\Gamma}^{(m)}_k\big)^{1/2} A_{\alpha}\bigg\} \\
    &\quad \bigcap \bigg\{ \bigg\| e_m\t \bigg( \U_k - \uhat_k \mathbf{\hat W}_k\t \bigg) \bigg( \big( \mathbf{\Gamma}^{(m)}_k \big)^{-1/2} - \mathbf{W}_k \big(\mathbf{\hat \Gamma}^{(m)}_k \big)^{-1/2} \mathbf{W}_k\t \bigg) \bigg\| \leq \varepsilon \bigg\} \Bigg\} \\
    &\qquad - (1- \alpha) \bigg| \\
    &\qquad + \p\bigg\{ \bigg\| e_m\t \bigg( \U_k - \uhat_k \mathbf{\hat W}_k\t \bigg) \bigg( \big( \mathbf{\Gamma}^{(m)}_k \big)^{-1/2} - \mathbf{W}_k \big(\mathbf{\hat \Gamma}^{(m)}_k \big)^{-1/2} \mathbf{W}_k\t \bigg) \bigg\| >  \varepsilon \bigg\} \\
    &\leq \bigg| \p\bigg\{ e_m\t \bigg( \U_k - \uhat_k \mathbf{\hat W}_k\t \bigg) \in \big(\mathbf{\Gamma}^{(m)}_k \big)^{1/2} A_{\alpha}^{\varepsilon} - (1- \alpha) \bigg| \\
    &\quad + p^{-6} \\
    &\lesssim \bigg| \p\bigg\{ Z \in A_{\alpha}^{\varepsilon} \bigg\} - (1 - \alpha) \bigg| \\
    &\quad + \frac{\mu_0 r^2}{p} + \frac{\sigma \kappa^3 \mu_0^2 \log(p) r^{3/2} \sqrt{p}}{\lambda} + \frac{\mu_0 r^{3/2} \kappa}{\sqrt{p}} + p^{-6} \\
    &\lesssim \bigg| \p\bigg( Z \in A_{\alpha} \bigg) - (1 - \alpha) \bigg| \\
    &\quad + \p\bigg\{ Z \in A_{\alpha}^{\varepsilon} \setminus A \bigg\} \\
    &\quad +  \frac{\mu_0 r^2}{p} + \frac{\sigma \kappa^3 \mu_0^2 \log(p) r^{3/2} \sqrt{p}}{\lambda} + \frac{\mu_0 r^{3/2} \kappa}{\sqrt{p}} + p^{-6}\\
    &\lesssim r^{1/4} \varepsilon + + \frac{\mu_0 r^2}{p} + \frac{\sigma \kappa^3 \mu_0^2 \log(p) r^{3/2} \sqrt{p}}{\lambda} + \frac{\mu_0 r^{3/2} \kappa}{\sqrt{p}} + p^{-6} \\
    &\lesssim r^{1/2} \varepsilon + + \frac{\mu_0 r^2}{p} + \frac{\sigma \kappa^3 \mu_0^2 \log(p) r^{3/2} \sqrt{p}}{\lambda} + \frac{\mu_0 r^{3/2} \kappa}{\sqrt{p}} + p^{-6} \\
    &\lesssim r^{1/2} \bigg( \frac{ \sigma \kappa \mu_0 r^{3/2} \sqrt{p} \log^{3/2}(p) + \sigma \kappa^3 r \mu_0 \sqrt{p} \log^2(p)}{\lambda} + \frac{\kappa^2 \mu_0^2 r \log^{3/2}(p)}{p} + \frac{\kappa \mu_0^2 r^2 \log(p)}{p} \bigg) \\
    &\quad +\frac{\mu_0 r^2}{p} + \frac{\sigma \kappa^3 \mu_0^2 \log(p) r^{3/2} \sqrt{p}}{\lambda} + \frac{\mu_0 r^{3/2} \kappa}{\sqrt{p}} + p^{-6} \\
    &= o(1),
\end{align*}
where the term is $o(1)$ as long as
\begin{align*}
    \lambda/\sigma \gg \kappa^3 \mu_0^2 \log^2(p) r^2 \sqrt{p},
\end{align*}
which is true by assumption, and 
\begin{align*}
    \kappa^2 \mu_0^2 r^{3/2} \log^{3/2}(p) + \kappa \mu_0^2 r^{5/2} \log(p) \ll p
\end{align*}
which holds as long as $\kappa^2 \mu_0^2 r^{3/2} \sqrt{\log(p)} \lesssim p^{1/4}$, which is also by assumption. This completes the proof. 
\end{proof}

\subsection{Proof of \cref{thm:civalidity}} \label{sec:civalidityproofs}
%We are now prepared to prove \cref{thm:civalidity}.  

\begin{proof}[Proof of \cref{thm:civalidity}]
We will model the argument in the proof of Theorem 4.11 of \citet{chen_spectral_2021}, where we will argue that $\hat s^2_{ijk} - s_{ijk}^2$ is sufficiently small.  We first introduce an auxiliary term
\begin{align*}
    \tilde s^2_{ijk} &\coloneqq \sum_{a} \big( \mathbf{Z}_1 \big)_{ia}^2 \big(\mathbf{V}_1 \mathbf{V}_1\t \big)_{a,(j-1)p_3+k}^2 +\sum_{b} \big( \mathbf{Z}_2 \big)^2_{jb} \big( \mathbf{V}_2 \mathbf{V}_2\t \big)_{b,(k-1)p_1 + i}^2  \\
    &\quad + \sum_c \big( \mathbf{Z}_3 \big)^2_{kc} \big( \mathbf{V}_3 \mathbf{V}_3\t \big)_{c,(i-1)p_2 + j}^2.
\end{align*}
We will compare both $\hat s^2_{ijk}$ and $s_{ijk}^2$ to $\tilde s^2_{ijk}$.
\begin{itemize}
    \item \textbf{Step 1: Showing $\hat s^2_{ijk} \approx \tilde s^2_{ijk}$:} 
Observe that
\begin{align*}
  | \hat s^2_{ijk} - \tilde s^2_{ijk} | &\leq \bigg|  \sum_{a} \big( \mathbf{Z}_1 \big)_{ia}^2 \big(\mathbf{V}_1 \mathbf{V}_1\t \big)_{a,(j-1)p_3+k}^2 -\big( \mathbf{\hat Z}_1 \big)_{ia}^2 \big(\mathbf{\hat V}_1 \mathbf{\hat V}_1\t \big)_{a,(j-1)p_3+k}^2 \bigg| \\
  &\quad + \bigg| \sum_{b} \big( \mathbf{Z}_2 \big)^2_{jb} \big( \mathbf{V}_2 \mathbf{V}_2\t \big)_{b,(k-1)p_1 + i}^2 -  \big( \mathbf{\hat Z}_2 \big)^2_{jb} \big( \mathbf{\hat V}_2 \mathbf{\hat V}_2\t \big)_{b,(k-1)p_1 + i}^2 \bigg|\\
  &\quad +\bigg|  \sum_c \big( \mathbf{Z}_3 \big)^2_{kc} \big( \mathbf{V}_3 \mathbf{V}_3\t \big)_{c,(i-1)p_2 + j}^2 - \big( \mathbf{\hat Z}_3 \big)^2_{kc} \big( \mathbf{\hat V}_3 \mathbf{\hat V}_3\t \big)_{c,(i-1)p_2 + j}^2 \bigg|.
\end{align*}
We will focus on the first term, since the other terms will follow by symmetry.  Note that
\begin{align}
    \bigg| \sum_{a} & \big( \mathbf{Z}_1 \big)_{ia}^2 \big(\mathbf{V}_1 \mathbf{V}_1\t \big)_{a,(j-1)p_3+k}^2 -\big( \mathbf{\hat Z}_1 \big)_{ia}^2 \big(\mathbf{\hat V}_1 \mathbf{\hat V}_1\t \big)_{a,(j-1)p_3+k}^2 \bigg| \nonumber\\
    &\leq \sum_{a} \big| (\mathbf{\hat Z}_1)_{ia}^2 - (\mathbf{Z}_1)_{ia}^2 \big| \big( \mathbf{\hat V}_1 \mathbf{\hat V}_1\t \big)_{a,(j-1)p_3 + k}^2 \\
    &\quad + \sum_a (\mathbf{Z}_1 )_{ia}^2 \bigg| \big(\mathbf{V}_1 \mathbf{V}_1\t\big)^2_{a,(j-1)p_3 + k} - \big(\mathbf{\hat V}_1 \mathbf{\hat V}_1\t\big)^2_{a,(j-1)p_3 + k} \bigg| \nonumber\\
    &\leq \max_{a} | (\mathbf{\hat Z}_1)_{ia}^2 - (\mathbf{Z}_1)_{ia}^2 | \sum_a \big(\mathbf{\hat V}_1 \mathbf{\hat V}_1\t \big)_{a,(j-1)p_3 + k}^2 \nonumber\\
    &\quad + \max_a | (\mathbf{Z}_1 )_{ia}^2 | p^2 \max_{a} \bigg| \big(\mathbf{V}_1 \mathbf{V}_1\t\big)^2_{a,(j-1)p_3 + k} - \big(\mathbf{\hat V}_1 \mathbf{\hat V}_1\t\big)^2_{a,(j-1)p_3 + k} \bigg|\nonumber \\
    &\leq \bigg( \| \mathbf{\hat Z}_1 \|_{\max} + \| \mathbf{Z}_1 \|_{\max} \bigg) \| \mathbf{\hat Z}_1 - \mathbf{Z}_1 \|_{\max} \| \mathbf{\hat V}_1 \|_{2,\infty}^2 \nonumber\\
    &\quad + p^2 \| \mathbf{Z}_1 \|_{\max}^2 \bigg( \| \mathbf{V}_1 \mathbf{V}_1\t \|_{\max} + \| \mathbf{\hat V}_1 \mathbf{\hat V}_1\t \|_{\max} \bigg) \| \mathbf{V}_1 \mathbf{V}_1\t - \mathbf{\hat V}_1 \mathbf{\hat V}_1\t \|_{\max}. \label{v1v2closeness}
\end{align}
It is straightforward to  note that with probability at least $1 - O(p^{-9})$ that
\begin{align*}
    \| \mathbf{Z}_1 \|_{\max} &= \max_{a,b,c} | \mathcal{Z}_{abc} | \\
    &\lesssim \sigma \sqrt{\log(p)}.
\end{align*}
In addition, it holds that
\begin{align*}
    \| \mathbf{V}_1 \mathbf{V}_1\t \|_{\max} &= \max_{i,j} | e_i\t \mathbf{V}_1 \mathbf{V}_1\t e_j | \\
    &\leq \| \mathbf{V}_1 \|_{2,\infty}^2 \\
    &\leq \mu_0^2 \frac{r}{p^2}.
\end{align*}
Similarly,
\begin{align*}
    \| \mathbf{\hat V}_1 \mathbf{\hat V}_1 \|_{\max} &\leq \| \mathbf{\hat V}_1 \|_{2,\infty}^2 \\
    &\leq \bigg( \| \mathbf{\hat V}_1 - \mathbf{V}_1 \mathbf{W}_{\mathbf{V}_1} \|_{2,\infty} + \mu_0 \frac{\sqrt{r}}{p} \bigg)^2.
\end{align*}
We  note that by \cref{lem:Vmatrixcloseness}, it holds that
\begin{align*}
  \| \mathbf{\hat V}_1 - \mathbf{V}_1 \mathbf{W}_{\mathbf{V}_1} \|_{2,\infty} &\lesssim \frac{ \kappa^2 \mu_0 r \sqrt{p\log(p)}}{\lambda} \mu_0 \frac{\sqrt{r}}{p} \\
  &\leq \mu_0 \frac{\sqrt{r}}{p}
\end{align*}
provided that 
\begin{align}
    \lambda/\sigma \gtrsim \kappa^2 \mu_0 r \sqrt{p\log(p)}. \label{snrcondition}
\end{align}
Note that we require that
\begin{align*}
    \kappa^2 \mu_0^2 r^{3/2} \sqrt{\log(p)} \lesssim p^{1/4}.
\end{align*}
In addition, \cref{thm:twoinfty} requires that $\lambda/\sigma \gtrsim \kappa p^{3/4} \sqrt{\log(p)}$.  Therefore, together these imply that
\begin{align*}
    \lambda/\sigma &\gtrsim \kappa p^{1/4} \sqrt{p\log(p)} \\
    &\gtrsim \kappa^3 r^{3/2} \mu_0^2 \log(p) \sqrt{p} \\
    &\gg \kappa^2 \mu_0 r \sqrt{p\log(p)},
\end{align*}
so that \cref{snrcondition} holds. Therefore,
\begin{align*}
    \| \mathbf{\hat V}_1 \mathbf{\hat V}_1\t \|_{\max} &\lesssim \mu_0^2 \frac{r}{p^2}.
\end{align*}
Therefore, the bound in \eqref{v1v2closeness} reduces to
\begin{align}
     \bigg( \| \mathbf{\hat Z}_1 \|_{\max} + \sigma \sqrt{\log(p)} \bigg) \| \mathbf{\hat Z}_1 - \mathbf{Z}_1 \|_{\max} \mu_0^2 \frac{r}{p^2} + \sigma^2 \log(p) \mu_0^2 r  \| \mathbf{V}_1 \mathbf{V}_1\t - \mathbf{\hat V}_1 \mathbf{\hat V}_1\t \|_{\max}.  \label{v1v2bd2}
\end{align}
We now note that
\begin{align*}
    \| \mathbf{\hat Z}_1 - \mathbf{Z}_1 \|_{\max} &= \| \mathcal{\hat Z} - \mathcal{Z} \|_{\max} \\
    &= \| \mathcal{T + Z - \hat T - Z} \|_{\max} \\
    &= \| \mathcal{T - \hat T} \|_{\max} \\
  % &\lesssim \frac{\mu_0 \sigma \sqrt{r\log(p)}}{p} + \frac{\sigma \kappa \mu_0^3 r^{3/2} \sqrt{\log(p)}}{p^{3/2}} + \frac{\sigma^2 \mu_0^4 \kappa^3 r^{3} \log(p)}{\lambda \sqrt{p}} \\
    &\lesssim  \frac{\mu_0\sigma \kappa  \sqrt{r\log(p)}  }{p} +  \frac{\sigma^2 \mu_0^4 \kappa^3 r^3 \log(p)}{\lambda \sqrt{p}}, % \\
%    &\lesssim \sqrt{\log(p)}
\end{align*}
which holds with probability at least $1 - O(p^{-6})$, which holds by \cref{cor:maxnormbound}. As a byproduct, we also obtain that
\begin{align*}
    \| \mathbf{\hat Z}_1 \|_{\max} &\leq \| \mathbf{\hat Z}_1 - \mathbf{Z}_1 \|_{\max} + \sigma \sqrt{\log(p)} \\
    &\lesssim \sigma\sqrt{\log(p)}
\end{align*}
provided that
\begin{align*}
    \lambda/\sigma \gtrsim \mu_0^2 \kappa^3 r^2 \sqrt{\log(p)},
\end{align*}
which is guaranteed by the conditions in \cref{thm:twoinfty} as well as the assumption $\kappa^2 \mu_0^2 r^{3/2} \sqrt{\log(p)} \lesssim p^{1/4}$.  Finally, by \cref{lem:Vmatrixcloseness}, it holds that
\begin{align*}
    \| \mathbf{V}_1 \mathbf{V}_1\t -\mathbf{\hat V}_1 \mathbf{\hat V}_1\t \|_{\max} &\leq \| \mathbf{V}_1 \mathbf{V}_1\t - \mathbf{\hat V}_1 (\mathbf{W}_{\mathbf{V}_1}\t) \mathbf{V}_1\t \|_{\max} + \| \mathbf{\hat V}_1 (\mathbf{W}_{\mathbf{V}_1}\t) \mathbf{V}_1\t - \mathbf{\hat V}_1 \mathbf{\hat V}_1\t \|_{\max} \\
    &\leq \| \mathbf{V}_1 \|_{2,\infty} \| \mathbf{V}_1 - \mathbf{\hat V}_1 \mathbf{W}_{\mathbf{V}_1}\t \|_{2,\infty} + \| \mathbf{\hat V}_1 \|_{2,\infty} \| \mathbf{V}_1 \mathbf{W}_{\mathbf{V}_1} - \mathbf{\hat V}_1 \|_{2,\infty} \\
    &\lesssim \mu_0 \frac{\sqrt{r}}{p} \frac{\kappa^2 \sigma \mu_0^2 r^{3/2} \sqrt{\log(p)}}{\lambda \sqrt{p}} \\
    &\lesssim \frac{ \sigma \mu_0^3 \kappa^2 r^2 \sqrt{\log(p)}}{\lambda p^{3/2}}.
\end{align*}
Therefore, plugging in these estimates to \eqref{v1v2bd2}, we see that with probability at least $1 - O(p^{-6})$,
\begin{align}
    | \hat s^2_{ijk} - \tilde s_{ijk}^2 | &\lesssim  \sigma \sqrt{\log(p)} \bigg( \frac{\mu_0 \sigma \kappa \sqrt{r\log(p)}}{p} + \frac{\sigma^2 \mu_0^4 \kappa^3 r^3 \log(p)}{\lambda \sqrt{p}} \bigg) \mu_0^2 \frac{r}{p^2} \nonumber \\
    &\quad + \sigma^2 \log(p) \mu_0^2 r \frac{\sigma \mu_0^3 \kappa^2 r^2 \sqrt{\log(p)}}{\lambda p^{3/2}} \nonumber \\
    &\lesssim \frac{\sigma^2 \kappa \mu_0^2 r^{3/2} \log(p)}{p^3} + \frac{\sigma^3 \mu_0^6 \kappa^3 r^4 \log^{3/2}(p)}{\lambda p^{5/2}} \nonumber \\
    &\quad + \frac{\sigma^3 \mu_0^5 r^3 \kappa^2 \log^{3/2}(p)}{\lambda p^{3/2}} \nonumber \\
    &\lesssim \frac{\sigma^2 \kappa \mu_0^3 r^{3/2} \log(p)}{p^3} + \frac{\sigma^3 \mu_0^5 r^3 \kappa^2 \log^{3/2}(p)}{\lambda p^{3/2}}.\label{vhatvtildebound}
\end{align}
%\textcolor{black}{Note: the second term is suboptimal relative to the lower bound in \cref{thm:asymptoticnormalityentries}, and this is due to the bounding technique.  However, it is not clear if there is another way to bound this term that eliminates this extra factor.  Moreover, since $\lambda/\sigma \gtrsim p^{3/4}$ by assumption, this bound reduces to $p^{9/4}$ when $\lambda/\sigma \asymp p^{3/4}$, which  still allows unbalanced-ness, but not as much as \cref{thm:asymptoticnormalityentries}.}
\item 
\textbf{Step 2: Showing $\tilde s^2_{ijk} \approx s^2_{ijk}$}: 
Note that the term $\tilde s^2_{ijk}$ is a sum of independent subexponential random variables, so we will apply Bernstein's inequality to it.  In order to avoid additional cross-term covariance factors, we will apply it to each of the three separate terms. 

Define
\begin{align*}
    (s^{(i)})^2 &= \left\|e_{(j-1) p_{3}+k}^{\top} \mathbf{V}_{1} \mathbf{V}_{1}^{\top} \big(\Sigma^{(i)}_1\big)^{1/2}\right\|^{2}; \\
    (s^{(j)})^2  &\coloneqq \left\|e_{(k-1) p_{1}+i}\t \mathbf{V}_{2} \mathbf{V}_{2}^{\top} \big(\Sigma^{(j)}_2\big)^{1/2}\right\|^{2}; \\
    (s^{(k)})^2 &\coloneqq \left\|e_{(i-1)p_{2}+j}\t \mathbf{V}_{3} \mathbf{V}_{3}^{\top} \big(\Sigma^{(k)}_3\big)^{1/2}\right\|^{2},
\end{align*}
so that $s^2_{ijk} = (s^{(i)})^2 + (s^{(j)})^2 + (s^{(k)})^2.$ Define $(\tilde s^{(i)})^2, (\tilde s^{(j)})^2,$ and $(\tilde s^{(k)})^2$ similarly.  Then
\begin{align*}
    \p\bigg\{ |\tilde s^2_{ijk} - s^2_{ijk} | \geq t \bigg\} &\leq 3 \max_{c \in ijk} \p\bigg\{ | (s^{(c)})^2 - (\tilde s^{(c)})^2| \geq t/3 \bigg\}.
\end{align*}
Without loss of generality, we focus on the first term; i.e., $s^{(i)}$.  Observe that
\begin{align*}
    \E (\tilde s^{(i)})^2 = (s^{(i)})^2,
\end{align*}
so that the difference $(\tilde s^{(i)})^2 - (s^{(i)})^2$ term is a sum of mean-zero random variables.  In order to apply Bernstein's inequality (Theorem 2.8.1 in \citet{vershynin_high-dimensional_2018}), we need to bound:
\begin{align*}
A_1 &\coloneqq \sum_{a} \| (( \mathbf{Z}_1)_{ia}^2 - \sigma^2_{ia} ) (\mathbf{V}_1 \mathbf{V}_1\t)_{a,(j-1)p_3 + k}^2 \|_{\psi_1}^2; \\
A_2 &\coloneqq \max_{l}\| (( \mathbf{Z}_1)_{ia}^2 - \sigma^2_{ia} ) (\mathbf{V}_1 \mathbf{V}_1\t)_{a,(j-1)p_3 + k} ^2\|_{\psi_1}.
\end{align*}
However,
\begin{align*}
    \sum_{a} \|& (( \mathbf{Z}_1)_{ia}^2 - \sigma^2_{ia} ) (\mathbf{V}_1 \mathbf{V}_1\t)_{a,(j-1)p_3 + k}^2 \|_{\psi_1}^2   \\
     &= \sum_{a}(\mathbf{V}_1 \mathbf{V}_1\t)_{a,(j-1)p_3 + k}^4 \| (\mathbf{Z}_1)_{ia}^2 - \sigma^2_{ia} \|_{\psi_1}^2 \\
    &\leq \sigma^4 \max_{a}  (\mathbf{V}_1 \mathbf{V}_1\t)_{a,(j-1)p_3 + k}^2 \sum_{a} (\mathbf{V}_1 \mathbf{V}_1\t)_{a,(j-1)p_3 + k}^2 \\
    &\leq \sigma^4 \| \mathbf{V}_1 \|_{2,\infty}^2 \| e_{(j-1)p_3 + k}\t \mathbf{V}_1 \|^2 \| (\mathbf{V}_1 \mathbf{V}_1\t)_{\cdot,(j-1)p_3 + k } \|^2 \\
    &\leq C \sigma^4 \frac{ \mu_0^2 r}{p^2} \| e_{(j-1)p_3 + k}\t \mathbf{V}_1 \|^4,
    \end{align*}
and
\begin{align*}
    \max_{a}\|& (( \mathbf{Z}_1)_{ia}^2 - \sigma^2_{ia} ) (\mathbf{V}_1 \mathbf{V}_1\t)_{a,(j-1)p_3 + k}^2 \|_{\psi_1}   \\
     &\leq \max_{a} | (\mathbf{V}_1 \mathbf{V}_1\t)_{a,(j-1)p_3 + k}^2 | \|(\mathbf{Z}_1)_{ia}^2 - \sigma^2_{ia} \|_{\psi_1} \\
    &\leq \sigma^2 \| \mathbf{V}_1 \|_{2,\infty}^2 \| \| e_{(j-1)p_3 + k}\t \mathbf{V}_1 \|^2  \\
    &\leq C\sigma^2 \mu_0^2 \frac{r}{p^2} \|  e_{(j-1)p_3 + k}\t \mathbf{V}_1 \|^2.
\end{align*}
Therefore, by Bernstein's inequality, 
\begin{align*}
    \p\bigg\{ &| (\tilde s^{(i)})^2 - (s^{(i)})^2 | \geq C t \bigg\} \\
    &\leq 2 \exp \bigg\{ -c \min\bigg( \frac{t^2}{\sigma^4 \| e_{(j-1)p_3 + k}\t \mathbf{V}_1 \|^4 \frac{\mu_0^2  r}{p^2} }, \frac{t }{ \sigma^2 \frac{\mu_0^2 r}{p^2} \|  e_{(j-1)p_3 + k}\t \mathbf{V}_1 \|^2   } \bigg) \bigg\}.
\end{align*}
Taking
\begin{align*}
    t &= C\sigma^2 \| e_{(j-1)p_3 + k}\t \mathbf{V}_1 \|^2 \mu_0 \frac{\sqrt{r\log(p)}}{p},
\end{align*}
yields that
\begin{align*}
     \p\bigg\{ | (\tilde s^{(i)})^2 - (s^{(i)})^2 | &\geq C\sigma^2 \| e_{(j-1)p_3 + k}\t \mathbf{V}_1 \|^2 \mu_0 \frac{\sqrt{r\log(p)}}{p} \bigg\} \\
     &\leq 2 \exp \bigg\{ -c \min\bigg( C^2\log(p) , C\frac{p\sqrt{\log(p)}}{\mu_0 \sqrt{r}} \bigg) \bigg\} \\
     &\leq Cp^{-20},
\end{align*}
since $\mu_0^2 r \lesssim \sqrt{p}$.  Consequently, by symmetry and the union bound, we obtain with this same probability that
\begin{align*}
    | \tilde s_{ijk}^2 - s_{ijk}^2 | &\lesssim \sigma^2 \mu_0 \frac{\sqrt{r\log(p)}}{p} \bigg(  \| e_{(j-1)p_3 + k}\t \mathbf{V}_1 \|^2 + \| e_{(k-1)p_1 + i}\t \mathbf{V}_2 \|^2 + \| e_{(i-1)p_2 + j}\t \mathbf{V}_3 \|^2 \bigg) \\
    &\ll s^2_{ijk},
\end{align*}
since $s^2_{ijk}$ satisfies the lower bound
\begin{align*}
    s^2_{ijk} \geq \sigma_{\min}^2 \bigg( \| e_{(j-1)p_3 + k}\t \mathbf{V}_1 \|^2 + \| e_{(k-1)p_1 + i}\t \mathbf{V}_2 \|^2 + \| e_{(i-1)p_2 + j}\t \mathbf{V}_3 \|^2 \bigg),
\end{align*}
and $\sigma/\sigma_{\min} = O(1)$ by assumption.  
\item \textbf{Step 3: Combining These Bounds}: 
Combining steps 1 and 2, we see that with probability at least $1 - O(p^{-6})$ that
\begin{align*}
    |s^2_{ijk} - \hat s^2_{ijk} | &\lesssim \sigma^2 \mu_0 \frac{\sqrt{r\log(p)}}{p} \bigg( \| e_{(j-1)p_3 + k}\t \mathbf{V}_1 \|^2 + \| e_{(k-1)p_1 + i}\t \mathbf{V}_2 \|^2 + \| e_{(i-1)p_2 + j}\t \mathbf{V}_3 \|^2 \bigg) \\
    &\quad + \frac{\sigma^2 \kappa \mu_0^3 r^{3/2} \log(p)}{p^3} + \frac{\sigma^3 \mu_0^5 r^3 \kappa^2 \log^{3/2}(p)}{\lambda p^{3/2}},
\end{align*}
where we used the bound \eqref{vhatvtildebound}.  Therefore,
\begin{align*}
    |s^2_{ijk} - \hat s^2_{ijk} | &\ll s^2_{ijk}
\end{align*}
under the assumption that
\begin{align*}
    s^2_{ijk} \gg \max\bigg\{  \frac{\sigma^{2} \kappa \mu_{0}^{3} r^{3 / 2} \log (p)}{p^{3}},\frac{\sigma^{3} \mu_{0}^{5} r^{3} \kappa^{2} \log ^{3 / 2}(p)}{\lambda p^{3 / 2}} \bigg\}. \numberthis \label{3202023}
\end{align*}
The second term is guaranteed by the assumption in \cref{thm:civalidity}.  For the first term, recall that the assumption in \cref{thm:asymptoticnormalityentries} implies that
\begin{align*}
    s_{ijk}^2 &\gg \frac{\sigma^2 \kappa^4 \mu_0^6 r^4 \log(p)}{p^3},
\end{align*}
so that the assumption in \eqref{3202023} is met.  

 From this expansion, it holds that
\begin{align*}
    \hat s_{ijk} &= s_{ijk}(1 + o(1))
\end{align*}
with probability at least $1 - O(p^{-6})$.  Therefore, by \cref{thm:asymptoticnormalityentries}, we have that 
\begin{align*}
    \bigg| \p\bigg\{ &\mathcal{T}_{ijk} \in \mathrm{C.I.}_{\alpha}( \mathcal{\hat T}_{ijk} ) \bigg\} - (1 - \alpha) \bigg| \\
    &= \bigg| \p\bigg\{ | \mathcal{\hat T}_{ijk} - \mathcal{T}_{ijk} | \leq z_{\alpha/2} \sqrt{\hat s^2_{ijk}} \bigg\} - (1 - \alpha ) \bigg| \\
    &\leq \bigg| \p\bigg\{ |\mathcal{\hat T}_{ijk} - \mathcal{T}_{ijk} | \leq z_{\alpha/2} s_{ijk} (1 + o(1)) \bigg\} - (1-\alpha) \bigg| + o(1) \\
    &\leq \bigg| \Phi\big( (1+o(1)\big) z_{\alpha/2} - \Phi\big(-(1+o(1))z_{\alpha/2} \big) - (1-\alpha) \bigg| + o(1) \\
    &\leq \bigg| \Phi(z_{\alpha/2}) - \Phi(-z_{\alpha/2}) - (1-\alpha) \bigg| + 2 \bigg| \Phi\big(  (1+ o(1))z_{\alpha/2} \big) - \Phi(z_{\alpha/2}) \bigg| + o(1) \\
    &\leq o(1),
\end{align*}
which follows by the Lipschitz continuity of $\Phi$.  This completes the proof.
\end{itemize}
\end{proof}

\subsection{Proof of Preliminary Lemmas from \cref{sec:civaliditylemmas}}
\label{sec:prelimproofsvalidity}

This section contains all the proofs from \cref{sec:civaliditylemmas}.

\subsubsection{Proof of \cref{lem:Vmatrixcloseness}}

\begin{proof}[Proof of \cref{lem:Vmatrixcloseness}]
Without loss of generality, we consider $k = 1$, and recall we assume $t$ is such that $t = t_0 +1$, where $t_0$ is such that \cref{thm:twoinfty} holds.  We note that $\mathbf{\hat V}_1$ is the right orthonormal matrix in the truncated SVD of the matrix
\begin{align*}
    \bigg(\mathbf{T}_1 + \mathbf{Z}_1 \bigg) \bigg( \uhat_2^{(t-1)} \big( \uhat_2^{(t-1)} \big)\t \otimes  \uhat_3^{(t-1)} \big( \uhat_3^{(t-1)} \big)\t \bigg).
\end{align*}
Note that
\begin{align*}
    \mathbf{E}\t &\coloneqq \mathbf{T}_1 - \bigg( \mathbf{T}_1  + \mathbf{Z}_1 \bigg) \bigg( \uhat_2^{(t-1)} \big( \uhat_2^{(t-1)} \big)\t \otimes  \uhat_3^{(t-1)} \big( \uhat_3^{(t-1)} \big)\t \bigg) \\
    &= \mathbf{T}_1 - \mathbf{T}_1 \bigg( \uhat_2^{(t-1)} \big( \uhat_2^{(t-1)} \big)\t \otimes  \uhat_3^{(t-1)} \big( \uhat_3^{(t-1)} \big)\t \bigg) \\
    &\quad - \mathbf{Z}_1 \bigg( \uhat_2^{(t-1)} \big( \uhat_2^{(t-1)} \big)\t \otimes  \uhat_3^{(t-1)} \big( \uhat_3^{(t-1)} \big)\t \bigg) \\
    &= \mathbf{T}_1 \bigg( \mathcal{P}_{\U_2} \otimes \mathcal{P}_{\U_3}  - \bigg( \uhat_2^{(t-1)} \big( \uhat_2^{(t-1)} \big)\t \otimes  \uhat_3^{(t-1)} \big( \uhat_3^{(t-1)} \big)\t \bigg) \bigg) \\
    &\quad - \mathbf{Z}_1 \bigg( \uhat_2^{(t-1)} \big( \uhat_2^{(t-1)} \big)\t \otimes  \uhat_3^{(t-1)} \big( \uhat_3^{(t-1)} \big)\t \bigg).
\end{align*}
Taking norms, it holds that
\begin{align}
    \| \mathbf{E} \| &\leq \| \mathbf{T}_1 \| \bigg\| \mathcal{P}_{\U_2} \otimes \mathcal{P}_{\U_3}  - \bigg( \uhat_2^{(t-1)} \big( \uhat_2^{(t-1)} \big)\t \otimes  \uhat_3^{(t-1)} \big( \uhat_3^{(t-1)} \big)\t \bigg) \bigg\| \nonumber \\
    &\quad + \|  \mathbf{Z}_1 \bigg( \uhat_2^{(t-1)} \big( \uhat_2^{(t-1)} \big)\t \otimes  \uhat_3^{(t-1)} \big( \uhat_3^{(t-1)} \big)\t \bigg) \| \nonumber \\
    &\leq 2 \lambda_1 \max \bigg\{ \| \sin\Theta( \uhat_2^{(t-1)}, \U_2 ) \|, \| \sin\Theta(\uhat_3^{(t-1)},\U_3 ) \| \bigg\} \nonumber \\
    &\quad + C \sigma \sqrt{pr} \nonumber \\
    &\lesssim  \kappa^2  \sigma \sqrt{p\log(p)} + \sigma \sqrt{pr} \label{Ebound}  \\
    &\ll \lambda, \nonumber
\end{align}
which holds with probability at least $1 - O(p^{-9})$ \textcolor{black}{by \eqref{sinthetaegood}}.  Here we have used the fact that $t_0 + 1 \leq t \leq t_{\max}$, as well as the bounds on the term $\|\mathbf{Z}_1 \big( \uhat_2^{(t-1)} (\uhat_2^{(t-1)})\t \otimes \uhat_3^{(t-1)} (\uhat_3^{(t-1)})\t \| \leq C \sigma \sqrt{pr}$, which holds on the event $\mathcal{E}_{\mathrm{Good}}$ (defined in \cref{sec:notation2}).  Therefore, by the Davis-Kahan Theorem, it holds that
\begin{align*}
    \| \sin\Theta\big( \mathbf{\hat V}_1 ,\mathbf{V}_1 \big) \| &\lesssim \frac{\kappa^2 \sigma \sqrt{p\log(p)} + \sigma \sqrt{pr}}{\lambda},
\end{align*}
with probability at least $1 - O(p^{-9})$. The same argument goes through for the other modes as well. 

We now consider the $\ell_{2,\infty}$ error for $\mathbf{\hat V}_1$.  We will apply Theorem 3.7 of \citet{cape_two--infinity_2019}, (with $\mathbf{X}$ therein defined as $\mathbf{T}_1\t$ and $\U$ and $\mathbf{V}$ switched from their notation) to see that % there exists an orthogonal matrix $\mathbf{W}_{\mathbf{V}_1}$ such that
\begin{align}
    \| \mathbf{\hat V}_1 - \mathbf{V}_1 \mathbf{W}_{\mathbf{V}_1} \|_{2,\infty} &\leq 2 \bigg( \frac{ \| ( \mathbf{I} - \mathbf{V}_1 \mathbf{V}_1\t ) \mathbf{E} \U_1 \U_1\t \|_{2,\infty}}{\lambda} \bigg)  \nonumber\\
    &\quad + 2 \bigg( \frac{ \| ( \mathbf{I} - \mathbf{V}_1 \mathbf{V}_1\t ) \mathbf{E} ( \mathbf{I} - \U_1 \U_1\t) \|_{2,\infty}}{\lambda} \bigg) \| \sin\Theta(\uhat_1^{(t)}, \U_1 ) \|  \nonumber\\
    &\quad + 2 \bigg( \frac{  \| ( \mathbf{I} - \mathbf{V}_1 \mathbf{V}_1\t ) \mathbf{T}_1\t ( \mathbf{I} - \U_1 \U_1\t) \|_{2,\infty}}{\lambda} \bigg) \| \sin\Theta( \uhat_1^{(t)}, \U_1 ) \| \nonumber\\
    &\quad + \| \sin\Theta( \mathbf{\hat V}_1, \mathbf{V}_1 ) \|^2 \| \mathbf{V}_1 \|_{2,\infty} \nonumber\\
    &\leq 2 \bigg( \frac{ \| ( \mathbf{I} - \mathbf{V}_1 \mathbf{V}_1\t ) \mathbf{E} \U_1 \U_1\t \|_{2,\infty}}{\lambda} \bigg)  \nonumber\\
    &\quad + 2 \bigg( \frac{ \| ( \mathbf{I} - \mathbf{V}_1 \mathbf{V}_1\t ) \mathbf{E} ( \mathbf{I} - \U_1 \U_1\t) \|_{2,\infty}}{\lambda} \bigg) \| \sin\Theta(\uhat_1^{(t)}, \U_1 ) \|  \nonumber\\
    &\quad + \| \sin\Theta( \mathbf{\hat V}_1, \mathbf{V}_1 ) \|^2 \| \mathbf{V}_1 \|_{2,\infty}, \\
    &\coloneqq (I) + (II) + (III),
    \label{eq:v1twoinfty}
\end{align}
where we have used the fact that $(\mathbf{I}- \mathbf{V}_1 \mathbf{V}_1\t) \mathbf{T}_1\t = 0$.  Therefore, it suffices to bound each of the other three terms above.  
\begin{itemize}
    \item 
\textbf{The term $(I)$}: We note that numerator satisfies
\begin{align*}
    &\| ( \mathbf{I} - \mathbf{V}_1 \mathbf{V}_1\t ) \mathbf{E} \U_1 \U_1\t \|_{2,\infty} \\
    &= \|  ( \mathbf{I} - \mathbf{V}_1 \mathbf{V}_1\t ) \bigg[ \uhat_2^{(t-1)} (\uhat_2^{(t-1)})\t \otimes \uhat_3^{(t-1)} (\uhat_3^{(t-1)})\t \big(  \mathbf{T}_1\t + \mathbf{Z}_1\t \big) - \mathbf{T}_1\t \bigg] \U_1 \U_1\t \|_{2,\infty} \\
    &\leq \|  ( \mathbf{I} - \mathbf{V}_1 \mathbf{V}_1\t )  \bigg( \uhat_2^{(t-1)} (\uhat_2^{(t-1)})\t \otimes \uhat_3^{(t-1)}( \uhat_3^{(t-1)})\t - \U_2 \U_2\otimes \U_3 \U_3\t \bigg) \mathbf{T}_1\t  \U_1 \U_1\t \|_{2,\infty} \\
    &\quad + \|  ( \mathbf{I} - \mathbf{V}_1 \mathbf{V}_1\t )\bigg(\uhat_2^{(t-1)} (\uhat_2^{(t-1)})\t \otimes \uhat_3^{(t-1)}( \uhat_3^{(t-1)})\t\bigg) \mathbf{Z}_1\t  \U_1 \U_1\t \|_{2,\infty} \\
    &\leq \|    \bigg( \uhat_2^{(t-1)} (\uhat_2^{(t-1)})\t \otimes \uhat_3^{(t-1)}( \uhat_3^{(t-1)})\t- \U_2 \U_2\otimes \U_3 \U_3\t \bigg) \mathbf{T}_1\t   \|_{2,\infty} \\
    &\quad +\|  \mathbf{V}_1 \mathbf{V}_1\t \bigg( \uhat_2^{(t-1)} (\uhat_2^{(t-1)})\t \otimes \uhat_3^{(t-1)}( \uhat_3^{(t-1)})\t - \U_2 \U_2\otimes \U_3 \U_3\t \bigg) \mathbf{T}_1\t  \|_{2,\infty} \\
    &\quad + \|   \bigg(\uhat_2^{(t-1)} (\uhat_2^{(t-1)})\t \otimes \uhat_3^{(t-1)}( \uhat_3^{(t-1)})\t\bigg) \mathbf{Z}_1\t  \U_1 \U_1\t \|_{2,\infty} \\
     &\quad + \|   \mathbf{V}_1 \mathbf{V}_1\t \bigg( \uhat_2^{(t-1)} (\uhat_2^{(t-1)})\t \otimes \uhat_3^{(t-1)}( \uhat_3^{(t-1)})\t\bigg) \mathbf{Z}_1\t \U_1 \U_1\t \|_{2,\infty} \\
     &\leq \|  \uhat_2^{(t-1)} (\uhat_2^{(t-1)})\t \otimes \uhat_3^{(t-1)}( \uhat_3^{(t-1)})\t- \U_2 \U_2\otimes \U_3 \U_3\t \|_{2,\infty} \lambda_1 \\
     &\quad + \| \mathbf{V}_1 \|_{2,\infty} \| \uhat_2^{(t-1)} (\uhat_2^{(t-1)})\t \otimes \uhat_3^{(t-1)}( \uhat_3^{(t-1)})\t- \U_2 \U_2\otimes \U_3 \U_3\t \| \lambda_1 \\
     &\quad + \| \uhat_2^{(t-1)} (\uhat_2^{(t-1)})\t \otimes \uhat_3^{(t-1)}( \uhat_3^{(t-1)})\t\|_{2,\infty} \| \bigg( \uhat_2^{(t-1)} (\uhat_2^{(t-1)})\t \otimes \uhat_3^{(t-1)}( \uhat_3^{(t-1)})\t \bigg) \mathbf{Z}_1\t \| \\
     &\quad + \| \mathbf{V}_1 \|_{2,\infty} \| \bigg( \uhat_2^{(t-1)} (\uhat_2^{(t-1)})\t \otimes \uhat_3^{(t-1)}( \uhat_3^{(t-1)})\t \bigg) \mathbf{Z}_1\t \|,
\end{align*}
where the final inequality is due to the fact that $\uhat_2^{(t-1)} (\uhat_2^{(t-1)})\t \otimes \uhat_3^{(t-1)}( \uhat_3^{(t-1)})\t$ is an orthogonal projection matrix  and hence equals its square. 
From the proof of \cref{lem:zentrywiselemma}, under the condition $\kappa^2 \mu_0^2 r^{3/2} \sqrt{\log(p)} \lesssim p^{1/4}$, \cref{cor:asymptoticnormality_projection} implies that with probability at least $1 - O(p^{-9})$,
\begin{align*}
    \max_k \| \uhat_k \uhat_k\t - \U_k \U_k\t \|_{2,\infty} \lesssim \frac{\sigma \mu_0 \sqrt{r\log(p)}}{\lambda}.
\end{align*}
Therefore,
\begin{align*}
    \| \uhat_2 \uhat_2\t \otimes \uhat_3\uhat_3\t - \U_2 \U_2\t \otimes \U_3\U_3\t \|_{2,\infty} &\leq \| \uhat_2 \uhat_2\t \otimes \big( \uhat_3 \uhat_3\t - \U_3 \U_3\t \big) \|_{2,\infty} \\
    &\quad + \| \big( \uhat_2 \uhat_2 - \U_2 \U_2\t \big) \otimes \U_3 \U_3\t \|_{2,\infty} \\
    &\lesssim \mu_0 \sqrt{\frac{r}{p}} \frac{\sigma \mu_0 \sqrt{r\log(p)}}{\lambda} \\
    &\lesssim \frac{ \sigma \mu_0^2 r \sqrt{\log(p)}}{\lambda \sqrt{p}}.
\end{align*}
In addition, with this same probability, \textcolor{black}{by \eqref{sinthetaegood}} it holds that
\begin{align*}
    \max_k \| \uhat_k \uhat_k\t - \U_k \U_k\t \| &\lesssim \max_k \|\sin\Theta(\uhat_k,\U_k) \| \lesssim \frac{\kappa \sqrt{p\log(p)}}{\lambda/\sigma}.
\end{align*}
Plugging in all these bounds yields
\begin{align*}
      \| ( \mathbf{I} - &\mathbf{V}_1 \mathbf{V}_1\t ) \mathbf{E} \U_1 \U_1\t \|_{2,\infty} \\
      &\lesssim \lambda_1 \frac{ \sigma \mu_0^2 r \sqrt{\log(p)}}{\lambda \sqrt{p}} + \lambda_1 \mu_0 \frac{\sqrt{r}}{p} \frac{\kappa \sigma \sqrt{p\log(p)}}{\lambda} \\
      &\quad + \bigg(\| \uhat_2 \uhat_2\t \otimes \uhat_3 \uhat_3\t \|_{2,\infty} + \| \mathbf{V}_1 \|_{2,\infty} \bigg) \| \bigg( \uhat_2 \uhat_2\t \otimes \uhat_3 \uhat_3\t \bigg) \mathbf{Z}_1\t \| \\
      &\lesssim \lambda_1 \frac{ \sigma \mu_0^2 r \sqrt{\log(p)}}{\lambda \sqrt{p}} + \lambda_1 \mu_0 \frac{\sqrt{r}}{p} \frac{\kappa \sigma \sqrt{p\log(p)}}{\lambda} \\
      &\quad + \mu_0^2 \frac{r}{p} \| \bigg( \uhat_2 \uhat_2\t \otimes \uhat_3 \uhat_3\t \bigg) \mathbf{Z}_1\t \|.
\end{align*}
On the event $\mathcal{E}_{\mathrm{Good}}$, it holds that
\begin{align*}
     \| \mathbf{Z}_1 \bigg( \uhat_2 \uhat_2\t \otimes \uhat_3 \uhat_3\t \bigg) \| &\lesssim \sigma\sqrt{pr};
     \end{align*}
Therefore, with probability at least $1 - O(p^{-9})$,
\begin{align}
    &\frac{ \| ( \mathbf{I} - \mathbf{V}_1 \mathbf{V}_1\t ) \mathbf{E} \U_1 \U_1\t \|_{2,\infty} }{\lambda} \nonumber \\
    &\lesssim  \frac{1}{\lambda} \bigg\{ \lambda_1 \frac{ \sigma \mu_0^2 r \sqrt{\log(p)}}{\lambda \sqrt{p}} + \lambda_1 \mu_0 \frac{\sqrt{r}}{p} \frac{\kappa \sigma \sqrt{p\log(p)}}{\lambda}  + \mu_0^2 \frac{r}{p} \sigma \sqrt{pr} \bigg\}\nonumber \\
     &\lesssim \frac{\kappa \sigma \mu_0^2 r \sqrt{\log(p)}  + \kappa^2 \sigma \mu_0 \sqrt{r\log(p)} + \sigma \mu_0^2 r^{3/2}}{\lambda\sqrt{p}  } \nonumber \\
     &\lesssim \frac{\kappa^2 \sigma \mu_0^2 r^{3/2} \sqrt{\log(p)}}{\lambda \sqrt{p}}\label{v1bd1}
     \end{align}
\item  \textbf{The Term $(II)$:} We note that the numerator of the second term in \eqref{eq:v1twoinfty} is of the form
\begin{align*}
    \| ( \mathbf{I} - \mathbf{V}_1 \mathbf{V}_1\t ) \mathbf{E}( \mathbf{I} - \U_1 \U_1\t) \|_{2,\infty} &\leq   \| ( \mathbf{I} - \mathbf{V}_1 \mathbf{V}_1\t ) \mathbf{E} \|.
\end{align*}
By repeating the argument above, this term satisfies the same upper bound as $(I)$ as the  $\sin\Theta$ distance is upper bounded by one. 
\item \textbf{The Term $(III)$:} The only remaining term is the term
\begin{align*}
    \| \sin\Theta( \mathbf{\hat V}_1 , \mathbf{V}_1 ) \|^2 \|\mathbf{V}_1 \|_{2,\infty} &\lesssim \mu_0 \frac{\sqrt{r}}{p} \frac{ \| \mathbf{E} \|^2}{\lambda^2}
\end{align*}
by the Davis-Kahan Theorem.  We have already showed in \eqref{Ebound} that
\begin{align*}
    \| \mathbf{E} \| &\lesssim \sigma \sqrt{pr} +  \kappa^2 \sigma \sqrt{p\log(p)}.
\end{align*}
Therefore,
\begin{align*}
   \mu_0 \frac{\sqrt{r}}{p} \frac{ \| \mathbf{E} \|^2}{\lambda^2} &\lesssim \mu_0 \frac{\sqrt{r}}{p}  \frac{ \big( \sigma \sqrt{pr} +  \kappa^2 \sigma \sqrt{p\log(p)} \big)^2}{\lambda^2} \\
    &\lesssim \mu_0 \frac{\sqrt{r}}{p}  \frac{\sigma^2 pr + \kappa^4 \sigma^2 p\log(p) + \sigma^2 \kappa^2 p \sqrt{r\log(p)}}{\lambda^2} \\
    &\lesssim \frac{\mu_0 \sigma^2 \sqrt{r} \big( r + \kappa^4 \log(p) + \kappa^2 \sqrt{r\log(p)}\big)}{\lambda^2} \\
    &\lesssim \frac{\mu_0 \sigma^2 r^{3/2} \kappa^4 \log(p)}{\lambda^2}.
\end{align*}
This bound is smaller than the bound in \eqref{v1bd1} as long as
\begin{align*}
    \lambda/\sigma \gtrsim \frac{\kappa^2}{\mu_0} \sqrt{p\log(p)}.
\end{align*}
Recall that we assume that $\lambda/\sigma \gtrsim \kappa p^{3/4} \sqrt{\log(p)}$ and that $\kappa \lesssim p^{1/4}$.  Therefore, \eqref{v1bd1} dominates this upper bound.
\end{itemize}
Putting these bounds together, with probability at least $1 - O(p^{-9})$ it holds that
\begin{align*}
    \| \mathbf{\hat V}_1 - \mathbf{V}_1 \mathbf{W}_{\mathbf{V}_1} \|_{2,\infty} &\lesssim \frac{\kappa^2 \sigma \mu_0^2 r^{3/2} \sqrt{\log(p)}}{\lambda \sqrt{p}},
\end{align*}
which proves the second assertion.

Next, we show that
\begin{align*}
     \| \mathbf{W}_{\mathbf{V}_k}\t \mathbf{\Lambda}_k\inv - \mathbf{\hat{\Lambda}}_k\inv \mathbf{\hat W}_k\t \| &\lesssim \frac{1}{\lambda} \bigg( \frac{\sigma \sqrt{pr} + \kappa^2 \sigma \sqrt{p\log(p)}}{\lambda} \bigg)
\end{align*}
with probability at least $1 - O(p^{-9})$.  
Since $\mathbf{\hat{\Lambda}}_k$ are the leading $r_k$ singular values of the matrix 
\begin{align*}
  \big( \mathbf{T}_k + \mathbf{Z}_k \big) \big( \uhat_{k+1}\uhat_{k+1}\t \otimes \uhat_{k+2} \uhat_{k+2}\t \big),
\end{align*}
then it holds that
\begin{align*}
(\mathbf{\hat{\Lambda}}_k)_{r_k} &\geq (\mathbf{\Lambda}_k)_{r_k}  - 2 \| \mathbf{E}_k \| \\
&\gtrsim (\mathbf{\Lambda}_k)_{r_k},
\end{align*}
where $\mathbf{E}_k$ is as in the previous part of the lemma (where above we suppressed the dependence of $\mathbf{E}_k$ on $k$). Hence $\| \mathbf{\hat{\Lambda}}_k\inv \| \lesssim \lambda\inv$, which will be useful in the sequel. 

Observe that
\begin{align*}
    \| \mathbf{W}_{\mathbf{V}_k}\t \mathbf{\Lambda}_k\inv - \mathbf{\hat{\Lambda}}_k\inv \mathbf{\hat W}_k\t \| &\leq \| \mathbf{\Lambda}_k\inv ( \mathbf{\hat W}_k - \U_k\t \uhat_k ) \| + \| \mathbf{\Lambda}_k\inv \U_k\t \uhat_k - \mathbf{V}_k\t \mathbf{\hat V}_k \mathbf{\hat{\Lambda}}_k\inv \| \\
    &\quad + \|\big( \mathbf{W}_{\mathbf{V}_k}- \mathbf{V}_k\t \mathbf{\hat V}_k \big)\mathbf{\hat{\Lambda}}_k\inv \| \\
    &\lesssim \frac{\| \mathbf{\hat W}_k - \U_k\t \uhat_k \| + \|\mathbf{W}_{\mathbf{V}_k}- \mathbf{V}_k\t \mathbf{\hat V}_k \| }{\lambda} \\
    &\quad + \| \mathbf{\Lambda}_k\inv\big( \U_k\t \uhat_k \mathbf{\hat{\Lambda}}_k - \mathbf{\Lambda}_k \mathbf{V}_k\t \mathbf{\hat V}_k \big) \mathbf{\hat{\Lambda}}_k\inv \| \\
    &\lesssim \frac{\| \mathbf{\hat W}_k - \U_k\t \uhat_k \| + \|\mathbf{W}_{\mathbf{V}_k}- \mathbf{V}_k\t \mathbf{\hat V}_k \| }{\lambda} \\
    &\quad + \frac{ \| \U_k\t \uhat_k \mathbf{\hat{\Lambda}}_k - \mathbf{\Lambda}_k \mathbf{V}_k\t \mathbf{\hat V}_k \|}{\lambda^2}.
\end{align*}
Note that since $\mathbf{\hat W}_k = \mathrm{sgn}(\U_k,\uhat_k)$, it holds on the event $\mathcal{E}_{\mathrm{Good}}$ \textcolor{black}{by \cref{sinthetaegood} and the same argument as \eqref{sintheta14}} that
\begin{align*}
    \| \mathbf{\hat W}_k - \U_k\t \uhat_k \| &\lesssim \| \sin\Theta(\uhat_k,\U_k) \|^2 \\
    &\lesssim \bigg( \frac{\kappa \sigma \sqrt{p\log(p)}}{\lambda} \bigg)^2;
\end{align*}
and similarly that
\begin{align*}
    \| \mathbf{W}_{\mathbf{V}_k} - \mathbf{V}_k\t\mathbf{\hat V}_k \| &\leq \| \sin\Theta(\mathbf{V}_k ,\mathbf{\hat V}_k ) \|^2 \\
    &\lesssim \bigg( \frac{\kappa^2 \sigma \sqrt{p\log(p)} + \sigma \sqrt{pr}}{\lambda} \bigg)^2,
\end{align*}
which holds with probability at least $1 - O(p^{-9})$ by the previous part of this proof.  For the remaining term, we note that by the eigenvector eigenvalue equation, it holds that $\uhat_k \mathbf{\hat{\Lambda}}_k = (\mathbf{T}_k + \mathbf{E}_k) \mathbf{\hat V}_k$, and hence that
\begin{align*}
    \| \U_k\t \uhat_k \mathbf{\hat{\Lambda}}_k - \mathbf{\Lambda}_k \mathbf{V}_k\t \mathbf{\hat V}_k \| &= \| \U_k\t \big( \mathbf{T}_k + \mathbf{E}_k \big) \mathbf{\hat V}_k - \U_k\t \mathbf{T}_k \mathbf{\hat V}_k \| \\
    &= \| \U_k\t \mathbf{E}_k \mathbf{\hat V}_k \| \\
    &\leq \| \mathbf{E}_k \| \\
    &\lesssim \sigma \sqrt{pr} + \kappa^2 \sigma \sqrt{p\log(p)},
\end{align*}
where the final inequality holds with probability at least $1 - O(p^{-9})$ by \eqref{Ebound}.  Putting these all together shows that
\begin{align*}
    \| \mathbf{W}_{\mathbf{V}_k}\t \mathbf{\Lambda}_k\inv - \mathbf{\hat{\Lambda}}_k\inv \mathbf{\hat W}_k\t \| &\lesssim \frac{1}{\lambda} \bigg( \frac{\kappa^2 \sigma \sqrt{p\log(p)} + \sigma \sqrt{pr}}{\lambda} \bigg)^2 + \frac{\sigma \sqrt{pr} + \kappa^2 \sigma \sqrt{p\log(p)}}{\lambda^2} \\
    &\lesssim \frac{1}{\lambda} \bigg( \frac{\sigma \sqrt{pr} + \kappa^2 \sigma \sqrt{p\log(p)}}{\lambda} \bigg),
\end{align*}
which completes the proof.
\end{proof}

\subsubsection{Proof of \cref{lem:gammacloseness}}
\begin{proof}[Proof of \cref{lem:gammacloseness}]
First we will show that
\begin{align*}
        \| \mathbf{\hat W}_k &\mathbf{\hat \Gamma}^{(m)}_k \mathbf{\hat W}_k\t - \mathbf{\Gamma}^{(m)}_k \|\\
        &\lesssim \frac{\sigma^2}{\lambda^2} \bigg( \frac{\sigma r \sqrt{p}\log(p) + \sigma \kappa^2 \sqrt{rp} \log^{3/2}(p)}{\lambda} + \frac{\kappa \mu_0 \sqrt{r} \log(p)}{p} + \frac{\mu_0 r^{3/2} \sqrt{\log(p)}}{p} \bigg)
\end{align*}
with probability at least $1 - O(p^{-6})$. Define $\tilde \Sigma^{(m)}_k$ as the diagonal matrix whose diagonal entries are the squared entries of $e_m\t \mathbf{Z}_k$.  We will proceed in steps. First, we note that
\begin{align*}
   \bigg\|  \mathbf{\hat W}_k\mathbf{\hat \Gamma}^{(m)}_k  \mathbf{\hat W}_k\t - \mathbf{\Gamma}^{(m)}_k  \bigg\| &= \bigg\|   \mathbf{\hat W}_k\mathbf{\hat{\Lambda}}_k\inv \mathbf{\hat V}_k\t \hat \Sigma^{(m)}_k \mathbf{\hat V}_k \mathbf{\hat{\Lambda}}_k\inv \mathbf{\hat W}_k\t - \mathbf{\Lambda}_k\inv \mathbf{V}_k\t \Sigma^{(m)}_k \mathbf{V}_k \mathbf{\Lambda}_k\inv \bigg\| \\
   &\leq\bigg\|   \mathbf{\hat W}_k\mathbf{\hat{\Lambda}}_k\inv \mathbf{\hat V}_k\t \hat \Sigma^{(m)}_k \mathbf{\hat V}_k \mathbf{\hat{\Lambda}}_k\inv \mathbf{\hat W}_k\t - \mathbf{\Lambda}_k\inv \mathbf{V}_k\t  \tilde \Sigma^{(m)}_k \mathbf{V}_k \mathbf{\Lambda}_k\inv \bigg\| \\
   &\quad + \bigg\|   \mathbf{\Lambda}_k\inv \mathbf{V}_k\t\bigg( \tilde \Sigma^{(m)}_k - \Sigma^{(m)}_k \bigg)\mathbf{V}_k \mathbf{\Lambda}_k\inv \bigg\| \\
   &\leq \bigg\| \bigg( \mathbf{\hat W}_k \mathbf{\hat{\Lambda}}_k\inv \mathbf{\hat V}_k\t - \mathbf{\Lambda}_k\inv \mathbf{V}_k\t \bigg) \hat \Sigma^{(m)}_k \mathbf{\hat V}_k \mathbf{\hat{\Lambda}}_k\inv \mathbf{\hat W}_k\t \bigg\| \\
   &\quad + \bigg\| \mathbf{\Lambda}_k\inv \mathbf{V}_k\t \bigg( \tilde \Sigma^{(m)}_k \mathbf{V}_k \mathbf{\Lambda}_k\inv - \hat \Sigma^{(m)}_k \mathbf{\hat V}_k \mathbf{\hat{\Lambda}}_k\inv \mathbf{\hat W}_k\t \bigg) \bigg\| \\
   &\quad + \bigg\|   \mathbf{\Lambda}_k\inv \mathbf{V}_k\t\bigg( \tilde \Sigma^{(m)}_k - \Sigma^{(m)}_k \bigg)\mathbf{V}_k \mathbf{\Lambda}_k\inv \bigg\| \\
   &\coloneqq \alpha_1 + \alpha_2 + \alpha_3.
\end{align*}
In the subsequent steps, we bound $\alpha_1$, $\alpha_2$, and $\alpha_3$, but first we obtain several preliminary bounds.
\begin{itemize}
    \item \textbf{Step 1: Initial Bounds:} First, it holds that
\begin{align*}
\| \hat \Sigma^{(m)}_k - \tilde \Sigma^{(m)}_k \| &= \max_{l} |  \big(\mathbf{\hat Z}_k\big)_{ml}^2 - \big(\mathbf{ Z}_k\big)_{ml}^2  | \\
    &\leq \bigg( \| \mathcal{\hat Z} \|_{\max} + \| \mathcal{Z} \|_{\max} \bigg) \| \mathcal{\hat Z} - \mathcal{Z} \|_{\max}.
\end{align*}
By \cref{cor:maxnormbound}, with probability at least $1 - O(p^{-6})$ it holds that
\begin{align*}
    \| \mathcal{\hat Z - Z} \|_{\max} &= \| \mathcal{\hat T - T} \|_{\max} \\
    &\lesssim \frac{\sigma \kappa \mu_{0} \sqrt{r \log (p)}}{p}+\frac{\sigma^{2} \mu_{0}^{4} \kappa^{3} r^{3} \log (p)}{\lambda \sqrt{p}}.
\end{align*}
Since $\kappa^2 \mu_0^2 r^{3/2} \sqrt{\log(p)} \lesssim p^{1/4}$, and $\lambda/\sigma \gtrsim \kappa p^{3/4} \sqrt{\log(p)}$, it also holds that
\begin{align*}
    \| \mathcal{\hat Z} \|_{\max} &\leq \| \mathcal{\hat Z - Z} \|_{\max} + \| \mathcal{Z} \|_{\max} \\
    &\lesssim \sigma \sqrt{\log(p)}
\end{align*}
with probability at least $1 - O(p^{-6})$.  Therefore, with this same probability,
\begin{align}
    \| \hat \Sigma^{(m)}_k - \tilde \Sigma^{(m)}_k \| &\lesssim \bigg( \| \mathcal{\hat Z} \|_{\max} + \| \mathcal{Z} \|_{\max} \bigg) \| \mathcal{\hat Z} - \mathcal{Z} \|_{\max} \nonumber \\
    &\lesssim \sigma \sqrt{\log(p)} \| \mathcal{\hat T - T} \|_{\max} \nonumber \\
    &\lesssim \sigma \sqrt{\log(p)} \bigg(\frac{\sigma \kappa \mu_{0} \sqrt{r \log (p)}}{p}+\frac{\sigma^{2} \mu_{0}^{4} \kappa^{3} r^{3} \log (p)}{\lambda \sqrt{p}}\bigg) \nonumber \\
    &\asymp \frac{\sigma^2 \kappa \mu_0 \sqrt{r} \log(p)}{p} + \frac{\sigma^3 \mu_0^4 \kappa^3 r^3 \log^{3/2}(p)}{\lambda \sqrt{p}}. \label{sigmahatminussigmatilde}
\end{align}
This argument reveals that with probability at least $1 - O(p^{-6})$,
\begin{align}
    \| \hat \Sigma^{(m)}_k \| &\leq \| \hat \Sigma^{(m)}_k - \tilde \Sigma^{(m)}_k \| + \| \tilde \Sigma^{(m)}_k \| \nonumber \\
    &\lesssim \frac{\sigma^2 \kappa \mu_0 \sqrt{r} \log(p)}{p} + \frac{\sigma^3 \mu_0^4 \kappa^3 r^3 \log^{3/2}(p)}{\lambda \sqrt{p}} + \max_{l} | (\mathbf{Z}_k)_{ml}^2 | \nonumber \\
    &\lesssim \frac{\sigma^2 \kappa \mu_0 \sqrt{r} \log(p)}{p} + \frac{\sigma^3 \mu_0^4 \kappa^3 r^3 \log^{3/2}(p)}{\lambda \sqrt{p}} + \sigma^2 \log(p) \nonumber \\
    &\asymp \sigma^2 \log(p), \label{sigmahatbound}
\end{align}
where the penultimate line is due to the fact that with probability at least $1- O(p^{-6})$, \begin{align*}
    \max_l | \big(\mathbf{Z}_k\big)_{ml}^2 | &\leq \sigma^2 + \max_l | \big(\mathbf{Z}_k\big)_{ml}^2 - \sigma^2_{ml} | \\
    &\lesssim \sigma^2 \log(p),
\end{align*}
together with the assumption that $\kappa^2 \mu_0^2 r^{3/2} \sqrt{\log(p)} \lesssim p^{1/4}$ and $\lambda/\sigma \gtrsim \kappa p^{3/4} \sqrt{\log(p)}$, so that the first two terms are less than $\sigma^2 \log(p)$.

 In addition, by \cref{lem:Vmatrixcloseness} (whose statement does not depend on this lemma), it holds that
\begin{align}
    \| \mathbf{W}_{\mathbf{V}_k}\t  \mathbf{\Lambda}_k\inv - \mathbf{\hat{\Lambda}}_k\inv \mathbf{\hat W}_k\t \| &\lesssim  \frac{1}{\lambda} \bigg( \frac{\sigma \sqrt{pr} + \kappa^2 \sigma \sqrt{p\log(p)}}{\lambda} \bigg) \label{approxcommute1}
\end{align}
with probability $1 - O(p^{-9})$.  In addition,
\begin{align}
    \| \mathbf{\hat V}_k \mathbf{W}_{\mathbf{V}_k}\t - \mathbf{V}_k \| &= \| \mathbf{\hat V}_k  - \mathbf{V}_k \mathbf{W}_{\mathbf{V}_k}\|  \nonumber \\
    &\leq  \| \mathbf{\hat V}_k  - \mathbf{V}_k \mathbf{W}_{\mathbf{V}_k}\|_F\nonumber  \\
    &= \| \sin\Theta(\mathbf{V}_k ,\mathbf{\hat V}_k ) \|_F \nonumber \\
    &\leq\sqrt{r} \| \sin\Theta(\mathbf{V}_k ,\mathbf{\hat V}_k ) \| \nonumber \\
    &\lesssim \sqrt{r} \frac{\sigma \sqrt{pr} + \kappa^2 \sigma \sqrt{p\log(p)}}{\lambda} \nonumber \\
    &\lesssim \frac{\sigma r \sqrt{p} + \kappa^2 \sigma \sqrt{rp\log(p)}}{\lambda}. \label{vsintheta}
\end{align}
Finally, by \eqref{approxcommute1} and \eqref{vsintheta}, we have that
\begin{align}
     \| \mathbf{\hat V}_k \mathbf{\hat{\Lambda}}_k\inv \mathbf{\hat W}_k\t - \mathbf{V}_k \mathbf{\Lambda}_k\inv \| &\leq  \| \mathbf{\hat V}_k \big( \mathbf{W}_{\mathbf{V}_k}\t \mathbf{\Lambda}_k\inv - \mathbf{\hat{\Lambda}}_k\inv  \mathbf{\hat W}_k\t \big) \| + \| \big(\mathbf{V}_k - \mathbf{\hat V}_k \mathbf{W}_{\mathbf{V}_k}\t \big) \mathbf{\Lambda}_k\inv  \|  \nonumber \\
     &\lesssim  \frac{\sigma r \sqrt{p} + \kappa^2 \sigma \sqrt{rp\log(p)}}{\lambda^2} \label{vhatminusv2}.
\end{align}
\item \textbf{Step 2: Bounding $\alpha_1$:} We have that
\begin{align}
    \alpha_1 &\coloneqq \bigg\| \bigg( \mathbf{\hat W}_k \mathbf{\hat{\Lambda}}_k\inv \mathbf{\hat V}_k\t - \mathbf{\Lambda}_k\inv \mathbf{V}_k\t \bigg) \hat \Sigma^{(m)} \mathbf{\hat V}_k \mathbf{\hat{\Lambda}}_k\inv \mathbf{\hat W}_k\t \bigg\| \nonumber \\
    &\lesssim   \frac{\sigma^2 \log(p)}{\lambda} \| \mathbf{\hat V}_k \mathbf{\hat{\Lambda}}_k\inv \mathbf{\hat W}_k\t - \mathbf{V}_k \mathbf{\Lambda}_k\inv \|   \label{sigmahatboundused1} \\
    %&\lesssim  \frac{\sigma^2 \log(p)}{\lambda}  \bigg( \| \mathbf{\hat V}_k \big( \mathbf{W}_{\mathbf{V}_k}\t \mathbf{\Lambda}_k\inv - \mathbf{\hat{\Lambda}}_k\inv  \mathbf{\hat W}_k\t \big) \| + \| \big(\mathbf{V}_k - \mathbf{\hat V}_k \mathbf{W}_{\mathbf{V}_k}\t \big) \mathbf{\Lambda}_k\inv  \| \bigg) \nonumber \\
    %&\lesssim \frac{\sigma^2 \log(p)}{\lambda} \bigg( \| \mathbf{W}_{\mathbf{V}_k}\t \mathbf{\Lambda}_k\inv - \mathbf{\hat{\Lambda}}_k\inv  \mathbf{\hat W}_k\t \| + \frac{ \| \mathbf{V}_k - \mathbf{\hat V}_k \mathbf{W}_{\mathbf{V}_k}\t\|}{\lambda} \bigg)\nonumber \\
    %&\lesssim \frac{\sigma^2 \log(p)}{\lambda} \bigg( \frac{1}{\lambda} \bigg( \frac{\sigma \sqrt{p r}+\kappa^{2} \sigma \sqrt{p \log (p)}}{\lambda} \bigg) + \frac{\sigma r \sqrt{p}+\kappa^{2} \sigma \sqrt{r p \log (p)}}{\lambda^2} \bigg) \label{alpha12} \\
    &\lesssim \frac{\sigma^2 \log(p)}{\lambda} \bigg(  \frac{\sigma r \sqrt{p} + \kappa^2 \sigma \sqrt{rp\log(p)}}{\lambda^2} \bigg) \label{vhatminusv3} \\
    &\asymp \frac{\sigma^3 r \sqrt{p} \log(p) + \sigma^3 \kappa^2 \sqrt{rp} \log^{3/2}(p)}{\lambda^3} \label{alpha1bound}
\end{align}
where \eqref{sigmahatboundused1} holds by \eqref{sigmahatminussigmatilde} and \eqref{vhatminusv3} follows from \eqref{vhatminusv2}.  This bound holds with probability at least $1 - O(p^{-6})$. 
\item \textbf{Step 3: Bounding $\alpha_2$:} Note that
\begin{align}
    \alpha_2 &\coloneqq \left\|\mathbf{\Lambda}_{k}^{-1} \mathbf{V}_{k}^{\top}\left(\tilde{\Sigma}^{(m)}_k \mathbf{V}_{k} \mathbf{\Lambda}_{k}^{-1}-\hat{\Sigma}^{(m)}_k \hat{\mathbf{V}}_{k} \hat{\Lambda}_{k}^{-1} \hat{\mathbf{W}}_{k}^{\top}\right)\right\| \nonumber \\
    &\leq \frac{1}{\lambda} \bigg\| \tilde \Sigma^{(m)}_k \mathbf{V}_k \mathbf{\Lambda}_k\inv - \hat \Sigma^{(m)}_k \mathbf{\hat V}_k \mathbf{\hat{\Lambda}}_k\inv \mathbf{\hat W}_k\t \bigg\| \nonumber \\
    &\leq \frac{1}{\lambda} \bigg(  \frac{ \| \tilde \Sigma^{(m)}_k - \hat \Sigma^{(m)}_k\|}{\lambda} + \| \hat \Sigma^{(m)}_k \| \| \mathbf{V}_k \mathbf{\Lambda}_k\inv - \mathbf{\hat V}_k \mathbf{\hat{\Lambda}}_k\inv \mathbf{\hat W}_k\t \| \bigg) \nonumber \\
    &\lesssim  \frac{1}{\lambda^2} \bigg( \frac{\sigma^{2} \kappa \mu_{0} \sqrt{r} \log (p)}{p}+\frac{\sigma^{3} \mu_{0}^{4} \kappa^{3} r^{3} \log ^{3 / 2}(p)}{\lambda \sqrt{p}} \bigg) \label{zhatminusz} \\
    &\quad + \frac{\sigma^2 \log(p)}{\lambda} \|  \mathbf{V}_k \mathbf{\Lambda}_k\inv - \mathbf{\hat V}_k \mathbf{\hat{\Lambda}}_k\inv \mathbf{\hat W}_k\t \| \label{zhatminusz2} \\
    &\lesssim  \frac{1}{\lambda^2} \bigg( \frac{\sigma^{2} \kappa \mu_{0} \sqrt{r} \log (p)}{p}+\frac{\sigma^{3} \mu_{0}^{4} \kappa^{3} r^{3} \log ^{3 / 2}(p)}{\lambda \sqrt{p}} \bigg) \nonumber \\
    &\quad + \frac{\sigma^2 \log(p)}{\lambda} \bigg( \frac{\sigma r \sqrt{p}+\kappa^{2} \sigma \sqrt{r p \log (p)}}{\lambda^{2}}\bigg) \label{vhatminusv4} \\
    &\asymp \frac{\sigma^3 r \sqrt{p} \log(p) + \sigma^3 \kappa^2 \sqrt{rp} \log^{3/2}(p)}{\lambda^3} + \frac{\sigma^2 \kappa \mu_0 \sqrt{r} \log(p)}{\lambda^2 p} \label{alpha2bound}
\end{align}
where \eqref{zhatminusz} follows from \eqref{sigmahatminussigmatilde},  \eqref{zhatminusz2} follows from \eqref{sigmahatbound}, and \eqref{vhatminusv4} follows from \eqref{vhatminusv2}. We note that \eqref{alpha2bound} follows since
\begin{align*}
    \frac{\sigma^3 \mu_0^4 \kappa^3 r^3 \log^{3/2}(p)}{\lambda^3 \sqrt{p}} \lesssim \frac{\sigma^3 \kappa^2 \sqrt{rp} \log^{3/2}(p)}{\lambda^3},
\end{align*}
which holds on the assumption $\kappa^2 \mu_0^2 r^{3/2} \sqrt{\log(p)} \lesssim p^{1/4}$. 
\item 
\textbf{Step 4: Bounding $\alpha_3$:} Finally, we note that
\begin{align*}
    \| \mathbf{V}_k\t \mathbf{\Lambda}_k\inv \bigg( \tilde \Sigma^{(m)}_k - \Sigma^{(m)}_k \bigg) \mathbf{\Lambda}_k\inv \mathbf{V}_k \| &\leq r  \| \mathbf{V}_k\t \mathbf{\Lambda}_k\inv \bigg( \tilde \Sigma^{(m)}_k - \Sigma^{(m)}_k \bigg) \mathbf{\Lambda}_k\inv \mathbf{V}_k \|_{\max}.
\end{align*}
We now note that the $i_1,i_2$ entry of the above matrix can be written as
\begin{align*}
    \sum_{j_1,j_2} &\big( \mathbf{V}_k \mathbf{\Lambda}_k\inv \big)_{j_1 i_1} \big( \mathbf{V}_k \mathbf{\Lambda}_k\inv\big)_{j_2 i_2} \bigg( \tilde \Sigma^{(m)}_k - \Sigma^{(m)}_k \bigg)_{j_1 j_2}\\
    &= \sum_{j_1}\big( \mathbf{V}_k \mathbf{\Lambda}_k\inv \big)_{j_1 i_1} \big( \mathbf{V}_k \mathbf{\Lambda}_k\inv\big)_{j_1 i_2} \bigg( \tilde \Sigma^{(m)}_k - \Sigma^{(m)}_k \bigg)_{j_1 j_1} \\
    &= \sum_{j_1}\big( \mathbf{V}_k \mathbf{\Lambda}_k\inv \big)_{j_1 i_1} \big( \mathbf{V}_k \mathbf{\Lambda}_k\inv\big)_{j_1 i_2} \bigg(\big( \mathbf{Z}_k\big)_{mj_1}^2 - \E \big( \mathbf{Z}_k\big)_{mj_1}^2 \bigg) \\
    &\coloneqq \eta_{i_1i_2}.
\end{align*}
This is a sum of $p_{-k}$ independent subexponential random variables, so we will apply Bernstein's inequality (Theorem 2.8.1 of \citet{vershynin_high-dimensional_2018}).  We note that
\begin{align*}
    \sum_{j_1} \bigg\| \big( \mathbf{V}_k \mathbf{\Lambda}_k\inv &\big)_{j_1 i_1} \big( \mathbf{V}_k \mathbf{\Lambda}_k\inv\big)_{j_1 i_2} \bigg(\big( \mathbf{Z}_k\big)_{mj_1}^2 - \E \big( \mathbf{Z}_k\big)_{mj_1}^2 \bigg) \bigg\|_{\psi_1}^2 \\
    &\leq \sum_{j_1} \big( \mathbf{V}_k \mathbf{\Lambda}_k\inv \big)_{j_1 i_1}^2 \big( \mathbf{V}_k \mathbf{\Lambda}_k\inv\big)_{j_1 i_2}^2 \|  \big( \mathbf{Z}_k\big)_{mj_1}^2 - \E \big( \mathbf{Z}_k\big)_{mj_1}^2 \|_{\psi_1}^2 \\
    &\lesssim\sigma^4 \max_{j_1} \| e_{j_1}\t\mathbf{V}_k \mathbf{\Lambda}_k\inv \|^2 \sum_{j_1} \big( \mathbf{V}_k \mathbf{\Lambda}_k\inv\big)_{j_1 i_2}^2  \\
    &\lesssim \frac{\sigma^4}{\lambda^4} \| \mathbf{V}_k \|_{2,\infty}^2 \sum_{j_1} (\mathbf{V}_k)_{j_1 i_2}^2 \\
    &\leq C\frac{\sigma^4}{\lambda^4} \mu_0^2 \frac{r}{p^2}
\end{align*}
In addition,
\begin{align*}
    \max_{j_1} \bigg\| \big( \mathbf{V}_k \mathbf{\Lambda}_k\inv \big)_{j_1 i_1} \big( \mathbf{V}_k \mathbf{\Lambda}_k\inv\big)_{j_1 i_2} \bigg(\big( \mathbf{Z}_k\big)_{mj_1}^2 - \E \big( \mathbf{Z}_k\big)_{mj_1}^2 \bigg) \bigg\|_{\psi_1} &\leq C \frac{\sigma^2}{\lambda^2} \mu_0^2 \frac{r}{p^2}.
\end{align*}
By Bernstein's inequality, it holds that
\begin{align*}
    \p\bigg\{ |\eta_{i_1i_2} | \geq t \bigg\} &\leq 2 \exp\bigg( -c \min\bigg\{ \frac{t^2}{ \frac{\sigma^4}{\lambda^4} \mu_0^2 \frac{r}{p^2}}, \frac{t}{\frac{\sigma^2}{\lambda^2} \mu_0^2 \frac{r}{p^2}} \bigg\} \bigg).
\end{align*}
Let $t = C\frac{\sigma^2}{\lambda^2} \mu_0 \frac{\sqrt{r}}{p} \sqrt{\log(p)}$.  Then 
\begin{align*}
      \p\bigg\{ |\eta_{i_1i_2} | \geq C \frac{\sigma^2}{\lambda^2} \mu_0 \frac{\sqrt{r}}{p} \bigg\} &\leq 2 \exp\bigg( -c \min\bigg\{ C^2 \log(p), \frac{C p \sqrt{\log(p)}}{\mu_0} \bigg\} \bigg) \\
      &\lesssim O(p^{-8}).
\end{align*}
Taking a union bound over all $r^2$ entries and noting that $r^2 \lesssim \sqrt{p}$, it holds with probability at least $1 - O(p^{-7})$ that
\begin{align}
    \alpha_3 &\lesssim \mu_0 \frac{r^{3/2}}{p} \frac{\sigma^2}{\lambda^2} \sqrt{\log(p)}. \label{alpha3bound}
\end{align}
\item \textbf{Step 5: Putting It All Together:} Combining \eqref{alpha1bound}, \eqref{alpha2bound}, and \eqref{alpha3bound}, we have that
\begin{align*}
   & \left\|\hat{\mathbf{W}}_{k} \hat{\Gamma}^{(m)} \hat{\mathbf{W}}_{k}^{\top}-\mathbf{\Gamma}^{(m)}_k\right\| \\&\lesssim \frac{\sigma^{3} r \sqrt{p} \log (p)+\sigma^{3} \kappa^{2} \sqrt{r p} \log ^{3 / 2}(p)}{\lambda^{3}} +\frac{\sigma^{2} \kappa \mu_{0} \sqrt{r} \log (p)}{\lambda^{2} p} +  \mu_0 \frac{r^{3/2}}{p} \frac{\sigma^2}{\lambda^2} \sqrt{\log(p)} \\
    &\leq \frac{\sigma^2}{\lambda^2} \bigg( \frac{\sigma r \sqrt{p}\log(p) + \sigma \kappa^2 \sqrt{rp} \log^{3/2}(p)}{\lambda} + \frac{\kappa \mu_0 \sqrt{r} \log(p)}{p} + \frac{\mu_0 r^{3/2} \sqrt{\log(p)}}{p} \bigg)
\end{align*}
with probability at least $1 - O(p^{-6})$. We now complete the proof of the lemma.  By Theorem 6.2 of \citet{higham_functions_2008}, it holds that
\begin{align*}
    \| &\mathbf{\hat W}_k\big(\mathbf{\hat \Gamma}^{(m)}_k \big)^{1/2}\mathbf{\hat W}_k\t -\big(\mathbf{\Gamma}^{(m)}_k \big)^{1/2} \| \\
    &\leq \frac{1}{ \lambda_{\min}^{1/2}(\mathbf{\hat \Gamma}^{(m)}_k) + \lambda_{\min}^{1/2}( \mathbf{\Gamma}^{(m)}_k)} \| \mathbf{\hat W}_k \mathbf{\hat \Gamma}^{(m)}_k \mathbf{\hat W}_k\t - \mathbf{\Gamma}^{(m)}_k \| \\
    &\lesssim \frac{1}{\lambda_{\min}^{1/2}( \mathbf{\Gamma}^{(m)}_k)}  \frac{\sigma^2}{\lambda^2} \bigg( \frac{\sigma r \sqrt{p}\log(p) + \sigma \kappa^2 \sqrt{rp} \log^{3/2}(p)}{\lambda} + \frac{\kappa \mu_0 \sqrt{r} \log(p)}{p} + \frac{\mu_0 r^{3/2} \sqrt{\log(p)}}{p} \bigg) \\
    &\lesssim \frac{\sigma}{\lambda} \bigg( \frac{\sigma r \sqrt{p}\log(p) + \sigma \kappa^2 \sqrt{rp} \log^{3/2}(p)}{\lambda} + \frac{\kappa \mu_0 \sqrt{r} \log(p)}{p} + \frac{\mu_0 r^{3/2} \sqrt{\log(p)}}{p} \bigg),
\end{align*}
where we have used the observation that
\begin{align*}
     \lambda_{\min}( \mathbf{\Gamma}^{(m)}_k ) \geq \frac{\sigma_{\min}^2}{\lambda},
 \end{align*}
 (see the proof of \cref{thm:eigenvectornormality2}), together with the assumption $\sigma /\sigma_{\min} = O(1)$.  This bound holds with probability at least $1 - O(p^{-6})$, which completes the proof. 
 \end{itemize}
\end{proof}

\section{Proofs of Applications and Some More General Theorems} \label{sec:applicationproofs}
In this section we prove the results in \cref{sec:consequences}.  In \cref{sec:testingproof} we prove \cref{cor:testing}.  Next, in \cref{sec:generalizationapplication} we provide slightly more general statements of \cref{thm:simultaneousinference_v1} and \cref{thm:entrytesting_v1}; i.e., \cref{thm:simultaneousinference} and \cref{thm:entrytesting}.  We then prove \cref{thm:simultaneousinference} and \cref{thm:entrytesting} the subsequent two subsections.  The proofs of these results rely heavily on the previous proofs.

\subsection{Proof of \cref{cor:testing}} \label{sec:testingproof}
\begin{proof}[Proof of \cref{cor:testing}]
%First, will prove the results with $n = p_1 = p_2$ and $L = p_3$ and $K = r_1 = r_2$ and $r = r_3$.  
Recall that throughout $r_k = O(1)$.  We will also assume that $\mu_0,\kappa = O(1)$, and that \begin{align}
    \lambda/\sigma \gg p^{3/4} \sqrt{\log(p)}.\label{snrconditiontoverify} %\asymp p^{3/4} \sqrt{\log(p)}, 
\end{align}
We will verify these conditions the end of the proof.  Without loss of generality we prove the result for $k = 1$.  For convenience we will suppress the dependence of $\mathbf{\Pi}_1$ and $\U_1$ on the index $k$.  

By \cref{lem:gammacloseness}, it holds that with probability at least $1 - O(p^{-6})$ that
\begin{align*}
    &\left\|\hat{\mathbf{W}}_{1} \big(\mathbf{\hat{\Gamma}}^{(m)}_1\big)^{1/2} \hat{\mathbf{W}}_{1}^{\top}-\big(\mathbf{\Gamma}^{(m)}_1\big)^{1/2}\right\| \\
    &\lesssim \frac{\sigma}{\lambda}\left(\frac{\sigma r \sqrt{p} \log (p)+\sigma \kappa^{2} \sqrt{r p} \log ^{3 / 2}(p)}{\lambda}+\frac{\kappa \mu_{0} \sqrt{r} \log (p)}{p}+\frac{\mu_{0} r^{3 / 2} \sqrt{\log (p)}}{p}\right) \\
    &\asymp \frac{\sigma}{\lambda} \bigg( \frac{\sqrt{p} \log^{3/2}(p)}{\lambda/\sigma} \bigg) \\
    &\ll \frac{\sigma_{\min}}{\lambda}
\end{align*}
since $\sigma/\sigma_{\min} = O(1)$. This implies that
\begin{align*}
  \frac{  \left\|\hat{\mathbf{W}}_{1} \big(\hat{\Gamma}^{(m)}\big)^{1/2} \hat{\mathbf{W}}_{1}^{\top}-\big(\mathbf{\Gamma}^{(m)}_k\big)^{1/2}\right\|}{\lambda_{\min}^{1/2}(\mathbf{\Gamma}^{(m)}_k)} \to 0 
\end{align*}
almost surely, where we implicitly use the fact that $\lambda_{\min}(\mathbf{\Gamma}^{(m)}_k) \gtrsim \frac{\sigma_{\min}^2}{\lambda^2}$.  Consequently, we have the almost sure convergence
\begin{align}
    \| \hat{\mathbf{W}}_{1}\big(\mathbf{\hat \Gamma}^{(m)}_k\big)^{1/2}\hat{\mathbf{W}}_{1}\t \big(\mathbf{\Gamma}^{(m)}_k\big)^{-1/2} - \mathbf{I}_{r_1}\| \to 0. \label{gammaconv1}
\end{align}
Next, by \cref{thm:eigenvectornormality2}, it holds that
\begin{align}
   \mathbf{\hat W}_{1}\t \bigg( \Gamma^{(i)} + \Gamma^{(j)} \bigg)^{-1/2}  \mathbf{\hat W}_{1} \big( \uhat_{i\cdot} - \uhat_{j\cdot} -  \big(\U \mathbf{\hat W}_{1}\big)_{i\cdot} + \big( \U  \mathbf{\hat W}_{1}\big)_{j\cdot} \big) \to N(0, \mathbf{I}_{r_1}), \label{normalconv2}
\end{align}
where we note that the proof of \cref{thm:eigenvectornormality2} reveals that rows $i$ and $j$ of $\uhat$ are asymptotically independent.  
Therefore, \eqref{normalconv2}, \eqref{gammaconv1}, and the continuous mapping theorem imply
\begin{align*}
    \big(\hat \Gamma^{(i)} + \hat \Gamma^{(j)} \big)^{-1/2}\big( \uhat_{i\cdot} - \uhat_{j\cdot} -  \big(\U\mathbf{\hat W}_{1}\big)_{i\cdot} + \big( \U \mathbf{\hat W}_{1}\big)_{j\cdot} \big) \to N(0, \mathbf{I}_{r_1} ).
\end{align*}
Under $H_0$, by Proposition 2 of \citet{agterberg_estimating_2022} it holds that $(\U \mathbf{\hat W}_{1})_{i\cdot} = (\U \mathbf{\hat W}_{1})_{j\cdot}$.  Therefore, under the null it holds that
\begin{align*}
    \big(\hat \Gamma^{(i)} + \hat \Gamma^{(j)} \big)^{-1/2}\big( \uhat_{i\cdot} - \uhat_{j\cdot} \big) \to N(0, \mathbf{I}_{r_1}),
\end{align*}
and hence that
\begin{align*}
    \hat T_{ij} &= \|  \big(\hat \Gamma^{(i)} + \hat \Gamma^{(j)} \big)^{-1/2}\big( \uhat_{i\cdot} - \uhat_{j\cdot} \big) \|^2 \to \chi^2_{r_1}.
\end{align*}
Next, under any alternative, Slutsky's Theorem and \cref{gammaconv1} imply
\begin{align*}
    \big(\hat \Gamma^{(i)} &+ \hat \Gamma^{(j)} \big)^{-1/2}\big( \uhat_{i\cdot} - \uhat_{j\cdot}-  \big(\hat \Gamma^{(i)} + \hat \Gamma^{(j)} \big)^{1/2} \mathbf{\hat W}_{1}\t \bigg( \Gamma^{(i)} + \Gamma^{(j)} \bigg)^{-1/2}  \mathbf{\hat W}_{1}  \big(\U\mathbf{\hat W}_{1}\big)_{i\cdot} + \big( \U \mathbf{\hat W}_{1}\big)_{j\cdot} \big)\\
    &\qquad \qquad \to N(0,\mathbf{I}_{r_1}),
\end{align*}
which further implies that
\begin{align*}
     \big(\hat \Gamma^{(i)} &+ \hat \Gamma^{(j)} \big)^{-1/2}\big( \uhat_{i\cdot} - \uhat_{j\cdot} \big) -  \mathbf{\hat W}_{1}\t \bigg( \Gamma^{(i)} + \Gamma^{(j)} \bigg)^{-1/2}  \mathbf{\hat W}_{1}  \big(\U\mathbf{W}_{1}\big)_{i\cdot} + \big( \U \mathbf{W}_{1}\big)_{j\cdot} \big) \\
     &\to N(0, \mathbf{I}_{r_1}).
\end{align*}
Therefore, the distribution of the random variable
\begin{align*}
   \big(\hat \Gamma^{(i)} + \hat \Gamma^{(j)} \big)^{-1/2}\big( \uhat_{i\cdot} - \uhat_{j\cdot} \big)
\end{align*}
is asymptotically equivalent to a Gaussian random variable with mean
\begin{align*}
    \mathbf{\hat W}_{1}\t \bigg( \Gamma^{(i)} + \Gamma^{(j)} \bigg)^{-1/2}  \mathbf{\hat W}_{1}  \big(\U\mathbf{\hat W}_{1}\big)_{i\cdot} + \big( \U \mathbf{\hat W}_{1}\big)_{j\cdot} \big).
\end{align*}
Therefore, let $\eta_{ij}$ denote this Gaussian random variable. By the Delta method, we have that
\begin{align*}
    \hat T_{ij} &= \| \eta_{ij}  \|^2,
\end{align*}
where the equality is in distribution.  It is straightforward to see that $\eta_{ij}^2$ is a noncentral $\chi^2$ distributed random variable with noncentrality parameter
\begin{align*}
    \|   \mathbf{\hat W}_{1}\t &\bigg( \Gamma^{(i)} + \Gamma^{(j)} \bigg)^{-1/2}  \mathbf{\hat W}_{1}  \big(\U\mathbf{\hat W}_{1}\big)_{i\cdot} + \big( \U \mathbf{\hat W}_{1}\big)_{j\cdot} \big) \|^2\\
    &= \big( \U_{i\cdot} - \U_{j\cdot} \big)\t \bigg( \Gamma^{(i)} + \Gamma^{(j)} \bigg)^{-1}  \big( \U_{i\cdot} - \U_{j\cdot} \big).
\end{align*}
When this term converges to $\gamma < \infty$, we immediately obtain the result under the alternative.  Therefore, it suffices to show that
\begin{align}
     \big( \U_{i\cdot} - \U_{j\cdot} \big)\t \bigg( \Gamma^{(i)} + \Gamma^{(j)} \bigg)^{-1}  \big( \U_{i\cdot} - \U_{j\cdot} \big) \to \infty \label{snronditiontoverify2}
\end{align}
whenever
\begin{align}
    \frac{\lambda_{\min}(\mathcal{S})}{\sigma} p \| \mathbf{\Pi}_{i\cdot} - \mathbf{\Pi}_{j\cdot} \| \to \infty. \label{snrsnr}
\end{align}
We now verify this condition, as well the condition \eqref{snrconditiontoverify}. To examine the SNR condition, we note that Lemma 1 of \citet{agterberg_estimating_2022} shows that
\begin{align*}
    \lambda \asymp \lambda_{\min}(\mathcal{S}) p^{3/2}
\end{align*}
as long as  $r$ is bounded.  Therefore, the signal-strength condition 
\begin{align*}
    \frac{\lambda_{\min}(\mathcal{S})^2}{\sigma^2} \gg \frac{1}{p^{3/2}} \log(p)
\end{align*}
is equivalent to the condition
\begin{align*}
    \frac{\lambda^2}{\sigma^2} \gg p^{3/2} \log(p),
\end{align*}
which is guaranteed whenever
\begin{align*}
    \lambda/\sigma \gg p^{3/4} \sqrt{\log(p)},
\end{align*}
which is what is needed for \cref{thm:civalidity2}.  In addition, we note that the incoherence condition $\mu_0 = O(1)$ is guaranteed by Lemma 1 of \citet{agterberg_estimating_2022}, as well as the fact that $\kappa = O(1)$.  This verifies \eqref{snrconditiontoverify}.

 For the condition \eqref{snronditiontoverify2}, we note that
 \begin{align*}
    \big( \U_{i\cdot} - \U_{j\cdot} \big)\t \bigg( \Gamma^{(i)} + \Gamma^{(j)} \bigg)^{-1}  \big( \U_{i\cdot} - \U_{j\cdot} \big) &\gtrsim \| \U_{i\cdot} - \U_{j\cdot} \|^2 \frac{\lambda^2}{\sigma^2}
 \end{align*}
 Therefore, this term diverges as long as 
 \begin{align}
      \| \U_{i\cdot} - \U_{j\cdot} \| \gg \frac{\sigma}{\lambda} \asymp \frac{\sigma}{\lambda_{\min}(\mathcal{S}) p^{3/2}} \label{snrconditiontoverify3}
 \end{align}
% We follow the proof of Lemma 7 of \citet{fan_simple_2022}.  First, observe that $\U_1$ can be obtained via the eigendecomposition of the matrix
% \begin{align*}
% \mathbb{Q} &=   \mathbf{\Pi} \bigg( \mathbf{M}^{(1)}     \mathbf{\Pi}\t     \mathbf{\Pi} \big(  \mathbf{M}^{(1)}  \big)\t \bigg) \mathbf{\Pi}\t.
% \end{align*}
% Since $\mathbb{Q} = \U \mathbf{\Lambda}_1 \U\t,$ it holds that
% \begin{align*}
%     \U = \Pi \mathbb{M} \Pi\t \U \mathbf{\Lambda}_1\inv,
% \end{align*}
% where
% \begin{align*}
%     \mathbb{M} &\coloneqq\mathbf{M}^{(1)}     \mathbf{\Pi}\t     \mathbf{\Pi}\big(  \mathbf{M}^{(1)}  \big)\t.
% \end{align*}
% since $\mathbb{M}$ is full-rank, we see that there must exist an invertible matrix $\mathbf{R}$ such that
% \begin{align*}
%     \U = \mathbf{\Pi} \mathbf{R}.
% \end{align*}
% Moreover, since $\U\t \U = \mathbf{I}_{r_1}$, it holds that $\mathbf{RR\t \Pi\t \Pi R R\t} = \mathbf{RR\t}$ and hence
% \begin{align*}
%     \mathbf{RR}\t = (\mathbf{\Pi\t\Pi})\inv.
% \end{align*}
% Therefore, since $\mathbf{R}$ is a $r_1\times r_1$ matrix and $\lambda_{\min}(\mathbf{\Pi\t \Pi}) \gtrsim p$, it holds that the entries of $\mathbf{R}$ are of order $\frac{1}{\sqrt{p}}$.  Consequently,
By Proposition 2 of \citet{agterberg_estimating_2022}, one has that $\U_k = \mathbf{\Pi}_k \U\pure$ and that $\U\pure$ has entries of order $\frac{1}{\sqrt{p}}$.  Hence,
\begin{align*}
    \| \U_{i\cdot} - \U_{j\cdot} \| &= \| \big( \mathbf{\Pi }\U\pure \big)_{i\cdot} - \big( \mathbf{\Pi } \U\pure \big)_{j\cdot} \| \\
    &\asymp \frac{1}{\sqrt{p}} \| \mathbf{\Pi}_{i\cdot} - \mathbf{\Pi}_{j\cdot} \|.
\end{align*}
Consequently, 
\begin{align*}
    \frac{\lambda_{\min}(\mathcal{S})}{\sigma} p  \| \mathbf{\Pi}_{i\cdot} - \mathbf{\Pi}_{j\cdot} \| &\asymp   \frac{\lambda_{\min}(\mathcal{S})}{\sigma}p^{3/2}  \| \U_{i\cdot} - \U_{j\cdot} \|
\end{align*}
and therefore \eqref{snrconditiontoverify3} holds whenever \eqref{snrsnr} holds, which completes the proof.
\end{proof}
\subsection{Generalizations of Theorems \ref{thm:simultaneousinference_v1} and \ref{thm:entrytesting_v1}}
\label{sec:generalizationapplication}
In this section we state generalizations of the applications in \cref{sec:introapplications}.  The following result generalizes \cref{thm:simultaneousinference_v1} to the setting where $\mu_0$ and $\kappa$ are permitted to grow.

\begin{theorem}[Generalization of \cref{thm:simultaneousinference_v1}] \label{thm:simultaneousinference}
Instate the conditions in \cref{thm:eigenvectornormality}, and suppose that
\begin{align*}
    \mu_0^2 \kappa^2 r^{3/2} \sqrt{\log(p)} \lesssim p^{1/4}.
\end{align*}
Let $J$ be a given index set with $|J| = o(p^{1/6})$. .  Define the $|J|\times |J|$ matrix $S_J$ via
\begin{align*}
    (S_J)_{\{i,j,k\},\{i',j',k'\}} &\coloneqq  \mathbb{I}_{\{i= i'\}} e_{(j-1)p_3 + k}\t  \mathbf{V}_1 \mathbf{V}_1\t \Sigma_1^{(i)} \mathbf{ V}_1 \mathbf{ V}_1\t e_{(j'-1)p_3 + k'} \\
    &\quad + \mathbb{I}_{\{j= j'\}} e_{(k-1)p_1 + i}\t  \mathbf{ V}_2 \mathbf{ V}_2\t \Sigma_2^{(j)} \mathbf{ V}_2 \mathbf{ V}_2\t e_{(k'-1)p_3 + i'}  \\
    &\quad + \mathbb{I}_{\{k= k'\}} e_{(i-1)p_2 + j}\t  \mathbf{ V}_3 \mathbf{ V}_3\t \Sigma_3^{(k)} \mathbf{ V}_3 \mathbf{ V}_3\t e_{(i'-1)p_2 + '},
\end{align*}
where $\Sigma_k^{(m)}$ is as in \cref{thm:asymptoticnormalityentries}.  Suppose $S_J$ is invertible, and let $s^2_{\min}$ denote its smallest eigenvalue.  Suppose that 
\begin{align*}
    s_{\min}/\sigma &\gg \max\bigg\{ |J|^{3/2}  \frac{\kappa^2 \mu_0^3 r^{3/2} \sqrt{\log(p)}}{p^{3/2}}, |J|^{3/2} \frac{ \mu_0^4 \kappa^3 r^2 \log(p)}{(\lambda/\sigma) \sqrt{p}}, |J| \frac{ \mu_0^{5/2} r^{3/2} \kappa \log^{3/4}(p)}{(\lambda/\sigma)^{1/2} p^{3/4}}, \\
      &\qquad \qquad \qquad  |J|^{1/6} \frac{\kappa \mu_0^{3/2} r^{3/2} \sqrt{\log(p)}}{p^{4/3}}, |J|^{1/6} \frac{\kappa^{5/3}\mu_0^{3} r^{7/6} \log^{5/6}(p)}{(\lambda/\sigma)^{1/3} p^{5/6}}\bigg\}.
\end{align*}
Let $\mathrm{C.I.}_J^{\alpha}(\mathcal{\hat T})$ denote the output of \cref{al:simultaneousci}.
%Let $\hat \Sigma^{(i)}$ denote the $p_2 \times p_3$ diagonal matrix with entries consisting of the squared values of $e_i\t \mathbf{\hat Z}_1$, and define $\hat \Sigma^{(j)}$ and $\hat \Sigma^{(k)}$ similarly.  Define the matrix $\hat S_J$ via
%\begin{align*}
%    \big(\hat S_{J} \big)_{\{i,j,k\},\{i',j',k'\}} &\coloneqq \mathbb{I}_{\{i= i'\}} e_{(j-1)p_3 + k}\t  \mathbf{\hat V}_1 \mathbf{\hat V}_1\t \hat \Sigma^{(i)} \mathbf{\hat V}_1 \mathbf{\hat V}_1\t e_{(j'-1)p_3 + k'} \\
%    &\quad + \mathbb{I}_{\{j= j'\}} e_{(k-1)p_1 + i}\t  \mathbf{\hat V}_2 \mathbf{\hat V}_2\t\hat \Sigma^{(j)} \mathbf{\hat V}_2 \mathbf{\hat V}_2\t e_{(k'-1)p_3 + i'}  \\
%    &\quad + \mathbb{I}_{\{k= k'\}} e_{(i-1)p_2 + j}\t  \mathbf{\hat V}_3 \mathbf{\hat V}_3\t \hat \Sigma^{(k)} \mathbf{\hat V}_3 \mathbf{\hat V}_3\t e_{(i'-1)p_2 +j'}.
%\end{align*}
 Then it holds that
\begin{align*}
 \p\bigg\{ \mathrm{Vec}(\mathcal{ T}_J) \in \mathrm{C.I.}_{\alpha}(\mathcal{\hat T}_J) \bigg\}= 1- \alpha -  o(1).
\end{align*}
\end{theorem}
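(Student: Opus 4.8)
The plan is to mimic the structure of the proof of \cref{thm:asymptoticnormalityentries}, \cref{thm:civalidity}, and the concentration lemmas, but now working with the $|J|$-dimensional vector $\mathrm{Vec}(\mathcal{\hat T}_J) - \mathrm{Vec}(\mathcal{T}_J)$ rather than a single entry. First I would invoke the leading-order expansion \eqref{mainentrywiseexpansion} from the proof of \cref{thm:asymptoticnormalityentries}, applied simultaneously to each index $\{i,j,k\} \in J$, so that the vector of estimation errors equals the vector $(\xi_{ijk})_{\{i,j,k\}\in J}$ of leading-order linear-in-$\mathcal{Z}$ terms plus a residual vector whose $\ell_\infty$ norm is bounded (uniformly over $J$, after a union bound over $|J|$ indices) by $\mathrm{err} \lesssim \sigma \kappa^2 \mu_0^3 r^2 \sqrt{\log(p)}/p^{3/2} + \sigma^2 \mu_0^4 \kappa^3 r^2 \log(p)/(\lambda \sqrt p)$. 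Converting the $\ell_\infty$ bound to an $\ell_2$ bound costs a factor $\sqrt{|J|}$, and after multiplying by $S_J^{-1/2}$ (whose operator norm is $s_{\min}^{-1}$) the residual contributes $\sqrt{|J|}\,\mathrm{err}/s_{\min}$ to the error; this is where the $|J|^{3/2}$ terms and the $|J|^{1/6}$ terms in the hypothesis on $s_{\min}$ come from, the latter via the dimension-dependent enlargement bound à la \citet{raicv_multivariate_2019} / the multivariate Berry--Esseen constant $|J|^{1/4}$ (recall $|J|^{1/4}\cdot\sqrt{|J|}=|J|^{3/4}$, and the cube-root enters through balancing the Berry--Esseen third-moment term against the residual).

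Second, I would establish a multivariate Berry--Esseen bound for $S_J^{-1/2}(\xi_{ijk})_{\{i,j,k\}\in J}$ against a standard Gaussian in $\mathbb{R}^{|J|}$ over convex sets, using Corollary 2.2 of \citet{shao_berryesseen_2022} just as in the proof of \cref{thm:eigenvectornormality2}; the summands are the independent contributions of the entries of $\mathcal{Z}$, and the third-moment bound is controlled by $\|\mathbf{V}_k\|_{2,\infty}$ and the lower bound $s_{\min}$ on the smallest eigenvalue of $S_J$, exactly paralleling Step 2 of the proof of \cref{lem:xiijkgaussian} but now carrying the extra $|J|$ factors. A subtlety here, absent in the single-entry case, is that the true covariance of $(\xi_{ijk})_{\{i,j,k\}\in J}$ is not exactly $S_J$ but $S_J$ plus off-diagonal cross-covariance terms coming from shared elements of $\mathcal{Z}$ between entries of $J$ that overlap in one index; I would bound these cross terms by the same computation as in Step 1 of the proof of \cref{lem:xiijkgaussian} (which shows each such covariance is $O(\sigma^2 \mu_0^2 r /p)$ times the relevant row norms), conclude $\|\mathrm{Cov} - S_J\|/s_{\min} = O(|J|\mu_0^2 r/p) = o(1)$ under the stated conditions, and absorb the discrepancy into a total-variation bound for Gaussians with nearby covariances (Theorem 1.3 of \citet{devroye_total_2022} in the multivariate form, or a direct argument).

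Third, I would show the plug-in estimate $\hat S_J$ from \cref{al:simultaneousci} is close to $S_J$: this reuses \cref{lem:Vmatrixcloseness} for $\|\mathbf{\hat V}_k - \mathbf{V}_k \mathbf{W}_{\mathbf{V}_k}\|_{2,\infty}$, \cref{cor:maxnormbound} for $\|\mathcal{\hat Z} - \mathcal{Z}\|_{\max} = \|\mathcal{\hat T} - \mathcal{T}\|_{\max}$, and a Bernstein argument for the fluctuation of the empirical variances $e_m^\top \mathbf{\hat Z}_k$ around the true $\Sigma_k^{(m)}$, entrywise over the at most $|J|^2$ entries of $\hat S_J$. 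Each entry of $\hat S_J - S_J$ is bounded by essentially the same quantity appearing in the proof of \cref{thm:civalidity} (Steps 1--2 there), so $\|\hat S_J - S_J\|$ picks up at most a factor $|J|$ from summing over entries, and dividing by $s_{\min}$ gives $o(1)$ under the hypothesis. With $\hat S_J^{-1/2} = (1+o(1)) S_J^{-1/2}$ in operator norm, the final step is routine: combining the Berry--Esseen bound, the residual bound, the covariance-discrepancy bound, and the plug-in closeness with an $\varepsilon$-enlargement argument for the $\chi^2_{|J|}$ ball (again via \citet{raicv_multivariate_2019}) yields $\p\{\mathrm{Vec}(\mathcal{T}_J) \in \mathrm{C.I.}^\alpha_J(\mathcal{\hat T})\} = 1 - \alpha - o(1)$. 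The main obstacle I anticipate is bookkeeping the interplay of the $|J|$ powers: tracking exactly how $\sqrt{|J|}$ (from $\ell_\infty \to \ell_2$), $|J|^{1/4}$ (from the multivariate Berry--Esseen / Gaussian-anticoncentration constants), and $|J|$ (from summing $|J|^2$ entrywise errors and from the cross-covariance count) combine, and verifying that each of the five terms in the displayed condition on $s_{\min}$ is precisely what is needed to kill the corresponding error contribution — this is delicate but not conceptually hard, since every individual estimate already exists in the single-entry proofs.
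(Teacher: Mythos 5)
Your proposal follows the paper's proof of \cref{thm:simultaneousinference} essentially step for step: the same invocation of \eqref{mainentrywiseexpansion} across the $|J|$ indices, the same use of Corollary 2.2 of \citet{shao_berryesseen_2022} for the multivariate Berry--Esseen bound, the same cross-covariance computation lifted from Step~1 of the proof of \cref{lem:xiijkgaussian} combined with Theorem~1.3 of \citet{devroye_total_2022} to pass between $\mathrm{Cov}(\xi_J)$ and $S_J$, the same plug-in argument reusing \cref{lem:Vmatrixcloseness}, \cref{cor:maxnormbound}, and Bernstein for $\|\hat S_J - S_J\|$, and the same $\varepsilon$-enlargement via \citet{raicv_multivariate_2019}. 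The only places you deviate are minor bookkeeping constants: you use the sharp $\ell_\infty\to\ell_2$ factor $\sqrt{|J|}$ where the paper uses the cruder $|J|$ (which only makes your condition marginally weaker than theirs, so this is harmless), and you write the relative covariance discrepancy as $O(|J|\mu_0^2 r/p)$ when the paper's \eqref{Sjapprox} gives $O(|J|^2\mu_0^2 r/p)$ (there are $O(|J|^2)$ off-diagonal cross-terms, not $O(|J|)$); since $|J|=o(p^{1/6})$ both are $o(1)$, so the conclusion is unaffected, but the exponent matters if you want to trace the displayed thresholds exactly.
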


\noindent The following result generalizes \cref{thm:entrytesting_v1}.  

\begin{theorem}[Generalization of \cref{thm:entrytesting_v1}]\label{thm:entrytesting}
Instate the conditions of \cref{thm:eigenvectornormality}, and suppose that $\kappa ^2 \mu_0^2 r^{3/2} \sqrt{\log(p)} \lesssim p^{1/4}$.  
% Define %the test statistic
% %\begin{align*}
% %    \hat T_{\{ijk\},\{i'j'k'\}} &= \frac{\mathcal{\hat T}_{ijk} - \mathcal{\hat T}_{i'j'k'}}{\hat s_{\{ijk\}\{i'j'k'\}}},
% %\end{align*}
% %where
% \begin{align*}
%     \hat s_{\{ijk\}\{i'j'k'\}}^2 &= \hat s^2_{ijk} + \hat s^2_{i'j'k'}  \\
%     &\quad - \mathbb{I}_{\{i= i'\}} e_{(j-1)p_3 + k}\t  \mathbf{\hat V}_1 \mathbf{\hat V}_1\t \hat \Sigma^{(i)} \mathbf{\hat V}_1 \mathbf{\hat V}_1\t e_{(j'-1)p_3 + k'} \\
%     &\quad - \mathbb{I}_{\{j= j'\}} e_{(k-1)p_1 + i}\t  \mathbf{\hat V}_2 \mathbf{\hat V}_2\t\hat \Sigma^{(j)} \mathbf{\hat V}_2 \mathbf{\hat V}_2\t e_{(k'-1)p_3 + i'}  \\
%     &\quad - \mathbb{I}_{\{k= k'\}} e_{(i-1)p_2 + j}\t  \mathbf{\hat V}_3 \mathbf{\hat V}_3\t \hat \Sigma^{(k)} \mathbf{\hat V}_3 \mathbf{\hat V}_3\t e_{(i'-1)p_2 +j'},
% \end{align*}
% with $\hat s^2_{ijk}$ defined as in \cref{thm:asymptoticnormalityentries_v1}. 
Suppose that  \begin{align*}
 \min\bigg\{ \|& e_{(j-1)p_3 + k }\t \mathbf{V}_1\|^2, \| e_{(j'-1)p_3 + k'}\t \mathbf{V}_1\|^2, \| e_{(k-1)p_1 + i}\t \mathbf{V}_2 \|^2,\\
 &\| e_{(k'-1)p_1 + i}\t \mathbf{V}_2 \|^2, \| e_{(i-1)p_2 + j}\t \mathbf{V}_3 \|^2, \| e_{(i'-1)p_2 + j'}\t \mathbf{V}_3 \|^2 \bigg\} \\
    &\gg\max\bigg\{ \frac{\kappa \mu_0^2 r^{3/2} \log(p)}{p^3} , \frac{\mu_0^5 r^3 \kappa^2 \log^{3/2}(p)}{(\lambda/\sigma)p^{3/2}}, \frac{\kappa^4 \mu_0^6 r^3 \log(p)}{p^3}, \frac{\mu_0^8 \kappa^6 r^3 \log^2(p)}{(\lambda/\sigma)^2 p} \bigg\} .
    \end{align*} 
     Let $\mathrm{C.I.}^{\alpha}_{\{ijk\},\{i'j'k'\}}( \mathcal{\hat T}) $ denote the output of \cref{al:entrytesting}. 
Then it holds that
\begin{align*}
    \frac{\mathcal{\hat T}_{ijk} - \mathcal{\hat T}_{i'j'k'} - \big( \mathcal{T}_{ijk} - \mathcal{T}_{i'j'k'}\big)}{\hat s_{\{ijk\}\{i'j'k'\}}} \to N(0,1).
\end{align*}
\end{theorem}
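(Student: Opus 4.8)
\textbf{Proof proposal for \cref{thm:entrytesting}.}

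The plan is to reduce the statement to the entrywise expansion already established in the course of proving \cref{thm:asymptoticnormalityentries}, namely the leading-order decomposition \eqref{mainentrywiseexpansion}, applied to both indices $\{ijk\}$ and $\{i'j'k'\}$. First I would write
\[
\mathcal{\hat T}_{ijk} - \mathcal{T}_{ijk} = \xi_{ijk} + \mathrm{err}_{ijk}, \qquad \mathcal{\hat T}_{i'j'k'} - \mathcal{T}_{i'j'k'} = \xi_{i'j'k'} + \mathrm{err}_{i'j'k'},
\]
where $\xi_{ijk}$ is the linear-in-$\mathcal{Z}$ statistic from \cref{lem:xiijkgaussian} and the error terms are $\tilde O(\sigma \kappa^2 \mu_0^3 r^2 \sqrt{\log(p)}/p^{3/2} + \sigma^2 \mu_0^4 \kappa^3 r^2 \log(p)/(\lambda \sqrt p))$ on a $1 - O(p^{-9})$ event. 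Subtracting gives $\mathcal{\hat T}_{ijk} - \mathcal{\hat T}_{i'j'k'} - (\mathcal{T}_{ijk} - \mathcal{T}_{i'j'k'}) = \xi_{ijk} - \xi_{i'j'k'} + O(\mathrm{err})$. The key new object is $\eta \coloneqq \xi_{ijk} - \xi_{i'j'k'}$, which is again a linear combination of the independent entries of $\mathcal{Z}$, so its exact variance can be computed: $\mathrm{Var}(\eta) = \mathrm{Var}(\xi_{ijk}) + \mathrm{Var}(\xi_{i'j'k'}) - 2\,\mathrm{Cov}(\xi_{ijk},\xi_{i'j'k'})$. Using the same cross-term bookkeeping as in Step 1 of the proof of \cref{lem:xiijkgaussian}, one shows $\mathrm{Var}(\xi_{ijk}) = s^2_{ijk}(1 + O(\mu_0^2 r/p))$ and similarly for the primed index, while $\mathrm{Cov}(\xi_{ijk},\xi_{i'j'k'})$ equals the sum of the three indicator-weighted bilinear forms appearing in $\hat s^2_{\{ijk\}\{i'j'k'\}}$ in \cref{al:entrytesting}, up to an $O(\mu_0^2 r/p)$-relative error coming from repeated entries of $\mathcal{Z}$ within and across the two statistics. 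This identifies the target deterministic variance $s^2_{\{ijk\}\{i'j'k'\}} \coloneqq s^2_{ijk} + s^2_{i'j'k'} - (\text{the three correction terms with population } \mathbf{V}_k, \Sigma^{(m)})$, and shows $\mathrm{Var}(\eta) = s^2_{\{ijk\}\{i'j'k'\}}(1 + o(1))$.

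Next I would establish asymptotic normality of $\eta / \sqrt{\mathrm{Var}(\eta)}$ by the Berry–Esseen theorem for sums of independent random variables, exactly mirroring Steps 2 and 3 of \cref{lem:xiijkgaussian}: the third absolute moment is controlled by $\|\mathbf{V}_k\|_{2,\infty}^2 \lesssim \mu_0^2 r/p^2$ times the relevant $\ell_2$ norms, and dividing by $\mathrm{Var}(\eta)^{3/2} \gtrsim s^3_{\{ijk\}\{i'j'k'\}}$ yields a rate that is $o(1)$ precisely under the lower bound on $\min\{\|e_{(j-1)p_3+k}\t \mathbf{V}_1\|^2,\dots\}$ assumed in the theorem — this is where the stated signal-strength hypothesis enters, since $s^2_{\{ijk\}\{i'j'k'\}}$ is lower bounded by $\sigma_{\min}^2$ times the \emph{minimum} of those six row norms (the worst case being when the indices overlap maximally and the correction terms nearly cancel the diagonal ones). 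Combined with a total-variation bound à la Theorem 1.3 of \citet{devroye_total_2022} to absorb the variance mismatch, this gives $\eta / s_{\{ijk\}\{i'j'k'\}} \to N(0,1)$, and since the error terms are $o(s_{\{ijk\}\{i'j'k'\}})$ under the same hypothesis, also $(\mathcal{\hat T}_{ijk} - \mathcal{\hat T}_{i'j'k'} - (\mathcal{T}_{ijk}-\mathcal{T}_{i'j'k'}))/s_{\{ijk\}\{i'j'k'\}} \to N(0,1)$.

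Finally I would show the plug-in estimator $\hat s_{\{ijk\}\{i'j'k'\}}$ from \cref{al:entrytesting} satisfies $\hat s^2_{\{ijk\}\{i'j'k'\}} = s^2_{\{ijk\}\{i'j'k'\}}(1 + o(1))$ with high probability. This follows by applying \cref{lem:Vmatrixcloseness} (closeness of $\mathbf{\hat V}_k$ to $\mathbf{V}_k$ in $\|\cdot\|$ and $\|\cdot\|_{2,\infty}$), \cref{cor:maxnormbound} (so that $\mathcal{\hat Z} = \mathcal{\tilde T} - \mathcal{\hat T}$ approximates $\mathcal{Z}$ entrywise, hence $\hat\Sigma^{(m)}_k \approx \tilde\Sigma^{(m)}_k$), and a Bernstein argument for $\tilde\Sigma^{(m)}_k \approx \Sigma^{(m)}_k$, term by term, exactly as in Steps 1–3 of the proof of \cref{thm:civalidity}; the only difference is that here there are three correction bilinear forms to handle in addition to the two $\hat s^2$ terms, and each is estimated by the same method. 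Slutsky's theorem then gives the claim. The main obstacle I anticipate is the covariance computation and its lower bound: unlike in \cref{thm:asymptoticnormalityentries} where one only needs a lower bound on $s^2_{ijk}$ itself, here one must verify that after subtracting the indicator-weighted cross terms, $s^2_{\{ijk\}\{i'j'k'\}}$ does not become too small relative to the bias and third-moment error terms — this is precisely why the hypothesis involves the \emph{minimum} over all six individual row norms rather than the sum, and checking that this suffices in all index-overlap patterns (no overlap, one shared index, two shared indices) requires a careful case analysis analogous to the explicit $2\times 2$ example following \cref{thm:simultaneousinference_v1}.
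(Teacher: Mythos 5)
Your proposal is correct and follows essentially the same route as the paper: expand via \eqref{mainentrywiseexpansion} to reduce to the linear statistics $\xi_{ijk}-\xi_{i'j'k'}$, compute the variance and covariance by the same bookkeeping as Step~1 of \cref{lem:xiijkgaussian}, obtain normality by Berry--Esseen with the total-variation correction, and establish plug-in consistency of $\hat s_{\{ijk\}\{i'j'k'\}}$ by modifying Steps~1--3 of the proof of \cref{thm:civalidity} term by term.

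One small clarification on your closing paragraph: the paper's lower bound on $s^2_{\{ijk\}\{i'j'k'\}}$ does not actually require a case analysis over overlap patterns or appeal to the correction terms ``nearly'' cancelling. Since $\{i,j,k\}\neq\{i',j',k'\}$, at least one mode must have distinct indices; for that mode the indicator in the correction term is identically zero, so that mode's full variance contribution (from both the primed and unprimed statistic) survives with no cancellation at all. That single surviving contribution is already $\gtrsim \sigma_{\min}^2$ times the minimum of the six row norms, which is why the hypothesis involves that minimum. Your intuition lands in the same place, but the argument is cleaner (and avoids worrying about whether partial cancellation could be nearly complete on the overlapping modes).
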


\subsection{Proof of \cref{thm:simultaneousinference}}

\begin{proof}[Proof of \cref{thm:simultaneousinference}]
The proof is similar to the proof of \cref{thm:civalidity}. First, by the proof of \cref{thm:asymptoticnormalityentries}, by applying the main expansion \eqref{mainentrywiseexpansion} to each entry separately, we have that with probability at least $1 - O(|J|p^{-9})$, it holds that
\begin{align*}
    \mathcal{T}_{J} &= \xi_{J} + O\bigg( |J|\frac{\sigma \kappa^2 \mu_0^3 r^{3/2} \sqrt{\log(p)}}{p^{3/2}} + |J| \frac{\sigma^2 \mu_0^4 \kappa^3 r^2 \log(p)}{\lambda \sqrt{p}} \bigg),
\end{align*}
where we set $\xi_J$ as the random variable with entries $\xi_{ijk}$ from \cref{lem:xiijkgaussian}.  We will show that $\xi_J$ is asymptotically Gaussian with covariance $S_J$, and we will demonstrate that $\hat S_J$ approximates $S_J$, which will yield the result.  
\begin{itemize}
\item \textbf{Step 1: Limiting Covariance Structure and First-Order Approximation}: 
Recall we define $s^2_{\min}$ via
\begin{align*}
    s^2_{\min} \coloneqq \lambda_{\min}(S_J),
\end{align*}
If it holds that
\begin{align}
    s_{\min} &\gg |J| \sigma\max\bigg\{ \frac{ \kappa^2 \mu_0^3 r^{3/2} \sqrt{\log(p)}}{p^{3/2}}, \frac{\sigma \mu_0^4 \kappa^3 r^2 \log(p)}{\lambda \sqrt{p}}\bigg\}, \label{smin0}
\end{align}
then 
\begin{align*}
    S_J^{-1/2} \mathrm{Vec}(\mathcal{T}_J) &= S_{J}^{-1/2} \xi_J + o(1).
\end{align*}
with probability at least $1 - O(|J|p^{-9})$.  It now remains to show that $\mathrm{Cov}(\xi_J) \approx S_J$.  By the proof of \cref{lem:xiijkgaussian}, it holds that
\begin{align*}
    \mathrm{Var}(\xi_{ijk}) &= s^2_{ijk} \bigg( 1 + O\big( \frac{\mu_0^2 r}{p} \big) \bigg).
\end{align*}
Therefore, it suffices to consider the covariance terms.  Observe that
\begin{align*}
    &\mathrm{Cov}\bigg( \xi_{ijk} \xi_{i'j'k'}\bigg) \\&= \mathbb{E}\bigg( {e}_{i}^{\top} \mathbf{Z}_{1} \mathbf{V}_{1} \mathbf{V}_{1}^{\top} e_{(j-1) p_{3}+k}+e_{j}^{\top} \mathbf{Z}_{2} \mathbf{V}_{2} \mathbf{V}_{2}^{\top} e_{(k-1) p_{1}+i}+e_{k}^{\top} \mathbf{Z}_{3} \mathbf{V}_{3} \mathbf{V}_{3}^{\top} e_{(i-1) p_{2}+j} \bigg)\\
    &\times \bigg( {e}_{i'}^{\top} \mathbf{Z}_{1} \mathbf{V}_{1} \mathbf{V}_{1}^{\top} e_{(j'-1) p_{3}+k'}+e_{j'}^{\top} \mathbf{Z}_{2} \mathbf{V}_{2} \mathbf{V}_{2}^{\top} e_{(k'-1) p_{1}+i'}+e_{k'}^{\top} \mathbf{Z}_{3} \mathbf{V}_{3} \mathbf{V}_{3}^{\top} e_{(i'-1) p_{2}+j'} \bigg).
\end{align*}
By a similar argument to Step 1 of the proof of \cref{lem:xiijkgaussian}, it holds that the cross term satisfies
\begin{align*}
    \mathbb{E}&\bigg( e_i\t \mathbf{Z}_1 \mathbf{V}_1 \mathbf{V}_1\t e_{(j-1)p_3 + k}\bigg) \bigg( e_{j'}\t \mathbf{Z}_2 \mathbf{V}_2 \mathbf{V}_2\t e_{(k'-1)p_1 + i'} \bigg) \\
    &= O\bigg( \sigma^2 \mu_0^2 \frac{r}{p} \bigg( \| e_{(j-1)p_3 + k} \mathbf{V}_1 \|^2 + \| e_{(k'-1)p_1 + i'} \mathbf{V}_2 \|^2 \bigg) \bigg) \\
    &= O\bigg(  \sigma^2 \mu_0^4 \frac{r^2}{p^2}  \bigg).
\end{align*}
The other cross terms can be handled similarly.  Hence, the only remaining terms are those such that $i = i'$, $j = j'$, or $k = k'$.  If $i= i'$, we have
\begin{align*}
    \mathbb{E}\bigg( &e_i\t \mathbf{Z}_1 \mathbf{V}_1\t e_{(j-1)p_3 + k} \bigg)\bigg( e_i\t \mathbf{Z}_1 \mathbf{V}_1\t e_{(j'-1)p_3 + k'} \bigg) \mathbb{I}_{\{i=i'\}}\\
    &=  e_{(j-1)p_2 + k}\t \mathbf{V}_1 \mathbf{V}_1\t \big( \Sigma^{(i)} \big)  \mathbf{V}_1 \mathbf{V}_1\t e_{(j'-1)p_3 + k'},
\end{align*}
with similar values if $j = j'$ or $k = k'$; in particular, these are the entries of $S_J$ by definition.  Hence
%so we see that
% \begin{align*}
%      \mathrm{Cov}(\xi_{ijk} \xi_{i'j'k'}) &=  \mathbb{I}_{\{i= i'\}} e_{(j-1)p_3 + k}\t  \mathbf{V}_1 \mathbf{V}_1\t \Sigma^{(i)} \mathbf{\hat V}_1 \mathbf{\hat V}_1\t e_{(j'-1)p_3 + k'} \\
%     &\quad + \mathbb{I}_{\{j= j'\}} e_{(k-1)p_1 + i}\t  \mathbf{\hat V}_2 \mathbf{\hat V}_2\t \Sigma^{(j)} \mathbf{\hat V}_2 \mathbf{\hat V}_2\t e_{(k'-1)p_3 + i'}  \\
%     &\quad + \mathbb{I}_{\{k= k'\}} e_{(i-1)p_2 + j}\t  \mathbf{\hat V}_3 \mathbf{\hat V}_3\t \Sigma^{(k)} \mathbf{\hat V}_3 \mathbf{\hat V}_3\t e_{(i'-1)p_2 + '} \\
%     &\qqyad + 
% \end{align*}
% Consequently, by the definition of $S_J$ it holds that
\begin{align}
    \mathrm{Cov}(\xi_J) &= S_J + O\bigg( \frac{|J|^2\mu_0^2 r}{p} s_{\min}^2 \bigg),\label{Sjapprox}
\end{align}
provided that $s_{\min}^2 \gg \sigma^2 \mu_0^2 r/p$.  
\item \textbf{Step 2: Gaussian Approximation}: We now study the Gaussian approximation of the vector $S_{J}^{-1/2} \mathrm{Vec}(\mathcal{T}_J - \mathcal{\hat T}_J)$.    We will apply Corollary 2.2 of \citet{shao_berryesseen_2022}.  Define, for some sufficiently large constant $C$,
\begin{align*}
    \Delta = \Delta^{(i)} &= C  \left(|J| \frac{\sigma \kappa^2 \mu_{0}^{3} r^{3 / 2} \sqrt{\log (p)}}{p^{3 / 2}}+|J| \frac{\sigma^{2} \mu_{0}^{4} \kappa^{3} r^{2} \log (p)}{\lambda \sqrt{p}}\right).
\end{align*}
Note that $\mathrm{Cov}(\xi_J)$ is invertible with smallest eigenvalue at least $s^2_{\min}(1- o(1))$ since $S_J$ is invertible provided that $|J|^2  \ll p /(\mu_0^2 r)$.  Consequently, we have that
\begin{align*}
    \lambda_{\min}\big( \mathrm{Cov}^{1/2}(\xi_J)\big) &\gtrsim s_{\min} \gg \Delta
\end{align*}
as long as
\begin{align}
    s_{\min} &\gg |J| \max \bigg\{   \frac{\sigma \kappa^2 \mu_{0}^{3} r^{3 / 2} \sqrt{\log (p)}}{p^{3 / 2}}, \frac{\sigma^{2} \mu_{0}^{4} \kappa^{3} r^{2} \log (p)}{\lambda \sqrt{p}}\bigg\}. \label{smin1}
\end{align}
Hence, it holds that
\begin{align*}
    \mathrm{Cov}(\xi_J)^{-1/2}
    \mathrm{Vec}(\mathcal{T}_J) &= \mathrm{Cov}(\xi_J)^{-1/2} \xi_J + \frac{\Delta}{s_{\min}}
\end{align*}
with probability at least $1 - O(|J|p^{-9})$.  Let this event be denoted $\mathcal{A}$.  By Corollary 2.2 of \citet{shao_berryesseen_2022}, it holds that
\begin{align*}
    \sup_{A \in \mathcal{A} } \bigg| &\p\bigg\{ \mathrm{Cov}^{-1/2} \mathrm{Vec}(\mathcal{T}_J - \mathcal{\hat T}_J) \in A \bigg\} - \p\bigg\{ Z \in A \bigg\} \bigg| \\
    &\lesssim |J|^{1/2}\gamma + \frac{\Delta}{s_{\min}} \E \| \mathrm{Cov}^{-1/2}\xi_J \|+ |J|p^{-9}, 
\end{align*}
where $\gamma$ is the sum of the third moments of the independent random variables in $\xi_J$ (see step 2 of the proof of \cref{lem:xiijkgaussian} for the explicit definition in terms of the indices of $\xi_J$).  A straightforward modification of the proof of \cref{lem:xiijkgaussian} shows that
\begin{align*}
    \gamma &\lesssim  \frac{|J|}{\sqrt{p\log(p)}}
\end{align*}
as long as $s_{\min} \gg \Delta$, which is true by assumption.  In addition, by subgaussianity, it holds that
\begin{align*}
    \mathbb{E} \| \mathrm{Cov}^{-1/2}( \xi_J) \xi_J \| &\lesssim | J |^{1/2}.
\end{align*}
Consequently, we have that
\begin{align*}
      \sup_{A \in \mathcal{A} } &\bigg| \p\bigg\{ \mathrm{Cov}(\xi_J)^{-1/2} \mathrm{Vec}(\mathcal{T}_J - \mathcal{\hat T}_J) \in A \bigg\} - \p\bigg\{ Z \in A \bigg\} \bigg| \\
      &\lesssim \frac{|J|^{3/2}}{\sqrt{p\log(p)}} + |J|p^{-9} + |J|^{1/2} \Delta \\
      &\lesssim \frac{|J|^{3/2}}{\sqrt{p \log(p)}} + \frac{ |J|^{3/2}}{s_{\min}} \left( \frac{\sigma \kappa^2 \mu_{0}^{3} r^{3 / 2} \sqrt{\log (p)}}{p^{3 / 2}}+ \frac{\sigma^{2} \mu_{0}^{4} \kappa^{3} r^{2} \log (p)}{\lambda \sqrt{p}}\right),
\end{align*}
which holds as long as
\begin{align}
    s_{\min} \gg |J| \frac{\sigma \kappa^2 \mu_{0}^{3} r^{3 / 2} \sqrt{\log (p)}}{p^{3 / 2}}+|J| \frac{\sigma^{2} \mu_{0}^{4} \kappa^{3} r^{2} \log (p)}{\lambda \sqrt{p}}. \label{smin2}
\end{align}
Therefore, for any $A \in \mathcal{A}$,
\begin{align*}
  \bigg| &\p\bigg\{ S_J^{-1/2} \mathrm{Vec}(\mathcal{T}_J - \mathcal{\hat T}_J) \in A \bigg\} - 
    \p \bigg\{ Z \in A \bigg\} \bigg| \\
    &\leq \bigg| \p\bigg\{ \mathrm{Cov}(\xi_J)^{-1/2}(\mathrm{Vec}(\mathcal{T}_J - \mathcal{\hat T}_J) \in \Cov(\xi_J)^{-1/2} S_J^{1/2} A \bigg\} - \p\bigg\{ Z' \in \Cov(\xi_J)^{-1/2} S_J^{1/2} A \bigg\} \bigg| \\
    &\quad + \bigg| \p\bigg\{ Z' \in \Cov(\xi_J)^{-1/2} S_J^{1/2} A \bigg\} - \p\bigg\{ Z \in A \bigg\} \bigg| \\
    &\lesssim \frac{|J|^{3/2}}{\sqrt{p\log(p)}} + \frac{ |J|^{3/2}}{s_{\min}}\bigg( \frac{\sigma \kappa^2 \mu_0^3 r^{3/2} \sqrt{\log(p)}}{p^{3/2}} + \frac{\sigma^2 \mu_0^4 \kappa^3 r^2 \log(p)}{\lambda \sqrt{p}} \bigg) \\
    &\quad + \bigg| \p\bigg\{ Z' \in \Cov(\xi_J)^{-1/2} S_J^{1/2} A \bigg\} - \p\bigg\{ Z \in A \bigg\} \bigg|
\end{align*}
where $Z'$ is an independent $|J|$-dimensional standard Gaussian random variable.  To bound the remaining term, by Theorem 1.3 of \citet{devroye_total_2022}, it holds that
\begin{align*}
    \sup_{A \in \mathcal{A}} \bigg| \p\bigg\{ S_J^{-1/2} \Cov(\xi_J)^{1/2}Z \in A \bigg\} - \p\bigg\{ Z' \in  A \bigg\} \bigg| &\leq d_{TV} (  S_J^{-1/2} \Cov(\xi_J)^{1/2}Z, Z' ) \\
    &\lesssim \| I - S_J\inv \Cov(\xi_J) \| \\
    &\lesssim |J|^2 \mu_0^2 \frac{r}{p},
\end{align*}
where the final inequality holds by \eqref{Sjapprox}.  Putting it all together, it holds that
\begin{align}
    \sup_{A \in \mathcal{A}} \bigg| &\p\bigg\{ S_J^{-1/2} \mathrm{Vec}(\mathcal{T}_J - \mathcal{\hat T}_J) \in A \bigg\} - \p\bigg\{ Z \in A \bigg\} \bigg| \nonumber \\&\lesssim \frac{|J|^{3/2}}{\sqrt{p\log(p)}} + \frac{|J|^2\mu_0^2 r}{p}  +\frac{ |J|^{3/2}}{s_{\min}}\bigg( \frac{\sigma \kappa^2 \mu_0^3 r^{3/2} \sqrt{\log(p)}}{p^{3/2}} + \frac{\sigma^2 \mu_0^4 \kappa^3 r^2 \log(p)}{\lambda \sqrt{p}} \bigg). \label{tjapprox}
\end{align}
We note that this bound is only non-vacuous if these terms are smaller than one, which requires that
\begin{align}
   s_{\min} &\gg |J|^{3/2} \max\bigg\{ \frac{\sigma \kappa^2 \mu_0^3 r^{3/2} \sqrt{\log(p)}}{p^{3/2}} , \frac{\sigma^2 \mu_0^4 \kappa^3 r^2 \log(p)}{\lambda \sqrt{p}} \bigg\}, \label{smin4}
\end{align}
which is the stronger than the requirements \eqref{smin0},\eqref{smin1}, and \eqref{smin2} by factors of $|J|$. 
\item 
\textbf{Step 3: Covariance Estimation}: We now consider the plug-in estimation of the covariance $S_J$.  The proof of \cref{thm:civalidity} shows that with probability at least $1 - O(p^{-6})$
\begin{align*}
    |(s^{(i)})^2 - (\hat s^{(i)})^2 |  &\lesssim \sigma^2 \mu_0 \frac{\sqrt{r \log(p)}}{p}  \| e_{(j-1)p_3 + k}\t \mathbf{V}_1 \|^2 + \frac{\sigma^2 \kappa \mu_0^2 r^{3/2} \log(p)}{p^3} \\
    &\quad + \frac{\sigma^3 \mu_0^5 r^3 \kappa^2 \log^{3/2}(p)}{\lambda p^{3/2}},
\end{align*}
with similar bounds for $s^{(j)}$ and $s^{(k)}$, where the notation is defined in the proof of \cref{thm:civalidity} (see \cref{sec:civalidityproofs}). % \textcolor{black}{todo: modify for other spectral structure of $S_J$}
A straightfoward modification of the same proof for the cross-terms reveals that with probability at least $1 - O(|J|^2 p^{-6})$
\begin{align*}
    \| &S_J - \hat S_J \| \\
    &\leq |J|^2 \| S_J - \hat S_J \|_{\max} \\
   % &\lesssim |J|^2 \sigma^2 \mu_0 \frac{\sqrt{r \log(p)}}{p} \mu_0^2 \frac{r}{p^2}  + |J|^2\bigg( \frac{\sigma^2 \kappa \mu_0^2 r^{3/2} \log(p)}{p^3} + \frac{\sigma^3 \mu_0^5 r^3 \kappa^2 \log^{3/2}(p)}{\lambda p^{3/2}} \bigg) \\
    &\lesssim %|J|^2 \mu_0^3 \frac{ r^{3/2} \sqrt{ \log(p)}}{p^3}  + 
    |J|^2 \sigma^2 \mu_0^3 \frac{ r^{3/2} \sqrt{\log(p)}}{p^3}  + |J|^2\bigg( \frac{\sigma^2 \kappa \mu_0^2 r^{3/2} \log(p)}{p^3} + \frac{\sigma^3 \mu_0^5 r^3 \kappa^2 \log^{3/2}(p)}{\lambda p^{3/2}} \bigg) \\
   % &\leq  o(1) s^2_{\min} \\
    &\ll s^2_{\min}, %\\
    %&\lesssim \lambda_{\min}(S_J)
\end{align*}
where the penultimate inequality holds as long as 
\begin{align}
    s_{\min}^2 &\gg |J|^2 \max\bigg\{ \frac{\sigma^2 \mu_0^3 r^{3/2} \sqrt{\log(p)}}{p^3}, \frac{\sigma^2 \kappa \mu_0^2 r^{3/2} \log(p)}{p^3}, \frac{\sigma^3 \mu_0^5 r^3 \kappa^2 \log^{3/2}(p)}{\lambda p^{3/2}} \bigg\}. \label{sminsecond}
\end{align}
Denote
\begin{align}
    M &\coloneqq % |J|^2 \mu_0 \frac{\sqrt{r \log(p)}}{p} s_{\min}^2 +
    |J|^2 \sigma^2 \mu_0^3 \frac{ r^{3/2} \sqrt{\log(p)}}{p^3} %\nonumber %\\ &\quad 
    + |J|^2\bigg( \frac{\sigma^2 \kappa \mu_0^2 r^{3/2} \log(p)}{p^3} 
     + \frac{\sigma^3 \mu_0^5 r^3 \kappa^2 \log^{3/2}(p)}{\lambda p^{3/2}} \bigg), \label{Mdef}
\end{align}
so that it holds that
\begin{align*}
    \| S_J - \hat S_J \| \lesssim M
\end{align*}
with probability at least $1 - O(|J|^2 p^{-6})$.  By Theorem 6.2 of \citet{higham_functions_2008}, it holds that
\begin{align*}
    \| \hat S_J^{1/2} - S_J^{1/2} \| &\leq \frac{1}{ \lambda_{\min}^{1/2}(\hat S_J) + \lambda_{\min}^{1/2}(S_J)} \| \hat S_J - S_J \| \\
    &\lesssim \frac{1}{s_{\min}} M \\
    &\ll s_{\min}
\end{align*}
provided $M \ll s^2_{\min}$. Therefore, by Weyl's inequality, $\hat S_J$ is invertible and
\begin{align*}
    \| \bigg( S_{J}^{-1/2} - \hat S_{J}^{-1/2} \bigg) (\mathcal{\hat T}_J - \mathcal{T}_J ) \| &\leq |J | \| \mathcal{\hat T} - \mathcal{T} \|_{\max} \| S_{J}^{-1/2} - \hat S_{J}^{-1/2} \| \\
    &\leq |J| \| S_{J}^{-1/2} ( \hat S_{J}^{1/2} - S_{J}^{1/2}) \hat S_{J}^{-1/2} \| \| \mathcal{\hat T} - \mathcal{T} \|_{\max} \\
    &\lesssim |J| \| \mathcal{\hat T} - \mathcal{T} \|_{\max} \frac{1}{s_{\min}^2} \| \hat S_J^{1/2} - S_J^{1/2} \| \\
    &\lesssim |J| \| \mathcal{\hat T} - \mathcal{T} \|_{\max} \frac{1}{s_{\min}^3} M \\
    &\lesssim  \frac{|J| M }{s_{\min}^3} \bigg( \frac{\sigma \kappa \mu_{0} \sqrt{r \log (p)}}{p}+\frac{\sigma^{2} \mu_{0}^{4} \kappa^{3} r^{3} \log (p)}{\lambda \sqrt{p}} \bigg) \\
    &=: \eps,
\end{align*}
where we note that the final inequality holds with probability at least $1 - O(p^{-6})$ by \cref{cor:maxnormbound}.  
\item 
\textbf{Step 4: Completing the Proof}: Just as in the proof of \cref{thm:civalidity2}, for a convex set $A$ denote $A^{\eps}$ as the $\eps$-enlargment via
\begin{align*}
    A^{\eps} &\coloneqq \{x : d(x,A) \leq \eps \}.
\end{align*}
By Theorem 1.2 of \citet{raicv_multivariate_2019}, if $A$ is an isotropic $\mathbb{R}^{|J|}$-dimensional random gaussian vector, it holds that
\begin{align*}
    \p\bigg( Z \in A^{\eps}\setminus A\bigg) &\lesssim |J|^{1/4} \eps \lesssim |J|^{1/2} \eps.
\end{align*}
We are now prepared to complete the proof.  Let $A_{\alpha}$ denote the confidence region such that $\p(Z \in A_{\alpha}) = 1 - \alpha$, where $Z \sim N(0, \mathbf{I}_{|J|})$.  Then
\begin{align*}
    \bigg|\p&\bigg\{  \mathrm{Vec}(\mathcal{T}_J) \in \mathrm{C.I.}_{\alpha}(\mathcal{\hat T}_J) \bigg\} - (1 - \alpha) \bigg| \\
    &= \bigg| \p\bigg\{ \hat S_{J}^{-1/2}\bigg( \mathrm{Vec}\big( \mathcal{\hat T}_J - \mathcal{T}_J \big) \bigg) \in \mathcal{A}_{\alpha} \bigg\} - (1 - \alpha) \bigg| \\
    &\leq \bigg| \p\bigg\{ S_{J}^{-1/2} \bigg( \mathrm{Vec}\big( \mathcal{\hat T}_J - \mathcal{T}_J \big) \bigg) \in \mathcal{A}_{\alpha}^{\eps} \bigg\} \cap \bigg\{\bigg|\big( S_{J}^{-1/2} - \hat S_{J}^{-1/2}\big) \bigg( \mathrm{Vec}\big( \mathcal{\hat T}_J - \mathcal{T}_J \big) \bigg)\bigg| \leq \eps \bigg\} \\
    &\qquad\qquad- (1 - \alpha) \bigg| \\
    &\quad + \p\bigg\{ \bigg|\big( S_{J}^{-1/2} - \hat S_{J}^{-1/2}\big) \bigg( \mathrm{Vec}\big( \mathcal{\hat T}_J - \mathcal{T}_J \big) \bigg) \big| > \eps \bigg\} \\
    &\leq \bigg| \p\bigg\{  S_{J}^{-1/2} \bigg( \mathrm{Vec}\big( \mathcal{\hat T}_J - \mathcal{T}_J \big) \bigg) \in \mathcal{A}_{\alpha}^{\eps} \bigg\} - (1- \alpha) \bigg|+ O(|J|^2p^{-6}) \\
    &\leq \bigg| \p\bigg\{  S_{J}^{-1/2} \bigg( \mathrm{Vec}\big( \mathcal{\hat T}_J - \mathcal{T}_J \big) \bigg) \in \mathcal{A}_{\alpha}^{\eps} \bigg\} - \p\bigg\{ Z \in A_\alpha^{\eps} \bigg\} \bigg| + \bigg| \p\bigg\{ Z \in A_{\alpha}^{\eps} \bigg\} - (1 - \alpha) \bigg| \\
    &\quad + O(|J|^2p^{-6}) \\
    &\lesssim \frac{|J|^{3/2}}{\sqrt{p\log(p)}} + \frac{|J|^2\mu_0^2 r}{p}  +\frac{ |J|^{3/2}}{s_{\min}}\bigg( \frac{\sigma \kappa^2 \mu_0^3 r^{3/2} \sqrt{\log(p)}}{p^{3/2}} + \frac{\sigma^2 \mu_0^4 \kappa^3 r^2 \log(p)}{\lambda \sqrt{p}} \bigg) \\
    &\quad + \bigg| \p\bigg\{ Z \in A_{\alpha}^{\eps} \bigg\} - \p\bigg\{ Z \in A_{\alpha} \bigg\} \bigg| + O(|J|^2p^{-6}) \\
    &\lesssim \frac{|J|^{3/2}}{\sqrt{p\log(p)}} + \frac{|J|^2\mu_0^2 r}{p}  +\frac{ |J|^{3/2}}{s_{\min}}\bigg( \frac{\sigma \kappa^2 \mu_0^3 r^{3/2} \sqrt{\log(p)}}{p^{3/2}} + \frac{\sigma^2 \mu_0^4 \kappa^3 r^2 \log(p)}{\lambda \sqrt{p}} \bigg) \\
    &\quad + \p( Z \in A^{\eps}_{\alpha} \setminus A_{\alpha}) + O(|J|^2p^{-6}) \\
    &\lesssim \frac{|J|^{3/2}}{\sqrt{p\log(p)}} + \frac{|J|^2\mu_0^2 r}{p}  +\frac{ |J|^{3/2}}{s_{\min}}\bigg( \frac{\sigma \kappa^2 \mu_0^3 r^{3/2} \sqrt{\log(p)}}{p^{3/2}} + \frac{\sigma^2 \mu_0^4 \kappa^3 r^2 \log(p)}{\lambda \sqrt{p}} \bigg) \\
    &\quad + |J|^{1/2} \eps + O(|J|^2p^{-6}),
\end{align*}
where we have used \eqref{tjapprox}.  
In order to complete the proof we need that this final bound is of order $o(1)$.  For the first two terms to be $o(1)$, we see that $J$ needs to satisfy
\begin{align*}
    |J| &\ll \min\bigg\{ p^{1/6}, \frac{p^{1/2}}{\mu_0 r^{1/2}} \bigg\}; 
\end{align*}
This is guaranteed if $|J| = o(p^{1/6})$, since we assume that $\mu_0^2 r \lesssim \sqrt{p}$, so that $\frac{p^{1/2}}{\mu_0 r^{1/2}} \gtrsim p^{1/4}$. This also is sufficient for the probability terms to be $o(1)$. 

We now translate the conditions on $s_{\min}$.   First we collect all our requirements from the previous steps.  From steps 1 and 2, \eqref{smin4} requires that
\begin{align*}
    s_{\min} &\gg  |J|^{3/2} \sigma \max\bigg\{ \frac{\kappa^2 \mu_0^3 r^{3/2} \sqrt{\log(p)}}{p^{3/2}}, \frac{\sigma \mu_0^4 \kappa^3 r^2 \log(p)}{\lambda \sqrt{p}} \bigg\}.
\end{align*}
In addition, from \eqref{sminsecond}, we require
\begin{align}
      s_{\min}^2 &\gg |J|^2 \max\bigg\{ \frac{\sigma^2 \mu_0^3 r^{3/2} \sqrt{\log(p)}}{p^3}, \frac{\sigma^2 \kappa \mu_0^2 r^{3/2} \log(p)}{p^3}, \frac{\sigma^3 \mu_0^5 r^3 \kappa^2 \log^{3/2}(p)}{\lambda p^{3/2}} \bigg\}. \label{sminsquared}
\end{align}
Finally, we also require that $|J|^{1/2} \eps = o(1)$, which translates to the condition
\begin{align*}
    s_{\min}^3 &\gg \sigma |J|^{3/2} M\max\bigg\{ \frac{ \kappa \mu_0 \sqrt{r\log(p)}}{p}, \frac{\sigma \mu_0^4 \kappa^3 r^3 \log(p)}{\lambda \sqrt{p}} \bigg\}. 
\end{align*}
Recalling the definition of $M$ in \eqref{Mdef}, this condition is equivalent to the condition
\begin{align*}
    s_{\min}^3 &\gg  \sigma |J|^{7/2}  \max\bigg\{ \frac{ \kappa \mu_0 \sqrt{r\log(p)}}{p}, \frac{\sigma \mu_0^4 \kappa^3 r^3 \log(p)}{\lambda \sqrt{p}} \bigg\} \\
    &\times \bigg( \mu_0 \frac{\sqrt{r \log(p)}}{p} s_{\min}^2 +  \sigma^2 \mu_0^3 \frac{ r^{3/2} \sqrt{\log(p)}}{p^3} \nonumber+  \frac{\sigma^2 \kappa \mu_0^2 r^{3/2} \log(p)}{p^3} 
     + \frac{\sigma^3 \mu_0^5 r^3 \kappa^2 \log^{3/2}(p)}{\lambda p^{3/2}} \bigg).
\end{align*}
When $|J| = o(p^{1/6})$ and $\kappa^2 \mu_0^2 r^{3/2} \sqrt{\log(p)} \lesssim p^{1/4}$ this condition is satisfied when
\begin{align*}
    s_{\min}^3 &\gg \sigma |J|^{7/2} \max\bigg\{ \frac{\kappa \mu_0 \sqrt{r\log(p)}}{p}, \frac{\sigma \mu_0^4 \kappa^3 r^2 \log(p)}{\lambda \sqrt{p}} \bigg\} \\
    &\quad \times \max\bigg\{ \frac{\sigma^2 \mu_0^3 r^{3/2} \sqrt{\log(p)}}{p^3}, \frac{\sigma^2 \kappa \mu_0^2 r^{3/2}\log(p)}{p^3}, \frac{\sigma^3 \mu_0^5 \kappa^2 \log^{3/2}(p)}{\lambda p^{3/2}} \bigg\} \\
    &\asymp \sigma^3 |J|^{7/2} \max\bigg\{ \frac{\kappa \mu_0^4 r^2 \log(p)}{p^4}, \frac{\kappa^2 \mu_0^3 r^2 \log^{3/2}(p)}{p^4}, \frac{\kappa^3 \mu_0^6 \sqrt{r} \log^2(p)}{ (\lambda/\sigma) p^{5/2}},\\
    &\qquad \qquad \qquad \qquad \qquad \frac{\kappa^3 \mu_0^7 r^{7/2} \log^{3/2}(p)}{ (\lambda/\sigma) p^{7/2}}, \frac{\kappa^4 \mu_0^6 r^{7/2} \log^2(p)}{(\lambda/\sigma) p^{7/2}}, \frac{\kappa^5 \mu_0^9 r^2 \log^{5/2}(p)}{(\lambda/\sigma)^2 p^{5/2}} \bigg\}
\end{align*}
 Putting it together and putting terms on the same scale, we see that we have the three conditions
 \begin{align*}
  s_{\min}/\sigma  &\gg  |J|^{3/2}\max\bigg\{ \frac{\kappa^2 \mu_0^3 r^{3/2} \sqrt{\log(p)}}{p^{3/2}}, \frac{\sigma \mu_0^4 \kappa^3 r^2 \log(p)}{\lambda \sqrt{p}} \bigg\}; \\
        s_{\min}/\sigma &\gg |J| \max\bigg\{ \frac{ \mu_0^{3/2} r^{3/4} \log^{1/4}(p)}{p^{3/2}}, \frac{ \kappa^{1/2} \mu_0 r^{3/4} \sqrt{\log(p)}}{p^{3/2}}, \frac{ \mu_0^{5/2} r^{3/2} \kappa \log^{3/4}(p)}{(\lambda/\sigma)^{1/2} p^{3/4}} \bigg\}; \\
        s_{\min}/\sigma &\gg |J|^{1/6} \max\bigg\{ \frac{\kappa^{1/3}\mu_0^{4/3} r^{2/3} \log^{1/3}(p)}{p^{4/3}}, \frac{\kappa^{2/3} \mu_0 r^{2/3} \log^{1/2}(p)}{p^{4/3}}, \frac{\kappa \mu_0^2 r^{1/6} \log^{2/3}(p)}{(\lambda/\sigma)^{1/3} p^{5/6}}, \\
        &\qquad \qquad \qquad \qquad \frac{\kappa \mu_0^{7/3} r^{7/6} \log^{1/2}(p)}{(\lambda/\sigma)^{1/3} p^{7/6}}, \frac{\kappa^{4/3} \mu_0^2 r^{7/6} \log^{2/3}(p)}{(\lambda/\sigma)^{1/3}p^{7/6}}, \frac{\kappa^{5/3}\mu_0^3 r^{2/3} \log^{5/6}(p)}{(\lambda/\sigma)^{2/3} p^{5/6}} \bigg\}.
 \end{align*}
 Removing redundant conditions shows that we require
 \begin{align*}
     s_{\min}/\sigma &\gg \max\bigg\{ |J|^{3/2}  \frac{\kappa^2 \mu_0^3 r^{3/2} \sqrt{\log(p)}}{p^{3/2}}, |J|^{3/2} \frac{ \mu_0^4 \kappa^3 r^2 \log(p)}{(\lambda/\sigma) \sqrt{p}}, |J| \frac{ \mu_0^{5/2} r^{3/2} \kappa \log^{3/4}(p)}{(\lambda/\sigma)^{1/2} p^{3/4}}, \\
     &\qquad \qquad \qquad  |J|^{1/6} \frac{\kappa \mu_0^{3/2} r^{3/2} \sqrt{\log(p)}}{p^{4/3}}, |J|^{1/6} \frac{\kappa^{5/3}\mu_0^{3} r^{7/6} \log^{5/6}(p)}{(\lambda/\sigma)^{1/3} p^{5/6}}\bigg\}.
 \end{align*}
%  Now recall that from the definition of $s_{\min}$, it holds that
%  \begin{align*}
%      s_{\min}/\sigma &\gtrsim  \min\bigg\{ \left\|e_{(j-1) p_{3}+k}^{\top} \mathbf{V}_{1}\right\|,\left\|e_{(k-1) p_{1}+i} \mathbf{V}_{2}\right\|,\left\|e_{(i-1) p_{2}+j} \mathbf{V}_{3}\right\|\bigg\}.
%  \end{align*}
%  Consequently, the results hold as long as
%  \begin{align*}
%      \min\bigg\{& \left\|e_{(j-1) p_{3}+k}^{\top} \mathbf{V}_{1}\right\|,\left\|e_{(k-1) p_{1}+i} \mathbf{V}_{2}\right\|,\left\|e_{(i-1) p_{2}+j} \mathbf{V}_{3}\right\|\bigg\} \\
%      &\gg \max\bigg\{ |J|^{3/2}  \frac{\kappa \mu_0^3 r^{3/2} \sqrt{\log(p)}}{p^{3/2}}, |J|^{3/2} \frac{ \mu_0^4 \kappa^3 r^2 \log(p)}{(\lambda/\sigma) \sqrt{p}}, |J| \frac{ \mu_0^{5/2} r^{3/2} \kappa \log^{3/4}(p)}{(\lambda/\sigma)^{1/2} p^{3/4}}, \\
%      &\qquad \qquad \qquad  |J|^{1/6} \frac{\kappa \mu_0^{3/2} r^{3/2} \sqrt{\log(p)}}{p^{4/3}}, |J|^{1/6} \frac{\kappa^{5/3}\mu_0^{3} r^{7/6} \log^{5/6}(p)}{(\lambda/\sigma)^{1/3} p^{5/6}}\bigg\},
%  \end{align*}
 which holds under the conditions of \cref{thm:simultaneousinference}.  
 \end{itemize}
\end{proof}

\subsection{Proof of \cref{thm:entrytesting}}

\begin{proof}[Proof of \cref{thm:entrytesting}]
Similar to the previous proof, by the proof of \cref{thm:asymptoticnormalityentries}, by \eqref{mainentrywiseexpansion} it holds that
\begin{align*}
    \mathcal{\hat T}_{ijk} - \mathcal{T}_{ijk} = \xi_{ijk} + O\bigg( \frac{\sigma \kappa^2 \mu_0^3 r^{3/2} \sqrt{\log(p)}}{p^{3/2}} + \frac{\sigma^2 \mu_0^4 \kappa^3 r^2 \log(p)}{\lambda \sqrt{p}} \bigg)
\end{align*}
with probability at least $1 - O(p^{-9})$, where $\xi_{ijk}$ is defined as in \cref{lem:xiijkgaussian}.  A similar expansion holds for $\mathcal{\hat T}_{i'j'k'} - \mathcal{T}_{i'j'k'}$, which demonstrates that
\begin{align*}
    \mathcal{\hat T}_{ijk} - \mathcal{\hat T}_{i'j'k'} - (\mathcal{T}_{ijk} - \mathcal{\hat T}_{i'j'k'}) &= \xi_{ijk} - \mathcal{\xi}_{i'j'k'} + O\bigg( \frac{\sigma \kappa^2 \mu_0^3 r^{3/2} \sqrt{\log(p)}}{p^{3/2}} + \frac{\sigma^2 \mu_0^4 \kappa^3 r^2 \log(p)}{\lambda \sqrt{p}} \bigg).
\end{align*}
Our proof now proceeds in a similar manner to \cref{thm:simultaneousinference}.

\begin{itemize}
    \item \textbf{Step 1: Limiting Variance Structure}: We first calculate the variance of $\xi_{ijk}$.  However, through precisely the same analysis as in step 1 of \cref{thm:simultaneousinference}, we have that
\begin{align*}
    \mathrm{Cov}\bigg( \xi_{ijk} \xi_{i'j'k'} \bigg) &= \mathbb{I}_{\{i= i'\}} e_{(j-1)p_3 + k}\t  \mathbf{\hat V}_1 \mathbf{\hat V}_1\t \hat \Sigma^{(i)} \mathbf{\hat V}_1 \mathbf{\hat V}_1\t e_{(j'-1)p_3 + k'} \\
    &\quad + \mathbb{I}_{\{j= j'\}} e_{(k-1)p_1 + i}\t  \mathbf{\hat V}_2 \mathbf{\hat V}_2\t\hat \Sigma^{(j)} \mathbf{\hat V}_2 \mathbf{\hat V}_2\t e_{(k'-1)p_3 + i'}  \\
    &\quad + \mathbb{I}_{\{k= k'\}} e_{(i-1)p_2 + j}\t  \mathbf{\hat V}_3 \mathbf{\hat V}_3\t \hat \Sigma^{(k)} \mathbf{\hat V}_3 \mathbf{\hat V}_3\t e_{(i'-1)p_2 +j'} \\
    &\qquad + O\bigg(  \mu_0^2 \frac{r}{p} \bigg[ s^2_{ijk} + s^2_{i'j'k'}\bigg] \bigg).
\end{align*}
Therefore,
\begin{align*}
    \mathrm{Var}( \xi_{ijk} - \xi_{i'j'k'}) &= s^2_{\{ijk\}\{i'j'k'\}} \bigg(1 + O\bigg( \mu_0^2 \frac{r}{p} \bigg) \bigg).
\end{align*}
\item 
\textbf{Step 2: Gaussian Approximation and Variance Approximation}: 
Define, for some sufficiently large constant $C$,
\begin{align*}
    \Delta = \Delta^{(i)} = C \bigg( \frac{\sigma \kappa^2 \mu_0^3 r^{3/2} \sqrt{\log(p)}}{p^{3/2}} + \frac{\sigma^2 \mu_0^4 \kappa^3 r^2 \log(p)}{\lambda \sqrt{p}} \bigg).
\end{align*}
Observe that we have the expansion
\begin{align*}
    \frac{\mathcal{\hat T}_{ijk} - \mathcal{\hat T}_{i'j'k'} - ( \mathcal{T}_{ijk} - \mathcal{T}_{i'j'k'})}{s_{\{ijk\}\{i'j'k'\}}} &= \frac{\xi_{ijk} - \xi_{i'j'k'}}{s_{\{ijk\}\{i'j'k'\}}} + \frac{\Delta}{s_{\{ijk\}\{i'j'k'\}}},
\end{align*}
which holds with probability at least $1 - O(p^{-9})$.  By modifying the proof of \cref{lem:xiijkgaussian}, it is straightforward to demonstrate that
\begin{align*}
    \sup_{t\in \mathbb{R}} \bigg| \p\bigg\{ \frac{\xi_{ijk} - \xi_{i'j'k'}}{s_{\{ijk\}\{i'j'k'\}}} \bigg\} - \Phi(t) \bigg| &\leq \frac{C_1}{\sqrt{p\log(p)}} + \frac{C_2 \mu_0^2 r}{p} \\
    &\coloneqq \eta.
\end{align*}
Hence it holds that
\begin{align*}
      \p\bigg\{  \frac{ \mathcal{\hat T}_{ijk} - \mathcal{\hat T}_{i'j'k'} - (\mathcal{T}_{ijk} - \mathcal{\hat T}_{i'j'k'})}{s_{\{ijk\}\{i'j'k'\}}} \leq t \bigg\} &\leq \p\bigg\{ \frac{\xi_{ijk} - \xi_{i'j'k'}}{s_{\{ijk\}\{i'j'k'\}}} \leq t + \frac{\Delta}{s_{\{ijk\}\{i'j'k'\}}} \bigg\} + C_3 p^{-9} \\
      &\leq \Phi\bigg\{ t +   \frac{\Delta}{s_{\{ijk\}\{i'j'k'\}}} \bigg\} + C_3 p^{-9} + \eta \\
      &\leq \Phi(t) + \frac{\Delta}{s_{\{ijk\}\{i'j'k'\}}}+ C_3 p^{-9} + \eta \\
      &= \Phi(t) + o(1),
\end{align*}
where the final bounds holds as long as  $ s_{\{ijk\}\{i'j'k'\}} \gg \Delta$, which we justify at the end of the proof.
In addition, it is straightforward to modify the proof of \cref{thm:civalidity} to demonstrate that as long as
\begin{align*}
    s^2_{\{ijk\}\{i'j'k'\}} &\gg \max\bigg\{ \frac{\sigma^2 \kappa \mu_0^2 r^{3/2} \log(p)}{p^3}, \frac{\sigma^3 \mu_0^5 r^3 \kappa^2 \log^{3/2}(p)}{\lambda p^{3/2}} \bigg\} \numberthis \label{222check}
\end{align*}
it holds that $\hat s_{\{ijk\}\{i'j'k'\}} = s_{\{ijk\}\{i'j'k'\}}(1 +o(1))$ with probability at least $1 - O(p^{-6})$. We will check this condition at the end of the proof. 
Denote the region
\begin{align*}
    \mathcal{R}_{\alpha} &\coloneqq \bigg(\frac{ \mathcal{\hat T}_{ijk} - \mathcal{\hat T}_{i'j'k'}}{\hat s_{\{ijk\}\{i'j'k'\}}} - z_{\alpha/2} , \frac{ \mathcal{\hat T}_{ijk} - \mathcal{\hat T}_{i'j'k'}}{\hat s_{\{ijk\}\{i'j'k'\}}} + z_{\alpha/2}\bigg).
\end{align*}
Consequently, by similar manipulations to the end of the proof of \cref{thm:civalidity},
\begin{align*}
    \bigg|  \frac{ (\mathcal{T}_{ijk} - \mathcal{ T}_{i'j'k'})}{s_{\{ijk\}\{i'j'k'\}}}  \in \mathcal{R}_{\alpha} \bigg\} - (1-\alpha) \bigg| &= o(1). 
\end{align*}
This completes the proof, provided we can justify all of the conditions on $s_{\{ijk\}\{i'j'k'\}}$.
\item \textbf{Step 3: Checking Conditions}: Observe that \eqref{222check} requires that
\begin{align*}
    s^2_{\{ijk\}\{i'j'k'\}} &\gg \max\bigg\{ \frac{\sigma^2 \kappa \mu_0^2 r^{3/2} \log(p)}{p^3}, \frac{\sigma^3 \mu_0^5 r^3 \kappa^2 \log^{3/2}(p)}{\lambda p^{3/2}} \bigg\}.
\end{align*}
In addition, we require that  $s_{\{ijk\}\{i'j'k'\}}\gg \Delta$.  It is sufficient to require that
\begin{align*}
    s^2_{\{ijk\}\{i'j'k'\}} &\gg \max\bigg\{ \frac{\sigma^2 \kappa^4 \mu_0^6 r^3 \log(p)}{p^3}, \frac{\sigma^4 \mu_0^8 \kappa^6 r^4 \log^2(p)}{\lambda^2 p} \bigg\}.
\end{align*}
Observe that $s^2_{\{ijk\}\{i'j'k'\}}$ satisfies
\begin{align*}
    &s^2_{\{ijk\}\{i'j'k'\}} \\
    &\gtrsim \sigma^2 \min\bigg\{ \| e_{(j-1)p_3 + k }\t \mathbf{V}_1\|, \| e_{(j'-1)p_3 + k'}\t \mathbf{V}_1\|,\\
    &\quad \| e_{(k-1)p_1 + i}\t \mathbf{V}_2 \|, \| e_{(k'-1)p_1 + i}\t \mathbf{V}_2 \|, \| e_{(i-1)p_2 + j}\t \mathbf{V}_3 \|, \| e_{(i'-1)p_2 + j'}\t \mathbf{V}_3 \| \bigg\}.
\end{align*}
To see this, note that no matter how many overlapping indices there, there is always one set of indices that are not overlapping, and hence the variance can be lower bounded by at least one of
\begin{align*}
    %s^2_{\{ijk\},\{ij'k'\}} &\geq \\
 \sigma_{\min}^2 \times   \bigg\{ &\| e_{(j-1)p_3 + k }\t \mathbf{V}_1\|^2, \| e_{(j'-1)p_3 + k'}\t \mathbf{V}_1\|^2,\\
 &\quad \| e_{(k-1)p_1 + i}\t \mathbf{V}_2 \|^2, \| e_{(k'-1)p_1 + i}\t \mathbf{V}_2 \|^2, \| e_{(i-1)p_2 + j}\t \mathbf{V}_3 \|^2, \| e_{(i'-1)p_2 + j'}\t \mathbf{V}_3 \|^2 \bigg\}.
\end{align*}
Therefore, as long as 
\begin{align*}
    \min\bigg\{ &\| e_{(j-1)p_3 + k }\t \mathbf{V}_1\|^2, \|e_{(j'-1)p_3 + k'}\t \mathbf{V}_1\|^2, \| e_{(k-1)p_1 + i}\t \mathbf{V}_2 \|^2, \\
    &\quad \| e_{(k'-1)p_1 + i}\t \mathbf{V}_2 \|^2, \| e_{(i-1)p_2 + j}\t \mathbf{V}_3 \|^2, \| e_{(i'-1)p_2 + j'}\t \mathbf{V}_3 \|^2 \bigg\} \\
    &\gg \max\bigg\{ \frac{\kappa \mu_0^2 r^{3/2} \log(p)}{p^3} , \frac{\mu_0^5 r^3 \kappa^2 \log^{3/2}(p)}{(\lambda/\sigma)p^{3/2}}, \frac{\kappa^4 \mu_0^6 r^3 \log(p)}{p^3}, \frac{\mu_0^8 \kappa^6 r^3 \log^2(p)}{(\lambda/\sigma)^2 p} \bigg\}.
    \end{align*}
    we see all our results continue to hold. In particular, when $\kappa,\mu_0 = O(1)$, it is sufficient to have that
    \begin{align*}
         \min\bigg\{ &\| e_{(j-1)p_3 + k }\t \mathbf{V}_1\|^2, \| e_{(j'-1)p_3 + k'}\t \mathbf{V}_1\|^2, \| e_{(k-1)p_1 + i}\t \mathbf{V}_2 \|^2, \\
         &\quad \| e_{(k'-1)p_1 + i}\t \mathbf{V}_2 \|^2, \| e_{(i-1)p_2 + j}\t \mathbf{V}_3 \|^2, \| e_{(i'-1)p_2 + j'}\t \mathbf{V}_3 \|^2 \bigg\} \\
    &\gg \max\bigg\{ \frac{r^{3} \log(p)}{p^3} , \frac{r^{3} \log^{3/2}(p)}{(\lambda/\sigma)p^{3/2}}\bigg\},
    \end{align*}
    which is precisely the condition in \cref{thm:entrytesting_v1}. 
    \end{itemize}
\end{proof}

\section{Proofs of Lower Bounds} \label{sec:efficiencyproof}
In this section we prove all of our lower bound results.  In the subsequent section we prove both  \cref{thm:efficiencyloadings} and \cref{thm:efficiency}, and in \cref{sec:prooforders} we prove \cref{thm:efficiency_order,thm:efficiency_order_ijk}.
\subsection{Proofs of \cref{thm:efficiencyloadings} and \cref{thm:efficiency}}
In this section we prove %\cref{thm:efficiency} and 
\cref{thm:efficiencyloadings} and \cref{thm:efficiency}.  We prove both results simultaneously in a self-contained manner.  

\begin{proof}[Proofs of \cref{thm:efficiencyloadings} and \cref{thm:efficiency}]
Without loss of generality we assume that $\sigma^2 \equiv \sigma^2_{\min}$, since otherwise  we only increase the variance.  In addition, we assume that the core tensor $\mathcal{C} \in \mathbb{R}^{r_1 \times r_2 \times r_3}$ is known, since not knowing it will also only increase the variance. 

Next, by the Cramer-Rao lower bound, for any unbiased estimator $(\tilde \U_1, \tilde \U_2, \tilde \U_3)$ of the parameter $(\U_1, \U_2, \U_3)$, it holds that
\begin{align*}
    \mathrm{Var} (\tilde \U_1, \tilde \U_2, \tilde \U_3) &\succcurlyeq \mathcal{I}\inv,
\end{align*}
where $\mathcal{I}$ is defined via
\begin{align*}
    \mathcal{I} &\coloneqq \E \bigg[\nabla_{\U_1, \U_2,\U_3} \log \mathcal{L}(\mathcal{\hat T}; \U_1, \U_2, \U_3)\bigg] \bigg[ \nabla_{\U_1, \U_2,\U_3} \log \mathcal{L}(\mathcal{\hat T}; \U_1, \U_2, \U_3)\bigg]\t,
\end{align*}
% where $\mathcal{L}(\mathcal{\hat T};\U_1,\U_2,\U_3)$ is the likelihood function.  Therefore 
By the delta method, for any unbiased estimator $\mathcal{\check T}_{ijk}$ of $\mathcal{T}_{ijk}$ it holds that
\begin{align*}
    \mathrm{Var}(\mathcal{\check T}_{ijk}) \geq \nabla_{\U_1,\U_2,\U_3} \mathcal{T}_{ijk}\t  \mathcal{I}\inv\nabla_{\U_1,\U_2,\U_3} \mathcal{T}_{ijk}, 
\end{align*}
where $\nabla_{\U_1,\U_2,\U_3} \mathcal{T}_{ijk}$ is the gradient of $\mathcal{T}_{ijk}$ with respect to the (vectorized) parameter $(\U_1, \U_2,\U_3)$.  
We now proceed in several steps. 
\begin{itemize}
    \item \textbf{Step 1: Calculating the Relevant Quantities}: 
 First, we note that
\begin{align}
    \frac{\del \mathcal{T}_{ijk}}{\del (\U_1)_{l_1,s}} &= \frac{\del}{\del (\U_1)_{l_1,s}} \sum_{a,b,c} \mathcal{C}_{abc} (\U_1)_{ia} (\U_2)_{jb} (\U_3)_{kc} \nonumber \\
    &= \mathbb{I}_{i = l_1} \sum_{b,c} \mathcal{C}_{sbc} (\U_2)_{jb} (\U_3)_{kc}. \label{eq:tijkpartial}
\end{align}
A similar calculation can be made for the other modes, with appropriate replacements.

We now calculate the Fisher information matrix.  To do so, we will use the convention that the vectorized matrices $\U_1, \U_2,$ and $\U_3$ are in the order such that they are first indexed by row then by column, which will be a useful convention later on. We note that
\begin{align*}
\log f( \mathcal{\hat T}; \U_1, \U_2, \U_3, \mathcal{C} ) &=\frac{-1}{2\sigma^2} \sum_{i_1,i_2,i_3} \bigg( \mathcal{\hat T}_{i_1 i_2 i_3} - \sum_{a,b,c} \mathcal{C}_{abc} (\U_1)_{i_1 a} (\U_2)_{i_2 b} (\U_3 )_{i_3 c} \bigg)^2; 
\end{align*}
and hence that
\begin{align*}
    &\frac{\del }{\del (\U_1)_{l a'}}\log f( \mathcal{\hat T}; \U_1, \U_2, \U_3, \mathcal{C} )  \\
&= \frac{\del }{\del (\U_1)_{l a'}}\frac{-1}{2\sigma^2} \sum_{i_1,i_2,i_3} \bigg( \mathcal{\hat T}_{i_1 i_2 i_3} - \sum_{a,b,c} \mathcal{C}_{abc} (\U_1)_{i_1 a} (\U_2)_{i_2 b} (\U_3 )_{i_3 c} \bigg)^2 \\
%&= \frac{\del }{\del (\U_1)_{l a'}}\frac{-1}{2\sigma^2} \sum_{i_2,i_3} \bigg( \mathcal{\hat T}_{l i_2 i_3} - \sum_{a,b,c} \mathcal{C}_{abc} (\U_1)_{l a} (\U_2)_{i_2 b} (\U_3 )_{i_3 c} \bigg)^2 \\
&= \frac{-1}{\sigma^2} \sum_{i_2,i_3} \bigg( \mathcal{\hat T}_{l i_2 i_3} - \sum_{a,b,c} \mathcal{C}_{abc} (\U_1)_{l a} (\U_2)_{i_2 b} (\U_3 )_{i_3 c}  \bigg)\\
&\quad \times \frac{\del}{\del(\U_1)_{la'}} \bigg( \mathcal{\hat T}_{l i_2 i_3} - \sum_{a,b,c} \mathcal{C}_{abc} (\U_1)_{l a} (\U_2)_{i_2 b} (\U_3 )_{i_3 c}  \bigg)  \\
&= \frac{1}{\sigma^2} \sum_{i_2,i_3} \bigg( \mathcal{\hat T}_{l i_2 i_3} - \sum_{a,b,c} \mathcal{C}_{abc} (\U_1)_{l a} (\U_2)_{i_2 b} (\U_3 )_{i_3 c}  \bigg)  \bigg( \sum_{b,c} \mathcal{C}_{a' bc} (\U_2)_{i_2 b} (\U_3 )_{i_3 c}  \bigg)  \\
&= \frac{1}{\sigma^2} \sum_{i_2, i_3} \mathcal{Z}_{l i_2 i_3}   \bigg( \sum_{b,c} \mathcal{C}_{a' bc} (\U_2)_{i_2 b} (\U_3 )_{i_3 c}  \bigg).
\end{align*}
Therefore,\begin{align}
\E &\Bigg[  \frac{\del }{\del (\U_1)_{l_1 a_1}}\log f( \mathcal{\hat T}; \U_1, \U_2, \U_3, \mathcal{C} ) \frac{\del }{\del (\U_1)_{l_2 a_2}}\log f( \mathcal{\hat T}; \U_1, \U_2, \U_3, \mathcal{C} ) \Bigg] \nonumber \\
&= \frac{1}{\sigma^4} \E \Bigg[ \sum_{i_2, i_3} \mathcal{Z}_{l_1 i_2 i_3}   \bigg( \sum_{b,c} \mathcal{C}_{a_1 bc} (\U_2)_{i_2 b} (\U_3 )_{i_3 c}  \bigg) \Bigg] \Bigg[  \sum_{i_2, i_3} \mathcal{Z}_{l_2 i_2 i_3}   \bigg( \sum_{b,c} \mathcal{C}_{a_2 bc} (\U_2)_{i_2 b} (\U_3 )_{i_3 c} \Bigg] \nonumber \\
&= \frac{1}{\sigma^2} \mathbb{I}_{l_1 = l_2}\sum_{i_2, i_3} \bigg( \sum_{b,c} \mathcal{C}_{a_1 bc} (\U_2)_{i_2 b} (\U_3 )_{i_3 c}  \bigg) \bigg(  \sum_{b,c} \mathcal{C}_{a_2 bc} (\U_2)_{i_2 b} (\U_3 )_{i_3 c} \bigg) \nonumber \\
&= \frac{1}{\sigma^2} \mathbb{I}_{l_1 = l_2}\sum_{b,c} \mathcal{C}_{a_1 bc} \mathcal{C}_{a_2 bc}\label{grammatrixidentity}
\end{align}
where the second inequality uses the independence of $\mathcal{Z}_{i_1i_2i_3}$, and the final inequality is by orthonormality.  Similarly,
\begin{align}
&\E \Bigg[  \frac{\del }{\del (\U_1)_{l_1 a_1}}\log f( \mathcal{\hat T}; \U_1, \U_2, \U_3, \mathcal{C} ) \frac{\del }{\del (\U_2)_{l_2 b_1}}\log f( \mathcal{\hat T}; \U_1, \U_2, \U_3, \mathcal{C} ) \Bigg] \nonumber \\
&=  \frac{1}{\sigma^4} \E \Bigg[ \sum_{i_2, i_3} \mathcal{Z}_{l_1 i_2 i_3}   \bigg( \sum_{b,c} \mathcal{C}_{a_1 bc} (\U_2)_{i_2 b} (\U_3 )_{i_3 c}  \bigg) \Bigg]  \Bigg[  \sum_{i_1, i_3} \mathcal{Z}_{i_1 l_2 i_3}   \bigg( \sum_{a,c} \mathcal{C}_{a b_1 c} (\U_1)_{i_1 a} (\U_3 )_{i_3 c} \Bigg] \nonumber\\
&= \frac{1}{\sigma^2} \sum_{i_3} \bigg(\sum_{b,c} \mathcal{C}_{a_1 bc} (\U_2)_{l_2 b} (\U_3)_{i_3 c} \bigg) \bigg( \sum_{a,c} \mathcal{C}_{a b_1 c} (\U_1)_{l_1 a} (\U_3 )_{i_3 c} \bigg) \label{eq:Hidentity}
\end{align}
Similar derivations can be obtained for $\U_2$ and $\U_3$. 

Define the $r_1$ vectors $\{\mathbf{c}_{a_i}\}_{i=1}^{r_1} \in \mathbb{R}^{r_2 r_3}$ by setting its $(b,c)$ entry equal to $\mathcal{C}_{a_i bc}$, and define the $r_2$ and $r_3$ vectors $\{\mathbf{c}_{b_j}\}_{j=1}^{r_2}$ and $\{ \mathbf{c}_{c_k}\}_{k=1}^{r_3}$ similarly.  With this definition, we note that \eqref{grammatrixidentity} is simply the equation for the sample gram matrix for the $r_1$ vectors $\{\mathbf{c}_{a_i}\}_{i=1}^{r_1} \in \mathbb{R}^{r_2 r_3}$ (with similar observations for the other two modes).  Therefore, abusing notation slightly, let $\mathbf{I}_1 \in \mathbb{R}^{p_1 r_1 \times p_1 r_1}$ be the block-diagonal matrix with $r_1 \times r_1$ blocks   given by this sample gram matrix, and let $\mathbf{I}_2$ and $\mathbf{I}_3$ be defined similarly.  With these notations in place, we see that $\mathcal{I}$ can be written in the following form:
\begin{align*}
   \sigma^2 \mathcal{I} &= \begin{pmatrix}
    \mathbf{I}_1 & 0 &0 \\ 0 & \mathbf{I}_2 & 0 \\
    0 & 0 &\mathbf{I}_3
    \end{pmatrix} + \begin{pmatrix} 0 & \mathbf{H}_{12} & \mathbf{H}_{13} \\ \mathbf{H}_{12}\t & 0 & \mathbf{H}_{23} \\ \mathbf{H}_{13}\t & \mathbf{H}_{23}\t & 0
    \end{pmatrix},
\end{align*}
where $\mathbf{H}_{12}, \mathbf{H}_{13}$ and $\mathbf{H}_{23}$ are of appropriate dimension with entries given by the identity in \eqref{eq:Hidentity}.  
\item 
\textbf{Step 2: Three Tensor Algebra Identities}: 
We now simplify the expressions in \eqref{eq:tijkpartial},
\eqref{grammatrixidentity} and \eqref{eq:Hidentity}.  
We recall from \eqref{eq:tijkpartial} that
\begin{align}
     \frac{\del \mathcal{T}_{ijk}}{\del (\U_1)_{i,s}} &= \sum_{b,c} \mathcal{C}_{sbc} (\U_2)_{jb} (\U_3)_{kc} \nonumber\\
     &= \sum_{b,c} \bigg( \sum_{i_1i_2i_3} \mathcal{T}_{i_1 i_2 i_3} (\U_1)_{i_1 s} (\U_2)_{i_2 b} (\U_3)_{i_3 c} \bigg) (\U_2)_{jb} (\U_3)_{kc} \nonumber\\
     &= \sum_{b,c} (\U_2)_{jb} (\U_3)_{kc} \bigg( \sum_{i_1} (\U_1)_{i_1 s} \sum_{i_2 i_3} \mathcal{T}_{i_1 i_2 i_3} (\U_2)_{i_2 b} (\U_3)_{i_3 c} \bigg)\nonumber \\
     &= \sum_{b,c} (\U_2)_{jb} (\U_3)_{kc} \bigg( \U_1\t \mathbf{T}_1 (\U_2 \otimes \U_3) \bigg)_{s, (b,c)} \nonumber\\
     &= \Bigg[ \U_1\t \mathbf{T}_1 (\U_2 \otimes \U_3) (\U_2 \otimes \U_3)\t\Bigg]_{s,(j-1)p_3 + k} \nonumber\\
     &= \Bigg[ \U_1\t \U_1 \mathbf{\Lambda}_1 \mathbf{V}_1 \mathcal{P}_{\U_1 \otimes \U_2} \Bigg]_{s,(j-1)p_3 +k} \nonumber \\
     &= \bigg( \mathbf{\Lambda}_1 \mathbf{V}_1\t \bigg)_{s,(j-1)p_3 +k}. \label{tijkpartial2}
\end{align}
Here we have denoted the index $(b,c)$ to be the matricization index corresponding to the $s,b,c$ element of the underlying tensor.

We now consider the form of the blocks in the definition of $\mathbf{I}_1$.  Recall that each block is given by the matrix whose $(a_1, a_2)$ entry is equal to
\begin{align*}
    \sum_{b,c} \mathcal{C}_{a_1 bc} \mathcal{C}_{a_2bc}.
\end{align*}
We have that
\begin{align*}
     &\sum_{b,c} \mathcal{C}_{a_1 bc} \mathcal{C}_{a_2bc} \\
     &= \sum_{b,c} \bigg(  \sum_{i_1,i_2,i_3}  \mathcal{T}_{i_1 i_2 i_3} (\U_1)_{i_1 a_1} (\U_2)_{i_2 b} (\U_3)_{i_3 c} \bigg)\bigg(  \sum_{j_1,j_2,j_3}  \mathcal{T}_{j_1 j_2 j_3} (\U_1)_{j_1 a_2} (\U_2)_{j_2 b} (\U_3)_{j_3 c} \bigg) \\
     &= \sum_{b,c}\sum_{i_1, i_2,i_3} \mathcal{T}_{i_1i_2i_3} (\U_1)_{i_1 a_1} (\U_2 )_{i_2 b} (\U_3 )_{i_3}\bigg[ \sum_{j_1} (\U_1)_{j_1 a_2} \sum_{j_2 j_3} \mathcal{T}_{j_1 j_2 j_3} (\U_2)_{j_2b} (\U_3)_{j_3 c} \bigg] \\
     &= \sum_{b,c}\sum_{i_1, i_2,i_3} \mathcal{T}_{i_1i_2i_3} (\U_1)_{i_1 a_1} (\U_2 )_{i_2 b} (\U_3 )_{i_3} \bigg[ \sum_{j_1} (\U_1)_{j_1 a_2} \bigg( \mathbf{T}_1 (\U_2 \otimes \U_3 ) \bigg)_{j_1,(b,c)} \bigg] \\
     &= \sum_{b,c}\sum_{i_1, i_2,i_3} \mathcal{T}_{i_1i_2i_3} (\U_1)_{i_1 a_1} (\U_2 )_{i_2 b} (\U_3 )_{i_3} \bigg( \U_1 \t \mathbf{T}_1 (\U_2 \otimes \U_3)\bigg)_{a_2, (b,c)} \\
     &=\sum_{b,c} \bigg( \U_1 \t \mathbf{T}_1 (\U_2 \otimes \U_3)\bigg)_{a_2, (b,c)} \sum_{i_1, i_2,i_3} \mathcal{T}_{i_1i_2i_3} (\U_1)_{i_1 a_1} (\U_2 )_{i_2 b} (\U_3 )_{i_3} \\
    &=\sum_{b,c} \bigg( \U_1 \t \mathbf{T}_1 (\U_2 \otimes \U_3)\bigg)_{a_2, (b,c)} \bigg( \U_1\t \mathbf{T}_1 (\U_2 \otimes \U_3) \bigg)_{a_1, (b,c)} \\
    &= \bigg(\U_1\t \mathbf{T}_1 ( \U_2 \otimes \U_3 ) (\U_2 \otimes \U_3)\t \mathbf{T}_1\t \U_1 \bigg)_{a_1a_2} \\
    &= \bigg( \U_1\t \U_1 \mathbf{\Lambda}_1 \mathbf{V}_1 \mathcal{P}_{\U_2 \otimes \U_3} \mathbf{V}_1\t \mathbf{\Lambda}_1 \U_1\t \U_1 \bigg)_{a_1 a_2} \\
    &= (\mathbf{\Lambda}_1^2)_{a_1 a_2},
\end{align*}
where we have used the fact that $\mathbf{T}_1 \mathcal{P}_{\U_2 \otimes \U_3} = \mathbf{T}_1$ and the fact that $\mathcal{P}_{\U_2 \otimes \U_3} = (\U_2 \otimes \U_3) (\U_2 \otimes \U_3)\t$.  This calculation reveals that $\mathbf{I}_1$ is simply a diagonal matrix with repeated diagonal blocks of $\mathbf{\Lambda}_1^2$, with a similar observation holding for $\mathbf{I}_2$ and $\mathbf{I}_3$, with diagonal blocks equal to $\mathbf{\Lambda}_2^2$ and $\mathbf{\Lambda}_3^2$ respectively.

Finally, we note that from \eqref{eq:Hidentity} that
\begin{align*}
\sigma^2 \E \Bigg[  \frac{\del }{\del (\U_1)_{l_1 a_1}}&\log f( \mathcal{\hat T}; \U_1, \U_2, \U_3, \mathcal{C} ) \frac{\del }{\del (\U_1)_{l_2 a_2}}\log f( \mathcal{\hat T}; \U_1, \U_2, \U_3, \mathcal{C} ) \Bigg] \\
&=  \sum_{i_3} \bigg(\sum_{b,c} \mathcal{C}_{a_1 bc} (\U_2)_{l_2 b} (\U_3)_{i_3 c} \bigg) \bigg( \sum_{a,c} \mathcal{C}_{a b_1 c} (\U_1)_{l_1 a} (\U_3 )_{i_3 c} \bigg).
\end{align*}
Note that
\begin{align*}
    \sum_{b,c} \mathcal{C}_{a_1bc} (\U_2)_{l_2 b} (\U_3)_{i_3 c} %&= \sum_{b,c} (\U_2)_{l_2 b} (\U_3)_{i_3 c}  \\
    &= \sum_{b,c} (\U_2)_{l_2 b} (\U_3)_{i_3 c} \bigg( \sum_{j_1j_2j_3} \mathcal{T}_{j_1j_2j_3} (\U_1)_{j_1a_1} (\U_2)_{j_2 b} (\U_3)_{j_3c} \bigg) \\
    &= \sum_{b,c} (\U_2)_{l_2 b} (\U_3)_{i_3 c} \bigg( \U_1\t \mathbf{T}_1 (\U_2 \otimes \U_3)\bigg)_{a_1,(b,c)} \\
    &= \bigg( \U_1\t \mathbf{T}_1 (\U_2 \otimes \U_3) (\U_2 \otimes \U_3)\t \bigg)_{a_1,(l_2,i_3)} \\
    &= \bigg( \mathbf{\Lambda}_1 \mathbf{V}_1\t \bigg)_{a_1,(l_2,i_3)} \\
    &= (\mathbf{\Lambda}_1)_{a_1} \big( \mathbf{V}_1\big)_{(l_2-1)p_3 + i_3, a_1}.
\end{align*}
By a similar argument,
\begin{align*}
    \bigg( \sum_{a,c} \mathcal{C}_{a b_1 c} (\U_1)_{l_1 a} (\U_3 )_{i_3 c} \bigg) &= (\mathbf{\Lambda}_2)_{b_1} \big( \mathbf{V}_2 \big)_{(l_1 - 1)p_2 + i_3}.
\end{align*}
Therefore,
\begin{align}
    \sum_{i_3} &\bigg(\sum_{b,c} \mathcal{C}_{a_1 bc} (\U_2)_{l_2 b} (\U_3)_{i_3 c} \bigg) \bigg( \sum_{a,c} \mathcal{C}_{a b_1 c} (\U_1)_{l_1 a} (\U_3 )_{i_3 c} \bigg) \nonumber \\
    &= \sum_{i_3}(\mathbf{\Lambda}_2)_{b_1} \big( \mathbf{V}_2 \big)_{(l_1 - 1)p_2 + i_3}(\mathbf{\Lambda}_1)_{a_1} \big( \mathbf{V}_1\big)_{(l_2-1)p_3 + i_3, a_1}, \label{hdef}
\end{align}
with appropriate replacements for different modes. 
\item \textbf{Step 3: Inverting the Fisher Information Matrix}:
Recall that we have that
\begin{align*}
    \sigma^2 \mathcal{I} &= \begin{pmatrix} \mathbf{I}_1 & 0 & 0 \\ 0 &\mathbf{I}_2 & 0 \\ 0 & 0 &\mathbf{I}_3 \end{pmatrix} + \begin{pmatrix} 0 & \mathbf{H}_{12} & \mathbf{H}_{13} \\ \mathbf{H}_{12}\t & 0 & \mathbf{H}_{23} \\ \mathbf{H}_{13}\t & \mathbf{H}_{23}\t & 0
    \end{pmatrix} \\
    &\coloneqq \mathcal{\tilde I} + \mathcal{H}.
\end{align*}
It is straightforward to observe that the matrix $ \mathcal{\tilde I}$ is invertible since it is a diagonal matrix with positive elements.  We note also that
\begin{align*}
\| \mathcal{H} \|&= \Bigg\| \begin{pmatrix} 0 & \mathbf{H}_{12} & \mathbf{H}_{13} \\ \mathbf{H}_{12}\t & 0 & \mathbf{H}_{23} \\ \mathbf{H}_{13}\t & \mathbf{H}_{23}\t & 0
    \end{pmatrix}\Bigg\| \\
    &\leq 3 \max_{jk} \| \mathbf{H}_{jk} \|
\end{align*}
We calculate the maximum spectral norm of each block.  Note that by \eqref{hdef},
\begin{align*}
\|    \mathbf{H}_{12}  \| &\leq \| \mathbf{H}_{12} \|_F \\
&\leq \Bigg[ \sum_{l_1,l_2,a_1,b_1} \bigg( \sum_{i_3}(\mathbf{\Lambda}_2)_{b_1} \big( \mathbf{V}_2 \big)_{(l_1 - 1)p_2 + i_3}(\mathbf{\Lambda}_1)_{a_1} \big( \mathbf{V}_1\big)_{(l_2-1)p_3 + i_3, a_1} \bigg)^2 \Bigg]^{1/2} \\
&\leq \Bigg[ \sum_{l_1,l_2,a_1,b_1,i_3} \bigg( (\mathbf{\Lambda}_2)_{b_1} \big( \mathbf{V}_2 \big)_{(l_1 - 1)p_2 + i_3} \bigg)^2 \bigg( (\mathbf{\Lambda}_1)_{a_1} \big( \mathbf{V}_1\big)_{(l_2-1)p_3 + i_3, a_1}  \bigg)^2 \Bigg]^{1/2} \\
&\leq \kappa^2 \lambda^2 \Bigg[ \sum_{l_1,l_2,i_3} \| \big( \mathbf{V}_2 \big)_{(l_1 - 1)p_2 + i_3,\cdot} \|^2 \| \big( \mathbf{V}_1\big)_{(l_2-1)p_3 + i_3, \cdot} \|^2 \Bigg]^{1/2} \\
&\leq \kappa^2 \lambda^2 \Bigg[ \frac{\mu_0^2 r}{p_2p_3} \sum_{l_1,l_2,i_3}  \| \big( \mathbf{V}_1\big)_{(l_2-1)p_3 + i_3, \cdot} \|^2 \Bigg]^{1/2} \\
&\leq \kappa^2 \lambda^2 \Bigg[ \frac{C \mu_0^2 r}{p} \sum_{l_2,i_3}\| \big( \mathbf{V}_1\big)_{(l_2-1)p_3 + i_3, \cdot} \|^2  \Bigg]^{1/2} \\
&\leq \kappa^2 \lambda^2 \big( \frac{C \mu_0^2 r}{p} \big) \\
&\lesssim \kappa^2 \lambda^2 \mu_0 \sqrt{\frac{r}{p}}.
\end{align*}
Therefore, applying this argument to each block, we obtain
\begin{align*}
    \| \mathcal{H} \| &\lesssim \kappa^2 \lambda^2 \mu_0 \sqrt{\frac{r}{p}}.
\end{align*}
Therefore,
\begin{align*}
    \| \mathcal{\tilde I}\inv \mathcal{H} \| &\leq \| \mathcal{\tilde I}\inv \| \|\mathcal{H} \| \\
    &\leq \frac{1}{\lambda^2} \|\mathcal{H} \| \\
    &\lesssim \kappa^2 \mu_0 \sqrt{\frac{r}{p}} \\
    &\ll 1,
\end{align*}
provided that $\kappa^2  \mu_0 \sqrt{\frac{r}{p}} \ll 1$.

Therefore, the following series expansion is justified:
\begin{align*}
    (\sigma^2\mathcal{I})\inv &= \bigg( \mathcal{I} + \mathcal{H}\bigg)\inv \\
    &= \Bigg[ \mathcal{\tilde I} \bigg( \mathbf{I}_{p_1p_2p_3r_1r_2r_3} + \mathcal{\tilde I}\inv \mathcal{H} \bigg) \Bigg]\inv \\
    &= \bigg(  \mathbf{I}_{p_1p_2p_3r_1r_2r_3} + \mathcal{\tilde I}\inv \mathcal{H} \bigg)\inv \mathcal{\tilde I}\inv \\
    &= \sum_{k=0}^{\infty} \big(  \mathcal{\tilde I}\inv \mathcal{H} \big)^k,
\end{align*}
and hence that
\begin{align*}
    \mathcal{I}\inv &= \sigma^2 \sum_{k=0}^{\infty} \big(  \mathcal{\tilde I}\inv \mathcal{H} \big)^k \mathcal{\tilde I}\inv \\
    &= \sigma^2  \mathcal{\tilde I}\inv + \sigma^2 \sum_{k=1}^{\infty} \big(  \mathcal{\tilde I}\inv \mathcal{H} \big)^k \mathcal{\tilde I}\inv \\
    &= \sigma^2 (1 - o(1)) \mathcal{\tilde I}\inv,
\end{align*}
where the $o(1)$ is taken to be with respect to the positive semidefinite ordering. 
\item 
\textbf{Step 4: Putting It All Together}: First we prove \cref{thm:efficiencyloadings}.   It is immediate that the $m$'th row of any estimator $\mathbf{\tilde U}_k$ of $\U_k$ has covariance lower bounded by the $r_k \times r_k$ submatrix of $\mathcal{I}\inv$.  Therefore,
\begin{align*}
    \mathrm{Var}( e_m\t \utilde_k ) &\succcurlyeq \big(\mathcal{I}\inv\big)_{r_k: r_k} \\
    &\succcurlyeq \sigma^2 (1 - o(1)) \big(\mathcal{\tilde I}\inv\big)_{r_k: r_k} \\
    &\succcurlyeq \sigma^2 (1- o(1)) \mathbf{\Lambda}_k^{-2},
\end{align*}
since the $r_k \times r_k$ submatrix of $\mathcal{\tilde I}\inv$ corresponding to the $m$'th row of $\U_k$ is simply $\mathbf{\Lambda}_k^{-2}$.  This completes the proof of \cref{thm:efficiencyloadings}.

 We now complete the proof of \cref{thm:efficiency}.  We have that
 \begin{align*}
    \big(  &\nabla_{\U_1,\U_2,\U_3} \mathcal{T}_{ijk} \big)\t \mathcal{I}\inv \big(  \nabla_{\U_1,\U_2,\U_3} \mathcal{T}_{ijk} \big) \\
   &= (1- o(1)) \sigma^2  \big(  \nabla_{\U_1,\U_2,\U_3} \mathcal{T}_{ijk} \big)\t \mathcal{\tilde I}\inv \big(  \nabla_{\U_1,\U_2,\U_3} \mathcal{T}_{ijk} \big).
 \end{align*}
 Note that from \eqref{eq:tijkpartial} and \eqref{tijkpartial2} that the vector
 \begin{align*}
     \nabla_{\U_1,\U_2,\U_3} \mathcal{T}_{ijk}
 \end{align*}
 is only nonzero in indices corresponding to rows $i$ $j$ or $k$.  Each of these is then scaled by the diagonal matrix $\mathbf{\Lambda}_1^{-2}$, $\mathbf{\Lambda}_2^{-2}$, or $\mathbf{\Lambda}_3^{-3}$ respectively. 
 Therefore, 
\begin{align*}
   & \big(  \nabla_{\U_1,\U_2,\U_3} \mathcal{T}_{ijk} \big)\t \mathcal{ I}\inv \big(  \nabla_{\U_1,\U_2,\U_3} \mathcal{T}_{ijk} \big) \\
    &= (1- o(1))\sigma^2 \sum_{s_1,s_2} (\mathbf{\Lambda}_1 \mathbf{V}_1\t )_{s_1,(j-1)p_3 + k} (\mathbf{\Lambda}_1)^{-2}_{s_1 s_2} (\mathbf{\Lambda}_1 \mathbf{V}_1\t )_{s_2,(j-1)p_3 + k} \\
    &\quad + (1- o(1))\sigma^2\sum_{s_1, s_2} (\mathbf{\Lambda}_2 \mathbf{V}_2\t )_{s_1,(k-1)p_1 + i} (\mathbf{\Lambda}_2)^{-2}_{s_1 s_2} (\mathbf{\Lambda}_2 \mathbf{V}_2\t )_{s_2,(k-1)p_1 + i} \\
    &\quad + (1- o(1))\sigma^2\sum_{s_1, s_2} (\mathbf{\Lambda}_3 \mathbf{V}_3\t )_{s_1,(i-1)p_2 + j} (\mathbf{\Lambda}_3)^{-2}_{s_1 s_2} (\mathbf{\Lambda}_3 \mathbf{V}_3\t )_{s_2,(i-1)p_2 + j} \\ 
    &= (1- o(1))\sigma^2\sum_{s_1} (\mathbf{\Lambda}_1 \mathbf{V}_1\t )_{s_1,(j-1)p_3 + k} (\mathbf{\Lambda}_1)^{-2}_{s_1 s_1} (\mathbf{\Lambda}_1 \mathbf{V}_1\t )_{s_2,(j-1)p_3 + k} \\
    &\quad +(1- o(1))\sigma^2 \sum_{s_1} (\mathbf{\Lambda}_2 \mathbf{V}_2\t )_{s_1,(k-1)p_1 + i} (\mathbf{\Lambda}_2)^{-2}_{s_1 s_1} (\mathbf{\Lambda}_2 \mathbf{V}_2\t )_{s_1,(k-1)p_1 + i} \\
    &\quad + (1- o(1))\sigma^2\sum_{s_1} (\mathbf{\Lambda}_3 \mathbf{V}_3\t )_{s_1,(i-1)p_2 + j} (\mathbf{\Lambda}_3)^{-2}_{s_1 s_1} (\mathbf{\Lambda}_3 \mathbf{V}_3\t )_{s_2,(i-1)p_2 + j} \\ 
    &= (1- o(1))\sigma^2\sum_{s_1} ( \mathbf{V}_1\t )_{s_1,(j-1)p_3 + k}^2 \\
    &\quad + (1- o(1))\sigma^2\sum_{s_1} ( \mathbf{V}_2\t )_{s_1,(k-1)p_1 + i}^2 \\
    &\quad + (1- o(1))\sigma^2\sum_{s_1} ( \mathbf{V}_3\t )_{s_1,(i-1)p_2 + j}^2 \\ 
    &=(1- o(1))\sigma^2 \bigg( \| e_{(j-1)p_3 + k} \mathbf{V}_1 \|^2 + \| e_{(k-1)p_2 + j} \mathbf{V}_2 \|^2 + \| e_{(i-1)p_2 + j} \mathbf{V}_3 \|^2\bigg).
\end{align*}
This completes the proof of \cref{thm:efficiency}.
\end{itemize}
\end{proof}

\subsection{Proofs of \cref{thm:efficiency_order,thm:efficiency_order_ijk}} \label{sec:prooforders}
In this section we prove the minimax lower bounds for the length of the confidence intervals.  

\subsubsection{Proof of \cref{thm:efficiency_order}}

\begin{proof}[Proof of \cref{thm:efficiency_order}]
%\textcolor{black}{
Without loss of generality we may  assume that $\|\xi\|_{\infty} = \sigma_{\min} = 1$, since the result is invariant to rescaling by these quantities.  We further assume that $k = m = 1$. Our proof mimics the proof of Theorem 3 of \citet{cai_confidence_2017}, but our construction is inspired by a similar construction in \citet{cheng_tackling_2021}.%}

%\textcolor{black}{
Take any $\mathcal{T}$ such that $\mathcal{T} = \mathcal{C} \times_1 \U_1 \times \U_2 \times \U_3 \in \Theta(\lambda,\kappa,\frac{\mu_0}{2},\sigma,\sigma_{\min})$, and suppose $\U_1$ is such that  $\| \big( \U_1 \big)_{\cdot, r_k} \|_{\infty} \leq \frac{\mu_0}{2}\sqrt{\frac{1}{p_1}}$. Let $\mathcal{H}_0$ denote the hypothesis space with point mass at $\mathcal{T}$, and let $\pi_{\mathcal{H}_0}$ denote the prior on this set.  We will construct an alternative hypothesis space $\mathcal{H}_1$ with prior given point mass $\pi_{\mathcal{H}_1}$ of the form
\begin{align*}
    \mathcal{\bar T} = \mathcal{C} \times_1 \mathbf{\bar U}_1 \times_2 \U_2 \times_3 \U_3,
\end{align*}
where $\mathbf{\bar U}_1$ is an orthonormal matrix satisfying certain properties.  By Lemma 1 of \citet{cai_confidence_2017} and properties of the infimum, it holds that 
\begin{align*}
&\inf_{\mathrm{C.I.}_{k,m}^{\alpha}(\xi,\mathcal{Z},\mathcal{T}) \in \mathcal{I}_{\alpha}(\Theta,\xi)} \sup_{\mathcal{T} \in \Theta} \mathbb{E}_{\mathcal{T}} L\big( \mathrm{C.I.}_{k,m}^{\alpha}(\xi,\mathcal{Z},\mathcal{T}) \big) \\ &\geq \inf_{\mathrm{C.I.}_{k,m}^{\alpha}(\xi,\mathcal{Z},\mathcal{T}) \in \mathcal{I}_{\alpha}(\Theta,\xi)} \sup_{\mathcal{T} \in \Theta_0 \cup \Theta_1} \mathbb{E}_{\mathcal{T}} L\big( \mathrm{C.I.}_{k,m}^{\alpha}(\xi,\mathcal{Z},\mathcal{T}) \big) \\
 &\geq \big| \xi\t \big( \mathbf{U}_1 - \mathbf{\bar U}_1 \big)_{1\cdot} \big| \big( 1 - 2\alpha - \mathrm{TV}(f_{\pi_{\mathcal{H}_1}}, f_{\pi_{\mathcal{H}_0}} ) \big),
\end{align*}
where $\mathrm{TV}(\cdot)$ denotes the total variation distance, and $f_{\pi_{\mathcal{H}_i}}$ denotes the density function with the prior $\pi_{\mathcal{H}_i}$ on $\mathcal{H}_i$. We note that
\begin{align*}
    \mathrm{TV}(f_{\pi_{\mathcal{H}_1}}, f_{\pi_{\mathcal{H}_0}} ) &\leq \sqrt{\chi^2(f_{\pi_{\mathcal{H}_1}}, f_{\pi_{\mathcal{H}_0}})}, 
\end{align*}
where
\begin{align*}
    \chi^2(f_1,f_0) &= \int \frac{f_1^2(z)}{f_0(z)} dz- 1.
\end{align*}
We will upper bound $\chi^2(f_{\pi_{\mathcal{H}_1}}, f_{\pi_{\mathcal{H}_0}})$ and lower bound  $\big| \xi\t \big( \mathbf{U}_1 - \mathbf{\bar U}_1 \big)_{1\cdot} \big|$.  %}

%\textcolor{black}{
First we describe our construction.  For a given $\mathbf{U}_1$, let $\mathbf{U'}_1$ be defined as follows. Let $\mathbf{v}$ denote the vector
\begin{align*}
    \mathbf{v} := \frac{1}{\| (\U_1)_{\cdot r_1} + \delta (\mathbf{I} - \U_1 \U_1\t ) e_1\| } \bigg( (\U_1)_{\cdot r_1} + \delta (\mathbf{I} - \U_{1} \U_{1}\t ) e_1 \bigg),
\end{align*}
where we will choose $\delta$ later. We set $\mathbf{\bar U}_1$ to have the first $r_1 - 1$ columns equal to $\U_1$, and we let its $r_1$'th column equal $\mathbf{v}$.  The proof proceeds in steps: first we demonstrate that $\mathcal{\bar T} \in \Theta(\lambda,\kappa,\mu_0,\sigma,\sigma_{\min})$, after which we upper bound the $\chi^2$ distance, and finally we lower bound the difference $\big|\xi\t\big( \U_1 - \mathbf{\bar U}_1\big)_{1\cdot} \big|$.

\begin{itemize}
    \item \textbf{Step 1: Showing $\mathcal{\bar T} \in \Theta(\lambda,\kappa,\mu_0,\sigma,\sigma_{\min}).$} It is clear that the singular values and the singular vectors $\U_2$ and $\U_3$ of $\mathcal{\bar T}$ remain unchanged; consequently, we need only demonstrate that $ \|\mathbf{\bar U}_1\|_{2,\infty} \leq \mu_0 \sqrt{\frac{r_1}{p_1}}$. Therefore, it suffices to demonstrate that $\|\mathbf{v} \|_{\infty} \leq \mu_0 \sqrt{\frac{1}{p_1}}$. First, note that \begin{align*}
    \|  (\U_1)_{\cdot r_1} + \delta ((\mathbf{I} - \U_{1} \U_{1}\t )) e_1 \| = \sqrt{1 + \delta^2 \| ((\mathbf{I} - \U_{1} \U_{1}\t ) ) e_1 \|^2}.
\end{align*}
Consequently,
\begin{align*}
    \| \mathbf{v} \|_{\infty} &\leq \| (\U_1)_{\cdot r_1} - \mathbf{v} \|_{\infty} + \|(\U_1)_{\cdot r_1} \|_{\infty} \\
    &\leq \bigg\| \big(\mathbf{U}_1\big)_{\cdot r_1} - \frac{(\U_1)_{\cdot r_1} + \delta (\mathbf{I} - \U_1 \U_1\t ) e_1}{\sqrt{1 + \delta^2 \| (\mathbf{I} - \U_1 \U_1\t ) e_1 \|^2}} \bigg\| + \frac{\mu_0}{2} \sqrt{\frac{1}{p_1}} \\
    &\leq \bigg\| \big( \U_1 \big)_{\cdot r_1} + \delta \big( \mathbf{I} - \U_1  \U_1\t \big) e_1 \bigg\|_{\infty} \bigg| 1 - \frac{1}{\sqrt{1 + \delta^2 \| (\mathbf{I} - \U_1\U_1\t) e_1\|^2}} \bigg| + \frac{\mu_0}{2} \sqrt{\frac{1}{p_1}} \\
    &\leq \bigg( \frac{\mu_0}{2} \sqrt{\frac{1}{p_1}} + \delta \bigg) \bigg| 1 - \frac{1}{\sqrt{1 + \delta^2 \| (\mathbf{I} - \U_1\U_1\t) e_1\|^2}} \bigg|+ \frac{\mu_0}{2} \sqrt{\frac{1}{p_1}} \\
    &\leq \delta \bigg( \frac{\mu_0}{2} \sqrt{\frac{1}{p_1}} + \delta \bigg) +  \frac{\mu_0}{2} \sqrt{\frac{1}{p_1}}.
\end{align*}
Therefore, $\|\mathbf{v} \|_{\infty} \leq \mu_0 \sqrt{\frac{1}{p_1}}$ as long as 
\begin{align*}
    \delta \bigg( \frac{\mu_0}{2} \sqrt{\frac{1}{p_1}} + \delta \bigg) \leq \frac{\mu_0}{2}  \sqrt{\frac{1}{p_1}}. \numberthis \label{eq:choosedelta}
\end{align*}
We will check this upon choosing $\delta$.
    \item \textbf{Step 2: Upper bounding $\chi^2(f_{\pi_{\mathcal{H}_1}}, f_{\pi_{\mathcal{H}_0}})$}. Under Gaussian noise, we have that
\begin{align*}
    \chi^2(f_{\pi_{\mathcal{H}_1}}, f_{\pi_{\mathcal{H}_0}}) + 1 &= \exp\bigg( \sum_{i,j,k} \frac{(\mathcal{\bar T}_{ijk} - \mathcal{T}_{ijk})^2}{\sigma^2_{ijk}} \bigg) \\
    &\leq \exp\bigg(  \| \mathcal{\bar T} - \mathcal{T} \|_F^2 \bigg) \\
    &\leq \exp\bigg( \|(\mathbf{\bar U}_1 - \U_1) \mathbf{\Lambda}_1 \mathbf{V}_1\t \|_F^2 \bigg) \\
    &\leq \exp\bigg( \|(\mathbf{\bar U}_1 - \U_1) \mathbf{\Lambda}_1  \|_F^2 \bigg) \\
    &\leq \exp\bigg( \lambda_{r_1}^2 \|(\U_1)_{\cdot r_1} - \mathbf{v} \|^2 \bigg),
\end{align*}
where we have used the assumption $\sigma_{\min} = 1$. 
We now bound the quantity inside the exponential. Observe that
\begin{align*}
    \| \big(\mathbf{U}_1\big)_{\cdot r_1} - \mathbf{v} \| &= \bigg\| \big(\mathbf{U}_1\big)_{\cdot r_1} - \frac{(\U_1)_{\cdot r_1} + \delta (\mathbf{I} - \U_1 \U_1\t ) e_1}{\sqrt{1 + \delta^2 \| (\mathbf{I} - \U_1 \U_1\t ) e_1 \|^2}} \bigg\| \\
    &\leq \delta \bigg\|\bigg( \mathbf{I} - \U_1 \U_1\t \bigg) e_1 \bigg\| \\
    &\quad + \bigg( 1 - \frac{1}{\sqrt{1 + \delta^2 \|(\mathbf{I} - \U_1 \U_1\t ) e_1\|}} \bigg) \| (\U_{1})_{\cdot r_1} + \delta (\mathbf{I} - \U_1 \U_1\t ) e_1 \| \\
    &\leq \delta  + \frac{\sqrt{1 + \delta^2\|(\mathbf{I} - \U_1 \U_1\t )e_i\|} - 1}{\sqrt{1 + \delta^2 \| (\mathbf{I} - \U_1 \U_1\t)e_1 \|^2}} (1 + \delta)  \\
    &\leq 3\delta ,
\end{align*}
where the final inequality holds as long as $\delta \leq \frac{1}{12}$.  Therefore,
\begin{align*}
\chi^2(f_{\pi_{\mathcal{H}_1}}, f_{\pi_{\mathcal{H}_0}}) &\leq \exp\bigg( 9\lambda_{r_1}^2 \delta^2 \bigg) - 1.
\end{align*}
\item \textbf{Step 3: Lower bounding $\xi\t (\mathbf{\bar U}_1 - \U_1 \big)_{1\cdot}$. } 
Without loss of generality assume that $(\U_1)_{1 r_1}$ is nonnegative.  Observe that
\begin{align*}
    |\xi\t& (\mathbf{\bar U}_1 - \U_1 \big)_{1\cdot}|\\ &= |\xi_{r_1}| \bigg| (\mathbf{\bar U}_1 - \U_1)_{1r_1} \bigg| \\
    &\geq C \| \xi\|_{\infty} \bigg| (\mathbf{\bar U}_1 - \U_1)_{1r_1} \bigg| \\
    &\geq C\|\xi\|_{\infty} \bigg| \mathbf{v}_{11} - \big(\U_1)_{1r_1} \bigg| \\
    &= C \bigg|\frac{1}{\sqrt{1 + \delta^2 \| (\mathbf{I} - \U_1 \U_1\t ) e_1 \|^2}}\bigg( (\U_1)_{1 r_1} + \delta (\mathbf{I} - \U_1 \U_1\t )_{11} \bigg) - \big(\U_1)_{1r_1} \bigg| \\
    &\geq C\Bigg( \frac{\delta \| |(\mathbf{I} - \U_1 \U_1\t )_{11} \|}{\sqrt{1 + \delta^2 \| (\mathbf{I} - \U_1 \U_1\t ) e_1 \|^2}} - (\U_1)_{11} \bigg( 1 - \frac{1}{\sqrt{1 + \delta^2 \| (\mathbf{I} - \U_1 \U_1\t ) _{11} \|^2}} \bigg) \Bigg) \\
    &\geq C \bigg( \frac{1}{2} \delta \| (\mathbf{I} - \U_1 \U_1\t )_{11} \| - |(\U_1)_{11}| \delta^2 \| (\mathbf{I} - \U_1 \U_1\t )_{11} \|  \bigg) \t \\
    &\geq \frac{C \delta}{4 } \| (\mathbf{I} - \U_1 \U_1\t )_{11} \|.
\end{align*}
The incoherence assumption implies that
\begin{align*}
    \| \U_1 \|_{2,\infty} \leq \frac{\mu_0}{2} \sqrt{\frac{r_1}{p_1}} \leq \frac{1}{2}
\end{align*}
and hence
\begin{align*}
    \| (\mathbf{I} - \U_1 \U_1\t )_{11} \| \geq \frac{1}{2}.
\end{align*}
Therefore,
\begin{align*}
     |\xi\t (\mathbf{\bar U}_1 - \U_1 \big)_{1\cdot}| &\geq \frac{C \delta}{8 }.
     \end{align*}
\item   \textbf{Completing the proof}.     Combining all of our bounds, we obtain that
    \begin{align*}
    \inf \sup    \mathbb{E}  L\big( \mathrm{C.I.}_{1,1}^{\alpha}(\xi,\mathcal{Z},\mathcal{T}) &\geq \big| \xi\t \big( \mathbf{U}_1 - \mathbf{\bar U}_1 \big)_{1\cdot} \big| \big( 1 - 2\alpha - \mathrm{TV}(f_{\pi_{\mathcal{H}_1}}, f_{\pi_{\mathcal{H}_0}} ) \big) \\
        &\geq \frac{C\delta}{9 } \bigg( 1 - 2\alpha - \sqrt{\exp\bigg(9 \lambda_{r_1}^2\delta^2\bigg) - 1} \bigg).
    \end{align*}
    By taking $\delta = \frac{\eps}{\lambda_{r_1}}$ for some sufficiently small constant $\eps$, we complete the proof, provided we can justify \eqref{eq:choosedelta}.  However, with this choice of $\delta$, it holds that
    \begin{align*}
       \delta \bigg( \frac{\mu_0}{2}\sqrt{\frac{1}{p_1}} + \delta \bigg) &= \frac{\eps}{\lambda_{r_k}} \bigg( \frac{\mu_0}{2}\sqrt{\frac{1}{p_1}} +  \frac{\eps}{\lambda_{r_k}}\bigg) \\
       &\leq  \frac{\eps}{\lambda} \bigg( \frac{\mu_0}{2} \sqrt{\frac{1}{p_1}} + \frac{\eps}{\lambda} \bigg) \\
       &\leq \frac{\eps}{\lambda/\sigma}  \bigg( \frac{\mu_0}{2} \sqrt{\frac{1}{p_1}} + \frac{\eps}{\lambda/\sigma} \bigg) \\
       &\leq \frac{\eps}{C_0 \sqrt{p_1}} \bigg( \frac{\mu_0}{2} \sqrt{\frac{1}{p_1}} + \frac{\eps}{C_0 \sqrt{p_1}} \bigg) \\
       &\leq \frac{\mu_0}{2} \sqrt{\frac{1}{p_1}}  \frac{\eps}{C_0 \sqrt{p_1}} \bigg( 1 + \frac{2 \eps}{\mu_0 C_0} \bigg) \\
       &\leq \frac{\mu_0}{2} \sqrt{\frac{1}{p_1}},
    \end{align*}
    where we have used the assumption that $\lambda/\sigma \geq C_0 \sqrt{p_{\max}}$ together with the fact that $\sigma \geq 1$ from the fact that $\sigma_{\min} = 1$.  
    %}
\end{itemize}
This completes the proof.

\end{proof}

\subsubsection{Proof of \cref{thm:efficiency_order_ijk}}
\begin{proof}[Proof of \cref{thm:efficiency_order_ijk}]
The proof is similar to the previous proof, only we generalize our construction slightly.  Without loss of generality we consider $i = j = k = 1$.  First let $\mathcal{T} = \mathcal{C} \times_1 \U_1 \times_2 \U_2 \times_3 \U_3$ be such that $\| \U_k \|_{\infty} \leq \frac{\mu_0}{2} \sqrt{\frac{r_k}{p_k}}$ for each $k$ which is permitted since $\mu_0 > 2$.  Let $\mathcal{T}$ also satisfy 
\begin{align*}
    \sqrt{\| e_1\t \mathbf{V}_1 \|^2 + \|e_1\t \mathbf{V}_2 \|^2 + \| e_1\t \mathbf{V}_3 \|^2} \geq C \mu_0 \frac{r_{\max}^{3/2}\kappa}{\lambda \sqrt{p_{\min}}},
\end{align*}
which is possible whenever $\lambda \geq C_0 \sqrt{p} r \kappa$, which holds under our assumptions on the class $\Theta(\lambda,\kappa,\mu_0,\sigma,\sigma_{\min})$. Define
\begin{align*}
    \mathbf{\bar U}_k = \bigg( \U_k + (\I - \U_k \U_k\t) \Delta_k \bigg) \Bigg[\bigg( \U_k + (\I - \U_k \U_k\t) \Delta_k \bigg)\t \bigg( \U_k + (\I - \U_k \U_k\t) \Delta_k \bigg)  \Bigg]^{-1/2},
\end{align*}
where $\Delta_k$ is the $p_k \times r_k$ matrix whose first row has $l$'th entry equal to $\pm\frac{\eps}{\lambda_l}$ for some constant $\eps$ to be determined later, and whose sign will be chosen later. Define
\begin{align*}
    \mathcal{\bar T} := \mathcal{C} \times_1 \mathbf{\bar U}_1 \times_2 \mathbf{\bar U}_2 \times_3 \mathbf{\bar U}_3.
\end{align*}
 By Lemma 1 of \citet{cai_confidence_2017}, we have that
\begin{align*}
 & \inf_{\mathrm{C.I.}^{\alpha}_{111}(\mathcal{Z},\mathcal{T}) \in \mathcal{I}_{\alpha}(\Theta,\{1,1,1\}} \sup_{\mathcal{T} \in \Theta(\lambda,\kappa,\mu_0)} \mathbb{E}_{\mathcal{T}} L\big( \mathrm{C.I.}_{111}^{\alpha}(\mathcal{Z},\mathcal{T}) \big) \\&\geq | \mathcal{T}_{ijk} - \mathcal{\bar T}_{ijk}| \big( 1 - 2 \alpha - \sqrt{\chi^2(f_{\pi_{\mathcal{H}_1}},f_{\pi_{{\mathcal{H}_0}}}) } \big).
\end{align*}
Similar to the previous proof, we proceed in steps.
\begin{itemize}
    \item \textbf{Step 1: Checking that $\mathcal{\bar T} \in \Theta(\lambda,\kappa,\mu_0)$}. It is evident that $\lambda \leq \lambda_{\min}(\mathcal{\bar T}) \leq \lambda_{\max}(\mathcal{\bar T}) \leq \kappa \lambda$.  Therefore, it suffices to demonstrate that $\|\mathbf{\bar U}_k \|_{2,\infty} \leq \mu_0 \sqrt{\frac{r_k}{p_k}}$.  Observe that
    \begin{align*}
        \mathbf{\bar U}_k &= \U_k + \big( \I - \U_k \U_k\t\big) \Delta_k + \bigg( \U_k + \big( \I - \U_k \U_k\t\big) \Delta_k \bigg) \bigg( \mathbf{C}_{\Delta_k}^{-1/2} - \mathbf{I} \bigg), \numberthis \label{ukbarexpression}
    \end{align*}
    where we have defined $\mathbf{C}_{\Delta_k}$ via
    \begin{align*}
        \mathbf{C}_{\Delta_k} &:= \bigg( \U_k + \big( \I - \U_k \U_k\t \big) \Delta_k \bigg)\t \bigg( \U_k + \big( \I - \U_k \U_k\t \big) \Delta_k \bigg)\\
        &\equiv \mathbf{I} + \Delta_k\t  \big( \I - \U_k \U_k\t \big) \Delta_k,
    \end{align*}
    which is positive definite as long as $\|\Delta_k\t (\I - \U_k \U_k\t) \Delta_k\| < 1,$ which we will demonstrate momentarily.      Therefore,
    \begin{align*}
        \| \mathbf{\bar U}_k \|_{2,\infty} &\leq \| \U_k \|_{2,\infty} + \| (\I - \U_k \U_k\t) \Delta_k \|_{2,\infty} \\
        &\quad + \bigg( \| \U_k \|_{2,\infty} + \| (\I - \U_k \U_k\t) \Delta_k \|_{2,\infty} \bigg) \bigg\| \mathbf{C}_{\Delta_k}^{-1/2} - \mathbf{I} \bigg\|. \numberthis \label{incoherence}
    \end{align*}
    Therefore, it suffices to bound the quantities $\|(\I - \U_k\U_k\t)\Delta_k \|_{2,\infty}$ and $\|\mathbf{C}_{\Delta_k}^{-1/2} - \I \|$. 
    \begin{itemize}
        \item \textbf{Bounding $\|(\I - \U_k\U_k\t)\Delta_k \|_{2,\infty}$}.  First, observe that we can write $\Delta_k$ via $\Delta_k = \eps \begin{pmatrix} \mathbf{v}\t \\ \mathbf{0} \end{pmatrix} \mathbf{\Lambda}_k\inv$, where $\mathbf{v} \in \{-1,1\}^{r_k}$ is a vector of signs.  Therefore,
        \begin{align*}
            \| (\mathbf{I} - \U_k \U_k\t) \Delta_k \|_{2,\infty} &\leq \frac{\eps}{\lambda} \| (\mathbf{I} - \U_k \U_k\t)\begin{pmatrix} \mathbf{v}\t \\ \mathbf{0} \end{pmatrix} \|_{2,\infty}.
        \end{align*}
Since $\mathbf{v}$ is a vector of signs, the matrix $ (\mathbf{I} - \U_k \U_k\t)\begin{pmatrix} \mathbf{v}\t \\ \mathbf{0} \end{pmatrix}$ is simply the rank one matrix whose columns are the entries of the first row of the matrix $(\mathbf{I} - \U_k \U_k\t)$.  Consequently, $\| (\mathbf{I} - \U_k \U_k\t)\begin{pmatrix} \mathbf{v}\t \\ \mathbf{0} \end{pmatrix}\|_{2,\infty} \leq \| (\mathbf{I} - \U_k \U_k\t)\begin{pmatrix} \mathbf{v}\t \\ \mathbf{0} \end{pmatrix}\| \leq \sqrt{r_k}$.  Combining these bounds yields that
\begin{align*}
    \| (\I - \U_k \U_k\t ) \Delta_k \|_{2,\infty} &\leq \frac{\eps \sqrt{r_k}}{\lambda}.
\end{align*}
        \item \textbf{Bounding $\|\mathbf{C}_{\Delta_k}^{-1/2} - \I \|$}. 
        Observe that
        \begin{align*}
            \| \mathbf{C}_{\Delta_k}^{-1/2} - \I \| &\leq \| \mathbf{C}_{\Delta_k}^{-1/2} ( \I - \mathbf{C}_{\Delta_k}^{1/2}) \| \leq \| \mathbf{C}_{\Delta_k}^{-1/2} \| \| \I - \mathbf{C}_{\Delta_k}^{1/2} \|.
        \end{align*}
        By Theorem 6.2 of \citet{higham_functions_2008}, it holds that
        \begin{align*}
            \| \I - \mathbf{C}_{\Delta_k}^{1/2} \| &\leq \frac{1}{\lambda_{\min}^{1/2}(\mathbf{C}_{\Delta_k}) + 1} \| \I - \mathbf{C}_{\Delta_k} \| \\
            &\leq \| \Delta_k\t (\mathbf{I} - \U_k \U_k\t)^2 \Delta_k \| \\
            &\leq \frac{\eps^2 r_k}{\lambda^2},
        \end{align*}
        where we have implicitly implied a similar argument to the previous bound.  As a result, we see that as long as $1 - \frac{\eps^2}{\lambda^2} \geq \frac{1}{2}$, we have that $\lambda_{\min}(\mathbf{C}_{\Delta_k}) \geq \frac{1}{2}$ by Weyl's inequality.  Therefore,
        \begin{align*}
            \| \mathbf{C}_{\Delta_k}^{1/2} - \I \| \leq 2 \frac{\eps^2 r_k}{\lambda^2}.
        \end{align*}
    \end{itemize}
    Combining our bounds and plugging this inequality into \eqref{incoherence}, we have that
    \begin{align*}
        \| \mathbf{\bar U}_k \|_{2,\infty} &\leq \frac{\mu_0}{2} \sqrt{\frac{r_k}{p_k}} + \frac{\eps\sqrt{r_k}}{\lambda} + \bigg( \frac{\mu_0}{2} \sqrt{\frac{r_k}{p_k}} + \frac{\eps\sqrt{r_k}}{\lambda} \bigg) \frac{2\eps^2 r_k}{\lambda^2} \\
        &\leq  \frac{\mu_0}{2} \sqrt{\frac{r_k}{p_k}} + \frac{\eps\sqrt{r_k} }{\lambda/\sigma} +  \bigg( \frac{\mu_0}{2} \sqrt{\frac{r_k}{p_k}} + \frac{\eps \sqrt{r_k}}{\lambda/\sigma} \bigg) \frac{2\eps^2 r_k}{(\lambda/\sigma)^2} \\
        &\leq \frac{\mu_0}{2} \sqrt{\frac{r_k}{p_k}} + \frac{\eps\sqrt{r_k} }{C_0 \sqrt{p_k}} +  \bigg( \frac{\mu_0}{2} \sqrt{\frac{r_k}{p_k}} + \frac{\eps\sqrt{r_k}}{C_0 \sqrt{p_k}} \bigg) \frac{2\eps^2 r_k}{C_0^2 p_k} \\
&\leq\mu_0\sqrt{\frac{r_k}{p_k}},
    \end{align*}
    where the final inequality holds for $\eps$ sufficiently small, the second inequality holds from the fact that $\sigma \geq \sigma_{\min} = 1$, and the penultimate inequality holds from the assumption $\lambda/\sigma \geq C_0 \kappa r_{\max} \sqrt{p_{\max}}$.  
    \item \textbf{Step 2: Upper bounding $\chi^2(f_{{\pi_{\mathcal{H}_1}}},f_{{\pi_{\mathcal{H}_0}}})$} Similar to the proof of \cref{thm:efficiency_order}, it holds that
\begin{align*}
    \chi^2 ( f_{{\pi_{\mathcal{H}_1}}}, f_{\pi_{\mathcal{H}_0}}) &\leq \exp\bigg( \| \mathcal{\bar T} - \mathcal{T} \|_F^2 \bigg) - 1.
\end{align*}
Therefore,
\begin{align*}
    \| \mathcal{\bar T - \mathcal{T}} \|_F &= \| \mathcal{C} \times_1 \mathbf{\bar U}_1 \times_2 \mathbf{\bar U}_2 \times_3 \mathbf{\bar U}_3 - \mathcal{C} \times_1 \U_1 \times_2 \U_2 \times_3 \U_3 \|_F \\
    &\leq \| \mathcal{C} \times_1 (\mathbf{\bar U}_1 - \U_1) \times_2 \mathbf{\bar U}_2 \times_3 \mathbf{\bar U}_3 \|_F \\
    &\quad + \| \mathcal{C} \times_1 \U_1 \times_2 (\mathbf{\bar U}_2 - \U_2) \times_3 \mathbf{\bar U}_3 \|_F \\
    &\quad + \| \mathcal{C} \times_1 \U_1 \times_2 \U_2 \times_3 ( \mathbf{\bar U}_3 - \U_3) \|_F \\
    &= \| (\mathbf{\bar U}_1 - \U_1) \mathcal{M}_1(\mathcal{C}) (\mathbf{\bar U}_2 \otimes \mathbf{\bar U}_3)\t \|_F \\
    &\quad + \| (\mathbf{\bar U}_2 - \U_2) \mathcal{M}_2(\mathcal{C}) (\mathbf{ U}_1 \otimes \mathbf{\bar U}_3)\t \|_F \\
     &\quad +  \| (\mathbf{\bar U}_3 - \U_3) \mathcal{M}_3(\mathcal{C}) (\mathbf{ U}_1 \otimes \mathbf{ U}_2)\t \|_F.
\end{align*}
We now consider an upper bound for the first term; the remaining two terms are similar.  From \eqref{ukbarexpression} we have that
\begin{align*}
    \| (\mathbf{\bar U}_1 &- \U_1) \mathcal{M}_1(\mathcal{C}) (\mathbf{\bar U}_2 \otimes \mathbf{\bar U}_3)\t \|_F \\
    &\leq  \| (\I - \U_1 \U_1\t )\Delta_1 \mathcal{M}_1(\mathcal{C}) (\mathbf{\bar U}_2 \otimes \mathbf{\bar U}_3)\t \|_F \\
    &\quad + \bigg\|\bigg(\U_1+  (\I - \U_1 \U_1\t )\Delta_1 \bigg) \bigg( \mathbf{C}_{\Delta_1}^{-1/2}- \I\bigg) \mathcal{M}_1(\mathcal{C}) (\mathbf{\bar U}_2 \otimes \mathbf{\bar U}_3)\t \bigg\|_F \\
    &\leq \underbrace{\| (\I - \U_1 \U_1\t )\Delta_1 \mathcal{M}_1(\mathcal{C}) (\mathbf{\bar U}_2 \otimes \mathbf{\bar U}_3)\t \|}_{\alpha_1} \\
    &\quad + \underbrace{\bigg\|  (\I - \U_1 \U_1\t )\Delta_1 \bigg( \mathbf{C}_{\Delta_1}^{-1/2}- \I\bigg) \mathcal{M}_1(\mathcal{C}) (\mathbf{\bar U}_2 \otimes \mathbf{\bar U}_3)\t \bigg\|}_{\alpha_2} \\
    &\quad + \underbrace{\bigg\|  \U_1  \bigg( \mathbf{C}_{\Delta_1}^{-1/2}- \I\bigg) \mathcal{M}_1(\mathcal{C}) (\mathbf{\bar U}_2 \otimes \mathbf{\bar U}_3)\t \bigg\|_F}_{\alpha_3},
\end{align*}
where we have passed from the Frobenius norm to the operator norm in the first two terms since the matrix $(\mathbf{I} - \U_1 \U_1\t)\Delta_k$ is rank one.  We now bound $\alpha_1$ through $\alpha_3$.
\begin{itemize}
    \item \textbf{Bounding $\alpha_1$.} Recall that $\Delta_1 = \eps \begin{pmatrix} \mathbf{v}\t \\ 0 \end{pmatrix} \mathbf{\Lambda}_k\inv$, where $\mathbf{v}$ is a matrix of signs.  Therefore,
    \begin{align*}
        \alpha_1 &\leq \eps \| \big( \I - \U_1 \U_1\t \big) \begin{pmatrix} \mathbf{v}\t \\ 0 \end{pmatrix} \mathbf{\Lambda}_k\inv \mathcal{M}_1(\mathcal{C}) \big( \mathbf{\bar U}_2 \otimes \mathbf{\bar U}_3\big)\t \|.
    \end{align*}
    Next, note that $\mathcal{M}_1(\mathcal{C}) (\mathbf{\bar U}_2\otimes \mathbf{\bar U}_3)\t$ has the same nonzero singular values as $\mathcal{M}_1(\mathcal{C})$ and hence also $\mathcal{T}$.  Therefore, $\| \mathbf{\Lambda}_k\inv \mathcal{M}_1(\mathcal{C}) \big( \mathbf{\bar U}_2 \otimes \mathbf{\bar U}_3\big)\t \| =  1$.  As a consequence,
    \begin{align*}
        \alpha_1 \leq \eps.
    \end{align*}
    \item \textbf{Bounding $\alpha_2$.}  We observe that
    \begin{align*}
        \alpha_2 &\leq \frac{\eps \sqrt{r_1}}{\lambda} \lambda_1 \| \mathbf{C}_{\Delta_1}^{-1/2} - \I \| \leq \eps \frac{\eps^2 \kappa r^{3/2}}{\lambda^2}\leq \eps \frac{\eps^2 \kappa r^{3/2}}{(\lambda/\sigma)^2}\leq \eps,
    \end{align*}
    where we have used the assumption that $\lambda/\sigma \geq C_0 r \kappa \sqrt{p}$, together with the previous bounds.
    \item \textbf{Bounding $\alpha_3$.} We have that
    \begin{align*}
        \alpha_3 &\leq \| \mathbf{C}_{\Delta_1}^{-1/2} - \I \| \lambda_1 \sqrt{r_1}\\
        &\leq \eps \frac{\eps r_1^{3/2} \kappa }{\lambda} \\
        &\leq \eps\frac{\eps r_1^{3/2} \kappa }{\lambda/\sigma} \\
        &\leq \eps,
    \end{align*}
    where we have used the assumptions $\lambda/\sigma \geq C_0 \kappa r \sqrt{p}$ and $r_k \leq p_{\max}^{1/2}$.  
\end{itemize}
As a consequence, we have that
\begin{align*}
    \| \mathcal{\bar T} - \mathcal{T} \|_F &\leq 9 \eps.
\end{align*}Therefore,
\begin{align*}
    \chi^2(f_{{\pi_{\mathcal{H}_1}}}, f_{\pi_{\mathcal{H}_0}}) \leq \exp\bigg( 81 \eps^2 \bigg) - 1.
\end{align*}
    \item \textbf{Step 3: Lower Bounding $|\mathcal{T}_{ijk} - \mathcal{\bar T}_{ijk}|$}.
Suppose that
\begin{align*}
   \mathcal{\bar T}_{ijk} - \mathcal{T}_{ijk} &= \eps \bigg(     \mathbf{v}_1\t \mathbf{V}_1\t e_{1} +    \mathbf{v}_2\t  \mathbf{V}_2\t e_{1} + \mathbf{v}_3\t  \mathbf{V}_3\t e_{1} \bigg) \bigg( 1 + o(1) \bigg), \numberthis \label{eq:desiredbound}
\end{align*}
where now we choose the sign of $\mathbf{v}_k \in \{-1,1\}^{r_k}$ by taking the sign of the entries of $e_{1}\t \mathbf{V}_k$. For these choices of $\mathbf{v}_k$, \eqref{eq:desiredbound} implies
\begin{align*}
    |  \mathcal{\bar T}_{111} - \mathcal{T}_{111} | &\geq c \eps \bigg( \big\| e_{1}\t \mathbf{V}_1 \big\|_1 + \big\| e_{1}\t \mathbf{V}_2 \big\|_1 + \big\| e_{1}\t \mathbf{V}_3 \big\|_1 \bigg) \\
    &\geq c \eps \bigg( \big\| e_{1}\t \mathbf{V}_1 \big\|_2 + \big\| e_{1}\t \mathbf{V}_2 \big\|_2 + \big\| e_{1}\t \mathbf{V}_3 \big\|_2 \bigg) \\
    &\geq c \eps \sqrt{ \big\|e_{1}\t \mathbf{V}_1 \big\|_2^2 + \big\| e_{1}\t \mathbf{V}_2 \big\|_2^2 + \big\| e_{1}\t \mathbf{V}_3 \big\|_2^2} . \numberthis \label{lowerboundtoprovetoday}
\end{align*}
We therefore prove \eqref{eq:desiredbound}.  Observe that
\begin{align*}
    \mathcal{\bar T}_{ijk} - \mathcal{T}_{ijk} &= \sum_{l_1=1}^{r_1} \sum_{l_2=1}^{r_2} \sum_{l_3=1}^{r_3} \mathcal{C}_{l_1l_2l_3} \bigg(\big(\mathbf{\bar U}_1\big)_{il_1} \big(\mathbf{\bar U}_2\big)_{jl_2} \big(\mathbf{\bar U}_3\big)_{kl_3} -  \big(\U_1\big)_{il_1} \big(\U_2\big)_{jl_2} \big(\U_3\big)_{kl_3} \bigg).
    \end{align*}
According to \eqref{ukbarexpression} we have that
\begin{align*}
    \mathbf{\bar U}_k - \U_k %&=  \big( \I - \U_k \U_k\t\big) \Delta_k + \bigg( \U_k + \big( \I - \U_k \U_k\t\big) \Delta_k \bigg) \bigg( \mathbf{C}_{\Delta_k}^{-1/2} - \mathbf{I} \bigg) \\
    &= \Delta_k  +  \underbrace{\bigg( \U_k + (\I -\U_k\U_k\t)\Delta_k \bigg) \bigg( \mathbf{C}_{\Delta_k}^{-1/2} - \I \bigg) - \U_k \U_k\t \Delta_k }_{\Gamma_k},
\end{align*}
allowing us to write
\begin{align*}
    \mathbf{\bar U}_k = \U_k +\Delta_k  + \Gamma_k.
\end{align*}
From this expression, we have that
\begin{align*}
  \big(\mathbf{\bar U}_1\big)_{il_1} &\big(\mathbf{\bar U}_2\big)_{jl_2} \big(\mathbf{\bar U}_3\big)_{kl_3} -  \big(\U_1\big)_{il_1} \big(\U_2\big)_{jl_2} \big(\U_3\big)_{kl_3} \\
  &=  \big(\mathbf{ U}_1 + \Delta_1 + \Gamma_1\big)_{il_1} \big(\mathbf{ U}_2 + \Delta_2 + \Gamma_2\big)_{jl_2} \big(\mathbf{ U}_3 + \Delta_3 + \Gamma_3\big)_{kl_3} \\
  &\quad -  \big(\U_1\big)_{il_1} \big(\U_2\big)_{jl_2} \big(\U_3\big)_{kl_3} \\
  %&= \big(\U_1 + (\I-\U_1 \U_1\t)\Delta_1 + \Gamma_1 \big)_{il_1} \big(\U_2 +(\I-\U_2 \U_2\t) \Delta_2 + \Gamma_2 \big)_{jl_2} \big(\U_3 +(\I-\U_3 \U_3\t) \Delta_3 + \Gamma_3\big)_{kl_3} \\
%  &\quad -  \big(\U_1\big)_{il_1} \big(\U_2\big)_{jl_2} \big(\U_3\big)_{kl_3} \\
  &= (\Delta_1)_{1l_1} (\U_2)_{2l_2} (\U_3)_{3l_3} + (\U_1)_{1l_1}  \Delta_2  \U_3 + (\U_1)_{1l_1} \U_2 \Delta_3 + \mathcal{R}_{l_1l_2l_3},
\end{align*}
where $\mathcal{R}_{l_1l_2l_3}$ contains all of the cross terms with at least two appearances of $\Delta$ or at least one appearance of $\Gamma$.  

We will demonstrate that $\mathcal{R}$ is a lower-order term. First, note that $\Delta_k$ and $\Gamma_k$ satisfy
\begin{align*}
    \| e_1\t \Delta_k \| &\leq \frac{\eps \sqrt{r_k}}{\lambda}; \\
    \|e_1\t \Gamma_k \| &\leq \bigg( \| \U_k \|_{2,\infty} + \frac{\eps \sqrt{r_k}}{\lambda} \bigg) \frac{2 \eps^2 r_k}{\lambda^2} + \| \U_k \|_{2,\infty} \frac{\eps \sqrt{r_k}}{\lambda} \\
    &\leq \frac{4 \eps \mu_0 r_k}{\lambda \sqrt{p_k}}. 
\end{align*}
We now bound $\mathcal{R}$ directly.  There are 8 possible subcases over the possible times $\Delta,\Gamma$, and $\U$ appear, subject to the fact that $\Delta$ must appear with either another $\Delta$ or with $\Gamma$, and $\U$ does not appear three times. Consider, for example the case that $\Delta$ appears twice and $\U$ appears once.  In this case we have
    \begin{align*}
        \bigg| \sum_{l_1l_2l_3} \mathcal{C}_{l_1l_2l_3} \big( \Delta_{1} \big)_{1l_1} \big( \Delta_2 \big)_{1l_2} \big( \U_3 \big)_{1l_3} \bigg| &= \bigg| e_1 \t \U_3 \mathcal{M}_3(\mathcal{C}) \big( \Delta_1 \otimes \Delta_2 \big) e_1 \bigg| \\
        &\leq \| e_1\t \U_3 \| \lambda_1 \| e_1\t \Delta_1 \| \| e_1\t \Delta_2 \| \\
        &\leq \mu_0 \sqrt{\frac{r_k}{p_k}} \frac{\eps^2 r_k \kappa }{\lambda}.
    \end{align*}
The other possible subcases are all extremely similar, and all satisfy the same upper bound up to some constant by virtue of the bounds on $\|\Gamma_k\|_{2,\infty}$ and $\|\Delta_k\|_{2,\infty}$.  As a result, there exists some universal constant $C$ such that
\begin{align*}
    \bigg| \sum_{l_1l_2l_3} \mathcal{C}_{l_1l_2l_3} \mathcal{R}_{l_1l_2l_3} \bigg| &\leq C \mu_0 \sqrt{\frac{r_k}{p_k}} \frac{\eps^2 r_k \kappa }{\lambda}.
\end{align*}
% \begin{itemize}
%     \item \textbf{Case 1: $\Delta \Delta \U$}. 
%     \item \textbf{Case 2: $\Delta \Delta \Gamma$}.  Similar to the previous case, we have that
%     \begin{align*}
%         \bigg| \sum_{l_1l_2l_3} \mathcal{C}_{l_1l_2l_3} \big( \Delta_{1} \big)_{1l_1} \big( \Delta_2 \big)_{1l_2} \big( \Gamma_3 \big)_{1l_3} \bigg| &= \bigg| e_1 \t \Gamma_3 \mathcal{M}_3(\mathcal{C}) \big( \Delta_1 \otimes \Delta_2 \big) e_1 \bigg| \\
%         &\leq \| e_1\t \Gamma_3 \| \lambda_1 \| e_1\t \Delta_1 \| \| e_1\t \Delta_2 \| \\
%         &\leq \frac{4\eps \mu_0 r_k}{\lambda \sqrt{p_k}} \frac{\eps^2 r_k \kappa }{\lambda}.
%     \end{align*}
%     \item \textbf{Case 3: $\Delta \Delta \Delta$} Similar to the previous cases, 
%     \begin{align*}
%         \bigg| \sum_{l_1l_2l_3} \mathcal{C}_{l_1l_2l_3} \big( \Delta_{1} \big)_{1l_1} \big( \Delta_2 \big)_{1l_2} \big( \Delta_3 \big)_{1l_3} \bigg| &= \bigg| e_1 \t \Delta_3 \mathcal{M}_3(\mathcal{C}) \big( \Delta_1 \otimes \Delta_2 \big) e_1 \bigg| \\
%         &\leq \| e_1\t \Delta_3 \| \lambda_1 \| e_1\t \Delta_1 \| \| e_1\t \Delta_2 \| \\
%         &\leq \frac{4\eps \mu_0 r_k}{\lambda \sqrt{p_k}} \frac{\eps^2 r_k \kappa }{\lambda}.
%     \end{align*}
%     \item \textbf{Case 4: $\Delta \Gamma \U$}
%     \item \textbf{Case 5: $\Delta \Gamma \Gamma$}
%     \item \textbf{Case 6: $\Gamma \U \U$}
%     \item \textbf{Case 7: $\Gamma \Gamma \U$}
%     \item \textbf{Case 8: $\Gamma \Gamma \Gamma$}
% \end{itemize}
Therefore, we have that
\begin{align*}
    & \mathcal{\bar T}_{ijk} - \mathcal{T}_{ijk} \\
     &= \sum_{l_1=1}^{r_1} \sum_{l_2=1}^{r_2} \sum_{l_3=1}^{r_3} \mathcal{C}_{l_1 l_2 l_3} \bigg( (\Delta_1)_{1l_1} (\U_2)_{1l_2} (\U_3)_{1l_3} + \U_1)_{1l_1} \Delta_2  \U_3 + \U_1)_{1l_1} \U_2 \Delta_3 + \mathcal{R}_{l_1l_2l_3} \bigg) \\
     &= e_1\t \Delta_1 \mathcal{M}_1(\mathcal{C}) (\U_2 \otimes \U_3)\t e_1 + e_1\t \Delta_2 \mathcal{M}_2(\mathcal{C}) (\U_1 \otimes \U_3)\t e_1 +
     e_1\t \Delta_3 \mathcal{M}_3(\mathcal{C}) (\U_2 \otimes \U_1)\t e_1 \\
     &\quad + \sum_{l_1,l_2l_3}\mathcal{C}_{l_1l_2l_3} \mathcal{R}_{l_1l_2l_3}.
\end{align*}
As a consequence,
\begin{align*}
  \bigg|   &\mathcal{\bar T}_{ijk} - \mathcal{T}_{ijk} \bigg| \\&\geq \bigg| e_1\t \Delta_1 \mathcal{M}_1(\mathcal{C}) (\U_2 \otimes \U_3)\t e_1 + e_1\t \Delta_2 \mathcal{M}_2(\mathcal{C}) (\U_1 \otimes \U_3)\t e_1 +
     e_1\t \Delta_3 \mathcal{M}_3(\mathcal{C}) (\U_2 \otimes \U_1)\t e_1 \bigg| \\
     &\qquad - C \mu_0 \sqrt{\frac{r_k}{p_k}} \frac{\eps^2 \mathbf{r}_k \kappa}{\lambda}.
\end{align*}
Next, recall that the $l$'th entry  of the first row of   $\Delta_k$ is of the form $\frac{\pm \eps}{\lambda_l}$, and hence
\begin{align*}
    e_1\t \Delta_1 \mathcal{M}_1(\mathcal{C}) \big( \U_2 \otimes \U_3 \big)\t e_1 &= \eps \mathbf{v}_1\t \mathbf{\Lambda}_k\inv \mathcal{M}_1(\mathcal{C}) \big( \U_2 \otimes \U_3 \big)\t e_1 = \eps \mathbf{v}_1\t \mathbf{\Lambda}_k\inv \mathbf{\Lambda}_k \mathbf{V}_1\t e_1 \\
    &= \eps  \mathbf{v}_1\t \mathbf{V}_1\t e_1,
\end{align*}
where $\mathbf{v}_1$ is a $r_k$-dimensional vector of signs.  Since the signs have not been specified, we are free to select them to match the signs of $e_1\t \mathbf{V}_1$, and hence
\begin{align*}
    \bigg| e_1\t &\Delta_1 \mathcal{M}_1(\mathcal{C}) (\U_2 \otimes \U_3)\t e_1 + e_1\t \Delta_2 \mathcal{M}_2(\mathcal{C}) (\U_1 \otimes \U_3)\t e_1 +
     e_1\t \Delta_3 \mathcal{M}_3(\mathcal{C}) (\U_2 \otimes \U_1)\t e_1 \bigg| \\
     &\geq \eps \bigg( \| e_1 \t \mathbf{V}_1 \|_1 + \| e_1 \t \mathbf{V}_2 \|_1 +\| e_1 \t \mathbf{V}_3 \|_1\bigg) \\
     &\geq \eps \bigg( \| e_1 \t \mathbf{V}_1 \|_2 + \| e_1 \t \mathbf{V}_2 \|_2 +\| e_1 \t \mathbf{V}_3 \|_2\bigg) \\
     &\geq \eps \sqrt{\| e_1 \t \mathbf{V}_1 \|_2^2 + \| e_1 \t \mathbf{V}_2 \|_2^2 +\| e_1 \t \mathbf{V}_3 \|_2^2}.
\end{align*}
In addition, we recall that we have assumed that $\mathcal{T}$ was selected such that
\begin{align*}
    \sqrt{\| e_1 \t \mathbf{V}_1 \|_2^2 + \| e_1 \t \mathbf{V}_2 \|_2^2 +\| e_1 \t \mathbf{V}_3 \|_2^2} &\geq C \mu_0 \frac{r_{\max}^{3/2} \kappa}{\lambda \sqrt{p_{\min}}},
\end{align*}
which is possible whenever $\lambda \geq C_0 r_{\max} \kappa$, which holds under the assumption $\lambda \geq C_0 \kappa \sqrt{p_{\max} }$ and the assumption $r_{\max} \leq p_{\min}^{1/2}$. Therefore,
\begin{align*}
    | \mathcal{T}_{ijk} - \mathcal{\bar T}_{ijk} | %&\geq %\eps \big( 1 - o(1) \big) \sqrt{\| e_1 \t \mathbf{V}_1 \|_2^2 + \| e_1 \t \mathbf{V}_2 \|_2^2 +\| e_1 \t \mathbf{V}_3 \|_2^2} \\
    &\geq \frac{\eps}{2} \sqrt{\| e_1 \t \mathbf{V}_1 \|_2^2 + \| e_1 \t \mathbf{V}_2 \|_2^2 +\| e_1 \t \mathbf{V}_3 \|_2^2}.
\end{align*}
    \item \textbf{Step 4: Completing the proof}.
    Combining all of our results, we have that
    \begin{align*}
          &\inf_{\mathrm{C.I.}^{\alpha}_{111} (\mathcal{Z},\mathcal{T}) \in \mathcal{I}_{\alpha}(\Theta,\{1,1,1\}} \sup_{\mathcal{T} \in \Theta(\lambda,\kappa,\mu_0)} \mathbb{E}_{\mathcal{T}} L\big( \mathrm{C.I.}_{111}^{\alpha}(\mathcal{Z},\mathcal{T}) \big)\\
          &\geq \frac{\eps}{2}\sqrt{\| e_1 \t \mathbf{V}_1 \|_2^2 + \| e_1 \t \mathbf{V}_2 \|_2^2 +\| e_1 \t \mathbf{V}_3 \|_2^2}  \big( 1 - 2 \alpha - \sqrt{\exp\bigg(81 \eps^2 \bigg) - 1} \big) \\
          &\geq c \sqrt{\| e_1 \t \mathbf{V}_1 \|_2^2 + \| e_1 \t \mathbf{V}_2 \|_2^2 +\| e_1 \t \mathbf{V}_3 \|_2^2},
    \end{align*}
    provided $\eps$ is taken sufficiently small. 
\end{itemize}
This completes the proof.    
\end{proof}

%\end{supplement}
%\bibliographystyle{plainnat}
%\bibliographystyle{tensors-perturb-2-infty/plainnat_JA}
%\bibliography{tensor-perturb-2-infinity/reference,tensor-perturb-2-infinity/tensors}
\section{Additional Simulations} \label{sec:additionalsimulations}
In this section we continue the simulation study from \cref{sec:simulations}.  The setup remains the same as in that section.  
\ \\ \ \\
\noindent
\textbf{Approximate Gaussianity of $\uhat_{1}$}: 
First, we examine the approximate Gaussianity implied by \cref{thm:eigenvectornormality2_v1}.  
In \cref{fig:distributionaltheory1}, we plot the approximate Gaussianity of the outputs of \cref{al:tensor-power-iteration} with $p = 100$ and $r= 2$, and in \cref{fig:distributionaltheory2}, we plot the same with $p = 150$.  To obtain this distribution we calculate the true population covariance $\mathbf{\Gamma}_1^{(1)}$ as predicted by \cref{thm:eigenvectornormality2_v1}, and for each iteration we obtain a single point of the form $\big(\mathbf{\Gamma}_1^{(1)}\big)^{-1/2} \big( \uhat_1^{(t)} \mathbf{W} - \U_1 \big)_{1\cdot},$ where $\mathbf{W} = \mathrm{sgn}(\uhat_1^{(t)}, \U_1)$. According to \cref{thm:eigenvectornormality_v1}, this term is approximately Gaussian with $2\times 2$ identity covariance.  We plot both the theoretical (dotted) and empirical (solid) $95\%$ confidence ellipse.  
 \begin{figure*}[htpb]
        \subfloat[$\lambda/\sigma = p^{3/4}$]
                {%
            \includegraphics[width=.3\linewidth]{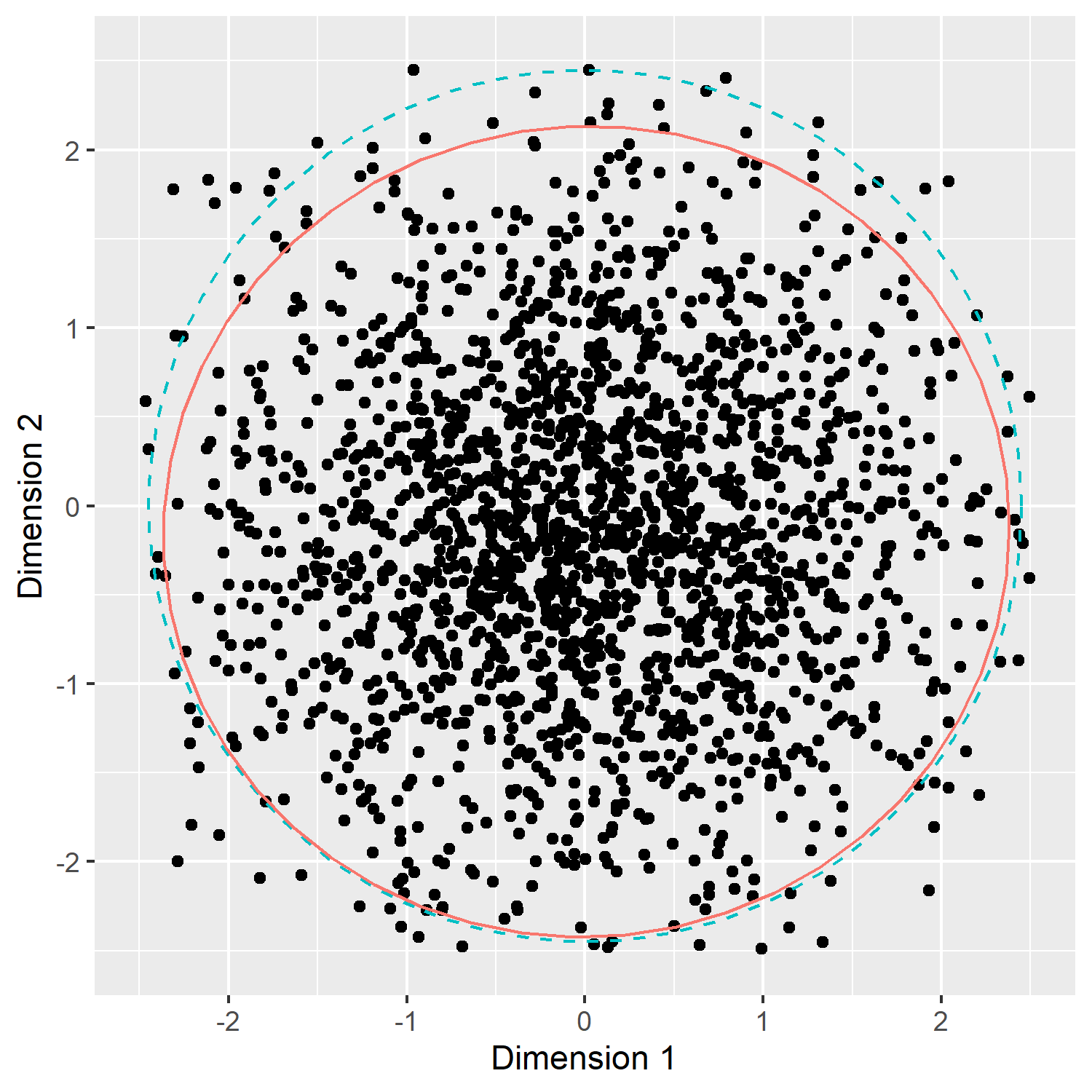}%
            \label{subfig:aveerror}%
        }
        \subfloat[$\lambda/\sigma = p^{7/8}$]{ %
            \includegraphics[width=.3\linewidth]{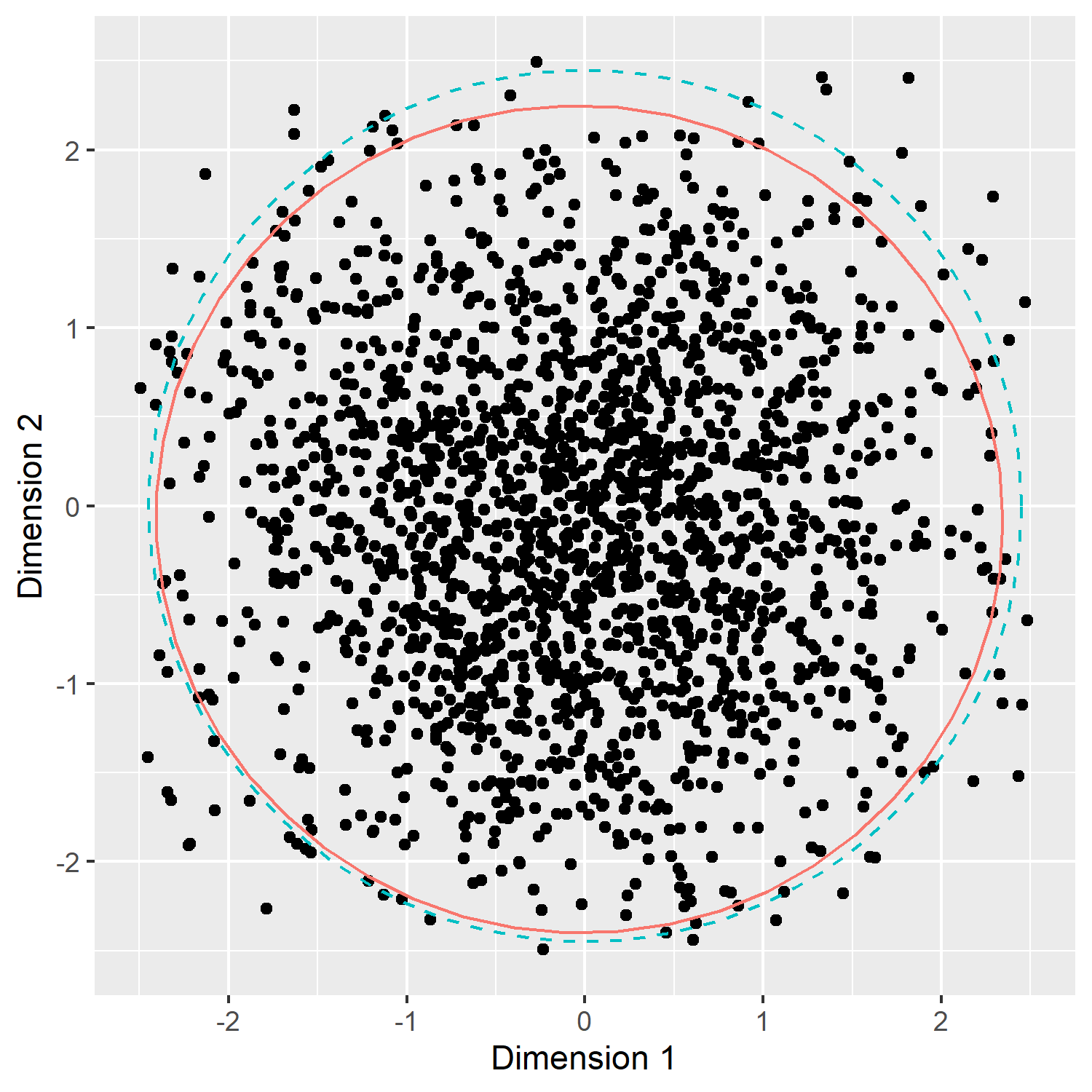}%
            \label{subfig:aveerror}%
     }      
        \subfloat[$\lambda/\sigma = p$]{ %
            \includegraphics[width=.4\linewidth,height=38mm]{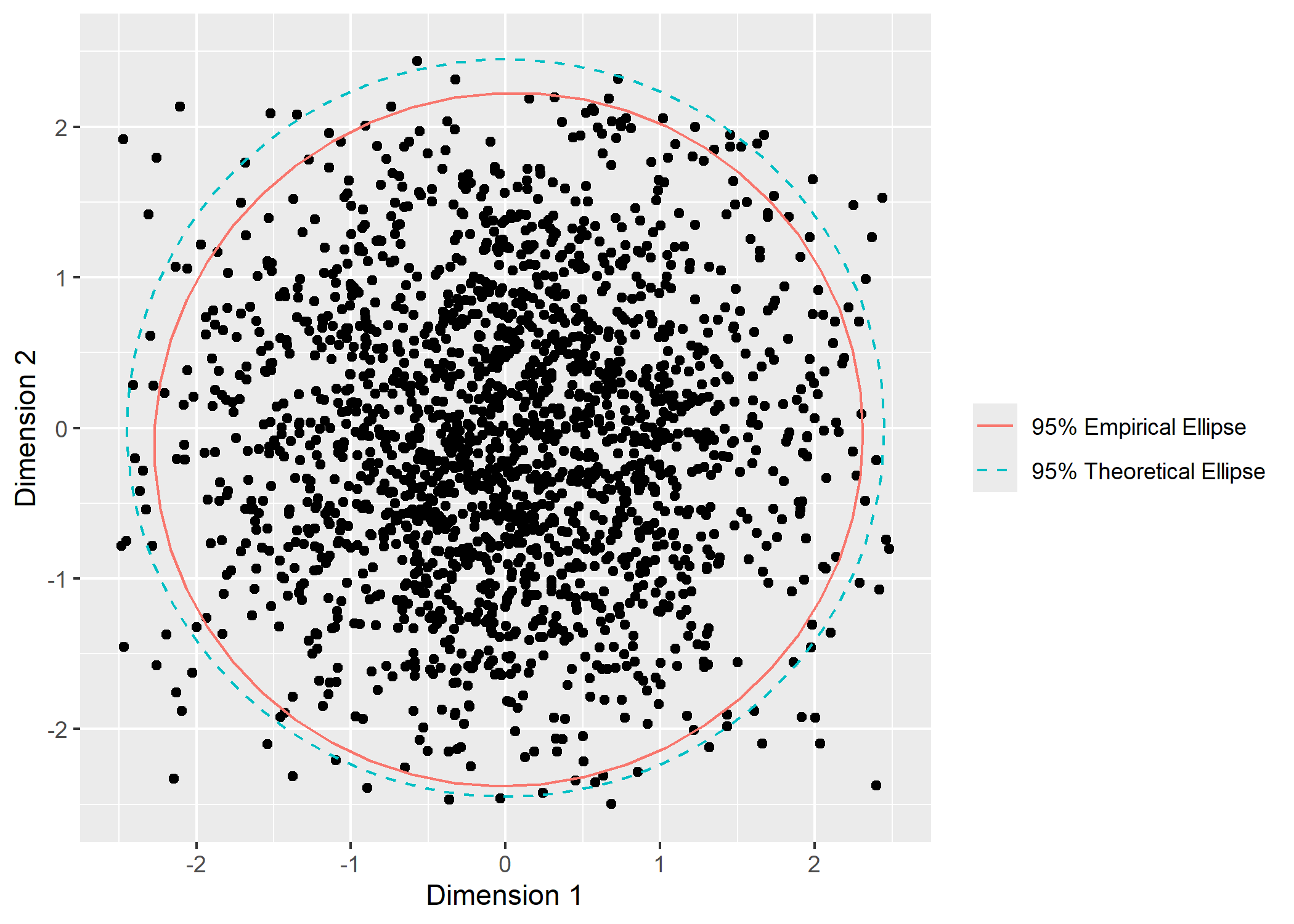}%
            \label{subfig:aveerror}%
     }
        \caption{Simulated distributional results for $\big(\mathbf{\Gamma}^{(1)}_{1}\big)^{-1/2}\big(\uhat_1\mathbf{W} - \U_1\big)_{1\cdot}$, $p = 100$ for varying level of noise.}
        \label{fig:distributionaltheory1}
        \subfloat[$\lambda/\sigma = p^{3/4}$]
                {%
            \includegraphics[width=.3\linewidth]{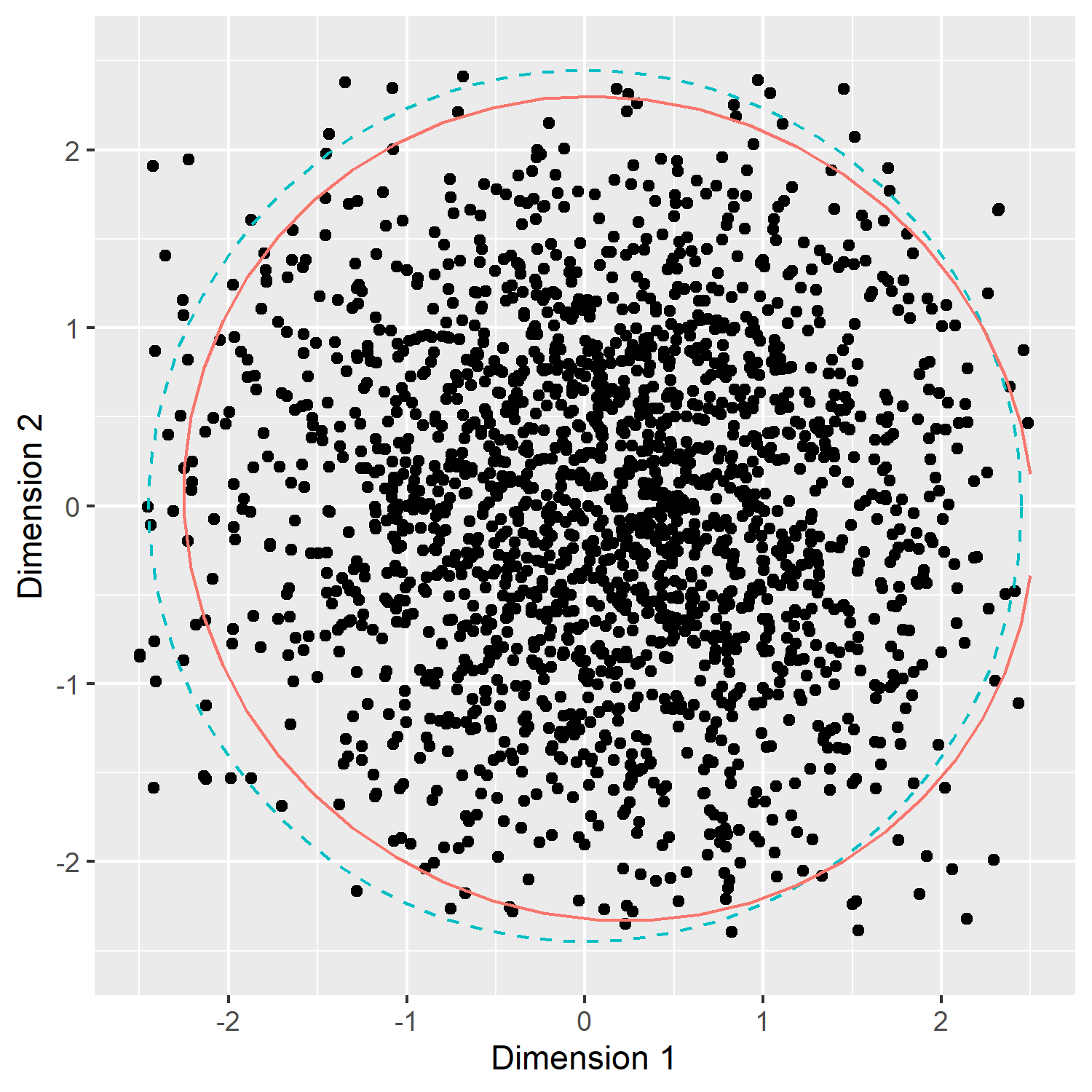}%
            \label{subfig:aveerror}%
        }
        \subfloat[$\lambda/\sigma = p^{7/8}$]{ %
            \includegraphics[width=.3\linewidth]{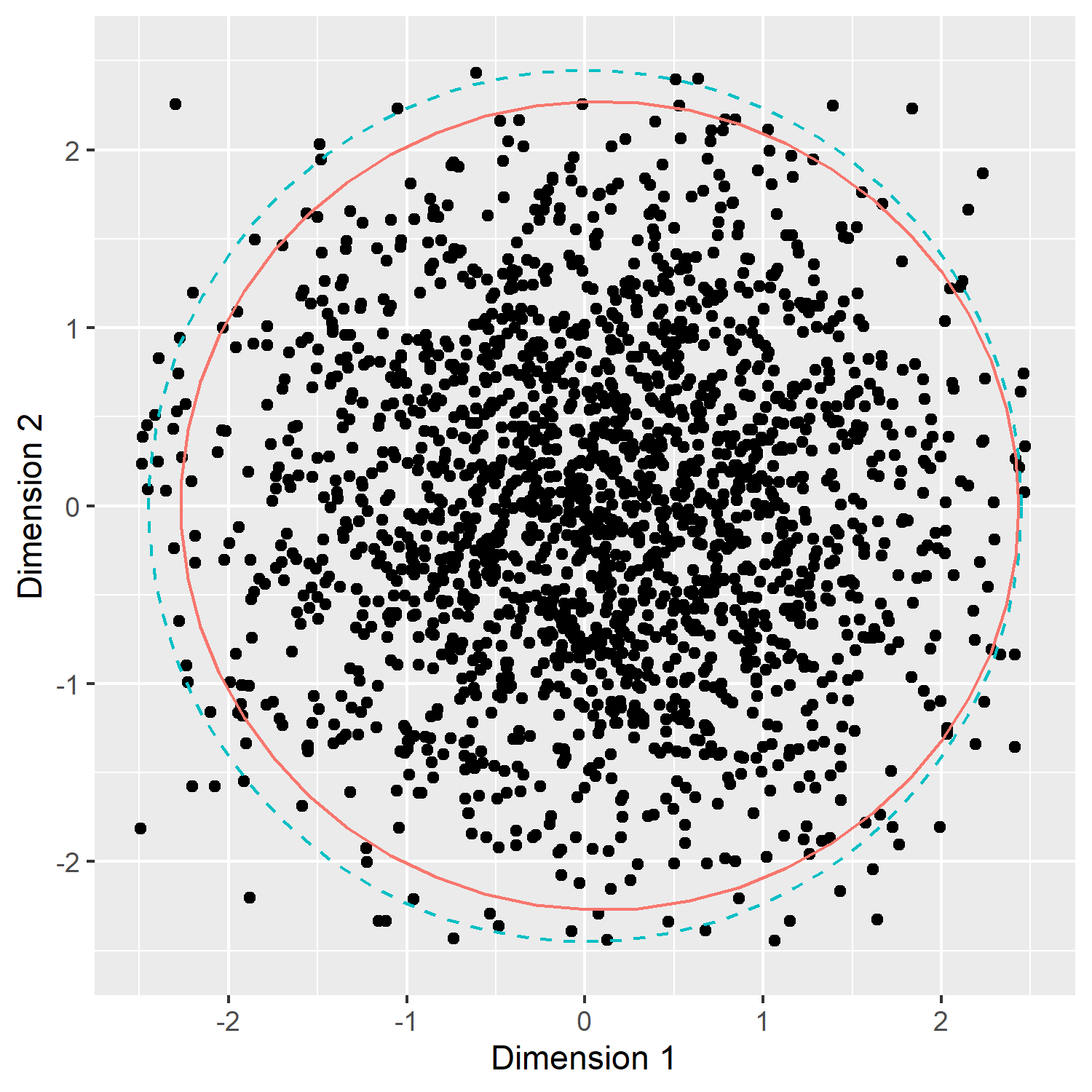}%
            \label{subfig:aveerror}%
     }      
        \subfloat[$\lambda/\sigma = p$]{ %
            \includegraphics[width=.4\linewidth,height=38mm]{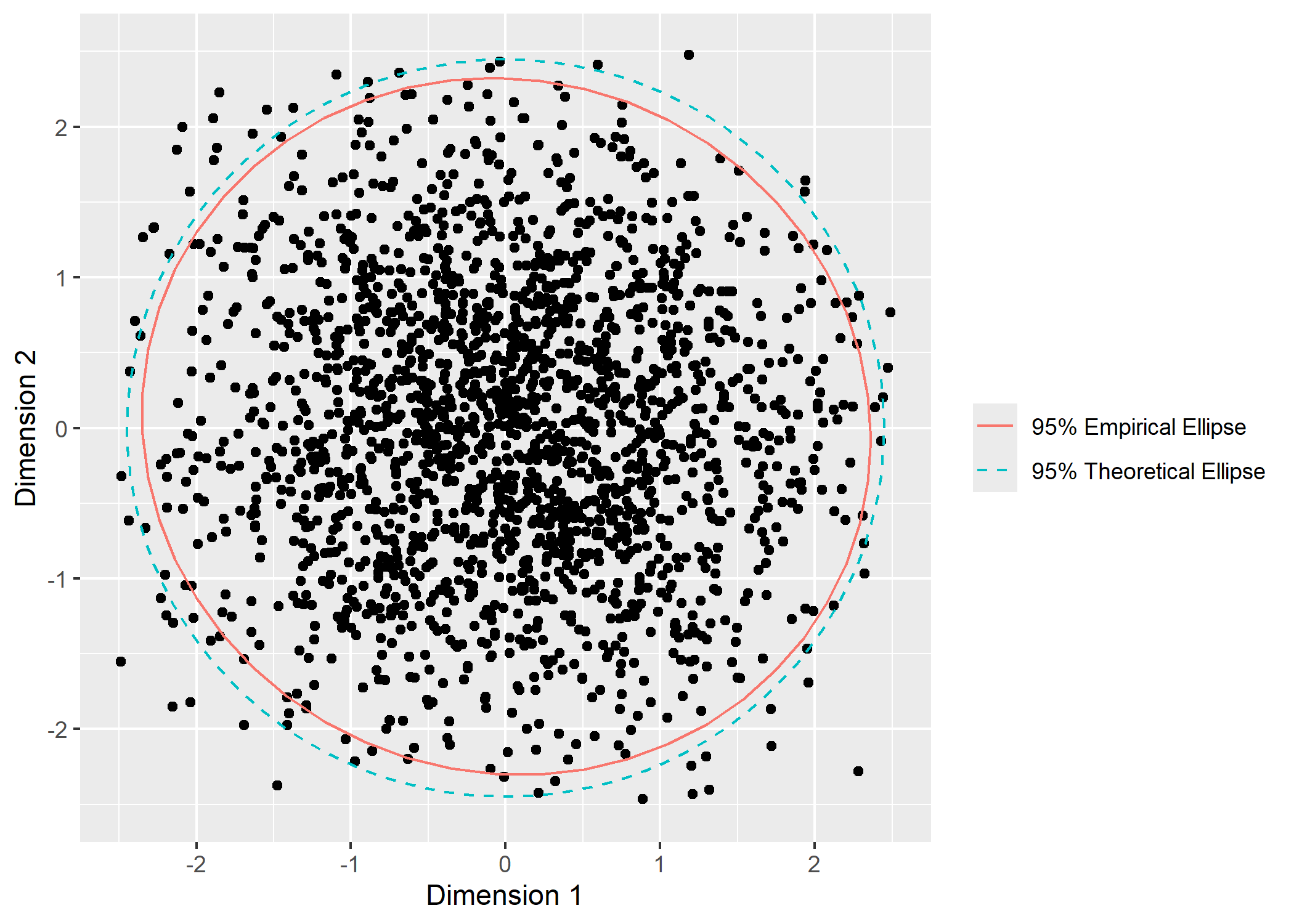}%
            \label{subfig:aveerror}%
     }
        \caption{Simulated distributional results for $\big(\mathbf{\Gamma}^{(1)}_{1}\big)^{-1/2}\big(\uhat_1\mathbf{W} - \U_1\big)_{1\cdot}$, $p = 150$ for varying level of noise.}
        \label{fig:distributionaltheory2}
    \end{figure*}

\ \\ \noindent
\textbf{Approximate Gaussianity of $\mathcal{\hat T}_{111}$}: 
Next, we consider the asymptotic normality  of the estimate $\mathcal{\hat T}_{111}$ as predicted by \cref{thm:asymptoticnormalityentries_v1}.  In \cref{fig:distributionaltheory3} ($p = 100$) and \cref{fig:distributionaltheory4} ($p = 150$) we plot the values of $\frac{\mathcal{\hat T}_{111} - \mathcal{T}_{111}}{s_{111}}$ under the same setup as the previous two figures  with $r = 4$.  The histograms represent the empirical observations, and the overlaid curve represents the density of the standard Gaussian distribution.

\begin{figure*}[htbp]
        \subfloat[$\lambda/\sigma = p^{3/4}$]
                {%
            \includegraphics[width=.3\linewidth]{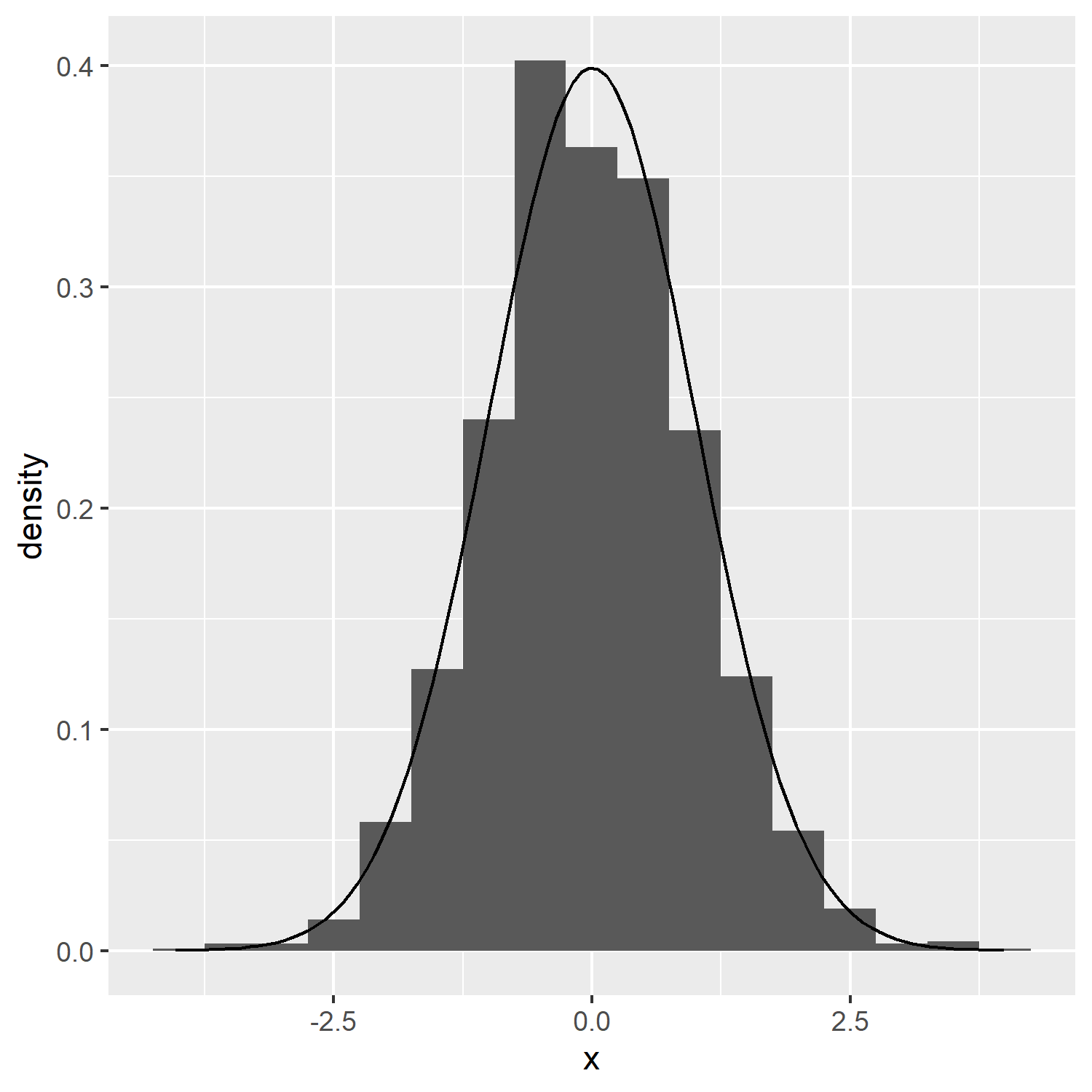}%
            \label{subfig:aveerror}%
        }\hfill
        \subfloat[$\lambda/\sigma = p^{7/8}$]{ %
            \includegraphics[width=.3\linewidth]{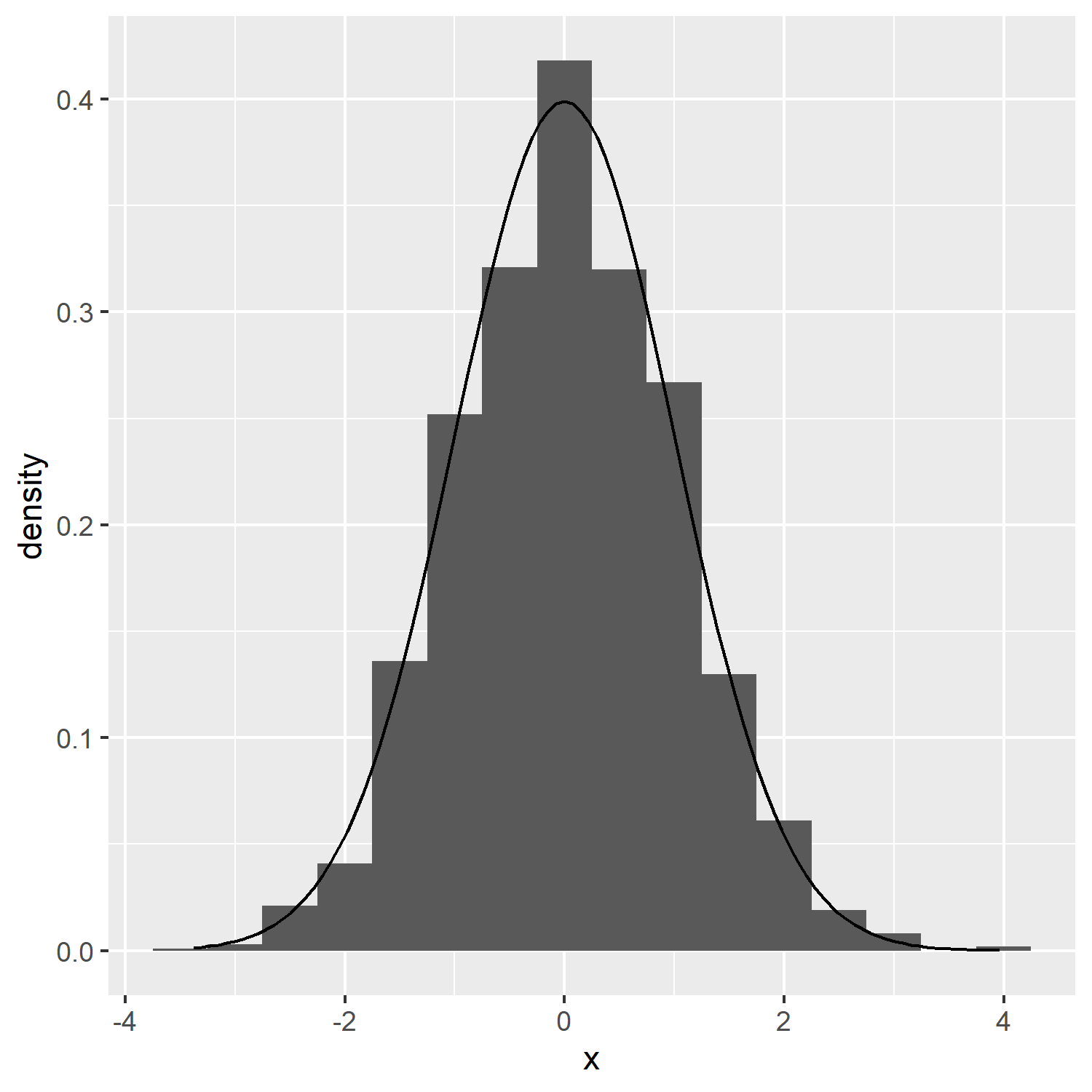}%
            \label{subfig:aveerror}%
     }      \hfill
        \subfloat[$\lambda/\sigma = p$]{ %
            \includegraphics[width=.3\linewidth,height=38mm]{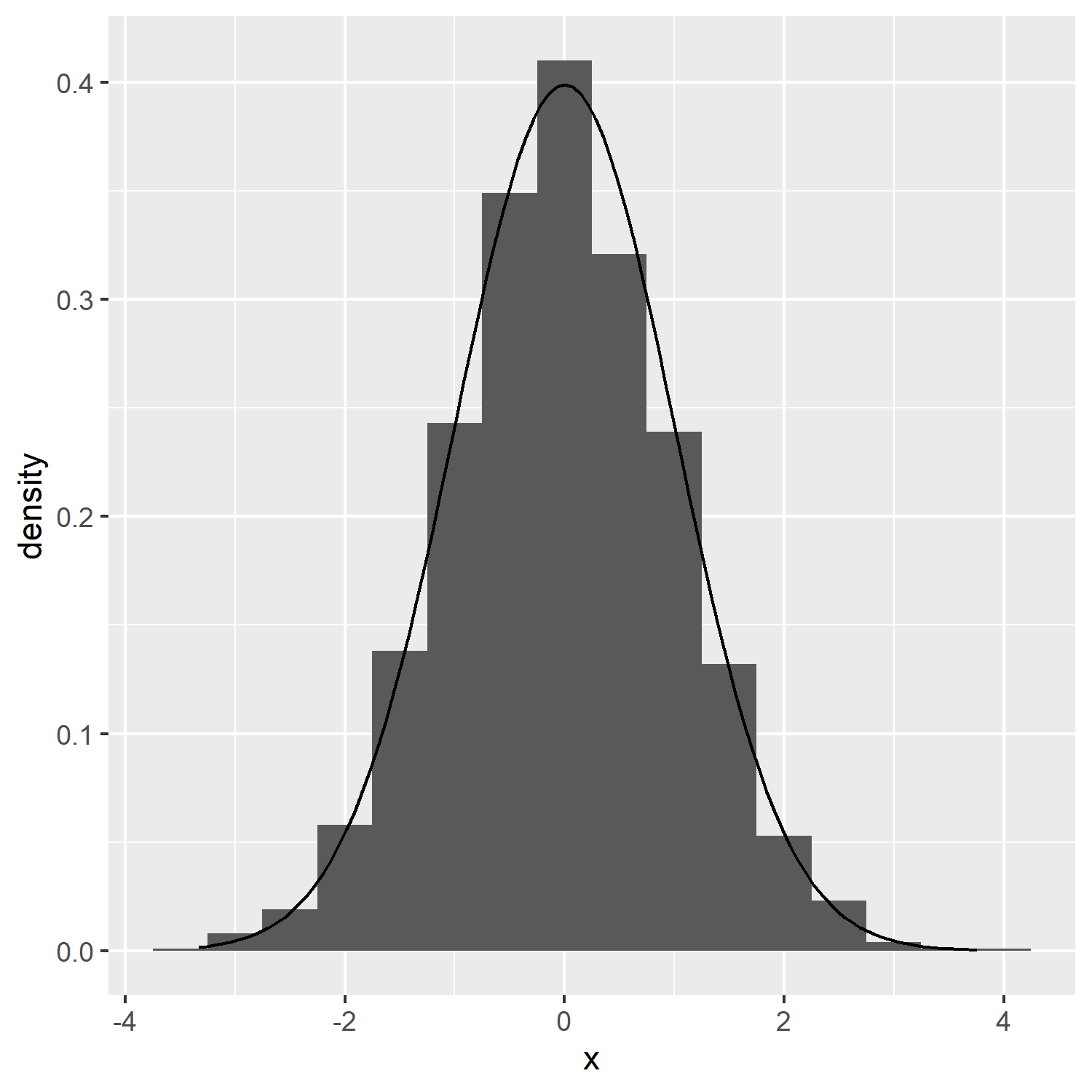}%
            \label{subfig:aveerror}%
     }
        \caption{Simulated distributional results for $\frac{1}{s_{ijk}}(\mathcal{\hat T}_{111} - \mathcal{T}_{111})$, $p = 100$ for varying level of noise.}
        \label{fig:distributionaltheory3}
        \subfloat[$\lambda/\sigma = p^{3/4}$]
                {%
            \includegraphics[width=.3\linewidth]{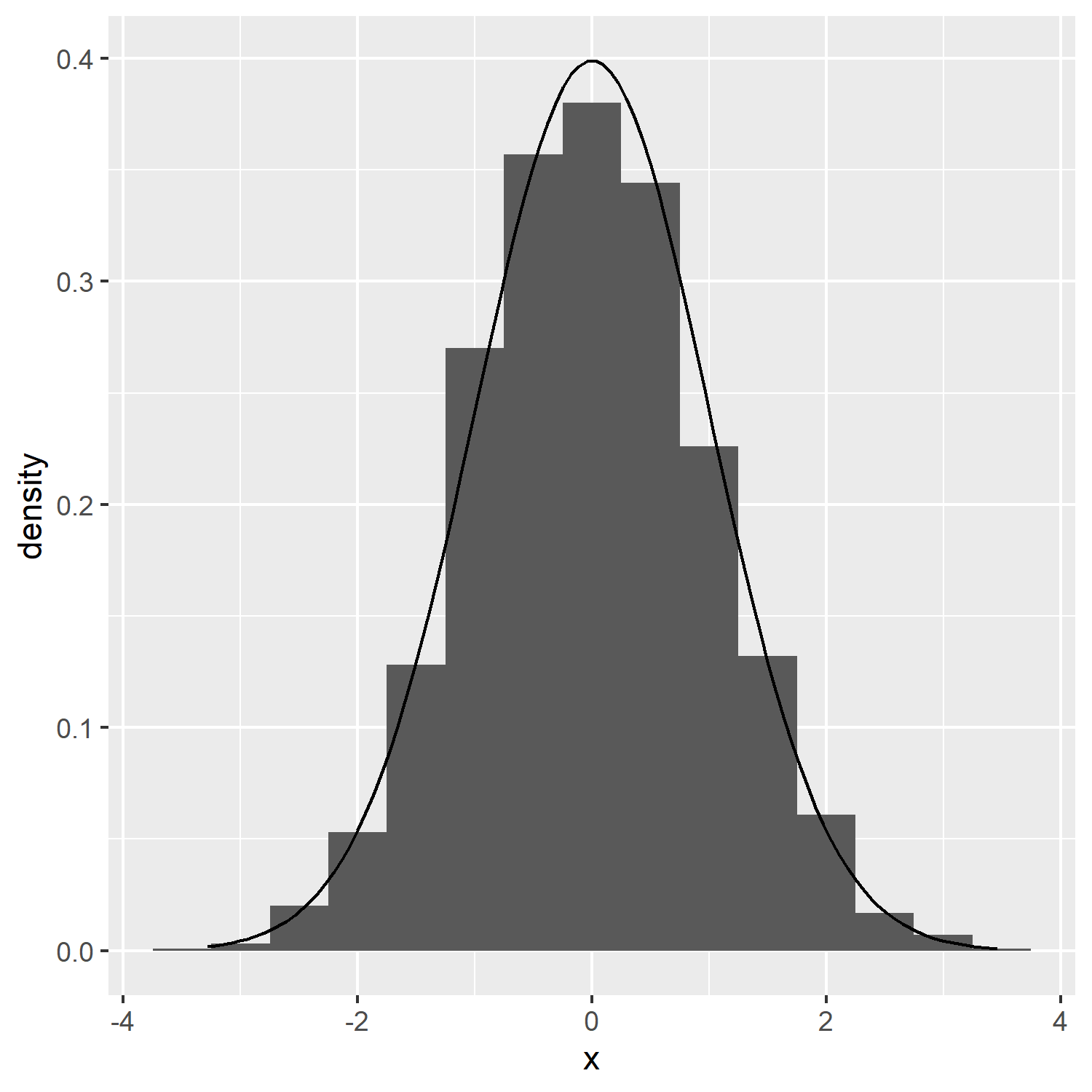}%
            \label{subfig:aveerror}%
        }\hfill
        \subfloat[$\lambda/\sigma = p^{7/8}$]{ %
            \includegraphics[width=.3\linewidth]{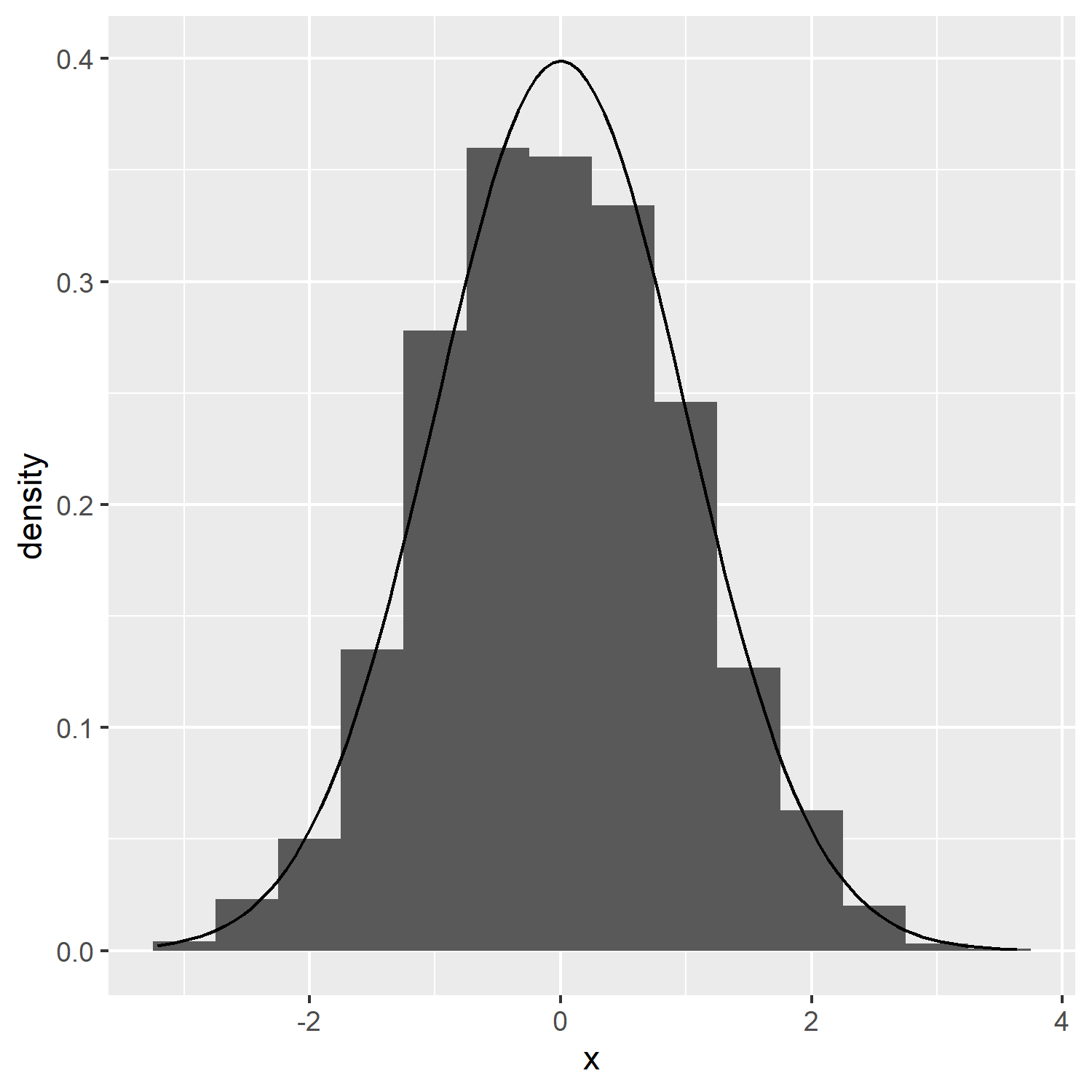}%
            \label{subfig:aveerror}%
     }      \hfill
        \subfloat[$\lambda/\sigma = p$]{ %
            \includegraphics[width=.3\linewidth]{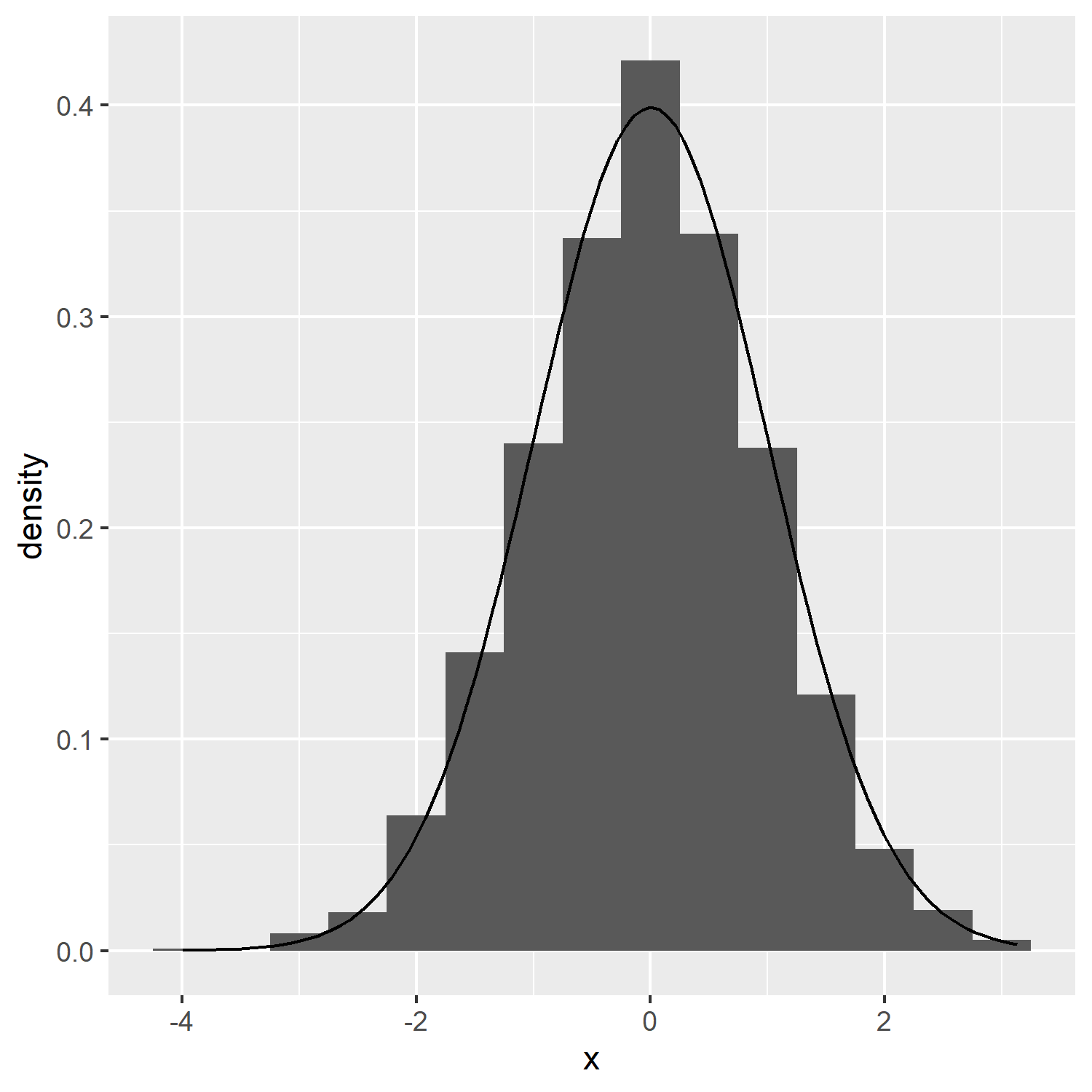}%
            \label{subfig:aveerror}%
     }
        \caption{Simulated distributional results for $\frac{1}{s_{111}}(\mathcal{\hat T}_{111} - \mathcal{T}_{111})$, $p = 150$ for varying level of noise.}
        \label{fig:distributionaltheory4}
    \end{figure*}
 
 \ \\ 

\textbf{Simultaneous Confidence Regions}: In \cref{fig:simultaneous1} and \cref{fig:simultaneous2}, we consider the joint distributions of $\hat S_{J}^{-1/2}\big( \mathcal{\hat T}_{J} - \mathcal{T}_{J} \big)$ with $J = \{111,112\}$ and $J = \{111,122\}$ respectively, where $\hat S_{J}$ is computed via \cref{al:simultaneousci}.  By \cref{thm:simultaneousinference_v1}, the distribution is approximately $N(0, \mathbf{I}_2)$, and we plot both the theoretical (dotted) and empirical (solid) 95\%  confidence ellipses.  
    \begin{figure*}[htpb]
        \subfloat[$\lambda/\sigma = p^{3/4}$]
                {%
            \includegraphics[width=.3\linewidth]{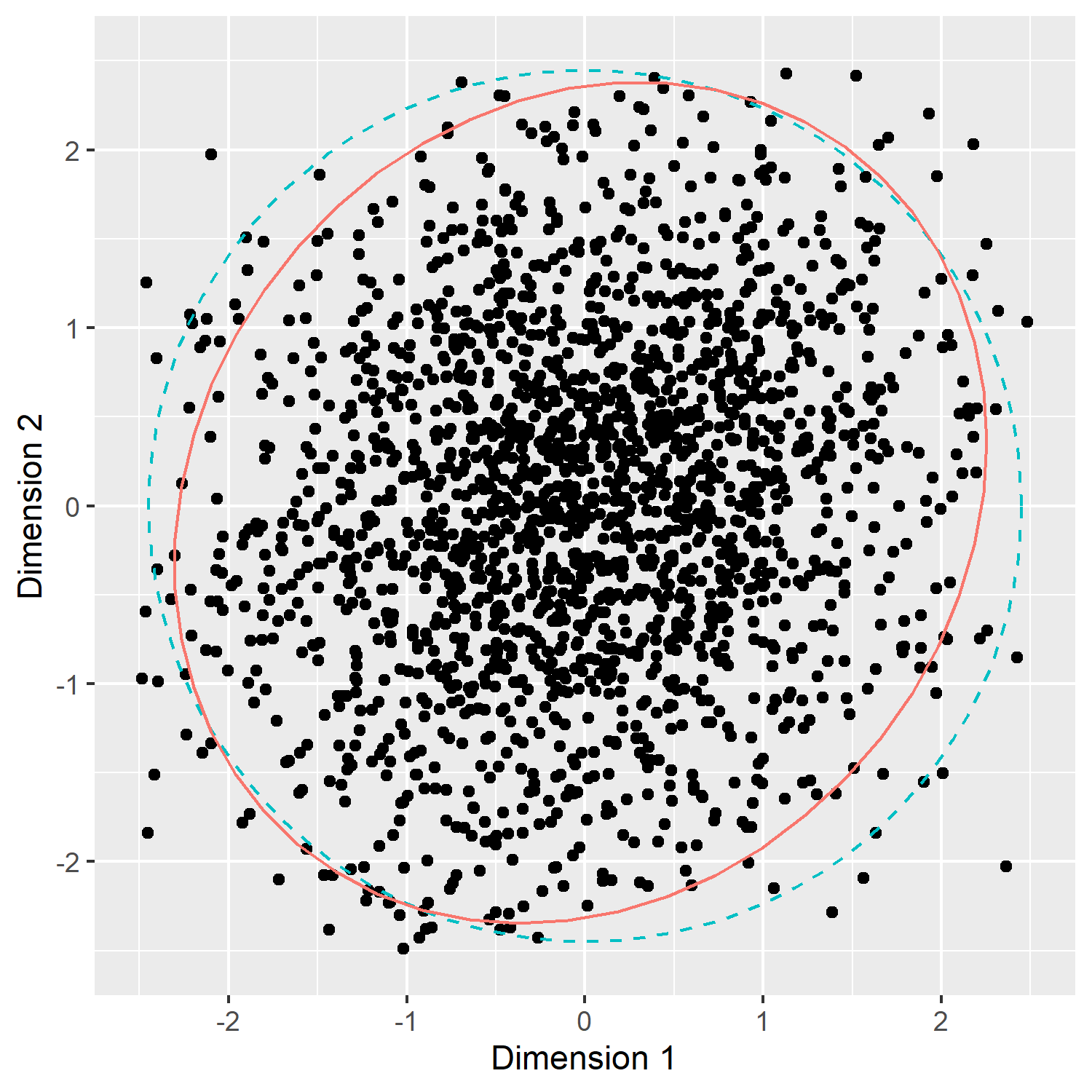}%
            \label{subfig:aveerror}%
        }
        \subfloat[$\lambda/\sigma = p^{7/8}$]{ %
            \includegraphics[width=.3\linewidth]{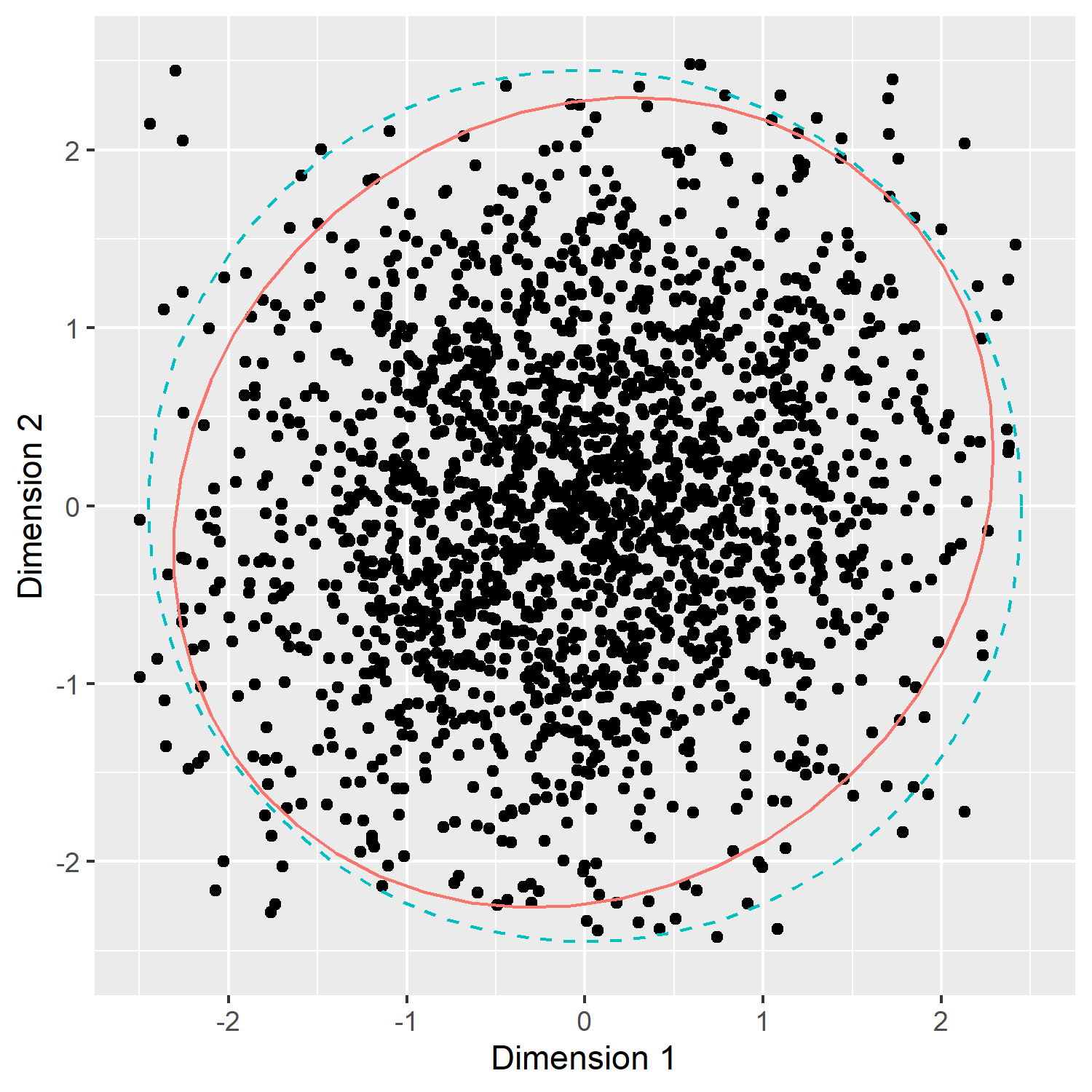}%
            \label{subfig:aveerror}%
     }      
        \subfloat[$\lambda/\sigma = p$]{ %
            \includegraphics[width=.4\linewidth,height=38mm]{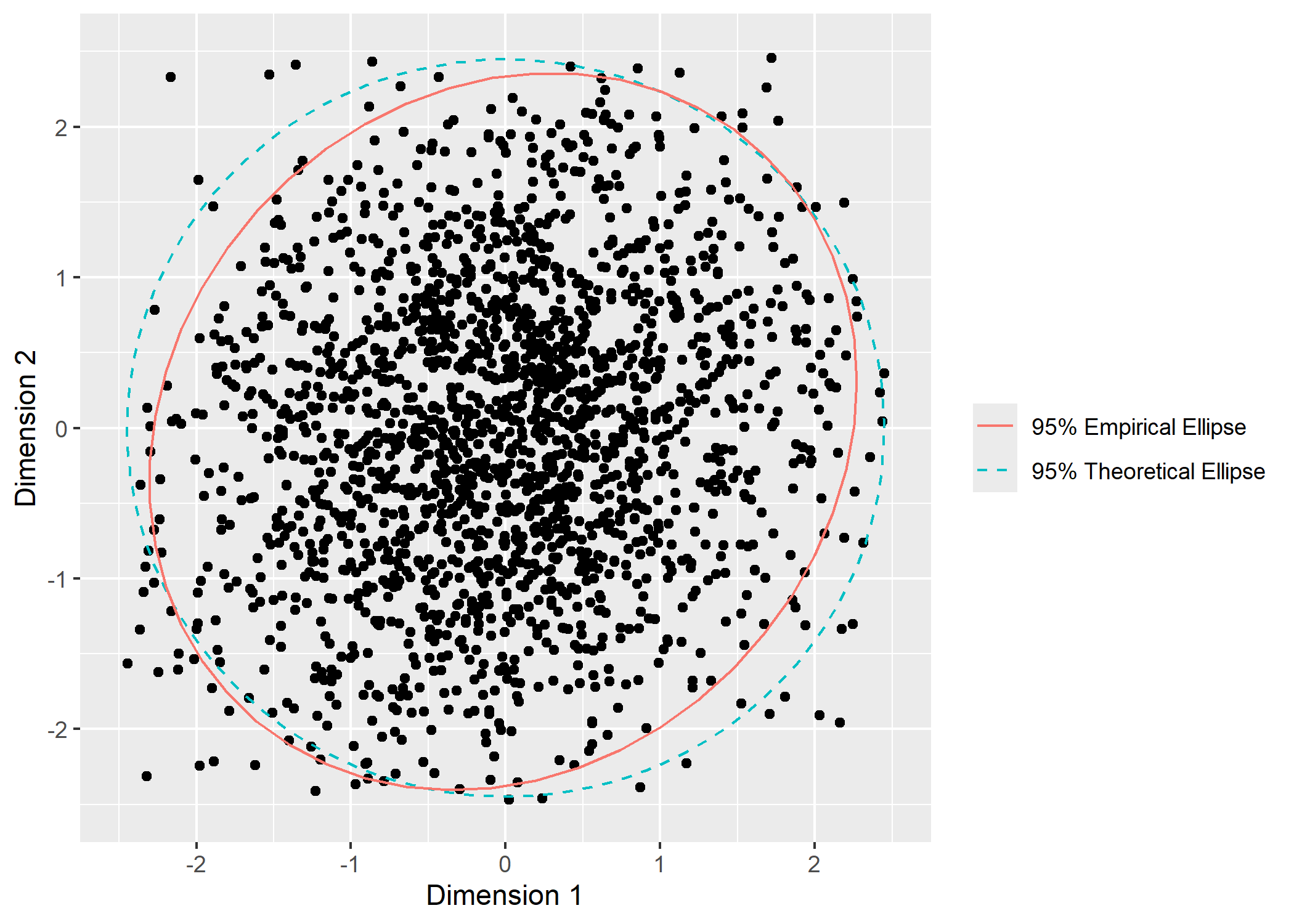}%
            \label{subfig:aveerror}%
     }
        \caption{Simulated distributional results for the joint distribution of $\hat S_{J}^{-1/2}(\mathcal{\hat T}_{111} - \mathcal{T}_{111},\mathcal{\hat T}_{112} - \mathcal{T}_{112})$, $p = 150$ for varying level of noise.}
        \label{fig:simultaneous1}
        \subfloat[$\lambda/\sigma = p^{3/4}$]
                {%
            \includegraphics[width=.3\linewidth]{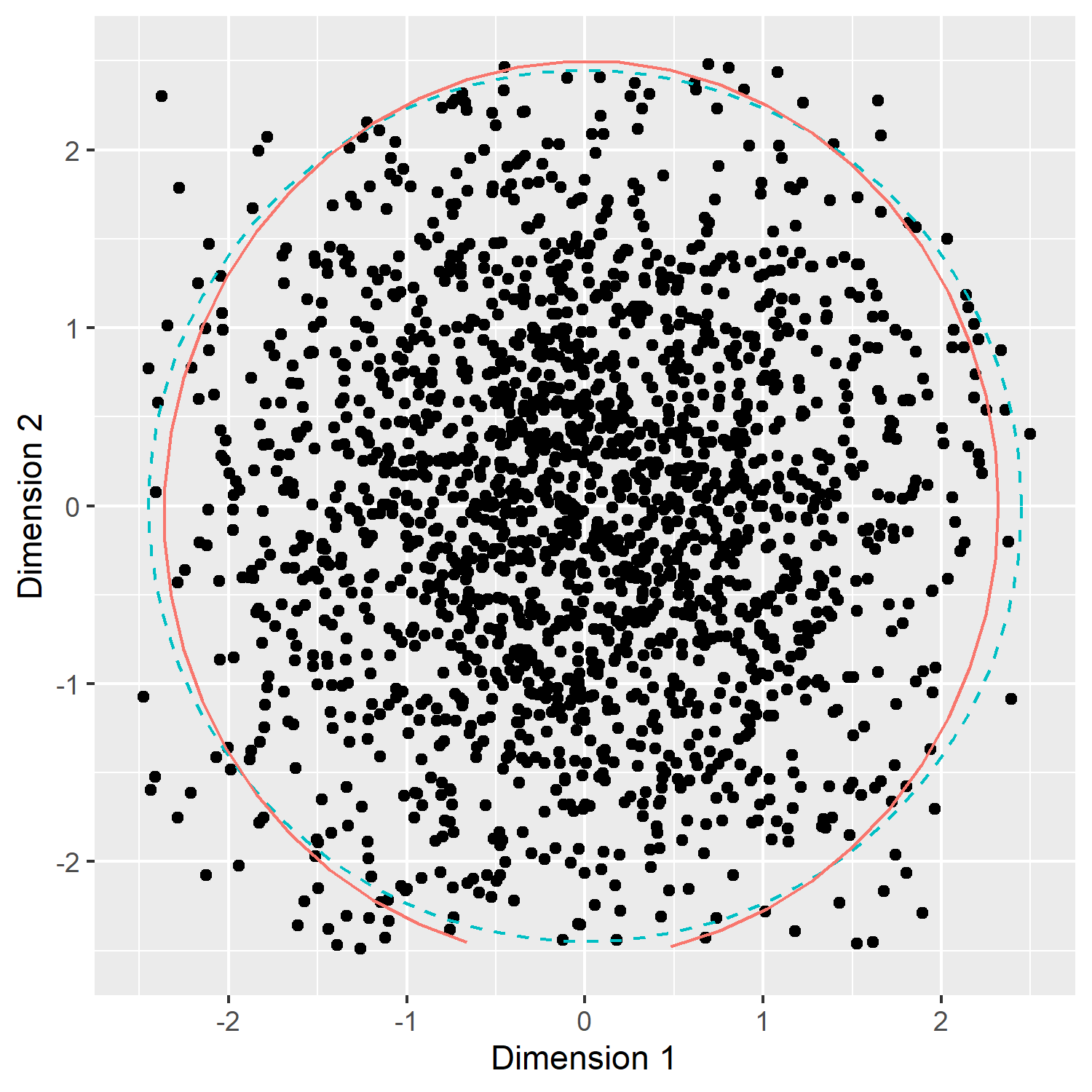}%
            \label{subfig:aveerror}%
        }
        \subfloat[$\lambda/\sigma = p^{7/8}$]{ %
            \includegraphics[width=.3\linewidth]{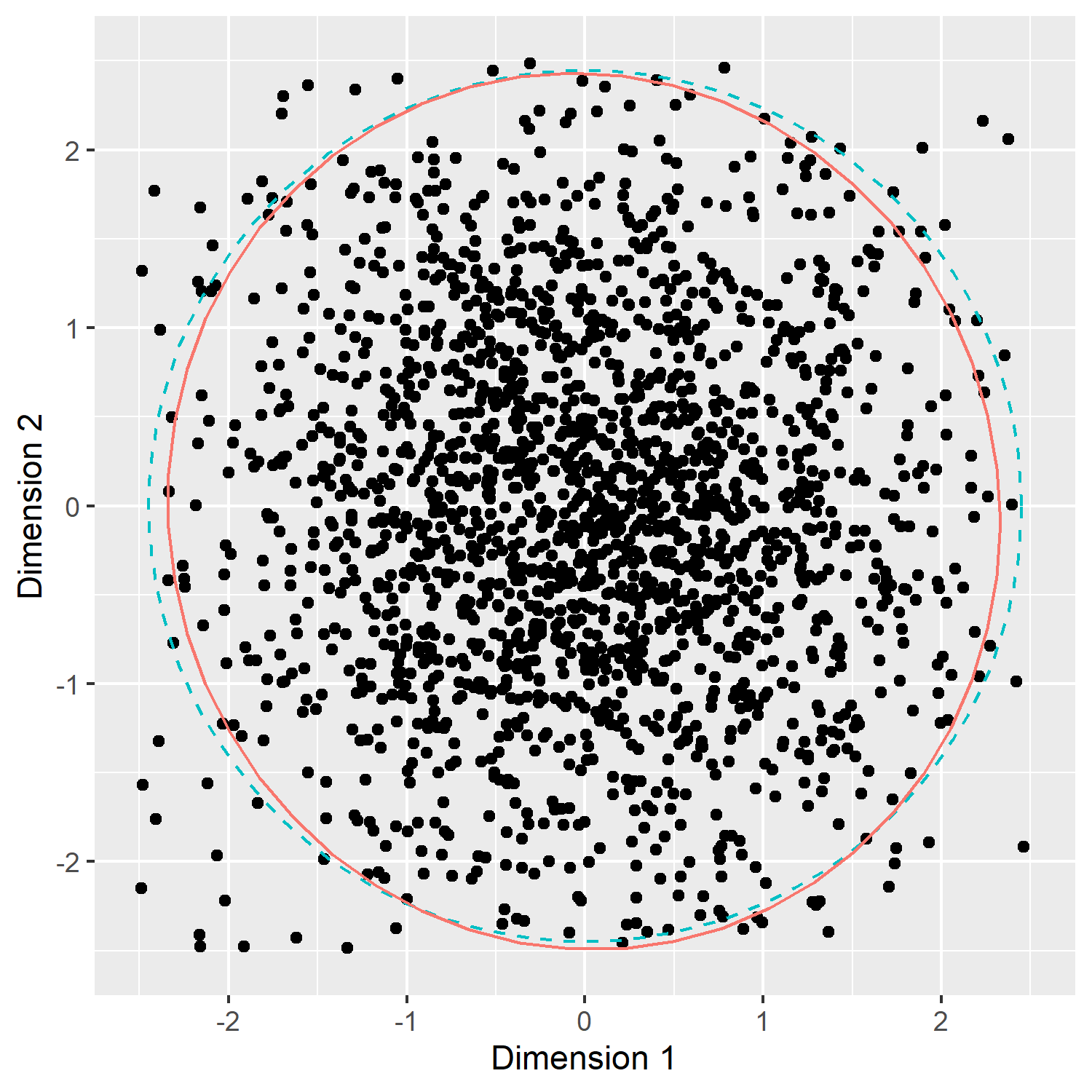}%
            \label{subfig:aveerror}%
     }      
        \subfloat[$\lambda/\sigma = p$]{ %
            \includegraphics[width=.4\linewidth,height=38mm]{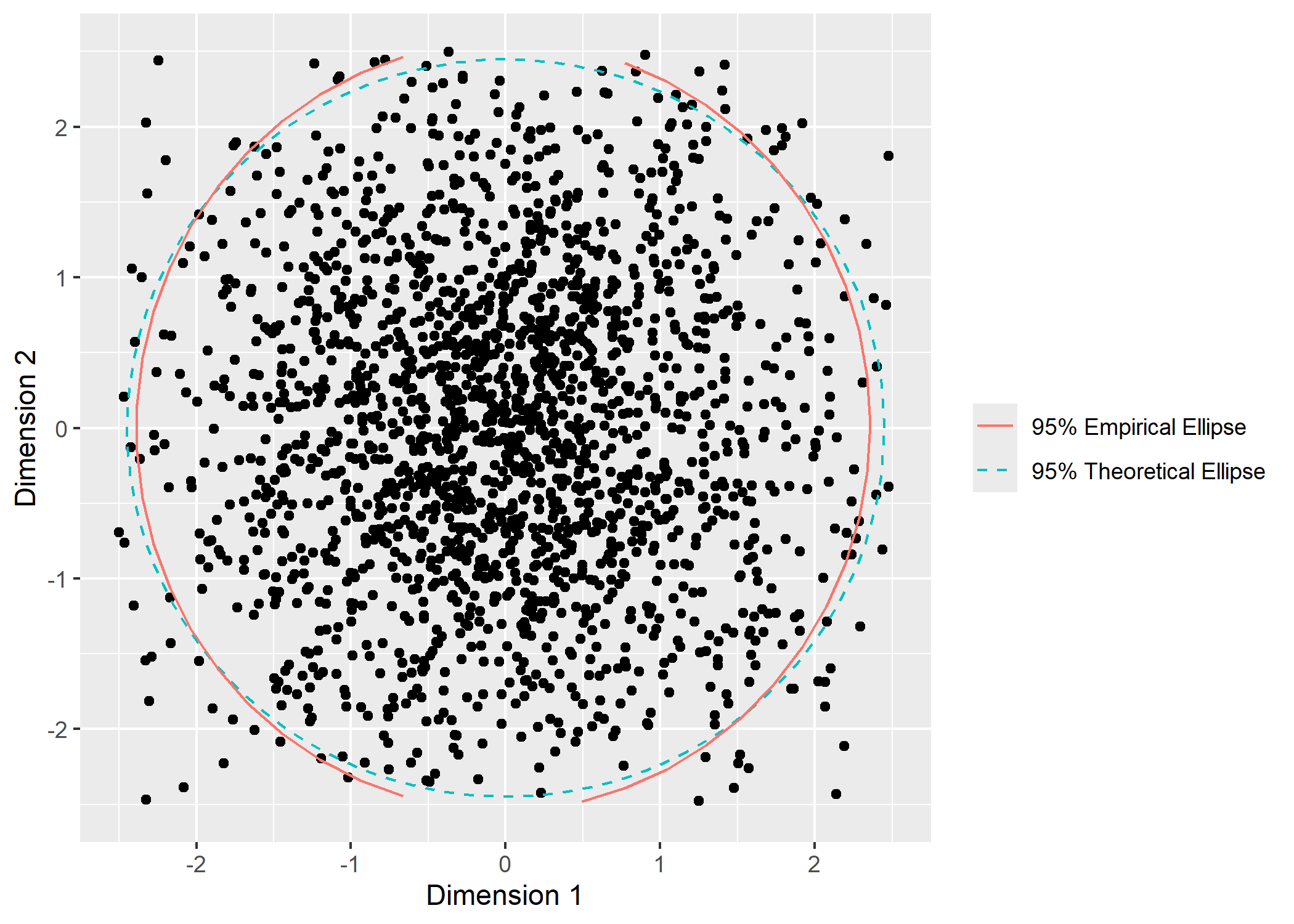}%
            \label{subfig:aveerror}%
     }
        \caption{Simulated distributional results  for the joint distribution of $\hat S_{J}^{-1/2}(\mathcal{\hat T}_{111} - \mathcal{T}_{111},\mathcal{\hat T}_{122} - \mathcal{T}_{122})$, $p = 150$ for varying level of noise.}
        \label{fig:simultaneous2}
    \end{figure*}
\ \\ \ \\
\bibliography{tensor-perturb-2-infinity/reference,tensor-perturb-2-infinity/tensors}

\end{document}